\title[Regularisation effects of nonlinear semigroups]{
  Regularisation effects of nonlinear semigroups}
\author{Thierry Coulhon} \address[Thierry Coulhon]{PSL Research University,
  F-75005 Paris, France}
\email{\href{mailto:thierry.coulhon@univ-psl.fr}{\nolinkurl{thierry.coulhon@univ-psl.fr}}}
\thanks{T.C.'s research was done while he was employed by the Australian National University and was supported by an Australian Research Council
  (ARC) grant DP 130101302.}
\author{Daniel Hauer} \address[Daniel Hauer]{School of Mathematics and
  Statistics, The University of Sydney, NSW 2006, Australia}
\email{\href{mailto:daniel.hauer@sydney.edu.au}{\nolinkurl{daniel.hauer@sydney.edu.au}}}
\subjclass[2010]{47H06,47H20,35K55,46B70,35B65}
\keywords{Nonlinear semigroups, p-Laplace operator, porous media
  operator, doubly nonlinear diffusion operator, nonlocal operators, regularity.}
\numberwithin{equation}{section}
\newtheorem{theorem}{Theorem}[section]
\newtheorem{proposition}[theorem]{Proposition}
\newtheorem{lemma}[theorem]{Lemma}
\newtheorem{corollary}[theorem]{Corollary}
\theoremstyle{definition}
\newtheorem{definition}[theorem]{Definition}
\theoremstyle{remark}
\newtheorem{remark}[theorem]{Remark}
\newcommand\R{{\mathbb{R}}}
\newcommand\N{\mathbb{N}}
\newcommand\dx{\mathrm{d}x }
\newcommand\dy{\mathrm{d}y }
\newcommand\dr{\mathrm{d}r }
\newcommand\ds{\mathrm{d}s }
\newcommand\dmu{\mathrm{d}\mu}
\newcommand\dt{\mathrm{d}t }
\newcommand\dH{\mathrm{d}\mathcal{H}}
\DeclareMathOperator*{\esssup}{ess\;sup}
\DeclareMathOperator*{\divergence}{div}
\newcommand\abs[1]{\lvert#1\rvert}
\newcommand\labs[1]{\left\lvert#1\right\rvert} 
\newcommand\norm[1]{\lVert#1\rVert}
\definecolor{darkred}{rgb}{0.7,0.1,0.1}
\begin{document}
\date{\today}


\begin{abstract} 
  One introduces natural and simple methods to deduce
  $L^{s}$-$L^{\infty}$-re\-gularisation estimates for $1\le s< \infty$
  of nonlinear semigroups holding uniformly for all time with sharp
  exponents from natural Gagliardo-Nirenberg inequalities. From
  $L^{q}$-$L^{r}$ Gagliardo-Nirenberg inequalities,
  $1\le q, r\le \infty$, one deduces $L^{q}$-$L^{r}$ estimates for the
  semigroup. New nonlinear interpolation techniques of independent
  interest are introduced in order to extrapolate such estimates to
  $L^{\tilde{q}}$-$L^{\infty}$ estimates for some $\tilde{q}$,
  $1\le \tilde{q}<\infty$. Finally one is able to extrapolate to
  $L^{s}$-$L^{\infty}$ estimates for $1\le s<q$.  The theory developed
  in this monograph allows to work with minimal regularity assumptions
  on solutions of nonlinear parabolic boundary value problems as
  illustrated in a plethora of examples including nonlocal diffusion
  processes.
 \end{abstract}

\maketitle

\tableofcontents

\section{Introduction}
\subsection{The story}
\label{subsec:story}
It begins in the linear semigroup theory: let $\{T_{t}\}_{t\geq 0}$ be
a symmetric semigroup with infinitesimal generator $-A$ of linear
operators acting on $L^2(\Sigma,\mu)$, where $(\Sigma,\mu)$ is a
$\sigma$-finite measure space. Assume that $\{T_{t}\}_{t\geq 0}$ is
submarkovian, meaning that $0\le u\le 1$ implies $0\le T_{t}u\le 1$ for
all $t>0$. If follows that $\{T_{t}\}_{t\geq 0}$ acts on
$L^q(\Sigma,\mu)$ for all $1\le q\le \infty$.

In this framework, there has been many works in the last four decades
that connect a variety of $L^q$-$L^r$, $1\le q<r\le \infty$,
regularisation properties of $\{T_{t}\}_{t\geq 0}$ to a variety of
abstract 
Sobolev type inequalities involving $A$. The first regularisation
property of $\{T_{t}\}_{t\geq 0}$ that attracted much attention was
the so-called {\em hypercontractivity}: 
for some (all) $1<q<r<\infty$, there exists $t_0=t_0(q,r)>0$ such that
$T_{t_0}$ maps $L^q$ to $L^r$ and
\begin{equation}
  \label{eq:237}
  \norm{T_{t_{0}}}_{q\to r}\le 1
\end{equation}
where $\norm{T}_{q\to r}:=\sup_{\norm{u}_{q}\le 1}\norm{Tu}_{r}$
denotes the operator norm of a linear bounded operator
$T : L^{q}\to L^{r}$, $1\le q, r\le \infty$. The theory of
hypercontractive semigroups was introduced by Nelson
in~\cite{MR0210416}, who also provided the most basic example: for the
harmonic oscillator
$A=-\tfrac{1}{2}\frac{d^2}{dx^2}+\tfrac{1}{2}x^{2}-\tfrac{1}{2}$ on
$L^{2}$ equipped with the Gaussian measure
$d\mu= (2\pi)^{-1/2}\,\exp(-\frac{1}{2}x^2)\,\dx$ on $\R$, $T_{t}$ is
a linear contraction from $L^{2}$ to $L^{4}$ if $e^{-t}\le 1/\sqrt{3}$
(cf.~\cite{MR0210416,MR0343816}). One reason for the popularity of
hypercontractivity was its deep connection to constructive quantum
field theory. The ideas in~\cite{MR0210416} were followed up rapidly
and further developed. For instance, Simon and
H{\o}egh-Krohn~\cite{MR0293451} combined the property that the
considered semigroup $\norm{T_{t}}_{q\to r}$ is contractive on $L^{q}$
for all $1\le q\le\infty$ with Riesz-Thorin's and Stein's
interpolation techniques to \emph{extrapolate} the
$L^{q}$-$L^{r}$-regularisation estimate~\eqref{eq:237} to an
$L^{\tilde{q}}$-$L^{\tilde{r}}$-regularisation estimate 
$\norm{T_{t_{0}}}_{\tilde{q}\to \tilde{r}}\le \tilde{C}$ for some
$\tilde{q},\tilde{r}$ such that $1\le \tilde{q}<q$ and
$r<\tilde{r}<\infty$.
Hypercontractivity is a natural property of some infinite-dimensional
semigroups such as Ornstein-Uhlenbeck. Note that Nelson~\cite{MR0343816} proved
that the Ornstein-Uhlenbeck semigroup does not admit an
$L^{q}$-$L^{\infty}$-regularisation effect for $1\le q<\infty$. 


In the mid 70's, Gross~\cite{MR0420249} characterised 
hypercontractivity  in terms of a \emph{single
  logarithmic Sobolev inequality in $L^{2}$}. 
Then  $\{T_{t}\}_{t\ge 0}$
 is hypercontractive if and only if there is some
 $C>0$ such that
 \begin{equation}
   \label{eq:238}
   \int_{\Sigma}\abs{u}^{2}\,\log \abs{u}\,\dmu\le
   C\, \langle Au,u \rangle+\norm{u}_{2}^{2}\log \norm{u}_{2}
 \end{equation}
 for every $u\in D(A)$. Here, $D(A)$ denotes the domain of  $A$ in $L^2(\Sigma,\mu)$. 

In the 80's, in the context of heat
kernels on Lie groups and manifolds, the focus shifted towards a
stronger property, namely {\em ultracontractivity}: for all $t>0$,
$T_{t}$ maps $L^1$ to $L^\infty$.  The game is
then to estimate $\|T_t\|_{1\to\infty}$ from above by an explicit
function of $t$. Of particular interest is the estimate
\begin{equation}
  \label{ultrad}
  \norm{T_t}_{1\to\infty}\le C\,t^{-d/2}, \quad \text{for every $t>0$,}
\end{equation}
where $d>0$ plays the role of a dimension. Davis and Simon~\cite{MR766493}
(see also~\cite{MR1103113}) adapted Gross' approach~\cite{MR0420249}
to the ultracontractivity framework and established the equivalence of
estimates~\eqref{ultrad} with the following one-para\-meter family of
logarithmic Sobolev inequalities: for every $\varepsilon>0$, 
\begin{equation}
  \label{eq:246}
  \int_{\Sigma}\abs{u}^{2}\,\log \abs{u}\,\dmu\le
  \varepsilon\, \langle Au,u \rangle +\varepsilon^{-d/4}\norm{u}_{2}^{2}+\norm{u}_{2}^{2}\log \norm{u}_{2}
\end{equation}
for every $u\in D(A)$. Estimate~\eqref{ultrad} was also
characterised in terms of \emph{($d$-dimen\-sional) Sobolev
  inequalities}: if there exists $C>0$ such that
\begin{equation}
  \label{eq:13}
  \norm{u}_{\frac{2d}{d-2}}^{2}\le C\,\langle  Au,u\rangle\qquad
  \text{for every $u\in D(A)$}
\end{equation}
by Varopoulos~\cite{MR803094} (see also \cite{MR1077272},
\cite{MR1164643} for simplifications and further developments) and in
terms of ($d$-dimensional) Nash inequalities by
Carlen-Kusu\-oka-Stroock~\cite{MR898496}). 
Further, an intermediate property
called {\em supercontractivity} was also considered
(\cite{MR0425601}): for all $1<q<r<\infty$ and all $t>0$, $T_{t}$ maps
$L^q$ to $L^r$ with the polynomial estimate 
\begin{equation}
\label{eq:177}
\norm{T_t}_{q\to r}\le C\,
t^{-d\left(\frac{1}{q}-\frac{1}{r}\right)}\quad\text{for all $t>0$.}
\end{equation}
Note that if the semigroup is uniformly bounded on $L^1$ and
$L^\infty$, then \eqref{eq:177} implies \eqref{ultrad} (see
\cite{MR1077272}).


The above outlined development of characterising
$L^{q}$-$L^{r}$-regularisation estimates of the semigroup
$\{T_{t}\}_{t\geq 0}$ with abstract (logarithmic) Sobolev inequalities
is exclusively concerned with \emph{linear semigroups}. Thus, it is
interestingly enough that prior to Varo\-poulos' theorem
\cite{MR803094}, the fact that an abstract Sobolev type inequality
associated with an operator $A$ implies an
$L^1$-$L^\infty$regularisation effect for the semigroup
$\{T_{t}\}_{t\geq 0}$ generated by $-A$ had been discovered in the late
70's by B\'enilan (\cite{Benilan1978}, see
also~\cite[p. 25]{MR1218884}) in the context of \emph{nonlinear
  semigroups}: %
%
let $\{T_{t}\}_{t\geq 0}$ be a semigroup of mappings $T_{t}$
acting on $L^q$ for all $1\le q\le \infty$, of a
$\sigma$-finite measure space $(\Sigma,\mu)$, with infinitesimal generator $-A$. 

In the paper \cite{Benilan1978}, B\'enilan established first $L^{q}$-$L^{r}$-regularisation
estimates ($1\le q<r\le \infty$) of nonlinear semigroups $\{T_{t}\}_{t\geq 0}$ generated
either by operators of similar type as the \emph{Dirichlet $p$-Laplace operator}
$\Delta_{p}^{\! D}u=\textrm{div}(\abs{\nabla u}^{p-2}\nabla u)$, ($1<p<\infty$),
or by operators similar to the \emph{Dirichlet porous media operator}
$\Delta^{\! D}(u^{m})=\textrm{div}(\nabla u^{m})$, ($m>0$).
Here, the name \emph{Dirichlet} and the superscript $D$ refer to the fact that the
differential operators $\Delta^{\! D}_{p}$ and $\Delta^{\! D}(\cdot^{m})$ are
equip\-ped with homogeneous Dirichlet boundary conditions on a
bounded domain $\Sigma$ of $\R^{d}$, and $u^{m}$ is the shorthand of
$\abs{u}^{m-1}u$. B\'enilan's method employs a truncation technique
on the sublevel sets of the resolvent combined with the
regularisation effect of the resolvent given by the $d$-dimensional
Sobolev inequality. 

Only one year later, V\'eron~\cite{MR554377} simplified B\'enilan's
method essentially and adapted it to the general nonlinear semigroup
framework acting on $L^{q}$ for $1\le q\le \infty$. 
V\'eron introduces an abstract Sobolev type inequality (similar
to~\eqref{eq:13}) satisfied by the generator $A$, from which one can
conclude an $L^{q}$-$L^{r}$ regularity estimate ($1\le q<r\le\infty$)
of the corresponding semigroup $\{T_{t}\}_{t\ge 0}$. In particular,
$L^{q}$-$L^{\infty}$ estimates ($1\le q<\infty$) of
$\{T_{t}\}_{t\ge 0}$ are obtained by using a \emph{one-parameter
  family of Sobolev type inequalities} satisfied by $A$ combined with
an iteration method in the time-variable of $\{T_{t}\}_{t\ge 0}$. To
be more precise, one easily sees that, for instance, for $1<p<d$, the
Dirichlet $p$-Laplace operator $A=-\Delta_{p}^{D}$ on $L^{2}$
satisfies the following one-parameter family (in $q\ge p$) of Sobolev
type inequalities
 \begin{equation}
   \label{eq:17}
   \norm{u}_{\frac{dq}{d-p}}^{q}\le
   C\,\tfrac{1}{q-p+1}\,\left(\tfrac{q}{p}\right)^{p}\, \langle
   -\Delta_{p}^{D}u, u_{q-p+2}\rangle
 \end{equation}
 for every $u\in D(A)\cap L^{\infty}$ and $q\ge p$. To the best of our knowledge,
 it goes back to V\'eron~\cite{MR554377} who established that the semigroup
$\{T_{t}\}_{t\geq 0}$ generated by $\Delta_{p}^{\! D}$ satisfies the
$L^{q}$-$L^{r}$-regularisation estimate
\begin{equation}
  \label{eq:176}
  \norm{T_{t}u-T_{t}\hat{u}}_{r}\le C\,t^{-\delta}
  \,\norm{u-\hat{u}}_{q}^{\gamma}
\end{equation}
for every $t>0$, $u$, $\hat{u}\in L^{q}$ with exponents $\delta$,
$\gamma>0$ depending on $d$, $p$, $r$ and $q$ for every
$q\le r\le \infty$ ($1\le q\le\infty$, $2\le p<\infty$), and that the
semigroup $\{T_{t}\}_{t\geq 0}$ generated by
$\Delta^{\! D}(\cdot^{m})$ satisfies the $L^{q}$-$L^{r}$-regularisation
estimate
\begin{equation}
  \label{eq:146}
  \norm{T_{t}u}_{r}\le C\,t^{-\delta}\,\norm{u}_{q}^{\gamma}
\end{equation}
for every $t>0$, $u\in L^{q}$, with  $q=1$, $r=\infty$  and exponents
$\delta$, $\gamma>0$ depending on $r$, $q$, $m>1$ and $d$. 
V\'eron's approach was quickly adapted  to many nonlinear parabolic problems
(cf. for instance~\cite{MR656651,MR1741878}).

The analogue of estimates~\eqref{ultrad} and \eqref{eq:177} concerning
linear semigroups are in the nonlinear semigroup theory the
estimates~\eqref{eq:176} or \eqref{eq:146}. To emphasise the fact that these estimates involving
nonlinear semigroups appear with an exponent
$\gamma$ at the initial datum $u\in L^{q}$, which is, in general,
different of one, we avoid calling the
estimates~\eqref{eq:176} and \eqref{eq:146} supercontractive or
ultracontractive estimates, but rather speak from an
\emph{$L^{q}$-$L^{r}$ regularisation estimate} of the nonlinear semigroup
$\{T_{t}\}_{t\geq 0}$ if $1\le q<r\le \infty$ (see also Remark~\ref{rem:8} in
Section~\ref{gn}).

In 2001, Cipriani and Grillo~\cite{MR1867617} adapted the approach by
Davis and Simon~\cite{MR766493} to establish
$L^{q}$-$L^{\infty}$-regularisation estimates of solutions of parabolic
diffusion equations involving quasilinear operators of $p$-Laplace
type equipped with homogeneous Dirichlet boundary conditions on
bounded domains. The approach in~\cite{MR1867617} is essentially based
on the following two steps (cf. \cite{MR1867617}): firstly, one
employs the \emph{classical} Sobolev inequality
\begin{equation}
 \label{eq:178}
  \norm{u}_{\frac{pd}{d-p}}\le C\,\norm{|\nabla u|}_{p}
\end{equation}
with respect to the Lebesgue measure, in order to derive a
\emph{one-parameter family of logarithmic Sobolev inequalities} in $L^{p}$
(similar to~\eqref{eq:246} with $2$ replaced by $p$) associated with the energy functional
of the Dirichlet-$p$-Laplace operator $\Delta_{p}^{D}$. Then one uses this family of
 inequalities to show that for a
solution $u$ of the parabolic equation under consideration the function
\begin{displaymath}
y(t):=\log \norm{u(t)}_{r(t)}\quad\text{ for $t\ge 0$,}
\end{displaymath}
satisfies a differential inequality from which one can deduce an
$L^{q}$-$L^{r}$-regulari\-sation estimate for $1\le q<r\le \infty$. 

Comparing this method with the one by V\'eron,  the
approach in~\cite{MR554377} seems to be more direct in order to achieve
$L^{q}$-$L^{r}$-regularisation estimates for $1\le q<r\le \infty$ with
optimal exponents. 


Many authors followed the approach in~\cite{MR1867617}; they derive
from the classical Sobolev inequality~\eqref{eq:178} new families of
\emph{energy entropy inequalities} (generalising the logarithmic
Sobolev inequality) and then apply these inequalities to nonlinear
parabolic problems (see, for
instance, \cite{MR2053885,MR1892935,Takac05,MR2149917,MR2129606,
  MR2268115,MR2379911,MR3158845}).

Another approach worth mentioning in this context is~\cite{MR2529737}
by Porzio. In this paper, Porzio employs directly the classical Sobolev inequality to establish
$L^{q}$-$L^{r}$-regularisation estimates ($1\le q<r\le \infty$) for
solutions of nonlinear parabolic equations involving
non-autonomous quasilinear differential operators of $p$-Laplace type.

In order to conclude this section, we want to emphasise that
$L^{q}$-$L^{r}$-regularity estimates of semigroups
$\{T_{t}\}_{t\ge 0}$  have many applications, such as new existence results 
(see, for instance,~\cite{MR2529737}), global H\"older
continuity (see, for instance,~\cite{MR1230384,MR1218742,MR1156216}) (and
higher regularity (see \cite{MR1230384})) of weak energy solutions of the underlying
parabolic boundary value problem (see
Section~\ref{Asec:proof-of-Linfty-regularity} and
Section~\ref{subsec:Mild-is-strong} of this monograph),  finite time of extinction 
results with respect to the initial data (see, for
instance, \cite[pp 234]{MR2582280} or \cite{MR2286292}) or uniqueness of
solutions (see~\cite{MR1028745}), and others.

\subsection{Main results}  
\label{sec:main-thms}
In the present monograph, rather than making a  detour via
a family of Log-Sobolev inequalities, we shall use the tools that
are more directly relevant to establish general $L^{q}$-$L^{r}$-regularity estimates for
$1\le q$, $r\le \infty$ and $L^{q}$-$L^{\infty}$-regularisation estimates
for $1\le q<\infty$ of nonlinear semigroups $\{T_{t}\}_{t\ge 0}$,
namely, Sobolev type inequalities or, more generally,
\emph{Gagliardo-Nirenberg} type inequalities 

In the following, $(\Sigma,\mu)$ will be  a $\sigma$-finite
measure space. For  $1\le q\le \infty$, we shall say that $A$ is an \emph{operator on
  $L^{q}(\Sigma,\mu)$} if 
$A$ is a subset of $L^{q}(\Sigma,\mu)\times L^{q}(\Sigma,\mu)$. Further notions and notation used
throughout this monograph can be found in Section~\ref{fm}. 

\begin{definition}
  \label{def:Gag-Nire-inequality}
  Let $1\le q<\infty$ and $1\le r\le \infty$. We say an operator
  $A$ on $L^{q}$ satisfies an $L^{q}$-$L^{r}$- \emph{Gagliardo-Nirenberg type
    inequality} for some $\varrho\ge 0$, $\sigma>0$, $\omega\in \R$ and
    $(u_{0},0)\in A$ if there is a constant $C>0$ such that
   \begin{equation}
     \label{eq:242}
    \norm{u-u_{0}}_{r}^{\sigma} \le C\,
    \Big( [u-u_{0},v]_{q}+\omega \norm{u-u_{0}}_{q}^{q}\Big)\;
    \norm{u-u_{0}}_{q}^{\varrho}
  \end{equation}
  for every $(u,v)\in A$. Moreover, we say that an operator $A$ on
  $L^{q}$ satisfies an $L^{q}$-$L^{r}$- \emph{Gagliardo-Nirenberg type
    inequality with differences} for some $\varrho\ge 0$,
  $\sigma>0$ and $\omega\in \R$ if there is a constant $C>0$ such that
   \begin{equation}
    \label{eq:5}
    \norm{u-\hat{u}}_{r}^{\sigma} \le C\,
    \Big( [u-\hat{u},v-\hat{v}]_{q}+\omega \norm{u-\hat{u}}_{q}^{q}\Big)\;
    \norm{u-\hat{u}}_{q}^{\varrho}
  \end{equation}
  for every $(u,v)$, $(\hat{u},\hat{v})\in A$. 
\end{definition}

For example, the negative Dirichlet $p$-Laplace operator
$-\Delta_{p}^{D}$ satisfies the Ga\-gliar\-do-Nirenberg
inequality~\eqref{eq:242} with $u_{0}=0$ if $1\le p<2$ and \eqref{eq:5} if
$2\le p<\infty$ (see Section~\ref{sec:p-laplace}) and for $m>0$, the
negative doubly nonlinear operator $-\Delta_{p}^{D}(\cdot^{m})$
equipped with Dirichlet boundary conditions
satisfies the Gagliardo-Nirenberg inequality~\eqref{eq:242} for $u_{0}=0$ (see
Section~\ref{sec:doubly-nonl-diff}). Further examples of operators
and other type of boundary conditions are discussed in Section~\ref{sec:examples}.

In the paper, we intend to come back to B\'enilan's and V\'eron's viewpoint, and provide a
systematic semigroup approach in order to establish
$L^{s}$-$L^{\infty}$-regulari\-sation estimates of the
form~\eqref{eq:176} and \eqref{eq:146} for any $1\le s < \infty$ for
(nonlinear) semigroups $\{T_{t}\}_{t\ge 0}$ under the assumption that
the corresponding infinitesimal generator $-A$ satisfies an
$L^{q}$-$L^{r}$-Gagli\-ardo-Nirenberg type inequality either without
differences~\eqref{eq:242} or with differences~\eqref{eq:5} for
some $1\le q$, $r\le \infty$.


We simplify B\'enilan's and V\'eron's method by avoiding for a large
class of operators the construction of a one-parameter family of
Sobolev inequalities (such as the family of inequalities given by
\eqref{eq:17}) to establish $L^{q}$-$L^{\infty}$-regularisation
estimates of semigroups $\{T_{t}\}_{t\ge 0}$. We rather tried to make
the extrapolation techniques from the \emph{linear} semigroup theory
by Simon and H{\o}egh-Krohn \cite{MR0293451} available for the
\emph{nonlinear} semigroup theory. To achieve this, we have
established a new nonlinear interpolation theorem (see
Theorem~\ref{thm:nonlinear-interpol},
Theorem~\ref{thm:interpol-no-differences} and
Theorem~\ref{thm:interpol-no-differences-bis} in
Section~\ref{sec:nonlinear-interpolation}). Our techniques require the
validity of only one $L^{q}$-$L^{r}$ Gagliardo-Nirenberg type
inequality satisfied by the generator $A$ for some $1\le q$,
$r\le \infty$ in order to establish
$L^{s}$-$L^{\infty}$-regulari\-sation estimates for $1\le s<\infty$ of
the corresponding semigroup $\{T_{t}\}_{t\ge 0}$. This simplifies
essentially the known techniques in the existing literature
(cf. \cite{Benilan1978,MR554377,MR0420249,MR2053885,MR1892935,Takac05,MR2149917,MR2129606,MR2268115,MR2379911,MR3158845}
and many more), but also allows us to establish
$L^{q}$-$L^{r}$-regularity estimates, $1\le q$, $r\le \infty$, for
solutions of nonlinear parabolic problems involving nonlocal diffusion
processes (see Section~\ref{sec:nonlocal}
and~\ref{subsection:fractional-p-laplace}). Estimates of this type for
solutions of nonlinear nonlocal diffusion problems are know to hold
only for the fractional porous media equation on the whole space (cf.
\cite{MR2737788}).  Further, we provide a nonlinear version of the
methods from \cite{MR1077272} and \cite{MR1164643} to conclude that if
a semigroup $\{T_{t}\}_{t\ge 0}$ satisfies a
$L^{q}$-$L^{r}$-regularisation estimate of the form~\eqref{eq:176} or
\eqref{eq:146} for some $1<q<r\le \infty$ then the semigroup admits,
in particular, a $L^{1}$-$L^{r}$-regularisation estimate of the
form~\eqref{eq:176} or \eqref{eq:146} (see
Theorem~\ref{thm:extrapol-L1-differences} and
Theorem~\ref{thm:extrapol-L1-bis} in
Section~\ref{sec:extra-pol-twoards-one}).

Similar to~\cite{MR554377}, we focus our attention on two
important classes of operators generating nonlinear semigroups acting
on $L^{q}$ for all $1\le q\le \infty$:

\begin{itemize}
\item \emph{quasi $m$-completely accretive operators in $L^{q_{0}}$
    for some $1\le q_{0}<\infty$}, 
 
\item \emph{quasi $m$-$T$-accretive operators in $L^{1}$ with complete
    resolvent}. 
 \end{itemize}


 In order to keep this subsection for an overview of the main results
 of this monograph, we refer for the definition of these two classes
 of operators to Section~\ref{sec:comp} and Section~\ref{subsec:L1}
 and note briefly that prototypes of the first class of operators are
 of the form $A+F$, where $F$ is a Lipschitz continuous mapping on
 $L^{q_{0}}$ and $A$ is, for instance, the celebrated negative
 $p$-Laplace operator $-\Delta_{p}$ (see Section~\ref{sec:p-laplace})
 but also the negative nonlocal fractional $p$-Laplace operator
 $-(-\Delta_{p})^{s}$ (see \cite{MaRoTo2015} and
 Section~\ref{subsection:fractional-p-laplace}) respectively equipped
 with some boundary conditions and the Dirichlet-to-Neumann operator
 associated with the $p$-Laplace operator (see
 Section~\ref{subsec:DtN}). Examples of the second class of operators
 are also of the form $A+F$, where $F$ is a Lipschitz continuous
 mapping on $L^{1}$ and $A$ is, for instance, the negative porous
 media operator $-\Delta (\cdot^{m})$ and its nonlocal counterpart
 (\cite{MR2737788}) or, more generally, doubly nonlinear operators
 $\Delta_{p}(\cdot^{m})$ (see Section~\ref{sec:doubly-nonl-diff}),
 where each of them is equipped with some boundary conditions.

Our first main result is concerned with
$L^{q}$-$L^{r}$-regularity estimates of semigroups
$\{T_{t}\}_{t\ge 0}$ generated by $-A$ for an operator $A$ of the
first class satisfying $L^{q}$-$L^{r}$-Gagliardo-Nirenberg type
inequality~\eqref{eq:5} with \emph{differences}.

\begin{theorem}
  \label{thm:main-1}
  For some $q\in[1,+\infty)$ and  $\omega\ge 0$, let $A+\omega I$ be $m$-completely accretive in
  $L^{q}(\Sigma,\mu)$  with dense
  domain. If $A$ satisfies the Gagliardo-Nirenberg type
    inequality~\eqref{eq:5} with parameters $q$, $1\le r\le \infty$,
    $\varrho\ge 0$ and $\sigma>0$, then the semigroup
    $\{T_{t}\}_{t\ge 0}$ generated by $-A$ on $L^{q}(\Sigma,\mu)$ satisfies
    \begin{equation}
      \label{eq:18}
      \norm{T_{t}u-T_{t}\hat{u}}_{r}\le 
      \left(\tfrac{C}{q}\right)^{1/\sigma}\,t^{-\alpha}\,e^{\omega\, \beta\,t}\,
      \norm{u-\hat{u}}_{q}^{\gamma}
    \end{equation}
    for every $t>0$, $u$, $\hat{u}\in L^{q}(\Sigma,\mu)$ with
    exponents $\alpha=\frac{1}{\sigma}$, $\beta=\gamma+1$ and
    $\gamma=\tfrac{q+\varrho}{\sigma}$. Moreover, if $1\le r<
    \infty$, $\gamma\,r>q$ and there is $(u_{0},0)\in A$ for some
    $u_{0}\in L^{1}\cap L^{\infty}(\Sigma,\mu)$, then
    \begin{equation}
      \label{eq:80}
        \norm{T_{t}u-T_{t}\hat{u}}_{\infty}\lesssim \; t^{-\alpha_s}\;
        e^{\omega\,  \beta_{s}\,t}\; \norm{u-\hat{u}}_{s}^{\gamma_s}
    \end{equation}
    for every $t>0$, $u$, $\hat{u}\in L^{s}(\Sigma,\mu)$, $1\le s\le
    \gamma\,r\,q^{-1}\,m_{0}$ satisfying $\gamma (1-\frac{s q}{\gamma
      r m_{0}})<1$ for every $m_{0}\ge q\,\gamma^{-1}$ satisfying
    \begin{equation}
      \label{eq:75}
      (\tfrac{\gamma\,r}{q}-1)\,m_{0}+q(\tfrac{1}{\gamma}-1)>0,
    \end{equation}
    with exponents
    \begin{equation}
      \label{eq:76}
      \begin{split}
        & \alpha^{\ast}= \tfrac{\sigma^{-1}\,q\,\gamma^{-1}}{
          (\frac{\gamma\,r}{q}-1)\,m_{0}+q(\frac{1}{\gamma}-1)},\;
        \beta^{\ast}=\tfrac{\gamma^{2} r
          q^{-1}-1}{(\frac{\gamma\,r}{q}-1)\,m_{0}+q(\frac{1}{\gamma}-1)}+1,\;
        \gamma^{\ast}= \tfrac{(\gamma r q^{-1}-1)\,m_{0}}{
          (\frac{\gamma\,r}{q}-1)\,m_{0}+q(\frac{1}{\gamma}-1)},\\
 	& \alpha_{s}=\tfrac{\alpha^{\ast}}{1-\gamma^{\ast}(1-\frac{s q}{\gamma r m_{0}})},\quad
        \beta_{s}=\tfrac{\beta^{\ast} 2^{-1}+\gamma^{\ast} \frac{s
            q}{\gamma r m_{0}}}{1-\gamma^{\ast}(1-\frac{s q}{\gamma r m_{0}})},\quad 
        \gamma_{s}=\tfrac{\gamma^{\ast}\frac{s q}{\gamma r m_{0}} }{
          1-\gamma^{\ast}(1-\frac{s q}{\gamma r m_{0}})}.
      \end{split}
    \end{equation}
\end{theorem}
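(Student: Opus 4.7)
\medskip
\noindent\textbf{Proof plan.}
The plan is to establish the first estimate \eqref{eq:18} by a direct energy argument combined with the Gagliardo--Nirenberg inequality \eqref{eq:5}, and then to obtain \eqref{eq:80} by feeding this $L^{q}$-$L^{r}$ estimate into the nonlinear interpolation machinery of Section~\ref{sec:nonlinear-interpolation} (followed, if $s<q$, by the extrapolation results of Section~\ref{sec:extra-pol-twoards-one}).

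For the first part, fix $u,\hat u\in L^{q}$ in the closure of $D(A)$ and set $y(t):=\norm{T_{t}u-T_{t}\hat u}_{q}$ and $g(t):=\norm{T_{t}u-T_{t}\hat u}_{r}$. Since $A+\omega I$ is $m$-completely accretive, $T_{t}u-T_{t}\hat u$ is absolutely continuous on $[0,\infty)$ with values in $L^{q}$ and satisfies the energy identity
\begin{equation*}
  \tfrac{1}{q}\tfrac{d}{dt}y(t)^{q}=-[T_{t}u-T_{t}\hat u,\,AT_{t}u-AT_{t}\hat u]_{q}\quad\text{a.e. on }(0,\infty),
\end{equation*}
together with $y(t)\le e^{\omega t}y(0)$ and the analogous contraction $g(t)\le e^{\omega(t-s)}g(s)$ for $0\le s\le t$ (this is where complete accretivity is used, to transfer the contraction property to the $L^{r}$ norm). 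Applying the Gagliardo--Nirenberg inequality \eqref{eq:5} pointwise in $t$ to the pair $(T_{t}u,AT_{t}u),(T_{t}\hat u,AT_{t}\hat u)\in A$ therefore gives
\begin{equation*}
  g(t)^{\sigma}\;\le\;-\tfrac{C}{q}\,(y^{q})'(t)\,y(t)^{\varrho}+C\omega\,y(t)^{q+\varrho}.
\end{equation*}
In the model case $\omega=0$, use the monotonicity $y(s)\le y(0)$ to replace $y(s)^{\varrho}$ by $y(0)^{\varrho}$, integrate on $[0,t]$ and apply the monotonicity of $g$ (so that $tg(t)^{\sigma}\le\int_{0}^{t}g(s)^{\sigma}\,\ds$); this yields $tg(t)^{\sigma}\le\tfrac{C}{q}y(0)^{q+\varrho}$ and hence \eqref{eq:18} with $\alpha=1/\sigma$, $\gamma=(q+\varrho)/\sigma$ and $\beta=0$. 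For $\omega>0$ the same calculation is run with the bounds $y(s)^{\varrho}\le e^{\omega\varrho s}y(0)^{\varrho}$, $y(s)^{q+\varrho}\le e^{\omega(q+\varrho)s}y(0)^{q+\varrho}$ and $g(s)^{\sigma}\ge e^{-\omega\sigma(t-s)}g(t)^{\sigma}$; collecting the exponential factors produces the stated prefactor $e^{\omega\beta t}$ with $\beta=\gamma+1$.

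For the second part, one inserts the just-obtained $L^{q}$-$L^{r}$ estimate \eqref{eq:18} into the nonlinear interpolation Theorem~\ref{thm:nonlinear-interpol}: combined with the $L^{r\cdot m_{0}}$-bound provided by the fixed point $u_{0}\in L^{1}\cap L^{\infty}$ and complete accretivity (this is what condition \eqref{eq:75} on the free parameter $m_{0}\ge q/\gamma$ is designed to accommodate), it upgrades \eqref{eq:18} to an intermediate $L^{\gamma r m_{0}/q}$-$L^{\infty}$ estimate with exponents $\alpha^{\ast},\beta^{\ast},\gamma^{\ast}$ as in \eqref{eq:76}. To descend from the exponent $\gamma r m_{0}/q$ to an arbitrary $s\in[1,\gamma r m_{0}/q]$, one interpolates this new estimate with the contractivity of the semigroup on $L^{s}$ (or, if $s<q$, invokes Theorem~\ref{thm:extrapol-L1-differences} of Section~\ref{sec:extra-pol-twoards-one}); the condition $\gamma(1-sq/(\gamma r m_{0}))<1$ is exactly what makes the resulting geometric series in the iteration convergent, and the arithmetic of the interpolating weights produces the final exponents $\alpha_{s},\beta_{s},\gamma_{s}$.

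The routine part is the energy/Gagliardo--Nirenberg computation that yields \eqref{eq:18}; the main obstacle is the precise bookkeeping of the two-step extrapolation in the second part, namely identifying the correct choice of the auxiliary parameter $m_{0}$ so that \eqref{eq:75} holds, tracking how the exponents transform under the nonlinear interpolation theorem, and verifying that the interpolation constants remain finite in the limiting regime $s\to\gamma r m_{0}/q$. All of this is driven entirely by the abstract machinery of Section~\ref{sec:nonlinear-interpolation}, which is why the hypothesis is reduced to a single Gagliardo--Nirenberg inequality rather than a one-parameter family.
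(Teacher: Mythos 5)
Your strategy for the first part correctly mirrors the paper's first proof of Theorem~\ref{thm:quasi-super-contractivity}, but that argument works only for $q>1$: the claim that $m$-complete accretivity forces $t\mapsto T_t u-T_t\hat u$ to be absolutely continuous (and a.e. differentiable) with values in $L^q$ rests on the regularity theory for mild solutions in uniformly convex Banach spaces with uniformly convex dual, which fails for $q=1$. Since the theorem allows $q\in[1,+\infty)$, the energy identity you integrate simply is not available when $q=1$; the $1$-bracket $[\cdot,\cdot]_1$ is only upper semicontinuous and the mild solution need not be differentiable. The paper resolves this by giving a second proof of Theorem~\ref{thm:quasi-super-contractivity} that works directly with the implicit Euler scheme $u_n+\tfrac{t}{N}Au_n\ni u_{n-1}$: applying \eqref{eq:5} at each time step and using the elementary bracket inequalities \eqref{eq:56}--\eqref{eq:57} produces the discrete analogue of the differential inequality, which is then iterated via \eqref{eq:110} and passed to the limit $N\to\infty$ using Crandall--Liggett and lower semicontinuity of the $L^r$-norm. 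That discretization is the missing ingredient in your argument. (A smaller issue in the same part: even for $q>1$ the differentiability holds only for $u,\hat u\in D(A)$, so the case of general $u,\hat u\in L^q$ must be recovered by a density argument at the end, and the $\omega>0$ bookkeeping should be done by observing that $e^{-\omega q t}y(t)^q$ is nonincreasing, rather than naively bounding $y(s)^\varrho$ under the time derivative, since $(y^q)'$ need not have a sign.)

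Your account of the second part captures the correct two-step structure (nonlinear interpolation to reach an $L^{\gamma rm_0/q}$-$L^\infty$ estimate, then extrapolation downward), but two points are slightly off. First, the hypothesis $(u_0,0)\in A$ with $u_0\in L^1\cap L^\infty$ is not there to supply ``an $L^{rm_0}$-bound''; via Proposition~\ref{propo:complete-expo-grow} it ensures that the semigroup genuinely acts on $L^q\cap L^\infty$, which is needed before Theorem~\ref{thm:extrapolation-to-infty} can be invoked at all. Second, condition \eqref{eq:75} governs the time-iteration in Lemma~\ref{lem:iteration} (it makes the recursively defined exponents $m_n$ strictly increasing to $+\infty$ and the relevant geometric sums convergent), whereas the separate hypothesis $\gamma(1-\tfrac{sq}{\gamma r m_0})<1$ controls the fixed-point bound in Theorem~\ref{thm:extrapol-L1-differences} when descending from $L^{\gamma rm_0/q}$ to $L^s$. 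Treating these as one ``geometric series'' condition blurs two distinct mechanisms, though it does not point at a wrong route.
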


The proof of Theorem~\ref{thm:main-1} follows by combining
Theorem~\ref{thm:quasi-super-contractivity} (Section~\ref{gn}),
Theorem~\ref{thm:extrapolation-to-infty}
(Section~\ref{sec:extrapolation-towards-infinity}) and subsequently by
applying Theorem~\ref{thm:extrapol-L1-differences} (Section~\ref{sec:extra-pol-twoards-one}).

\begin{remark}
  \label{rem:11}
  At first glance, the condition $\gamma\,r>q$ and the choice of
  $m_{0}\ge q\,\gamma^{-1}$ satisfying~\eqref{eq:75} in
  Theorem~\ref{thm:main-1} and in the subsequent two theorems seem
  rather mysterious. They are sufficient conditions to conclude an
  $L^{s}$-$L^{\infty}$ regularisation estimate for
  $s=\gamma\,r\,q^{-1}\,m_{0}$ from an $L^{q}$-$L^{r}$ regularity
  estimate for some $1\le q$, $r<\infty$ (cf. Remark~\ref{rem:10} in
  Chapter~\ref{sec:extrapolation-towards-infinity}). But on the other
  hand, the parameters $\gamma$, $r$ and $q$ are intimately related
  with the given operator $A$. In fact, the condition $\gamma\,r>q$
  changes if and only if $A$ changes. This is not the case for the
  parameter $m_{0}$ satisfying $m_{0}\ge q\,\gamma^{-1}$
  and~\eqref{eq:75} since for sufficiently large $m_{0}$ both
  conditions always hold. In certain cases, but not all,
  $m_{0}=q\,\gamma^{-1}$ satisfies ~\eqref{eq:75}, in which case this
  choice of $m_0$ is optimal. This is well demonstrated by the example
  of the $p$-Laplace operator $A=-\Delta_{p}^{\!\R^{d}}$ on $\R^{d}$
  satisfying vanishing conditions at infinity (see
  Theorem~\ref{thm:Dirichlet-p-laplace} in
  Section~\ref{sec:homog-dirichl-bound}).
\end{remark}

Our second main result is concerned with
$L^{q}$-$L^{r}$-regularisation estimates of semigroups
$\{T_{t}\}_{t\ge 0}$ generated by $-A$ for an operator $A$ of the
first class but satisfying the Gagliardo-Nirenberg type
inequality~\eqref{eq:242} without \emph{differences}.

\begin{theorem}
  \label{thm:GN-implies-reg-bis}
  For some $q\in[1,+\infty)$ and  $\omega\ge 0$, let $A+\omega I$ be $m$-completely accretive in
  $L^{q}(\Sigma,\mu)$  with dense
  domain. If $A$ satisfies the Gagliardo-Nirenberg type
    inequality~\eqref{eq:242} with  parameters $q$, $1\le r\le \infty$,
    $\varrho\ge 0$ and $\sigma>0$ and some $(u_{0},0)\in A$ satisfying
    $u_{0}\in L^{1}\cap L^{\infty}(\Sigma,\mu)$, then the semigroup
    $\{T_{t}\}_{t\ge 0}$ generated by $-A$ on $L^{q}(\Sigma,\mu)$ satisfies
    \begin{equation}
      \label{eq:20}
      \norm{T_{t}u-u_{0}}_{r}\le 
      \left(\tfrac{C}{q}\right)^{1/\sigma}\,t^{-\alpha}\,e^{\omega\, \beta\,t}\,
      \norm{u-u_{0}}_{q}^{\gamma}
    \end{equation}
    for every $t>0$, $u\in L^{q}(\Sigma,\mu)$ with
    exponents $\alpha=\frac{1}{\sigma}$, $\beta=\gamma+1$ and
    $\gamma=\tfrac{q+\varrho}{\sigma}$. Moreover, if $1\le r<
    \infty$ and $\gamma\,r>q$, then
    \begin{equation}
      \label{eq:168}
        \norm{T_{t}u-u_{0}}_{\infty}\lesssim \; t^{-\alpha_s}\;
        e^{\omega \, \beta_{s}\,t}\; \norm{u-u_{0}}_{s}^{\gamma_s}
    \end{equation}
    for every $t>0$, $u \in L^{s}(\Sigma,\mu)$,
    $1\le s\le \gamma\,r\,q^{-1}\,m_{0}$ satisfying
    $\gamma (1-\frac{s q}{\gamma r m_{0}})<1$ for every
    $m_{0}\ge q\,\gamma^{-1}$ satisfying~\eqref{eq:75} with
    exponents~\eqref{eq:76}.
\end{theorem}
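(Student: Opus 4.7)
The proof parallels Theorem~\ref{thm:main-1} but systematically replaces each ingredient by its ``without differences'' counterpart, tracking the shifted orbit $w(t) := T_{t}u - u_{0}$ in place of a pair of orbits. The three stages are: derive \eqref{eq:20}, extrapolate to an $L^{\tilde s}$--$L^{\infty}$ estimate at $\tilde s = \gamma\,r\,q^{-1}\,m_{0}$, and extrapolate downward to the full range $1 \le s < \tilde s$.

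For the first stage I run the usual B\'enilan--V\'eron energy scheme on $y(t) := \norm{w(t)}_{q}^{q}$. Since $(u_{0},0) \in A$ and $A + \omega I$ is $m$-completely accretive in $L^{q}$ with dense domain, general nonlinear semigroup theory ensures that $y$ is locally absolutely continuous on $(0,\infty)$ and that for a.e.\ $t > 0$ there is $v(t) \in A\,T_{t}u$ with $\tfrac{1}{q}\,y'(t) = -[w(t),v(t)]_{q}$. Inserting the pair $(T_{t}u,v(t)) \in A$ into \eqref{eq:242} gives
\begin{equation*}
\norm{w(t)}_{r}^{\sigma} \;\le\; C\,\bigl([w(t),v(t)]_{q} + \omega\,y(t)\bigr)\,y(t)^{\varrho/q},
\end{equation*}
and combining the two produces a Bernoulli-type differential inequality in $y$ alone. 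The standard ODE integration used in the no-differences analog of Theorem~\ref{thm:quasi-super-contractivity} (the bound $y(t) \le e^{q\omega t}y(0)$ coming from complete accretivity, the substitution $\tilde y = e^{-q\omega t}y$, and integration on $(0,t)$) converts this into the quantitative decay \eqref{eq:20} with $\alpha = 1/\sigma$, $\gamma = (q+\varrho)/\sigma$ and $\beta = \gamma + 1$.

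For the second stage I apply the ``no differences'' nonlinear interpolation of Theorem~\ref{thm:interpol-no-differences} (or its variant Theorem~\ref{thm:interpol-no-differences-bis}) to the shifted maps $u \mapsto T_{t}u - u_{0}$. The hypothesis $u_{0} \in L^{1}\cap L^{\infty}$ is exactly what lets these shifted maps act on every $L^{p}$ and what converts the complete accretivity of $A+\omega I$ into an ambient $L^{\infty}$ growth bound $e^{\omega t}$, against which the $L^{q}$--$L^{r}$ gain \eqref{eq:20} can be interpolated; the output is an $L^{\tilde s}$--$L^{\infty}$ estimate at the endpoint $\tilde s = \gamma\,r\,q^{-1}\,m_{0}$, where $m_{0}$ is the free iteration parameter of the interpolation and condition~\eqref{eq:75} is precisely what ensures the iteration improves the integrability exponent at each pass. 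The exponents in \eqref{eq:76} drop out of the resulting interpolation identities. The third stage then applies Theorem~\ref{thm:extrapol-L1-bis} (the no-differences counterpart of Theorem~\ref{thm:extrapol-L1-differences}) to propagate the endpoint estimate down to $1 \le s \le \tilde s$ satisfying $\gamma(1 - \tfrac{sq}{\gamma r m_{0}}) < 1$, yielding \eqref{eq:168}.

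The main obstacle lies in the second stage: without differences one cannot invoke a contraction \emph{between} two distinct orbits, so every estimate must be phrased as a quantitative bound on the single deviation $T_{t}u - u_{0}$ from the equilibrium. The Simon--H\o egh-Krohn iteration must therefore be replayed with the fixed reference point $u_{0}$ subtracted throughout, and the $L^{1}\cap L^{\infty}$ integrability of $u_{0}$ is exactly what lets this subtraction commute with the various $L^{p}$ norms appearing in the iteration. Making this precise is the content of Theorems~\ref{thm:interpol-no-differences} and~\ref{thm:interpol-no-differences-bis} and is what distinguishes the present result from Theorem~\ref{thm:main-1}.
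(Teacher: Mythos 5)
Your high-level architecture matches the paper's exactly: \eqref{eq:20} first, then extrapolation to $L^{\infty}$ via Theorem~\ref{thm:interpol-no-differences} (or \ref{thm:interpol-no-differences-bis}) and an iteration lemma, then extrapolation downward via Theorem~\ref{thm:extrapol-L1-bis}. Stages two and three are correct and well motivated. The gap is in your Stage~1.

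You assert that, because $A+\omega I$ is $m$-completely accretive with dense domain and $(u_{0},0)\in A$, ``general nonlinear semigroup theory ensures that $y(t):=\norm{T_{t}u-u_{0}}_{q}^{q}$ is locally absolutely continuous on $(0,\infty)$ and that for a.e.\ $t>0$ there is $v(t)\in A\,T_{t}u$ with $\tfrac{1}{q}\,y'(t)=-[w(t),v(t)]_{q}$.'' This is not true for the full range $q\in[1,+\infty)$ claimed in the theorem. The a.e.\ differentiability of the mild solution $t\mapsto T_{t}u$ and the inclusion $T_{t}u\in D(A)$ for a.e.\ $t>0$ are consequences of the regularisation result recalled around \eqref{eq:38}, and that result requires $L^{q}(\Sigma,\mu)$ \emph{and its dual} to be uniformly convex — i.e.\ $1<q<\infty$. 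For $q=1$ neither hypothesis holds, and complete accretivity does not repair this: the Brezis subgradient theorem lives in Hilbert space, and the B\'enilan--Crandall homogeneity result requires $1<q<\infty$. Moreover, even writing $\tfrac{1}{q}y'(t)=-[w(t),v(t)]_{q}$ presupposes the chain-rule identity \eqref{eq:35}, which is again specific to $1<q<\infty$; for $q=1$ the bracket $[\cdot,\cdot]_{1}$ is merely upper semicontinuous. Your differential-inequality derivation is therefore a ``without differences'' analogue of the \emph{first} proof of Theorem~\ref{thm:quasi-super-contractivity}, and that proof is explicitly restricted to $q>1$ in the paper.

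The paper's actual route to \eqref{eq:20} is the discrete implicit Euler scheme: one shows directly that the discrete resolvent iterates $\hat{u}_{n}=J_{t/N}\hat{u}_{n-1}$ satisfy the Gagliardo--Nirenberg inequality together with the algebraic manipulation using \eqref{eq:56}, \eqref{eq:57} and the recursion \eqref{eq:110}, and then passes to the limit $N\to\infty$ using the Crandall--Liggett exponential formula and lower semicontinuity of $\norm{\cdot}_{r}$ on $L^{q}$. This avoids any differentiation in time and hence works for all $q\ge 1$ including $q=1$. To repair your proposal you should either replace Stage~1 by this discrete scheme, or explicitly restrict the differential-inequality argument to $q>1$ and handle $q=1$ separately (the theorem as stated includes $q=1$).
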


The statements of Theorem~\ref{thm:GN-implies-reg-bis} follows from
Theorem~\ref{thm:quasi-super-contractivity-bis} (Section~\ref{gn}) and
by Theorem~\ref{thm:extrapolation-to-infty-bis}
(Section~\ref{sec:extrapolation-towards-infinity}) combined with
Theorem~\ref{thm:extrapol-L1-bis}
(Section~\ref{sec:extra-pol-twoards-one}).

The last main result of this monograph focuses on the
$L^{q}$-$L^{r}$-regularisation estimates of semigroups
$\{T_{t}\}_{t\ge 0}$ generated by $-A$ for an operator $A$ in
$L^{1}(\Sigma,\mu)$ of the second class satisfying the Gagliardo-Nirenberg
type inequality~\eqref{eq:242} without \emph{differences}. However,
applications show that operators $A$ of the second class, generally, do not
satisfy the Gagliardo-Nirenberg type inequality~\eqref{eq:242} for $u\in
D(A)\cap L^{q}(\Sigma,\mu)$ and some $q>1$ but for $u\in
D(A)\cap L^{\infty}(\Sigma,\mu)$. Thus, it is very useful to introduce
the \emph{trace}
\begin{displaymath}
    A_{1\cap \infty}:= A\cap ((L^{1}\cap
  L^{\infty}(\Sigma,\mu))\times (L^{1}\cap
  L^{\infty}(\Sigma,\mu)))
  \end{displaymath}
of $A$ on $L^{1}\cap L^{\infty}(\Sigma,\mu)$. 
Furthermore, note that the
notion of \emph{$c$-complete resolvent} is defined in Section~\ref{subsec:L1}. 

The nonlinear interpolation theorems used in the proofs of Theorems
\ref{thm:main-1} and \ref{thm:GN-implies-reg-bis} (see
Theorem~\ref{thm:nonlinear-interpol},
Theorem~\ref{thm:interpol-no-differences} and
Theorem~\ref{thm:interpol-no-differences-bis} in
Section~\ref{sec:nonlinear-interpolation}) cannot be applied to
semigroups $\{T_{t}\}_{t\ge 0}$ generated by $-A$ in $L^{1}$ for
operators of the second class. One essential reason for this is
that in the latter case each mapping
$T_{t} : L^{1}\cap L^\infty \to L^{1}\cap L^\infty$ is, in general, not Lipschitz
continuous with respect to the $L^{\infty}$-norm.  
Another important observation
is that operators satisfying an $L^{q}$-$L^{r}$ Gagliardo-Nirenberg
type inequality~\eqref{eq:5} with differences are necessarily
quasi-accretive in $L^{q}$. But there are operators of the second
class, as for instance, the negative porous media operator $-\Delta(\cdot^{m})$
(cf.~\cite{MR2286292}) that are not accretive in $L^{q}$ for
$q>1$. Hence we have to provide an alternative approach which applies
to the second class of operators.

\begin{theorem}
  \label{thm:GN-implies-reg-for-oper-in-L1}
  Let $A+\omega I$ be an $m$-$T$-accretive operator in $L^{1}(\Sigma,\mu)$
  for some $\omega\ge 0$ with complete resolvent (respectively,
  $c$-complete resolvent and $\omega=0$). Suppose the
  trace $A_{1\cap \infty}$ of $A$ on
  $L^{1}\cap L^{\infty}(\Sigma,\mu)$ satisfies the range condition
  \begin{equation}
    \label{eq:148}
    L^{1}\cap L^{\infty}(\Sigma,\mu)\subseteq Rg(I+(A_{1\cap \infty}+\omega I)).
  \end{equation}
  and the Gagliardo-Nirenberg type
  inequality~\eqref{eq:242} for parameters $1\le q$,
  $r\le \infty$, $(q<\infty)$, $\sigma>0$, $\varrho\ge 0$ and some
  $(u_{0},0)\in A_{1\cap \infty}$ satisfying $u_{0}\in L^{1}\cap L^{\infty}(\Sigma,\mu)$.
  Then the semigroup $\{T_{t}\}_{t\geq 0}$ generated by $-A$ on
    $\overline{D(A)}^{\mbox{}_{L^{1}}}$ satisfies
    \begin{equation}
      \tag{\ref{eq:20}}
      \norm{T_{t}u-u_{0}}_{r}\le 
      \left(\tfrac{C}{q}\right)^{1/\sigma}\; t^{-\alpha}\; e^{\omega\,\beta\,t}\;
      \norm{u-u_{0}}_{q}^{\gamma}
    \end{equation}
    for every $t>0$, $u\in \overline{D(A)}^{\mbox{}_{L^{1}}}\cap L^{\infty}(\Sigma,\mu)$ with
    exponents $\alpha=\frac{1}{\sigma}$, $\beta=\gamma+1$ and
    $\gamma=\tfrac{q+\varrho}{\sigma}$.
   Moreover, if for parameters $\kappa>1$, $m>0$ and $q_{0}\ge p\ge 1$
   satisfying $\kappa m q_{0}\ge 1$ and
    \begin{equation}
      \label{eq:164}
      (\kappa-1)q_{0}+p-1-\tfrac{1}{m}>0,
    \end{equation}
   the trace $A_{1\cap \infty}$ satisfies the one-parameter family of Sobolev
    type inequalities
    \begin{equation}
      \label{eq:10single}
      \begin{split}
        \norm{u-u_{0}}_{\kappa mq}^{mq}&\le
        \tfrac{C\,(q/p)^{p}}{q-p+1}\,
        \left[[u-u_{0},v]_{(q-p+1)m+1}+\omega\norm{u-u_{0}}_{(q-p+1)m+1}^{(q-p+1)m+1}\right]
      \end{split}
    \end{equation}
    for every $(u,v)\in A_{1\cap \infty}$ and every $q\ge q_{0}$, then
    there is a $\beta^{\ast}\ge 0$ such that the semigroup
    $\{T_{t}\}_{t\geq 0}$ satisfies
    \begin{equation}
      \tag{\ref{eq:168}}
      \norm{T_{t}u-u_{0}}_{\infty} \lesssim \;
      t^{-\alpha_{s}}\;e^{\omega\,
        \beta_{s}\,t}\,\norm{u-u_{0}}_{s}^{\gamma_{s}} 
    \end{equation}
    for every $u\in \overline{D(A)}^{\mbox{}_{L^{1}}}\cap L^{\infty}(\Sigma,\mu)$, $1\le s\le
    \kappa m q_{0}$ satisfying $\gamma^{\ast}\left(1-\tfrac{q}{\kappa m q_{0}}\right)<1$
    with exponents
    \begin{equation}
      \label{eq:261}
      \begin{split}
        &\qquad
        \alpha^{\ast}=\tfrac{1}{m((\kappa-1)q_{0}+p-1-\frac{1}{m})},\quad
        \gamma^{\ast}=\tfrac{(\kappa-1)
          q_{0}}{(\kappa-1)q_{0}+p-1-\frac{1}{m}},\\
        &\alpha_{s}=\tfrac{\alpha^{\ast}}{1-\gamma^{\ast}\left(1-\tfrac{s}{\kappa
              m q_{0}}\right)},\; \beta_{s}=\tfrac{\beta^{\ast}
          2^{-1}+\gamma^{\ast}s\kappa^{-1} m^{-1} q_{0}^{-1}}{
          1-\gamma^{\ast}\left(1-\tfrac{s}{\kappa m q_{0}}\right)},\;
        \gamma_{s}=\tfrac{\gamma^{\ast}\,s}{\kappa m
          q_{0}(1-\gamma^{\ast}(1-\tfrac{s}{\kappa m q_{0}}))}.
      \end{split}
    \end{equation}
\end{theorem}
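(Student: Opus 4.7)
The plan is to follow the scheme of B\'enilan and V\'eron, adapted to the $L^{1}$ setting where the trace $A_{1\cap\infty}$ carries the nonlinear structure. The first claim is obtained by integrating the Gagliardo--Nirenberg inequality~\eqref{eq:242} along the semigroup orbit; the second by a Moser-type iteration of the one-parameter family~\eqref{eq:10single}. Since the nonlinear interpolation theorems used in Theorems~\ref{thm:main-1} and~\ref{thm:GN-implies-reg-bis} are unavailable in this framework, as explained in the paragraph preceding the statement, the iterative route is mandatory for the second claim.

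For the first claim, $m$-$T$-accretivity of $A+\omega I$ in $L^{1}$ together with the (respectively, $c$-)complete resolvent ensures that $\{T_{t}\}$ is order-preserving and nonexpansive on every $L^{p}$, $1\le p\le\infty$; the range condition~\eqref{eq:148} lets the Crandall--Liggett exponential formula be run inside $L^{1}\cap L^{\infty}$, so that for $u\in\overline{D(A)}^{\mbox{}_{L^{1}}}\cap L^{\infty}(\Sigma,\mu)$ the orbit $w(t):=T_{t}u-u_{0}$ stays in $L^{1}\cap L^{\infty}$ and the bracket identity $\tfrac{d}{dt}\|w(t)\|_{q}^{q}=-q\,[w(t),Av(t)]_{q}$ holds along a resolvent approximation with $(T_{t}u,Av(t))\in A_{1\cap\infty}$. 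Substituting into~\eqref{eq:242}, with $z(t):=\|w(t)\|_{q}^{q}$, gives
\[
\|w(t)\|_{r}^{\sigma}\le C\bigl(-\tfrac{1}{q}z'(t)+\omega z(t)\bigr)\,z(t)^{\varrho/q}.
\]
Monotonicity of $t\mapsto\|w(t)\|_{r}$ (another consequence of the complete resolvent) allows me to evaluate this at an arbitrary $s\in[0,t]$ and integrate in $s$. In the case $\omega=0$ this yields directly $t\,\|w(t)\|_{r}^{\sigma}\le \tfrac{C}{q+\varrho}\,z(0)^{(q+\varrho)/q}$, which is~\eqref{eq:20} with $\alpha=1/\sigma$, $\gamma=(q+\varrho)/\sigma$; the case $\omega>0$ is absorbed by the Gronwall rescaling $\tilde z(t):=e^{-q\omega t}z(t)$, producing $\beta=\gamma+1$.

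For the second claim, set $p_{q}:=(q-p+1)m+1$ and compute $\tfrac{d}{dt}\|w(t)\|_{p_{q}}^{p_{q}}=-p_{q}\,[w(t),Av(t)]_{p_{q}}$; inserting into~\eqref{eq:10single} yields the differential inequality
\[
\|w(t)\|_{\kappa mq}^{mq}\le \tfrac{C(q/p)^{p}}{(q-p+1)\,p_{q}}\Bigl(-\tfrac{d}{dt}\|w(t)\|_{p_{q}}^{p_{q}}+\omega\,p_{q}\,\|w(t)\|_{p_{q}}^{p_{q}}\Bigr).
\]
I then set the recursion $q_{n+1}:=\kappa q_{n}+p-1-\tfrac{1}{m}$ so that the source exponent $p_{q_{n+1}}$ at step $n+1$ equals the target exponent $\kappa m q_{n}$ of the previous step. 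Condition~\eqref{eq:164} is exactly $q_{0}>-(p-1-\tfrac{1}{m})/(\kappa-1)$, which makes $q_{n}$ grow like $\kappa^{n}$ with strictly positive denominators at every step. Picking a summable sequence of time-steps $t_{n}$ with $\sum t_{n}=t$ (geometric $t_{n}=t(1-\eta)\eta^{n}$ for some $\eta\in(0,1)$ to be optimised) and integrating the differential inequality between $s_{n}$ and $s_{n+1}=s_{n}+t_{n}$ yields a chain estimate of the form
\[
\|w(s_{n+1})\|_{\kappa m q_{n}}\le C_{n}\,t_{n}^{-a_{n}}\,e^{\omega b_{n} t_{n}}\,\|w(s_{n})\|_{p_{q_{n}}}^{c_{n}},
\]
whose telescoped product, as $n\to\infty$, produces the endpoint $L^{\kappa m q_{0}}$-$L^{\infty}$ estimate with the exponents $\alpha^{*}$, $\gamma^{*}$ of~\eqref{eq:261} and a finite $\beta^{*}\ge 0$ arising from $\sum b_{n}t_{n}$. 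The full range $1\le s<\kappa mq_{0}$ is then recovered by combining this endpoint with~\eqref{eq:20} and applying Theorem~\ref{thm:extrapol-L1-bis}, producing $\alpha_{s},\beta_{s},\gamma_{s}$ as stated.

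The main obstacle is the Moser bookkeeping: one must choose $t_{n}$ and $\eta$ so that the telescoped exponents match~\eqref{eq:261} verbatim, and verify that the products $\prod_{n}C_{n}$ and $\prod_{n}t_{n}^{-a_{n}}$ converge. This uses crucially the only polynomial growth in $n$ of the factor $(q_{n}/p)^{p}/[(q_{n}-p+1)\,p_{q_{n}}]$ against the geometric growth $q_{n}\sim \kappa^{n}(q_{0}+(p-1-\tfrac{1}{m})/(\kappa-1))$, which is why~\eqref{eq:164} is the sharp condition; the accretivity and complete-resolvent hypotheses ensure that each iterate stays in $L^{\infty}$ a priori, so that~\eqref{eq:10single} is applicable throughout the iteration.
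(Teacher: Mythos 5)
Your proposal matches the architecture of the paper's proof: the first claim is the content of Theorem~\ref{thm:Sobolev-for-porousmedia} (specialised to $q_{0}=\infty$, i.e.\ $A_{1\cap\infty}$) together with Corollary~\ref{cor:Gagliardo-for-accretive-op-in-L1-complete-res} or~\ref{cor:Gagliardo-for-accretive-op-c-complete}, and the second claim is Theorem~\ref{thm:Moser} (Moser iteration via Lemma~\ref{lem:iterationbis}, with the same recursion $q_{n+1}=\kappa q_{n}+p-1-\tfrac{1}{m}$ and $t_{\nu}=2^{-\nu-1}$ in the role of your geometric $t_{n}$) followed by the extrapolation towards $L^{1}$ from Theorem~\ref{thm:extrapol-L1-bis}, exactly as you indicate.

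One point needs sharpening. You write the bracket identity $\tfrac{d}{dt}\|w(t)\|_{q}^{q}=-q[w(t),Av(t)]_{q}$ as if the orbit $t\mapsto T_{t}u-u_{0}$ were differentiable in $L^{q}$ with $(T_{t}u,Av(t))\in A_{1\cap\infty}$ for a.e.\ $t$. In $L^{1}$ the mild solution is not differentiable a.e.\ (the space is not uniformly convex and $A$ need not be a subgradient), and the trajectory is not known a priori to remain in $D(A_{1\cap\infty})$. Your parenthetical "along a resolvent approximation" gestures at the fix, but it needs to be the whole argument, not a side remark: the paper runs the explicit Euler scheme $\hat{u}_{n}+\tfrac{t}{N}A_{1\cap\infty}\hat{u}_{n}\ni\hat{u}_{n-1}$ (possible by~\eqref{eq:148}), applies~\eqref{eq:242} or~\eqref{eq:10single} at each discrete step using the $q$-bracket inequalities~\eqref{eq:56}--\eqref{eq:57}, passes the discrete Gronwall-type estimate~\eqref{eq:110} through, and only then lets $N\to\infty$ using the Crandall--Liggett convergence and lower semicontinuity of the $L^{r}$-norm on $L^{1}$. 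The monotonicity of $t\mapsto\|T_{t}u-u_{0}\|_{r}$ (up to $e^{\omega(t-s)}$) that you invoke is likewise obtained at the resolvent level via~\eqref{eq:123} and then transferred to the semigroup by the exponential formula, not by a pointwise derivative. With the differential inequality understood as shorthand for this discrete scheme, your plan is correct and identical in substance to the paper's.
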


Suppose the domain $D(A)$ of the operator $A$ considered in
Theorem~\ref{thm:GN-implies-reg-for-oper-in-L1} is dense in
$L^{1}(\Sigma,\mu)$. If the measure space $(\Sigma,\mu)$
is finite then a standard density result yields the
inequalities \eqref{eq:20} and \eqref{eq:168} in
Theorem~\ref{thm:GN-implies-reg-for-oper-in-L1} hold for all
$u\in L^{q}(\Sigma,\mu)$, respectively, $u\in L^{s}(\Sigma,\mu)$. If
$\Sigma$ has infinite measure $\mu$ and if the inequalities
\eqref{eq:20} and~\eqref{eq:168} hold for $q=1$ and $s=1$ then
\eqref{eq:20} and~\eqref{eq:168} hold also for all
$u\in L^{1}(\Sigma,\mu)$.

By comparing Theorem~\ref{thm:GN-implies-reg-for-oper-in-L1} with
Theorem~\ref{thm:GN-implies-reg-bis}, we note that
quasi-$m$-completely accretive operators $A$ in $L^{q_{0}}$ for some
$q_{0}\ge 1$ (that is, operators of the first class considered in
Theorem~\ref{thm:GN-implies-reg-bis}), it is not difficult to see that
the trace $A_{1\cap \infty}$ of $A$ in $L^{1}\cap L^{\infty}$
satisfies the \emph{range condition}~\eqref{eq:148}. This is not
immediately clear for quasi-$m$-accretive operators $A$ in $L^{1}$
with complete resolvent (that is, the second class of operators
considered in Theorem~\ref{thm:GN-implies-reg-for-oper-in-L1}). Thus,
we provide in Proposition~\ref{propo:Range-cond-in-Rd} sufficient
conditions yielding that operators $A\phi$ composed of an operator $A$
of the first class and a monotone graph $\phi$ satisfies range
condition~\eqref{eq:148}. On the other hand, there are operators of
the first class which do not satisfy the one-parameter family of
Sobolev type inequalities~\ref{eq:10single} in
Theorem~\ref{thm:GN-implies-reg-for-oper-in-L1} but they do satisfy
all assumptions in Theorem~\ref{thm:GN-implies-reg-bis}. Examples of
such operators are the nonlinear Dirichlet-to-Neumann operator
associated with $p$-Laplace type operators (see
Section~\ref{subsec:DtN}) or the fractional $p$-Laplace operator
equipped with some boundary conditions (see
Section~\ref{subsection:fractional-p-laplace}). Here, it is interesting to
note that both operators are of \emph{nonlocal} character (cf.~\cite{MR3369257,MaRoTo2015}).

The proof of Theorem~\ref{thm:GN-implies-reg-for-oper-in-L1} follows from
Corollary~\ref{cor:Gagliardo-for-accretive-op-in-L1-complete-res}
(respectively,
Corollary~\ref{cor:Gagliardo-for-accretive-op-c-complete}) and
Theorem~\ref{thm:Moser}. Comparing
Theorem~\ref{thm:GN-implies-reg-for-oper-in-L1} with the existing
literature (cf., for instance,
\cite{MR2268115,MR2409206,MR2379911,Takac05,MR2286292}), we note that
so far, in order to establish an $L^{1}$-$L^{\infty}$-regularisation
estimate~\eqref{eq:146} for semigroups $\{T_{t}\}_{t\ge 0}$ generated
by doubly nonlinear operators $\Delta_{p}(\cdot^{m})$ on
$L^{1}(\R^{d})$, one had either to assume more regularity on the
solution of the associated parabolic problem (cf., for instance,
\cite{MR2268115,MR2379911,Takac05}) or to approximate the semigroup
$\{T_{t}\}_{t\ge 0}$ on $L^{1}(\R^{d})$ by the semigroup
$\{T_{t}^{n}\}_{t\ge 0}$ generated by the Dirichlet-doubly nonlinear
operator $\Delta_{p}^{D}(\cdot^{m})$ on $L^{1}(\Sigma_{n})$ for a
sequence $(\Sigma_{n})_{n}$ of open sets $\Sigma_{n}\subseteq \R^{d}$
with finite measure, smooth boundary, and satisfying $\Sigma_{n}\subseteq \Sigma_{n+1}$
and $\bigcup_{n\ge 1}\Sigma_{n}=\R^{d}$ (cf., for instance,
\cite{MR2286292}). Our approach presented in
Theorem~\ref{thm:GN-implies-reg-for-oper-in-L1} is different to this
one and to the one given by V\'eron~\cite{MR554377}. Thus, the statement of
Theorem~\ref{thm:GN-implies-reg-for-oper-in-L1} unfolds its complete
strength in the case $(\Sigma,\mu)$ is an infinite measure space. It is based on
the result stated in Proposition~\ref{propo:Range-cond-in-Rd}. We
emphasise that we generalise in
Proposition~\ref{propo:Range-cond-in-Rd} an idea by Crandall and
Pierre \cite{MR647071} for the composition $A\phi$ of a
\emph{nonlinear} completely accretive operator $A$ and a non-decreasing
function $\phi : \R\to \R$. 

As an application of the theory developed in this monograph, we
provide in Section~\ref{Asec:proof-of-Linfty-regularity} an abstract
approach showing that a global $L^{1}$-$L^{\infty}$-regularisation
estimate satisfied by a semigroup $\{T_{t}\}_{t\ge 0}$ implies that
the trajectories $u(t):=T_{t}u_{0}$ in $L^{1}$, $t\ge 0$, for initial
values $u_{0}\in L^{1}$, are, in fact, \emph{weak energy solutions} of
the corresponding abstract initial value problem (see
Definition~\ref{Def:weak-sols-in-V} in
Section~\ref{Asec:proof-of-Linfty-regularity}). Here, the semigroup
$\{T_{t}\}_{t\ge 0}$ is generated by a quasi-$m$-$T$-accretive
operator in $L^{1}$ of the form $A_{1\cap\infty}\phi+F$, where $A$ is
the realisation in $L^{2}$ of the G\^ateaux-derivative
$\Psi' : V\to V'$ of a convex real-valued functional $\Psi$ defined on
a Banach space $V\hookrightarrow L^{2}$, and $A$ has the property to
be an $m$-completely accretive operator in $L^{2}$. Further, $\phi$ is
assumed to be a continuous, strictly increasing function on $\R$, and
$F$ a Lipschitz mapping on $L^{1}$. In the case $F\equiv 0$ and
$A\phi$ is the celebrated negative porous media operator
$-\Delta\phi$, this result is well-known
(see~\cite{MR1164641,MR0454360,MR2286292}). Our results
(Theorem~\ref{thm:weak-solutions-Li-Linfty},
Theorem~\ref{thm:Linfty-implies-mild-are-strong-in-L1} and
Theorem~\ref{thm:reg-mild-sol-are-weak}) in
Section~\ref{Asec:proof-of-Linfty-regularity} extend the known
literature by providing a uniform approach which can be applied to
general quasi-$m$-$T$-accretive operators $A_{1\cap\infty}\phi+F$ in
$L^{1}$ where the operator $A_{1\cap\infty}$ needs \emph{not to be
  linear}, but has the important property to be completely accretive
in $L^{2}$. 
Concerning the regularity of trajectories $u(t):=T_{t}u_{0}$ in
$L^{1}$ generated by operators $-A\phi$ with similar characteristics as
$\Delta_{p}\phi$, the notion of \emph{entropy solutions} was developed
(see~\cite{MR1354907,MR2437810,MR2033382}).
Theorem~\ref{thm:weak-solutions-Li-Linfty} in
Section~\ref{Asec:proof-of-Linfty-regularity} improves the regularity
of these trajectories $u(t):=T_{t}u_{0}$ in $L^{1}$ essentially.

We demonstrate the efficiency of the methods and techniques developed
in the Sections~\ref{gn}, \ref{sec:extra-pol-twoards-one},
\ref{sec:extrapolation-towards-infinity}
and~\ref{sec:Alternative-Moser-iteration} with a plethora of examples
gathered in Section~\ref{sec:examples}. Section~\ref{sec:p-laplace} is
concerned with establishing $L^{q}$-$L^{r}$-regulari\-sation estimates
of the semigroup generated by a negative Leray-Lions type operator
equipped with either homogeneous Dirichlet, Neumann or Robin boundary
conditions. Our results in this section improve some known results in
the literature. For instance, we prove that the exponents in the
$L^{q}$-$L^{r}$-regularisation estimates remain unchanged by adding a
monotone (multi-valued) or a Lipschitz perturbation. Note that our methods
yield sharp exponents as one can see  in
Section~\ref{sec:p-laplace}. Section~\ref{sec:nonlocal} is dedicated
to establishing the $L^{q}$-$L^{r}$-re\-gu\-lari\-sation estimates of two
nonlocal parabolic problems where known methods fail. In this section, we establish the
$L^{q}$-$L^{r}$-re\-gu\-lari\-sation estimates of the semigroup
generated by the negative Dirichlet-to-Neumann operator associated
with a Leray-Lions type operator and of the semigroup generated by the
fractional $p$-Laplace operator equipped with either Dirichlet or
Neumann boundary conditions. In Section~\ref{sec:doubly-nonl-diff}, we
establish the $L^{q}$-$L^{r}$-regulari\-sation estimates of
\emph{mild} solutions of parabolic problems involving the negative doubly
nonlinear operator $-\Delta_{p}(\cdot^{m})$ for $m>0$ equipped with
either homogeneous Dirichlet, Neumann or Robin boundary conditions. In
this section, we use the properties of the $p$-Laplace operator
established in Section~\ref{sec:p-laplace}, to construct the operator
$-\Delta_{p}\phi$.

In Section~\ref{subsec:Mild-is-strong}, we
employ  the $L^{1}$-$L^{\infty}$-regularisation estimate~\eqref{eq:168} satisfied by the
semigroup $\{T_{t}\}_{t\ge 0}$ of the doubly nonlinear
operator $\Delta_{p}(\cdot^{m})$ equipped with either Dirichlet,
Neumann or Robin boundary conditions to show that for every given
initial value $u_{0}\in L^{1}$, the mild solution $u(t):=T_{t}u_{0}$,
$t\ge 0$, in $L^{1}$ is a \emph{strong energy
  solution} (see Theorem~\ref{thm:weak-is-strong}). We give the exact definition of
strong solutions in the next section, where we set the general
framework in which we are working and briefly
review some classical definitions and
important results in nonlinear semigroup theory.

\subsection{Acknowledgements}  
\label{sec:acknowledgements}

Both authors are very grateful to Professor
Michel Pierre (ENS Cachan) for his tireless willingness to discuss
important problems concerning nonlinear semigroups generated by the
porous media operator and for his valuable suggestions.


\section{Framework}\label{fm}

Throughout this monograph, $(\Sigma,\mu)$ denotes a $\sigma$-finite
measure space and $M(\Sigma,\mu)$ the space (of all classes) of
measurable real-valued functions on $\Sigma$. We denote by
$L^{q}(\Sigma,\mu)$, $1\le q\le \infty$, the corresponding standard
Lebesgue space with norm $\norm{\cdot}_{q}$. For $1\le q<\infty$, we
identify the dual space $(L^{q}(\Sigma,\mu))'$ with
$L^{q^{\mbox{}_{\prime}}}(\Sigma,\mu)$ and use the notation
$\langle u^{\mbox{}_{\prime}},u\rangle$ to denote the natural pairing
of $u^{\mbox{}_{\prime}}\in L^{q^{\mbox{}_{\prime}}}(\Sigma,\mu)$ and
$u\in L^{q}(\Sigma,\mu)$, where $q^{\mbox{}_{\prime}}$ is the
conjugate exponent of $q$ given by
$1=\tfrac{1}{q}+\tfrac{1}{q^{\mbox{}_{\prime}}}$. More generally, for
every topological vector space $X\subseteq M(\Sigma,\mu)$, we denote
by $\langle \psi,u\rangle$ the value of $\psi\in X'$ at $u\in X$.  In
the case $1<q<\infty$, we shall write $u_{q}$ to denote
$\abs{u}^{q-2}u$ for every $u\in L^{q}(\Sigma,\mu)$.

\subsection{Nonlinear semigroup theory: old and new}

Most of this section can be skipped by readers familiar with classical
nonlinear semigroup theory. Most of the results of this theory can be
found in the books~\cite{MR2582280} by Barbu, \cite{MR1192132} by
Miyadera or in the famous draft~\cite{Benilanbook} of the
\emph{unpublished} book by B\'enilan, Crandall and Pazy.

\medskip

We call a mapping $A$ from $M(\Sigma,\mu)$ into
the set of all subsets of $M(\Sigma,\mu)$, denoted by
$2^{M(\Sigma,\mu)}$, an \emph{operator} on $M(\Sigma,\mu)$. As usual, we
identify an operator $A$ on $M(\Sigma,\mu)$ with its \emph{graph},
that is, with the set 
\begin{displaymath}
  \Big\{(u,v)\in M(\Sigma,\mu)\times M(\Sigma,\mu)\,\big\vert\; v\in Au\Big\},
\end{displaymath}
and thus, we shall say that $(u,v)\in A$ if $v\in Au$. The
\emph{effective domain} $D(A)$ of $A$ denotes the set of all
$u\in M(\Sigma,\mu)$ satisfying $Au\neq \emptyset$ and the
\emph{range} $Rg(A)$ of $A$ the set
$\bigcup_{u\in D(A)}Au\subseteq M(\Sigma,\mu)$. The \emph{inverse
  operator} $A^{-1}$ of $A$ is given by the set of all pairs
$(u,v)\in M(\Sigma,\mu)\times M(\Sigma,\mu)$ satisfying $u \in Av$,
hence $D(A^{-1})=Rg(A)$ and $Rg(A^{-1})=D(A)$. Given two operators $A$
and $B$ on $M(\Sigma,\mu)$ and a scalar $\alpha\in \R$, the operator
$A+\alpha B$ is given by $(A+\alpha B)u=Au+\alpha(Bu)$ for every
$u\in D(A)\cap D(B)$. Further, the \emph{composition} $AB:=A\circ B$
of two operators $A$ and $B$ on $M(\Sigma,\mu)$ is defined by
\begin{displaymath}
  AB = \Big\{(u,v)\in M(\Sigma,\mu)\times M(\Sigma,\mu)\,\Big\vert\;
  \text{ there is }z\in Bu \text{ such that }v\in Az\Big\}.
\end{displaymath}

Let $X\subseteq M(\Sigma,\mu)$ be a Banach space with norm
$\norm{.}_X$. Then, an operator $A$ on $X$, meaning that
$A\subseteq X\times X$, is said to be \emph{densely defined} or with
\emph{dense domain} if its effective domain $D(A)$ is dense in $X$. We
denote by $\overline{A}$ the closure of the graph of $A$ in $X$ and
call $\overline{A}$ the \emph{closure of $A$} in $X$. We call $A$
closed if $A=\overline{A}$. Obviously, the domain $D(\overline{A})$ of
the closure $\overline{A}$ of $A$ in $X$ is closed in $X$. For any
sequence $(A_{n})_{n\ge 0}$ of operators $A_{n}$ on $X$, the
\emph{limit inferior of $(A_{n})$} denoted by
$\liminf_{n\to\infty}A_{n}$ is defined by
\begin{displaymath}
  \Big\{(u,v)\in X\times X\,\Big\vert\,\text{there are
  }(u_{n},v_{n})\in A_{n}\text{
    s. t. }\lim_{n\to\infty}(u_{n},v_{n})=(u,v)\text{ in }X\times X\Big\}.
\end{displaymath}

Now, an operator $A$ on $X$ is called \emph{accretive (in $X$)} if for
every $(u,v)$, $(\hat{u},\hat{v})\in A$ and every $\lambda\ge 0$, one
has
\begin{equation}
  \label{eq:63}
  \norm{u-\hat{u}}_{X}\le \norm{u-\hat{u}+\lambda (v-\hat{v})}_{X}.
\end{equation}
In other words, $A$ is accretive in $X$ if and only if for every
$\lambda>0$, the \emph{resolvent operator}
$J_{\lambda}=(I+\lambda A)^{-1}$ of $A$ is a single-valued mapping
from $Rg(I+\lambda A)$ to $D(A)$, which is \emph{contractive} (also called \emph{nonexpansive})
with respect to the norm of $X$. Recall a mapping $S : D(S)\to X$ with domain $D(S)\subseteq X$
is call \emph{contractive} with respect to the norm of $X$ or a \emph{contraction} in $X$ if
\begin{displaymath}
  \norm{Su-S\hat{u}}_{X}\le \norm{u-\hat{u}}_{X},
\end{displaymath}
for all $u,\hat{u}\in D(S)$. In order to  better
grasp  the definition of accretive operators, one might
first  consider
accretive operators $\beta$ on $\R$. If $X=\R$ is equipped with the
absolute value $\abs{\cdot}$ then for every $(u,v)$ and
$(\hat{u},\hat{v})\in \beta$, inequality~\eqref{eq:63} is equivalent
to the inequality
\begin{equation}
  \label{eq:65}
  (v-\hat{v})(u-\hat{u})\ge 0.
\end{equation}
This shows that a single-valued operator $\beta$ on $\R$ is accretive
if and only if $\beta$ is non-increasing. From now on we refer to
accretive operators on $\R$ as \emph{monotone graphs}
(cf.~\cite[Example~(2.3)]{Benilanbook}). For a given monotone graph
$\beta$ on $\R$ and for every $1\le q\le\infty$, we denote by
$\beta_{q}$ the \emph{associated accretive operator with $\beta$} in
$L^{q}(\Sigma,dx)$ given by
\begin{equation}
  \label{eq:66}
  \beta_{q}=\Big\{(u,v)\in L^{q}\times L^{q}(\Sigma,\mu)\,\Big\vert\;v(x)\in
  \beta(u(x))\text{ for a.e. $x\in \Sigma$}\Big\}.
\end{equation}

There are important characterisations of accretivity, which we use from time to time
throughout this paper. Here is the first one: an operator $A$ is accretive in $X$ if and only if
\begin{equation}
  \label{eq:40}
  \begin{cases}
    &\textrm{for every $(u,v)$,
      $(\hat{u},\hat{v})\in A$, there exists $\psi\in J(u-\hat{u})$}\\
    &\textrm{satisfying }\langle \psi,v-\hat{v} \rangle\ge0,
  \end{cases}
\end{equation}
where $J : X \to 2^{X'}$ denotes the \emph{duality mapping} of $X$,
which is given by
\begin{displaymath}
  J(u)=\Big\{\psi\in X'\,\Big\vert\;\langle \psi,u\rangle=\norm{u}_{X}\text{
    and }\norm{\psi}_{X'}\le 1\Big\}
\end{displaymath}
for every $u\in X$ (cf.~\cite[Theorem~(2.15)]{Benilanbook}
or~\cite[Proposition~3.1]{MR2582280}).

Now, it is not difficult to verify (cf.~\cite[Example
(2.11)]{Benilanbook}) that  for
$q=1$, the duality mapping
$J$ on $L^{1}(\Sigma,\mu)$ is given by
\begin{displaymath}
  J(u)=\Big\{\psi\in L^{\infty}(\Sigma,\mu)\,\big\vert\,\psi(x)\in
  \textrm{sign}(u(x))\text{ for a.e. $x\in \Sigma$ }\Big\}
\end{displaymath}
for every $u\in L^{1}(\Sigma,\mu)$, where the multi-valued
\emph{signum} function is defined by
\begin{displaymath}
  \textrm{sign}(s):=
  \begin{cases}
    1 & \text{if $s>0$,}\\
    [-1,1] & \text{if $s=0$,}\\
    -1 & \text{if $s<0$}
  \end{cases}
\end{displaymath}
for every $s\in \R$, and for $1<q<\infty$, the duality mapping $J$
on $L^{q}(\Sigma,\mu)$ is a well-defined mapping $J :
L^{q}(\Sigma,\mu)\to L^{q^{\mbox{}_{\prime}}}(\Sigma,\mu)$ given by
\begin{equation}
  \label{eq:37}
  J(u)=u_{q}\,\norm{u}_{q}^{1-q}
\end{equation}
for every $u\in L^{q}(\Sigma,\mu)$. In the case $q=1$, $J(u)$ is multi-valued
exactly when the set $\{u=0\}$ has strictly positive
$\mu$-measure. However, the multi-valued signum function
$\textrm{sign}(\cdot)$ can be approximated by the sequence
$(\gamma_{\varepsilon})_{\varepsilon>0}$ of piecewise smooth functions
$\gamma_{\varepsilon} : \R\to \R$ defined by
\begin{equation}
  \label{eq:64}
  \gamma_{\varepsilon}(r)=
  \begin{cases}
    1 & \text{if $r>\varepsilon,$}\\
   \tfrac{r}{\varepsilon} & \text{if $-\varepsilon\le r\le
      \varepsilon$,}\\
    -1 & \text{if $r<-\varepsilon$.}
  \end{cases}
\end{equation}

For our purposes, it is convenient to introduce the notion of
\emph{$q$-brackets}. For $1\le q<\infty$, the \emph{$q$-bracket}
$[\cdot,\cdot]_{q} : L^{q}(\Sigma,\mu)\times L^{q}(\Sigma,\mu)\to \R$ is defined by 
\begin{displaymath}
  [u,v]_{q}=\lim_{\lambda\to 0+}
  \frac{\frac{1}{q}\norm{u+\lambda v}_{q}^{q}-\frac{1}{q}\norm{u}_{q}^{q}}{\lambda}
\end{displaymath}
for every $u$, $v\in L^{q}(\Sigma,\mu)$. For given $u$, $v\in
L^{q}(\Sigma,\mu)$, the number $[u,v]_{q}$ is the
right-hand directional derivative of the function $u\mapsto
\frac{1}{q}\norm{u}_{q}^{q}$. Since the function $\lambda\mapsto 
\frac{1}{q}\norm{u+\lambda v}_{q}^{q}$ is convex on $\R$, we can
define $[\cdot,\cdot]_{q}$, equivalently, by
\begin{equation}
  \label{eq:53}
  [u,v]_{q}=\inf_{\lambda>0}
  \frac{\frac{1}{q}\norm{u+\lambda v}_{q}^{q}-\frac{1}{q}\norm{u}_{q}^{q}}{\lambda}
\end{equation}
for every $u$, $v\in L^{q}(\Sigma,\mu)$. The $q$-bracket
$[\cdot,\cdot]_{q} : L^{q}(\Sigma,\mu)\times L^{q}(\Sigma,\mu)\to \R$
is upper semicontinuous (respectively, continuous if $1<q<\infty$) and
\begin{equation}
  \label{eq:35}
  [u,v]_{q}=\langle u_{q},v\rangle\qquad\text{for every 
    $u$, $v\in L^{q}(\Sigma,\mu)$ if $1<q<\infty$,}
\end{equation}
while for $q=1$, $[\cdot,\cdot]_{1}$ reduces to the classical
\emph{brackets} $[\cdot,\cdot]$ on $L^{1}(\Sigma,\mu)$ given by
\begin{equation}
  \label{eq:67}
  [u,v]_{1}=\int_{\{u\neq 0\}}\textrm{sign}_{0}(u)\,v\,\dmu + \int_{\{u=0\}}\abs{v}\,\dmu
\end{equation}
for every $u$, $v\in L^{1}(\Sigma,\mu)$, where the \emph{restricted
  signum} $\textrm{sign}_{0}$ is defined by
\begin{displaymath}
  \textrm{sign}_{0}(s)=
  \begin{cases}
    1 & \text{if $s>0$,}\\
    0 & \text{if $s=0$,}\\
    -1 & \text{if $s<0$}
  \end{cases}
\end{displaymath}
for every $s\in \R$ (cf.~\cite[Section 2.2 \& Example~(2.8)]{Benilanbook} or \cite[pp
102]{MR2582280}). By characterisations~\eqref{eq:40}, \eqref{eq:37} and \eqref{eq:35} if
$1<q<\infty$, respectively, by \cite[Theorem~2.14]{Benilanbook} (or,
alternatively, \cite[p 103 formula (3.15)]{MR2582280}) if $q=1$, we
see that an operator $A$ on $L^{q}(\Sigma,\mu)$ is accretive if and
only if
\begin{displaymath}
  [u-\hat{u},v-\hat{v}]_{q}\ge0 \qquad\text{for every $(u,v)$, 
    $(\hat{u},\hat{v})\in A$.}
\end{displaymath}
In Section~\ref{gn}, we shall need the following two properties of $q$-brackets:
\begin{equation}
  \label{eq:56}
   [u,v]_{q}\le
   \frac{1}{q}\norm{u+v}_{q}^{q}-\frac{1}{q}\norm{u}_{q}^{q}\qquad
   \text{for every $u$, $v\in L^{q}(\Sigma,\mu)$,}
\end{equation}
and 
\begin{equation}
  \label{eq:57}
  [u,\alpha\, v+\omega u]_{q}=\alpha\, [u,v]_{q}+\omega\,\norm{u}_{q}^{q}
\end{equation}
for every $u$, $v\in L^{q}(\Sigma,\mu)$, $\omega$, $\alpha\in
\R$. Here, note that inequality~\eqref{eq:56} is an immediate consequence
of~\eqref{eq:53}.  Property~\eqref{eq:57} is shown for $q=1$ in
\cite[Proposition~(2.5)]{Benilanbook} (or, alternatively,
\cite[Proposition~3.7]{MR2582280}) and if $1<q<\infty$
then~\eqref{eq:57} can be easily deduced from \eqref{eq:35}.

If $X\subseteq M(\Sigma,\mu)$ is a Banach lattice, then we shall
denote the usual lattice operations $u\vee\hat{u}$ and
$u\wedge \hat{u}$ to be the almost everywhere pointwise supremum and
infimum of $u$ and $\hat{u}\in X$. In addition, $u^{+}=u\vee 0$ is the
positive part, $u^- = (-u)\vee 0$ the negative part, and
$|u| = u^+ + u^-$ the absolute value of an element $u\in X$. For every
$u$, $\hat{u}\in X$, one denotes by $u\le \hat{u}$ the usual order
relation on $X$. In this framework, a mapping $S : D(S) \to X$ with domain
$D(S)\subseteq X$ is called \emph{order preserving} if $Su \le S\hat{u}$
for every $u\le \hat{u}$, \emph{positive} if $Su\ge0$ for every $u\ge
0$, and a \emph{$T$-contraction} if
\begin{displaymath}
  \norm{[Su-S\hat{u}]^{+}}_{X}\le \norm{[u-\hat{u}]^{+}}_{X}
\end{displaymath}
for every $u$, $\hat{u}\in D(S)$. Note that if $S$ is $T$-contractive
then it is order-preserving and that the converse holds if $S$ is
contractive and satisfies $u\vee\hat{u}$ and $u\wedge\hat{u}\in D(S)$
for every $u$, $\hat{u}\in D(S)$ (see \cite[Lemma
(19.11)]{Benilanbook}).  We shall say that an operator $A$ on $X$ is
\emph{$T$-accretive} if for every $\lambda>0$, the resolvent
$J_{\lambda}$ of $A$ defines a $T$-contraction with domain
$D(J_{\lambda})=Rg(I+\lambda A)$.

Note, without further assumptions (cf.~\cite[Proposition~(19.13)]{Benilanbook}),
$T$-con\-tractive does not imply contractive, nor does $T$-accretive imply
accretive, and vice-versa. However, if the norm $\norm{\cdot}_{X}$ on
$X$ satisfies the implication
\begin{equation}
  \label{eq:14}
  \norm{u^{+}}_{X}\le \norm{\hat{u}^{+}}_{X},\quad
    \norm{u^{-}}_{X}\le \norm{\hat{u}^{-}}_{X}\quad\text{implies}\quad
      \norm{u}_{X}\le \norm{\hat{u}}_{X}
\end{equation}
for every $u$, $\hat{u}$, then every $T$-contraction is also a
contraction (cf. \cite[p 267]{Benilanbook}). One easily verifies that
this implication holds, for instance, for the space
$X=L^{q}(\Sigma,\mu)$ for every $1\le q\le \infty$. Thus, for the rest
of this monograph, if we speak about $T$-contractive or $T$-accretive
operators on $X$, then we automatically assume that the underlying
space $X$ is a Banach lattice satisfying~\eqref{eq:14}.

In the space $X=L^{1}(\Sigma,\mu)$, the property that an
operator $A$ in $L^{1}(\Sigma,\mu)$ is $T$-accretive can be
characterised as follows: for every $(u,v)$, $(\hat{u},\hat{v})\in A$, there is a
$w\in L^{\infty}(\Sigma,\mu)$ satisfying $w(x)\in
\textrm{sign}^{+}(u(x)-\hat{u}(x))$ for a.e. $x\in \Sigma$ and 
\begin{displaymath}
  \int_{\Sigma}w\,(v-\hat{v})\,\dmu\ge 0,
\end{displaymath}
where for every $s\in \R$,
\begin{displaymath}
  \textrm{sign}^{+}(s):=
  \begin{cases}
    1 & \text{if $s>0$,}\\
    [0,1] & \text{if $s=0$,}\\
    0 & \text{if $s<0$,}
  \end{cases}
\end{displaymath}
or equivalently (cf.~\cite{Benthesis}) for every $(u,v)$,
$(\hat{u},\hat{v})\in A$, one has
\begin{displaymath}
  [u-\hat{u},v-\hat{v}]_{+}:=
  \int_{\{u=\hat{u}\}}[v-\hat{v}]^{+}\,\dmu+\int_{\{u>\hat{u}\}}(v-\hat{v})\,\dmu\ge 0.
\end{displaymath}


In order to conclude that the sum $A+B$ of two operators $A$ and
$B$ in $X$ is accretive, the assumption that $A$ and $B$ are both
accretive is not sufficient
(cf.~\cite[Exercise E2.3]{Benilanbook}). For this to be true, we need that
at least one of the two operators $A$ and $B$ admits the following
stronger property. We call an operator $A$ \emph{$s$-accretive} in $X$
if for every $(u,v)$, $(\hat{u},\hat{v})\in A$ and for every $\psi\in J(u-\hat{u})$,
\begin{displaymath}
  \langle \psi,v-\hat{v} \rangle\ge  0.
\end{displaymath}
Now, if $A$ is an operator on $X$ then the sum $A+B$ is accretive in
$X$ for every accretive operator $B$ on $X$ if and only if $A$ is
$s$-accretive (cf.~\cite[Proposition~(2.20)]{Benilanbook}). Obviously,
for $1<q<\infty$, every accretive operator $A$ in $L^{q}(\Sigma,\mu)$
is $s$-accretive in $L^{q}(\Sigma,\mu)$. Unfortunately, this is not
true for accretive operators $A$ in $L^{1}(\Sigma,\mu)$. A counter
example is, for instance, given by the accretive operator $\beta_{1}$
in $L^{1}((-2,1),\textrm{d}x)$ for $\beta(s):=\textrm{sign}(s)$
(cf. ~\cite[Exercise~E2.25]{Benilanbook}). On the other hand, a
prototype of $s$-accretive operators in $L^{1}(\Sigma,\mu)$ is
provided by the accretive operator $\beta_{1}$ in $L^{1}(\Sigma,\mu)$
associated with a \emph{non-decreasing function} $\beta : \R\to \R$.
To see that $\beta_{1}$ is $s$-accretive in $L^{1}(\Sigma,\mu)$, we
need to check that for every $(u,v)$, $(\hat{u},\hat{v})\in \beta_{1}$
every $\psi\in L^{\infty}(\Sigma,\mu)$ satisfying
$\psi(x)\in \textrm{sign}(u(x)-\hat{u}(x))$ for a.e. $x\in \Sigma$,
one has
\begin{equation}
  \label{eq:15}
  \int_{\Sigma}\psi\,(v-\hat{v})\,\dmu \ge 0.
\end{equation}
Since $\beta$ is assumed to be real-valued function, we have that
$v=\beta(u)$ and $\hat{v}=\beta(\hat{u})$. Thus, and by the
monotonicity of $\beta$, for a.e. $x\in
\Sigma$, the condition $v(x)>\hat{v}(x)$ implies $u(x)>\hat{u}(x)$ and
so, $\psi(x)=1$ hence $\psi(x)\,(v(x)-\hat{v}(x))\ge 0$. Analogously, for a.e. $x\in
\Sigma$, the condition $v(x)<\hat{v}(x)$ implies $u(x)<\hat{u}(x)$ and
so $\psi(x)=-1$ hence $\psi(x)\,(v(x)-\hat{v}(x))\ge
0$. Therefore, \eqref{eq:15} holds. 

 Next, an operator $A$ on $X$ is called \emph{$m$-($T$)-accretive in $X$} if
 $A$ is ($T$)-accretive in $X$ and satisfies the \emph{range condition}
\begin{equation}
  \label{eq:range-condition}
  Rg(I+\lambda A)=X\qquad\text{for some (or equivalently all) $\lambda>0$.}
\end{equation}

Coming back to the example of a monotone graph $\beta$ in $\R$, we set
$\beta(r+)=\inf\beta(]r,\infty[)$ and
$\beta(r-)=\sup\beta(]-\infty,r[)$ for every $r\in\R$, where, as
usual, $\inf\emptyset:=+\infty$ and $\sup\emptyset:=-\infty$. Then, a
monotone graph $\beta$ in $\R$ is $m$-accretive if and only if for
every $r\in \R$, one has
\begin{displaymath}
  \beta(r)=[\beta(r-),\beta(r+)]\cap \R.
\end{displaymath}
Therefore, a monotone graph $\beta$ in $\R$ is $m$-accretive
if and only if the graph of $\beta$ is the maximal monotone set in
$\R\times\R$ containing $\beta$ itself. If $\beta$ is $m$-accretive in
$\R$ and either $(0,0)\in\beta$ or $(\Sigma,\mu)$ is finite, then for
every $1\le q\le \infty$, the associated operator $\beta_{q}$ on
$L^{q}(\Sigma,\mu)$ is $m$-$T$-accretive (cf.~\cite[Examples~(8.4)
\&~(8.5)]{Benilanbook} or \cite[Section 3.2]{MR2582280}). Moreover,
the resolvent operator $J_{\lambda}$ of $\beta_{q}$ is given by
$(J_{\lambda}u)(x)=(1+\lambda\beta)^{-1}u(x)$ and the so-called
\emph{Yosida operator}
$\beta_{\lambda}(\cdot):=\lambda^{-1}(I-J_{\lambda})$ is given by
$\beta_{\lambda}(u)(x)=\lambda^{-1}(1-(1+\lambda\beta)^{-1})u(x)$ for
a.e. $x\in \Sigma$. If $1<q<\infty$ then for a given accretive
operator $A$ on $L^{q}(\Sigma,\mu)$, the sum $A+\beta_{q}$ is
accretive in $L^{q}(\Sigma,\mu)$. This is an immediate consequence of
the fact that the duality mapping $J$ of $L^{q}(\Sigma,\mu)$ is
single-valued on $L^{q}(\Sigma,\mu)$. However, in order to conclude
that for an $m$-accretive operator $A$ in $L^{q}(\Sigma,\mu)$,
$(1<q<\infty)$, satisfying $D(A)\cap D(\beta_{q})\neq\emptyset$, the
sum $A+\beta_{q}$ is $m$-accretive in $L^{q}(\Sigma,\mu)$, one needs
an additional condition. A possible one is the following
(cf.~\cite[Proposition~3.8]{MR2582280}):
\begin{equation}
  \label{eq:179}
  [\beta_{\lambda}(u),v]_{q}\ge 0\qquad\text{for every $\lambda>0$,
    $(u,v)\in A$.}
\end{equation}

Another important example of $m$-accretive operators in
$L^{2}(\Sigma,\mu)$ is given by the \emph{subgradient}
\begin{displaymath}
  \partial_{\! L^2}\Psi:=\Big\{(u,v)\in L^{2}\times
  L^{2}(\Sigma,\mu)\,\Big\vert\; \langle v,\xi-u\rangle\le
  \Psi(\xi)-\Psi(u)\text{ for all }\xi\in L^{2}(\Sigma,\mu)\Big\}
\end{displaymath}
in $L^{2}(\Sigma,\mu)$ of a functional
$\Psi : L^{2}(\Sigma,\mu)\to \R\cup\{+\infty\}$ which is convex,
lower semicontinuous and proper (see~\cite{MR0348562} and 
also~\cite{arXiv:1412.4151}).

More generally, an operator $A$ on $X$ is called \emph{quasi ($T$)-accretive}
if there is an $\omega\in \R$ such that $A+\omega I$ is
($T$)-accretive in $X$. Obviously, if $A+\omega I$ is ($T$)-accretive for some
$\omega\in \R$ then $A+\tilde{\omega} I$ is ($T$)-accretive for every
$\tilde{\omega}\ge \omega$. Thus, there is no loss of generality in assuming
that $A+\omega I$ is ($T$)-accretive for some $\omega\ge 0$.
Finally, we call $A$ \emph{quasi $m$-($T$)-accretive} if $A+\omega I$ is
$m$-($T$)-accretive for some $\omega\in \R$. It is easy to check
that $A+\omega I$ is ($T$)-accretive for some $\omega\in \R$ if and only if
for every $\lambda>0$ satisfying $\lambda\omega<1$,
the resolvent $J_{\lambda}$ of $A$ satisfies
\begin{displaymath}
  \norm{J_{\lambda}u-J_{\lambda}\hat{u}}_{X}\le
  (1-\lambda\omega)^{-1}\,\norm{u-\hat{u}}_{X}
\end{displaymath}
for every $u$, $\hat{u}\in Rg(I+\lambda A)$  (respectively, one has
\begin{displaymath}
  \norm{[J_{\lambda}u-J_{\lambda}\hat{u}]^{+}}_{X}\le
  (1-\lambda\omega)^{-1}\,\norm{[u-\hat{u}]^{+}}_{X}
\end{displaymath}
for every $u$, $\hat{u}\in Rg(I+\lambda A)$). It is important to know
that if $A+\omega I$ is $m$-accretive for some $\omega\in \R$, then
for every $\lambda>0$ such that $\omega \lambda<1$ and $u\in
\overline{D(A)}^{\mbox{}_{X}}$, the closure of $D(A)$ in $X$, one has
$J_{\lambda}u\in D(A)$ and
\begin{equation}
  \label{eq:212}
  \lim_{\lambda\to 0}J_{\lambda}u=u\qquad\text{in $X$}
\end{equation}
(cf.  \cite[Proposition~(4.4)]{Benilanbook} or
\cite[Proposition~3.2]{MR2582280}). Prototype examples of quasi
($T$)-accretive operators $A$ on $X=L^{q}(\Sigma,\mu)$,
$1\le q\le \infty$, are of the form $A=B+F$, where $B$ denotes a
($T$)-accretive operator on $L^{q}(\Sigma,\mu)$ and
$F : L^{q}(\Sigma,\mu)\to L^{q}(\Sigma,\mu)$ is defined by
$F(u)(x):=f(x,u(x))$ for every $u\in L^{q}(\Sigma,\mu)$ of a given
$f : \Sigma\times\R\to \R$ with the properties that
$f(\cdot, u) : \Sigma\to \R$ is measurable on $\Sigma$ for every
$u\in \R$, $f(x,0)=0$ for a.e. $x\in \Sigma$, and there is a constant
$\omega\ge 0$ such that
\begin{equation}
  \label{eq:2}
  \abs{f(x,u)-f(x,\hat{u})}\le L\,\abs{u-\hat{u}}\qquad
  \text{for all $u$, $\hat{u}\in \R$ and a.e. $x\in \Sigma$.}
\end{equation}
A real-valued function $f$ satisfying such properties (or slightly
weaker ones) is also called a \emph{Carath\'eodory function}, and the
mapping $F $ given by $F(u)(x):=f(x,u(x))$ for every
$u\in L^{q}(\Sigma,\mu)$ the \emph{Nemytski operator} on
$L^{q}(\Sigma,\mu)$ associated with $f$.

If $q=1$ then an approximation argument
with the sequence $(\gamma_{\varepsilon})_{\varepsilon>0}$ given
by~\eqref{eq:64} and if $1<q<\infty$, using that the duality map $J$
on $L^{q}(\Sigma,\mu)$ is single-valued, one sees that the operator
$B+F+\omega I$ is ($T$)-accretive in $L^{q}(\Sigma,\mu)$. On
the other hand, for the same $\omega$, the operator $F+\omega I$ on
$L^{q}(\Sigma,\mu)$ is accretive and Lipschitz continuous. Hence a
standard fixed point argument shows that $F+\omega I$ is $m$-($T$)-accretive
in $L^{q}(\Sigma,\mu)$. Therefore, if $B$ is
$m$-($T$)-accretive in $L^{q}(\Sigma,\mu)$ for some $1\le q<\infty$, then $B+F$ is
quasi $m$-($T$)-accretive in $L^{q}(\Sigma,\mu)$
(cf.~\cite[Theorem~3.1]{MR2582280}).

For an accretive operator $A+\omega I$ on $X$, $\omega\in \R$, one
easily verifies that the following properties  hold
(cf.~\cite[Proposition~2.18]{Benilanbook}):
\allowdisplaybreaks
\begin{align}
\label{eq:111} & \textit{ The closure $\overline{A+\omega
                 I}$ of $A+\omega I$ in $X$ coincides with
                 $\overline{A}+\omega I$ and is accretive.}\\
\label{eq:112} & \textit{ If $A$ is closed, then $Rg (I+\lambda
                 (A+\omega I))$ is closed for every
  $\lambda>0$.}\\
\label{eq:113} &\textit{ If there is a $\lambda>0$ such that $Rg
                 (I+\lambda (A+\omega I))$ is closed, then $A$ is closed.}\\
\label{eq:114} &
                 \begin{array}[c]{l}
                   \textit{\!\!\! If $A\subseteq B$, $Rg (I+(B+\omega I))\subseteq
                   Rg(I+(A+\omega I))$ and $B+\omega I$ is}\\
                   \textit{\! accretive, then $A=B$.}
                 \end{array}
\end{align}

By the celebrated Crandall-Liggett
theorem~\cite[Theorem~I]{MR0287357}, the condition \emph{$A$ is
quasi $m$-accretive in $X$} ensures that for all
$u_0\in \overline{D(A)}^{\mbox{}_{X}}$, the abstract initial value problem
\begin{equation}
  \label{eq:9}
  \tfrac{du}{dt}+Au\ni 0,\quad u(0)=u_{0}
\end{equation}
is well-posed in the sense of mild solutions. In particular, if $A$ is quasi
$m$-$T$-accretive in $X$, then for every $u_{0}$, $\hat{u}_{0}\in
\overline{D(A)}^{\mbox{}_{X}}$ satisfying $u_{0}\le \hat{u}_{0}$, the
corresponding mild solutions $u$ and $\hat{u}$ of~\eqref{eq:9}
satisfy $u(t)\le \hat{u}(t)$ for all $t\ge 0$ (cf. \cite[Proposition
(19.12)]{Benilanbook}). To be more precise,
we first recall that for given $u_{0}\in X$, a function $u \in
W^{1,1}_{loc}((0,\infty);X)\cap C([0,\infty);X)$ satisfying
$u(0)=u_{0}$, $u(t)\in D(A)$ and $-\tfrac{du}{dt}(t)\in Au(t)$ a
\emph{strong solution of~\eqref{eq:9}}. Now, a \emph{mild solution $u$
  of Cauchy problem~\eqref{eq:9}} is a function $u\in C([0,\infty);X)$
with the following property: for every $T$, $\varepsilon>0$, for every
partition $0=t_{0}<\cdots<t_{N}=T$ of the interval $[0,T]$ such that
$t_{i}-t_{i-1}<\varepsilon$ for every $i=1,\dots,N$, there exists a
piecewise constant function $u_{\varepsilon,N} : [0,T]\to X$ given by
\begin{displaymath}
  u_{\varepsilon,N}(t)=u_{0}\,\mathds{1}_{\{t_{0}=0\}}(t)+\sum_{i=1}^{N}u_{\varepsilon,i}\;
\mathds{1}_{(t_{i-1},t_{i}]}(t)
\end{displaymath}
  where the values $u_{i}$ on $(t_{i-1},t_{i}]$
solve recursively the finite difference equation
\begin{displaymath}
  u_{i}+(t_{i}-t_{i-1})Au_{i}\ni u_{i-1}\qquad\text{for every $i=1,\dots,N$}
\end{displaymath}
and
\begin{displaymath}
  \sup_{t\in [0,T]}\norm{u(t)-u_{\varepsilon,N}(t)}_{X}\le\varepsilon.
\end{displaymath}
If $A+\omega I$ is $m$-accretive in $X$ for some $\omega\in \R$, then
for every element $u_{0}$ of $\overline{D(A)}^{\mbox{}_{X}}$, there is
a unique mild solution $u$ of~\eqref{eq:9} which can be given by
\emph{exponential formula}
\begin{equation}
  \label{eq:36}
  u(t)=\lim_{n\to\infty}\left(I+\tfrac{t}{n}A\right)^{-n}u_{0}
\end{equation}
uniformly in $t$ on compact intervals. For every
$u_0\in \overline{D(A)}^{\mbox{}_{X}}$, setting $T_{t}u_{0}=u(t)$,
$t\ge0$, defines a (non-linear) \emph{strongly continuous
  semigroup} $\{T_{t}\}_{t\geq 0}$ of \emph{Lipschitz continuous
  mappings}
$T_{t} : \overline{D(A)}^{\mbox{}_{X}}\to
\overline{D(A)}^{\mbox{}_{X}}$ with constant $e^{\omega\,t}$. More
precisely, there is a family $\{T_{t}\}_{t\geq 0}$ of mappings
$T_{t}$ on $\overline{D(A)}^{\mbox{}_{X}}$ obeying the following three properties:
\begin{itemize}
\item (\emph{semigroup property})
  \begin{equation}
    \label{eq:272}
    T_{t+s}=T_{t}\circ T_{s}\qquad\text{for every $t$, $s\ge0$,}
  \end{equation}
\item (\emph{strong continuity}) 
  \begin{displaymath}
    \lim_{t\to0+}\norm{T_{t}u-u}_{X}=0\qquad
    \text{for every $u\in \overline{D(A)}^{\mbox{}_{X}}$,}
  \end{displaymath}
\item (\emph{exponential growth property in $X$}) 
  \begin{displaymath}
    \norm{T_{t}u-T_{t}v}_{X}\le e^{\omega\,t}\norm{u-v}_{X}\qquad\text{for all $u$,
      $v\in \overline{D(A)}^{\mbox{}_{X}}$, $t\ge0$.}
  \end{displaymath}
\end{itemize}
In addition, if $A$ is quasi
$m$-$T$-accretive in $X$, then for every $t\ge 0$, $T_{t}$ satisfies
\begin{itemize}
\item (\emph{exponential $T$-growth property in $X$})
  \begin{displaymath}
     \norm{[T_{t}u-T_{t}v]^{+}}_{X}\le e^{\omega\,t}\norm{[u-v]^{+}}_{X}\qquad\text{for all $u$,
      $v\in \overline{D(A)}^{\mbox{}_{X}}$, $t\ge0$,}
  \end{displaymath}
\end{itemize}
in particular, the semigroup $\{T_{t}\}_{t\geq 0}$ is
order-preserving, that is, every $T_{t}$ is order-preserving.

 To express that
 the semigroup $\{T_{t}\}_{t\geq 0}$ has been obtained by the above 
construction  we say that $\{T_{t}\}_{t\geq 0}$ has been \emph{generated by $-A$ on
  $\overline{D(A)}^{\mbox{}_{X}}$} and we denote $\{T_{t}\}_{t\geq 0}\sim -A$. If $A$ is $m$-accretive in $X$,
then each mapping $T_{t}$ of the semigroup
$\{T_{t}\}_{t\geq 0}\sim -A$ becomes \emph{contractive} in $X$.

With these preliminaries in mind, we turn now to one of the main
topics of this article. It is not difficult to see (\cite[p. 130]{MR2582280}) that every strong solution of
Cauchy problem~\eqref{eq:9} is a mild solution. But, it is still not
well understood under which conditions on the operator $A$ and the
Banach space $X$, for each $u_{0}\in \overline{D(A)}^{\mbox{}_{X}}$,
the mild solution $u(t)=T_{t}u_{0}$, $t\ge 0$, of~\eqref{eq:9} is a
strong one. Obviously, this problem involves a \emph{regularisation
  effect} since the solution $u$ gains a posteriori in regularity, namely, the
property to be differentiable at a.e. $t>0$ with values in $X$. The
current state of knowledge in the literature concerning this problem is the following one
(cf.~\cite[Theorem~4.6]{MR2582280}
or~\cite[Corollary~(7.11)]{Benilanbook}): if $A$ is
quasi-$m$-accretive in $X$ and if the Banach space $X$ and its dual
$X'$ are uniformly convex, then for every initial value
$u_{0}\in D(A)$ the mild solution $u(t):=T_{t}u_{0}$, $t\ge0$,
of~\eqref{eq:9} belongs to the space
$W^{1,\infty}_{loc}([0,\infty);X)$, is almost everywhere
differentiable on $(0,\infty)$, differentiable from the right at every
$t\ge 0$, the right-hand side derivative $\tfrac{d u}{dt}_{+}(t)$ is
right continuous on $[0,\infty)$ and for every $t\ge 0$,
\begin{equation}
  \label{eq:38}
  u(t)\in D(A)\qquad\text{and}\qquad
  \tfrac{d}{dt}_{+}u(t) + A^{\circ}u(t)=0.
\end{equation}
Here, $A^{\circ}$ denotes the \emph{principal section} of $A$ which
assigns to every $u\in D(A)$ the element $A^{\circ}u$ of $Au$ with
minimal norm among all elements of $Au$. Thus, under these
assumptions, the mild solution $u(t)=T_{t}u_{0}$, $t\ge 0$,
of~\eqref{eq:9} for every $u_{0}\in D(A)$ is a strong solution
of~\eqref{eq:9}. Recall that the space $X=L^{1}(\Sigma,\mu)$ is not
uniformly convex. Further, it is natural to ask whether this statement
holds true if $u_{0}\in \overline{D(A)}^{\mbox{}_{X}}$ for general
quasi $m$-accretive operators $A$. Thanks to the pioneering
result~\cite{MR0283635} by Brezis, the answer of this question is
affirmative provided $A$ is the subgradient $\partial_{\! L^{2}}\Psi$
in
$L^{2}(\Sigma,\mu)$ 
of a convex, proper, lower semicontinuous functional
$\Psi : L^{2}(\Sigma,\mu)\to\R\cup\{+\infty\}$. Semigroups
$\{T_{t}\}_{t\geq 0}$ generated by \emph{positive homogeneous
  operators $A$ of order $\alpha>0$ with $\alpha\neq 1$} on
$L^{q}(\Sigma,\mu)$ for $1<q<\infty$ admit the same regularisation
effect (cf.~\cite{MR648452}). As a by-product of our other results
(Theorem~\ref{thm:GN-implies-reg-for-oper-in-L1}) we can show that the
semigroup $\{T_{t}\}_{t\geq 0}$ in $L^{1}(\Sigma,\mu)$ has also this
regularisation effect provided its infinitesimal generator $A$ is the
closure $\overline{(\partial_{\!  L^{2}}\Psi)_{1\cap\infty}\phi}$ of
$(\partial_{\!  L^{2}}\Psi)_{1\cap\infty}\phi$ in $L^{1}(\Sigma,\mu)$,
where $\Psi : L^{2}(\Sigma,\mu)\to\R\cup\{+\infty\}$ is a convex,
proper, lower semicontinuous functional and $\phi : \R\to \R$ a
strictly increasing function such that $\phi$ and $\phi^{-1}$ are
locally Lipschitz continuous (see
Theorem~\ref{thm:Linfty-implies-mild-are-strong-in-L1} for the exact
statement).

As mentioned in the introduction, throughout this monograph, we deal with the
following two classes of accretive operators $A$ on $M(\Sigma,\mu)$
generating nonlinear semigroups acting on all $L^{q}$ spaces for $1\le
q\le \infty$:
\begin{itemize}
\item \emph{quasi $m$-completely accretive operators in $L^{q_{0}}$
    for some $1\le q_{0}<\infty$}
 
\item \emph{quasi $m$-$T$-accretive operators in $L^{1}$ with complete
    resolvent}.
 \end{itemize}

 Besides the already mentioned prototypes, typical examples of the
 first class of operators are, for instance, quasi-linear operators of
 second order of $p$-Laplace type (so-called \emph{Leray-Lions
   operators}~\cite{MR0194733}), nonlocal diffusion operators of
 $p$-La\-place type (see, e.g., \cite{MR2722295}), but also the total
 variational flow operator of local and nonlocal type (cf., for
 instance, \cite{MR2033382} and
 \cite{Ha2015TotalVariationalFlow}). Typical examples of the second
 class of operators are easily obtained by considering the
 \emph{composition operator} $A\phi$ in $L^{1}$ of an $m$-completely
 accretive operator $A$ in $L^{1}$ and a strictly increasing function
 $\phi : \R\to \R$ satisfying $\phi(0)=0$. Operators of both classes
 are equipped with some boundary conditions when required and may be
 perturbed by a monotone (multi-valued) or Lipschitz continuous lower
 order term.

 In the following two subsections, we introduce these two classes of
 operators in more details and list some of their properties relevant
 for this monograph. We establish $L^{q}$-$L^{r}$-regularisation
 estimates for several examples of both classes in
 Section~\ref{sec:examples}.

\subsection{Completely accretive operators}\label{sec:comp}
The class of \emph{completely accretive} operators is the nonlinear
analogue of the class of linear operators generating a submarkovian
semigroup in the sense that the semigroup they generate extrapolates
to $L^p$, $1<p<\infty$ (see Proposition \ref{propo:extrapol-prop}
below) and is order preserving. This class of nonlinear operators was
introduced by Benilan and Crandall~\cite{MR1164641}.

The notion of complete accretivity we use is the same as
in~\cite{MR1164641} and will be introduced now. We denote by
$\mathcal{J}_{0}$ the set of all convex, lower semicontinuous functions
$j : \R\to[0,\infty]$ satisfying $j(0)=0$. 

\begin{definition}
  A mapping $S : D(S)\to M(\Sigma,\mu)$ with domain $D(S)\subseteq
  M(\Sigma,\mu)$ is called a \emph{complete contraction} if 
  \begin{displaymath}
    \int_{\Sigma}j(Su-S\hat{u})\,\textrm{d}\mu\le 
    \int_{\Sigma}j(u-\hat{u})\,\textrm{d}\mu
  \end{displaymath}
  for all $j\in \mathcal{J}_{0}$ and every $u$, $\hat{u}\in D(S)$.
\end{definition}

\begin{remark}
  \label{rem:1}
  Choosing $j(\cdot)=\abs{[\cdot]^{+}}^{q}\in \mathcal{J}_{0}$
  if $1\le q<\infty$ and $j(\cdot)=[[\cdot]^{+}-k]^{+}\in \mathcal{J}_{0}$ for $k\ge 0$ large
  enough if $q=\infty$ shows that each complete contraction $S$ is
  $T$-contractive in $L^{q}(\Sigma,\mu)$ for every $1\le q\le \infty$.
\end{remark}

Now, we can state the
definition of completely accretive operators.

\begin{definition}
  \label{def:completely-accretive-operators}
  An operator $A$ on $M(\Sigma,\mu)$ is called \emph{completely
    accretive} if for every $\lambda>0$, the resolvent operator
  $J_{\lambda}$ of $A$ is a complete contraction. If $X$ is a linear
  subspace of $M(\Sigma,\mu)$ and $A$ an operator on $X$, then $A$ is
  \emph{$m$-completely accretive on $X$} if $A$ is completely
  accretive and satisfies the range condition~\eqref{eq:range-condition}.
  Further, we call an operator $A$ on $M(\Sigma,\mu)$ \emph{quasi
    completely accretive} if there is an $\omega\in \R$ such that
  $A+\omega I$ is completely accretive. Finally, an operator $A$ on a
  linear subspace $X$ is called \emph{quasi $m$-completely accretive}
  if $A+\omega I$ is $m$-completely accretive on $X$ for some
  $\omega\in \R$.
\end{definition}

As a matter of fact, in most applications the following
characterisation is used to verify whether a given operator $A$ on
$X=L^{q}(\Sigma,\mu)$ is completely accretive (see
 also~\cite[Corollary A.43]{MR2722295}). Here, we state
 \cite[Proposition~2.2]{MR1164641} only in a special case since it is
 more convenient for us.

\begin{proposition}[{\cite[Proposition~2.2]{MR1164641}}]
  \label{prop:completely-accretive}
   Let $P_{0}$ denote the set of all functions $T\in C^{\infty}(\R)$
   satisfying $0\le T'\le 1$, 
    $T'$  is compactly supported, and $x=0$ is not contained in the
   support $\textrm{supp}(T)$ of $T$. Then for $u$, $v\in
   L^{1}(\Sigma,\mu)+L^{\infty}(\Sigma,\mu)$ with
   $\mu(\{\abs{u}>k\})<\infty$ for all $k>0$, one has
   \begin{displaymath}
    \int_{\Sigma}j(u)\,\textrm{d}\mu\le 
    \int_{\Sigma}j(u+\lambda v)\,\textrm{d}\mu
  \end{displaymath}
  for every $j\in \mathcal{J}_{0}$ and $\lambda>0$ if and only if
   \begin{equation}
     \label{eq:complete}
     \int_{\Sigma} T(u)\,v\,\dmu\ge 0
   \end{equation}
   for every $T\in P_{0}$. As a consequence, an operator $A$ on
   $L^{q}(\Sigma,\mu)$ for $1\le q<\infty$ is completely accretive
   if and only if
   \begin{equation}
     \label{eq:charact-completelyaccretive}
     \int_{\Sigma}T(u-\hat{u})(v-\hat{v})\,\dmu\ge 0
   \end{equation}
   for every $T\in P_{0}$ and every $(u,v)$,
   $(\hat{u},\hat{v})\in A$.
 \end{proposition}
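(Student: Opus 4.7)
The plan is to prove the equivalence in both directions and then deduce the characterisation of complete accretivity.

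For $(\Rightarrow)$, I associate to each $T\in P_{0}$ its primitive $j(s):=\int_{0}^{s}T(r)\,\mathrm{d}r$. Since $T$ is non-decreasing with $T\equiv 0$ on some neighbourhood $(-\eta,\eta)$ of $0$, the function $j$ is smooth, convex, non-negative, and vanishes at $0$; hence $j\in\mathcal{J}_{0}$. Applying the hypothesis with this $j$, dividing by $\lambda$, and rewriting
\[
\tfrac{1}{\lambda}\bigl[j(u+\lambda v)-j(u)\bigr]\;=\;\tfrac{1}{\lambda}\int_{0}^{\lambda}T(u+sv)\,v\,\mathrm{d}s,
\]
I pass to the limit $\lambda\to 0^{+}$. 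To justify the interchange, note that $T(u+sv)$ is supported in $\{|u+sv|\ge\eta\}$; decomposing $v=v_{1}+v_{2}$ with $v_{1}\in L^{1}$, $v_{2}\in L^{\infty}$, this set lies in $\{|u|\ge\eta/2\}\cup\{|v_{1}|\ge\eta/(4\lambda_{0})\}$ for $s\in[0,\lambda_{0}]$ and $\lambda_{0}$ small, which has finite $\mu$-measure by the hypothesis $\mu(\{|u|>k\})<\infty$ and Chebyshev's inequality. The dominant $\|T\|_{\infty}|v|$ restricted to this finite-measure set is integrable, and dominated convergence yields $\int_{\Sigma}T(u)v\,\mathrm{d}\mu\ge 0$.

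For $(\Leftarrow)$, fix $j\in\mathcal{J}_{0}$ and $\lambda>0$, and set $\varphi(s):=\int_{\Sigma}j(u+sv)\,\mathrm{d}\mu$. The function $\varphi:[0,\infty)\to[0,\infty]$ is convex (inherited from $j$), so it suffices to show $\varphi'(0^{+})\ge 0$: convexity plus this bound forces $\varphi$ to be non-decreasing on $[0,\infty)$, hence $\varphi(\lambda)\ge\varphi(0)$. Assuming $\varphi$ is finite at some $s_{0}>0$ (otherwise the inequality is trivial), the difference quotients $\lambda^{-1}[j(u+\lambda v)-j(u)]$ are pointwise monotone non-increasing as $\lambda\downarrow 0^{+}$ and dominated by their value at $s_{0}$, so monotone convergence gives $\varphi'(0^{+})=\int_{\Sigma}D_{v}^{+}j(u)\,\mathrm{d}\mu$, where $D_{v}^{+}j(u)$ is the right-hand directional derivative of $j$ at $u$ in direction $v$. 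Since $j'_{-}\le j'_{+}$, a case split on the sign of $v$ gives $D_{v}^{+}j(u)\ge j'_{+}(u)v$ pointwise, so it is enough to prove $\int_{\Sigma}j'_{+}(u)v\,\mathrm{d}\mu\ge 0$.

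To this end, I approximate $j'_{+}$ by a sequence $T_{n}\in P_{0}$. Convexity of $j$ combined with $j\ge 0$ and $j(0)=0$ forces $j'_{+}$ to be non-decreasing with $j'_{+}\le 0$ on $(-\infty,0)$ and $j'_{+}\ge 0$ on $(0,\infty)$. Accordingly, one builds smooth non-decreasing functions $T_{n}$ with $\supp T_{n}\subseteq\{|r|\ge 1/n\}$, $T_{n}'$ compactly supported, and $T_{n}\nearrow j'_{+}$ pointwise; after dividing by a positive constant to enforce $T_{n}'\le 1$ (which preserves the sign of the integral), $T_{n}\in P_{0}$. Splitting $v=v^{+}-v^{-}$ and applying monotone and dominated convergence on the finite-measure sets where $T_{n}(u)\ne 0$ (control provided by $\mu(\{|u|>1/n\})<\infty$ and $v\in L^{1}+L^{\infty}$), the hypothesis $\int T_{n}(u)v\,\mathrm{d}\mu\ge 0$ passes to the limit to give $\int_{\Sigma}j'_{+}(u)v\,\mathrm{d}\mu\ge 0$. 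The main obstacle is precisely this approximation-and-limit step: constructing $T_{n}\in P_{0}$ that capture an unbounded and possibly non-smooth $j'_{+}$ while keeping their supports bounded away from $0$, and controlling convergence under only the weak integrability hypotheses on $u$ and $v$.

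The consequence for complete accretivity is immediate: setting $w=u+\lambda v$ and $\hat{w}=\hat{u}+\lambda\hat{v}$ for $(u,v),(\hat{u},\hat{v})\in A$, so that $J_{\lambda}w=u$ and $J_{\lambda}\hat{w}=\hat{u}$, the condition that $J_{\lambda}$ be a complete contraction reads $\int j(u-\hat{u})\,\mathrm{d}\mu\le\int j((u-\hat{u})+\lambda(v-\hat{v}))\,\mathrm{d}\mu$ for every $j\in\mathcal{J}_{0}$ and $\lambda>0$. Because $u-\hat{u}\in L^{q}(\Sigma,\mu)$ with $q<\infty$ automatically satisfies $\mu(\{|u-\hat{u}|>k\})<\infty$ by Chebyshev's inequality, the equivalence just proved, applied to $(u-\hat{u},v-\hat{v})$, converts this into $\int_{\Sigma}T(u-\hat{u})(v-\hat{v})\,\mathrm{d}\mu\ge 0$ for every $T\in P_{0}$.
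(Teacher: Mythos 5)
Your $(\Rightarrow)$ direction is correct: setting $j(s)=\int_{0}^{s}T(r)\,\mathrm{d}r\in\mathcal{J}_{0}$ for $T\in P_{0}$, writing the difference as $\int_{0}^{\lambda}T(u+sv)v\,\mathrm{d}s$, and using the compact support of $T'$ together with $\mu(\{|u|>k\})<\infty$ and $v\in L^{1}+L^{\infty}$ to obtain a finite-measure set on which $\|T\|_{\infty}|v|$ dominates, is a clean and complete dominated-convergence argument. The final reduction of complete accretivity to the two-function statement, via Chebyshev for $u-\hat{u}\in L^{q}$, is also fine.

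The $(\Leftarrow)$ direction has a genuine gap, exactly at the step you flag as ``the main obstacle,'' and the difficulty is not merely technical. You prove the inequality directly for a general $j\in\mathcal{J}_{0}$ by reducing to $\varphi'(0^{+})\ge 0$ and then to $\int_{\Sigma}j'_{+}(u)v\,\mathrm{d}\mu\ge 0$, and only at the very end approximate $j'_{+}$ by $T_{n}\in P_{0}$. This order does not close: (i) when $j$ takes the value $+\infty$, $j'_{+}(u)$ is undefined on $\{j(u)=\infty\}$ and the pointwise bound $D_{v}^{+}j(u)\ge j'_{+}(u)v$ does not make sense there, nor does the assertion $T_{n}\nearrow j'_{+}$ on all of $\R$; (ii) even for finite but superlinear $j$ (e.g.\ $j(s)=s^{2}$), $j'_{+}$ is unbounded, the sets $\{T_{n}(u)\neq 0\}\subseteq\{|u|\ge 1/n\}$ increase to a possibly infinite-measure set, and after splitting $T_{n}(u)v$ into its four sign-pieces the increasing pieces $\int_{\{u>0\}}T_{n}(u)v^{\pm}$ can both diverge to $+\infty$, so one cannot pass from $\int T_{n}(u)v\ge 0$ to $\int j'_{+}(u)v\ge 0$ by monotone or dominated convergence without further input. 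The standard fix reverses your order: first approximate $j$ itself (Moreau--Yosida $j_{\nu}$, then $[\,j_{\nu}-\delta\,]^{+}$, or truncating the derivative outside $[-n,n]$) so that the approximants $\tilde{j}$ are Lipschitz, convex, nonnegative, vanish near $0$, and have bounded non-decreasing right derivative $\tilde{j}'_{+}$ vanishing on a fixed neighbourhood $(-\eta,\eta)$ of $0$; for such $\tilde{j}$ the supports of $T_{n}(u)$ lie in the \emph{fixed} finite-measure set $\{|u|\ge\eta\}$, the domination $|T_{n}(u)v|\le\|\tilde{j}'_{+}\|_{\infty}|v|\mathds{1}_{\{|u|\ge\eta\}}$ is integrable, and the directional-derivative argument you outline runs without difficulty; one then passes from $\tilde{j}$ to $j$ by monotone convergence at the level of the final inequality, which is unconditional. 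This is precisely the strategy the paper uses for the $c$-complete analogue in Appendix~\ref{Asec:accretive-L1} (Proposition~\ref{propo:characterisation-c-complete}), and it is the one you should adopt here.
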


 For any given monotone graph $\beta$ on $\R$ and $1\le q<\infty$, the
 associated accretive operator $\beta_{q}$ on $L^{q}(\Sigma,\mu)$ is,
 in fact, completely accretive. To see this, note first that every
 $T\in P_{0}$ is continuous and non-decreasing. Thus for all $(u,v)$,
 $(\hat{u},\hat{v})\in \beta_{q}$ and every $T\in P_{0}$, one has
 \begin{displaymath}
   T(u-\hat{u})(v-\hat{v})\ge 0\qquad\text{a.e. on $\Sigma$.}
 \end{displaymath}
 Integrating this inequality over $\Sigma$ yields
 inequality~\eqref{eq:charact-completelyaccretive} in
 Proposition~\ref{prop:completely-accretive}.  

 Further, the property \emph{completely accretive} is preserved
 under perturbation of a Lipschitz continuous mapping. This result
 seems to be known, but we could not find a reference in the 
 literature. It provides an important example of completely accretive
 operators (cf.~\cite[Corollary~2.4]{MR1164641}).

 \begin{proposition}
   \label{propo:Lipschitz-complete-accretive}
   Let $1\le q<\infty$, $B$ be a completely accretive operator on
   $L^{q}(\Sigma,\mu)$ and
   $F : L^{q}(\Sigma,\mu)\to L^{q}(\Sigma,\mu)$ the Nemytski operator
   of a Carath\'eodory function $f : \Sigma\times\R\to \R$
   satisfying~\eqref{eq:2} for some constant $\omega\ge 0$ and
   $F(0)\in L^{q}(\Sigma,\mu)$. Then the following
   statements hold:
   \begin{enumerate}
   \item The operator $A:=B+F+\omega I$ is completely
     accretive.
   \item Let $1<q<\infty$ and $\beta$ be an $m$-accretive graph on
     $\R$ such that either $(0,0)\in \beta$ or $(\Sigma,\mu)$ is finite.
     If $B$ satisfies the range condition~\eqref{eq:range-condition}
     in $L^{q}(\Sigma,\mu)$ with $D(B)\cap D(\beta_{q})\neq\emptyset$
     and if the Yosida operator $\beta_{\lambda}(\cdot)$ of $\beta_{q}$ satisfies
     \begin{equation}
       \label{eq:78}
       [\beta_{\lambda}(u),v]_{q}\ge 0\qquad\text{for all
         $(u,v)\in A$ and $\lambda>0$,}
     \end{equation}
     then $A:=B+\beta_{q}+F$ is
     quasi $m$-completely accretive in $L^{q}(\Sigma,\mu)$.
   \end{enumerate}
 \end{proposition}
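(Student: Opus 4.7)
For statement (1), the plan is to verify the pointwise characterisation of complete accretivity given by Proposition~\ref{prop:completely-accretive} directly. I fix $T\in P_{0}$ and any pairs $(u,v)$, $(\hat{u},\hat{v})\in A:=B+F+\omega I$. By definition of the sum, I can write $v=w+F(u)+\omega u$ and $\hat{v}=\hat{w}+F(\hat{u})+\omega\hat{u}$ with $w\in Bu$ and $\hat{w}\in B\hat{u}$, so that
\begin{displaymath}
\int_{\Sigma}T(u-\hat{u})(v-\hat{v})\,\dmu=\int_{\Sigma}T(u-\hat{u})(w-\hat{w})\,\dmu+\int_{\Sigma}T(u-\hat{u})\bigl(F(u)-F(\hat{u})+\omega(u-\hat{u})\bigr)\,\dmu.
\end{displaymath}
The first integral is nonnegative by complete accretivity of $B$ via~\eqref{eq:charact-completelyaccretive}. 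For the second, the pointwise Lipschitz estimate $\abs{f(x,u(x))-f(x,\hat{u}(x))}\le \omega\,\abs{u(x)-\hat{u}(x)}$ implies that $F(u)-F(\hat{u})+\omega(u-\hat{u})$ has the same sign as $u-\hat{u}$ almost everywhere on $\Sigma$. Since $T\in P_{0}$ satisfies $T'\ge0$ and $0\notin\supp(T)$, the function $T$ is non-decreasing and vanishes in a neighbourhood of $0$, so $T(u-\hat{u})$ has the same sign as $u-\hat{u}$ pointwise. Thus the second integrand is pointwise nonnegative and~\eqref{eq:charact-completelyaccretive} holds for $A$.

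For statement (2), I would first note that the sum of two completely accretive operators is completely accretive: for any $(u,v),(\hat{u},\hat{v})\in A_{1}+A_{2}$, one splits $v-\hat{v}=(v_{1}-\hat{v}_{1})+(v_{2}-\hat{v}_{2})$ with $v_{i}\in A_{i}u$, $\hat{v}_{i}\in A_{i}\hat{u}$, and applies~\eqref{eq:charact-completelyaccretive} to each summand separately. Since $\beta_{q}$ is completely accretive (as observed immediately after Proposition~\ref{prop:completely-accretive}), the operator $B+\beta_{q}$ is completely accretive; applying (1) with $B$ replaced by $B+\beta_{q}$ then yields the complete accretivity of $A+\omega I=(B+\beta_{q})+F+\omega I$.

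For the range condition, I would invoke the Barbu-type criterion~\eqref{eq:179} using hypothesis~\eqref{eq:78}, the $m$-accretivity of $B$ in $L^{q}(\Sigma,\mu)$ and $D(B)\cap D(\beta_{q})\neq\emptyset$, to conclude that $B+\beta_{q}$ is $m$-accretive. To add the Lipschitz perturbation $F+\omega I$, I fix $f\in L^{q}(\Sigma,\mu)$ and reformulate the inclusion $u+\lambda\bigl((B+\beta_{q})u+F(u)+\omega u\bigr)\ni f$ as the fixed-point equation
\begin{displaymath}
u=(I+\lambda(B+\beta_{q}))^{-1}\bigl(f-\lambda(F(u)+\omega u)\bigr).
\end{displaymath}
Because $(I+\lambda(B+\beta_{q}))^{-1}$ is nonexpansive on $L^{q}(\Sigma,\mu)$ and $F+\omega I$ is globally Lipschitz, the right-hand side is a strict contraction for $\lambda>0$ small enough, so Banach's fixed-point theorem supplies a solution and gives $Rg(I+\lambda(A+\omega I))=L^{q}(\Sigma,\mu)$ for some (hence all) $\lambda>0$.

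The subtle step I anticipate is the correct matching of hypothesis~\eqref{eq:78}, which is stated for pairs $(u,v)\in A$, with the standard condition~\eqref{eq:179} needed on pairs in $B$ alone in order to add $\beta_{q}$ via the Barbu result. Disentangling the contributions of $\beta_{q}$, $F$ and $\omega I$ inside $[\beta_{\lambda}(u),v]_{q}$ by means of the translation-linearity~\eqref{eq:57} of the $q$-bracket is the main point where care is required; once this is resolved, the combination of Barbu's perturbation theorem with the contraction-mapping step to incorporate $F$ is routine.
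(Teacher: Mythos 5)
Your proof of claim (1) is correct and follows the same line as the paper's: you verify the characterisation~\eqref{eq:charact-completelyaccretive} after writing $v-\hat v=(w-\hat w)+\bigl(F(u)-F(\hat u)+\omega(u-\hat u)\bigr)$. Your packaging is slightly cleaner: rather than splitting the integral over $\{u\ge\hat u\}$ and $\{u<\hat u\}$ as the paper does, you note that the Lipschitz bound~\eqref{eq:2} forces $F(u)-F(\hat u)+\omega(u-\hat u)$ to carry the same sign as $u-\hat u$ almost everywhere, so the second integrand is pointwise nonnegative. Same computation, phrased more directly.

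For claim (2) you agree with the paper on the complete accretivity part (the sum rule for completely accretive operators, then an application of (1) with $B$ replaced by $B+\beta_q$). For the range condition you invert the order of the two perturbations: you propose to pass from $B$ to $B+\beta_q$ via the Barbu sum theorem and only then absorb the Lipschitz term $F+\omega I$ by a fixed-point argument. The paper does the opposite: it first records that $B+F+\omega I$ is $m$-accretive (by the Lipschitz-perturbation statement recalled in Section~\ref{fm}), and only then adds $\beta_q$ via~\cite[Proposition~3.8]{MR2582280}. The ordering matters for how hypothesis~\eqref{eq:78} is consumed. Your route needs $[\beta_\lambda(u),v]_q\ge 0$ for $(u,v)\in B$, whereas the paper's route needs it for $(u,v)\in B+F+\omega I$, which is precisely the operator named $A$ in claim (1) and the natural reading of~\eqref{eq:78}. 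You correctly single out this mismatch as the delicate point and sketch resolving it by using the linearity~\eqref{eq:57} of the $q$-bracket, but you leave the reconciliation open; the paper avoids it altogether by choosing the other order of perturbations. Until that matching is actually carried out, the fixed-point-last route you describe has a genuine gap.
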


 \begin{proof}
   Let $T\in P_{0}$ and $(u,v)$, $(\hat{u},\hat{v})\in A$. Then, in
   order to apply Proposition~\ref{prop:completely-accretive}, we need
   to show that inequality~\eqref{eq:charact-completelyaccretive}
   holds. By assumption, there are $w\in Bu$ and $\hat{w}\in B\hat{u}$
   such that $v=w+f(x,u)+L\,u$ and
   $\hat{v}=\hat{w}+f(x,\hat{u})$. Also, $T$ is non-decreasing and
   $T(0)=0$. Hence $T(u-\hat{u})\ge 0$ if $u\ge \hat{u}$ and
   $T(u-\hat{u})\le 0$ if $u<\hat{u}$. Using this together with
   inequality~\eqref{eq:2} and since, by assumption, $B$ is completely
   accretive, we see that \allowdisplaybreaks
   \begin{align*}
    & \int_{\Sigma}T(u-\hat{u})(v+\omega u-(\hat{v}+\omega \hat{u}))\, \dmu\\
     &\qquad = \int_{\Sigma}T(u-\hat{u})\,(w-\hat{w})\,\dmu +
       \int_{\Sigma}T(u-\hat{u})\,(f(x,u)-f(x,\hat{u})\, \dmu\\
     &\hspace{5cm} + L
       \int_{\Sigma}T(u-\hat{u})\,(u-\hat{u})\,\dmu\\ 
     & \qquad \ge - L \int_{\{u\ge
       \hat{u}\}}T(u-\hat{u})\,\abs{u-\hat{u}}\, \dmu
       + L \int_{\{u< \hat{u}\}}T(u-\hat{u})\,\abs{u-\hat{u}}\, \dmu\\
          &\hspace{5cm} + L \int_{\Sigma}T(u-\hat{u})\,(u-\hat{u})\,\dmu\\   
     &\qquad = -L \int_{\Sigma}\,T(u-\hat{u})\,(u-\hat{u})\, \dmu +
        L \int_{\Sigma}\,T(u-\hat{u})\, (u-\hat{u})\,\dmu = 0.
   \end{align*}
   Thus claim $(1)$ holds. For any given
   monotone graph $\beta$ on $\R$, one sees by using~\eqref{eq:65}
   together with the fact that $T\in P_{0}$ is monotonically increasing and by 
   proceeding as before that $A$ is completely accretive. If $B$ is
   $m$-accretive in $L^{q}(\Sigma,\mu)$ and $1<q<\infty$, then we know
   that $B+F+\omega I$ is $m$-accretive in
   $L^{q}(\Sigma,\mu)$. Now, under the assumptions of
   claim $(2)$ and by using the definition of
   $q$-brackets $[\cdot,\cdot]_{q}$, it follows by
   \cite[Proposition~3.8]{MR2582280},
   $B+F+\omega I+\beta_{q}$ is $m$-accretive in
   $L^{q}(\Sigma,\mu)$. This completes the proof.
 \end{proof}

 The last proposition of this subsection states that a semigroup
 $\{T_{t}\}_{t\ge0}$ generated by a quasi $m$-completely accretive
 operator $-A$ on $L^{q_{0}}(\Sigma,\mu)$ for some $1\le q_{0}<\infty$
 has, in fact, \emph{exponential growth} in all $L^{q}$ spaces.

\begin{proposition}
  \label{propo:complete-expo-grow}
  Let $1\le q_{0}<\infty$ and $\omega\in \R$ such that
  $A+\omega I$ is completely accretive in
  $L^{q_{0}}(\Sigma,\mu)$. Then for every
  $\lambda>0$ satisfying $\lambda\,\omega<1$,
  the resolvent operator $J_{\lambda}$ of $A$ satisfies
  \begin{equation}
    \label{eq:61}
    \norm{J_{\lambda}u-J_{\lambda}\hat{u}}_{\tilde{q}}\le  
    (1-\lambda\omega)^{-1}\norm{u-\hat{u}}_{\tilde{q}}
  \end{equation}
  for every $u$, $\hat{u}\in Rg(I+\lambda A)$ and every
  $1\le \tilde{q}\le \infty$. If $A+\omega I$ is $m$-completely
  accretive in $L^{q_{0}}(\Sigma,\mu)$, then for all
  $1\le \tilde{q}\le \infty$, the semigroup $\{T_{t}\}_{t\ge0}\sim -A$
  on $\overline{D(A)}^{\mbox{}_{L^{q_{0}}}}$ satisfies the ''exponential
  growth property''
\begin{equation}
\label{eq:62}
  \norm{T_{t}u-T_{t}\hat{u}}_{\tilde{q}}\le  e^{\omega\,t}\norm{u-\hat{u}}_{\tilde{q}}
\end{equation}
for every $u$, $\hat{u}\in \overline{D(A)}^{\mbox{}_{L^{q_{0}}}}\cap
L^{\tilde{q}}(\Sigma,\mu)$ and $t>0$. Moreover, if there are
$u_{0}\in L^{q_{0}}\cap L^{^{\tilde{q}}}(\Sigma,\mu)$ and $t\ge 0$ such that
$T_{t}u_{0}\in L^{\tilde{q}}(\Sigma,\mu)$ then $T_{t}$ can
be uniquely extended to a Lipschitz continuous mapping on
$\overline{\overline{D(A)}^{\mbox{}_{L^{q_{0}}}}\cap
  L^{\tilde{q}}(\Sigma,\mu)}^{\mbox{}_{L^{\tilde{q}}}}$ with constant
$e^{\omega t}$.
\end{proposition}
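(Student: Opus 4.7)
The plan is to work with $B := A+\omega I$, which by hypothesis is completely accretive in $L^{q_0}(\Sigma,\mu)$. First, I would verify the following scaling identity: for $\lambda>0$ with $\lambda\omega<1$, the relation $I+\lambda A=(1-\lambda\omega)(I+\frac{\lambda}{1-\lambda\omega}B)$ yields
\begin{equation*}
   J_{\lambda}^{A}u = J_{\mu}^{B}\bigl(\tfrac{u}{1-\lambda\omega}\bigr),
   \qquad \mu:=\tfrac{\lambda}{1-\lambda\omega}>0,
\end{equation*}
for every $u\in Rg(I+\lambda A)$. Since $B$ is completely accretive, $J_{\mu}^{B}$ is a complete contraction, hence by Remark~\ref{rem:1} a $T$-contraction in $L^{\tilde{q}}(\Sigma,\mu)$ for every $1\le\tilde{q}\le\infty$; the implication~\eqref{eq:14} for $L^{\tilde{q}}$ then upgrades $T$-contractivity to ordinary contractivity. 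Combining the scaling identity with the contractivity of $J_{\mu}^{B}$ gives~\eqref{eq:61} immediately.

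Next, assume $A+\omega I$ is $m$-completely accretive and fix $u,\hat{u}\in\overline{D(A)}^{L^{q_0}}\cap L^{\tilde{q}}$. Choose $n$ large enough that $\omega t/n<1$ and iterate the resolvent estimate $n$ times, which produces
\begin{equation*}
   \bigl\lVert(I+\tfrac{t}{n}A)^{-n}u-(I+\tfrac{t}{n}A)^{-n}\hat{u}\bigr\rVert_{\tilde{q}}
   \le (1-\tfrac{\omega t}{n})^{-n}\,\lVert u-\hat{u}\rVert_{\tilde{q}}.
\end{equation*}
By the exponential formula~\eqref{eq:36}, the left-hand iterates converge to $T_{t}u$ and $T_{t}\hat{u}$ respectively in $L^{q_{0}}$; after extracting an a.e.\ convergent subsequence and applying Fatou's lemma (respectively, using the fact that $\lVert\cdot\rVert_{\infty}$ is lower semicontinuous with respect to a.e.\ convergence when $\tilde{q}=\infty$), I obtain
\begin{equation*}
   \lVert T_{t}u-T_{t}\hat{u}\rVert_{\tilde{q}}
   \le \liminf_{n\to\infty}(1-\tfrac{\omega t}{n})^{-n}\,\lVert u-\hat{u}\rVert_{\tilde{q}}
   = e^{\omega t}\,\lVert u-\hat{u}\rVert_{\tilde{q}},
\end{equation*}
which is~\eqref{eq:62}.

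For the final extension statement, suppose there is an anchor $u_{0}\in\overline{D(A)}^{L^{q_0}}\cap L^{\tilde{q}}$ with $T_{t}u_{0}\in L^{\tilde{q}}$. For any $v\in\overline{D(A)}^{L^{q_0}}\cap L^{\tilde{q}}$ the triangle inequality together with~\eqref{eq:62} gives
\begin{equation*}
   \lVert T_{t}v\rVert_{\tilde{q}}
   \le \lVert T_{t}v-T_{t}u_{0}\rVert_{\tilde{q}}+\lVert T_{t}u_{0}\rVert_{\tilde{q}}
   \le e^{\omega t}\lVert v-u_{0}\rVert_{\tilde{q}}+\lVert T_{t}u_{0}\rVert_{\tilde{q}}<\infty,
\end{equation*}
so $T_{t}$ actually maps $\overline{D(A)}^{L^{q_0}}\cap L^{\tilde{q}}$ into $L^{\tilde{q}}$, and by~\eqref{eq:62} it is Lipschitz there with constant $e^{\omega t}$ with respect to $\lVert\cdot\rVert_{\tilde{q}}$. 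A standard extension of a uniformly continuous map on a dense subset of a complete metric space then produces the unique Lipschitz extension to $\overline{\overline{D(A)}^{L^{q_0}}\cap L^{\tilde{q}}}^{L^{\tilde{q}}}$ with the same constant.

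The main technical obstacle is the passage from the $L^{q_0}$-convergent iterates of the exponential formula to the desired $L^{\tilde{q}}$-estimate for $T_{t}$; since no convergence in $L^{\tilde{q}}$ is guaranteed a priori, the argument must rely on extracting an a.e.\ convergent subsequence (using $\sigma$-finiteness of $(\Sigma,\mu)$) and on Fatou-type lower semicontinuity of $\lVert\cdot\rVert_{\tilde{q}}$, uniformly across $1\le \tilde{q}\le\infty$. The case $\tilde{q}=\infty$ in particular deserves a quick explicit remark to confirm that essential-sup lower semicontinuity under a.e.\ convergence suffices.
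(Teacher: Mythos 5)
Your proof is correct and follows essentially the same route as the paper's: both derive~\eqref{eq:61} from the complete accretivity of $A+\omega I$ via the algebraic identity $I+\lambda A=(1-\lambda\omega)\bigl(I+\tfrac{\lambda}{1-\lambda\omega}(A+\omega I)\bigr)$ (the paper phrases this as a rearrangement of the accretivity inequality rather than as a resolvent rescaling, but it is the same computation), and both then iterate and pass to the limit in the exponential formula using lower semicontinuity of the $L^{\tilde q}$-norm with respect to $L^{q_0}$-convergence. One small simplification worth noting: to pass from a complete contraction to a contraction in $L^{\tilde q}$ you need not route through $T$-contractivity and implication~\eqref{eq:14}; it is immediate from testing with $j(\cdot)=\abs{\cdot}^{\tilde q}$ (resp.\ $j(\cdot)=[\abs{\cdot}-k]^{+}$ for $\tilde q=\infty$), as in Remark~\ref{rem:4}.
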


\begin{remark}
  Throughout this monograph, we often assume that there is
  \begin{equation}
    \label{eq:180}
    (u_{0},0)\in A\qquad\text{for some $u_{0}\in L^{q_{0}}\cap
      L^{\tilde{q}}(\Sigma,\mu)$}
  \end{equation}
  and some $1\le q_{0}$, $\tilde{q}\le\infty$ (see, for instance,
  Definition~\ref{def:Gag-Nire-inequality},
  Theorem~\ref{thm:GN-implies-reg-bis} or
  Theorem~\ref{thm:GN-implies-reg-for-oper-in-L1}). Condition~\eqref{eq:180}
  is equivalent to $u_{0}\in L^{q_{0}}\cap L^{\tilde{q}}(\Sigma,\mu)$
  is a \emph{fixed point} for the resolvent $J_{\lambda}$ of $A$, that
  is, $J_{\lambda}u_{0}=u_{0}$ for all $\lambda>0$. Thus and by using
  the exponential formula~\eqref{eq:36}, one sees that
  condition~\eqref{eq:180} is equivalently to the fact that
  $u_{0}\in L^{q_{0}}\cap L^{\tilde{q}}(\Sigma,\mu)$ is a \emph{fixed
    point} for the semigroup $\{T_{t}\}\sim -A$, that is,
  $T_{t}u_{0}=u_{0}$ for all $t\ge 0$. Moreover, it is worth noting
  that under condition~\eqref{eq:180}, the exponential growth
  property~\eqref{eq:62} of the semigroup $\{T_{t}\}$ reduces to
  \begin{equation}
    \label{eq:216}
    \norm{T_{t}u-u_{0}}_{\tilde{q}}\le  e^{\omega\,t}\norm{u-u_{0}}_{\tilde{q}}
  \end{equation}
  for every
  $u\in \overline{D(A)}^{\mbox{}_{L^{q_{0}}}}\cap
  L^{\tilde{q}}(\Sigma,\mu)$ and $t>0$.
\end{remark}

\begin{proof}[Proof of Proposition~\ref{propo:complete-expo-grow}]
  By assumption, the resolvent operator of $A+\omega I$ is a complete
  contraction. Let $1\le \tilde{q}\le \infty$ and $\lambda>0$ such
  that $\lambda\omega<1$. Then by
  Remark~\ref{rem:1},
   \begin{displaymath}
     \norm{u-\hat{u}}_{\tilde{q}}\le 
     \norm{u-\hat{u}+\lambda (\omega (u-\hat{u}) + (v-\hat{v}))}_{\tilde{q}}
   \end{displaymath}
   for every $(u,v)$, $(\hat{u},\hat{v})\in A$. Thus, 
   \begin{align*}
     \norm{u-\hat{u}+\lambda (v-\hat{v})}_{\tilde{q}}& =
     (1-\lambda\omega)\norm{u-\hat{u}+\tfrac{\lambda}{1-\lambda\omega}(\omega
     (u-\hat{u})+(v-\hat{v}))}_{\tilde{q}}\\
     &\ge (1-\lambda\omega)\,\norm{u-\hat{u}}_{\tilde{q}}
   \end{align*}
   for every $(u,v)$, $(\hat{u},\hat{v})\in A$ and every
  $\lambda>0$ such that $\lambda\,\omega<1$, proving
   that the resolvent operator $J_{\lambda}$ of $A$
   satisfies~\eqref{eq:61}. In order to see that the semigroup $\{T_{t}\}_{t\ge0}\sim -A$ on
$\overline{D(A)}^{\mbox{}_{L^{q_{0}}}}$ satisfies~\eqref{eq:62},  for
given $t>0$ and $n\in \N$ large enough, one takes $\lambda=t/n$ such that
$\tfrac{t}{n}\omega<1$. Then, replacing $u$ and $\hat{u}$ by
$J_{t/n}^{n-1}u$ and $J_{t/n}^{n-1}\hat{u}$ in~\eqref{eq:61} yields
\begin{align*}
  \norm{J_{t/n}^{n}u-J_{t/n}^{n}\hat{u}}_{\tilde{q}}&\le   
  (1-\tfrac{t\,\omega}{n})^{-1}\norm{J_{t/n}^{n-1}u-J_{t/n}^{n-1}\hat{u}}_{\tilde{q}}\\
   &\;\;\vdots\\
    &\le  (1-\tfrac{t\,\omega}{n})^{-n}\norm{u-\hat{u}}_{\tilde{q}}.
  \end{align*}
By the exponential formula~\eqref{eq:36}, one has
\begin{displaymath}
\lim_{n\to\infty}J_{t/n}^{n}u-J_{t/n}^{n}\hat{u}
=T_{t}u-T_{t}\hat{u}\qquad\text{in $L^{q_{0}}(\Sigma,\mu)$}.
\end{displaymath}
Since the $L^{\tilde{q}}$ norm on $L^{q_{0}}(\Sigma,\mu)$ is lower semicontinuous, sending
$t\to\infty$ in the previous estimates shows that
inequality~\eqref{eq:62} holds. In order to see that the last
statement of this proposition holds, note that the existence of
$u_{0}\in L^{q_{0}}\cap L^{\tilde{q}}(\Sigma,\mu)$ and $t\ge 0$ such
that $T_{t}u_{0}\in L^{\tilde{q}}(\Sigma,\mu)$ together
with~\eqref{eq:62} imply that $T_{t}$ maps
$\overline{D(A)}^{\mbox{}_{L^{q_{0}}}}\cap L^{\tilde{q}}(\Sigma,\mu)$
into $L^{q_{0}}\cap L^{\tilde{q}}(\Sigma,\mu)$. Thus ~\eqref{eq:62}
and a standard density argument yield that $T_{t}$ has a unique Lipschitz
continuous extension from
$\overline{\overline{D(A)}^{\mbox{}_{L^{q_{0}}}}\cap
  L^{\tilde{q}}(\Sigma,\mu)}^{\mbox{}_{L^{\tilde{q}}}}$
to
$\overline{\overline{D(A)}^{\mbox{}_{L^{q_{0}}}}\cap
  L^{\tilde{q}}(\Sigma,\mu)}^{\mbox{}_{L^{\tilde{q}}}}$
with constant $e^{\omega t}$. This completes the proof.
\end{proof}

The fundamental property of completely accretive operators $A$ is
given by the \emph{extrapolation property} stated in the next
proposition, which is especially meaningful  when
$(\Sigma,\mu)$ is not a finite measure space. The first main steps
towards the statement of this proposition have been established by
B\'enilan and Crandall~\cite{MR1164641}. The main idea of the proof
in~\cite{MR1164641} relies on the following fundamental property of 
sequences of functions in $L^{q}(\Sigma,\mu)$ for $1\le q<\infty$:
\begin{equation}
  \label{eq:225}
  \begin{split}
    &\text{ for any sequence
      $(u_{n})_{n\ge 1}\subseteq L^{q}(\Sigma,\mu)$ satisfying}\\
    &\quad (i)\quad \int_{\Sigma}\abs{u_{n}}^{q}\,\dmu\le
    \int_{\Sigma}\abs{u}^{q}\,\dmu \quad\text{ for all $n\ge 1$,}\\
    &\quad (ii)\quad \lim_{n\to\infty}u_{n}(x)=u(x)\quad \text{ for a.e.
      $x\in \Sigma$,}\\
    &\text{one has }\quad\lim_{n\to\infty}u_{n}=u\text{ in
      $L^{q}(\Sigma,\mu)$.}
  \end{split}
\end{equation}
Statement~\eqref{eq:225} follows from Fatou's lemma in combination
with either the uniform convexity of $L^{q}(\Sigma,\mu)$ if $q>1$ or
with Young's theorem (cf.,  for instance,
\cite[Theorem~2.8.8]{MR2267655}) if $q=1$. Furthermore ~\eqref{eq:225} 
yields the statements of our next proposition. We
leave its easy proof to the interested reader as an exercise.

\begin{proposition}
  \label{propo:extrapol-prop}
  Let $1\le q_{0}<\infty$ and $A$ be a $m$-completely accretive in
  $L^{q_{0}}(\Sigma,\mu)$ with dense domain and satisfying $(0,0)\in
  A$. Let $A_{1\cap \infty}$ be the trace of $A$ on
  $L^{1}\cap L^{\infty}(\Sigma,\mu)$ and for $1\le q<\infty$, let
  $\overline{A_{1\cap\infty}}^{\mbox{}_{L^{q}}}$ be the closure of $A_{1\cap\infty}$ in
  $L^{q}(\Sigma,\mu)$. Then the following statements hold true.
  \begin{enumerate}
  \item For every $1\le q<\infty$,
    $\overline{A_{1\cap \infty}}^{\mbox{}_{L^{q}}}$ is $m$-completely
    accretive in $L^{q}(\Sigma,\mu)$ with dense domain and
    $(0,0)\in \overline{A_{1\cap \infty}}^{\mbox{}_{L^{q}}}$.

  \item For every $1\le q<\infty$ and for all $\lambda>0$, the
    resolvent $J_{\lambda}$ of $A$ admits a unique extension on
    $L^{q}(\Sigma,\mu)$ and this extension coincides with the
    resolvent operator $J_{\lambda}^{q}$ of
    $\overline{A_{1\cap \infty}}^{\mbox{}_{L^{q}}}$.

  \item For every $1\le q<\infty$ and for every $t>0$, the mapping
    $T_{t}$ of the semigroup $\{T_{t}\}_{t\ge0}\sim -A$ on
    $L^{q_{0}}(\Sigma,\mu)$ admits a unique extension on
    $L^{q}(\Sigma,\mu)$ and this extension coincides with the mapping
    $T_{t}^{q}$ on $L^{q}(\Sigma,\mu)$ of the semigroup
    $\{T_{t}^{q}\}_{t\ge0}\sim -\overline{A_{1\cap
        \infty}}^{\mbox{}_{L^{q}}}$ on $L^{q}(\Sigma,\mu)$.
  \end{enumerate}
  If $F : L^{q_{0}}(\Sigma,\mu)\to L^{q_{0}}(\Sigma,\mu)$ is a
  Lipschitz mapping satisfying $F(0)=0$, then the same statements hold
  for the quasi $m$-completely
  accretive operator $A+F$ on $L^{q_{0}}(\Sigma,\mu)$.  
\end{proposition}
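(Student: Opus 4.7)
The plan is to leverage the single observation that a complete contraction which fixes $0$ is automatically an $L^{q}$-contraction for every $1\le q\le\infty$. Since $(0,0)\in A$ forces $J_{\lambda}0=0$, testing the complete-contraction inequality with $j(s)=|s|^{q}$ for $1\le q<\infty$, and with $j(s)=[|s|-k]^{+}$ for $k$ large in the $L^{\infty}$ case, yields $\|J_{\lambda}u\|_{q}\le\|u\|_{q}$ for every $u\in L^{q_{0}}\cap L^{q}$ and every $1\le q\le\infty$. In particular $J_{\lambda}$ preserves $L^{1}\cap L^{\infty}$. For $1\le q<\infty$ the subspace $L^{q_{0}}\cap L^{q}$ is dense in $L^{q}$ (truncation together with a $\sigma$-finite exhaustion), so the contractive restriction of $J_{\lambda}$ to $L^{q_{0}}\cap L^{q}$ extends by continuity to a unique contraction $J_{\lambda}^{q}:L^{q}\to L^{q}$; the resolvent identity passes to the limit, so the family $\{J_{\lambda}^{q}\}_{\lambda>0}$ is the resolvent of a unique $m$-accretive operator $B^{q}$ on $L^{q}$ with $(0,0)\in B^{q}$.

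Next I would identify $B^{q}=\overline{A_{1\cap\infty}}^{L^{q}}$. For $(u,v)\in A_{1\cap\infty}$ one has $u+\lambda v\in L^{1}\cap L^{\infty}\subseteq L^{q_{0}}\cap L^{q}$, hence $J_{\lambda}^{q}(u+\lambda v)=J_{\lambda}(u+\lambda v)=u$, so $A_{1\cap\infty}\subseteq B^{q}$ and, by closedness of $B^{q}$, $\overline{A_{1\cap\infty}}^{L^{q}}\subseteq B^{q}$. Conversely, given $w\in L^{q}$, approximate in $L^{q}$ by $w_{n}\in L^{1}\cap L^{\infty}$; preservation of $L^{1}\cap L^{\infty}$ places $(J_{\lambda}w_{n},\lambda^{-1}(w_{n}-J_{\lambda}w_{n}))$ in $A_{1\cap\infty}$, and the $L^{q}$-limit $(J_{\lambda}^{q}w,\lambda^{-1}(w-J_{\lambda}^{q}w))$ lies in $\overline{A_{1\cap\infty}}^{L^{q}}$. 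This gives $Rg(I+\lambda\overline{A_{1\cap\infty}}^{L^{q}})=L^{q}$, and property~\eqref{eq:114} then forces $B^{q}=\overline{A_{1\cap\infty}}^{L^{q}}$.

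Complete accretivity is transferred from $A$ to $B^{q}$ through Proposition~\ref{prop:completely-accretive}: for $T\in P_{0}$ and pairs in $B^{q}$, select approximants in $A_{1\cap\infty}$ and pass to the limit in $\int T(u_{n}-\hat{u}_{n})(v_{n}-\hat{v}_{n})\,d\mu\ge 0$, exploiting that $T$ is Lipschitz with support bounded away from $0$, so $T(u_{n}-\hat{u}_{n})$ is bounded in $L^{\infty}$ and essentially supported on a set of uniformly finite $\mu$-measure. Dense domain in $L^{q}$ is a corollary of $J_{\mu}u\to u$ in $L^{q}$ as $\mu\to 0$ for every $u\in L^{1}\cap L^{\infty}$, obtained by combining the $L^{q_{0}}$-convergence~\eqref{eq:212} with the uniform $L^{1}$ and $L^{\infty}$ bounds on $J_{\mu}u$ via the interpolation inequality between them. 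Claims (1) and (2) are now complete, and claim (3) is immediate from the exponential formula~\eqref{eq:36}: on $L^{q_{0}}\cap L^{q}$ the iterates $(J_{t/n})^{n}u=(J_{t/n}^{q})^{n}u$ coincide, and the $L^{q}$-limit of each is the corresponding semigroup mapping, yielding the announced extension. The Lipschitz case is handled by first applying Proposition~\ref{propo:Lipschitz-complete-accretive} to $A+F$, which becomes quasi $m$-completely accretive in $L^{q_{0}}$, and then repeating steps 1--3 with the obvious $\omega I$ bookkeeping.

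The main obstacle is the identification step, specifically verifying $Rg(I+\lambda\overline{A_{1\cap\infty}}^{L^{q}})=L^{q}$ so that~\eqref{eq:114} becomes applicable; this is where the $\sigma$-finiteness of $(\Sigma,\mu)$ genuinely enters, through the density of $L^{1}\cap L^{\infty}$ in $L^{q}$. Everything else reduces to routine bookkeeping around the complete-contraction inequality together with Fatou-type passages to the limit.
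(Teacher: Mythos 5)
Your proof is correct and implements exactly the strategy the paper points to (the paper leaves the proof as an exercise, citing B\'enilan--Crandall and the a.e.-convergence-plus-norm-bound principle~\eqref{eq:225}): extend the resolvent by density using the $L^q$-contraction property from Remark~\ref{rem:1} and $(0,0)\in A$, identify the resulting family with the resolvent of $\overline{A_{1\cap\infty}}^{L^q}$ via~\eqref{eq:114}, transfer complete accretivity through Proposition~\ref{prop:completely-accretive}, and obtain the semigroup statement via the exponential formula. Your argument replaces explicit appeals to~\eqref{eq:225} with direct contraction and weak-convergence estimates, which is a legitimate, essentially equivalent route; the only places where you are slightly terse --- that the nonlinear resolvent identity together with globally defined nonexpansive $J_{\lambda}^{q}$ produces a genuine $m$-accretive $B^{q}$, and that pointwise-plus-uniformly-bounded $T(u_n-\hat u_n)$ converges weakly in $L^{q'}$ when passing to the limit in the completely accretive inequality --- are standard facts that hold, so there is no real gap.
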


\subsection{$T$-accretive operators in
  $L^{1}$ with complete resolvent}\label{subsec:L1}

The class of \emph{$T$-ac\-cretive operators in $L^{1}$ with complete
  resolvent} was developed in connection with the porous media
equation by B\'enilan~\cite{Benthesis}. To the best of our knowledge,
the first important result in direction to this class of operators has
been the interpolation theorem in~\cite{MR0336050} due to Brezis and
Strauss in order to treat semilinear elliptic equations in
$L^{1}$. This interpolation theorem has been extended by B\'enilan and
Crandall in~\cite{MR1164641} to introduce the class of completely
accretive operators and further investigated by many others (for
instance, see also~\cite[Partie II]{Benthesis},
\cite{MR670925,MR647071} and \cite{MR2286292}).

For the sake of brevity, we merely state
the results in this section and refer for their proofs to
Appendix~\ref{Asec:accretive-L1}.

We begin by introducing the notion of \emph{complete maps} on
$M(\Sigma,\mu)$ in accordance with~\cite[Definition~1.7]{MR1164641}
(see also~\cite[D\'efinition~2.1]{Benthesis}).

\begin{definition}
  \label{def:complete-mapping}
  Let $D(S)$ be a subset of $M(\Sigma,\mu)$. A mapping
  $S : D(S)\to M(\Sigma,\mu)$ is called \emph{complete} if
  \begin{equation}
    \label{eq:109}
    \int_{\Sigma}j(Su)\,\textrm{d}\mu\le 
    \int_{\Sigma}j(u)\,\textrm{d}\mu
  \end{equation}
  for every $j\in \mathcal{J}_{0}$ and $u\in D(S)$. Further, let
  $\mathcal{J}$ be the set of all convex, lower semicontinuous functions
  $j : \R\to[0,\infty]$. We call a
  mapping $S : D(S)\to M(\Sigma,\mu)$ \emph{$c$-complete} if $S$
  satisfies inequality~\eqref{eq:109} for all $u\in D(S)$ and $j\in \mathcal{J}$. 
\end{definition}

Since the set $\mathcal{J}_{0}$ is contained in $\mathcal{J}$, every
$c$-complete mapping is a complete mapping. In
particular, we have the following characterisation.

\begin{proposition}
  \label{propo:characterisation-c-complete}
  Let $P$ denote the set of all functions $T\in C^{\infty}(\R)$
  satisfying $0\le T'\le 1$ and $T'$ is compactly supported.  Suppose
  $(\Sigma,\mu)$ is a finite measure space. Then for $u$,
  $v\in L^{1}(\Sigma,\mu)$, one has
   \begin{equation}
     \label{eq:118}
    \int_{\Sigma}j(u)\,\textrm{d}\mu\le 
    \int_{\Sigma}j(u+\lambda v)\,\textrm{d}\mu
  \end{equation}
  for every $j\in \mathcal{J}$ and $\lambda>0$ if and only if
   \begin{equation}
     \label{eq:char-c-complete}
     \int_{\Sigma} T(u)\,v\,\dmu\ge 0
   \end{equation}
   for every $T\in P$. As a consequence, an operator $A$ on
   $L^{1}(\Sigma,\mu)$ has a $c$-complete resolvent
   if and only if inequality~\eqref{eq:char-c-complete} holds for
   every $(u,v)\in A$.
\end{proposition}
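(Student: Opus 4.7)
The plan is to prove the main biconditional for a fixed pair $u,v\in L^{1}(\Sigma,\mu)$ and then to derive the consequence about $c$-complete resolvents by substituting $w=u+\lambda v$ with $v\in Au$ into the definition of $c$-completeness of $J_{\lambda}$. The argument parallels the proof of Proposition~\ref{prop:completely-accretive} due to B\'enilan and Crandall, adapted to the situation where neither the convex test function $j\in\mathcal{J}$ nor the smooth function $T\in P$ is required to vanish, respectively at $0$ and in a neighbourhood of $0$.

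For the implication \eqref{eq:char-c-complete} $\Rightarrow$ \eqref{eq:118}, I would proceed by approximation. Given $j\in\mathcal{J}$, construct a sequence $\tilde{J}_{n}\in C^{\infty}(\R)$ of non-negative convex functions such that $\tilde{J}_{n}\le j$, $\tilde{J}_{n}\to j$ pointwise (with $\int_{\Sigma}\tilde{J}_{n}(u)\,\dmu\to\int_{\Sigma}j(u)\,\dmu$ and similarly for $u+\lambda v$ via monotone convergence up to a uniformly small mollification error), $\tilde{J}_{n}$ is linear outside a compact interval $[-R_{n},R_{n}]$, and $\tilde{J}_{n}^{\prime\prime}$ is bounded. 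A concrete choice is: first form the infimal convolution $J_{n}(s):=\inf_{t\in\R}[j(t)+n|s-t|]$, which is $n$-Lipschitz, convex, non-negative, and satisfies $J_{n}\nearrow j$; then replace $J_{n}$ outside $[-R_{n},R_{n}]$ by the tangent lines of $J_{n}$ at $\pm R_{n}$ (still convex and still $\le j$); finally mollify at a scale $\varepsilon_{n}\ll 1/n$. Since $\tilde{J}_{n}^{\prime\prime}\ge 0$ is bounded and compactly supported, a suitable positive multiple of $\tilde{J}_{n}^{\prime}$ lies in $P$; by the homogeneity of \eqref{eq:char-c-complete} in $T$, it yields $\int_{\Sigma}\tilde{J}_{n}^{\prime}(u)\,v\,\dmu\ge 0$. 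The pointwise convexity inequality $\tilde{J}_{n}(u+\lambda v)-\tilde{J}_{n}(u)\ge\lambda v\,\tilde{J}_{n}^{\prime}(u)$ integrated over $\Sigma$ then gives $\int_{\Sigma}\tilde{J}_{n}(u+\lambda v)\,\dmu\ge\int_{\Sigma}\tilde{J}_{n}(u)\,\dmu$, and passing to the limit in $n$ transfers the inequality to $j$.

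For the reverse direction \eqref{eq:118} $\Rightarrow$ \eqref{eq:char-c-complete}, the key preliminary is that \eqref{eq:118} forces $\int_{\Sigma}v\,\dmu=0$. Testing \eqref{eq:118} with the scaled softplus $j_{K}(s):=K\log(1+e^{s/K})\in\mathcal{J}$, which is smooth with $|j_{K}^{\prime}|\le 1$, dividing by $\lambda$ and letting $\lambda\to 0^{+}$ via dominated convergence yields $\int_{\Sigma}j_{K}^{\prime}(u)\,v\,\dmu\ge 0$; since $j_{K}^{\prime}\to\tfrac{1}{2}$ pointwise as $K\to\infty$, a second dominated convergence gives $\tfrac{1}{2}\int_{\Sigma}v\,\dmu\ge 0$, and the symmetric choice $K\log(1+e^{-s/K})$ provides the reverse inequality. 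Now, given $T\in P$, choose any $c\in[T_{-\infty},T_{+\infty}]$ (non-empty since $T$ is bounded) and a point $s_{0}\in\R$ with $T(s_{0})=c$; then $j(s):=\int_{s_{0}}^{s}(T(r)-c)\,\dr$ is smooth, convex, and non-negative (the integrand is $\ge 0$ for $r\ge s_{0}$ and $\le 0$ for $r\le s_{0}$), so $j\in\mathcal{J}$. Applying \eqref{eq:118}, dividing by $\lambda$ and letting $\lambda\to 0^{+}$ by dominated convergence (with the bound $\|T-c\|_{\infty}|v|\in L^{1}$ since $(\Sigma,\mu)$ is finite and $v\in L^{1}$) produces $\int_{\Sigma}(T(u)-c)\,v\,\dmu\ge 0$, which combined with $\int_{\Sigma}v\,\dmu=0$ yields $\int_{\Sigma}T(u)\,v\,\dmu\ge 0$.

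The operator-theoretic consequence is then a direct reformulation: for $(u,v)\in A$ and $\lambda>0$, the element $w=u+\lambda v\in Rg(I+\lambda A)$ satisfies $J_{\lambda}w=u$, so the defining inequality of a $c$-complete mapping, $\int_{\Sigma}j(J_{\lambda}w)\,\dmu\le\int_{\Sigma}j(w)\,\dmu$ for every $j\in\mathcal{J}$ and $w\in Rg(I+\lambda A)$, is exactly \eqref{eq:118} quantified over $(u,v)\in A$; the main equivalence converts this into \eqref{eq:char-c-complete} quantified over $(u,v)\in A$ and $T\in P$. The most delicate aspect of the proof is the construction of the approximating sequence $\tilde{J}_{n}$: one must simultaneously enforce smoothness, compact support of $\tilde{J}_{n}^{\prime\prime}$ (so that $\tilde{J}_{n}^{\prime}$ genuinely lies in $P$ after rescaling), the inequality $\tilde{J}_{n}\le j$, and a mode of convergence strong enough to commute limits with integration, while accommodating the possibility that $j$ takes the value $+\infty$ on a proper half-line and that the ``varying'' region of $j^{\prime}$ is unbounded.
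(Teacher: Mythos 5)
Your argument is correct, and in one of the two directions it departs from the paper's proof in a genuinely different and cleaner way. For \eqref{eq:char-c-complete} $\Rightarrow$ \eqref{eq:118} both you and the paper regularise $j\in\mathcal{J}$ by infimal convolution, truncate linearly outside a compact interval, and take a rescaled derivative as the test function in $P$; your explicit mollification step is a small but genuine improvement, since the paper's truncated $j_{\nu,n}$ is only Lipschitz and its almost-everywhere derivative is therefore not literally an element of $P$, which requires $C^{\infty}$. For \eqref{eq:118} $\Rightarrow$ \eqref{eq:char-c-complete}, however, the two approaches diverge. The paper runs a sign-based case split on $T$, treating $T<0$, $T\in P_0$, and $T>0$ separately and using $j(s)=\int_0^s T(r)\,\dr$ in the positive case; as written this is delicate, since those three cases do not exhaust $P$ (a $T$ that changes sign and is nonzero on a deleted neighbourhood of $0$ lies in none of them), and when $T>0$ the primitive $\int_0^s T(r)\,\dr$ is negative for $s<0$ and so is not in $\mathcal{J}$. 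Your route avoids these issues altogether. You first extract the structural fact $\int_\Sigma v\,\dmu=0$ by testing \eqref{eq:118} against $K\log(1+e^{\pm s/K})$ and letting $\lambda\to 0^{+}$ and then $K\to\infty$; this is the infinitesimal counterpart of condition \eqref{eq:81} in Proposition~\ref{propo:charact-of-c-complete-operators} and isolates exactly what distinguishes $c$-completeness from ordinary completeness. You then shift an arbitrary $T\in P$ by a constant $c\in[T(-\infty),T(+\infty)]$ so that $T-c$ vanishes at some $s_0$, making $j(s)=\int_{s_0}^{s}(T(r)-c)\,\dr$ a bona fide element of $\mathcal{J}$, and differentiating \eqref{eq:118} at $\lambda=0^{+}$ together with $\int_\Sigma v\,\dmu=0$ yields \eqref{eq:char-c-complete} uniformly in $T$, without any case analysis. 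Your version is both cleaner and more robust in this direction.
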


For a proof of this characterisation we refer to
Appendix~\ref{Asec:accretive-L1}.

\begin{remark}
  \label{rem:4}
  By taking $j(\cdot)=\abs{\cdot}^{q}\in \mathcal{J}_{0}$ if
  $1\le q<\infty$ and
  $j(\cdot)=[\abs{\cdot}-k]^{+}\in \mathcal{J}_{0}$ for $k\ge 0$ large
  enough if $q=\infty$, we see that every complete mapping $S$
  has \emph{non-increasing} $L^{q}$
  norm for all $1\le q\le \infty$. More precisely, 
  \begin{displaymath}
    \norm{Su}_{q}\le \norm{u}_{q}
  \end{displaymath}
  for all $u\in D(S)$ and every $1\le q\le \infty$. More generally,
  since for every constant $c\in \R$ and every $1\le
  q<\infty$, the function $j(\cdot):=\abs{\cdot-c}^{q}\in
  \mathcal{J}$ and since $j(\cdot):=[\abs{\cdot-c}-k]^{+}\in
  \mathcal{J}$ for every $k\ge 0$, one has that if $S$ is
  $c$-complete, then
  \begin{displaymath}
    \norm{Su-c}_{q}\le \norm{u-c}_{q}
  \end{displaymath}
  for all $u\in D(S)$, $c\in \R$ and $1\le q\le \infty$.
\end{remark}

Now, we can define the class of accretive operators in $L^{1}$ with
($c$-)complete resolvent.

\begin{definition}
  An operator $A$ on $L^{1}(\Sigma,\mu)$ is called \emph{($m$-) accretive in
    $L^{1}$ with complete resolvent} if $A$
  is ($m$-) accretive in $L^{1}(\Sigma,\mu)$ and for every $\lambda>0$, the
  resolvent operator $J_{\lambda} : Rg(I+\lambda A)\to D(A)$ of $A$ is
  a complete mapping. Further, we call an
  operator $A$ on $M(\Sigma,\mu)$ \emph{quasi ($m$-) accretive in $L^{1}$
    with complete resolvent} if there exists $\omega\in \R$ such that
  $A+\omega I$ is ($m$-) accretive in $L^{1}(\Sigma,\mu)$ 
  and for every $\lambda>0$, the resolvent
  $J_{\lambda}$ of $A+\omega I$ is a complete mapping. Similarly, we
  call an operator $A$ on $L^{1}(\Sigma,\mu)$ \emph{($m$-) accretive in
    $L^{1}$ with $c$-complete resolvent} if $A$
  is ($m$-) accretive in $L^{1}(\Sigma,\mu)$ and for every $\lambda>0$, the
  resolvent $J_{\lambda} : Rg(I+\lambda A)\to D(A)$ of $A$ is
  a $c$-complete mapping.
\end{definition}

\begin{remark}
  \label{rem:9}
  Note that, in contrast to completely accretive operators in $L^{1}$, an
  accretive operator $A$ in $L^{1}$ with ($c$)-complete resolvent does
  not admit, in general, an order-preserving resolvent $J_{\lambda}$ on
  $L^{1}$. For this, one needs the additional assumption $A$ is \emph{$T$-accretive} in
  $L^{1}$.
\end{remark}

Note that it does not make much sense to introduce the notion of
\emph{quasi ($m$-) accretive operators in $L^{1}$ with $c$-complete resolvent}.
This becomes more clear by the following result due to
B\'enilan~\cite[Corollaire~2.3]{Benthesis}. To be more precise,
consider the following situation. Let $B=-\Delta^{N}$ be the Neumann
Laplace operator on $L^{1}$ on a bounded Lipschitz domain
$\Sigma\subseteq \R^{d}$ and $f$ a real-valued Carath\'eodory
function on $\Sigma\times \R$ satisfying $f(x,0)=0$ for a.e. $x\in \Sigma$ and Lipschitz
condition~\eqref{eq:2} for some $\omega\ge 0$. Then $B$ is accretive in
$L^{1}$ has a $c$-complete resolvent and satisfies~\eqref{eq:81}. Let
$F$ denote the Nemytski operator on $L^{1}$ associated with $f$. Then
for $\omega= L$, $A+\omega I$ is accretive in $L^{1}$ and has a
complete resolvent. If one assumes that the resolvent of $A$ is
$c$-complete, then our next proposition implies that $f\equiv -\omega I_{\R}$.

\begin{proposition}[{\cite{Benthesis}}]
  \label{propo:charact-of-c-complete-operators}
  Suppose that $(\Sigma,\mu)$ is a finite measure space and $A$ is an
  accretive operator in $L^{1}(\Sigma,\mu)$ satisfying
  $L^{\infty}(\Sigma,\mu)\subseteq Rg (I+A)$. Then $A$ has a
  $c$-complete resolvent if and only if
  \begin{equation}
    \label{eq:81}
    (c,0)\in A\text{ for all $c\in \R$.}
  \end{equation}
\end{proposition}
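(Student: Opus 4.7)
The plan is to prove the equivalence direction by direction; both directions hinge on the explicit description of $c$-completeness via the bracket $[\cdot,\cdot]_1$ together with the characterisation in Proposition~\ref{propo:characterisation-c-complete}.

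\emph{Forward direction} ($c$-complete resolvent $\Rightarrow$ $(c,0)\in A$ for every $c\in\R$). Since $\mu(\Sigma)<\infty$, every real constant $c$ lies in $L^\infty(\Sigma,\mu)\subseteq Rg(I+A)$, so $u_c:=J_1 c\in D(A)$ is well defined and there exists $w\in Au_c$ with $u_c+w=c$. I would then test the assumed $c$-completeness of $J_1$ against $j(x):=\abs{x-c}\in \mathcal{J}$, giving
\begin{displaymath}
\int_\Sigma \abs{u_c-c}\,\dmu \le \int_\Sigma \abs{c-c}\,\dmu = 0,
\end{displaymath}
which forces $u_c\equiv c$ a.e.\ and hence $w=c-u_c=0$, that is, $(c,0)\in A$.

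\emph{Reverse direction} ($(c,0)\in A$ for every $c\in\R \Rightarrow$ $c$-complete resolvent). By Proposition~\ref{propo:characterisation-c-complete}, it suffices to establish $\int_\Sigma T(u)v\,\dmu\ge 0$ for every $T\in P$ and every $(u,v)\in A$. The accretivity of $A$ applied to the pair $(c,0)\in A$ yields $[u-c,v]_1\ge 0$ for all $c\in\R$, which by the explicit formula \eqref{eq:67} for the $1$-bracket reads
\begin{displaymath}
\int_{\{u>c\}} v\,\dmu - \int_{\{u<c\}} v\,\dmu + \int_{\{u=c\}} \abs{v}\,\dmu \ge 0 \qquad (\ast).
\end{displaymath}
Since $u$ is $\mu$-a.e.\ finite and $v\in L^1$, dominated convergence as $c\to\pm\infty$ in $(\ast)$ would give $\pm\int_\Sigma v\,\dmu \ge 0$ and hence $\int_\Sigma v\,\dmu=0$ for every $(u,v)\in A$. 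Moreover, the finiteness of $\mu$ ensures that at most countably many level sets $\{u=c\}$ carry positive measure, so for Lebesgue-a.e.\ $c\in\R$ the third term in $(\ast)$ vanishes and the first two terms, together with $\int_\Sigma v\,\dmu=0$, give $\int_{\{u>c\}} v\,\dmu\ge 0$.

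To close the argument, the compact support of $T'$ provides the layer-cake representation $T(x)=T(-\infty)+\int_{-\infty}^{x} T'(s)\,\ds$; Fubini (justified since $T'$ is bounded with compact support and $v\in L^1$) then gives
\begin{displaymath}
\int_\Sigma T(u)v\,\dmu = T(-\infty)\int_\Sigma v\,\dmu + \int_\R T'(s)\int_{\{u>s\}} v\,\dmu\,\ds = \int_\R T'(s)\int_{\{u>s\}} v\,\dmu\,\ds,
\end{displaymath}
which is $\ge 0$ since $T'\ge 0$ and the inner integral is nonnegative for Lebesgue-a.e.\ $s$. The main technical hurdle is to bridge the pointwise-in-$c$ content of accretivity with the integrated testing against $T$; the layer-cake decomposition combined with the \emph{zero-net-mass} identity $\int_\Sigma v\,\dmu=0$ (itself a perhaps-surprising consequence of the assumption $(c,0)\in A$ for \emph{all} $c\in\R$ on a finite measure space) is precisely what makes this pass through.
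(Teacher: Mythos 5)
Your proof is correct, and since the paper merely attributes this proposition to B\'enilan's thesis without reproducing an argument, there is no in-text proof to compare against; I will therefore just verify the logic. The forward direction is exactly right: the assumption $L^\infty(\Sigma,\mu)\subseteq Rg(I+A)$ and finiteness of $\mu$ let you form $J_1 c$, and testing the $c$-completeness of $J_1$ against $j(\cdot)=\abs{\cdot-c}\in\mathcal{J}$ forces $J_1 c=c$ a.e., hence $0\in Ac$. For the reverse direction, you correctly invoke the ``as a consequence'' part of Proposition~\ref{propo:characterisation-c-complete} (which applies since $\mu(\Sigma)<\infty$), reducing matters to $\int_\Sigma T(u)\,v\,\dmu\ge 0$ for $T\in P$ and $(u,v)\in A$. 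The bracket characterisation of accretivity against the test pairs $(c,0)$ gives $[u-c,v]_1\ge 0$ for every $c$; the limits $c\to\pm\infty$ by dominated convergence (legitimate because $v\in L^1$ and $u$ is a.e.\ finite) give the zero-mass identity $\int_\Sigma v\,\dmu=0$, and for Lebesgue-a.e.\ $c$ (those with $\mu(\{u=c\})=0$, all but countably many since $\mu$ is finite) the bracket inequality then yields $\int_{\{u>c\}}v\,\dmu\ge 0$. The layer-cake decomposition $T(x)=T(-\infty)+\int_{-\infty}^x T'(s)\,\ds$ is valid because $T'$ has compact support, and Fubini (absolute integrability because $T'$ is bounded with compact support and $v\in L^1$) converts $\int_\Sigma T(u)v\,\dmu$ into $T(-\infty)\int_\Sigma v\,\dmu+\int_\R T'(s)\int_{\{u>s\}}v\,\dmu\,\ds$, which is nonnegative by the two facts just established. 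Every step is justified, and the observation that $\int_\Sigma v\,\dmu=0$ for all $(u,v)\in A$ (a conservation-of-mass identity) is indeed the structural heart of the result, consistent with the Neumann-Laplacian example quoted just before the statement.
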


Due to Proposition~\ref{propo:charact-of-c-complete-operators}, a
typical example of accretive operator in $L^{1}$ with $c$-complete
resolvent on a finite measure space $(\Sigma,\mu)$ is given by any
second order (nonlinear) diffusion operator equipped with homogeneous
Neumann boundary conditions on a bounded Lipschitz domain.


In the next proposition, we state some important properties of
accretive operators in $L^{1}$ with ($c$-)complete resolvent for later reference.

\begin{proposition}
  \label{proposition:closure-in-L1}
  If $A+\omega I$ is accretive in $L^{1}$ with complete resolvent for
  some $\omega \in \R$, then the closure $\overline{A}+\omega I$ is
  accretive in $L^{1}$ with complete resolvent. Further, if
  $(\Sigma,\mu)$ is a finite measure space and $A$ is accretive in
  $L^{1}$ with $c$-complete resolvent, then the closure
  $\overline{A}$ is accretive in $L^{1}$ with $c$-complete
  resolvent.
\end{proposition}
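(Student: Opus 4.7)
The plan is to reduce both assertions to the same passage-to-the-limit argument after recasting the (\emph{$c$-})completeness of the resolvent in a pointwise form via the test families $P_{0}$ and $P$.

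By property~\eqref{eq:111}, the closure $\overline{A+\omega I}$ coincides with $\overline{A}+\omega I$ and is accretive in $L^{1}(\Sigma,\mu)$; it therefore remains only to verify that its resolvent is complete (respectively $c$-complete). For any $(u,v)\in A+\omega I$ one has $u=J_{\lambda}(u+\lambda v)$, so the condition that $J_{\lambda}$ be a complete mapping for every $\lambda>0$ is equivalent to
\begin{displaymath}
\int_{\Sigma}j(u)\,\dmu\le \int_{\Sigma}j(u+\lambda v)\,\dmu \quad\text{for every }(u,v)\in A+\omega I,\ \lambda>0,\ j\in\mathcal{J}_{0}.
\end{displaymath}
Since every $u\in L^{1}(\Sigma,\mu)$ satisfies $\mu(\{\abs{u}>k\})<\infty$ for all $k>0$ by Chebyshev's inequality, Proposition~\ref{prop:completely-accretive} converts this into the pointwise test
\begin{displaymath}
\int_{\Sigma}T(u)\,v\,\dmu\ge 0 \quad\text{for every }(u,v)\in A+\omega I\text{ and every }T\in P_{0}.
\end{displaymath}
In the $c$-complete case, the hypothesis that $(\Sigma,\mu)$ is finite brings Proposition~\ref{propo:characterisation-c-complete} to bear, yielding the same type of test but with $T$ now ranging over the larger family $P$.

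The heart of the argument is then to show that these pointwise tests are preserved under closure in $L^{1}\times L^{1}$. Let $(u,v)\in \overline{A+\omega I}$ and pick $(u_{n},v_{n})\in A+\omega I$ with $u_{n}\to u$ and $v_{n}\to v$ in $L^{1}(\Sigma,\mu)$; after extracting a subsequence, $u_{n}\to u$ almost everywhere on $\Sigma$. Any $T\in P_{0}$ (or $T\in P$) is $1$-Lipschitz and bounded, with $M:=\sup_{\R}\abs{T}<\infty$ because $T'$ has compact support. Splitting
\begin{displaymath}
\int_{\Sigma}T(u_{n})\,v_{n}\,\dmu = \int_{\Sigma}T(u_{n})(v_{n}-v)\,\dmu + \int_{\Sigma}T(u_{n})\,v\,\dmu,
\end{displaymath}
the first integral is bounded in absolute value by $M\,\norm{v_{n}-v}_{1}\to 0$, while the second converges to $\int_{\Sigma}T(u)\,v\,\dmu$ by continuity of $T$ and dominated convergence, since $\abs{T(u_{n})\,v}\le M\abs{v}\in L^{1}(\Sigma,\mu)$. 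Because $\int_{\Sigma}T(u_{n})\,v_{n}\,\dmu\ge 0$ for every $n$ by hypothesis on $A+\omega I$, one obtains $\int_{\Sigma}T(u)\,v\,\dmu\ge 0$, which is exactly the pointwise test for $\overline{A}+\omega I$ (respectively its $c$-complete analogue).

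The main obstacle I anticipate is having the preliminary characterizations genuinely at one's disposal: the nontrivial ``only if'' directions of Proposition~\ref{prop:completely-accretive} and Proposition~\ref{propo:characterisation-c-complete} require approximating an arbitrary $j\in\mathcal{J}_{0}$ (respectively $\mathcal{J}$) by convex combinations of primitives of $P_{0}$-functions (respectively $P$-functions), an argument that rests on the structure of convex lower semicontinuous functions vanishing at $0$. Once these characterizations are granted, the closure step reduces to the uniform-boundedness plus dominated-convergence calculation above, which presents no further subtlety.
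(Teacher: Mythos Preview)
Your proof is correct and follows essentially the same route as the paper: invoke~\eqref{eq:111} for accretivity, translate the (\emph{$c$-})complete resolvent into the pointwise test $\int_{\Sigma}T(u)\,v\,\dmu\ge 0$ via Proposition~\ref{prop:completely-accretive} (respectively Proposition~\ref{propo:characterisation-c-complete}), and pass to the limit using that each $T$ is bounded and Lipschitz. The only cosmetic difference is that the paper asserts $L^{1}$-convergence of $T(u_{n})(\omega u_{n}+v_{n})$ directly, whereas you extract an almost-everywhere convergent subsequence and apply dominated convergence; your version is slightly more explicit but the substance is identical.
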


\begin{proof}
  The operator $\overline{A}+\omega I$ is accretive in
  $L^{1}(\Sigma,\mu)$ by~\eqref{eq:111} and since
  $\overline{A+\omega I}=\overline{A}+\omega I$. Now, suppose that the
  resolvent $J_{\lambda}$ of $A+\omega I$ is a complete mapping
  $J_{\lambda} : Rg(I+\lambda(\omega I+A))\to D(A)$ for every
  $\lambda>0$. Let $(u,v)\in \overline{A}$. Then there are sequences
  $(u_{n})$ and $(v_{n})$ such that $(u_{n},v_{n})\in A$ and $u_{n}$
  converges to $u$ and $v_{n}$ converges to $v$ in
  $L^{1}(\Sigma,\mu)$. By assumption, for every $\lambda>0$, the
  resolvent operator $J_{\lambda}$ of $A+\omega I$ is a complete
  mapping, that is, by Proposition~\ref{prop:completely-accretive},
  for every $(u_{n},v_{n})\in A$, one has
  \begin{equation}
    \label{eq:115}
    \int_{\Sigma}T(u_{n})\,(\omega u_{n}+v_{n})\,\dmu\ge 0
  \end{equation}
  for every $T\in P_{0}$. Since every $T\in P_{0}$ is Lipschitz
  continuous and bounded, 
  \begin{displaymath}
    T(u_{n}) (u_{n}+\lambda(\omega u_{n}+v_{n}))\to T(u_{n}) (u_{n}+\lambda(\omega u_{n}+v_{n}))
  \end{displaymath}
   in $L^{1}(\Sigma,\mu)$ as $n\to\infty$. Thus, sending $n\to\infty$
   in~\eqref{eq:115} yields
   \begin{displaymath}
     \int_{\Sigma}T(u)\,(\omega u+v)\,\dmu\ge 0,
   \end{displaymath}
   showing that the first statement of this proposition holds. In the
   case that the measure space $(\Sigma,\mu)$ is finite and $A$ with
   $c$-complete resolvent, the same arguments show that $\overline{A}$
   has a $c$-complete resolvent.
\end{proof}

A semigroup
 $\{T_{t}\}_{t\ge0}\sim -A$ on $\overline{D(A)}^{\mbox{}_{L^{1}}}$ of
 a quasi $m$-accretive operator in $L^{1}$ with complete resolvent has
 \emph{exponential growth} in all $L^{\tilde{q}}$-norms 
 \begin{equation}
  \label{eq:60}
  \norm{T_{t}u}_{\tilde{q}}\le e^{\omega\,t}\norm{u}_{\tilde{q}}
  \quad\text{ for all $t>0$, $u\in \overline{D(A)}^{\mbox{}_{L^{1}}}\cap L^{\tilde{q}}(\Sigma,\mu)$,}
\end{equation}
and $1\le \tilde{q}\le \infty$. Similarly, a semigroup
$\{T_{t}\}_{t\ge0}\sim -A$ on $\overline{D(A)}^{\mbox{}_{L^{1}}}$ of a
$m$-accretive operator in $L^{1}$ with $c$-complete resolvent has
\emph{modulo a constant ''c'' non-increasing} $L^{\tilde{q}}$-norm
 \begin{equation}
  \label{eq:120}
  \norm{T_{t}u-c}_{\tilde{q}}\le \norm{u-c}_{\tilde{q}}
  \quad\text{ for all $t>0$, $u\in \overline{D(A)}^{\mbox{}_{L^{1}}}\cap L^{\tilde{q}}(\Sigma,\mu)$,}
\end{equation}
$c\in \R$ and $1\le \tilde{q}\le \infty$. We omit the proof of these
statements since they are shown
similarly as the ones of Proposition~\ref{propo:complete-expo-grow}.

\begin{proposition}
  \label{propo:quasi-accretive-operators-in-L1-complete-resolvent}
  Let $A+\omega I$ be an ($m$-) accretive operator in
  $L^{1}$ with complete resolvent for some $\omega\in \R$. Then for every
  $\lambda>0$ such that $\lambda\,\omega<1$ and every $1\le \tilde{q}\le \infty$,
  the resolvent operator $J_{\lambda}$ of $A$ satisfies
  \begin{equation}
    \label{eq:34}
    \norm{J_{\lambda}u}_{\tilde{q}}\le  (1-\lambda\omega)^{-1}\norm{u}_{\tilde{q}}
  \end{equation}
  for every $u\in Rg(I+\lambda A)\cap L^{\tilde{q}}(\Sigma,\mu)$ and
  the semigroup $\{T_{t}\}_{t\ge0}\sim -A$ on $\overline{D(A)}^{\mbox{}_{L^{1}}}$
  satisfies~\eqref{eq:60}. If $A$ is ($m$-) accretive operator in
  $L^{1}$ with $c$-complete resolvent, then for every
  $\lambda>0$ and $1\le \tilde{q}\le \infty$,
  the resolvent operator $J_{\lambda}$ of $A$ satisfies
  \begin{equation}
    \label{eq:121}
    \norm{J_{\lambda}u-c}_{\tilde{q}}\le  \norm{u-c}_{\tilde{q}}
  \end{equation}
  for every $c\in \R$,
  $u\in Rg(I+\lambda A)\cap L^{\tilde{q}}(\Sigma,\mu)$ and the
  semigroup $\{T_{t}\}_{t\ge0}\sim -A$ on
  $\overline{D(A)}^{\mbox{}_{L^{1}}}$ satisfies~\eqref{eq:120}.
\end{proposition}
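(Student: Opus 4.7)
The strategy is to mirror the proof of Proposition~\ref{propo:complete-expo-grow}, replacing the appeal to Remark~\ref{rem:1} (complete contractivity, with differences) by the corresponding ``without differences'' statements for complete mappings contained in Remark~\ref{rem:4}. The difficulty is purely technical since the algebraic manipulations and the passage to the limit via the exponential formula are identical; one only needs the correct bookkeeping of constants.

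\emph{Step 1 (resolvent estimate, complete case).} Fix $1\le\tilde q\le\infty$ and $\lambda>0$ with $\lambda\omega<1$, and set $\mu=\lambda/(1-\lambda\omega)>0$. Since $A+\omega I$ is accretive in $L^{1}$ with complete resolvent, its resolvent $\tilde J_{\mu}=(I+\mu(A+\omega I))^{-1}$ is a complete mapping; by Remark~\ref{rem:4} one has $\norm{\tilde J_{\mu}w}_{\tilde q}\le\norm{w}_{\tilde q}$ on $Rg(I+\mu(A+\omega I))\cap L^{\tilde q}(\Sigma,\mu)$. Given $(u,v)\in A$, we have $(u,\omega u+v)\in A+\omega I$, so
\begin{equation*}
\norm{u}_{\tilde q}\le\norm{u+\mu(\omega u+v)}_{\tilde q}.
\end{equation*}
As in the proof of Proposition~\ref{propo:complete-expo-grow}, the algebraic identity
\begin{equation*}
\norm{u+\lambda v}_{\tilde q}=(1-\lambda\omega)\,\norm{u+\mu(\omega u+v)}_{\tilde q}\ge(1-\lambda\omega)\norm{u}_{\tilde q}
\end{equation*}
yields~\eqref{eq:34} upon rearrangement.

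\emph{Step 2 (semigroup estimate, complete case).} For $t>0$ and $n\in\N$ large enough that $\tfrac{t}{n}\omega<1$, one iterates~\eqref{eq:34} on $u\in\overline{D(A)}^{\mbox{}_{L^{1}}}\cap L^{\tilde q}(\Sigma,\mu)$; since each resolvent step preserves membership in $L^{\tilde q}$ (again by Remark~\ref{rem:4}), the bound
\begin{equation*}
\norm{J_{t/n}^{\,n}u}_{\tilde q}\le\bigl(1-\tfrac{t\omega}{n}\bigr)^{-n}\norm{u}_{\tilde q}
\end{equation*}
follows by induction. The exponential formula~\eqref{eq:36} gives $J_{t/n}^{\,n}u\to T_{t}u$ in $L^{1}(\Sigma,\mu)$; passing to an a.e.\ convergent subsequence and applying Fatou's lemma shows that $\norm{\cdot}_{\tilde q}$ is lower semicontinuous with respect to this convergence, and therefore letting $n\to\infty$ with $(1-t\omega/n)^{-n}\to e^{\omega t}$ produces~\eqref{eq:60}.

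\emph{Step 3 ($c$-complete case).} Here $\omega=0$ and the resolvent $J_{\lambda}$ of $A$ is itself $c$-complete. The second part of Remark~\ref{rem:4}, applied with $j(\cdot)=\abs{\cdot-c}^{\tilde q}\in\mathcal J$ for $1\le\tilde q<\infty$ and with $j(\cdot)=[\abs{\cdot-c}-k]^{+}\in\mathcal J$ for $\tilde q=\infty$, yields~\eqref{eq:121} directly for every $c\in\R$. Iterating as in Step~2 and passing to the limit through~\eqref{eq:36} delivers~\eqref{eq:120}.

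\emph{Main obstacle.} The only substantive point is the upgrade of $L^{1}$-convergence $J_{t/n}^{\,n}u\to T_{t}u$ to a bound in $L^{\tilde q}$: one must extract an a.e.\ convergent subsequence and apply Fatou, since $L^{\tilde q}$-convergence is not automatic. For $\tilde q=\infty$ this is handled, as in the proofs of Propositions~\ref{propo:complete-expo-grow} and~\ref{propo:extrapol-prop}, via the standard reformulation through distribution functions, which is already built into the choice of $j\in\mathcal J_{0}$ (respectively $\mathcal J$) used in Remark~\ref{rem:4}.
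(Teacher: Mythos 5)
Your proof is correct and takes essentially the same route the paper indicates, namely adapting the argument of Proposition~\ref{propo:complete-expo-grow} by replacing the complete-contraction estimate of Remark~\ref{rem:1} with the complete/$c$-complete mapping estimates of Remark~\ref{rem:4}, followed by the standard iteration and passage to the limit via the exponential formula. The algebra $u+\lambda v=(1-\lambda\omega)\bigl(u+\tfrac{\lambda}{1-\lambda\omega}(\omega u+v)\bigr)$ and the Fatou-based lower semicontinuity of the $L^{\tilde q}$-norm under $L^{1}$-convergence are exactly what the paper relies on implicitly.
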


The next proposition outlines the construction of an operator of the
second class by taking the composition $A\phi$ of an operator $A$ of
the first class and a continuous non-decreasing function $\phi$ on $\R$.
In the case $\varepsilon=0$, the statements of this proposition are well-known
(cf.~\cite[Proposition~11]{PierreDEA} or \cite[Proposition
2.5]{Benthesis}). 

\begin{proposition}
  \label{propo:composition-operators-in-L1}
  Let $A$ be an accretive operator in $L^{1}(\Sigma,\mu)$ and
  $\phi : \R\to \R$ be a non-decreasing function. Suppose that
  one of the following hypotheses hold:
  \begin{enumerate}[($i$)]
   \item\label{propo:composition-operators-in-L1-H1} $A$ is
     $s$-accretive in $L^{1}(\Sigma,\mu)$ and single-valued.
   \item\label{propo:composition-operators-in-L1-H2} $\phi$ is injective. 
  \end{enumerate}
 Then, the following statements hold.
 \begin{enumerate}
 \item For every $\varepsilon\ge 0$, the operator
   $\varepsilon\phi_{1} +A\phi$ is accretive in
   $L^{1}(\Sigma,\mu)$. 
  \item If, in addition, $A$ has a complete resolvent
   and $\phi$ is continuous satisfying $\phi(0)=0$ (respectively,
    $A$ has a $c$-complete
   resolvent and $(\Sigma,\mu)$ is a finite measure space), then for every $\varepsilon\ge 0$,
   $\varepsilon\phi_{1} + A\phi$ is accretive in $L^{1}$ with complete
   resolvent (respectively, with $c$-complete resolvent).
  \item If, in addition, $A$ is $T$-accretive in $L^{1}(\Sigma,\mu)$
   and $\phi$ is injective, then for every $\varepsilon\ge 0$,
   $\varepsilon\phi_{1} + A\phi$ is $T$-accretive in $L^{1}(\Sigma,\mu)$.
 \end{enumerate}
\end{proposition}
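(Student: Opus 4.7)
The plan is to exploit the algebraic identity
\[
  \varepsilon\phi_{1} + A\phi \;=\; (A+\varepsilon I)\circ \phi_{1} \;=\; \tilde{A}\,\phi,
  \qquad \tilde{A}:=A+\varepsilon I,
\]
and thereby reduce each of the three assertions for $\varepsilon>0$ to the already known $\varepsilon=0$ case (cited from \cite{PierreDEA,Benthesis}) applied to $\tilde A$ in place of $A$. What remains is therefore to verify that $\tilde A$ inherits from $A$ exactly the hypothesis needed by the $\varepsilon=0$ statement in each of the three items, while the hypotheses on $\phi$ (non-decreasing, continuous with $\phi(0)=0$, or injective) are unaffected by the passage from $A$ to $\tilde A$.

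For item~(1), I first observe that $\varepsilon I$ is $s$-accretive in $L^{1}$: for every $\psi\in J(u-\hat u)$ one has $\langle\psi,\varepsilon(u-\hat u)\rangle=\varepsilon\|u-\hat u\|_{1}\ge 0$. Since the sum of an accretive operator and an $s$-accretive operator is accretive, $\tilde A$ is accretive in $L^{1}$; under hypothesis~(i) it remains $s$-accretive and single-valued (as a sum of two such), and hypothesis~(ii) concerns only $\phi$. The $\varepsilon=0$ version of~(1) applied to $\tilde A\,\phi$ then delivers the accretivity of $\varepsilon\phi_{1}+A\phi$. Item~(3) is handled in exactly the same way using the $T$-accretive analogue: $\varepsilon I$ is $s$-$T$-accretive in $L^{1}$, so $\tilde A$ inherits $T$-accretivity, the injectivity of $\phi$ is preserved, and the $\varepsilon=0$ version of~(3) closes the argument.

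For item~(2), the content is that $\tilde A$ inherits a complete (respectively $c$-complete) resolvent from $A$. By Proposition~\ref{prop:completely-accretive} (respectively Proposition~\ref{propo:characterisation-c-complete}) this reduces to verifying
\[
  \int_{\Sigma} T(z)\,(y+\varepsilon z)\,\dmu \;\ge\; 0
  \qquad\text{for every } T\in P_{0}\ (\text{resp.\ } T\in P)\text{ and } (z,y)\in A.
\]
The first summand $\int_{\Sigma} T(z)\,y\,\dmu\ge 0$ is the resolvent property already assumed on $A$, while for the second the pointwise inequality $T(z)\,z\ge 0$ holds whenever $T\in P_{0}$ because $T(0)=0$ and $T$ non-decreasing force $\mathrm{sign}(T(z))=\mathrm{sign}(z)$; this handles the complete case, after which the $\varepsilon=0$ statement applied to $\tilde A\,\phi$ concludes. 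The main obstacle is the $c$-complete sub-case, where for general $T\in P$ with $T(0)\ne 0$ the pointwise sign argument breaks down; the remedy is to split $T=T_{0}+T(0)$ with $T_{0}\in P$ vanishing in a neighbourhood of $0$ and to absorb the constant contribution by means of the identity $(c,0)\in A$ for every $c\in\R$ supplied by Proposition~\ref{propo:charact-of-c-complete-operators}, together with the finite-measure hypothesis which ensures that $\int_{\Sigma}z\,\dmu$ is finite and can be cancelled against the constant piece. Once $\tilde A$ is shown to have the appropriate resolvent, the $\varepsilon=0$ statement applied to $\tilde A\,\phi$ finishes the proof.
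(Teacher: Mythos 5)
Your algebraic identity $\varepsilon\phi_{1}+A\phi=(A+\varepsilon I)\phi$ is correct, and the resulting reduction to the $\varepsilon=0$ case is a genuine reorganisation of the argument: the paper first establishes the $\varepsilon=0$ statement for $A\phi$ directly (constructing a suitable $\psi\in\textrm{sign}(u-\hat{u})\cap\textrm{sign}(w-\hat{w})$ under hypothesis~(\emph{i}), or via the bracket identity $[\phi(u)-\phi(\hat{u}),v-\hat{v}]_{1}\ge 0$ and the equality $\{\phi(u)=\phi(\hat{u})\}=\{u=\hat{u}\}$ under~(\emph{ii})), and only afterwards adds the $s$-accretive summand $\varepsilon\phi_{1}$ to the result $A\phi$. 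You instead absorb $\varepsilon$ into the outer operator before composing with $\phi$. Both routes rest on the $\varepsilon=0$ statement cited from \cite{PierreDEA,Benthesis}. Your reduction goes through cleanly for item~(1), for item~(3), and for the complete-resolvent half of item~(2): $\tilde{A}:=A+\varepsilon I$ does inherit a complete resolvent because for $T\in P_{0}$ one has $T(z)\,z\ge 0$ pointwise, $T$ vanishing in a neighbourhood of the origin.

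The gap is in the $c$-complete sub-case of item~(2). The operator $\tilde{A}=A+\varepsilon I$ does \emph{not} inherit a $c$-complete resolvent from $A$ when $\varepsilon>0$. Taking the constant $T\equiv 1\in P$ and $(z,y)\in A$, the required inequality becomes $\int_{\Sigma}(y+\varepsilon z)\,\dmu\ge 0$; the term $\int_{\Sigma}y\,\dmu$ is $\ge 0$ by hypothesis, but $\varepsilon\int_{\Sigma}z\,\dmu$ has no sign. Equivalently, Proposition~\ref{propo:charact-of-c-complete-operators} shows that on a finite measure space a $c$-complete resolvent forces $(c,0)\in\tilde{A}$ for every $c\in\R$, i.e.\ $-\varepsilon c\in A(c)$, whereas the hypothesis on $A$ only delivers $0\in A(c)$. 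Your proposed repair does not close this. Writing $T_{0}:=T-T(0)$ gives $T_{0}(0)=0$ but in general $0\in\textrm{supp}(T_{0})$ (any $T\in P$ that is non-constant near $0$), so $T_{0}\notin P_{0}$; and even granting a suitable splitting, the constant contribution is exactly the uncontrolled term $T(0)\,\varepsilon\int_{\Sigma}z\,\dmu$, and the relation $(c,0)\in A$ supplies $0\in A(\phi(c))$ but not the $-\varepsilon\phi(c)\in A(\phi(c))$ that $(c,0)\in\tilde{A}\phi$ would require. So the reduction via $\tilde{A}$ cannot deliver the $c$-complete assertion for $\varepsilon>0$; for that part you must treat $A\phi$ first, as the paper does, and handle the summand $\varepsilon\phi_{1}$ separately (noting that the favourable sign $\int_{\Sigma}T(u)\phi(u)\,\dmu\ge 0$ is, there too, only immediate for $T\in P_{0}$ rather than for all $T\in P$).
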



Our next result provides sufficient conditions to ensure that the
composition operator $A\phi$ of an operator $A$ of the first class
and a non-decreasing function $\phi$ on $\R$ satisfies the range
condition~\eqref{eq:range-condition} and so, $-A\phi$ generates a
strongly continuous semigroup on $L^{1}(\Sigma,\mu)$. This result generalises
\cite[Proposition~2]{MR647071} to operators $A\phi$ for (possibly
nonlinear) \emph{$m$-completely accretive operators} $A$ in
$L^{1}$. For the proof of this  result, we refer the interested
reader to the Appendix~\ref{Asec:accretive-L1} of this monograph.

\begin{proposition}
  \label{propo:Range-cond-in-Rd}
  Suppose $A$ is an $m$-completely accretive operator in
  $L^{q}(\Sigma,\mu)$ for some $1<q<\infty$ with $(0,0)\in A$ and 
  $A_{1\cap \infty}$ be the trace of $A$ on $L^{1}\cap L^{\infty}(\Sigma,\mu)$.
  Let $\phi : \R\to \R$ be a continuous, non decreasing function and
  for every $\lambda>0$, $\beta_{\lambda}$ be the Yosida operator of $\beta=\phi^{-1}$.
  Suppose that 
  \begin{equation}
    \label{eq:252}
    \phi(0)=0,\;
    \text{$A$ and $\beta_{\lambda}$ satisfy \eqref{eq:179} in $L^{q}$,}\;
    \text{$A_{1\cap\infty}$ and $\beta_{\lambda}$ satisfy \eqref{eq:179} in
        $L^{1}$,}
 \end{equation}
 and that one of the following hypotheses holds:
  \begin{enumerate}[($i$)]
  \item \label{propo:Range-cond-in-Rd-Hyp-3} $\phi$ is injective.

  \item\label{propo:Range-cond-in-Rd-Hyp-1} $A$ is $s$-accretive in
    $L^{1}(\Sigma,\mu)$ and single-valued, and there are $r_{0}>0$,
    $K>0$ such that
    \begin{displaymath}
      \abs{\phi(s)}\le K\,\abs{s}\qquad\text{ for every $\abs{s}\le r_{0}$.}
  \end{displaymath}

\item\label{propo:Range-cond-in-Rd-Hyp-2} $A$ is $s$-accretive in
  $L^{1}(\Sigma,\mu)$ and single-valued, and the measure space
  $(\Sigma,\mu)$ is finite.
  \end{enumerate}
  Then, the closure $\overline{A_{1\cap\infty}\phi}$ of $A_{1\cap\infty}\phi$ in
  $L^{1}(\Sigma,\mu)$ is $m$-accretive in $L^{1}(\Sigma,\mu)$ with
  complete resolvent. Moreover, under the hypotheses~\eqref{propo:Range-cond-in-Rd-Hyp-1} and
  \eqref{propo:Range-cond-in-Rd-Hyp-2}, one has
  \begin{equation}
    \label{eq:182}
    \begin{split}
     & \text{for every $\lambda>0$,
        $f\in L^{1}\cap L^{\infty}(\Sigma,\mu)$, there is
        $u\in L^{1}\cap L^{\infty}(\Sigma,\mu)$}\\
      & \text{such that $\phi(u)\in D(A_{1\cap \infty})$ with }u+\lambda A_{1\cap\infty}\phi(u)\ni f.
      \end{split}
  \end{equation}  
\end{proposition}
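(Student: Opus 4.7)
The plan is to tackle the range condition first, since by Proposition~\ref{propo:composition-operators-in-L1} accretivity of $A_{1\cap\infty}\phi$ in $L^1$ and completeness of its resolvent are already granted under either group of hypotheses; moreover, the closure inherits these properties by Proposition~\ref{proposition:closure-in-L1}. So the substance of the argument is to produce, for every $\lambda>0$ and every $f$ in a sufficiently rich subset of $L^1(\Sigma,\mu)$, a solution of $u+\lambda A_{1\cap\infty}\phi(u)\ni f$ (for the closure statement, for $f\in L^1\cap L^\infty$ will suffice by density), and under hypotheses (\ref{propo:Range-cond-in-Rd-Hyp-1})--(\ref{propo:Range-cond-in-Rd-Hyp-2}) to get this solution already inside $L^1\cap L^\infty$, which is the content of~\eqref{eq:182}.

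The construction, generalising Crandall--Pierre from linear to nonlinear completely accretive $A$, runs through Yosida regularisation of $\beta:=\phi^{-1}$. Normalising $\lambda=1$, set $v=\phi(u)$ so the equation becomes $\beta(v)+Av\ni f$. For $\varepsilon>0$ I would solve $\beta_\varepsilon(v_\varepsilon)+Av_\varepsilon\ni f$ in $L^q(\Sigma,\mu)$: the operator $A+\beta_{\varepsilon,q}$ is $m$-accretive in $L^q$ by \cite[Proposition~3.8]{MR2582280} using the compatibility condition~\eqref{eq:179} built into~\eqref{eq:252}, and since $f\in L^q$ when $f\in L^1\cap L^\infty$, a unique $v_\varepsilon\in D(A)$ exists. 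Using $(0,0)\in A$, $\phi(0)=0$ hence $\beta_\varepsilon(0)=0$, and the complete accretivity of $A$ (via Proposition~\ref{prop:completely-accretive} applied to truncations $T\in P_0$), the standard argument yields $\|v_\varepsilon\|_1\le\|f\|_1$, $\|v_\varepsilon\|_\infty\le\|f\|_\infty$ and the analogous bounds on $\beta_\varepsilon(v_\varepsilon)$. Extract weak-$*$ limits $v_\varepsilon\rightharpoonup v$ and $\beta_\varepsilon(v_\varepsilon)\rightharpoonup u=:f-Av$ along a subsequence in $L^1\cap L^\infty$, noting that $Av_\varepsilon=f-\beta_\varepsilon(v_\varepsilon)$ is likewise bounded so that $v_\varepsilon$ stays in a favourable subset of $D(A)$.

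Identification of the limit is the main obstacle, and where the three hypotheses split. Under~(\ref{propo:Range-cond-in-Rd-Hyp-3}), $\beta$ is single-valued continuous, so a Minty-type monotonicity trick against the demiclosedness of the $m$-completely accretive $A$ gives $u=\beta(v)$, i.e.\ $v=\phi(u)$, and the equation $u+A_{1\cap\infty}\phi(u)\ni f$ is satisfied (after checking $v\in L^1\cap L^\infty$, and taking a closure in $L^1$ to accommodate general $f\in L^1$). Under~(\ref{propo:Range-cond-in-Rd-Hyp-1}) or~(\ref{propo:Range-cond-in-Rd-Hyp-2}), $\beta$ can have vertical jumps and $\phi$ can be flat, so weak convergence is not enough to pin down $\beta_\varepsilon(v_\varepsilon)$ on the plateaus. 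Here I would exploit the $s$-accretivity and single-valuedness of $A$ in $L^1$: testing the difference $v_\varepsilon-v_{\varepsilon'}$ with a selection $\psi\in J_{L^1}(v_\varepsilon-v_{\varepsilon'})$ and using $s$-accretivity, together with the near-zero linear bound on $\phi$ in~(\ref{propo:Range-cond-in-Rd-Hyp-1}) or the finite-measure setting in~(\ref{propo:Range-cond-in-Rd-Hyp-2}) (which prevents mass escape to the plateaus), produces a Cauchy estimate that upgrades the weak-$*$ limit to strong $L^1$ convergence of $v_\varepsilon$; this, in turn, forces $u=\beta(v)$ a.e., giving~\eqref{eq:182} directly at the level of $A_{1\cap\infty}\phi$ on $L^1\cap L^\infty$. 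Finally, the $m$-accretivity of $\overline{A_{1\cap\infty}\phi}$ on all of $L^1$ is obtained by density of $L^1\cap L^\infty$ in $L^1$ and Proposition~\ref{proposition:closure-in-L1}.
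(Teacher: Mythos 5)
Your framework—solve a regularised problem and pass to the limit—has the right shape, and the reduction of accretivity and completeness of the resolvent of the closure to Propositions~\ref{propo:composition-operators-in-L1} and~\ref{proposition:closure-in-L1} is fine. The passage to the limit, however, is where the outline breaks, and that is precisely where the three hypotheses earn their keep. Under hypothesis~(\ref{propo:Range-cond-in-Rd-Hyp-3}) you have no Cauchy estimate and propose instead weak-$*$ compactness plus a ``Minty-type monotonicity trick''. This does not close in $L^{1}(\Sigma,\mu)$: bounded families need not have weakly convergent subsequences unless one first shows uniform integrability (Dunford--Pettis), which nothing in your estimates supplies for a general $\sigma$-finite $\mu$; and even with a weak-$*$ limit in $L^{\infty}$, the Minty argument needs to pass to the limit in $\langle\beta_\varepsilon(v_\varepsilon),v_\varepsilon\rangle$, a product of two weak limits that cannot be controlled without strong convergence of one factor. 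The a~priori bounds $\norm{v_\varepsilon}_{1}\le\norm{f}_{1}$, $\norm{v_\varepsilon}_{\infty}\le\norm{f}_{\infty}$ are also not justified: the equation $\beta_\varepsilon(v_\varepsilon)+Av_\varepsilon\ni f$ is not a resolvent equation in the variable $v_\varepsilon$, and the contraction estimate coming from complete accretivity naturally attaches to $u_\varepsilon:=\beta_\varepsilon(v_\varepsilon)$, not to $v_\varepsilon$.

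The paper avoids all of this by choosing a different regularisation, namely $u_\varepsilon+\lambda(\varepsilon\phi(u_\varepsilon)+A_{1\cap\infty}\phi(u_\varepsilon))\ni g$, whose point is structural rather than merely technical: its resolvent is order-preserving, and for a sign-definite datum the added $\varepsilon\phi$-term makes $u_\varepsilon$ \emph{monotone} in $\varepsilon$. One then obtains \emph{strong} $L^{1}$-convergence by Beppo--Levi for $g\ge0$ and $g\le0$, sandwiches the general $g$ between $g^{+}$ and $-g^{-}$, and finishes with a Cauchy-in-$\mu$-measure estimate; no weak compactness is ever used. The limit operator $A_\phi$ is then defined directly as the $\varepsilon\to0$ limit of those resolvents, shown to be accretive (being contained in the $\liminf$ of accretive operators), shown to be $m$-accretive (the range condition passes to the limit), and identified with $\overline{A_{1\cap\infty}\phi}$ via the maximality property~\eqref{eq:114}; your closing ``by density and Proposition~\ref{proposition:closure-in-L1}'' hides exactly this maximality step, since under~(\ref{propo:Range-cond-in-Rd-Hyp-3}) you never show the limit lies in $D(A_{1\cap\infty}\phi)$ itself. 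Replacing your Yosida-of-$\beta$ regularisation with the $\varepsilon\phi$-perturbation is what recovers the monotonicity needed to handle hypothesis~(\ref{propo:Range-cond-in-Rd-Hyp-3}); under~(\ref{propo:Range-cond-in-Rd-Hyp-1})--(\ref{propo:Range-cond-in-Rd-Hyp-2}) the same choice reduces the Cauchy estimate to $\varepsilon\phi(u_\varepsilon)\to0$ in $L^{1}$, which your near-zero linear bound or the finiteness of $\mu$ gives cleanly.
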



The class of accretive operators in $L^{1}$ with complete resolvent is
invariant under perturbation by a Lipschitz continuous mapping. This
is shown similarly as in the proof of
Proposition~\ref{propo:Lipschitz-complete-accretive}. Thus, we omit the
proof of the first statement of the following proposition.

\begin{proposition}
  \label{propo:Lipschitz-complete} 
  Let $A$ be an accretive operator in $L^{1}$ with complete resolvent and
  $(0,0)\in A$. Further, suppose $F : L^{1}(\Sigma,\mu)\to
  L^{1}(\Sigma,\mu)$ is the Nemytski operator of a Carath\'eodory function
  $f : \Sigma\times\R\to \R$ satisfying $f(x,0)=0$ for a.e. $x\in \Sigma$
  and Lipschitz condition~\eqref{eq:2} for some constant
  $\omega\ge 0$. Then, the following statements hold:
  \begin{enumerate}
    \item The operator $A+F+\omega I$ is accretive in $L^{1}$ with complete resolvent.
    \item Suppose $A$ and $\phi : \R\to \R$ satisfy the hypotheses of
      Proposition~\ref{propo:Range-cond-in-Rd} and $\overline{A_{1\cap
          \infty}\phi}$ be the closure of $A_{1\cap\infty}\phi$. Then,
      $\overline{A_{1\cap \infty}\phi}+F+\omega I$ is $m$-accretive in
      $L^{1}(\Sigma,\mu)$ and for every $\lambda>0$ satisfying
      $\lambda \omega<1$, one has that
  \begin{equation}
    \label{eq:201}
    L^{1}\cap L^{\infty}(\Sigma,\mu)\subseteq Rg(I+\lambda (A_{1\cap\infty}\phi+F)).
  \end{equation}
  \end{enumerate}
 \end{proposition}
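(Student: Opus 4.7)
The first assertion follows by the same argument as Proposition~\ref{propo:Lipschitz-complete-accretive}$(1)$, so I concentrate on the second. The plan has three ingredients: accretivity of the sum, global $m$-accretivity, and the sharper range inclusion in $L^{1}\cap L^{\infty}$; the latter two are realised by two fixed-point arguments, one in $L^{1}$ and one inside $L^{1}\cap L^{\infty}$.

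First, I would observe that $F+\omega I$ is $s$-accretive in $L^{1}$: for any $u,\hat{u}\in L^{1}$ and any $\psi\in J(u-\hat{u})$ (so $\psi\in \textrm{sign}(u-\hat{u})$ $\mu$-a.e.\ and $\int_{\Sigma}\psi\,(u-\hat{u})\,\dmu=\norm{u-\hat{u}}_{1}$), the Lipschitz bound $\abs{Fu-F\hat{u}}\le \omega\abs{u-\hat{u}}$ yields
\begin{equation*}
  \int_{\Sigma}\psi\,\bigl((F+\omega I)u-(F+\omega I)\hat{u}\bigr)\,\dmu\ge -\omega\norm{u-\hat{u}}_{1}+\omega\norm{u-\hat{u}}_{1}=0.
\end{equation*}
Since $\overline{A_{1\cap\infty}\phi}$ is accretive in $L^{1}$ by Proposition~\ref{propo:Range-cond-in-Rd}, its sum with the $s$-accretive $F+\omega I$ is accretive. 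For $m$-accretivity, given $g\in L^{1}$ and $\lambda>0$, I rewrite $g\in u+\lambda(\overline{A_{1\cap\infty}\phi}\,u+Fu+\omega u)$ as the fixed-point equation $u=Tu$ with $Tv:=J_{\lambda/(1+\lambda\omega)}\!\left(\tfrac{g-\lambda Fv}{1+\lambda\omega}\right)$, where $J_{\mu}$ is the resolvent of $\overline{A_{1\cap\infty}\phi}$ (defined on all of $L^{1}$ by Proposition~\ref{propo:Range-cond-in-Rd}). The $L^{1}$-contractivity of $J_{\mu}$ combined with the Lipschitz estimate for $F$ gives $\norm{Tv-T\hat{v}}_{1}\le \tfrac{\lambda\omega}{1+\lambda\omega}\norm{v-\hat{v}}_{1}$, so Banach's fixed-point theorem supplies a unique solution.

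For the range inclusion, fix $\lambda>0$ with $\lambda\omega<1$ and $f\in L^{1}\cap L^{\infty}$. I run the analogous scheme inside $L^{1}\cap L^{\infty}$: for $v\in L^{1}\cap L^{\infty}$, define $Sv$ to be the $u\in L^{1}\cap L^{\infty}$ solving $u+\lambda A_{1\cap\infty}\phi(u)\ni f-\lambda Fv$, whose existence in that intersection is provided by~\eqref{eq:182} since $f-\lambda Fv\in L^{1}\cap L^{\infty}$ (noting $\abs{Fv}\le \omega\abs{v}$ because $f(x,0)=0$). The complete resolvent property via Proposition~\ref{propo:quasi-accretive-operators-in-L1-complete-resolvent} then produces $\norm{Sv}_{\infty}\le \norm{f}_{\infty}+\lambda\omega\norm{v}_{\infty}$ and $\norm{Sv}_{1}\le \norm{f}_{1}+\lambda\omega\norm{v}_{1}$, so the convex set
\begin{equation*}
  C=\Big\{v\in L^{1}\cap L^{\infty}\;\Big\vert\;\norm{v}_{\infty}\le \tfrac{\norm{f}_{\infty}}{1-\lambda\omega},\;\norm{v}_{1}\le \tfrac{\norm{f}_{1}}{1-\lambda\omega}\Big\}
\end{equation*}
is $S$-invariant and closed in $L^{1}$ (the $L^{\infty}$-ball being $L^{1}$-closed by a.e.\ convergence of a subsequence), while $S\big|_{C}$ is an $L^{1}$-contraction of ratio $\lambda\omega$. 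Banach's theorem delivers a fixed point $u\in C$, giving $f\in u+\lambda(A_{1\cap\infty}\phi+F)(u)$.

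The principal obstacle is maintaining the iterates inside $L^{1}\cap L^{\infty}$: this forces the simultaneous use of~\eqref{eq:182} (keeping each iterate in $D(A_{1\cap\infty}\phi)$ after one resolvent step) together with the $L^{\infty}$-contractivity of the complete resolvent of $\overline{A_{1\cap\infty}\phi}$; the hypothesis $\lambda\omega<1$ is exactly what converts these growth estimates into a self-map on a closed bounded subset of $L^{1}\cap L^{\infty}$, making Banach's fixed-point theorem applicable.
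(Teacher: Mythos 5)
Your first assertion and your $m$-accretivity argument for $\overline{A_{1\cap\infty}\phi}+F+\omega I$ are the same as the paper's (the paper just cites Barbu's perturbation theorem rather than running the fixed-point iteration explicitly). For the range inclusion~\eqref{eq:201} you take a genuinely different route. The paper argues by \emph{descent}: it first uses the already-established $m$-accretivity of the closure to produce a solution $u$ of $u+\lambda(\overline{A_{1\cap\infty}\phi}\,u+Fu)=v$, observes via Proposition~\ref{propo:quasi-accretive-operators-in-L1-complete-resolvent} that $u\in L^{\infty}$, then applies~\eqref{eq:182} to the frozen right-hand side $v-\lambda F(u)\in L^{1}\cap L^{\infty}$ to manufacture a competitor $\tilde{u}\in D(A_{1\cap\infty}\phi)$, and finally invokes $L^{1}$-contractivity of the resolvent to force $\tilde{u}=u$. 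You instead construct the solution directly by a Banach iteration \emph{confined} to $L^{1}\cap L^{\infty}$: each iterate is produced by~\eqref{eq:182}, the complete-resolvent bounds keep the iterates in a fixed $L^{1}\cap L^{\infty}$-ball $C$, and $C$ is shown to be closed in $L^{1}$ so that the $\lambda\omega$-contraction converges there. Your argument has the advantage of not requiring $m$-accretivity of the closure as a preliminary input to~\eqref{eq:201} (it supplies its own existence), and of being fully constructive; the paper's argument is shorter because it recycles the $m$-accretivity already on hand and closes via a uniqueness comparison. Both rely on the same two ingredients — the range condition~\eqref{eq:182} and the $L^{\infty}$-bound from the complete resolvent — and both need the accretivity of $A_{1\cap\infty}\phi$ (which you tacitly use to ensure your map $S$ is well-defined, i.e.\ that the $u$ supplied by~\eqref{eq:182} is unique; it would be worth saying this in a final write-up). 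Your proof is correct.
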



 \begin{proof}
   By employing the same
   notation as in Proposition~\ref{propo:Range-cond-in-Rd},
   $\overline{A_{1\cap\infty}\phi}$ is $m$-accre\-tive in $L^{1}(\Sigma,\mu)$ with
   complete resolvent. Since $F+\omega I$ is accretive and Lipschitz
   continuous in $L^{1}(\Sigma,\mu)$, a standard fixed point argument shows
   that $F+\omega I$ is $m$-accretive in $L^{1}(\Sigma,\mu)$. By the
   continuity of $F+\omega I$ and since $\overline{A_{1\cap\infty}\phi}$ is
   $m$-accretive in $L^{1}(\Sigma,\mu)$, \cite[Theorem~3.1]{MR2582280}
   implies that $\overline{A_{1\cap\infty}\phi}+F+\omega I$ is $m$-accretive in
   $L^{1}(\Sigma,\mu)$. 

   Now, let $\lambda>0$ such that $\lambda \omega<1$. Then,
   Proposition~\ref{propo:quasi-accretive-operators-in-L1-complete-resolvent}
   yields that the resolvent operator $J_{\lambda}$ of $\overline{A_{1\cap\infty}\phi}+F$
   satisfies~\eqref{eq:34} with respect to the
   $L^{\infty}$-norm. Thus, for every
   $v\in L^{1}\cap L^{\infty}(\Sigma,\mu)$, there is a
   $u\in L^{\infty}(\Sigma,\mu)\cap D(\overline{A_{1\cap\infty}\phi})$ such that
   $u+\lambda (\overline{A_{1\cap\infty}\phi}(u)+F(u))=v$ and so, if
   $J_{\lambda}^{\overline{A_{1}\phi}}$ denotes the resolvent of
   $\overline{A_{1\cap\infty}\phi}$,
   $J_{\lambda}^{\overline{A_{1\cap\infty}\phi}}[v-\lambda F(u)]=u$. On the other
   hand, since $v-\lambda F(u)\in L^1\cap L^{\infty}(\Sigma,\mu)$ and
   since $A_{1\cap\infty}\phi$ satisfies the range condition~\eqref{eq:182}, there is a
   $\tilde{u}\in L^{1}\cap L^{\infty}(\Sigma,\mu)$ such that
   $\phi(\tilde{u})\in D(A_{1\cap\infty})$ and
   $J_{\lambda}^{A_{1}\phi}[v-\lambda F(u)]=\tilde{u}$. Since
   $A_{1\cap\infty}\phi\subseteq \overline{A_{1\cap\infty}\phi}$, the resolvents
   $J_{\lambda}^{\overline{A_{1\cap\infty}\phi}}$ and $J_{\lambda}^{A_{1\cap\infty}\phi}$ coincide
   on $Rg(I+\lambda A_{1\cap\infty}\phi)$ and since $J_{\lambda}^{\overline{A_{1}\phi}}$
   is contractive on $L^{1}(\Sigma,\mu)$, we obtain that
   $\tilde{u}=u$, implying that $u$ satisfies
   $u+\lambda ( A_{1\cap\infty}\phi(u)+F(u))=v$. This shows that also the second
   statement of this proposition holds.
 \end{proof}

\section{Gagliardo-Nirenberg type inequalities \&
  $L^{q}$-$L^{r}$-regularity}\label{gn}

This section is concerned with establishing
$L^{q}$-$L^{r}$-regularisation estimates for $1\le q, r\le \infty$ of
semigroups $\{T_{t}\}_{t\ge 0}$ provided their infinitesimal generator $-A$ satisfies
a \emph{Gagliardo-Nirenberg type inequality} of the
form~\eqref{eq:242} or~\eqref{eq:5}.

\begin{remark}
  We note that for $\omega=0$, the Gagliardo-Nirenberg type
  inequality~\eqref{eq:242} reduces to
\begin{displaymath}
    \norm{u-u_{0}}_{r}^{\sigma} \le C\,[u-u_{0},v]_{q}\;
    \norm{u-u_{0}}_{q}^{\varrho}
\end{displaymath}
for all $(u,v)\in A$, and the Gagliardo-Nirenberg type inequality with
differences~\eqref{eq:5} becomes
\begin{equation}
  \label{eq:243}
  \norm{u-\hat{u}}_{r}^{\sigma} \le C\,
   [u-\hat{u},v-\hat{v}]_{q}\;\norm{u-\hat{u}}_{q}^{\varrho}
\end{equation}
for all $(u,v)$, $(\hat{u},\hat{v})\in A$, which are similar to the classical one
(cf.~\cite{MR0109940}).
\end{remark}

Further, similar to the \emph{classical case}, for $\varrho=0$, Ga\-gliardo-Nirenberg type
inequalities~\eqref{eq:242} and~\eqref{eq:5} reduce to the
following so-called Sobolev type inequalities.

\begin{definition}
  \label{def:Sobolev-inequality}
  We say an operator $A$ on $L^{q}(\Sigma,\mu)$ for some $1\le q<\infty$ satisfies a
  \emph{Sobolev type inequality for some $(u_{0},0)\in A$}
  (respectively, with \emph{differences}) if there exist
  $1\le r\le \infty$,  $\sigma>0$, and $C>0$ such that
  $(u_{0},0)\in A$ and
   \begin{displaymath}
    \norm{u-u_{0}}_{r}^{\sigma} \le C\,
    \Big( [u-u_{0},v]_{q}+\omega \norm{u-u_{0}}_{q}^{q}\Big)
  \end{displaymath}
   for every $(u,v)\in A$  (respectively,
   \begin{displaymath}
    \norm{u-\hat{u}}_{r}^{\sigma} \le C\,
    \Big( [u-\hat{u},v-\hat{v}]_{q}+\omega \norm{u-\hat{u}}_{q}^{q}\Big)
  \end{displaymath}
  for every $(u,v)$, $(\hat{u},\hat{v})\in A$).
\end{definition}

Our first main theorem of this section applies to the class
of operators considered in Section~\ref{sec:comp}. 

\begin{theorem}\label{thm:quasi-super-contractivity}
  Let $A+\omega I$ be an $m$-accretive operator on $L^{q}(\Sigma,\mu)$
  for some $1\le q< \infty$ and $\omega\ge 0$. Suppose $A$
  satisfies the Gagliardo-Nirenberg type inequality~\eqref{eq:5} for some $1\le
  r\le \infty$, $\varrho\ge 0$ and $\sigma>0$, and the semigroup
  $\{T_{t}\}_{t\ge 0}\sim -A$ on $\overline{D(A)}^{\mbox{}_{L^{q}}}$
  has exponential growth~\eqref{eq:62} for $\tilde{q}=r$. Then  $\{T_{t}\}_{t\geq 0}$ satisfies
   \begin{equation}\tag{\ref{eq:18}}
    \norm{T_{t}u-T_{t}\hat{u}}_{r}\le 
    \left(\tfrac{C}{q}\right)^{1/\sigma}\,t^{-\alpha}\,e^{\omega\,\beta\,t}\,
    \norm{u-\hat{u}}_{q}^{\gamma}
  \end{equation}
  for every $t>0$ and $u$, $\hat{u}\in \overline{D(A)}^{\mbox{}_{L^{q}}}$ with
  exponents $\alpha=\frac{1}{\sigma}$, $\beta=\gamma+1$ and $\gamma=\frac{q+\varrho}{\sigma}$.
\end{theorem}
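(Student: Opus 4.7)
The plan is to reduce the estimate to a differential/integral inequality for the quantities $\psi(s):=\tfrac{1}{q}\|T_{s}u-T_{s}\hat{u}\|_{q}^{q}$ and $\|T_{s}u-T_{s}\hat{u}\|_{r}^{\sigma}$, linked by the hypothesis~\eqref{eq:5}, and to integrate it on $(0,t)$. The exponential growth assumption for $\tilde{q}=r$ is what turns an integral average into a pointwise bound at time $t$.

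First I would assume that $u,\hat{u}\in D(A)$ and that the mild solutions $u(s):=T_{s}u$, $\hat{u}(s):=T_{s}\hat{u}$ are strong solutions (legitimate at least when $L^q$ is uniformly convex, i.e. $1<q<\infty$; see Step~4 below for the general case). Then $\psi$ is absolutely continuous with
$$
\psi'(s)=-[u(s)-\hat{u}(s),\,A^{\circ}u(s)-A^{\circ}\hat{u}(s)]_{q}\quad\text{a.e.,}
$$
and since $A+\omega I$ is accretive one has via~\eqref{eq:57} that $\psi'(s)\le q\omega\psi(s)$, so $w(s):=e^{-q\omega s}\psi(s)$ is non-increasing and $\psi(s)\le e^{q\omega s}\psi(0)$. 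Inserting the pair $(u(s),-u'(s))$, $(\hat{u}(s),-\hat{u}'(s))\in A$ into~\eqref{eq:5} gives
$$
\|u(s)-\hat{u}(s)\|_{r}^{\sigma}\;\le\; C\bigl(-\psi'(s)+q\omega\psi(s)\bigr)\bigl(q\psi(s)\bigr)^{\varrho/q},
$$
which after the substitution $\psi=e^{q\omega s}w$ reads $-Cq^{\varrho/q}\,e^{(q+\varrho)\omega s}\,w'(s)\,w(s)^{\varrho/q}$.

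Next I would exploit the exponential growth~\eqref{eq:62} with $\tilde{q}=r$: combined with the semigroup property $T_{t}=T_{t-s}\circ T_{s}$ it forces $s\mapsto e^{-\omega s}\|T_{s}u-T_{s}\hat{u}\|_{r}$ to be \emph{non-increasing} on $[0,t]$, hence
$$
t\,e^{-\sigma\omega t}\,\|T_{t}u-T_{t}\hat{u}\|_{r}^{\sigma}\;\le\;\int_{0}^{t}e^{-\sigma\omega s}\,\|T_{s}u-T_{s}\hat{u}\|_{r}^{\sigma}\,ds.
$$
Plugging in the bound from the previous step, the integrand becomes $-Cq^{\varrho/q}\,e^{(q+\varrho-\sigma)\omega s}\,w'(s)\,w(s)^{\varrho/q}$; bounding the exponential factor by its value at $s=t$ and integrating the remaining $-w'w^{\varrho/q}$ explicitly yields
$$
\|T_{t}u-T_{t}\hat{u}\|_{r}^{\sigma}\;\le\;\frac{C}{q+\varrho}\,t^{-1}\,e^{(q+\varrho)\omega t}\,\|u-\hat{u}\|_{q}^{q+\varrho}.
$$
Taking $1/\sigma$-th roots, using $\tfrac{1}{q+\varrho}\le\tfrac{1}{q}$ and $\tfrac{q+\varrho}{\sigma}=\gamma\le\gamma+1=\beta$, delivers~\eqref{eq:18} for $u,\hat{u}\in D(A)$; a density argument based on the Lipschitz continuity of $T_{t}$ with constant $e^{\omega t}$ on $\overline{D(A)}^{L^{q}}$ (Crandall--Liggett) then extends the bound to all initial data.

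The main obstacle is justifying the energy identity for $\psi'$ when $q=1$, since $L^{1}$ is not uniformly convex and strong differentiability of $T_{s}u$ is not guaranteed on $D(A)$. The cleanest remedy is to run the whole argument on the Crandall--Liggett implicit scheme: with $\lambda=t/n$ and $u_{k}=J_{\lambda}u_{k-1}$, $\hat{u}_{k}=J_{\lambda}\hat{u}_{k-1}$, the bracket inequality~\eqref{eq:56} gives
$$
[u_{k}-\hat{u}_{k},\,(u_{k-1}-\hat{u}_{k-1})-(u_{k}-\hat{u}_{k})]_{q}\;\le\;q\psi_{k-1}-q\psi_{k},
$$
which, fed into~\eqref{eq:5}, produces a Riemann-sum analogue of the continuous integral inequality above. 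Telescoping, applying the exponential formula~\eqref{eq:36} and sending $n\to\infty$ recovers~\eqref{eq:18} in full generality.
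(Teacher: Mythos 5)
Your proof follows essentially the same two-stage route as the paper: for $q>1$ you differentiate $\psi(s)=\tfrac1q\|T_su-T_s\hat u\|_q^q$, feed the energy identity into the Gagliardo--Nirenberg inequality~\eqref{eq:5}, and use exponential growth in $L^r$ to convert a time average into a pointwise estimate at $t$; for general $q\ge 1$ you pass to the Crandall--Liggett implicit scheme with the convexity bound~\eqref{eq:56}, which is exactly the paper's second proof. The one genuine slip is the claim that $e^{(q+\varrho-\sigma)\omega s}$ is bounded by its value at $s=t$: that is an upper bound only when $q+\varrho\ge\sigma$, whereas the paper's examples (e.g.\ the $p$-Laplacian, where $\sigma=p>2=q+\varrho$) include the opposite sign; consequently your intermediate factor $e^{(q+\varrho)\omega t}$ is too small in that case. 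Replacing it by the always-valid estimate $e^{(q+\varrho-\sigma)\omega s}\le e^{\max(0,\,q+\varrho-\sigma)\omega t}\le e^{(q+\varrho)\omega t}$ and then moving $e^{\sigma\omega t}$ back across produces precisely the advertised factor $e^{(q+\varrho+\sigma)\omega t}=e^{\sigma\omega\beta t}$, and your concluding relaxations $\tfrac1{q+\varrho}\le\tfrac1q$ and $\gamma\le\beta$ close the argument as in the paper.
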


\allowdisplaybreaks
\begin{remark}
 \label{rem:8}
  If $1\le q< r\le \infty$ and if there is an element
  $u_{0}\in \overline{D(A)}^{\mbox{}_{L^{q}}}$ such that
  $T_{t}u_{0}\in L^{r}(\Sigma,\mu)$ for some (all) $t>0$, then
  inequality~\eqref{eq:18} implies that $\{T_{t}\}_{t\ge 0}$ enjoys an
  \emph{$L^{q}$-$L^{r}$-regularisation effect} in the sense that for
  some (all) $t>0$, $T_{t}$ maps $\overline{D(A)}^{\mbox{}_{L^{q}}}$
  into $L^{r}(\Sigma,\mu)$. Thus we call
  inequality~\eqref{eq:18} an \emph{$L^{q}$-$L^{r}$-regularisation
    estimate} if $r>q$. If $q\ge r$ then we call~\eqref{eq:18} an \emph{$L^{q}$-$L^{r}$-regularity
    estimate}. For example, the
  semigroup $\{T_{t}\}_{t\ge 0}$ associated with the
  total variational flow (see~\cite{Ha2015TotalVariationalFlow})
  satisfies inequality~\eqref{eq:18} for some $r<q$ and some $u_{0}\in
  \overline{D(A)}^{\mbox{}_{L^{q}}}\cap L^{\infty}(\Sigma,\mu)$
  satisfying $T_{t}u_{0}=u_{0}$ for all $t\ge 0$. 
\end{remark}

\begin{remark}
  \label{rem:2}
  We want to emphasise that Theorem~\ref{thm:quasi-super-contractivity}
  implies that the parameters $1\le r\le \infty$, $1\le q<\infty$ and
  exponents $\sigma>0$ and $\varrho\ge 0$ in
  $L^{q}$-$L^{r}$-regulari\-sation estimate~\eqref{eq:18} are
  \emph{stable} under a monotone or Lipschitz continuous
  perturbation. To be more specific, suppose $B$ is an accretive
  operator on $L^{q}(\Sigma,\mu)$ satisfying the Gagliardo-Nirenberg type
  inequality~\eqref{eq:243}, $F$ be the Nemytski operator
  on $L^{q}(\Sigma,\mu)$ of a Carath\'eodory function
  $f : \Sigma\times \R\to\R$ satisfying $f(x,0)=0$ for a.e. $x\in \Sigma$
  and Lipschitz condition~\eqref{eq:2} for some constant
  $\omega\ge 0$ and $\beta_{q}$ the accretive operator on
  $L^{q}(\Sigma,\mu)$ associated with a monotone graph $\beta$ on $\R$
  (if $q=1$ suppose, in addition, that $B+\beta_{1}$ is accretive). We
  set $A:=B+\beta_{q}+F$. Then, by property~\eqref{eq:57} of the
  $q$-bracket $[\cdot,\cdot]_{q}$ and since $\beta_{q}$ and
  $F+\omega I$ are accretive in $L^{q}(\Sigma,\mu)$, we see that
  \begin{align*}
    &
      [u-\hat{u},(v_{1}+v_{2}+F(u))-(\hat{v}_{1}+\hat{v}_{2}+F(\hat{u}))]_{q} 
      + \omega \norm{u-\hat{u}}_{q}^{q}\\
    & \qquad= [u-\hat{u},v_{1}-\hat{v}_{1}]_{q}+ [u-\hat{u},v_{2}-\hat{v}_{2}]_{q}\\
    &\qquad\hspace{2,2cm}   + 
      [u-\hat{u},(F(u)+\omega u)-(F(\hat{u})+\omega\hat{u})]_{q}\\
    & \qquad \ge [u-\hat{u},v_{1}-\hat{v}_{1}]_{q}
  \end{align*}
  for every $u$, $\hat{u}\in D(A)\cap D(\beta_{q})$, $v_{1}\in Bu$,
  $\hat{v}_{1}\in B\hat{u}$, $v_{2}\in \beta_{q}(u)$,
  $\hat{v}_{2}\in \beta_{q}(\hat{u})$. Thus,
  the $L^{q}$-$L^{r}$-regularisation effect~\eqref{eq:18} for $1\le q<r\le \infty$ of a
  semigroup $\{T_{t}\}_{t\geq 0}\sim-A$ for $A=B+F$ is only
  determined by $B$.
\end{remark}

\begin{remark}
   The statement of Theorem~\ref{thm:quasi-super-contractivity}
  remains unchanged if one replaces the constant $e^{\omega t}$ in
  condition~\eqref{eq:62} for $\tilde{q}=s$ by $M\,e^{\omega t}$
  for some constant $M>0$.  Then the constant $C$ in~\eqref{eq:18} has to be changed accordingly.
\end{remark}

A common situation in applications is the one where \emph{$A$ is quasi $m$-completely accretive
   on $L^{2}(\Sigma,\mu)$}. Also, we shall see in
Section~\ref{sec:p-laplace} how to derive a  Gagliardo-Nirenberg type
inequality~\eqref{eq:5} for $q=2$. Therefore in practice we shall often use the following special case of
Theorem~\ref{thm:quasi-super-contractivity}.

\begin{corollary}
  \label{coro:super-contractivity}
  Let $A+\omega I$ be an $m$-completely accretive operator on
  $L^{2}(\Sigma,\mu)$ for some $\omega\ge 0$. Suppose there are
  $1\le r\le \infty$, $\varrho\ge 0$, $\sigma>0$ and $C>0$ such that
  \begin{equation}
    \label{eq:245}
    \norm{u-\hat{u}}_{r}^{\sigma} \le C\,
    \Big[ [u-\hat{u},v-\hat{v}]_{2}+\omega \norm{u-\hat{u}}_{2}^{2}\Big]\;
    \norm{u-\hat{u}}_{2}^{\varrho}
  \end{equation}
  for every $(u,v)$,
  $(\hat{u},\hat{v})\in A$. Then the semigroup $\{T_{t}\}_{t\geq 0}\sim-A$ on
  $\overline{D(A)}^{\mbox{}_{L^{2}}}$ satisfies
  \begin{displaymath}
    \norm{T_{t}u-T_{t}\hat{u}}_{r}\le 
    \left(\tfrac{C}{2}\right)^{1/\sigma}\,t^{-\alpha}\,e^{\omega\,\beta\,t}\,
    \norm{u-\hat{u}}_{2}^{\gamma}
  \end{displaymath}
  for every $t>0$ and $u$, $\hat{u}\in \overline{D(A)}^{\mbox{}_{L^{2}}}$ with
  $\alpha=\tfrac{1}{\sigma}$, $\beta=\gamma+1$ and $\gamma=\frac{2+\varrho}{\sigma}$.
\end{corollary}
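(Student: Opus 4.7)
The plan is to obtain Corollary~\ref{coro:super-contractivity} as a direct specialisation of Theorem~\ref{thm:quasi-super-contractivity} to the case $q=2$. The only task is therefore to verify that the three hypotheses of Theorem~\ref{thm:quasi-super-contractivity} are met under the stronger assumption that $A+\omega I$ is $m$-completely accretive on $L^{2}(\Sigma,\mu)$.

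First, I would observe that by Remark~\ref{rem:1} every complete contraction is in particular contractive in $L^{2}(\Sigma,\mu)$, so $m$-complete accretivity of $A+\omega I$ on $L^{2}$ implies $m$-accretivity of $A+\omega I$ on $L^{2}$, which gives the first hypothesis of Theorem~\ref{thm:quasi-super-contractivity} with $q=2$. Second, the Gagliardo--Nirenberg assumption \eqref{eq:245} is literally \eqref{eq:5} with parameters $q=2$, $r$, $\varrho$, $\sigma$, so the second hypothesis is immediate.

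The only non-trivial point is to check the exponential growth property \eqref{eq:62} for $\tilde{q}=r$. This is precisely where the strength of complete accretivity (as opposed to mere accretivity in $L^{2}$) is needed, and it is supplied by Proposition~\ref{propo:complete-expo-grow}: since $A+\omega I$ is $m$-completely accretive on $L^{2}(\Sigma,\mu)$, the semigroup $\{T_{t}\}_{t\ge 0}\sim -A$ on $\overline{D(A)}^{\mbox{}_{L^{2}}}$ satisfies
\begin{displaymath}
  \norm{T_{t}u-T_{t}\hat{u}}_{\tilde{q}}\le e^{\omega t}\norm{u-\hat{u}}_{\tilde{q}}
\end{displaymath}
for every $1\le \tilde{q}\le\infty$ and every $u,\hat{u}\in \overline{D(A)}^{\mbox{}_{L^{2}}}\cap L^{\tilde{q}}(\Sigma,\mu)$. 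Specialising to $\tilde{q}=r$ yields \eqref{eq:62} with $q=2$, and all three hypotheses of Theorem~\ref{thm:quasi-super-contractivity} are in place.

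Applying Theorem~\ref{thm:quasi-super-contractivity} with $q=2$ then produces \eqref{eq:18} with exponents $\alpha=1/\sigma$, $\gamma=(2+\varrho)/\sigma$ and $\beta=\gamma+1$, which is exactly the claimed estimate of Corollary~\ref{coro:super-contractivity}. There is no real obstacle in the argument beyond recognising that complete accretivity is exactly the extra ingredient needed to propagate the $L^{2}$-contractivity of the semigroup to the $L^{r}$-growth bound required by the general theorem.
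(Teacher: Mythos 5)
Your proof is correct and is exactly the route the paper intends: the corollary is presented as a special case of Theorem~\ref{thm:quasi-super-contractivity} at $q=2$, and the one non-trivial hypothesis to discharge — exponential growth of the semigroup in the $L^{r}$-norm — is precisely what Proposition~\ref{propo:complete-expo-grow} supplies from $m$-complete accretivity. Nothing is missing.
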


Now, we turn to the proof of
Theorem~\ref{thm:quasi-super-contractivity}. For this, we first
consider the case $q>1$. Then by \eqref{eq:35},
the $q$-brackets $[u-\hat{u},v-\hat{v}]_{q}$ can be replaced by 
$\langle (u-\hat{u})_{q},v-\hat{v}\rangle$
in inequality~\eqref{eq:5}. Moreover, the Lebesgue space
$L^{q}(\Sigma,\mu)$ and its dual space are uniformly convex Banach
spaces and so for every $u\in D(A)$, the mild solution
$t\mapsto T_{t}u$ is almost everywhere differentiable, everywhere
differentiable from the right on $[0,\infty)$ with values in
$L^{q}(\Sigma,\mu)$, and satisfies~\eqref{eq:38}. Using this
leads to the following short proof of
Theorem~\ref{thm:quasi-super-contractivity} in this situation
(cf. \cite{MR1164643} in the case of linear semigroups for $\omega=0$
and $\varrho=1$). 

\begin{proof}[First proof of
  Theorem~\ref{thm:quasi-super-contractivity} for $q>1$]
  First, let $u$, $\hat{u}\in D(A)$. By
  hypothesis, one has
  \begin{equation}
    \label{eq:41}
    \norm{T_{t}u-T_{t}\hat{u}}_{\tilde{q}}\le e^{\omega (t-s)}\norm{T_{s}u-T_{s}\hat{u}}_{\tilde{q}}
  \end{equation}
  for every $t\ge s>0$ and for every $\tilde{q}\in \{q,r\}$. Combining
  this with  inequality~\eqref{eq:5}
  and the fact that $\tfrac{d}{dt}_{+}T_{t}u=-A^{0}T_{t}u$ for every
  $t\ge 0$ (cf.~\eqref{eq:38}), we see that \allowdisplaybreaks
  \begin{align*}
      \norm{u-\hat{u}}_{q}^{q+\varrho} & \ge
      \Big[\norm{u-\hat{u}}_{q}^{q}-e^{-\omega\,q\,t}\norm{T_{t}u-T_{t}\hat{u}}_{q}^{q}\Big]
      \,\norm{u-\hat{u}}_{q}^{\varrho}\\
      & = \Big[-
      \int_{0}^{t}\tfrac{d}{ds}\left(e^{-\omega\,q\,s}\norm{T_{s}u-T_{s}\hat{u}}_{q}^{q}\right)\,\ds\Big]\,
      \norm{u-\hat{u}}_{q}^{\varrho}\\
      & =\Big[ q \int_{0}^{t} e^{-\omega\,q\,s}\big( \langle
      (T_{s}u-T_{s}\hat{u})_{q},A^{\circ}T_{s}u-A^{\circ}T_{s}\hat{u}\rangle\,\Big.\\
      &\hspace{5cm}+\omega\,\Big. \norm{T_{s}u-T_{s}\hat{u}}_{q}^{q}\big) 
      \ds\Big]\,\norm{u-\hat{u}}_{q}^{\varrho}\\
      & \ge q \int_{0}^{t} e^{-\omega\,(q+\varrho)\,s}\Big[ \langle
      (T_{s}u-T_{s}\hat{u})_{q},A^{\circ}T_{s}u-A^{\circ}T_{s}\hat{u}\rangle\,\Big.\\
      &\hspace{4cm}+\omega\,\Big. \norm{T_{s}u-T_{s}\hat{u}}_{q}^{q}\Big]\, 
      \norm{T_{s}u-T_{s}\hat{u}}_{q}^{\varrho}\,\ds\\
      & \ge \frac{q}{ C}
      \int_{0}^{t}e^{-\omega\,(q+\varrho)\,s}\norm{T_{s}u-T_{s}\hat{u}}_{r}^{\sigma}\,\ds\\
    &  \ge \frac{q}{ C}
     \left( \int_{0}^{t}e^{-\omega\,(q+\varrho)\,s}\,e^{-\omega (t-s)\sigma}\,\ds \right)
       \norm{T_{t}u-T_{t}\hat{u}}_{r}^{\sigma}.\\
       &  \ge \frac{q}{ C}
     \left( \int_{0}^{t}e^{-\omega\,(q+\varrho)\,s}\,e^{-\omega t\sigma}\,\ds \right)
       \norm{T_{t}u-T_{t}\hat{u}}_{r}^{\sigma}.\\
    & \ge \frac{q}{ C'}\,t\,e^{-\omega(q+\varrho+\sigma)\,t}\,\norm{T_{t}u-T_{t}\hat{u}}_{r}^{\sigma},
  \end{align*}
  showing that inequality~\eqref{eq:18} holds for $u$, $\hat{u}\in D(A)$.

  Now, let $u$, $\hat{u}\in \overline{D(A)}^{\mbox{}_{L^{q}}}$. Then,
  there are sequences $(u_{n})$ and $(\hat{u}_{n})$ in $D(A)$ such
  that $u_{n}$ converges to $u$ and $\hat{u}_{n}$ converges to
  $\hat{u}$ in $L^{q}(\Sigma,\mu)$. Since the semigroup
  $\{T_{t}\}_{t\ge0}$ has exponential growth~\eqref{eq:62} for
  $\tilde{q}=q$, for every $t\ge 0$, the sequence
  $S_{n}(t):=T_{t}u_{n}-T_{t}\hat{u}_{n}$ converges to
  $S(t):=T_{t}u-T_{t}\hat{u}$ in $L^{q}(\Sigma,\mu)$. 
  Moreover, by the first step of
  this proof, inequality~\eqref{eq:18} implies that
  \begin{displaymath}
    \norm{S_{n}(t)}_{r}\le 
    \left(\tfrac{C}{q}\right)^{1/\sigma}\,t^{-\alpha}\,e^{\omega \frac{q+\varrho+\sigma}{\sigma}\,t}\;
    \norm{u_{n}-\hat{u}_{n}}_{q}^{\frac{q+\varrho}{\sigma}}
  \end{displaymath}
  for every $n$. Since the $L^{r}$-norm is lower semicontinuous on
  $L^{q}(\Sigma,\mu)$, sending $n\to\infty$ in the previous inequality
  yields 
  $S(t)\in L^{r}(\Sigma,\mu)$ and
  \begin{displaymath}
    \norm{S(t)}_{r}\le 
    \left(\tfrac{C}{q}\right)^{1/\sigma}\;t^{-\alpha}\; e^{\omega \frac{q+\varrho+\sigma}{\sigma}\,t}\;
    \norm{u-\hat{u}}_{q}^{\frac{q+\varrho}{\sigma}}.
  \end{displaymath}
  Therefore inequality~\eqref{eq:18} holds for every $u$,
  $\hat{u}\in\overline{D(A)}^{\mbox{}_{L^{q}}}$, completing
  the proof of Theorem~\ref{thm:quasi-super-contractivity} for $q>1$.
\end{proof}

Our second proof of Theorem~\ref{thm:quasi-super-contractivity} is
rather technical and uses the definition of mild solutions
(cf. \cite{MR554377} in the case $\omega=\varrho=1$).

\begin{proof}[Second proof of Theorem~\ref{thm:quasi-super-contractivity}]
  Let $u$, $\hat{u}\in \overline{D(A)}^{\mbox{}_{L^{q}}}$.  For given
  $t>0$, we choose $N\ge 1$ large enough such that
  $\tfrac{\omega\,q\,t}{N}<\tfrac{1}{2}$ and set
  $t_{n}=n\,\frac{t}{N}$ for every $n=0,\dots,N$, $u_{0}=u$ and
  $\hat{u}_{0}=\hat{u}$. By hypothesis,
  $Rg(I+\tfrac{\lambda}{1-\lambda \omega}A)=L^{q}(\Sigma,\mu)$ for every
  $0<\lambda<\tfrac{1}{\omega}$. Thus, there are $u_{1}$,
  $\hat{u}_{1}\in D(A)$ solving $u_{1}+\tfrac{t}{N}A u_{1}\ni u_{0}$
  and $\hat{u}_{1}+\tfrac{t}{N}A\hat{u}_{1}\ni
  \hat{u}_{0}$. Iteratively, for every $n=1,\dots,N$, there are solutions $u_{n}$
  and $\hat{u}_{n}\in D(A)$ of
  \begin{equation}
    \label{eq:42}
    u_{n}+\tfrac{t}{N} A u_{n}\ni u_{n-1}\quad\text{and}\quad
    \hat{u}_{n}+\tfrac{t}{N} A \hat{u}_{n}\ni \hat{u}_{n-1},
  \end{equation}
  respectively. We set
  \begin{displaymath}
    U_{N}(s)=u_{0}\mathds{1}_{\{t_{0}=0\}}(s)
    +\sum_{n=1}^{N}u_{n}\;\mathds{1}_{(t_{n-1},t_{n}]}(s)
  \end{displaymath}
  and
  \begin{displaymath}
    \hat{U}_{N}(s)=\hat{u}_{0}\mathds{1}_{\{t_{0}=0\}}(s)+\sum_{n=1}^{N}\hat{u}_{n}\;
    \mathds{1}_{(t_{n-1},t_{n}]}(s)
  \end{displaymath}
  for every $s\in [0,t]$. Further, for $v_{n}=(u_{n-1}-u_{n})\tfrac{N}{t}$
  and $\hat{v}_{n}=(\hat{u}_{n-1}-\hat{u}_{n})\tfrac{N}{t}$, both
  inclusions in~\eqref{eq:42} can be rewritten as $ v_{n}\in A u_{n}$
  and $\hat{v}_{n}\in A\hat{u}_{n}$, or as $J_{t/N}u_{n-1}=u_{n}$ and
  $J_{t/N}\hat{u}_{n-1}=\hat{u}_{n}$ for every $n=1,\dots,N$. Hence by
  Gagli\-ardo-Nirenberg type inequalities~\eqref{eq:5},
  \eqref{eq:57} and~\eqref{eq:56}, we see that
  \begin{align*}
    &\norm{u_{n}-\hat{u}_{n}}_{r}^{\sigma}\\
    &\;\le C\,\Big(\,[u_{n}-\hat{u}_{n},v_{n}-\hat{v}_{n}]_{q}
      +\omega \norm{u_{n}-\hat{u}_{n}}_{q}^{q}\Big)\;
    \norm{u_{n}-\hat{u}_{n}}_{q}^{\varrho}\\
   &\;= C\,\tfrac{N}{t}\Big(\,[u_{n}-\hat{u}_{n},(u_{n-1}-u_{n})-(\hat{u}_{n-1}-\hat{u}_{n})]_{q}
      +\tfrac{\omega\,t}{N} \norm{u_{n}-\hat{u}_{n}}_{q}^{q}\Big)\;
    \norm{u_{n}-\hat{u}_{n}}_{q}^{\varrho}\\
    &\;= C\,\tfrac{N}{t}\Big(\,[u_{n}-\hat{u}_{n},(u_{n-1}-\hat{u}_{n-1})-(u_{n}-\hat{u}_{n})]_{q}
      +\tfrac{\omega\,t}{N} \norm{u_{n}-\hat{u}_{n}}_{q}^{q}\Big)\;
    \norm{u_{n}-\hat{u}_{n}}_{q}^{\varrho}\\
    &\;\le
      C\,\tfrac{N}{t}\Big(\,\tfrac{1}{q}\norm{u_{n-1}-\hat{u}_{n-1}}_{q}^{q}-
      (1-\tfrac{\omega\,q\,t}{N})\tfrac{1}{q}\norm{u_{n}-\hat{u}_{n}}_{q}^{q}\Big)\;
    \norm{u_{n}-\hat{u}_{n}}_{q}^{\varrho}
  \end{align*}
  for every $n=1,\dots,N$. By assumption, $J_{t/N}$ satisfies
  inequality~\eqref{eq:61} for $\tilde{q}=q$. Hence
  \begin{align*}
    \norm{u_{n}-\hat{u}_{n}}_{q} 
      & =\norm{J_{t/N}u_{n-1}-J_{t/N}\hat{u}_{n-1}}_{q}\\
      & \le
        (1-\tfrac{t\omega}{N})^{-1}\,\norm{u_{n-1}-\hat{u}_{n-1}}_{q}\\
      & \;\;\vdots\\
      & \le (1-\tfrac{t\omega}{N})^{-n}\norm{u_{0}-\hat{u}_{0}}_{q}\\
     & \le (1-\tfrac{t\omega}{N})^{-N}\norm{u_{0}-\hat{u}_{0}}_{q}
  \end{align*}
  Using this in order to estimate the term
  $\norm{u_{n}-\hat{u}_{n}}_{q}^{\varrho}$ in the previous inequality
  and multiplying the resulting inequality by
  $\tfrac{t}{N}\,(1-\tfrac{\omega\,q\,t}{N})^{-1}$ yields
  \begin{align*}
   &\tfrac{t}{N} (1-\tfrac{\omega\,q\,t}{N})^{-1}\norm{u_{n}-\hat{u}_{n}}_{r}^{\sigma}\\ 
   &\;\le
      C\,\Big(\,(1-\tfrac{\omega\,q\,t}{N})^{-1}\,\tfrac{1}{q}\norm{u_{n-1}-\hat{u}_{n-1}}_{q}^{q}-
      \tfrac{1}{q}\norm{u_{n}-\hat{u}_{n}}_{q}^{q}\Big)\;\\
    &\hspace{8cm} \times (1-\tfrac{t\omega}{N})^{-N\varrho}
    	\norm{u_{0}-\hat{u}_{0}}_{q}^{\varrho}.
  \end{align*}
  Rearranging the last inequality gives
  \begin{displaymath}
    \tfrac{1}{q}\norm{u_{n}-\hat{u}_{n}}_{q}^{q} \le
    (1-\tfrac{\omega\,q\,t}{N})^{-1}\,\tfrac{1}{q}\norm{u_{n-1}-\hat{u}_{n-1}}_{q}^{q}
    + b_{n}
  \end{displaymath}    
  for every $n=1,\dots,N$, where we set
  \begin{equation}
    \label{eq:183}
    b_{n}:= -\tfrac{t}{N}
    (1-\tfrac{\omega\,q\,t}{N})^{-1}\,\norm{u_{n}-\hat{u}_{n}}_{r}^{\sigma}\,C^{-1}
    \, (1-\tfrac{t\omega}{N})^{N\varrho}\,\norm{u_{0}-\hat{u}_{0}}_{q}^{-\varrho}.
  \end{equation}
  It is easy to see that
  \begin{equation}
    \label{eq:110}
    \begin{cases}
      &\text{for sequences $(\lambda_{n})\subseteq [0,\infty)$ and $(a_{n})$, $(b_{n})\subseteq \R$
   satisfying}\\
 &\text{$a_{n}\le \lambda_{n}a_{n-1}+b_{n}$ for all
  $n=1,\dots,N$, one has that }\\[7pt]
   &\qquad    a_{N}\le a_{0}\,\left(\displaystyle\prod_{n=1}^{N}\lambda_{n}\right) +
      \displaystyle\sum_{n=1}^{N}b_{n} \left(\displaystyle\prod_{k=n+1}^{N}\lambda_{n}\right)
    \end{cases}
  \end{equation}
  (cf.~\cite[Exercise E3.8]{Benilanbook}). Applying this to $\lambda_{n}=(1-\tfrac{\omega\,q\,t}{N})^{-1}$,
  $a_{n}=\tfrac{1}{q}\norm{u_{n}-\hat{u}_{n}}_{q}^{q}$ and $b_{n}$
  given by~\eqref{eq:183}, we obtain 
  \begin{displaymath}
    \tfrac{1}{q}\norm{u_{n}-\hat{u}_{n}}_{q}^{q} \le
    (1-\tfrac{\omega\,q\,t}{N})^{-N}\,
    \tfrac{1}{q}\norm{u_{0}-\hat{u}_{0}}_{q}^{q}+\sum_{n=1}^{N}
    (1-\tfrac{\omega\,q\,t}{N})^{-(N-(n+1))} b_{n}.
  \end{displaymath}
  Using that $(1-\tfrac{\omega\,q\,t}{N})^{n}\le
  (1-\tfrac{\omega\,q\,t}{N})^{N}$ and rearranging this
  inequality yields
  \begin{align*}
    &
      (1-\tfrac{\omega\,q\,t}{N})^{N}\tfrac{1}{q}\norm{u_{n}-\hat{u}_{n}}_{q}^{q}\\
    &\hspace{2cm} + C^{-1}\,
      (1-\tfrac{t\omega}{N})^{N\varrho}\,\norm{u_{0}-\hat{u}_{0}}_{q}^{-\varrho}
      \,(1-\tfrac{\omega\,q\,t}{N})^{N}\, 
      \sum_{n=1}^{N}\tfrac{t}{N} \norm{u_{n}-\hat{u}_{n}}_{r}^{\sigma}\\
   & \qquad \le \tfrac{1}{q}\norm{u_{0}-\hat{u}_{0}}_{q}^{q}
  \end{align*}
  so that
  \begin{align*}
    &
      (1-\tfrac{\omega\,q\,t}{N})^{N}\tfrac{1}{q}\norm{U_{N}(t)-\hat{U}_{N}(t)}_{q}^{q}\\
    &\hspace{2cm}  + C^{-1}\,
      (1-\tfrac{t\omega}{N})^{N\varrho}\,\norm{u-\hat{u}}_{q}^{-\varrho}
      \,(1-\tfrac{\omega\,q\,t}{N})^{N}\, 
      \int_{0}^{t}\norm{U_{N}(s)-\hat{U}_{N}(s)}_{r}^{\sigma}\,\ds\\
   & \qquad \le \tfrac{1}{q}\norm{u-\hat{u}}_{q}^{q}
  \end{align*}
  By the Crandall-Liggett theorem,
  \begin{displaymath}
    \lim_{N\to\infty}U_{N}=T_{t}u\quad\text{ in $L^{q}(\Sigma,\mu)$}\quad\text{ and }\quad
    \lim_{N\to\infty}\hat{U}_{N}=T_{t}\hat{u}\quad\text{ in $L^{q}(\Sigma,\mu)$}
\end{displaymath}
respectively uniformly on $[0,t]$. Thus, sending $N\to \infty$ in the
previous estimate and using the lower
semicontinuity of the $L^{r}$-norm   on $L^{q}(\Sigma,\mu)$ yields
  \begin{align*}
    &e^{-\omega\,q\,t}\,\tfrac{1}{q}\norm{T_{t}u-T_{t}\hat{u}}_{q}^{q}
      + C^{-1}\,e^{-\omega\,\varrho\,t}\,
      \norm{u-\hat{u}}_{q}^{-\varrho}\,e^{-\omega\,q\,t}\, 
      \int_{0}^{t}\norm{T_{s}u-T_{s}\hat{u}}_{r}^{\sigma}\,\ds\\
   & \hspace{2cm} \le \tfrac{1}{q}\norm{u-\hat{u}}_{q}^{q}
  \end{align*}
 and so
 \begin{displaymath}
   C^{-1}\,e^{-\omega\,\varrho\,t}\,
      \norm{u-\hat{u}}_{q}^{-\varrho}\,e^{-\omega\,q\,t}\, 
      \int_{0}^{t}\norm{T_{s}u-T_{s}\hat{u}}_{r}^{\sigma}\,\ds\\
   \le \tfrac{1}{q}\norm{u-\hat{u}}_{q}^{q}
 \end{displaymath}
  By assumption, $\{T_{t}\}_{t\geq 0}$ satisfies~\eqref{eq:41} for $\tilde{q}=r$
  from where we can deduce that \eqref{eq:18} holds.  
\end{proof}

 Even for the class of quasi-$m$-accretive
operators $A$ on $L^{q}$, there are situations in which the
operator $A$ merely satisfies 
the Gagliardo-Nirenberg type inequality~\eqref{eq:242} for some
$(u_{0},0)\in A$. In this situation, we can state the following result.

\begin{theorem}\label{thm:quasi-super-contractivity-bis}
  Let $A+\omega I$ be an $m$-accretive operator on $L^{q}(\Sigma,\mu)$
  for some $1\le q<\infty$ and $\omega\ge 0$. Suppose $A$
  satisfies the Gagliardo-Nirenberg type
  inequality~\eqref{eq:242} for parameters $1\le r\le \infty$,
  $\varrho\ge 0$, $\sigma>0$ and some $(u_{0},0)\in A$ satisfying
  $u_{0}\in L^{q}\cap L^{r}(\Sigma,\mu)$, and the semigroup
  $\{T_{t}\}_{t\ge 0}\sim -A$ on $\overline{D(A)}^{\mbox{}_{L^{q}}}$
  has exponential growth~\eqref{eq:62} for $\tilde{q}=r$. Then
  the semigroup $\{T_{t}\}_{t\geq 0}$ satisfies
   \begin{equation}
      \tag{~\ref{eq:20}}
      \norm{T_{t}u-u_{0}}_{r}\le 
      \left(\tfrac{C}{q}\right)^{1/\sigma}\; t^{-\alpha}\; e^{\omega \beta t}\;
      \norm{u-u_{0}}_{q}^{\gamma}
    \end{equation}
    for every $t>0$, $u\in L^{q}(\Sigma,\mu)$ with
    exponents $\alpha=\frac{1}{\sigma}$, $\beta=\gamma+1$ and
    $\gamma=\tfrac{q+\varrho}{\sigma}$. 
\end{theorem}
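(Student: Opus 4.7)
The plan is to mimic the two proofs of Theorem~\ref{thm:quasi-super-contractivity} almost verbatim, with $\hat u$ replaced by the fixed point $u_0$ and $\hat v$ replaced by $0$. The crucial observation is that the hypothesis $(u_0,0)\in A$ forces $J_\lambda u_0=u_0$ for every admissible $\lambda>0$, so the exponential formula~\eqref{eq:36} yields $T_t u_0=u_0$ for all $t\ge 0$. Consequently, wherever the earlier proof used the differences $T_s u-T_s\hat u$ and the pair $(\hat u,\hat v)\in A$, we may substitute $T_s u-u_0$ and $(u_0,0)\in A$ respectively; the Gagliardo--Nirenberg inequality~\eqref{eq:242} for $(u_0,0)$ is exactly what plays the role formerly played by~\eqref{eq:5} with differences. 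Moreover, the exponential growth property~\eqref{eq:62} for $\tilde q=r$ applied to the pair $(u,u_0)$ reads $\|T_t u-u_0\|_r\le e^{\omega(t-s)}\|T_s u-u_0\|_r$ for $t\ge s$, which is the only form of contractivity needed in the $L^r$-norm.

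For $q>1$, I would first take $u\in D(A)$. Since $L^q$ and its dual are uniformly convex, $t\mapsto T_t u$ is right-differentiable on $[0,\infty)$ with $\tfrac{d}{dt}_+T_t u=-A^\circ T_t u$, while $T_t u_0=u_0$ gives $A^\circ u_0=0$. Then by identity~\eqref{eq:35},
\begin{align*}
\|u-u_0\|_q^{q+\varrho}
&\ge \bigl[\|u-u_0\|_q^q-e^{-\omega q t}\|T_t u-u_0\|_q^q\bigr]\,\|u-u_0\|_q^\varrho\\
&= q\int_0^t e^{-\omega q s}\bigl(\langle (T_s u-u_0)_q,\,A^\circ T_s u\rangle+\omega\|T_s u-u_0\|_q^q\bigr)\,\ds\;\|u-u_0\|_q^\varrho,
\end{align*}
and the monotonicity $\|T_s u-u_0\|_q\le e^{\omega s}\|u-u_0\|_q$ together with~\eqref{eq:242} bounds the integrand below by $q\,C^{-1}e^{-\omega(q+\varrho)s}\|T_s u-u_0\|_r^\sigma$; finally the exponential growth in $L^r$ replaces $\|T_s u-u_0\|_r^\sigma$ by $e^{-\omega\sigma t}\|T_t u-u_0\|_r^\sigma$ inside the integral, and evaluating $\int_0^t e^{-\omega(q+\varrho+\sigma)s}\ds\ge t\,e^{-\omega(q+\varrho+\sigma)t}$ (or just $t\,e^{-\omega\beta t}$ with $\beta=\gamma+1$) yields~\eqref{eq:20}. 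Passing to $u\in\overline{D(A)}^{L^q}$ is done by density, using the lower semicontinuity of $\|\cdot\|_r$ on $L^q$.

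For the general case $1\le q<\infty$, I would instead follow the second (discrete) proof: fix $t>0$ and $N$ large so that $\omega qt/N<1/2$, construct $u_n$ by the resolvent iteration $u_n+\tfrac{t}{N}Au_n\ni u_{n-1}$, and take $\hat u_n\equiv u_0$ for every $n$, which is legitimate because $J_{t/N}u_0=u_0$. Applying~\eqref{eq:242} to $(u_n,v_n)\in A$ with $v_n=(u_{n-1}-u_n)N/t$, using properties~\eqref{eq:56}--\eqref{eq:57} of the $q$-bracket and the discrete Gronwall lemma~\eqref{eq:110}, reproduces the estimate
\[
e^{-\omega q t}\,\tfrac{1}{q}\|U_N(t)-u_0\|_q^q+C^{-1}e^{-\omega(\varrho+q)t}\|u-u_0\|_q^{-\varrho}\int_0^t\|U_N(s)-u_0\|_r^\sigma\,\ds\le \tfrac{1}{q}\|u-u_0\|_q^q
\]
up to constants arising from $(1-\omega qt/N)^{-N}\to e^{\omega qt}$. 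The Crandall--Liggett theorem then gives $U_N\to T_{(\cdot)}u$ uniformly in $L^q$, lower semicontinuity of $\|\cdot\|_r$ preserves the $L^r$-integral, and the exponential growth in $L^r$ finally converts the time integral into $t\,e^{-\omega\sigma t}\|T_t u-u_0\|_r^\sigma$, delivering~\eqref{eq:20}.

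The only subtlety I anticipate is the logical status of the exponential growth bound in $L^r$: we have \emph{not} assumed a priori that $T_t u\in L^r$, so one cannot directly invoke~\eqref{eq:62} as a pointwise bound for $T_t u-u_0$ in $L^r$. The resolution is exactly as in Theorem~\ref{thm:quasi-super-contractivity}: the estimate $\|T_t u-u_0\|_r\le e^{\omega(t-s)}\|T_s u-u_0\|_r$ is used inside an integral where the right-hand side is a priori finite (or, if infinite, both sides are so and the inequality is trivial), and the final statement $T_t u-u_0\in L^r$ with the quantitative bound~\eqref{eq:20} comes out as a consequence of the argument rather than being assumed.
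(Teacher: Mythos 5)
Your proposal is correct and matches the paper's own strategy. The paper states explicitly that Theorem~\ref{thm:quasi-super-contractivity-bis} is proved "along the lines of the second proof of Theorem~\ref{thm:quasi-super-contractivity}," and your second sketch (resolvent iteration with $\hat u_n\equiv u_0$, justified by $J_{t/N}u_0=u_0$, plus the discrete Gronwall lemma~\eqref{eq:110} and lower semicontinuity of $\|\cdot\|_r$ in $L^q$) reproduces that argument faithfully; the first (differentiability) sketch for $q>1$ is an equally valid alternative route. You also correctly identify and resolve the only non-trivial point, namely that the exponential growth bound in $L^r$ may only be applied to $T_su$ once one knows $T_su\in L^r$, which is precisely what the a.e.-finiteness of $\|T_su-u_0\|_r$ (obtained from the integral inequality, cf.~\eqref{eq:125}) together with $u_0\in L^q\cap L^r$ supplies before the bound is fed back into the integral.
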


We omit the proof of Theorem~\ref{thm:quasi-super-contractivity-bis}
since it proceeds along the lines of the second proof of
Theorem~\ref{thm:quasi-super-contractivity}. 

Analogously, as above, the important case $q=2$ and $A$ is quasi $m$-completely accretive
operator on $L^{2}(\Sigma,\mu)$ follows immediately from
Theorem~\eqref{thm:quasi-super-contractivity-bis}.

\begin{corollary}
  \label{coro:super-contractivity-bis}
  Let $A+\omega I$ be $m$-completely accretive operator on
  $L^{2}(\Sigma,\mu)$ for some $\omega\ge 0$. Suppose there are
  $(u_{0},0)\in A$, $2< r\le \infty$, $\varrho\ge 0$, $\sigma>0$ and $C>0$ such that
  \begin{equation}
    \label{eq:190}
    \norm{u-u_{0}}_{r}^{\sigma} \le C\,
    \Big[ [u-u_{0},v]_{2}+\omega \norm{u-u_{0}}_{2}^{2}\Big]\;
    \norm{u-u_{0}}_{2}^{\varrho}
  \end{equation}
  for every $(u,v)\in A$. Then the semigroup $\{T_{t}\}_{t\geq 0}\sim-A$
  on $\overline{D(A)}^{\mbox{}_{L^{2}}}$ satisfies
  \begin{displaymath}
    \norm{T_{t}u-u_{0}}_{r}\le 
    \left(\tfrac{C}{2}\right)^{1/\sigma}\; t^{-\alpha}\; e^{\omega \beta t}\;
    \norm{u-u_{0}}_{2}^{\gamma}
  \end{displaymath}
  for every $t>0$ and $u\in \overline{D(A)}^{\mbox{}_{L^{2}}}$ with exponents
  $\alpha=\tfrac{1}{\sigma}$, $\beta=\gamma+1$ and $\gamma=\frac{2+\varrho}{\sigma}$.
\end{corollary}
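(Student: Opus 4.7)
The plan is to reduce Corollary~\ref{coro:super-contractivity-bis} to a direct application of Theorem~\ref{thm:quasi-super-contractivity-bis} with the specific choice $q=2$. This requires verifying, in turn, each of the three hypotheses of that theorem in the present setting, and then reading off the conclusion with the explicit exponents the corollary claims.

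First, I would observe that $m$-complete accretivity of $A+\omega I$ on $L^{2}(\Sigma,\mu)$ is a strictly stronger property than $m$-accretivity on $L^{2}(\Sigma,\mu)$: by Remark~\ref{rem:1}, every complete contraction is in particular a contraction on $L^{2}$, so the resolvents $J_{\lambda}$ of $A+\omega I$ are $L^{2}$-contractions on $Rg(I+\lambda(A+\omega I)) = L^{2}(\Sigma,\mu)$, giving $m$-accretivity on $L^{2}(\Sigma,\mu)$. The Gagliardo-Nirenberg hypothesis~\eqref{eq:242} with $q=2$ is precisely the assumed inequality~\eqref{eq:190}, and the fixed point $(u_{0},0)\in A$ is supplied by hypothesis. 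The required integrability $u_{0}\in L^{2}\cap L^{r}(\Sigma,\mu)$ follows from $A\subseteq L^{2}\times L^{2}$ for the $L^{2}$ part; for the $L^{r}$ part, note that~\eqref{eq:190} is only informative on pairs $(u,v)\in A$ for which $u-u_{0}\in L^{r}(\Sigma,\mu)$, so without loss of generality $u_{0}\in L^{r}(\Sigma,\mu)$ (otherwise the inequality is vacuous and there is nothing to prove).

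Next I would verify the exponential growth hypothesis~\eqref{eq:62} in $L^{r}$. This is precisely where $m$-\emph{complete} accretivity of $A+\omega I$ (as opposed to just $m$-accretivity) is needed: Proposition~\ref{propo:complete-expo-grow} guarantees that the semigroup $\{T_{t}\}_{t\ge 0}\sim -A$ on $\overline{D(A)}^{\mbox{}_{L^{2}}}$ satisfies
\begin{equation*}
\norm{T_{t}u-T_{t}\hat u}_{\tilde q}\le e^{\omega t}\norm{u-\hat u}_{\tilde q}
\end{equation*}
for every $1\le\tilde q\le\infty$ and all $u,\hat u$ for which the right-hand side makes sense. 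Choosing $\hat u=u_{0}$ (using that $u_{0}$ is a fixed point of each $T_{t}$, as noted in the remark immediately following Proposition~\ref{propo:complete-expo-grow}) and $\tilde q=r$ yields the $L^{r}$-exponential growth bound $\norm{T_{t}u-u_{0}}_{r}\le e^{\omega t}\norm{u-u_{0}}_{r}$ required by the hypothesis of Theorem~\ref{thm:quasi-super-contractivity-bis}.

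With all three hypotheses in hand, Theorem~\ref{thm:quasi-super-contractivity-bis} applied with $q=2$ yields directly
\begin{equation*}
\norm{T_{t}u-u_{0}}_{r}\le \left(\tfrac{C}{2}\right)^{1/\sigma}\,t^{-\alpha}\,e^{\omega\beta t}\,\norm{u-u_{0}}_{2}^{\gamma}
\end{equation*}
for all $t>0$ and $u\in\overline{D(A)}^{\mbox{}_{L^{2}}}$, with $\alpha=1/\sigma$, $\gamma=(2+\varrho)/\sigma$ and $\beta=\gamma+1$, which is exactly the claimed conclusion. There is no substantial obstacle here beyond checking that the hypotheses line up; the only subtle point is the passage from $m$-complete accretivity to the $L^{r}$-exponential growth of the semigroup, which is handled by Proposition~\ref{propo:complete-expo-grow} together with the fixed-point observation $T_{t}u_{0}=u_{0}$.
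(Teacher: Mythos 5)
Your proposal takes exactly the same route as the paper: the paper's entire proof of Corollary~\ref{coro:super-contractivity-bis} is the one sentence preceding it, stating that it follows immediately from Theorem~\ref{thm:quasi-super-contractivity-bis} with $q=2$ once one observes that a quasi $m$-completely accretive operator on $L^2$ is quasi $m$-accretive on $L^2$ and that Proposition~\ref{propo:complete-expo-grow} supplies the $L^r$-exponential growth~\eqref{eq:62}. You spell out these verifications correctly. The only soft spot is your ``without loss of generality $u_0\in L^r$'' remark: inequality~\eqref{eq:190} forces $u-u_0\in L^r$ for each $u\in D(A)$ (otherwise the left side is $+\infty$), but this does not literally give $u_0\in L^r$, and the inequality is not vacuous when $u_0\notin L^r$. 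This is a quirk of the paper itself — the Corollary omits the explicit $u_0\in L^2\cap L^r$ hypothesis that Theorem~\ref{thm:quasi-super-contractivity-bis} carries — and in fact the proof of that theorem (as modelled on Theorem~\ref{thm:Sobolev-for-porousmedia}) only ever applies the $L^r$-resolvent bound~\eqref{eq:123} to elements $\tilde u$ with $\tilde u - u_0\in L^r$, which is exactly what~\eqref{eq:125} produces for $T_s u$ at a.e.\ $s$; so the missing hypothesis is harmless. Your argument is otherwise sound and identical in structure to the intended one.
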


%
%
%
%
%
%

Our third main theorem of this section considers the second class of
operators introduced in Section~\ref{subsec:L1}. As a matter of fact,
many examples show that the Gagliardo-Nirenberg type
inequality~\eqref{eq:242} is not satisfied by a quasi
$m$-accretive operator $A$ in $L^{1}(\Sigma,\mu)$ with ($c$-)complete
resolvent. But in order to obtain $L^{q}$-$L^{r}$-regularisation
estimates with $1\le q$, $r\le \infty$ for the semigroup
$\{T_{t}\}_{t\geq 0}\sim -A$ on $\overline{D(A)}^{\mbox{}_{L^{1}}}$,
it turns out that it is sufficient that for some
$1\le q\le q_{0}\le \infty$, the \emph{trace}
\begin{displaymath}
    A_{1\cap q_{0}}:= A\cap ((L^{1}\cap L^{q_{0}}(\Sigma,\mu))\times 
    (L^{1}\cap L^{q_{0}}(\Sigma,\mu))
  \end{displaymath}
  of $A$ on $L^{1}\cap L^{q_{0}}(\Sigma,\mu)$
  satisfies~\eqref{eq:242}. Note that, for $1\le q\le q_{0}\le \infty$,
  $L^{1}\cap L^{q_{0}}(\Sigma,\mu)$ injects continuously into
  $L^{q}(\Sigma,\mu)$. Hence, then trace $A_{1\cap q_{0}}$ is contained in
  the \emph{part} $A_{q}:=A\cap (L^{q}\times L^{q}(\Sigma,\mu))$ of $A$ in $L^{q}(\Sigma,\mu)$.

\begin{theorem}
  \label{thm:Sobolev-for-porousmedia}
  Let $A+\omega I$ be $m$-accretive in $L^{1}(\Sigma,\mu)$ for some
  $\omega\ge 0$.  Suppose, there are $1\le q\le q_{0}\le \infty$,
  $(q<\infty)$, such that the trace $A_{1\cap q_{0}}$ of $A$ on
  $L^{1}\cap L^{q_{0}}(\Sigma,\mu)$ satisfies the range condition
  \begin{equation}
    \label{eq:122}
    L^{1}\cap L^{q_{0}}(\Sigma,\mu)\subseteq Rg(I+(A_{1\cap q_{0}}+\omega I)),
  \end{equation}
  and the Gagliardo-Nirenberg type
  inequality~\eqref{eq:242} for some $1\le r\le \infty$,
  $\varrho\ge 0$, $\sigma>0$ and $(u_{0},0)\in A_{1\cap q_{0}}$, and for
  every $\lambda>0$ satisfying $\lambda \omega<1$, the resolvent
  $J_{\lambda}$ of $A$ satisfies
  \begin{equation}
    \label{eq:123}
    \norm{J_{\lambda}u-u_{0}}_{\tilde{q}}\le (1-\lambda \omega)^{-1}\,\norm{u-u_{0}}_{\tilde{q}}
  \end{equation}
  for $\tilde{q}=r$, every $u \in Rg(I+\lambda A_{1\cap q_{0}})$,
  and for $\tilde{q}=q$ provided $\varrho>0$. Then the semigroup
  $\{T_{t}\}_{t\geq 0}\sim -A$ on $\overline{D(A)}^{\mbox{}_{L^{1}}}$
  satisfies inequality~\eqref{eq:20} for every $t>0$ and
  $u\in \overline{D(A)}^{\mbox{}_{L^{1}}}\cap L^{q_{0}}(\Sigma,\mu)$ with
  exponents $\alpha=\tfrac{1}{\sigma}$, $\beta=\gamma+1$ and
  $\gamma=\frac{q+\varrho}{\sigma}$.
\end{theorem}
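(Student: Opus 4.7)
The plan is to adapt the discrete (mild-solution) argument used in the second proof of Theorem~\ref{thm:quasi-super-contractivity}, but working inside the trace $A_{1\cap q_0}$ so that the Gagliardo--Nirenberg inequality \eqref{eq:242} is actually available at each step of the Crandall--Liggett iteration. The first-proof approach via $\frac{d}{dt}T_tu$ is not available here: $A$ lives in $L^1$, which is not reflexive, and we have no a priori differentiability of mild solutions.

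More precisely, fix $u\in\overline{D(A)}^{\mbox{}_{L^{1}}}\cap L^{q_{0}}(\Sigma,\mu)$, $t>0$, and choose $N$ so large that $(t/N)\,\omega<1$. The first step is to use the range condition \eqref{eq:122} to build iterates: since $u_{0}\in L^{1}\cap L^{q_{0}}$ and $u\in L^{1}\cap L^{q_{0}}$, we solve recursively
\begin{displaymath}
u^{(n)}+\tfrac{t}{N}\,A_{1\cap q_{0}}u^{(n)}\ni u^{(n-1)},\qquad u^{(0)}=u,
\end{displaymath}
producing $u^{(n)}\in D(A_{1\cap q_{0}})\subseteq L^{1}\cap L^{q_{0}}(\Sigma,\mu)$. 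Because $A_{1\cap q_{0}}\subseteq A$, these are precisely the resolvent iterates $u^{(n)}=J_{t/N}^{n}u$ of $A$, so the exponential formula \eqref{eq:36} identifies their piecewise-constant extension $U_{N}$ with a sequence converging to $T_{\bullet}u$ in $C([0,t];L^{1}(\Sigma,\mu))$.

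Next, I would apply the Gagliardo--Nirenberg inequality \eqref{eq:242} to each pair $(u^{(n)},v^{(n)})\in A_{1\cap q_{0}}$ with $v^{(n)}:=\tfrac{N}{t}(u^{(n-1)}-u^{(n)})$. Using the bracket identity \eqref{eq:57} and the convexity inequality \eqref{eq:56} exactly as in the second proof of Theorem~\ref{thm:quasi-super-contractivity} gives
\begin{displaymath}
\tfrac{t}{N}\,\|u^{(n)}-u_{0}\|_{r}^{\sigma}\le C\Big(\tfrac{1}{q}\|u^{(n-1)}-u_{0}\|_{q}^{q}-(1-\tfrac{\omega q t}{N})\tfrac{1}{q}\|u^{(n)}-u_{0}\|_{q}^{q}\Big)\|u^{(n)}-u_{0}\|_{q}^{\varrho}.
\end{displaymath}
The factor $\|u^{(n)}-u_{0}\|_{q}^{\varrho}$ on the right is controlled by iterating the resolvent bound \eqref{eq:123} with $\tilde q=q$ (the hypothesis is imposed precisely when $\varrho>0$), yielding $\|u^{(n)}-u_{0}\|_{q}\le(1-\tfrac{t\omega}{N})^{-N}\|u-u_{0}\|_{q}$. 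Dividing by $(1-\tfrac{\omega q t}{N})$ and applying the telescoping lemma \eqref{eq:110} then produces
\begin{displaymath}
C^{-1}(1-\tfrac{t\omega}{N})^{N\varrho}(1-\tfrac{\omega q t}{N})^{N}\|u-u_{0}\|_{q}^{-\varrho}\int_{0}^{t}\|U_{N}(s)-u_{0}\|_{r}^{\sigma}\,\ds\;\le\;\tfrac{1}{q}\|u-u_{0}\|_{q}^{q}.
\end{displaymath}

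Finally, I would pass to the limit $N\to\infty$. By Crandall--Liggett $U_{N}(s)\to T_{s}u$ in $L^{1}$ uniformly on $[0,t]$, so, along a subsequence, pointwise a.e.; the lower semicontinuity of the $L^{r}$-norm plus Fatou deliver
\begin{displaymath}
e^{-\omega(q+\varrho)t}\,\|u-u_{0}\|_{q}^{-\varrho}\int_{0}^{t}\|T_{s}u-u_{0}\|_{r}^{\sigma}\,\ds\le\tfrac{C}{q}\,\|u-u_{0}\|_{q}^{q}.
\end{displaymath}
Since the resolvent bound \eqref{eq:123} with $\tilde q=r$ passes through the exponential formula to give the semigroup growth $\|T_{t}u-u_{0}\|_{r}\le e^{\omega(t-s)}\|T_{s}u-u_{0}\|_{r}$ (whenever either side is finite, via density from the iterates that sit in $L^{1}\cap L^{q_{0}}$), we bound $\|T_{t}u-u_{0}\|_{r}^{\sigma}$ by the averaged integrand on the left and extract the desired pointwise estimate \eqref{eq:20}. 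The main obstacle I expect is the last step: ensuring that the $L^{r}$-growth inequality for the semigroup is genuinely available on the relevant class $\overline{D(A)}^{\mbox{}_{L^{1}}}\cap L^{q_{0}}$, and that the limiting argument produces a \emph{finite} $\|T_{t}u-u_{0}\|_{r}$ rather than only an almost-everywhere integral inequality; this is where the hypothesis~\eqref{eq:123} for $\tilde q=r$ on $Rg(I+\lambda A_{1\cap q_{0}})$ is used, transporting the $L^{r}$-information from the discrete iterates to the continuous semigroup.
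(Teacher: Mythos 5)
Your proposal follows the paper's proof almost line for line: you correctly transplant the discrete resolvent-iteration argument from the second proof of Theorem~\ref{thm:quasi-super-contractivity} into the trace $A_{1\cap q_0}$, invoke the range condition~\eqref{eq:122} to build iterates in $D(A_{1\cap q_0})$, apply~\eqref{eq:242} together with~\eqref{eq:57} and~\eqref{eq:56}, iterate~\eqref{eq:123} with $\tilde q=q$ to absorb the $\varrho$-factor, use the telescoping lemma~\eqref{eq:110}, pass to the limit via Crandall--Liggett and Fatou, and finally upgrade the integral inequality by applying~\eqref{eq:123} with $\tilde q=r$ through the exponential formula. The paper's only slight refinement of your last step is that it states the $L^{r}$-growth $\norm{T_{t}\tilde u-u_{0}}_{r}\le e^{\omega t}\norm{\tilde u-u_{0}}_{r}$ directly for $\tilde u\in\overline{D(A)}^{\mbox{}_{L^{1}}}\cap L^{r}(\Sigma,\mu)$ (the almost-every finiteness of $\norm{T_{s}u-u_{0}}_{r}$ supplied by the integral inequality makes $T_{s}u$ such an admissible $\tilde u$), rather than going through a density argument; both routes are sound.
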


\begin{remark}
  \label{rem:6}
  One easily verifies that a similar statement as given in
  Remark~\ref{rem:2} holds for accretive operators in
  $L^{1}(\Sigma,\mu)$. More precisely, for an $m$-accretive operator
  $A$ on $L^{1}(\Sigma,\mu)$ satisfying the hypotheses of
  Theorem~\ref{thm:Sobolev-for-porousmedia} with $\omega=0$ and a
  Lipschitz continuous mapping
  $F: L^{1}(\Sigma,\mu)\to L^{1}(\Sigma,\mu)$ with $F(0)=0$ and
  Lipschitz constant $L>0$, if the trace $A_{1\cap q_{0}}$ of $A$ on
  $L^{1}\cap L^{q_{0}}(\Sigma,\mu)$ satisfies~\eqref{eq:122} for
  $\omega=0$ and satisfies the Gagliardo-Nirenberg type
  inequality~\eqref{eq:242} for $(u_{0},0)$ and $\omega=0$,
  then $A_{1\cap q_{0}}+F$ satisfies the Gagliardo-Nirenberg type
  inequality~\eqref{eq:242} for $(u_{0},0)$ and $\omega=L$.
\end{remark}

From Theorem~\ref{thm:Sobolev-for-porousmedia},
we can immediately conclude the following result concerning
quasi $m$-accretive operators in $L^{1}$ with complete
resolvent. 

\begin{corollary}
  \label{cor:Gagliardo-for-accretive-op-in-L1-complete-res}
  Let $A+\omega I$ be $m$-accretive operator in $L^{1}(\Sigma,\mu)$
  with complete resolvent for some $\omega\ge 0$. Suppose, there are
  $1\le q\le q_{0}\le \infty$, $(q<\infty)$, such that the trace
  $A_{1\cap q_{0}}$ of $A$
  in $L^{1}\cap L^{q_{0}}(\Sigma,\mu)$ satisfies range
  condition~\eqref{eq:122} and Gagliardo-Nirenberg type
  inequality~\eqref{eq:242} for some $1\le r\le \infty$,
  $\varrho\ge 0$, $\sigma>0$ and $(0,0)\in A_{1\cap q_{0}}$. Then
  the semigroup $\{T_{t}\}_{t\geq 0}\sim -A$ on
  $\overline{D(A)}^{\mbox{}_{L^{1}}}$ satisfies
  \begin{equation}
    \label{eq:172}
    \norm{T_{t}u}_{r}\le 
    \left(\tfrac{C}{q}\right)^{1/\sigma}\,e^{\omega (\gamma+1) t}\,t^{-\alpha}
    \norm{u}_{q}^{\gamma}
  \end{equation}
  for every $t>0$ and
  $u\in \overline{D(A)}^{\mbox{}_{L^{1}}}\cap L^{q_{0}}(\Sigma,\mu)$
  with exponents $\alpha=\tfrac{1}{\sigma}$, $\beta=\gamma+1$ and
  $\gamma=\frac{q+\varrho}{\sigma}$.
\end{corollary}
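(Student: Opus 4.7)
The plan is to deduce this corollary as a direct consequence of Theorem~\ref{thm:Sobolev-for-porousmedia}, with the only nontrivial check being the resolvent contractivity condition~\eqref{eq:123}. Since the hypotheses of the corollary already include the range condition~\eqref{eq:122} and the Gagliardo-Nirenberg type inequality~\eqref{eq:242} on the trace $A_{1\cap q_{0}}$, with the distinguished fixed point taken as $u_{0}=0$, the only missing ingredient required by Theorem~\ref{thm:Sobolev-for-porousmedia} is that for every $\lambda>0$ with $\lambda\omega<1$ the resolvent $J_{\lambda}$ of $A$ obeys
\begin{displaymath}
  \norm{J_{\lambda}u-0}_{\tilde{q}}\le (1-\lambda\omega)^{-1}\norm{u-0}_{\tilde{q}}
\end{displaymath}
for $\tilde{q}=r$, and for $\tilde{q}=q$ when $\varrho>0$.

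The key observation is that this $L^{\tilde{q}}$-contractivity (modulo the exponential factor) is exactly what the assumption \emph{$A+\omega I$ is $m$-accretive in $L^{1}$ with complete resolvent} buys us: Proposition~\ref{propo:quasi-accretive-operators-in-L1-complete-resolvent}, and specifically inequality~\eqref{eq:34}, yields
\begin{displaymath}
  \norm{J_{\lambda}u}_{\tilde{q}}\le (1-\lambda\omega)^{-1}\norm{u}_{\tilde{q}}
\end{displaymath}
for every $u\in Rg(I+\lambda A)\cap L^{\tilde{q}}(\Sigma,\mu)$ and every $1\le \tilde{q}\le \infty$. Since $u_{0}=0$ is the chosen fixed point, this is precisely~\eqref{eq:123}, valid for all admissible values of $\tilde{q}$ at once, so in particular for both $\tilde{q}=q$ and $\tilde{q}=r$.

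With the resolvent contractivity secured, I would simply invoke Theorem~\ref{thm:Sobolev-for-porousmedia} with $u_{0}=0$: it produces the semigroup estimate
\begin{displaymath}
  \norm{T_{t}u-0}_{r}\le \left(\tfrac{C}{q}\right)^{1/\sigma}\,t^{-\alpha}\,e^{\omega\,\beta\,t}\,\norm{u-0}_{q}^{\gamma}
\end{displaymath}
with the announced exponents $\alpha=1/\sigma$, $\beta=\gamma+1$ and $\gamma=(q+\varrho)/\sigma$, valid for every $t>0$ and $u\in \overline{D(A)}^{\mbox{}_{L^{1}}}\cap L^{q_{0}}(\Sigma,\mu)$. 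This is exactly~\eqref{eq:172}. There is no genuine obstacle here — the only subtlety is to notice that passing from a \emph{complete} resolvent to the single needed $L^{\tilde{q}}$-bound is automatic thanks to Remark~\ref{rem:4} (or equivalently Proposition~\ref{propo:quasi-accretive-operators-in-L1-complete-resolvent}), so the corollary genuinely requires no new work beyond citing the preceding material.
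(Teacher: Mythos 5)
Your proof is correct and matches the paper's own (implicit) argument exactly: the paper states the corollary follows immediately from Theorem~\ref{thm:Sobolev-for-porousmedia}, and the only thing to observe is that with $u_0=0$ the condition~\eqref{eq:123} reduces to~\eqref{eq:34}, which Proposition~\ref{propo:quasi-accretive-operators-in-L1-complete-resolvent} supplies from the complete-resolvent hypothesis. Nothing further is needed.
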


Furthermore, by Theorem~\ref{thm:Sobolev-for-porousmedia}, we can
deduce the following result concerning $m$-accretive operators in $L^{1}$ with
$c$-complete resolvent. 

\begin{corollary}
  \label{cor:Gagliardo-for-accretive-op-c-complete}
  Let $A$ be an $m$-accretive operator in $L^{1}(\Sigma,\mu)$ with
  $c$-complete resolvent. Suppose, there are
  $1\le q\le q_{0}\le \infty$, $(q<\infty)$, such that the trace
  $A_{1\cap q_{0}}$ of $A$ on $L^{1}\cap L^{q_{0}}(\Sigma,\mu)$ satisfies
  the range condition~\eqref{eq:122} and the Gagliardo-Nirenberg type
  inequality~\eqref{eq:242} for some $1\le r\le \infty$,
  $\varrho\ge 0$, $\sigma>0$ and $c\in \R$ with $(c,0)\in A_{1\cap q_{0}}$. Then the
  semigroup $\{T_{t}\}_{t\geq 0}\sim -A$ on
  $\overline{D(A)}^{\mbox{}_{L^{1}}}$ satisfies
  \begin{displaymath}
    \norm{T_{t}u-c}_{r}\le 
    \left(\tfrac{C}{q}\right)^{1/\sigma}\,t^{-\alpha}
    \norm{u-c}_{q}^{\gamma}
  \end{displaymath}
   for every $t>0$ and
  $u\in \overline{D(A)}^{\mbox{}_{L^{1}}}\cap L^{q_{0}}(\Sigma,\mu)$ with exponents
  $\alpha=\tfrac{1}{\sigma}$ and $\gamma=\frac{q+\varrho}{\sigma}$.
\end{corollary}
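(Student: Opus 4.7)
The plan is to deduce Corollary~\ref{cor:Gagliardo-for-accretive-op-c-complete} as a direct application of Theorem~\ref{thm:Sobolev-for-porousmedia} with the choice $\omega=0$ and $u_{0}=c$ (viewed as the constant function on $\Sigma$). Thus the task reduces to checking that the hypotheses of that theorem are all in place.

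First, by assumption we already have the range condition~\eqref{eq:122} for $A_{1\cap q_{0}}$ and the Gagliardo-Nirenberg type inequality~\eqref{eq:242} with the chosen $u_{0}=c$ and $\omega=0$. The only remaining item to verify is the resolvent estimate~\eqref{eq:123}, which in this setting becomes
\[
\|J_{\lambda}u-c\|_{\tilde q}\le \|u-c\|_{\tilde q}
\]
for $\tilde q=r$ (and for $\tilde q=q$ if $\varrho>0$), where $J_{\lambda}$ denotes the resolvent of $A$. Since $A$ is $m$-accretive in $L^{1}(\Sigma,\mu)$ with $c$-complete resolvent, the second part of Proposition~\ref{propo:quasi-accretive-operators-in-L1-complete-resolvent} (specifically estimate~\eqref{eq:121}) yields precisely this inequality for every $c\in\R$ and every $1\le\tilde q\le\infty$. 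In particular it holds for $\tilde q=r$ and $\tilde q=q$, so hypothesis~\eqref{eq:123} is satisfied.

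With all assumptions of Theorem~\ref{thm:Sobolev-for-porousmedia} fulfilled, the conclusion~\eqref{eq:20} applies and, since $\omega=0$, the exponential factor $e^{\omega\beta t}$ collapses to $1$, yielding
\[
\|T_{t}u-c\|_{r}\le\left(\tfrac{C}{q}\right)^{1/\sigma}t^{-\alpha}\|u-c\|_{q}^{\gamma}
\]
for every $t>0$ and every $u\in\overline{D(A)}^{\mbox{}_{L^{1}}}\cap L^{q_{0}}(\Sigma,\mu)$, with the stated exponents $\alpha=1/\sigma$ and $\gamma=(q+\varrho)/\sigma$. There is no real obstacle here; the only conceptual point is the recognition that the $c$-complete resolvent property is exactly what is needed to provide the contraction of $J_{\lambda}$ around the fixed point $c$ in arbitrary $L^{\tilde q}$ norms, playing the role that the exponential growth~\eqref{eq:62} plays in the quasi-$m$-completely accretive setting.
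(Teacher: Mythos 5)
Your proof is correct and follows precisely the route the paper intends: the paper itself introduces this corollary with the phrase ``Furthermore, by Theorem~\ref{thm:Sobolev-for-porousmedia}, we can deduce the following result,'' and your verification that the resolvent bound~\eqref{eq:123} with $\omega=0$, $u_{0}=c$ is exactly~\eqref{eq:121} from Proposition~\ref{propo:quasi-accretive-operators-in-L1-complete-resolvent} supplies the one non-trivial check.
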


\allowdisplaybreaks
\begin{proof}[Proof of Theorem~\ref{thm:Sobolev-for-porousmedia}]
  Let $u \in \overline{D(A)}^{\mbox{}_{L^{1}}}\cap L^{q_{0}}(\Sigma,\mu)$
  and for given $t>0$, let $N\ge 1$ be large enough such that
  $\tfrac{t\,\omega\,q}{N}<\tfrac{1}{2}$. Then, we set
  $t_{n}=n\,\frac{t}{N}$ for every $n=0,\dots,N$ and
  $\hat{u}_{0}=u$. By range condition~\eqref{eq:122}, for every $n=1,\dots,N$, there is
  iteratively a $\hat{u}_{n}\in D(A_{1\cap q_{0}})$ satisfying
  \begin{equation}
  \label{eq:108}
    \hat{u}_{n}+\tfrac{t}{N} A_{1\cap q_{0}} \hat{u}_{n}\ni \hat{u}_{n-1}.
  \end{equation}
  We set
  \begin{displaymath}
    \hat{U}_{N}(s)=\hat{u}_{0}\mathds{1}_{\{t_{0}=0\}}(s)
    +\sum_{n=1}^{N}\hat{u}_{n}\;\mathds{1}_{(t_{n-1},t_{n}]}(s)
  \end{displaymath}
  for every $s\in [0,t]$ and
  $\hat{v}_{n}=(\hat{u}_{n-1}-\hat{u}_{n})\tfrac{N}{t}$.  Then,
  inclusions~\eqref{eq:108} can be rewritten as
  $ \hat{v}_{n}\in A_{1\cap q_{0}} \hat{u}_{n}$ or as
  $J_{t/N}\hat{u}_{n-1}=\hat{u}_{n}$ for every $n=1,\dots,N$. Hence, since $A_{1\cap q_{0}}$ satisfies Gagli\-ardo-Nirenberg type
  inequalities~\eqref{eq:242} with $(u_{0},0)\in A_{1\cap q_{0}}$, we see
  that by using~\eqref{eq:57} and~\eqref{eq:56} that
  \begin{align*}
    &\norm{\hat{u}_{n}-u_{0}}_{r}^{\sigma}\\
    &\;\le C\,\Big(\,[\hat{u}_{n}-u_{0},\hat{v}_{n}]_{q}
      +\omega \norm{\hat{u}_{n}-u_{0}}_{q}^{q}\Big)\;
    \norm{\hat{u}_{n}-u_{0}}_{q}^{\varrho}\\
   &\;= C\,\tfrac{N}{t}\Big(\,[\hat{u}_{n}-u_{0},\hat{u}_{n-1}-\hat{u}_{n}]_{q}
      +\tfrac{\omega\,t}{N} \norm{\hat{u}_{n}-u_{0}}_{q}^{q}\Big)\;
    \norm{\hat{u}_{n}-u_{0}}_{q}^{\varrho}\\
    &\;= C\,\tfrac{N}{t}\Big(\,[\hat{u}_{n}-u_{0},(\hat{u}_{n-1}-u_{0})-(\hat{u}_{n}-u_{0})]_{q}
      +\tfrac{\omega\,t}{N} \norm{\hat{u}_{n}-u_{0}}_{q}^{q}\Big)\;
    \norm{\hat{u}_{n}-u_{0}}_{q}^{\varrho}\\
    &\;\le
      C\,\tfrac{N}{t}\Big(\,\tfrac{1}{q}\norm{\hat{u}_{n-1}-u_{0}}_{q}^{q}-
      (1-\tfrac{\omega\,q\,t}{N})\tfrac{1}{q}\norm{\hat{u}_{n}-u_{0}}_{q}^{q}\Big)\;
    \norm{\hat{u}_{n}-u_{0}}_{q}^{\varrho}
  \end{align*}
  for every $n=1,\dots,N$. By assumption, 
  the resolvent operator $J_{t/N}$ of $A$ satisfies
  inequality~\eqref{eq:123} for $\tilde{q}=q$ provided
  $\varrho>0$. Then,
  \begin{align*}
    \norm{\hat{u}_{n}-u_{0}}_{q} 
      & =\norm{J_{t/N}\hat{u}_{n-1}-u_{0}}_{q}\\
      & \le
        (1-\tfrac{t\omega}{N})^{-1}\,\norm{\hat{u}_{n-1}-u_{0}}_{q}\\
      & \quad\vdots\\
      & \le (1-\tfrac{t\omega}{N})^{-n}\norm{\hat{u}_{0}-u_{0}}_{q}\\
     & \le (1-\tfrac{t\omega}{N})^{-N}\norm{\hat{u}_{0}-u_{0}}_{q}.
  \end{align*}
  Applying this to the previous inequality, in order to estimate
  $\norm{\hat{u}_{n}-u_{0}}_{q}^{\varrho}$ and multiplying the
  resulting inequality by
  $\tfrac{t}{N}\,(1-\tfrac{\omega\,q\,t}{N})^{-1}$ yields
  \begin{align*}
   &\tfrac{t}{N} (1-\tfrac{\omega\,q\,t}{N})^{-1}\norm{\hat{u}_{n}-u_{0}}_{r}^{\sigma}\\ 
   &\qquad\le
      C\,\Big(\,(1-\tfrac{\omega\,q\,t}{N})^{-1}\,\tfrac{1}{q}\norm{\hat{u}_{n-1}-u_{0}}_{q}^{q}-
      \tfrac{1}{q}\norm{\hat{u}_{n}-u_{0}}_{q}^{q}\Big)\;\\
    &\hspace{6cm} \times (1-\tfrac{t\omega}{N})^{-N\varrho}
    	\norm{\hat{u}_{0}-u_{0}}_{q}^{\varrho}.
  \end{align*}
  Rearranging this inequality yields
  \begin{displaymath}
    \tfrac{1}{q}\norm{\hat{u}_{n}-u_{0}}_{q}^{q} \le
    (1-\tfrac{\omega\,q\,t}{N})^{-1}\,\tfrac{1}{q}\norm{\hat{u}_{n-1}-u_{0}}_{q}^{q}
    + b_{n}
  \end{displaymath}    
  with
  \begin{displaymath}
    b_{n}:= -\tfrac{t}{N}
    (1-\tfrac{\omega\,q\,t}{N})^{-1}\,\norm{\hat{u}_{n}-u_{0}}_{r}^{\sigma}\,C^{-1}
    \, (1-\tfrac{t\omega}{N})^{N\varrho}\,\norm{\hat{u}_{0}-u_{0}}_{q}^{-\varrho}.
  \end{displaymath}
  for every $n=1,\dots,N$. By auxiliary inequality~\eqref{eq:110},
  \begin{displaymath}
    \tfrac{1}{q}\norm{\hat{u}_{n}-u_{0}}_{q}^{q} \le
    (1-\tfrac{\omega\,q\,t}{N})^{-N}\,
    \tfrac{1}{q}\norm{\hat{u}_{0}-u_{0}}_{q}^{q}+\sum_{n=1}^{N}
    (1-\tfrac{\omega\,q\,t}{N})^{-(N-(n+1))} b_{n}.
  \end{displaymath}
  Rearranging this inequality and using that $(1-\tfrac{\omega\,q\,t}{N})^{n}\le
  (1-\tfrac{\omega\,q\,t}{N})^{N}$ gives
  \begin{align*}
    &
      (1-\tfrac{\omega\,q\,t}{N})^{N}\tfrac{1}{q}\norm{\hat{u}_{n}-u_{0}}_{q}^{q}\\
    &\hspace{2cm} + C^{-1}\,
      (1-\tfrac{t\omega}{N})^{N\varrho}\,\norm{\hat{u}_{0}-u_{0}}_{q}^{-\varrho}
      \,(1-\tfrac{\omega\,q\,t}{N})^{N}\, 
      \sum_{n=1}^{N}\tfrac{t}{N} \norm{\hat{u}_{n}-u_{0}}_{r}^{\sigma}\\
   & \qquad \le \tfrac{1}{q}\norm{\hat{u}_{0}-u_{0}}_{q}^{q}
  \end{align*}
  and so,
  \begin{equation}
    \label{eq:117}
    \begin{split}
      &
      (1-\tfrac{\omega\,q\,t}{N})^{N}\tfrac{1}{q}\norm{\hat{U}_{N}(t)-u_{0}}_{q}^{q}\\
      &\hspace{1cm} + C^{-1}\,
      (1-\tfrac{t\omega}{N})^{N\varrho}\,\norm{u-u_{0}}_{q}^{-\varrho}
      \,(1-\tfrac{\omega\,q\,t}{N})^{N}\,
      \int_{0}^{t}\norm{\hat{U}_{N}(s)-u_{0}}_{r}^{\sigma}\,\ds\\
      & \quad \le \tfrac{1}{q}\norm{u-u_{0}}_{q}^{q}.
    \end{split}
  \end{equation}
  Recall that $A_{1\cap q_{0}}\subseteq A$ and, by assumption, $A+\omega I$ is $m$-accretive
  in $L^{1}(\Sigma,\mu)$. Thus, the Crandall-Liggett theorem yields
  \begin{displaymath}
  \lim_{N\to\infty}\hat{U}_{N}=T_{t}u\qquad\text{ in
  $L^{1}(\Sigma,\mu)$ uniformly on $[0,t]$.}
\end{displaymath}
Since the $L^{r}$- and $L^{q}$-norm on $L^{1}(\Sigma,\mu)$ are lower semicontinuous in
  $L^{1}(\Sigma,\mu)$, sending $N\to \infty$ in~\eqref{eq:117} and
  applying Fatou's Lemma yields
  \begin{align*}
    &e^{-\omega\,q\,t}\,\tfrac{1}{q}\norm{T_{t}u-u_{0}}_{q}^{q}
      + C^{-1}\,e^{-\omega\,\varrho\,t}\,
      \norm{u-u_{0}}_{q}^{-\varrho}\,e^{-\omega\,q\,t}\, 
      \int_{0}^{t}\norm{T_{s}u-u_{0}}_{r}^{\sigma}\,\ds\\
   & \hspace{2cm} \le \tfrac{1}{q}\norm{u-u_{0}}_{q}^{q}
  \end{align*}
  and so
  \begin{equation}
    \label{eq:125}
    C^{-1}\,e^{-\omega\,\varrho\,t}\,
    \norm{u-u_{0}}_{q}^{-\varrho}\,e^{-\omega\,q\,t}\, 
      \int_{0}^{t}\norm{T_{s}u-u_{0}}_{r}^{\sigma}\,\ds\\
      \le \tfrac{1}{q}\norm{u-u_{0}}_{q}^{q}.
  \end{equation}
  Now, fix $\tilde{u}\in \overline{D(A)}^{\mbox{}_{L^{1}}}\cap 
  L^{r}(\Sigma,\mu)$. Then, applying~\eqref{eq:123} 
  iteratively for $\tilde{q}=r$, we see that
  \begin{equation}
    \label{eq:126}
    \begin{split}
      \norm{J_{t/n}^{n}\tilde{u}-u_{0}}_{r}& = \norm{J_{t/n}^{n-1}(J_{t/n}\tilde{u})-u_{0}}_{r}\\
          &  \le (1-\tfrac{t\omega}{n})^{-1}\, \norm{J_{t/n}^{n-1}\tilde{u}-u_{0}}_{r}\\
          &  \;\;\vdots\\
          & \le (1-\tfrac{t\omega}{n})^{-n}\norm{\tilde{u}-u_{0}}_{r}
        \end{split}
      \end{equation}
  for every $t>0$ and integer $n\ge 1$ such that $\tfrac{t}{n}\omega
  <1$. By Euler's formula~\eqref{eq:36} and
  since $\tilde{u}\in \overline{D(A)}^{\mbox{}_{L^{1}}}$,
  \begin{displaymath}
     \lim_{n\to\infty}J_{t/n}^{n}\tilde{u}=T_{t}\tilde{u}\qquad\text{in $L^{1}(\Sigma,\mu)$}
  \end{displaymath}
  for every $t>0$. Since the $L^{r}$-norm is lower semicontinuous on
  $L^{1}(\Sigma,\mu)$, sending $n\to\infty$ in~\eqref{eq:126} yields
  \begin{displaymath}
    \norm{T_{t}\tilde{u}-u_{0}}_{r}\le e^{\omega t}\norm{\tilde{u}-u_{0}}_{r}
  \end{displaymath}
  for every $t>0$ hence, by using the semigroup property of
  $\{T_{t}\}_{t\ge 0}$, it follows that
 \begin{displaymath}
    \norm{T_{t}\tilde{u}-u_{0}}_{r}\le e^{\omega (t-s)}\norm{T_{s}\tilde{u}-u_{0}}_{r}
  \end{displaymath}
  for every $t\ge s> 0$ and $\tilde{u}\in
  \overline{D(A)}^{\mbox{}_{L^{1}}}\cap L^{r}(\Sigma,\mu)$. Applying
  this inequality to the integrand
  in~\eqref{eq:125}, we see that~\eqref{eq:20} holds.
\end{proof}

\section{Nonlinear extrapolation}\label{extra}

The aim of this section is to provide simple and sufficient conditions such that
an $L^{q}$-$L^{r}$-regularisation estimate of the type~\eqref{eq:18}
or~\eqref{eq:20} for some $1<q<r\le \infty$ satisfied by a nonlinear semigroup
$\{T_{t}\}_{t\ge 0}$  can be extrapolated to an
$L^{s}$-$L^{r}$-regularisation estimate for every $1\le
s<q$ (this we call below  \emph{extrapolation
  towards $L^{1}$}) and such that an $L^{q}$-$L^{r}$-regularity
estimate of the type~\eqref{eq:18}
or~\eqref{eq:20} for some $1<q, r<\infty$ can be extrapolated to an $L^{\tilde{q}}$-$L^{\infty}$-regularisation
estimate for some $1\le \tilde{q}<\infty$ (\emph{extrapolation
  towards $L^{\infty}$}). We note that in order to extrapolate towards
$L^{\infty}$, the relation $r>q$ is not important, but one rather needs that
the relation $\gamma r>q$ holds for the exponent $\gamma>0$ in the
estimates~\eqref{eq:18} and~\eqref{eq:20}
(cf. Theorems~\ref{thm:main-1} and~\ref{thm:GN-implies-reg-bis} or
Theorems~\ref{thm:extrapolation-to-infty}
and~\ref{thm:extrapolation-to-infty-bis}). Further, the
iteration method (Lemma~\ref{lem:iteration} and
Lemma~\ref{lem:iteration-bis}) used to establish
$L^{\tilde{q}}$-$L^{\infty}$-regularisation  works if $1\le
\tilde{q}<\infty$ is chosen sufficiently large. Thus, if one starts from an $L^{q}$-$L^{r}$-regularity
estimate for some $1<q, r<\infty$ then, first, one extrapolates
towards $L^{\infty}$, and then one extrapolates towards $L^{1}$.\bigskip

The extrapolation towards $L^\infty$ being more involved, we shall begin  in Section \ref{sec:extra-pol-twoards-one}
 by extrapolating towards $L^{1}$, or, more
precisely, towards $L^{s}$ for any $1\le s<q$. Section~\ref{sec:nonlinear-interpolation} is concerned with a new
nonlinear interpolation result  which provides the fundamental auxiliary
tool to establish our \emph{extrapolation result towards $L^{\infty}$} presented
in Section~\ref{sec:extrapolation-towards-infinity}.


\subsection{Extrapolation towards $L^1$}
\label{sec:extra-pol-twoards-one}
This subsection is dedicated to giving a nonlinear version of~\cite[Lemme 1]{MR1077272}
(see also \cite[Section I]{MR1164643}). The first extrapolation result of
this subsection is adapted to semigroups generated by completely
accretive operators (see Section~\ref{sec:comp}) satisfying the
$L^{q}$-$L^{r}$-regularising effect~\eqref{eq:18} for
\emph{differences} and $1<q<r\le \infty$.

\begin{theorem}\label{thm:extrapol-L1-differences}
  Let $1\le s<q<r\le \infty$ and $\{T_{t}\}_{t\geq 0}$ be a semigroup
  acting on some subset $D$ of $L^{q}(\Sigma,\mu)$ with exponential
  growth~\eqref{eq:62} for $\tilde{q}=s$ and some $\omega\ge 0$. Suppose
  there exist $\alpha>0$, $\beta$, $\gamma>0$ and $C>0$ such that
  \begin{equation}
    \tag{\ref{eq:18}}
    \norm{T_{t}u-T_{t}\hat{u}}_{r}\le \,C\; t^{-\alpha}\; e^{\omega \beta t}\; 
    \norm{u-\hat{u}}_{q}^{\gamma}
  \end{equation}
  for every $t>0$ and $u$, $\hat{u}\in D$. For
  $\theta_{s}=\tfrac{(r-q)s}{q(r-s)}>0$ if $r<\infty$ and
  $\theta_{s}=\tfrac{s}{q}$ if $r=\infty$, assume that
  \begin{equation}
    \label{eq:229}
    \gamma(1-\theta_{s})<1.
  \end{equation}
  Then 
  one has
  \begin{equation}
    \label{eq:8}
    \norm{T_{t}u-T_{t}\hat{u}}_{r}\le\,
    (C \,2^{\frac{\alpha}{1-\gamma(1-\theta_{s})}})^{\frac{1}{1-\gamma(1-\theta_{s})}}
   \; t^{-\alpha_{s}}\; e^{\omega \beta_{s} t} \norm{u-\hat{u}}_{s}^{\gamma_{s}}
  \end{equation}
  for every $t>0$ and $u$,
  $\hat{u}\in D\cap L^{s}(\Sigma,\mu)$ with exponents
  \begin{equation}
    \label{eq:19}
    \alpha_{s}=\frac{\alpha}{1-\gamma(1-\theta_{s})},\quad
    \beta_{s}= \frac{(\beta/2)+\gamma\, \theta_{s}}{1-\gamma(1-\theta_{s})},\quad
    \gamma_{s}=\gamma\frac{\theta_{s}}{1-\gamma(1-\theta_{s})}.
 \end{equation}
\end{theorem}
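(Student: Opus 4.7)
The plan is to run a classical interpolation bootstrap, a nonlinear analogue of~\cite{MR1077272}. The three ingredients are: the semigroup property of $\{T_t\}_{t\ge 0}$ (so a bound at time $t$ can be traded for a shorter-time bound via composition), log-convexity of $L^p$-norms (to interpolate the $L^q$-norm between $L^s$ and $L^r$), and the assumed exponential growth in $L^s$ (to control one of the factors produced by the interpolation).

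\textbf{Splitting and interpolation.} For any $0<\tau<t$, I apply~\eqref{eq:18} with initial data $T_\tau u,\, T_\tau\hat u\in D$ on the time interval $t-\tau$, obtaining
\begin{equation*}
\|T_t u - T_t\hat u\|_r \le C(t-\tau)^{-\alpha} e^{\omega\beta(t-\tau)} \|T_\tau u - T_\tau\hat u\|_q^\gamma.
\end{equation*}
The exponent $\theta_s$ in the statement is precisely the one arising from $1/q=\theta_s/s+(1-\theta_s)/r$ (with $\theta_s=s/q$ when $r=\infty$), so the classical log-convexity of $L^p$-norms gives
\begin{equation*}
\|T_\tau u - T_\tau\hat u\|_q \le \|T_\tau u - T_\tau\hat u\|_s^{\theta_s}\,\|T_\tau u - T_\tau\hat u\|_r^{1-\theta_s},
\end{equation*}
and the $L^s$-growth hypothesis yields $\|T_\tau u - T_\tau\hat u\|_s\le e^{\omega\tau}\|u-\hat u\|_s$. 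Choosing the natural value $\tau=t/2$ and combining these bounds produces
\begin{equation*}
\|T_t u-T_t\hat u\|_r \le C\,2^{\alpha}\,t^{-\alpha}\, e^{\omega(\beta+\gamma\theta_s)t/2}\,\|u-\hat u\|_s^{\gamma\theta_s}\, \|T_{t/2}u-T_{t/2}\hat u\|_r^{\gamma(1-\theta_s)}.
\end{equation*}

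\textbf{Bootstrap.} For each $T>0$ define
\begin{equation*}
\Phi(T) := \sup_{0<\tau\le T} \tau^{\alpha_s}\, e^{-\omega\beta_s\tau}\,\|u-\hat u\|_s^{-\gamma_s}\,\|T_\tau u - T_\tau\hat u\|_r.
\end{equation*}
Substituting the tautological bound $\|T_{t/2}u - T_{t/2}\hat u\|_r \le (t/2)^{-\alpha_s}e^{\omega\beta_s t/2}\|u-\hat u\|_s^{\gamma_s}\Phi(T)$ into the preceding display and multiplying by $t^{\alpha_s}e^{-\omega\beta_s t}\|u-\hat u\|_s^{-\gamma_s}$, the exponents of $t$, $e^{\omega t}$ and $\|u-\hat u\|_s$ cancel provided $\alpha_s=\alpha/(1-\gamma(1-\theta_s))$ and $\gamma_s=\gamma\theta_s/(1-\gamma(1-\theta_s))$, and provided $\beta_s$ is at least $(\beta+\gamma\theta_s)/(2-\gamma(1-\theta_s))$; the value in~\eqref{eq:19} is a convenient choice that lies above this threshold, so the $\omega$-exponent contributes a factor $\le 1$. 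A short arithmetic check shows that the residual constant equals $C\cdot 2^{\alpha/(1-\gamma(1-\theta_s))}$, leaving
\begin{equation*}
\Phi(T) \le C\,2^{\alpha/(1-\gamma(1-\theta_s))}\,\Phi(T)^{\gamma(1-\theta_s)}.
\end{equation*}
The assumption $\gamma(1-\theta_s)<1$ of~\eqref{eq:229} lets me solve this self-improving inequality, yielding $\Phi(T)\le \bigl(C\,2^{\alpha/(1-\gamma(1-\theta_s))}\bigr)^{1/(1-\gamma(1-\theta_s))}$, which unravels to~\eqref{eq:8}.

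\textbf{Main obstacle.} The delicate point is to secure that $\Phi(T)<\infty$ \emph{a priori} before absorbing the $\Phi(T)^{\gamma(1-\theta_s)}$ term: otherwise the fixed-point inequality is vacuous. For $u,\hat u\in D\subseteq L^q$ this is automatic from~\eqref{eq:18}, which gives local boundedness of $\tau\mapsto\|T_\tau u - T_\tau\hat u\|_r$ on $(0,T]$; however the conclusion is claimed for the larger class $D\cap L^s$, and when $L^s\not\subset L^q$ a truncation/density argument is required, approximating $u,\hat u$ by elements of $D\cap L^q$, applying the bound just proved, and passing to the limit in $L^s$ using the exponential $L^s$-growth assumption. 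Verifying that the limit also lies in $L^r$ with the stated estimate is the genuinely technical ingredient.
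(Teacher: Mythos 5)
Your argument follows exactly the same route as the paper's proof: split the time interval at $\tau=t/2$ via the semigroup property, interpolate the $L^q$-norm between $L^s$ and $L^r$, control the $L^s$-factor by the exponential growth hypothesis, and then close the estimate by a self-improving ``$\sup$'' inequality exploiting $\gamma(1-\theta_s)<1$. The exponent bookkeeping is correct (the only cosmetic difference from the paper is that the paper normalizes its supremum by $e^{\omega\gamma_s t}$ rather than $e^{\omega\beta_s t}$, and then converts to $\beta_s$ at the very end; your variant is equivalent and you correctly identify the threshold for $\beta_s$).

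However, the ``main obstacle'' you raise at the end is not a real obstacle, and your proposed fix is unnecessary. You write that the conclusion is claimed for ``the larger class $D\cap L^s$'' and that a truncation/density argument would be needed when $L^s\not\subset L^q$. This is a misreading of the hypotheses: the theorem assumes that $D$ is a \emph{subset of $L^q(\Sigma,\mu)$}, so $D\cap L^s(\Sigma,\mu)\subseteq D\subseteq L^q(\Sigma,\mu)$ is a \emph{smaller} class, not a larger one. Hence for every $u,\hat u\in D\cap L^s$ the pair already lies in $L^q$, estimate~\eqref{eq:18} applies directly, and the finiteness of the supremum $\Phi(T)$ (equivalently, the paper's $C_{u,\hat u,T}$) is automatic: one has
\begin{displaymath}
  t^{\alpha_s}\,\|T_t u - T_t\hat u\|_r \le C\,t^{\alpha_s-\alpha}\,e^{\omega\beta t}\,\|u-\hat u\|_q^{\gamma},
\end{displaymath}
and since $\theta:=1-\gamma(1-\theta_s)\in(0,1)$ gives $\alpha_s=\alpha/\theta>\alpha$, the right-hand side stays bounded as $t\to0+$. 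No approximation or passage to the limit in $L^s$ is needed, and indeed the paper makes no such argument. Apart from this misdirected worry, your proof is correct and matches the paper's.
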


\begin{remark}
  The statement of Theorem~\ref{thm:extrapol-L1-differences}
  remains unchanged if one replaces the constant $e^{\omega t}$ in
  condition~\eqref{eq:62} for $\tilde{q}=s$ by $M\,e^{\omega t}$
  for some constant $M>0$.  Then the constant $C$ in~\eqref{eq:8} has to be changed accordingly.
\end{remark}


\begin{proof}[Proof of Theorem~\ref{thm:extrapol-L1-differences}]
  We outline the proof only for $r<\infty$ since the case $r=\infty$ is
  treated similarly. Then, set $\theta_{s}=\tfrac{(r-q)s}{q(r-s)}$ and
  assume that~\eqref{eq:229} holds. For
  $\theta:=1-\gamma(1-\theta_{s})$,  $u$, $\hat{u}\in L^{s}(\Sigma,\mu)\cap
  D$ satisfying $u\neq \hat{u}$ and $T>0$, set
  \begin{displaymath}
  C_{u,\hat{u},T}:=\sup_{t\in [0,T]}\frac{t^{\alpha/\theta}
  \norm{T_{t}u-T_{t}\hat{u}}_{r}}{e^{\omega \gamma_{s} t}\,\norm{u-\hat{u}}_{s}^{\gamma_{s}}}.
  \end{displaymath}
  By~\eqref{eq:18} and since $\theta_{s}$ satisfies $\frac{1}{q}=\frac{(1-\theta_{s})}{r}+\frac{\theta_{s}}{s}$,
  H\"older's inequality imply
  \begin{align*}
    \norm{T_{t}u-T_{t}\hat{u}}_{r} & \le C\,e^{\omega \beta \frac{t}{2}}\,\big(\tfrac{t}{2}\big)^{-\alpha}
    \,\norm{T_{t/2}u-T_{t/2}\hat{u}}_{q}^\gamma\\
    & \le C\,e^{\omega \beta \frac{t}{2}}\,\big(\tfrac{t}{2}\big)^{-\alpha}\,
       \norm{T_{t/2}u-T_{t/2}\hat{u}}_{r}^{\gamma(1-\theta_{s})}\,
       \norm{T_{t/2}u-T_{t/2}\hat{u}}_{s}^{\gamma\theta_{s}}
  \end{align*}
  Since $\{T_{t}\}_{t\geq 0}$ satisfies~\eqref{eq:62} for
  $\tilde{q}=s$ and some $\omega\ge 0$,
  \begin{displaymath}
    \norm{T_{t}u-T_{t}\hat{u}}_{r} \le
    C\,e^{\omega (\beta+\gamma\theta_{s}) \frac{t}{2}}\,
    \big(\tfrac{t}{2}\big)^{-\alpha}\,
    \norm{T_{t/2}u-T_{t/2}\hat{u}}_{r}^{\gamma(1-\theta_{s})}\,
    \norm{u-\hat{u}}_{s}^{\gamma\theta_{s}}
  \end{displaymath}
  and so by definition of $C_{u,\hat{u},T}$,
  \begin{displaymath}
    \norm{T_{t}u-T_{t}\hat{u}}_{r} \le C\;
    e^{\omega (\beta+\gamma\theta_{s}+\gamma_{s}\gamma(1-\theta_{s})) \frac{t}{2}}\;
    \big(\tfrac{t}{2}\big)^{-\alpha-\alpha\frac{\gamma(1-\theta_{s})}{\theta}}\;
    C_{u,\hat{u},T}^{\gamma(1-\theta_{s})}\,
    \norm{u-\hat{u}}_{s}^{\gamma(\theta_{s}+\gamma_{0}(1-\theta_{s}))}
    \end{displaymath}
  for every $t\in [0,2T]$. Since
  $\gamma\theta_{s}+\gamma_{s}\gamma(1-\theta_{s})=\gamma_{s}$ and
  $1+\frac{\gamma(1-\theta_{s})}{\theta}=\frac{1}{\theta}$, the
  previous estimate becomes
  \begin{displaymath}
    \norm{T_{t}u-T_{t}\hat{u}}_{r} \le 
    C\; e^{\omega (\beta+\gamma_{s}) \frac{t}{2}}\; 2^{\frac{\alpha}{\theta}}\;
    t^{-\frac{\alpha}{\theta}}\; C_{u,\hat{u},T}^{\gamma(1-\theta_{s})}\,
    \norm{u-\hat{u}}_{s}^{\gamma_{s}}
  \end{displaymath}
  and so
  \begin{displaymath}
    \norm{T_{t}u-T_{t}\hat{u}}_{r} \le 
    C\,e^{\omega \beta \frac{t}{2}}\,2^{\frac{\alpha}{\theta}}
    \, C_{u,\hat{u},T}^{\gamma(1-\theta_{s})}\,e^{\omega \gamma_{s} t}\,t^{-\frac{\alpha}{\theta}}\,
    \norm{u-\hat{u}}_{s}^{\gamma_{s}}
  \end{displaymath}
  for every $t\in [0,T]$. Dividing this inequality by $\,e^{\omega \gamma_{s} t}\,t^{-\frac{\alpha}{\theta}}\,
    \norm{u-\hat{u}}_{s}^{\gamma_{s}}$ and taking the
  supremum over $[0,T]$ on the left hand-side of the resulting inequality yields
  \begin{displaymath}
  C_{u,\hat{u},T}\le C\,e^{\omega \beta \frac{T}{2}}\;
  2^{\frac{\alpha}{\theta}}\; C_{u,\hat{u},T}^{\gamma(1-\theta_{s})}.
  \end{displaymath}
  Since $\gamma(1-\theta_{s})<1$, this implies that $C_{u,\hat{u},T}$ is
  uniformly bounded in $u$, $\hat{u}$ by constant
  $(C \,2^{\frac{\alpha}{\theta}})^{\frac{1}{\theta}}\,e^{\omega\frac{\beta}{\theta} \frac{T}{2}}>0$
  with $\theta=1-\gamma(1-\theta_{s})$. In other words,
  \begin{displaymath}
  \norm{T_{t}u-T_{t}\hat{u}}_{r}\le
  (C
  \; 2^{\frac{\alpha}{\theta}})^{\frac{1}{\theta}}\;
  e^{\omega \frac{\beta}{\theta} \frac{T}{2}} \; e^{\omega \gamma_{s} t}
  \; t^{-\frac{\alpha}{\theta}}\; \norm{u-\hat{u}}_{s}^{\gamma_{s}}
  \end{displaymath}
  for every $t\in [0,T]$ and $u$,
  $\hat{u}\in D\cap L^{s}(\Sigma,\mu)$, where $T>0$ was
  arbitrary. Taking $t=T$ in this inequality, we can conclude that
  inequality~\eqref{eq:8} holds for every $t>0$ and $u$,
  $\hat{u}\in D\cap L^{s}(\Sigma,\mu)$. 
\end{proof}

Our second extrapolation result of this subsection is adapted to
semigroups enjoying the $L^{q}$-$L^{r}$-regularising
effect~\eqref{eq:20} for $1<q<r\le \infty$ and some
$u_{0}\in L^{r}\cap L^{s}(\Sigma,\mu)$ generated by
either quasi $m$-completely accretive operators on $L^{q}(\Sigma,\mu)$
(Section~\ref{sec:comp}) or quasi $m$-accretive
operators in $L^{1}$ with ($c$-)complete resolvent (Section~\ref{subsec:L1}).

\begin{theorem}\label{thm:extrapol-L1-bis}
  Let $1\le s<q<r\le \infty$ and $\{T_{t}\}_{t\geq 0}$ be a
  semigroup acting on a subset $D$ of $L^{q}(\Sigma,\mu)$ and
  satisfies the exponential growth property~\eqref{eq:216} for $\tilde{q}=s$,
  some $\omega\ge 0$ and
  $u_{0}\in L^{s}\cap L^{r}(\Sigma,\mu)$, $\{T_{t}\}_{t\geq 0}$.
  Suppose there exist $C>0$ and exponents
  $\alpha>0$, $\beta$, $\gamma>0$ such that
  \begin{equation}
    \tag{\ref{eq:20}}
    \norm{T_{t}u-u_{0}}_{r}\le \,C\; t^{-\alpha}\; e^{\omega \beta t}\;
    \norm{u-u_{0}}_{q}^{\gamma}
  \end{equation}
  for every $t>0$ and $u\in D$. For
  $\theta_{s}=\tfrac{(r-q)s}{q(r-s)}>0$ if $r<\infty$ and
  $\theta_{s}=\tfrac{s}{q}$ if $r=\infty$, assume that
  $\gamma(1-\theta_{s})<1$. Then
  one has
  \begin{equation}
    \label{eq:239}
    \norm{T_{t}u-u_{0}}_{r}\le\,
    (C \,2^{\frac{\alpha}{1-\gamma(1-\theta_{s})}})^{\frac{1}{1-\gamma(1-\theta_{s})}}\;
    t^{-\alpha_{s}}\;
    \,e^{\omega \beta_{s} t}\; \norm{u-u_{0}}_{s}^{\gamma_{s}}
  \end{equation}
  for every $t>0$ and $u\in D\cap L^{q_{0}}(\Sigma,\mu)$ with exponents~\eqref{eq:19}.
\end{theorem}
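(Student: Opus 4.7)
The plan is to follow verbatim the bootstrap/interpolation strategy used for Theorem~\ref{thm:extrapol-L1-differences}, with the fixed point $u_0$ playing the role of the ``second argument'' $T_t\hat u$. The key structural observation is that the exponential growth hypothesis~\eqref{eq:216} implicitly encodes $T_tu_0=u_0$, so by the semigroup property we may write $T_tu-u_0=T_{t/2}(T_{t/2}u)-u_0$ and apply the hypothesis~\eqref{eq:20} to the single orbit starting at $T_{t/2}u\in D$ (using that $D$ is invariant under the semigroup).

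More concretely, I will treat the case $r<\infty$, since $r=\infty$ is handled identically with Hölder interpolation replaced by the trivial bound. Set $\theta_s=\tfrac{(r-q)s}{q(r-s)}\in(0,1)$ and $\theta:=1-\gamma(1-\theta_s)>0$. For $u\in D\cap L^s(\Sigma,\mu)$ with $u\neq u_0$ and $T>0$, introduce the bootstrap quantity
\begin{displaymath}
  C_{u,T}:=\sup_{t\in[0,T]}\frac{t^{\alpha/\theta}\,\norm{T_tu-u_0}_r}{e^{\omega\gamma_s t}\,\norm{u-u_0}_s^{\gamma_s}}.
\end{displaymath}
Fixing $t\in[0,T]$, I apply~\eqref{eq:20} to the first half-step to obtain $\norm{T_tu-u_0}_r\le C(t/2)^{-\alpha}e^{\omega\beta t/2}\norm{T_{t/2}u-u_0}_q^{\gamma}$; then Hölder's inequality with the exponent relation $\tfrac{1}{q}=\tfrac{1-\theta_s}{r}+\tfrac{\theta_s}{s}$ gives
\begin{displaymath}
  \norm{T_{t/2}u-u_0}_q\le\norm{T_{t/2}u-u_0}_r^{1-\theta_s}\,\norm{T_{t/2}u-u_0}_s^{\theta_s}.
\end{displaymath}
Next I use~\eqref{eq:216} at $\tilde q=s$ to replace $\norm{T_{t/2}u-u_0}_s$ by $e^{\omega t/2}\norm{u-u_0}_s$, and the defining inequality of $C_{u,T}$ to estimate $\norm{T_{t/2}u-u_0}_r\le C_{u,T}(t/2)^{-\alpha/\theta}e^{\omega\gamma_s t/2}\norm{u-u_0}_s^{\gamma_s}$.

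Assembling these three ingredients and collecting exponents via the identities $\gamma\theta_s+\gamma_s\gamma(1-\theta_s)=\gamma_s$ and $1+\gamma(1-\theta_s)/\theta=1/\theta$, yields
\begin{displaymath}
  \norm{T_tu-u_0}_r\le C\,2^{\alpha/\theta}\,e^{\omega\beta t/2}\,C_{u,T}^{\gamma(1-\theta_s)}\,e^{\omega\gamma_s t}\,t^{-\alpha/\theta}\,\norm{u-u_0}_s^{\gamma_s}
\end{displaymath}
for every $t\in[0,T]$. Dividing by $e^{\omega\gamma_s t}t^{-\alpha/\theta}\norm{u-u_0}_s^{\gamma_s}$ and taking the supremum over $t\in[0,T]$ gives $C_{u,T}\le C\,2^{\alpha/\theta}\,e^{\omega\beta T/2}\,C_{u,T}^{\gamma(1-\theta_s)}$. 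Since by hypothesis $\gamma(1-\theta_s)<1$, the Young-type inequality forces $C_{u,T}\le (C\,2^{\alpha/\theta})^{1/\theta}\,e^{\omega\beta T/(2\theta)}$, uniformly in $u$; evaluating at $t=T$ and recalling that $T>0$ is arbitrary yields~\eqref{eq:239} with the stated exponents $\alpha_s=\alpha/\theta$, $\beta_s=((\beta/2)+\gamma\theta_s)/\theta$, $\gamma_s=\gamma\theta_s/\theta$. There is no real obstacle here beyond book-keeping; the only point requiring minor care is ensuring $T_{t/2}u$ lies in the domain where~\eqref{eq:20} is valid, which follows from the semigroup $\{T_t\}_{t\ge0}$ acting on $D$ (so that $D$ is invariant).
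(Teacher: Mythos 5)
Your proof is correct and follows essentially the same strategy as the paper: the published proof simply refers back to the proof of Theorem~\ref{thm:extrapol-L1-differences}, replacing $\hat{u}$ and $T_t\hat{u}$ by $u_0$ and~\eqref{eq:62} by~\eqref{eq:216}, which is precisely the substitution you carried out in full.
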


\begin{proof}[Proof of Theorem~\ref{thm:extrapol-L1-bis}]
  By using the same arguments as outlined in the proof of
  Theorem~\ref{thm:extrapol-L1-differences}, where one replaces
  $\hat{u}$ and $T_{t}\hat{u}$ by $u_{0}$ and condition~\eqref{eq:62}
  by \eqref{eq:216}, one sees
  that the statement of Theorem~\ref{thm:extrapol-L1-bis} holds.
\end{proof}


We continue this section by establishing a new nonlinear interpolation theorem of independent interest.

%
%
%
%
%

\subsection{A nonlinear interpolation theorem}
\label{sec:nonlinear-interpolation}

In this subsection, we state our nonlinear interpolation theorem,
which generalises both Peetre's (\cite[Theorem~3.1]{MR0482280}) and
Tartar's (cf.~\cite[Th\'eor\`eme~4]{MR0310619}) nonlinear
interpolation results. Our nonlinear interpolation theorem complements
the existing literature in three ways, namely, by introducing
additional parameters $p_{0}$, $r_{0}$, $r_{1}$, by treating the
borderline cases $p_{0}=\infty$, $p_{1}<\infty$ and $p_{0}<\infty$,
$p_{1}=\infty$, and by giving exact constants.

We begin by recalling some basic definitions, notations and results
from the classical interpolation theory (cf., for
instance,~\cite{MR0264390} or \cite[Chapter~3]{MR0230022}). Let
$X_{0}$ and $X_{1}$ be two real or complex Banach spaces such that
both are continuously embedded into a Hausdorff topological
vector space $\mathcal{X}$. A pair $\{X_{0},X_{1}\}$ of Banach spaces
$X_{0}$ and $X_{1}$ satisfying these conditions is called an
\emph{interpolation couple}. We equip the \emph{intersection space}
$X_{0}\cap X_{1}$ and the \emph{sum space}
\begin{displaymath}
  X_{0}+X_{1}:=\Big\{x\,\Big\vert\;\text{there are }x_{0}\in
  X_{0},\;x_{1}\in X_{1}\text{ s.t. }x=x_{0}+x_{1} \Big\}
\end{displaymath}
respectively with the norm
$\norm{x}_{X_{0}\cap X_{1}}:=\max\{\norm{x}_{X_{0}},\norm{x}_{X_{1}}\}$
and
\begin{displaymath}
  \norm{x}_{X_{0}+x_{1}}:=\inf\Big\{\norm{x_{0}}_{X_{0}}+\norm{x_{1}}_{X_{1}}\;\Big\vert\;
  x=x_{0}+x_{1},\;x_{0}\in X_{0},\,x_{1}\in X_{1}\Big\}.
\end{displaymath}
Then $X_{0}\cap X_{1}$ and $X_{0}+X_{1}$ are Banach spaces and
\begin{equation}
  \label{eq:49}
  X_{0}\cap X_{1}\hookrightarrow Z\hookrightarrow X_{0}+X_{1}
\end{equation}
for $Z=X_{0}$ and $Z=X_{1}$ each with linear continuous embeddings
(cf.~\cite[Proposition~3.2.1]{MR0230022}).
A Banach space $Z$ satisfying~\eqref{eq:49} is called an \emph{intermediate
  space} (of $X_{0}$ and $X_{1}$).

For any Banach space $X$ equipped with norm $\norm{\cdot}_{X}$ and for
every $1\le
q\le \infty$, we denote by $L^{q}_{\ast}(X)$ the Banach space of all
(classes of) strongly $\textrm{d}t/t$-measurable functions $f :
(0,\infty)\to X$ having finite norm
\begin{displaymath}
  \norm{f}_{L^{q}_{\ast}(X)}:=
  \begin{cases}
    \Bigg\{\displaystyle\int_{0}^{\infty}\norm{f(t)}_{X}^{q}\tfrac{\textrm{d}t}{t}\Bigg\}^{1/q}
    & \text{if $1\le q<\infty$,}\\[7pt]
    \displaystyle\esssup_{t\in (0,\infty)} \norm{f(t)}_{X} & \text{if $q=\infty$.}
  \end{cases}
\end{displaymath}

We shall make use of the so-called \emph{mean-method}, which was introduced by
J.-L. Lions and Peetre (\cite{MR0165343,MR0133693}) and further
elaborated, for instance, in~\cite{MR0169043,zbMATH03276049}. 

We begin by introducing the \emph{mean spaces} (\emph{espaces de
  moyennes}).  Let $(X_{0},X_{1})$ be an interpolation couple. Then
for every $0<\theta<1$ and $1\le p_{0},p_{1}\le \infty$, the \emph{mean
  space} $(X_{0},X_{1})_{\theta,p_{0},p_{1}}$ is defined by the space of all
elements $u\in X_{0}+X_{1}$ with the property
\begin{equation}
  \label{eq:50}
  \begin{cases}
    \;\text{for $i=0,1$, there is a measurable function
      $v_{i} : (0,\infty)\to X_{i}$}&\\
\text{satisfying $u=v_{0}(t)+v_{1}(t)$ in $X_{0}+X_{1}$ for a.e. $t\in
  (0,\infty)$,}&\\
t^{-\theta}v_{0}\in L^{p_{0}}_{\ast}(X_{0})\text{ and }
\;t^{1-\theta}v_{1}\in L^{p_{1}}_{\ast}(X_{1}).&
  \end{cases}
\end{equation}
We equip the mean space $(X_{0},X_{1})_{\theta,p_{0},p_{1}}$ with the
norm 
\begin{displaymath}
  \norm{u}_{\theta,p_{0},p_{1}}:=\inf_{a=v_{0}(t)+v_{1}(t)}
  \max\Big\{\norm{t^{-\theta}v_{0}}_{L^{p_{0}}_{\ast}(X_{0})},
  \norm{t^{1-\theta}v_{1}}_{L^{p_{1}}_{\ast}(X_{1})}\Big\},
\end{displaymath}
where the infimum is taken of all representation pairs $(v_{0},v_{1})$
satisfying~\eqref{eq:50}. Then, it is not difficult to see that each
mean space $(X_{0},X_{1})_{\theta,p_{0},p_{1}}$ is an intermediate
space (cf.~\cite[p. 9]{MR0165343}). Moreover, the spaces
$(X_{0},X_{1})_{\theta,p_{0},p_{1}}$ admits the so-called
\emph{interpolation property} (cf.~\cite[p. 63]{MR3024598}), that is,
for every linear mapping $T : X_{0}+X_{1}\to X_{0}+X_{1}$ such that
its restriction to $X_{i}$ yields a linear and bounded operator from
$X_{i}$ into itself, where $i=0, 1$, one has that the restriction of
$T$ to $(X_{0},X_{1})_{\theta,p_{0},p_{1}}X_{i}$ yields a linear and
bounded operator from $(X_{0},X_{1})_{\theta,p_{0},p_{1}}$ into itself
(\cite[Th\'eor\`eme~(3.1)]{MR0165343}). In particular, one has
\begin{equation}\label{eq:51}
  \norm{u}_{\theta,p_{0},p_{1}}=\inf_{a=v_{0}(t)+v_{1}(t)}
  \norm{t^{-\theta}v_{0}}_{L^{p_{0}}_{\ast}(X_{0})}^{1-\theta}\,
  \norm{t^{1-\theta}v_{1}}_{L^{p_{1}}_{\ast}(X_{1})}^{\theta},
\end{equation}
for every $u\in (X_{0},X_{1})_{\theta,p_{0},p_{1}}$, where the infimum
is taken of all representation pairs $(v_{0},v_{1})$
satisfying~\eqref{eq:50} (cf.~\cite[Lemme~(3.1)]{MR0165343}). In addition, the
following continuous embedding is valid.

\begin{lemma}[{\cite[Th\'eor\`eme~(5.3)]{MR0165343}}]
 \label{lem:embedding-of-mean-spaces}
 Let $0<\theta<1$ and $1\le p_{0}$, $p_{1}$, $s_{0}$, $s_{1}\le
 \infty$. Then for $s_{0}\le p_{0}$ and $s_{1}\le p_{1}$, one has
 \begin{displaymath}
   \norm{u}_{\theta,p_{0},p_{1}}\le
    C_{\theta,r_{0},r_{1}}\,\norm{u}_{\theta,s_{0},s_{1}}
 \end{displaymath}
for all $u\in (X_{0},X_{1})_{\theta,s_{0},s_{1}}$, where the constant
 \begin{equation}\label{eq:52}
   \begin{split}
     C_{\theta,r_{0},r_{1}}:= & 
     \begin{cases}
     	1 & \text{if $s_{0}=p_{0}$ and $s_{1}=p_{1}$,}\\[3pt]
        \displaystyle\inf_{\varphi\in
       D_{+}}\norm{t^{-\theta}\varphi}_{L^{r_{0}}_{\ast}(\R)}^{1-\theta}\,
     & \text{if $s_{1}=p_{1}$}\\[3pt]
        \displaystyle\inf_{\varphi\in
       D_{+}}\norm{t^{1-\theta}\varphi}_{L^{r_{1}}_{\ast}(\R)}^{\theta} 
     & \text{if $s_{0}=p_{0}$}\\[3pt]
	\displaystyle\inf_{\varphi\in
       D_{+}}\norm{t^{-\theta}\varphi}_{L^{r_{0}}_{\ast}(\R)}^{1-\theta}\,
     \norm{t^{1-\theta}\varphi}_{L^{r_{1}}_{\ast}(\R)}^{\theta} & 
     	\text{if otherwise}
    \end{cases}\\
     \quad\text{with } \tfrac{1}{r_{0}}&=1-\left[\tfrac{1}{s_{0}}-\tfrac{1}{p_{0}}\right]\quad
     \text{and}\quad\tfrac{1}{r_{1}}=1-\left[\tfrac{1}{s_{1}}-\tfrac{1}{p_{1}}\right],
   \end{split}
  \end{equation}
  and $D_{+}$ denotes the set of all test functions
  $\varphi\in C^{\infty}_{c}((0,\infty))$ satisfying $\varphi\ge0$ and
  $\int_{0}^{\infty}\varphi(\tfrac{1}{t})\tfrac{\dt}{t}=1$.
\end{lemma}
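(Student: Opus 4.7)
The plan is to produce, from any admissible decomposition of $u$ in the $(s_{0},s_{1})$-sense, a new decomposition that is admissible in the $(p_{0},p_{1})$-sense, by means of a multiplicative convolution with a suitable test function $\varphi\in D_{+}$. Fix $u\in (X_{0},X_{1})_{\theta,s_{0},s_{1}}$ and a representation pair $(v_{0},v_{1})$ satisfying \eqref{eq:50}, so that $u=v_{0}(t)+v_{1}(t)$ in $X_{0}+X_{1}$ for a.e.\ $t>0$. Pick $\varphi\in D_{+}$, normalised so that $\int_{0}^{\infty}\varphi(\tfrac{1}{t})\,\tfrac{\dt}{t}=1$, and set
\begin{displaymath}
   \tilde v_{i}(t)\;:=\;\int_{0}^{\infty}v_{i}(t\tau)\,\varphi(\tau)\,\tfrac{d\tau}{\tau},\qquad i=0,1.
\end{displaymath}
The normalisation of $\varphi$ together with the identity $u=v_{0}(t\tau)+v_{1}(t\tau)$ in $X_{0}+X_{1}$ for a.e.\ $\tau$ shows that $u=\tilde v_{0}(t)+\tilde v_{1}(t)$ for a.e.\ $t>0$, so $(\tilde v_{0},\tilde v_{1})$ is again an admissible representation for $u$.

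The next step is to estimate each $\|t^{i-\theta}\tilde v_{i}\|_{L^{p_{i}}_{\ast}(X_{i})}$ by pulling the multiplication by $t^{i-\theta}$ inside the integral (up to a factor $\tau^{\theta-i}$) and applying Young's inequality on the multiplicative group $((0,\infty),\tfrac{dt}{t})$. For $i=0$, the multiplicative Young inequality with indices $s_{0}$ (for $v_{0}$) and $r_{0}$ (for $\varphi$) yielding output index $p_{0}$ requires precisely $\tfrac{1}{s_{0}}+\tfrac{1}{r_{0}}=1+\tfrac{1}{p_{0}}$, i.e.\ $\tfrac{1}{r_{0}}=1-\bigl(\tfrac{1}{s_{0}}-\tfrac{1}{p_{0}}\bigr)$, which is exactly the relation defining $r_{0}$ in \eqref{eq:52}; the corresponding statement for $i=1$ gives $r_{1}$. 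This produces
\begin{displaymath}
   \|t^{-\theta}\tilde v_{0}\|_{L^{p_{0}}_{\ast}(X_{0})}
   \le\|t^{-\theta}\varphi\|_{L^{r_{0}}_{\ast}(\R)}\,\|t^{-\theta}v_{0}\|_{L^{s_{0}}_{\ast}(X_{0})},
\end{displaymath}
and analogously for $i=1$ with $\|t^{1-\theta}\varphi\|_{L^{r_{1}}_{\ast}(\R)}$.

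To obtain the geometric-mean form of the bound, I would now invoke the equivalent expression \eqref{eq:51} for the mean-space norm: taking the infimum over representations $(\tilde v_{0},\tilde v_{1})$ constructed as above, I obtain
\begin{displaymath}
   \|u\|_{\theta,p_{0},p_{1}}
   \le\|t^{-\theta}\varphi\|_{L^{r_{0}}_{\ast}(\R)}^{1-\theta}\,
       \|t^{1-\theta}\varphi\|_{L^{r_{1}}_{\ast}(\R)}^{\theta}\,
       \|u\|_{\theta,s_{0},s_{1}},
\end{displaymath}
after inserting the previous bounds and again using \eqref{eq:51} on the right-hand side to recognise $\|u\|_{\theta,s_{0},s_{1}}$ (optimising over the original pair $(v_{0},v_{1})$). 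Taking the infimum over $\varphi\in D_{+}$ then delivers the constant $C_{\theta,r_{0},r_{1}}$ in the generic case.

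The remaining point is to treat the four cases appearing in the piecewise definition of $C_{\theta,r_{0},r_{1}}$. If $s_{0}=p_{0}$ and $s_{1}=p_{1}$, no averaging is needed and $C=1$. If exactly one equality holds, say $s_{1}=p_{1}$, then the $i=1$ component already sits in the right space and one only convolves the $i=0$ component, which removes the factor $\|t^{1-\theta}\varphi\|_{L^{r_{1}}_{\ast}(\R)}^{\theta}$ from the constant; the symmetric case is analogous. Care is needed in the borderline cases $p_{0}=\infty$ or $p_{1}=\infty$ and in interpreting $r_{i}=\infty$ when $s_{i}=p_{i}$, but the multiplicative Young inequality covers these endpoints, so the piecewise definition of $C_{\theta,r_{0},r_{1}}$ follows uniformly. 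The main obstacle is the bookkeeping in the borderline and endpoint cases together with the precise identification of the constant via \eqref{eq:51}; the analytic core is a single application of Young's inequality on $((0,\infty),\tfrac{dt}{t})$.
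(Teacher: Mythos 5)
The paper itself does not prove this lemma: it is quoted directly from Lions--Peetre \cite[Th\'eor\`eme~(5.3)]{MR0165343}, so there is no in-paper proof to compare against. That said, your regularisation-by-multiplicative-convolution idea is exactly the Lions--Peetre mechanism, and the generic case ($s_{0}<p_{0}$, $s_{1}<p_{1}$) is essentially right: your $\tilde v_{i}(t)=\int_{0}^{\infty}v_{i}(t\tau)\varphi(\tau)\tfrac{d\tau}{\tau}$ does produce an admissible pair, since $\int_{0}^{\infty}\varphi(\tau)\tfrac{d\tau}{\tau}=\int_{0}^{\infty}\varphi(1/\tau)\tfrac{d\tau}{\tau}=1$, and Young's inequality for multiplicative convolution on $((0,\infty),\tfrac{dt}{t})$ with the exponent relation $\tfrac{1}{s_{i}}+\tfrac{1}{r_{i}}=1+\tfrac{1}{p_{i}}$ yields the desired estimates. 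Two issues.

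The first is cosmetic but worth flagging. With your choice $\tilde v_{i}(t)=\int v_{i}(t\tau)\varphi(\tau)\tfrac{d\tau}{\tau}$, the weight transfer gives $t^{-\theta}\tilde v_{0}=\int (t\tau)^{-\theta}v_{0}(t\tau)\,\tau^{\theta}\varphi(\tau)\tfrac{d\tau}{\tau}$, so the constant that Young produces is $\|t^{\theta}\varphi\|_{L^{r_{0}}_{\ast}}$ (and likewise $\|t^{-(1-\theta)}\varphi\|_{L^{r_{1}}_{\ast}}$), not the $\|t^{-\theta}\varphi\|_{L^{r_{0}}_{\ast}}$ and $\|t^{1-\theta}\varphi\|_{L^{r_{1}}_{\ast}}$ appearing in \eqref{eq:52}. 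To match the stated formula verbatim one should instead use $\tilde v_{i}(t)=\int v_{i}(t/\tau)\varphi(\tau)\tfrac{d\tau}{\tau}$. With your definition the two infima over $D_{+}$ nonetheless coincide, because $D_{+}$ is invariant under the reflection $\varphi\mapsto\varphi^{\vee}:=\varphi(1/\cdot)$ and $\|t^{\theta}\varphi\|_{L^{r_{0}}_{\ast}}=\|t^{-\theta}\varphi^{\vee}\|_{L^{r_{0}}_{\ast}}$; but the argument should say so rather than silently producing the wrong intermediate constant.

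The second issue is a genuine gap and concerns the piecewise constant. You write that when, say, $s_{1}=p_{1}$, one only convolves the $i=0$ component and "removes" the factor $\|t^{1-\theta}\varphi\|_{L^{r_{1}}_{\ast}}^{\theta}$. But the decomposition constraint forces the other hand: if $\tilde v_{0}=v_{0}*\varphi^{\vee}$ and $\tilde v_{0}+\tilde v_{1}=u$, then necessarily $\tilde v_{1}=u-\tilde v_{0}=\int v_{1}(t\tau)\varphi(\tau)\tfrac{d\tau}{\tau}$, so you are convolving $v_{1}$ whether you intend to or not, and Young (now with $r_{1}=1$) still produces the factor $\|t^{1-\theta}\varphi\|_{L^{1}_{\ast}}^{\theta}$. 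Since that factor is not identically $1$ on $D_{+}$ and the full product $\|t^{-\theta}\varphi\|_{L^{r_{0}}_{\ast}}^{1-\theta}\|t^{1-\theta}\varphi\|_{L^{1}_{\ast}}^{\theta}$ is dilation-invariant, you cannot make the second factor approach $1$ while simultaneously driving the first to its infimum, so the naive "convolve both" bound does not immediately collapse to the single-factor constant in \eqref{eq:52}. Establishing the sharper piecewise form requires an additional argument (this is where Lions--Peetre's careful handling enters), not a mere bookkeeping step. Also note that $s_{i}=p_{i}$ always gives $r_{i}=1$, not $r_{i}=\infty$ as your remark suggests; $r_{i}=\infty$ only occurs at the endpoint $s_{i}=1$, $p_{i}=\infty$.
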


Due to the result~\cite[Th\'eor\`eme~3.1]{MR0169043} by
Peetre, for every $0<\theta<1$ and $1\le p_{0},p_{1}, p\le \infty$
satisfying
$\tfrac{1}{p}=\tfrac{1-\theta}{p_{0}}+\tfrac{\theta}{p_{1}}$, the mean
space $(X_{0},X_{1})_{\theta,p_{0},p_{1}}$ coincides with the
(classical) \emph{real interpolation space}
$(X_{0},X_{1})_{\theta,p}$ with equivalent norms. For the definition of
the interpolation space $(X_{0},X_{1})_{\theta,p}$ we refer, for instance,
to~\cite[Definition~3.2.4]{MR0230022}. Combining this together with
the density result \cite[Theorem~1.6.2]{MR1328645}, we can state the
following extended version of the density
result~\cite[Th\'eor\`eme~2.1]{MR0165343}.

\begin{lemma}\label{lem:density}
 Let $(X_{0},X_{1})$ be an interpolation couple and suppose that
 one of the following cases holds:
 \begin{enumerate}[(i)]
   \item $1\le p_{0}, p_{1}<\infty$
   \item $1\le p_{0}<\infty$ and
   $p_{1}=\infty$
   \item $1\le p_{1}<\infty$ and $p_{0}=\infty$.
 \end{enumerate}
Then, for every $0<\theta <1$, the intersection space $X_{1}\cap
X_{2}$ is dense in $(X_{0},X_{1})_{\theta,p_{0},p_{1}}$. 
\end{lemma}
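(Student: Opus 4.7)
The plan is to reduce Lemma~\ref{lem:density} to the classical density statement for real interpolation spaces via Peetre's equivalence theorem (quoted in the paragraph just before the lemma). Concretely, Peetre's result asserts that whenever $0<\theta<1$ and $1\le p_{0},p_{1}\le\infty$, the mean space $(X_{0},X_{1})_{\theta,p_{0},p_{1}}$ coincides with the real interpolation space $(X_{0},X_{1})_{\theta,p}$, with equivalent norms, where
\begin{displaymath}
  \tfrac{1}{p}=\tfrac{1-\theta}{p_{0}}+\tfrac{\theta}{p_{1}}.
\end{displaymath}
Since density and equivalence of norms are preserved under bi-Lipschitz changes of norm, it suffices to prove that $X_{0}\cap X_{1}$ is dense in $(X_{0},X_{1})_{\theta,p}$.

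Next I verify that the hypotheses (i), (ii), (iii) force $p<\infty$. In case (i) one has $\tfrac{1}{p}\ge \tfrac{1-\theta}{p_{0}}>0$, so $p<\infty$; in case (ii), $\tfrac{1}{p}=\tfrac{1-\theta}{p_{0}}>0$, hence $p<\infty$; in case (iii), symmetrically $\tfrac{1}{p}=\tfrac{\theta}{p_{1}}>0$, again $p<\infty$. With $p<\infty$ in hand, I would invoke the classical density result (referred to as Theorem~1.6.2 in~\cite{MR1328645}): for any interpolation couple $(X_{0},X_{1})$, for every $0<\theta<1$ and every $1\le p<\infty$, the intersection space $X_{0}\cap X_{1}$ is dense in $(X_{0},X_{1})_{\theta,p}$. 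Composed with Peetre's identification, this yields density in $(X_{0},X_{1})_{\theta,p_{0},p_{1}}$, which is the desired conclusion.

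The only genuinely substantive ingredient is the classical $L^{p}$-density in the real interpolation space with $p<\infty$, which in turn rests on the standard approximation procedure: for $u\in (X_{0},X_{1})_{\theta,p}$ one writes the $J$-method representation $u=\int_{0}^{\infty}v(t)\,\tfrac{\dt}{t}$ with $t\mapsto t^{-\theta}J(t,v(t))\in L^{p}_{\ast}(\R)$, $v(t)\in X_{0}\cap X_{1}$, and then truncates the integral to $(1/n,n)$; the truncated integral lies in $X_{0}\cap X_{1}$ and converges to $u$ in $(X_{0},X_{1})_{\theta,p}$ precisely because $p<\infty$ allows dominated convergence. This is exactly where the borderline cases $p_{0}=\infty$ or $p_{1}=\infty$ (but not both) cause no trouble: the exponent $p$ produced by Peetre's formula still remains finite.

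The main and essentially only obstacle is bookkeeping: ensuring that the equivalence-of-norm constants from Peetre's theorem are finite in the borderline settings (ii) and (iii), so that the density transfers. This is handled in the excerpt's Lemma~\ref{lem:embedding-of-mean-spaces} (through the constants $C_{\theta,r_{0},r_{1}}$, which are finite under the assumed parameter ranges), so no extra work is needed beyond quoting Peetre's equivalence and the classical density theorem.
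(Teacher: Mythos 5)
Your proposal is correct and follows exactly the paper's own (implicit) reasoning: one combines Peetre's identification of the mean space $(X_{0},X_{1})_{\theta,p_{0},p_{1}}$ with the real interpolation space $(X_{0},X_{1})_{\theta,p}$ (where $1/p=(1-\theta)/p_{0}+\theta/p_{1}$) with the classical density theorem \cite[Theorem~1.6.2]{MR1328645}, which requires $p<\infty$. Your explicit check that each of the three hypotheses forces $p<\infty$ is the only step the paper leaves unspoken, and you carry it out correctly.
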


Now, we are in a position to state our first nonlinear interpolation theorem.

\begin{theorem}\label{thm:nonlinear-interpol}
  Let $(X_{0},X_{1})$ and $(Y_{0},Y_{1})$ be two interpolation
  couples and $T$ be a mapping from $X_{0}+X_{1}$ into $Y_{0}+Y_{1}$ with
  domain containing $X_{0}\cap X_{1}$. Suppose there are
  exponents $0<\alpha_{0}$, $\alpha_{1}<\infty$ and constants $M_{0}$,
  $M_{1}\ge 0$ such that
  \begin{equation}
    \label{eq:30}
    \norm{Tu-T\hat{u}}_{Y_{0}}\le
    M_{0}\,\norm{u-\hat{u}}_{X_{0}}^{\alpha_{0}}
  \end{equation}
  for all $u$, $\hat{u}\in X_{0}\cap X_{1}$ and
  \begin{equation}
    \label{eq:47}
    \norm{Tu-T\hat{u}}_{Y_{1}}\le
    M_{1}\,\norm{u-\hat{u}}_{X_{1}}^{\alpha_{1}}
  \end{equation}
  for all $u$, $\hat{u}\in X_{0}\cap X_{1}$. For every
  $0<\theta<1$ and $1\le q_{0}$,
    $q_{1}\le \infty$ (excluding $q_{0}=q_{1}=\infty$) satisfying
    $q_{0}\ge \tfrac{1}{\alpha_{0}}$ and
    $q_{1}\ge \tfrac{1}{\alpha_{1}}$, let
    $1\le q, p_{0}, p_{1}\le\infty$, $0<\eta<1$, $0<\alpha<\infty$
    be given by
    \begin{equation}
      \label{eq:68}
      \begin{array}[c]{ll}
        \tfrac{1}{q}=\tfrac{1-\theta}{q_{0}}+\tfrac{\theta}{q_{1}}, 
          &p_{0}= \alpha_{0}q_{0},\quad p_{1}=\alpha_{1}q_{1},\\
        \eta=\tfrac{\theta\,\alpha_{1}}{(1-\theta)\alpha_{0}+\theta\alpha_{1}}, 
          &\alpha=(1-\theta)\alpha_{0}+\theta \alpha_{1}
      \end{array}
    \end{equation}
    and let $1\le s_{0}\le p_{0}$ and $1\le s_{1}\le p_{1}$. Then the
    following statements hold.
  \begin{enumerate}
  \item One has
    \begin{equation}
      \label{eq:48}
      \norm{Tu-T\hat{u}}_{(Y_{0},Y_{1})_{\theta,q_{0},q_{1}}}\le \,
      \Big(\tfrac{\eta\,\alpha_{0}}{\theta}\Big)^{\frac{1}{q}}\,
      M_{0}^{1-\theta}\,M_{1}^{\theta}\,C^{\alpha}_{\eta,r_{0},r_{1}}
      \norm{u-\hat{u}}_{(X_{0},X_{1})_{\eta,s_{0},s_{1}}}^{\alpha}
    \end{equation}
    for every $u$, $\hat{u}\in X_{0}\cap X_{1}$, where the constant
    $C_{\eta,r_{0},r_{1}}$ is given by~\eqref{eq:52}. 
  
   \item If there is a $u_{0}\in X_{0}\cap X_{1}$ such that
    $Tu_{0}\in (Y_{0},Y_{1})_{\theta,q_{0},q_{1}}$, then $T$ can be
    uniquely extended to a mapping $T : (X_{0},X_{1})_{\eta,s_{0},s_{1}}\to
    (Y_{0},Y_{1})_{\theta,q_{0},q_{1}}$ satisfying inequality~\eqref{eq:48} for all $u$,
    $\hat{u}\in (X_{0},X_{1})_{\eta,s_{0},s_{1}}$.
  \end{enumerate}
\end{theorem}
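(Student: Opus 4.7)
The plan is to adapt the classical mean-method of Lions--Peetre to the nonlinear setting. The key observation is that a decomposition of $u-\hat u$ in the $(X_0,X_1)$-couple can be transferred through $T$ to a decomposition of $Tu - T\hat u$ in the $(Y_0,Y_1)$-couple, the price being the exponents $\alpha_0,\alpha_1$ and a rescaling of the parameter.

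First I would fix $u,\hat u\in X_0\cap X_1$ and any admissible pair $(v_0,v_1)$ with $u-\hat u = v_0(s)+v_1(s)$ in $X_0+X_1$ for a.e.\ $s>0$. Setting $u(s):=\hat u + v_0(s)\in X_0\cap X_1$, one has $u(s)-\hat u = v_0(s)\in X_0$ and $u-u(s)=v_1(s)\in X_1$, so
\[
Tu - T\hat u = w_0(s) + w_1(s),\qquad w_0(s):=Tu(s)-T\hat u,\;\; w_1(s):=Tu-Tu(s),
\]
with pointwise bounds $\|w_0(s)\|_{Y_0}\le M_0\|v_0(s)\|_{X_0}^{\alpha_0}$ and $\|w_1(s)\|_{Y_1}\le M_1\|v_1(s)\|_{X_1}^{\alpha_1}$ coming from \eqref{eq:30}--\eqref{eq:47}. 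This is the nonlinear analogue of the Peetre--Tartar trick.

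Next I would make the change of variable $t=s^{\mu}$ with $\mu:=\eta\alpha_0/\theta$; by the defining relations \eqref{eq:68} one checks $\mu=(1-\eta)\alpha_1/(1-\theta)$ as well, so the same $\mu$ balances both sides. Setting $\tilde w_i(t):=w_i(t^{1/\mu})$, the identity $dt/t=\mu\,ds/s$ yields
\[
\|t^{-\theta}\tilde w_0\|_{L^{q_0}_*(Y_0)}^{q_0}=\mu\!\int_0^\infty\! s^{-\eta p_0}\|w_0(s)\|_{Y_0}^{q_0}\tfrac{ds}{s}\le \mu M_0^{q_0}\|s^{-\eta}v_0\|_{L^{p_0}_*(X_0)}^{p_0},
\]
using $p_0=\alpha_0 q_0$, and analogously $\|t^{1-\theta}\tilde w_1\|_{L^{q_1}_*(Y_1)}\le \mu^{1/q_1}M_1\|s^{1-\eta}v_1\|_{L^{p_1}_*(X_1)}^{\alpha_1}$. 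Inserting the representation $Tu-T\hat u=\tilde w_0(t)+\tilde w_1(t)$ into the geometric-mean form \eqref{eq:51} of the target norm, together with the identities $\tfrac{1-\theta}{q_0}+\tfrac{\theta}{q_1}=\tfrac1q$, $\alpha_0(1-\theta)=\alpha(1-\eta)$ and $\alpha_1\theta=\alpha\eta$, gives
\[
\|Tu-T\hat u\|_{\theta,q_0,q_1}\le \mu^{1/q}M_0^{1-\theta}M_1^{\theta}\Bigl(\|s^{-\eta}v_0\|_{L^{p_0}_*(X_0)}^{1-\eta}\|s^{1-\eta}v_1\|_{L^{p_1}_*(X_1)}^{\eta}\Bigr)^{\alpha}.
\]
Taking the infimum over admissible pairs $(v_0,v_1)$ and invoking \eqref{eq:51} once more yields
\[
\|Tu-T\hat u\|_{\theta,q_0,q_1}\le \bigl(\tfrac{\eta\alpha_0}{\theta}\bigr)^{1/q}M_0^{1-\theta}M_1^{\theta}\,\|u-\hat u\|_{\eta,p_0,p_1}^{\alpha}.
\]
Applying Lemma~\ref{lem:embedding-of-mean-spaces} with $s_0\le p_0$, $s_1\le p_1$ converts the right-hand side into the desired $(X_0,X_1)_{\eta,s_0,s_1}$ norm with multiplicative constant $C_{\eta,r_0,r_1}^{\alpha}$, establishing \eqref{eq:48} for $u,\hat u\in X_0\cap X_1$.

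For part~(2), the Hölder-type estimate \eqref{eq:48} shows that $T$ is uniformly continuous on $X_0\cap X_1$ as a map into the Banach space $(Y_0,Y_1)_{\theta,q_0,q_1}$, once translated by the fixed element $Tu_0$; since $X_0\cap X_1$ is dense in $(X_0,X_1)_{\eta,s_0,s_1}$ by Lemma~\ref{lem:density} (the excluded corner $q_0=q_1=\infty$ is exactly what prevents the density from failing), $T$ extends uniquely and continuously, and \eqref{eq:48} passes to the closure. The main technical obstacles I anticipate are: (i) the exponent bookkeeping, in particular verifying that $\mu$ is the same from both sides and that the combination $\mu^{(1-\theta)/q_0+\theta/q_1}=\mu^{1/q}$ reproduces the constant $(\eta\alpha_0/\theta)^{1/q}$; and (ii) handling the borderline cases $q_0=\infty$ or $q_1=\infty$, where the relevant $L^{q_i}_*$ quantity must be read as an essential supremum and the change of variable still makes sense because $\mu>0$ is finite. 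The measurability of the $v_i$'s transfers to the $w_i$'s through continuity of $T$ on slices $\hat u + v_0(s)\in X_0\cap X_1$, which is guaranteed by the Hölder estimates \eqref{eq:30}--\eqref{eq:47}.
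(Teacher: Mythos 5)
Your proof is correct and takes essentially the same route as the paper's: the Lions--Peetre/Tartar mean-method decomposition of $Tu-T\hat u$ into pieces bounded via \eqref{eq:30} and \eqref{eq:47}, the power change of variable (you write $t=s^{\mu}$ with $\mu=\eta\alpha_0/\theta$, while the paper parameterises it inversely as $s=t^{\lambda}$ with $\lambda=\theta/(\eta\alpha_0)$), the geometric-mean form \eqref{eq:51}, the embedding Lemma~\ref{lem:embedding-of-mean-spaces} to pass to $s_i\le p_i$, and density plus Hölder continuity for the extension in part~(2). The only cosmetic difference is that you decompose $u-\hat u$ directly, whereas the paper decomposes $u$ around a fixed $\hat u$ and substitutes $u\leftarrow u-\hat u$ at the end; the exponent bookkeeping and the derivation of the constant $(\eta\alpha_0/\theta)^{1/q}$ agree.
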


By using the preliminaries of this subsection, we can now outline the
proof of this nonlinear interpolation theorem.

\begin{proof}[Proof of Theorem~\ref{thm:nonlinear-interpol}]
 First, we fix $\hat{u}\in X_{0}\cap
  X_{1}$ and show that
  \begin{equation}
    \label{eq:58}
    \norm{T(u+\hat{u})-T\hat{u}}_{(Y_{0},Y_{1})_{\theta,q_{0},q_{1}}}\le
    \Big(\tfrac{\eta\,\alpha_{0}}{\theta}\Big)^{\frac{1}{q}}\,
    M_{0}^{1-\theta}\,M_{1}^{\theta}\, \norm{u}_{(X_{0},X_{1})_{\eta,p_{0},p_{1}}}^{\alpha}
  \end{equation}
  for all $u\in X_{0}\cap X_{1}$. To do so, let
  $u\in X_{0}\cap X_{1}$. Since $X_{0}\cap X_{1}$ is continuously
  injected into $(X_{0},X_{1})_{\eta,p_{0},p_{1}}$, there is a pair
  $(v_{0},v_{1})$ of measurable functions
  satisfying~\eqref{eq:50}. Since $u\in X_{0}\cap X_{1}$ and
  $u=v_{0}+v_{1}$, it follows that  $v_{i}(t)\in X_{0}\cap X_{1}$ for
  a.e. $t\in (0,\infty)$ and every $i=0$, $1$. For
  $\lambda:=\tfrac{\theta}{\eta\,\alpha_{0}}>0$, we set
  \begin{displaymath}
    w_{0}(t)=T(v_{0}(t^{\lambda})+\hat{u})-T\hat{u}\quad\text{ and }\quad
    w_{1}(t)=T(u+\hat{u})-T\hat{u}-w_{0}(t)
  \end{displaymath}
  for a.e. $t\in (0,\infty)$. Then,
  $T(u+\hat{u})-T\hat{u}=w_{0}(t)+w_{1}(t)$ for a.e.
  $t\in (0,\infty)$, and by using~\eqref{eq:30} and~\eqref{eq:47}, one sees
  that the functions $w_{i} : (0,\infty)\to Y_{i}$ are measurable and
  satisfy
  \begin{equation}\label{eq:59}
    \norm{w_{i}(t)}_{Y_{i}}\le
    M_{i}\,\norm{v_{i}(t^{\lambda})}_{X_{i}}^{\alpha_{i}}
  \end{equation}
  for a.e. $t\in (0,\infty)$ and each $i=0, 1$. Since we have chosen
  $\lambda=\tfrac{\theta}{\eta\,\alpha_{0}}$ and
  $p_{0}=q_{0}\alpha_{0}$, we obtain by applying inequality~\eqref{eq:59} and 
  substituting $s=t^{\lambda}$ that
  \begin{align*}
    \norm{t^{-\theta}w_{0}}_{L^{q_{0}}_{\ast}(Y_{0})}
         &\le M_{0}\,\Big(\tfrac{\eta\,\alpha_{0}}{\theta}\Big)^{\frac{1}{q_{0}}}\,
            \norm{s^{-\eta}v_{0}}_{L^{p_{0}}(X_{0})}^{\alpha_{0}}.
  \end{align*}
  On the other hand, 
  $\eta=\tfrac{\theta\,\alpha_{1}}{(1-\theta)\alpha_{0}+\theta\alpha_{1}}$
  is equivalent to $\tfrac{1-\eta}{\eta}=\tfrac{(1-\theta) \alpha_{0}}{\theta
    \alpha_{1}}$ hence $\lambda = \tfrac{1-\theta}{(1-\eta)
    \alpha_{1}}$. Using this together with inequality~\eqref{eq:59},
  the fact that $p_{1}=q_{1} \alpha_{1}$,
  and applying the substitution $s=t^{\lambda}$, we see that
  \begin{align*}
    \norm{t^{1-\theta}w_{1}}_{L^{q_{1}}_{\ast}(Y_{1})}
         &\le M_{1}\,\Big(\tfrac{\eta\,\alpha_{0}}{\theta}\Big)^{\frac{1}{q_{1}}}\,
            \norm{s^{1-\eta}v_{1}}_{L^{p_{1}}(X_{1})}^{\alpha_{1}}.
  \end{align*}
  Thus $T(u+\hat{u})-T\hat{u}\in (Y_{1},Y_{2})_{\theta,q_{0},q_{1}}$. Combining the last
  two estimates together with~\eqref{eq:51} yields
  \begin{align*}
    &\norm{T(u+\hat{u})-T\hat{u}}_{(Y_{0},Y_{1})_{\theta,q_{0},q_{1}}}\\
    	&\qquad \le
    M_{0}^{1-\theta}\,M_{1}^{\theta}\,\Big(\tfrac{\eta\,\alpha_{0}}{\theta}\Big)^{\frac{1}{q}}\,
            \norm{s^{-\eta}v_{0}}_{L^{p_{0}}(X_{0})}^{(1-\theta)\alpha_{0}}\,
            \norm{s^{1-\eta}v_{1}}_{L^{p_{1}}(X_{1})}^{\theta\alpha_{1}}\\
    &\qquad \le
    M_{0}^{1-\theta}\,M_{1}^{\theta}\,\Big(\tfrac{\eta\,\alpha_{0}}{\theta}\Big)^{\frac{1}{q}}\,
          \max\Big\{\norm{s^{-\eta}v_{0}}_{L^{p_{0}}(X_{0})},\,
            \norm{s^{1-\eta}v_{1}}_{L^{p_{1}}(X_{1})}\Big\}^{\alpha}.
  \end{align*}
  Taking the infimum over all representation pairs $(v_{0},v_{1})$
  satisfying~\eqref{eq:50} shows that inequality~\eqref{eq:58}
  holds. Now, for every $u, \hat{u}\in X_{0}\cap X_{1}$, replacing 
  $u$ by $u-\hat{u}\in X_{0}\cap X_{1}$ in~\eqref{eq:58} gives
  \begin{equation}\label{eq:55}
    \norm{Tu-T\hat{u}}_{(Y_{0},Y_{1})_{\theta,q_{0},q_{1}}}\le
    \Big(\tfrac{\eta\,\alpha_{0}}{\theta}\Big)^{\frac{1}{q}}\,
    M_{0}^{1-\theta}\,M_{1}^{\theta}\, \norm{u-\hat{u}}_{(X_{0},X_{1})_{\eta,p_{0},p_{1}}}^{\alpha}
  \end{equation}
  for all $u$, $\hat{u}\in X_{0}\cap X_{1}$. Applying
  Lemma~\ref{lem:embedding-of-mean-spaces} yields
  inequality~\eqref{eq:48} for every $u$, $\hat{u}\in X_{0}\cap X_{1}$,
  proving that the first statement of this theorem holds.

  Under the assumption, there is a $u_{0}\in X_{0}\cap X_{1}$ such
  that $Tu_{0}\in (Y_{0},Y_{1})_{\theta,q_{0},q_{1}}$,
  inequality~\eqref{eq:48} implies that the mapping $T$ maps
  $X_{0}\cap X_{1}$ equipped with the
  $(X_{0},X_{1})_{\eta,p_{0},p_{1}}$-norm into
  $(Y_{0},Y_{1})_{\theta,q_{0},q_{1}}$. Thus by Lemma~\eqref{lem:density} and since
  the spaces $(X_{0},X_{1})_{\eta,p_{0},p_{1}}$ and
  $(Y_{0},Y_{1})_{\theta,q_{0},q_{1}}$ are complete, we can
  conclude that $T$ admits a unique H\"older-continuous extension from
  $(X_{0},X_{1})_{\eta,p_{0},p_{1}}$ to 
  $(Y_{0},Y_{1})_{\theta,q_{0},q_{1}}$
  satisfying~\eqref{eq:55} for all  $u$, $\hat{u}\in
  (X_{0},X_{1})_{\eta,p_{0},p_{1}}$. This completes the proof of this theorem.
\end{proof}

In our second nonlinear interpolation theorem, we consider the
situation when the mapping $T$ admits an element
$u_{0}\in X_{0}\cap X_{1}$ such that $Tu_{0}\in Y_{0}\cap Y_{1}$.

\begin{theorem}\label{thm:interpol-no-differences}
  Let $(X_{0},X_{1})$ and $(Y_{0},Y_{1})$ be two interpolation couples
  and $T$ a mapping from $X_{0}+X_{1}$ into $Y_{0}+Y_{1}$ with domain
  containing $X_{0}\cap X_{1}$. Suppose
  $T$ is continuous from $X_{0}\cap X_{1}$ equipped with the
  $X_{0}$-norm to $Y_{0}$ and there are $u_{0}\in X_{0}\cap X_{1}$ satisfying
  $Tu_{0}\in Y_{0}\cap Y_{1}$, exponents $0<\alpha_{0}$,
  $\alpha_{1}<\infty$, and constants $M_{0}$, $M_{1}\ge 0$ such that
  \begin{equation}
    \label{eq:30bis}
    \norm{Tu-Tu_{0}}_{Y_{0}}\le
    M_{0}\,\norm{u-u_{0}}_{X_{0}}^{\alpha_{0}}\qquad\text{ for all $u\in X_{0}\cap X_{1}$} 
  \end{equation}
  and
  \begin{equation}
    \label{eq:47bis}
    \norm{Tu-T\hat{u}}_{Y_{1}}\le
    M_{1}\,\norm{u-\hat{u}}_{X_{1}}^{\alpha_{1}}\qquad\text{for all $u$, $\hat{u}\in X_{0}\cap X_{1}$.}
  \end{equation}
  For every $0<\theta<1$
  and $1\le q_{0}$, $q_{1}\le \infty$ (excluding $q_{0}=q_{1}=\infty$)
  satisfying $q_{0}\ge \tfrac{1}{\alpha_{0}}$ and
  $q_{1}\ge \tfrac{1}{\alpha_{1}}$, let
  $1\le q, p_{0}, p_{1}\le\infty$, $0<\eta<1$, $0<\alpha<\infty$ given
  by~\eqref{eq:68},
  and let $1\le s_{0}\le p_{0}$ and $1\le s_{1}\le p_{1}$. Then one has
  \begin{equation}
    \label{eq:48bis}
    \norm{Tu-Tu_{0}}_{(Y_{0},Y_{1})_{\theta,q_{0},q_{1}}}\le \,
    \Big(\tfrac{\eta\,\alpha_{0}}{\theta}\Big)^{\frac{1}{q}}\,
    M_{0}^{1-\theta}\,M_{1}^{\theta}\,C^{\alpha}_{\eta,r_{0},r_{1}}
    \norm{u-u_{0}}_{(X_{0},X_{1})_{\eta,s_{0},s_{1}}}^{\alpha}
  \end{equation}
  for every $u\in X_{0}\cap X_{1}$, 
  where the constant $C_{\eta,r_{0},r_{1}}$ is given by~\eqref{eq:52}.
\end{theorem}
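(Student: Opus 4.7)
The strategy is to adapt the decomposition argument from the proof of Theorem~\ref{thm:nonlinear-interpol} to the present situation, where the $Y_{0}$-Hölder bound~\eqref{eq:30bis} is only available relative to the reference point $u_{0}$. Fix $u\in X_{0}\cap X_{1}$. Since $X_{0}\cap X_{1}$ embeds continuously into $(X_{0},X_{1})_{\eta,p_{0},p_{1}}$, the element $u-u_{0}$ admits a representation pair $(v_{0},v_{1})$ as in~\eqref{eq:50} with $v_{0}(t)+v_{1}(t)=u-u_{0}$ for a.e.\ $t>0$, and the identity $v_{i}(t)=(u-u_{0})-v_{1-i}(t)$ together with $u-u_{0}\in X_{0}\cap X_{1}$ forces $v_{0}(t),\,v_{1}(t)\in X_{0}\cap X_{1}$ a.e. I set $\lambda:=\theta/(\eta\alpha_{0})$ and note that the definition of $\eta$ in~\eqref{eq:68} yields the key identity $\lambda=(1-\theta)/((1-\eta)\alpha_{1})$.

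The splitting I would then introduce is
\begin{displaymath}
w_{0}(t):=T(v_{0}(t^{\lambda})+u_{0})-Tu_{0},\qquad
w_{1}(t):=Tu-Tu_{0}-w_{0}(t)=Tu-T(v_{0}(t^{\lambda})+u_{0}),
\end{displaymath}
so that $Tu-Tu_{0}=w_{0}(t)+w_{1}(t)$ in $Y_{0}+Y_{1}$ for a.e.\ $t>0$. The assumed continuity of $T:(X_{0}\cap X_{1},\norm{\cdot}_{X_{0}})\to Y_{0}$, together with the strong $\mathrm{d}t/t$-measurability of $t\mapsto v_{0}(t^{\lambda})$ into $X_{0}$, yields strong $\mathrm{d}t/t$-measurability of $w_{0}$ into $Y_{0}$. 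The bound~\eqref{eq:30bis} applied to $v_{0}(t^{\lambda})+u_{0}\in X_{0}\cap X_{1}$ gives
\begin{displaymath}
\norm{w_{0}(t)}_{Y_{0}}\le M_{0}\,\norm{v_{0}(t^{\lambda})}_{X_{0}}^{\alpha_{0}},
\end{displaymath}
while~\eqref{eq:47bis} applied to the pair $u,\,v_{0}(t^{\lambda})+u_{0}\in X_{0}\cap X_{1}$, combined with the identity $u-(v_{0}(t^{\lambda})+u_{0})=v_{1}(t^{\lambda})$ coming from the decomposition, yields
\begin{displaymath}
\norm{w_{1}(t)}_{Y_{1}}\le M_{1}\,\norm{v_{1}(t^{\lambda})}_{X_{1}}^{\alpha_{1}},
\end{displaymath}
so in particular $w_{1}(t)\in Y_{1}$ for a.e.\ $t$.

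Next, substituting $s=t^{\lambda}$ inside $L^{q_{0}}_{\ast}(Y_{0})$ and $L^{q_{1}}_{\ast}(Y_{1})$, and invoking $p_{i}=\alpha_{i}q_{i}$ together with both expressions for $\lambda$, one obtains
\begin{displaymath}
\norm{t^{-\theta}w_{0}}_{L^{q_{0}}_{\ast}(Y_{0})}\le M_{0}\,\bigl(\tfrac{\eta\alpha_{0}}{\theta}\bigr)^{1/q_{0}}\,\norm{s^{-\eta}v_{0}}_{L^{p_{0}}_{\ast}(X_{0})}^{\alpha_{0}},
\end{displaymath}
\begin{displaymath}
\norm{t^{1-\theta}w_{1}}_{L^{q_{1}}_{\ast}(Y_{1})}\le M_{1}\,\bigl(\tfrac{\eta\alpha_{0}}{\theta}\bigr)^{1/q_{1}}\,\norm{s^{1-\eta}v_{1}}_{L^{p_{1}}_{\ast}(X_{1})}^{\alpha_{1}}.
\end{displaymath}
Applying the equivalent norm formula~\eqref{eq:51} on $(Y_{0},Y_{1})_{\theta,q_{0},q_{1}}$, taking the infimum over all representation pairs of $u-u_{0}$, and using the arithmetic identities $(1-\theta)/q_{0}+\theta/q_{1}=1/q$ together with $(1-\theta)\alpha_{0}=(1-\eta)\alpha$ and $\theta\alpha_{1}=\eta\alpha$ (which follow from~\eqref{eq:68}), produces
\begin{displaymath}
\norm{Tu-Tu_{0}}_{(Y_{0},Y_{1})_{\theta,q_{0},q_{1}}}\le \bigl(\tfrac{\eta\alpha_{0}}{\theta}\bigr)^{1/q} M_{0}^{1-\theta}M_{1}^{\theta}\,\norm{u-u_{0}}_{(X_{0},X_{1})_{\eta,p_{0},p_{1}}}^{\alpha}.
\end{displaymath}

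Finally, Lemma~\ref{lem:embedding-of-mean-spaces} converts the $(X_{0},X_{1})_{\eta,p_{0},p_{1}}$-norm into the $(X_{0},X_{1})_{\eta,s_{0},s_{1}}$-norm at the cost of the factor $C_{\eta,r_{0},r_{1}}^{\alpha}$, producing~\eqref{eq:48bis}. In contrast to Theorem~\ref{thm:nonlinear-interpol}, no density extension is needed since the conclusion is asserted only on $X_{0}\cap X_{1}$. The main (modest) obstacle in transferring the argument is the lack of symmetry in~\eqref{eq:30bis}: the pointwise bound controls $\norm{w_{0}(t)}_{Y_{0}}$ relative to $u_{0}$, but does not by itself guarantee strong measurability of $t\mapsto T(v_{0}(t^{\lambda})+u_{0})$ into $Y_{0}$, which is exactly why the separate continuity hypothesis on $T$ must be added in the statement.
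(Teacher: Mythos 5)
Your proof is correct and follows essentially the same route as the paper's own argument: the decomposition $w_{0}(t)=T(v_{0}(t^{\lambda})+u_{0})-Tu_{0}$, $w_{1}(t)=Tu-Tu_{0}-w_{0}(t)$, the use of the continuity hypothesis to obtain strong measurability of $w_{0}$, and then the same substitution and infimum steps carried over from Theorem~\ref{thm:nonlinear-interpol}. The only difference is that you spell out the arithmetic (the two expressions for $\lambda$ and the identities $(1-\theta)\alpha_{0}=(1-\eta)\alpha$, $\theta\alpha_{1}=\eta\alpha$) which the paper leaves implicit by referring back to the earlier proof.
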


\begin{proof}[Proof of Theorem~\ref{thm:interpol-no-differences}]
 Let $u\in X_{0}\cap X_{1}$. Since $X_{0}\cap X_{1}$ is continuously
  injected into $(X_{0},X_{1})_{\eta,p_{0},p_{1}}$, there are measurable functions $v_{i} : (0,\infty)\to
  X_{i}$ for $i=0, 1$ satisfying $u-u_{0}=v_{0}(t)+v_{1}(t)$ in $X_{0}+X_{1}$ for a.e. $t\in
  (0,\infty)$,
  \begin{equation}
      \label{eq:69}
      t^{-\theta}v_{0}\in L^{p_{0}}_{\ast}(X_{0})\qquad\text{and}\qquad
      \;t^{1-\theta}v_{1}\in L^{p_{1}}_{\ast}(X_{1}).
    \end{equation}
 For $\lambda:=\tfrac{\theta}{\eta\,\alpha_{0}}>0$, we set
  \begin{displaymath}
    w_{0}(t)=T(v_{0}(t^{\lambda})+u_{0})-Tu_{0}\quad\text{ and }\quad
    w_{1}(t)=Tu-Tu_{0}-w_{0}(t)
  \end{displaymath}
  for a.e. $t\in (0,\infty)$. By construction,
  $Tu-Tu_{0}=w_{0}(t)+w_{1}(t)$ for a.e.  $t\in (0,\infty)$. Since by
  assumption, $T$ is continuous from $X_{0}\cap X_{1}$ equipped with
  the $X_{0}$-norm to $Y_{0}$, the function
  $w_{0} : (0,\infty)\to Y_{0}$ is strongly
  measurable. By~\eqref{eq:47}, $T$ is H\"older-continuous from
  $X_{0}\cap X_{1}$ equipped with the $X_{1}$-norm to $Y_{1}$. Thus,
  the function $w_{1} : (0,\infty)\to Y_{1}$ is strongly
  measurable. Moreover, by \eqref{eq:30bis} and~\eqref{eq:47bis}, we
  have that the inequalities~\eqref{eq:59} hold for $i=0, 1$. Now, we
  can proceed as in the proof of Theorem~\ref{thm:nonlinear-interpol}
  to conclude that inequality~\eqref{eq:48bis} holds for all
  $u\in X_{0}\cap X_{1}$.
\end{proof}

%
%
%
%

In some applications, the assumption that \emph{the
  mapping $T$ is continuous from $X_{0}\cap X_{1}$ equipped with the
  $X_{0}$-norm topology to $Y_{0}$} in
Theorem~\ref{thm:interpol-no-differences} is too strong. This can be
circumvented, for instance, by the following result.

\begin{theorem}\label{thm:interpol-no-differences-bis}
  Let $(X_{0},X_{1})$ and $(Y_{0},Y_{1})$ be two interpolation couples,
  $Y_{0}$ being a separable Banach space. Let $T$ be a mapping from
  $X_{0}+X_{1}$ into $Y_{0}+Y_{1}$ with domain containing
  $X_{0}\cap X_{1}$. Suppose there is some $u_{0}\in X_{0}\cap X_{1}$
  such that $Tu_{0}\in Y_{0}\cap Y_{1}$ and $T$ satisfies the following three
  conditions. 
  \begin{itemize}
  \item $T$ is continuous from $X_{0}\cap X_{1}$
  equipped with the $X_{0}$-norm to $Y_{0}$ equipped with the weak
  topology,
  \item there are exponents $0<\alpha_{0}$, $\alpha_{1}<\infty$ and constants $M_{0}$,
  $M_{1}\ge 0$ such that $T$ satisfies~\eqref{eq:30bis} and \eqref{eq:47bis}.
  \end{itemize}
  For every $0<\theta<1$ and
  $1\le q_{0}$, $q_{1}\le \infty$ (excluding $q_{0}=q_{1}=\infty$)
  satisfying $q_{0}\ge \tfrac{1}{\alpha_{0}}$ and
  $q_{1}\ge \tfrac{1}{\alpha_{1}}$, let
  $1\le q, p_{0}, p_{1}\le\infty$, $0<\eta<1$, $0<\alpha<\infty$ given
  by~\eqref{eq:68}, and let $1\le s_{0}\le p_{0}$ and
  $1\le s_{1}\le p_{1}$. Then $T$ satisfies
  inequality~\eqref{eq:48bis} for every $u\in X_{0}\cap X_{1}$, where
  the constant $C_{\eta,r_{0},r_{1}}$ is given by~\eqref{eq:52}.
\end{theorem}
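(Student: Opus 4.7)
The plan is to mirror the proof of Theorem~\ref{thm:interpol-no-differences} verbatim, reducing the whole argument to the single new point which is to verify strong measurability of the auxiliary function $w_0$ under the weaker (only weak) continuity hypothesis on $T$. Concretely, fix $u\in X_0\cap X_1$ and, as in the previous proof, choose a representation pair $(v_0,v_1)$ satisfying~\eqref{eq:50} with $\eta$ in place of $\theta$ and $p_0,p_1$ in place of $q_0,q_1$; note that $u\in X_0\cap X_1$ forces $v_0(t), v_1(t)\in X_0\cap X_1$ for a.e. $t$. With $\lambda=\theta/(\eta\alpha_0)$, define
\[
w_0(t):=T\bigl(v_0(t^{\lambda})+u_0\bigr)-Tu_0,\qquad
w_1(t):=Tu-Tu_0-w_0(t),
\]
so that $Tu-Tu_0=w_0(t)+w_1(t)$ a.e. in $(0,\infty)$.

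Strong measurability of $w_1$ into $Y_1$ is obtained as in the previous proof from~\eqref{eq:47bis}, since $v_1$ is strongly $X_1$-measurable and $T(\cdot+u_0)-Tu_0$ is H\"older continuous from $(X_0\cap X_1,\|\cdot\|_{X_1})$ into $Y_1$. The new step is to establish strong measurability of $w_0$ into $Y_0$. I invoke Pettis' measurability theorem: since $Y_0$ is separable, it suffices to show that $w_0$ is weakly measurable, i.e. that $t\mapsto \langle \psi,w_0(t)\rangle$ is Lebesgue measurable for every $\psi\in Y_0'$.

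To verify weak measurability, fix $\psi\in Y_0'$ and define $G_{\psi}:X_0\cap X_1\to\R$ by $G_{\psi}(x):=\langle \psi, T(x+u_0)-Tu_0\rangle$. The assumed weak continuity of $T$ from $(X_0\cap X_1,\|\cdot\|_{X_0})$ to $(Y_0,\text{weak})$ implies $G_{\psi}$ is sequentially continuous, hence continuous, from $(X_0\cap X_1,\|\cdot\|_{X_0})$ to $\R$. On the other hand, $t\mapsto v_0(t^{\lambda})$ is strongly $X_0$-measurable, and by Pettis' theorem its essential range $\Omega\subset X_0$ is separable; after modification on a null set we may assume $\Omega\subseteq X_0\cap X_1$. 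Restricted to the separable metric space $(\Omega,\|\cdot\|_{X_0})$, the continuous function $G_{\psi}$ is Borel measurable, and the composition $t\mapsto G_{\psi}(v_0(t^{\lambda}))=\langle\psi,w_0(t)\rangle$ is therefore Lebesgue measurable. Hence $w_0$ is weakly measurable and, by Pettis, strongly measurable into $Y_0$.

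With strong measurability of both $w_0$ and $w_1$ established, the estimates~\eqref{eq:59} follow at once from~\eqref{eq:30bis} and~\eqref{eq:47bis} (with $\hat{u}=u_0$ for $i=0$ and $\hat{u}$ playing the role of $v_0(t^\lambda)+u_0$ in the second part of the splitting for $i=1$, exactly as in the previous proof). From here the computation is identical to the one carried out for Theorem~\ref{thm:interpol-no-differences}: the substitution $s=t^{\lambda}$ combined with $\lambda=\theta/(\eta\alpha_0)=(1-\theta)/((1-\eta)\alpha_1)$ yields the two $L^{q_i}_{\ast}(Y_i)$ bounds, \eqref{eq:51} produces~\eqref{eq:55} with the $(X_0,X_1)_{\eta,p_0,p_1}$-norm on the right, and Lemma~\ref{lem:embedding-of-mean-spaces} upgrades this to the $(X_0,X_1)_{\eta,s_0,s_1}$-norm with the factor $C_{\eta,r_0,r_1}^{\alpha}$. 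The main (and only) obstacle is the measurability step just described; once Pettis' theorem is applied, the remainder is a quotation of the previous argument.
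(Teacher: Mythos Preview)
Your proof is correct and follows essentially the same approach as the paper: both arguments reduce to showing strong measurability of $w_0$ via Pettis' theorem (using the separability of $Y_0$ and the weak continuity of $T$), after which the computation from Theorem~\ref{thm:interpol-no-differences} is quoted verbatim. You supply more detail on the weak measurability step (factoring through the continuous functional $G_\psi$ and the separable essential range of $v_0$), whereas the paper simply asserts that weak continuity of $T$ makes $w_0$ weakly measurable; but the underlying idea is identical.
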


\begin{remark}
  \label{rem:density-argument}
  Consider the following situation: For $1\le q$, $r<\infty$, let
  $X_{0}=L^{q}(\Sigma,\mu)$, $X_{1}=L^{\infty}(\Sigma,\mu)$,
  $Y_{0}=L^{r}(\Sigma,\mu)$ and $Y_{1}=L^{\infty}(\Sigma,\mu)$, where
  one assumes that $(\Sigma,\mu)$ is a \emph{separable measure space} (cf,
  \cite[Definition on p.98]{MR2759829}). Suppose $T$ satisfy the
  assumptions of Theorem~\ref{thm:interpol-no-differences-bis} and we
  choose
 \begin{displaymath}
   \begin{array}[c]{lll}
     q_{0}=r,& q_{1}=\infty, & p_{0}=\beta\,q_{0}=\beta\,r> q\ge1,\\
     p_{1}=q_{1}=\infty, & s_{0}=q< \beta\,r=p_{0}, & s_{1}=\infty.
   \end{array}
 \end{displaymath}
 Then, by Corollary~\ref{corApp:1},
 \begin{displaymath}
    (X_{0},X_{1})_{\eta,s_{0},s_{1}}=L^{\frac{q}{(1-\eta)}}(\Sigma,\mu)
    \qquad\text{and}\qquad
    (Y_{0},Y_{1})_{\theta,q_{0},q_{1}}=L^{\frac{r}{(1-\theta)}}(\Sigma,\mu)
 \end{displaymath}
 with equal norms for every $0<\theta, \eta<1$ and so
 Theorem~\ref{thm:interpol-no-differences-bis} yields
 \begin{equation}
   \label{eq:185}
   \begin{split}
     & \norm{Tu-u_{0}}_{\frac{r}{1-\theta}}\\
     &\qquad \le
     \Big[\tfrac{\beta}{(1-\theta)\beta+\theta}\Big]^{\frac{1-\theta}{r}}\,
     M_{0}^{1-\theta}\,M_{1}^{\theta}\, C_{\eta,r_{0},1}^{(1-\theta)\beta+\theta}\,
     \norm{u-u_{0}}_{\frac{q}{1-\eta(\theta)}}^{(1-\theta)\beta+\theta}
   \end{split}
  \end{equation}
  for every $u\in L^{q}(\Sigma,\mu)\cap L^{\infty}(\Sigma,\mu)$ and every
  $0<\theta<1$, where
  \begin{math}
   r_{0}=\frac{q\,\beta\,r}{\beta r(q-1)+q}.
 \end{math}
In addition, to the above assumptions, we suppose
 \begin{displaymath}
  \textit{$T$ is continuous from $L^{\frac{q}{1-\eta(\theta)}}(\Sigma,\mu)$ to $L^{\frac{q}{1-\eta(\theta)}}(\Sigma,\mu)$.}
 \end{displaymath}
Since $L^{q}(\Sigma,\mu)\cap L^{\infty}(\Sigma,\mu)$ is dense in
$L^{\frac{q}{1-\eta(\theta)}}(\Sigma,\mu)$, for every $u\in
L^{\frac{q}{1-\eta(\theta)}}(\Sigma,\mu)$, there is a sequence
$(u_{n})$ in $L^{q}(\Sigma,\mu)\cap L^{\infty}(\Sigma,\mu)$ such that
$u_{n}$ converges to $u$ in
$L^{\frac{q}{1-\eta(\theta)}}(\Sigma,\mu)$ and so $Tu_{n}$ converges to $Tu$ in
$L^{\frac{q}{1-\eta(\theta)}}(\Sigma,\mu)$. By~\eqref{eq:185},
$(Tu_{n})$ is bounded in $L^{\frac{r}{1-\theta}}(\Sigma,\mu)$ and
hence, after eventually passing to a subsequence of $(u_{n})$, we may
assume that $Tu_{n}$ converges weakly to $v$ in
$L^{\frac{r}{1-\theta}}(\Sigma,\mu)$ for some $v\in
L^{\frac{r}{1-\theta}}(\Sigma,\mu)$. Since
$L^{\frac{q}{1-\eta(\theta)}}(\Sigma,\mu)$ and
$L^{\frac{r}{1-\theta}}(\Sigma,\mu)$ are both continuously embedded into
$L^{m}_{loc}(\Sigma,\mu)$, with
$m:=\min\{\frac{q}{1-\eta(\theta)},\frac{r}{1-\theta}\}$, we obtain
$v=Tu$ a.e. on $\Sigma$ and so, sending $n\to\infty$ in~\eqref{eq:185}
for $u=u_{n}$ and using Fatou's lemma shows that~\eqref{eq:185} holds
for all $u\in L^{\frac{q}{1-\eta(\theta)}}(\Sigma,\mu)$.
\end{remark}


\begin{proof}[Proof of Theorem~\ref{thm:interpol-no-differences-bis}]
  Let $u\in X_{0}\cap X_{1}$ and for $i=0$, $1$, let
  $v_{i} : (0,\infty)\to X_{i}$ be measurable such that
  $u-u_{0}=v_{0}(t)+v_{1}(t)$ in $X_{0}+X_{1}$ for a.e.
  $t\in (0,\infty)$ and~\eqref{eq:69} holds.
  For $\lambda:=\tfrac{\theta}{\eta\,\alpha_{0}}>0$, we set
  \begin{displaymath}
    w_{0}(t)=T(v_{0}(t^{\lambda})+u_{0})-Tu_{0}\quad\text{ and }\quad
    w_{1}(t)=Tu-Tu_{0}-w_{0}(t)
  \end{displaymath}
  for a.e. $t\in (0,\infty)$. By construction,
  $Tu-Tu_{0}=w_{0}(t)+w_{1}(t)$ for a.e.  $t\in (0,\infty)$. By
  assumption, $T$ is continuous from $X_{0}\cap X_{1}$ equipped with
  the $X_{0}$-norm topology to $Y_{0}$ equipped with the
  weak-topology. Hence $w_{0}$ is weakly measurable. But since by
  assumption, $Y_{0}$ is separable, the function
  $w_{0} : (0,\infty)\to Y_{0}$ is strongly measurable due to Pettis's theorem
  (\cite[Theorem~3.5.3]{MR0423094}). By~\eqref{eq:47}, $T$ is
  H\"older-continuous from $X_{0}\cap X_{1}$ equipped with the
  $X_{1}$-norm to $Y_{1}$. Thus, the function
  $w_{1} : (0,\infty)\to Y_{1}$ is strongly measurable. Moreover, by
  \eqref{eq:30bis} and~\eqref{eq:47bis}, we have that the
  inequalities~\eqref{eq:59} hold for $i=0, 1$. Now, we can proceed as
  in the proof of Theorem~\ref{thm:nonlinear-interpol} and see that
  the statement of this theorem holds.
\end{proof}

\subsection{Extrapolation towards $L^\infty$}
\label{sec:extrapolation-towards-infinity}

To the best of our knowledge, first extrapolation results towards
$L^{\infty}$ in the context of \emph{linear semigroups} and 
employing Riesz-Thorin's or Stein's linear interpolation theorems go
back to the pioneering work \cite{MR0293451} by Simon and
H{\o}egh-Krohn (see also~\cite[Theorem 3.3]{MR766493}). An alternative
approach using a duality argument has been given
in~\cite[Lemme~1]{MR1077272}. However, in this article, we are
confronted with a much more difficult situation, since the family of
operators $\{T_{t}\}_{t\ge0}$ are (in general) nonlinear. Hence
neither a duality argument or a linear Riesz-Thorin interpolation
theorem can be used.

Our extrapolation result towards $L^{\infty}$ is a nonlinear
generalisation of the techniques developed
in~\cite{MR0293451,MR766493,MR1077272}. Our proof relies essentially
on the nonlinear interpolation results
Theorem~\ref{thm:nonlinear-interpol} and
Theorem~\ref{thm:interpol-no-differences-bis}, as well as the fact
that the mean spaces involving $L^{p_{0}}(\Sigma,\mu)$ and
$L^{p_{1}}(\Sigma,\mu)$ spaces are isometrically isomorphic to an
appropriate $L^{p}(\Sigma,\mu)$ space (cf. Corollary~\ref{corApp:1}).

Here, we shall use the notation $u\lesssim v$ to say that there exists a
constant $C$ (independent of the important parameters) such that
$u\leq Cv$.

Our first extrapolation result towards $L^{\infty}$ is adapted to
semigroups generated by completely accretive operators
(Section~\ref{sec:comp}) satisfying the $L^{q}$-$L^{r}$-regulari\-sation
effect~\eqref{eq:18} for \emph{differences} and $1\le q$, $r<\infty$.

\begin{theorem}\label{thm:extrapolation-to-infty}
  Let $1\le q$, $r< \infty$ and $\{T_{t}\}_{t\geq 0}$ be a semigroup
  acting on $L^{q}\cap L^{\infty}(\Sigma,\mu)$. Suppose $\{T_{t}\}_{t\geq 0}$ satisfies
  exponential growth~\eqref{eq:62} for $\tilde{q}=\infty$ and some $\omega\ge 0$,
  and there exist $C>0$ and exponents $\alpha$, $\beta$, $\gamma>0$
  such that the estimate
   \begin{equation}
    \tag{\ref{eq:18}}
    \norm{T_{t}u-T_{t}\hat{u}}_{r}\le 
    \left(\tfrac{C}{q}\right)^{1/\sigma}\; t^{-\alpha}\; e^{\omega \beta t}\,
    \norm{u-\hat{u}}_{q}^{\gamma}
  \end{equation}
  holds for every $t>0$ and $u$,
  $\hat{u}\in L^{q}(\Sigma,\mu)\cap L^{\infty}(\Sigma,\mu)$. If
  \begin{equation}
    \label{eq:70}
    \gamma\,r> q
  \end{equation}
  then 
  \begin{equation}
    \label{eq:1}
    \norm{T_{t}u-T_{t}\hat{u}}_{\infty}\lesssim \; t^{-\alpha^{\ast}}\;
    e^{\omega \beta^{\ast} t}\;
    \norm{u-\hat{u}}_{\gamma\,r\,q^{-1}\,m_{0}}^{\gamma^{\ast}}
  \end{equation}
  for every $t>0$ and $u$,
  $\hat{u}\in L^{\gamma\,r\,q^{-1}\,m_{0} }(\Sigma,\mu)$, with exponents
  \begin{equation}
    \label{eq:72}
    \begin{array}[c]{c}
    \alpha^{\ast}=
      \displaystyle  \frac{\alpha\,q\,\gamma^{-1}}{(\frac{\gamma\,r}{q}-1)
      \,m_{0}+q(\frac{1}{\gamma}-1)},\qquad
      \gamma^{\ast}=
      \frac{(\frac{\gamma\,r}{q}-1)\,m_{0}}{(
      \frac{\gamma\,r}{q}-1)\,m_{0}+q(\frac{1}{\gamma}-1)},\\[9pt]
      \displaystyle \beta^{\ast}=\frac{(\beta-1)\gamma\,r\,q^{-1}
      +\gamma-\beta}{(\frac{\gamma\,r}{q}-1)\,m_{0}+q(\frac{1}{\gamma}-1)}+1,
    \end{array}
  \end{equation}
  and $m_{0}\ge q\,\gamma^{-1}$ such that
  \begin{equation}
    \label{eq:71}
  (\tfrac{\gamma\,r}{q}-1)\,m_{0}+q(\tfrac{1}{\gamma}-1)>0.
\end{equation}
\end{theorem}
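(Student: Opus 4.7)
The strategy is to combine the regularisation estimate~\eqref{eq:18} with the $L^\infty$-$L^\infty$ Lipschitz bound~\eqref{eq:62} for $\tilde{q}=\infty$ via our nonlinear interpolation theorem (Theorem~\ref{thm:nonlinear-interpol}) and then bootstrap by a Moser-type iteration driven by the semigroup property.

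\textbf{Step 1 (Base interpolated estimate).} Apply Theorem~\ref{thm:nonlinear-interpol} to the mapping $T_t$ with interpolation couples $(X_0,X_1)=(L^q,L^\infty)$ and $(Y_0,Y_1)=(L^r,L^\infty)$, H\"older exponents $\alpha_0=\gamma$ and $\alpha_1=1$, and constants $M_0=(C/q)^{1/\sigma}\,t^{-\alpha}e^{\omega\beta t}$ and $M_1=e^{\omega t}$. For admissible $\theta\in(0,1)$ and integer parameters $q_0,q_1$, the formulas~\eqref{eq:68} give $\eta=\theta/[(1-\theta)\gamma+\theta]$ and a compound H\"older exponent $(1-\theta)\gamma+\theta$. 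By Corollary~\ref{corApp:1}, the relevant mean spaces are isometric to $L^{q/(1-\eta)}$ and $L^{r/(1-\theta)}$, yielding an $L^{q/(1-\eta)}$-$L^{r/(1-\theta)}$ H\"older estimate whose constant factors as a product of powers of $t^{-\alpha}e^{\omega\beta t}$ and $e^{\omega t}$.

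\textbf{Step 2 (Moser iteration via the semigroup property).} Write $T_t=T_{t_N}\circ\cdots\circ T_{t_1}$ with times $t_1,\dots,t_N$ to be chosen later summing to $t$. At the $k$-th step, apply the interpolated estimate from Step~1 to $T_{t_k}$ with a parameter $\theta_k$ chosen so that the target integrability of step $k$ matches the source of step $k+1$, and so that the integrability index $p_k$ grows by a fixed factor dictated by the scaling $\gamma r/q$; condition~\eqref{eq:70} guarantees that this gain is strictly greater than $1$. The free parameter $m_0$ indexes the base integrability $p_0=\gamma r q^{-1}m_0$ of the iteration, and condition~\eqref{eq:71} is exactly the admissibility range in which the geometric series that will emerge in Step~3 converge.

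\textbf{Step 3 (Telescoping and passage to $L^\infty$).} Iterate $N$ times and pass to the limit. A direct bookkeeping argument—summing geometric series in the exponents of $t$ and of $e^{\omega\cdot t}$ and multiplying the H\"older exponents—shows that the cumulative power of $t$ collapses to $-\alpha^\ast$, the cumulative H\"older exponent to $\gamma^\ast$, and the cumulative exponential factor to $e^{\omega\beta^\ast t}$, with the exponents given by~\eqref{eq:72}. Since in the limit the output integrability $p_N\to\infty$ while the $L^{p_N}$ bound remains uniformly controlled by the right-hand side of~\eqref{eq:1}, passing $N\to\infty$ (with suitable monotone-convergence or Fatou-type argument against the implicit constant) yields the claimed $L^{\gamma r q^{-1} m_0}$-$L^\infty$ estimate.

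The main technical obstacle will be Step~2--3: choosing the parameters $\theta_k$ and $t_k$ consistently so that source/target integrabilities chain up, verifying that the geometric series for the $t$-exponent, the $\omega t$-exponent and the H\"older exponent converge to precisely the values in~\eqref{eq:72}, and justifying the passage $N\to\infty$ in the $L^{p_N}$-norm to reach $L^\infty$. The algebraic identity that makes everything fit together is essentially a rewriting of~\eqref{eq:71}, which simultaneously expresses that each iteration step is contractive in the exponent sequence and that the accumulated constants remain finite.
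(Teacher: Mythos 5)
Your proposal matches the paper's approach: Step~1 is precisely how the paper begins (Theorem~\ref{thm:nonlinear-interpol} with couples $(L^{q},L^{\infty})$, $(L^{r},L^{\infty})$, $\alpha_{0}=\gamma$, $\alpha_{1}=1$, identifying mean spaces via Corollary~\ref{corApp:1}), and your Steps~2--3 describe the time-splitting Moser iteration that the paper carries out in Lemma~\ref{lem:iteration}, including the role of $\kappa=\gamma r/q>1$ and condition~\eqref{eq:71} as admissibility and convergence conditions. The only material point you glossed over, which the paper handles explicitly, is fixing a single test function $\varphi^{\ast}\in D_{+}$ so the embedding constants $C_{\varphi^{\ast},m}$ coming from Lemma~\ref{lem:embedding-of-mean-spaces} stay uniformly bounded above and below as $m\to\infty$ (cf.\ \eqref{eq:4}), without which the geometric-series bookkeeping in Step~3 would not close.
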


\begin{remark}
  \label{rem:10}
  The two conditions~\eqref{eq:70} and \eqref{eq:71} are heavily
  involved in the recursive construction 
  \begin{equation}
    \label{eq:247}
    m_{n+1} = m_{n}\,\kappa-  r\, \kappa^{-1}\,(\gamma-1),\qquad\text{($n\ge 1$),}
  \end{equation}
  of a strictly increasing sequence $(m_{n})_{n\ge 0}\subseteq (1,+\infty)$ satisfying
  $\lim_{n\to+\infty}m_{n}=+\infty$. If one chooses $\kappa$ by
  \begin{equation}
    \label{eq:249}
    \kappa = \frac{\gamma\, r}{q}
  \end{equation}
  then condition \eqref{eq:70} yields $\kappa>1$. If, in
  addition, $m_{0}$ satisfies~\eqref{eq:71} then $(m_{n})_{n\ge 0}$ is
  strictly increasing and $\lim_{n\to+\infty}m_{n}=+\infty$. Inserting
  the sequence $(m_{n})_{n\ge 0}$ into inequality~\eqref{eq:24} and
  using the semigroup property of $\{T_{t}\}_{t\ge 0}$, one obtains
  an $L^{\tilde{q}}$-$L^{\infty}$ regularisation effect of the
  $\{T_{t}\}_{t\ge 0}$ for some $\tilde{q}=\gamma\,r\,q^{-1}\,m_{0}\in
  [1,\infty)$. 
\end{remark}


\begin{proof}[Proof of Theorem~\ref{thm:extrapolation-to-infty}]
  We intend to apply Theorem~\ref{thm:nonlinear-interpol} to the
  following situation: let $X_{0}=L^{q}(\Sigma,\mu)$,
  $X_{1}=L^{\infty}(\Sigma,\mu)$, $Y_{0}=L^{r}(\Sigma,\mu)$,
  $Y_{1}=L^{\infty}(\Sigma,\mu)$, and for any fixed $t>0$, let
  $T=T_{t}$. By assumption, $T_{t}$ satisfies~\eqref{eq:18} and has exponential
  growth~\eqref{eq:62} for $\tilde{q}=\infty$ and some $\omega\ge 0$. Hence
  the mapping $T$ satisfies 
  inequality~\eqref{eq:30} 
  with $\alpha_{0}=\gamma>0$, $M_{0}=C\,e^{\omega \beta t}\,t^{-\alpha}$
  and inequality~\eqref{eq:47} with $\alpha_{1}=1$,
  $M_{1}= e^{\omega t}$. Further, we choose
\begin{displaymath}
  \begin{array}[c]{lll}
  q_{0}=r,& q_{1}=\infty, & p_{0}=\gamma\,q_{0}=\gamma\,r> q\ge1,\\
  p_{1}=q_{1}=\infty, & s_{0}=q< \gamma\,r=p_{0}, & s_{1}=\infty.
  \end{array}
\end{displaymath}
Then, by Corollary~\ref{corApp:1},
  \begin{displaymath}
    (X_{0},X_{1})_{\eta,s_{0},s_{1}}=L^{\frac{q}{(1-\eta)}}(\Sigma,\mu)
    \qquad\text{and}\qquad
    (Y_{0},Y_{1})_{\theta,q_{0},q_{1}}=L^{\frac{r}{(1-\theta)}}(\Sigma,\mu)
  \end{displaymath}
 with equal norms for every $0<\theta, \eta<1$. Thus,
 Theorem~\ref{thm:nonlinear-interpol} yields
  \begin{align*}
\norm{T_{t}u-T_{t}\hat{u}}_{\frac{r}{1-\theta}}
    &\le 
    \Big[\tfrac{\gamma}{(1-\theta)\gamma+\theta}\Big]^{\frac{1-\theta}{r}}\,
    \big[C\,e^{\omega \beta t}\,t^{-\alpha}\big]^{1-\theta}\,e^{\omega \theta t}\times\\
    &\hspace{2cm}\times\big[\inf_{\varphi\in
       D_{+}}\norm{s^{-\theta}\varphi}_{L^{r_{0}}_{\ast}(\R)}^{1-\theta}\big]^{(1-\theta)\gamma+\theta}\,
     \norm{u-\hat{u}}_{\frac{q}{1-\eta(\theta)}}^{(1-\theta)\gamma+\theta}
  \end{align*}
  for every $t>0$, $u$,
  $\hat{u}\in L^{q}(\Sigma,\mu)\cap L^{\infty}(\Sigma,\mu)$ and every
  $0<\theta<1$, where
  \begin{displaymath}
   r_{0}=\frac{q\,\gamma\,r}{\gamma r(q-1)r+q}.
 \end{displaymath}

  Next, we choose a
  test function $\rho\in C^{\infty}_{c}((0,\infty))$ with $\rho\ge 0$
  and support
  $\textrm{supp}(\varphi)$ in the closed interval $[1,3]$ satisfying 
  \begin{displaymath}
    e^{-\frac{4}{3}}\,\log \tfrac{5}{3}\le
    \int_{0}^{\infty}\rho(\tfrac{1}{s})\,\tfrac{\ds}{s}\le e^{-1}\,\log 3.
  \end{displaymath}
  Then
  $\varphi^{\ast}:=\left(\int_{0}^{\infty}\rho(\tfrac{1}{t})\,\tfrac{\dt}{t}\right)^{-1}\rho\in
  D_{+}$ and there are $C_{\varphi^{\ast}, 1}$, $C_{\varphi^{\ast}, 2}>0$ such that
  \begin{displaymath}
    C_{\varphi^{\ast}, 1} \le \norm{t^{-\theta}\varphi^{\ast}}_{L^{r_{0}}_{\ast}(\R)}
    \le C_{\varphi^{\ast}, 2}
  \end{displaymath}
  hence
  \begin{align*}
    \norm{T_{t}u-T_{t}\hat{u}}_{\frac{r}{1-\theta}}
    &\le 
    \Big[\tfrac{\gamma}{(1-\theta)\gamma+\theta}\Big]^{\frac{1-\theta}{r}}\,
    \big[C\,e^{\omega \beta t}\,t^{-\alpha}\big]^{1-\theta}\,e^{\omega \theta t}\times\\
    &\hspace{2cm}\times\norm{s^{-\theta}\varphi^{\ast}}_{L^{r_{0}}_{\ast}(\R)}^{(1-\theta)((1-\theta)\gamma+\theta)}\,
     \norm{u-\hat{u}}_{\frac{q}{1-\eta(\theta)}}^{(1-\theta)\gamma+\theta}
  \end{align*}
  for every $t>0$, $u$,
  $\hat{u}\in L^{q}(\Sigma,\mu)\cap L^{\infty}(\Sigma,\mu)$ and every
  $0<\theta<1$. 

  Next, we choose $\kappa$ by~\eqref{eq:249} and set 
  \begin{displaymath}
    \theta_{m}=1-\frac{1}{m}\,\frac{r}{\kappa} \qquad
    \text{for every $m> r\,\kappa^{-1}=\frac{q}{\gamma}$.}
  \end{displaymath}
  Then by hypothesis~\eqref{eq:70}, 
  $\kappa>1$ and for all $m> r\,\kappa^{-1}$, one has
  \begin{displaymath}
    \begin{array}[c]{cc}
    0<\theta_{m}<1, & 1-\theta_{m}= \displaystyle\frac{1}{m}\frac{r}{\kappa},\\[7pt]
  \displaystyle 1-\eta(\theta_{m})=\frac{r\,\kappa^{-1}\gamma}{m+r\kappa^{-1}(\gamma-1)}, &
  \displaystyle\frac{\gamma}{(1-\theta_{m})\gamma+\theta_{m}}=\frac{\gamma\,m}{m+r\kappa^{-1}(\gamma-1)}>0.
    \end{array}
  \end{displaymath}
  Further, we set for all $m> r\,\kappa^{-1}$,
  \begin{displaymath}
    C_{\varphi^{\ast},m}:=\norm{s^{\frac{1}{m}\frac{r}{\kappa}-1}\varphi^{\ast}}_{L^{r_{0}}_{\ast}(\R)}.
  \end{displaymath}
  With this setting in mind, the previous inequality reduces to
  inequality~\eqref{eq:24} below for every $t>0$, $u$,
  $\hat{u}\in L^{q}(\Sigma,\mu)\cap L^{\infty}(\Sigma,\mu)$ and for
  all $m> r\,\kappa^{-1}$. 

  Finally, we choose $m_{0}\ge r\,\kappa^{-1}$ such that~\eqref{eq:71}
  holds (where one notes that with the setting of this proof,
  condition~\eqref{eq:71} coincides with~\eqref{cond:qnod} below) and
  let $m>m_{0}$. The condition on $m_{0}$ is sufficient  to
   run an iteration in the
  time-variable. 
  This is the contents of the next iteration lemma and from there we
  can conclude that the statement of this theorem holds.
\end{proof}

\begin{lemma}\label{lem:iteration}
  Suppose there are $\kappa>1$, $\beta$, $\gamma>0$, $1\le r<\infty$ and $m_{0}\ge r\,K^{-1}$
  such that
  \begin{equation}
    \label{cond:qnod}
    (\kappa-1) m_{0}+r\,\kappa^{-1}(1-\gamma)>0.
  \end{equation}
  Let $\{T_{t}\}_{t\geq 0}$ be a semigroup acting on $
  L^{\kappa m_{0}}(\Sigma,\mu)\cap L^{\infty}(\Sigma,\mu)$ such that
  \begin{equation}
    \label{eq:24}
    \begin{split}
    \norm{T_{t}u-T_{t}\hat{u}}_{m\,\kappa} &\le 
    \Big[\tfrac{\gamma\,m}{m+r\kappa^{-1}(\gamma-1)}\Big]^{\frac{1}{m\,\kappa}}\,
    \big[C\,e^{\omega \beta t}\,t^{-\alpha}\big]^{
      \frac{1}{m}\frac{r}{\kappa}}\, e^{\omega (1-\frac{1}{m}\frac{r}{\kappa}) t}\times\\
    & \hspace{2cm}\times \,C_{\varphi^{\ast},m}^{\frac{1}{m}\frac{r}{\kappa}(\frac{1}{m}\frac{r}{\kappa}(\gamma-1)+1)}\,
     \norm{u-\hat{u}}_{m+r\,\kappa^{-1}(\gamma-1)}^{\frac{1}{m}r\,\kappa^{-1}(\gamma-1)+1}
  \end{split}
  \end{equation}
  for every $u$, $\hat{u}\in L^{\kappa m_{0}}(\Sigma,\mu)\cap
  L^{\infty}(\Sigma,\mu)$, $t>0$ and $m\ge m_{0}$, where $C_{\varphi^{\ast},m}$
  satisfies
  \begin{equation}\label{eq:4}
    C_{\varphi^{\ast}, 1} \le
    C_{\varphi^\ast,m}\le C_{\varphi^{\ast}, 2}
  \end{equation}
  for some constants $ C_{\varphi^{\ast}, 1}$,
  $ C_{\varphi^{\ast}, 2}>0$ independent of $m\ge m_{0}$ . Then
  \begin{equation}
    \label{eq:11}
    \begin{split}
      \norm{T_{t}u-T_{t}\hat{u}}_{\infty}
     & \lesssim
      \;e^{\omega
        \big(\frac{(\beta-1)\kappa +
          \gamma-\beta}{(\kappa-1)m_{0}+r\,\kappa^{-1}(1-\gamma)}+1\big) t}
      \;t^{-\frac{\alpha\,r\,\kappa^{-1}}{(\kappa-1)
          m_{0}+r\,\kappa^{-1}(1-\gamma)}}\times\\
     &\hspace{4cm}\times 
     \norm{u-\hat{u}}_{\kappa m_{0}}^{\frac{(\kappa-1)\,
          m_{0}}{(\kappa-1)m_{0}+r\,\kappa^{-1}(1-\gamma)}}
    \end{split}
   \end{equation}
  for every $u$, $\hat{u}\in L^{\kappa m_{0}}(\Sigma,\mu)$ and every $t>0$.
\end{lemma}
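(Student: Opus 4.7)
The proof is a Moser-type iteration driven by the recursion flagged in Remark \ref{rem:10}. Set $c := r\kappa^{-1}(\gamma-1)$ and define $(m_n)_{n\ge 0}$ by $m_{n+1} = \kappa m_n - c$. Condition \eqref{cond:qnod} is exactly $m_1 - m_0 = (\kappa-1)m_0 - c > 0$, and by induction $m_{n+1}-m_n = \kappa(m_n-m_{n-1})$, so $(m_n)$ is strictly increasing; the closed form $m_n = \kappa^n(m_0 - \frac{c}{\kappa-1}) + \frac{c}{\kappa-1}$ shows that the leading coefficient $A := m_0 - \frac{c}{\kappa-1}$ is positive under \eqref{cond:qnod}, so $\kappa m_n \to \infty$. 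This sequence is tailored to \eqref{eq:24}: applying it with $m = m_{n+1}$ makes the right-hand $L$-norm exponent equal to $m_{n+1}+c = \kappa m_n$, while the left-hand one is $\kappa m_{n+1}$; the Hölder exponent that appears is $\theta_{n+1} := \frac{1}{m_{n+1}}c + 1 = \frac{\kappa m_n}{m_{n+1}}$, so that the telescoping product
\begin{equation*}
  \prod_{k=1}^N \theta_k \;=\; \kappa^N \,\frac{m_0}{m_N} \;\xrightarrow[N\to\infty]{}\; \frac{m_0}{A} \;=\; \frac{(\kappa-1)m_0}{(\kappa-1)m_0 - c}
\end{equation*}
coincides with the exponent of $\|u-\hat u\|_{\kappa m_0}$ in \eqref{eq:11}.

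Next, split $t = \sum_{n\ge 1} t_n$ with $t_n = (1-\lambda)\lambda^{n-1} t$ for some $\lambda\in(0,1)$ (the parameter $\lambda$ is incidental and can be fixed, e.g.\ $\lambda = 1/\kappa$) and put $s_n = t_1+\cdots+t_n$. Set $a_n := \|T_{s_n} u - T_{s_n}\hat u\|_{\kappa m_n}$ for $n\ge 0$ (with $s_0 = 0$). Applying \eqref{eq:24} with $m = m_{n+1}$ to the pair $(T_{s_n}u, T_{s_n}\hat u)$ over time $t_{n+1}$, and using the semigroup property, one obtains a recursion of the form
\begin{equation*}
  a_{n+1} \;\le\; K_{n+1}(t_{n+1})\, a_n^{\theta_{n+1}},
\end{equation*}
where $K_{n+1}(\tau)$ is the explicit product of the four prefactors in \eqref{eq:24} at $m = m_{n+1}$, $t=\tau$. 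Iterating from $0$ to $N$ gives the closed form
\begin{equation*}
  a_N \;\le\; a_0^{\,\kappa^N m_0/m_N}\,\prod_{k=1}^{N} K_k(t_k)^{\,\prod_{j=k+1}^{N}\theta_j},
\end{equation*}
with the convention that the empty product equals $1$.

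Taking logarithms, it remains to control two groups of series. The series arising from the $t$-power factor, $\sum_{k\ge 1}\frac{r}{\kappa m_k}\alpha\,(\log t_k^{-1})\prod_{j>k}\theta_j$, is essentially geometric: using $\prod_{j>k}\theta_j \sim m_k/(A\kappa^{k})$ and $\kappa m_k \sim A\kappa^{k+1}$ for large $k$, the factor $\prod_{j>k}\theta_j/(\kappa m_k)$ tends to the constant $1/(\kappa A) = 1/((\kappa-1)m_0 - c + c)$ modulo lower-order tails, and after summing the geometric $\log t_k$ contributions one arrives at the power $-\alpha r\kappa^{-1}/((\kappa-1)m_0 + r\kappa^{-1}(1-\gamma))$ on $t$, matching \eqref{eq:11}. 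The exponential factors $e^{\omega\beta t_k}$ and $e^{\omega(1-r/(\kappa m_k))t_k}$ sum to a single $e^{\omega\beta^* t}$ with $\beta^*$ as in \eqref{eq:11}, after collecting all weights and using $\sum_k t_k = t$. Finally, the bounded factors $\big[\frac{\gamma m_{k+1}}{\kappa m_k}\big]^{1/(\kappa m_{k+1})}$ and $C_{\varphi^*,m_k}^{\mathrm{O}(1/\kappa m_k)}$ contribute only a finite multiplicative constant thanks to \eqref{eq:4} combined with $\sum_k 1/(\kappa m_k) < \infty$ (which is geometric in $\kappa^{-k}$); this constant is absorbed in the symbol $\lesssim$.

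To conclude, send $N\to\infty$: since $s_N \nearrow t$ and $\kappa m_N \to \infty$, the left-hand side $a_N = \|T_{s_N} u - T_{s_N}\hat u\|_{\kappa m_N}$ converges to $\|T_t u - T_t \hat u\|_\infty$ by Fatou and monotone convergence of $L^p$-norms as $p\to\infty$ (the Hölder continuity in time inherited from a single application of \eqref{eq:24} supplying the needed joint control). This yields \eqref{eq:11}. The main technical obstacle is the bookkeeping in the second paragraph of computations: the two geometric summations for the $t$- and $e^{\omega t}$-exponents must be carried out with precise asymptotics of $m_k$ and $\prod_{j>k}\theta_j$ in order to recover the sharp exponents $\alpha^* r\kappa^{-1}$ type factor and $\beta^*$ displayed in \eqref{eq:11}; hypothesis \eqref{eq:4} is exactly what prevents the analogous sum $\sum_k \frac{\log C_{\varphi^*,m_k}}{\kappa m_k}\prod_{j>k}\theta_j$ from blowing up.
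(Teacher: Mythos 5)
Your proposal follows the same Moser-type iteration as the paper: same recursion $m_{n+1}=\kappa m_n-c$ with $c=r\kappa^{-1}(\gamma-1)$, same closed form and leading coefficient $A=\frac{(\kappa-1)m_0+r\kappa^{-1}(1-\gamma)}{\kappa-1}$, same telescoping of Hölder exponents $\theta_{n+1}=\kappa m_n/m_{n+1}$, same time-splitting with geometric $t_k$, and the same structure of passing to the limit. The key identities you display (the telescoping $\prod_{k=1}^{N}\theta_k=\kappa^N m_0/m_N\to m_0/A$ and the resulting power on $\|u-\hat u\|_{\kappa m_0}$) are exactly the ones the paper computes.

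Two small imprecisions in the write-up. First, your claim that ``the factor $\prod_{j>k}\theta_j/(\kappa m_k)$ tends to the constant $1/(\kappa A)$'' is not correct as stated: with $\prod_{j>k}^{N}\theta_j=\kappa^{N-k}m_k/m_N$ one has $\prod_{j>k}\theta_j/(\kappa m_k)=\kappa^{N-k-1}/m_N\sim \kappa^{-k-1}/A$, which decays geometrically in $k$ rather than tending to a constant. That geometric decay is of course precisely what you need — it is what makes the series $\sum_k \frac{r\alpha}{\kappa m_k}(\log t_k^{-1})\prod_{j>k}\theta_j$ converge and produces the exponent $-\alpha r\kappa^{-1}/((\kappa-1)m_0+r\kappa^{-1}(1-\gamma))$ on $t$ — and your subsequent reference to ``summing the geometric contributions'' suggests you realized this; but the sentence as written contradicts it. Second, ``Fatou and monotone convergence of $L^p$-norms as $p\to\infty$'' needs a touch more care on a $\sigma$-finite infinite measure space (one typically extracts an a.e.\ convergent subsequence from the $L^{\kappa m_0}$-convergence of $T_{s_n}u-T_{s_n}\hat u$ and combines it with the uniform bound in $\kappa m_{n+1}$), and ``Hölder continuity in time inherited from a single application of \eqref{eq:24}'' mischaracterizes \eqref{eq:24}, which is a regularizing estimate rather than a continuity-in-time estimate; the paper itself simply uses strong continuity of the semigroup in $L^{\kappa m_0}$. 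Finally, you omit the closing density argument (extending from $u,\hat u\in L^{\kappa m_0}\cap L^\infty$ to $L^{\kappa m_0}$), but that step is routine and the paper also dispatches it in a line.
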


 For the proof of this lemma, we simplify some techniques
 from~\cite{MR554377} and extend them to semigroups satisfying
 exponential growth condition~\eqref{eq:62} (see also~\cite{MR1277971}
in the linear case).

\begin{proof}
  For $m_{0}\ge r\,\kappa^{-1}$ such that~\eqref{cond:qnod} holds, we
  construct a sequence $(m_{n})_{n\ge 0}$ recursively
  by~\eqref{eq:247}.  Then
  \begin{equation}
    \label{eq:23}
    m_{n+1}=\kappa\,m_{n}+r\,\kappa^{-1}(1-\gamma)
  \end{equation}
  for every integer $n\ge 0$ and so, an induction over $n\in \N_{0}$
  yields
  \begin{equation}
    \label{eq:27i}
    m_{n}=\kappa^{n}[m_{0}+r\,\kappa^{-1}\,(\gamma-1)] +
    r\,\kappa^{-1}(1-\gamma)\,\sum_{\nu=0}^{n}\kappa^{\nu},
  \end{equation}
  that is
  \begin{equation}
    \label{eq:27}
    m_{n}=\kappa^{n}\frac{(\kappa-1)m_{0}+r\,\kappa^{-1}(1-\gamma)}{\kappa-1}
    -\frac{r\,\kappa^{-1}(1-\gamma)}{\kappa-1}.
  \end{equation}

  Using \eqref{eq:27i}, we see that
  \begin{displaymath}
    m_{n+1}-m_{n}= \kappa^{n}\Big[(\kappa-1) m_{0}+r\,\kappa^{-1}(1-\gamma)\Big]
  \end{displaymath}
  hence the sequence $(m_{n})_{n\ge 0}$ is strictly increasing if
  and only if $m_{0}$ satisfies condition~\eqref{cond:qnod}. Moreover,
  by~\eqref{eq:27}, since $\kappa >1$, and by~\eqref{cond:qnod}, we see
  that
  \begin{equation}
    \label{eq:25}
    \lim_{n\to\infty}m_{n}=\infty
  \end{equation}
  and
  \begin{equation}
    \label{rap}
    \lim_{n\to\infty}\frac{m_{n}}{\kappa^{n}}=\frac{(\kappa-1)m_{0}+r\,\kappa^{-1}(1-\gamma)}{\kappa-1}.
  \end{equation}

   Since
  \begin{displaymath}
  \frac{1}{m_{n}}r\,\kappa^{-1}(\gamma-1)+1=\frac{\kappa\,m_{n-1}}{m_{n}}\quad\text{
    and }\quad
  \frac{\gamma\,m_{n}}{m_{n}+r\kappa^{-1}(\gamma-1)}=\frac{\gamma\,m_{n}}{m_{n-1}\kappa},
 \end{displaymath}
 inserting the sequence $(m_{n})_{n\ge0}$ into~\eqref{eq:24} yields
  \begin{equation}
    \label{eq:28}
    \norm{T_{t}u-T_{t}\hat{u}}_{\kappa m_{n}}\le 
    C_{m_{n}}^{\frac{1}{m_{n}\,\kappa}}\,t^{-
      \frac{\alpha\,r}{m_{n}\,\kappa}}
     \,e^{\omega (\beta-1) \frac{r}{m_{n}\,\kappa}
     t}\,e^{\omega t}\,C_{\varphi^{\ast},m_{n}}^{\frac{r}{m_{n}\kappa}\frac{\kappa\,m_{n-1}}{m_{n}}}\,
    \norm{u-\hat{u}}_{m_{n-1}\,\kappa}^{\frac{\kappa\,m_{n-1}}{m_{n}}}
  \end{equation}
  for every $t>0$, $u$,
  $\hat{u}\in L^{\kappa m_{0}}(\Sigma,\mu)\cap
  L^{\infty}(\Sigma,\mu)$, and $n\ge 1$, where
  \begin{displaymath}
    C_{m_{n}}:=\tfrac{\gamma\,m_{n}}{m_{n-1}\kappa}\, C^{r}.
 \end{displaymath}
  
 Now, let $(t_{\nu})_{\nu\ge0}$ be a sequence in $[0,1]$ such that
 $\sum_{\nu=0}^{\infty}t_{\nu}=1$ which we will specify below. By
 assumption, $\{T_{t}\}_{t\ge0}$ is a semigroup and $T_{t}u$,
 $T_{t}\hat{u}\in L^{\kappa m_{0}}(\Sigma,\mu)\cap
 L^{\infty}(\Sigma,\mu)$
 for every $t\ge0$. Thus, we can iterate~\eqref{eq:28} and obtain
  \begin{equation}
    \label{eq:29}
    \begin{split}
    &\norm{T_{t\sum_{\nu=0}^{n}t_{\nu}}u-T_{t\sum_{\nu=0}^{n}t_{\nu}}\hat{u}}_{\kappa m_{n+1}}\\
    &\le \prod_{\nu=1}^{n+1}C_{m_{\nu}}^{\frac{\kappa^{n-\nu}}{m_{n+1}}}\;
    \prod_{\nu=0}^{n} t_{\nu}^{-\frac{\alpha\,r}{\kappa}\,\frac{\kappa^{n-\nu}}{m_{n+1}}}\,
     e^{\omega (\beta-1) \frac{r}{\kappa m_{n+1}}\sum_{\nu=0}^{n}t_{\nu} \kappa^{n-\nu} t}\times\\
    &\times e^{\omega
      \frac{1}{m_{n+1}}\sum_{\nu=0}^{n}t_{\nu} \kappa^{n-\nu}
      m_{\nu+1} t}\,\prod_{\nu=1}^{n+1}
    C_{\varphi^{\ast},m_{\nu}}^{\frac{r}{m_{\nu}}\frac{\kappa^{n+1-\nu}m_{\nu-1}}{m_{n+1}}} 
    \,t^{-\frac{\alpha\,r}{\kappa\,m_{n+1}}\sum_{\nu=0}^{n}\kappa^{\nu}}\, 
      \norm{u-\hat{u}}_{\kappa m_{0}}^{m_{0}\frac{\kappa^{n+1}}{m_{n+1}}}.
    \end{split}
  \end{equation}
  Since by assumption, $\kappa>1$, by~\eqref{eq:27}, and
  by~\eqref{cond:qnod}, we see that
  \begin{equation}
    \label{eq:limit-q-sum-kappa}
    \lim_{n\to\infty} \frac{1}{m_{n+1}}\sum_{\nu=0}^{n}\kappa^{\nu}
    = \frac{1}{(\kappa-1) m_{0}+r\,\kappa^{-1}(1-\gamma)}.
  \end{equation}
  Thus
  \begin{equation}
    \label{lim:t-powers}
    \lim_{n\to\infty} t^{-\frac{\alpha\,r}{\kappa}\frac{1}{m_{n+1}}\sum_{\nu=0}^{n}\kappa^{n-\nu}}
    = t^{-\frac{\alpha\,r}{\kappa}\frac{1}{(\kappa-1)
        m_{0}+r\,\kappa^{-1}(1-\gamma)}}
      \qquad\text{for every $t>0$.}
  \end{equation}
  If we choose, for instance, $t_{\nu}=2^{-\nu-1}$, then
  \begin{displaymath}
    \prod_{\nu=0}^{n}t_{\nu}^{-\frac{\alpha r}{\kappa}\frac{\kappa^{n-\nu}}{m_{n+1}}}=
    2^{\frac{\alpha r}{\kappa}\frac{\kappa^{n}}{m_{n+1}}\sum_{\nu=0}^{n}(\nu+1)\kappa^{-\nu}}.
  \end{displaymath}
  Using
  \begin{displaymath}
    \sum_{\nu=0}^{\infty}(\nu+1)\kappa^{-\nu}=\frac{\kappa^2}{(\kappa-1)^2}
  \end{displaymath}
  and \eqref{rap}, one obtains
  \begin{equation}
    \label{eq:limit-sum-nu-plus-one-kappa}
    \lim_{n\to\infty}\frac{\kappa^{n}}{m_{n+1}}\sum_{\nu=0}^{n}(\nu+1)\kappa^{-\nu}
    = \frac{\kappa}{\kappa-1}\,\frac{1}{(\kappa-1) m_{0}+r\,\kappa^{-1}(1-\gamma)},
  \end{equation}
  therefore
  \begin{equation}
    \label{lim:t-nu-powers}
    \lim_{n\to\infty}\prod_{\nu=0}^{n}t_{\nu}^{-\frac{\alpha r}{\kappa}\frac{\kappa^{n-\nu}}{m_{n+1}}}=
    2^{\frac{\alpha r}{\kappa}\frac{\kappa}{\kappa-1}\,\frac{1}{(\kappa-1) m_{0}+r\,\kappa^{-1}(1-\gamma)}}.
  \end{equation}
  Using again that $t_{\nu}=2^{-\nu-1}$ together with~\eqref{eq:27} and
  ~\eqref{rap}, gives
  \begin{displaymath}
    \lim_{n\to\infty}\frac{1}{\kappa m_{n+1}}\sum_{\nu=0}^{n}t_{\nu}
    \kappa^{n-\nu}= 
    \frac{\kappa^{-1}(\kappa-1)(2\kappa-1)^{-1}}{(\kappa-1)m_{0}+r\kappa^{-1}(1-\gamma)}
  \end{displaymath}
  and so
  \begin{equation}
    \label{eq:74}
    \lim_{n\to\infty} e^{\omega (\beta-1) \frac{r}{\kappa
        m_{n+1}}\sum_{\nu=0}^{n}t_{\nu} \kappa^{n-\nu} t}=
    e^{\omega (\beta-1)
      \frac{r \kappa^{-1}(\kappa-1)(2\kappa-1)^{-1}}{(\kappa-1)m_{0}+r\kappa^{-1}(1-\gamma)} t}.
  \end{equation}
  Similarly, we obtain that
  \begin{displaymath}
    \lim_{n\to\infty} \frac{1}{m_{n+1}}\sum_{\nu=0}^{n}t_{\nu} \kappa^{n-\nu}
    m_{\nu+1}= 1- 
    \frac{r (1-\gamma) \kappa^{-1}(2\kappa-1)^{-1}}{(\kappa-1)m_{0}+r\kappa^{-1}(1-\gamma)}
  \end{displaymath}
  and so
  \begin{equation}
    \label{eq:73}
    \lim_{n\to\infty} e^{\omega \frac{1}{m_{n+1}}\sum_{\nu=0}^{n}t_{\nu} \kappa^{n-\nu}
      m_{\nu+1} t}=e^{\omega \big(1- 
      \frac{r (1-\gamma) \kappa^{-1}(2\kappa-1)^{-1}}{(\kappa-1)m_{0}+r\kappa^{-1}(1-\gamma)} \big) t}.
  \end{equation}
  Next, by \eqref{eq:4}, one has
  \begin{equation}
    \label{eq:147}
    C_{\varphi^{\ast},1}^{\frac{r \kappa^{n+1}}{m_{n+1}}\sum_{\nu=1}^{n+1}\frac{m_{\nu-1}}{\kappa^{\nu}m_{\nu}}} 
    \le \prod_{\nu=1}^{n+1}
    C_{\varphi^{\ast},m_{\nu}}^{\frac{r}{m_{\nu}}\frac{\kappa^{n+1-\nu}m_{\nu-1}}{m_{n+1}}} 
    \le
    \prod_{\nu=1}^{n+1} C_{\varphi^{\ast},2}^{\frac{r \kappa^{n+1}}{m_{n+1}}\sum_{\nu=1}^{n+1}\frac{m_{\nu-1}}{\kappa^{\nu}m_{\nu}}}.
  \end{equation}
  Since by~\eqref{eq:27}, one has that
  $a_{\nu}:=\frac{m_{\nu-1}}{\kappa^{\nu}m_{\nu}}$ satisfies
  $\lim_{\nu\to\infty}\abs{\frac{a_{\nu+1}}{a_{\nu}}}=\frac{1}{\kappa}$,
  the ratio test implies that the series
  $\sum_{\nu=1}^{\infty}\frac{m_{\nu-1}}{\kappa^{\nu}m_{\nu}}$
  converges. Furthermore, \eqref{rap} yields
  \begin{equation}
    \label{eq:39}
    \lim_{n\to\infty} \frac{\kappa^{n+1}}{m_{n+1}}=
    \frac{(\kappa-1)}{(\kappa-1)m_{0}+r\,\kappa^{-1}(1-\gamma)}.
  \end{equation}
  Thus
  \begin{displaymath}
    \lim_{n\to\infty} \frac{r
      \kappa^{n+1}}{m_{n+1}}\sum_{\nu=1}^{n+1}\frac{m_{\nu-1}}{\kappa^{\nu}m_{\nu}}
    =\frac{r
      (\kappa-1)}{(\kappa-1)m_{0}+r\,\kappa^{-1}(1-\gamma)}\sum_{\nu=1}^{\infty}
    \frac{m_{\nu-1}}{\kappa^{\nu}m_{\nu}}
  \end{displaymath}
  so that sending $n\to\infty$ in~\eqref{eq:147} yields
  \begin{equation}
    \label{eq:186}
    \begin{split}
      C_{\varphi^{\ast},1}^{\frac{r
          (\kappa-1)}{(\kappa-1)m_{0}+r\,\kappa^{-1}(1-\gamma)}\sum_{\nu=1}^{\infty}
        \frac{m_{\nu-1}}{\kappa^{\nu}m_{\nu}}} & \le
      \liminf_{n\to\infty}\prod_{\nu=1}^{n+1}
      C_{\varphi^{\ast},m_{\nu}}^{\frac{r}{m_{\nu}}\frac{\kappa^{n+1-\nu}m_{\nu-1}}{m_{n+1}}}\\
      & \le \limsup_{n\to\infty}\prod_{\nu=1}^{n+1}
      C_{\varphi^{\ast},m_{\nu}}^{\frac{r}{m_{\nu}}\frac{\kappa^{n+1-\nu}m_{\nu-1}}{m_{n+1}}}\\
      &\le C_{\varphi^{\ast},2}^{\frac{r
          (\kappa-1)}{(\kappa-1)m_{0}+r\,\kappa^{-1}(1-\gamma)}\sum_{\nu=1}^{\infty}
        \frac{m_{\nu-1}}{\kappa^{\nu}m_{\nu}}}.
  \end{split}
\end{equation}
  Using again~\eqref{eq:39}, we see that
  \begin{equation}
    \label{lim:norm-f-powers}
    \lim_{n\to\infty}\norm{u-\hat{u}}_{\kappa m_{0}}^{m_{0}\frac{\kappa^{n+1}}{m_{n+1}}}
    = \norm{u-\hat{u}}_{\kappa m_{0}}^{\frac{(\kappa-1)\, m_{0}}{(\kappa-1)m_{0}+r\,\kappa^{-1}(1-\gamma)}}.
  \end{equation}

  It remains to control the product
  \begin{equation}
    \label{eq:control-product}
    \begin{split}
      \prod_{\nu=1}^{n+1}C_{m_{\nu}}^{\frac{\kappa^{n-\nu}}{m_{n+1}}}=
      \prod_{\nu=1}^{n+1}\left[\tfrac{\beta\,m_{n}}{m_{n-1}\kappa}\right]^{\frac{\kappa^{n-\nu}}{m_{n+1}}}\times
      C^{\frac{r \sum_{\nu=1}^{n+1}\kappa^{n-\nu}}{m_{n+1}}}
    \end{split}
  \end{equation}
  as $n\to\infty$. Since $\kappa>1$ and by~\eqref{rap},
  \begin{equation}\label{eq:3}
    \lim_{n\to\infty}\frac{1
    }{m_{n+1}}\sum_{\nu=1}^{n+1}\kappa^{n-\nu}= 
    \frac{\kappa^{-1}}{(\kappa-1)m_{0}+r\,\kappa^{-1}(1-\gamma)}
  \end{equation}
  and so
  \begin{equation}
    \label{lim:C-in-power-of-kappa-nu}
    \lim_{n\to\infty} C^{\frac{r
        \sum_{\nu=1}^{n+1}\kappa^{n-\nu}}{m_{n+1}}}= 
    C^{\frac{r\,\kappa^{-1}}{(\kappa-1)m_{0}+r\,\kappa^{-1}(1-\gamma)}}.
  \end{equation}
  For every $n\ge 1$, the quotient
  $\tfrac{\gamma\,m_{n}}{m_{n-1}\kappa}=\tfrac{\gamma\,m_{n}}{m_{n}+r\kappa^{-1}(\gamma-1)}$
  can be controlled by
  \begin{displaymath}
     \gamma<\tfrac{\gamma\,m_{n}}{m_{n-1}\kappa}<\frac{\gamma}{1+\frac{r}{m_{0}}\kappa^{-1}(\gamma-1)}
     \qquad\text{if $0<\gamma<1$}
  \end{displaymath}
  and by
  \begin{displaymath}
    \frac{\gamma}{1+\frac{r}{m_{0}}\kappa^{-1}(\gamma-1)}
    <\tfrac{\gamma\,m_{n}}{m_{n-1}\kappa}<\gamma
    \qquad\text{if $\gamma\ge 1$.}
  \end{displaymath}
  Thus for general $\gamma>0$, there are constants $C_{1}$, $C_{2}>0$
  such that
  \begin{equation}
    \label{eq:184}
   C_{1}^{\frac{1}{m_{n+1}}\sum_{\nu=1}^{n+1}\kappa^{n-\nu}} \le
   \prod_{\nu=1}^{n+1}\left[\tfrac{\gamma\,m_{n}}{m_{n-1}\kappa} \right]^{\frac{\kappa^{n-\nu}}{m_{n+1}}} \le 
   C_{2}^{\frac{1}{m_{n+1}}\sum_{\nu=1}^{n+1}\kappa^{n-\nu}}
  \end{equation}
  for every $n\ge 0$ and so by \eqref{eq:3}, sending $n\to\infty$ in
  \eqref{eq:184} yields
  \begin{displaymath}
    C_{1}^{\frac{\kappa^{-1}}{(\kappa-1)m_{0}+r\,\kappa^{-1}(1-\gamma)}} 
    \le \liminf_{n\to\infty}\prod_{\nu=1}^{n+1}
    \left[\tfrac{\gamma\,m_{n}}{m_{n-1}\kappa} \right]^{\frac{\kappa^{n-\nu}}{m_{n+1}}}
    \le
    \limsup_{n\to\infty}\prod_{\nu=1}^{n+1}C_{m_{\nu}}^{\frac{\kappa^{n-\nu}}{m_{n+1}}}
    \le C_{2}^{\frac{\kappa^{-1}}{(\kappa-1)m_{0}+r\,\kappa^{-1}(1-\gamma)}}.
  \end{displaymath}
  
  Thus sending $n\to\infty$ in inequality~\eqref{eq:29} and
  using~\eqref{lim:t-powers}, \eqref{lim:t-nu-powers}, \eqref{eq:74}, \eqref{eq:73},
  \eqref{lim:norm-f-powers}, \eqref{lim:C-in-power-of-kappa-nu}, \eqref{eq:186}
  together with the fact that $m_{n}\nearrow\infty$ as $n\to\infty$
  yields
  \begin{align*}
   \norm{T_{t}u-T_{t}\hat{u}}_{\infty}
   & \le
    \Big[C_{2}\,
    C^{r}\Big]^{\frac{r\,\kappa^{-1}}{(\kappa-1)m_{0}+r\,\kappa^{-1}(1-\gamma)}}\,e^{\omega
     \Big(\frac{(\beta-1)\kappa +
     \gamma-\beta}{(\kappa-1)m_{0}+r\,\kappa^{-1}(1-\gamma)}+1\Big) t}\,
     t^{-\frac{\alpha\,r\,\kappa^{-1}}{(\kappa-1)
        m_{0}+r\,\kappa^{-1}(1-\gamma)}}
     \times\\
   &\qquad\times C_{\varphi^{\ast},2}^{\frac{r
          (\kappa-1)}{(\kappa-1)m_{0}+r\,\kappa^{-1}(1-\gamma)}\sum_{\nu=1}^{\infty}
        \frac{m_{\nu-1}}{\kappa^{\nu}m_{\nu}}}\,
    \norm{u-\hat{u}}_{\kappa m_{0}}^{\frac{(\kappa-1)\, m_{0}}{(\kappa-1)m_{0}+r\,\kappa^{-1}(1-\gamma)}}
  \end{align*}
  showing that inequality~\eqref{eq:11} holds for $u$,
  $\hat{u}\in L^{\kappa m_{0}}\cap L^{\infty}(\Sigma,\mu)$. By
  hypothesis, the semigroup $\{T_{t}\}$ acts on
  $L^{\kappa m_{0}}\cap L^{\infty}(\Sigma,\mu)$, that is, every
  $T_{t}$ maps $ L^{\kappa m_{0}}\cap L^{\infty}(\Sigma,\mu)$ to $
  L^{\kappa m_{0}}\cap L^{\infty}(\Sigma,\mu)$. Since
  $L^{\kappa m_{0}}\cap L^{\infty}(\Sigma,\mu)$ is dense in
  $L^{\kappa m_{0}}(\Sigma,\mu)$, a standard approximation
  argument shows that the first claim of this iteration lemma
  holds. This completes the proof.
\end{proof}

Our second extrapolation result towards $L^{\infty}$ is adapted to
semigroups enjoying the $L^{q}$-$L^{r}$-regularising
effect~\eqref{eq:20} for $1\le q$, $r< \infty$ and some
$u_{0}\in L^{r}\cap L^{q_{0}}(\Sigma,\mu)$ generated by
quasi $m$-completely accretive operators $A$ on $L^{q}(\Sigma,\mu)$
(Section~\ref{sec:comp}).

\begin{theorem}\label{thm:extrapolation-to-infty-bis}
  Let $(\Sigma,\mu)$ be a separable measure space, $1\le q$,
  $r< \infty$, and $\{T_{t}\}_{t\geq 0}$ be a semigroup acting on
  $L^{q}\cap L^{\infty}(\Sigma,\mu)$ with exponential
  growth~\eqref{eq:62} for some $\omega\ge 0$ and every
  $q\le \tilde{q}\le \infty$. Further, suppose there exists
  $u_{0}\in L^{q}\cap L^{\infty}(\Sigma,\mu)$ satisfying $T_{t}u_{0}=u_{0}$ for all $t\ge 0$, 
  and there exist $C>0$ and exponents $\alpha$, $\beta$, $\gamma>0$ such that
  \begin{equation}
    \tag{\ref{eq:20}}
       \norm{T_{t}u-u_{0}}_{r}\le 
      \left(\tfrac{C}{q}\right)^{1/\sigma}\,t^{-\alpha}\,e^{\omega \beta t}\,
      \norm{u-u_{0}}_{q}^{\gamma}
   \end{equation}
  holds for every $t>0$ and
  $u \in L^{q}(\Sigma,\mu)\cap L^{\infty}(\Sigma,\mu)$. If the
  parameter $\gamma$, $r$, $q$ satisfy~\eqref{eq:70}, then
  \begin{displaymath}
    \norm{T_{t}u-u_{0}}_{\infty}\lesssim \;e^{\omega
        (\gamma^{\ast}+1) t}\,
    t^{-\delta}\,
    \norm{u-u_{0}}_{\gamma\,r\,q^{-1}\,m_{0}}^{\gamma}
  \end{displaymath}
  for every $t>0$ and $u\in L^{\gamma\,r\,q^{-1}\,m_{0}}(\Sigma,\mu)$, where $\delta$,
  $\gamma$ and $\beta^{\ast}$ are given by~\eqref{eq:72} and
  $m_{0}\ge q\,\gamma^{-1}$ such that~\eqref{eq:71} holds.
\end{theorem}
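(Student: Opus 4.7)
The plan is to follow the exact same strategy as the proof of Theorem~\ref{thm:extrapolation-to-infty}, replacing the use of the interpolation result with differences (Theorem~\ref{thm:nonlinear-interpol}) by the corresponding one without differences but requiring only weak continuity (Theorem~\ref{thm:interpol-no-differences-bis}), and then replacing the difference-based iteration Lemma~\ref{lem:iteration} by an obvious analogue in which $\hat u$ and $T_t\hat u$ are replaced by $u_0$ throughout, exploiting the fixed-point identity $T_tu_0=u_0$.

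First I would apply Theorem~\ref{thm:interpol-no-differences-bis} (as explained in Remark~\ref{rem:density-argument}) with the couples $X_0=L^q(\Sigma,\mu)$, $X_1=L^\infty(\Sigma,\mu)$, $Y_0=L^r(\Sigma,\mu)$, $Y_1=L^\infty(\Sigma,\mu)$, and $T=T_t$ for fixed $t>0$. From \eqref{eq:20} one reads off $\alpha_0=\gamma$ and $M_0=(C/q)^{1/\sigma}\,t^{-\alpha}e^{\omega\beta t}$; from the exponential growth \eqref{eq:62} with $\tilde q=\infty$, $\alpha_1=1$ and $M_1=e^{\omega t}$. The hypothesis that $(\Sigma,\mu)$ is separable makes $Y_0=L^r(\Sigma,\mu)$ separable. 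The required weak continuity of $T_t:(L^q\cap L^\infty,\|\cdot\|_q)\to L^r_{\mathrm{weak}}$ is automatic: exponential growth \eqref{eq:62} for $\tilde q=q$ gives strong $L^q$-continuity, while exponential growth \eqref{eq:62} for $\tilde q=\infty$ keeps any $L^\infty$-bounded input $L^\infty$-bounded after applying $T_t$; strong $L^q$-convergence together with uniform $L^\infty$-boundedness of the image yields weak $L^r$-convergence. Choosing the parameters $q_0=r$, $q_1=\infty$, $p_0=\gamma r$, $p_1=\infty$, $s_0=q$, $s_1=\infty$, Corollary~\ref{corApp:1} identifies the mean spaces $(X_0,X_1)_{\eta,q,\infty}$ and $(Y_0,Y_1)_{\theta,r,\infty}$ isometrically with $L^{q/(1-\eta)}(\Sigma,\mu)$ and $L^{r/(1-\theta)}(\Sigma,\mu)$, producing, for every $0<\theta<1$, an estimate of the form
\begin{equation*}
\norm{T_t u-u_0}_{\frac{r}{1-\theta}}\le
    \Big[\tfrac{\gamma}{(1-\theta)\gamma+\theta}\Big]^{\frac{1-\theta}{r}}
    M_0^{1-\theta}M_1^{\theta}\,C_{\eta,r_0,1}^{(1-\theta)\gamma+\theta}
    \norm{u-u_0}_{\frac{q}{1-\eta}}^{(1-\theta)\gamma+\theta}
\end{equation*}
for all $u\in L^q\cap L^\infty(\Sigma,\mu)$, with the test-function constant $C_{\eta,r_0,1}$ uniformly bounded above and below after choosing a fixed admissible $\varphi^{\ast}\in D_+$ exactly as in the proof of Theorem~\ref{thm:extrapolation-to-infty}.

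Next, setting $\kappa=\gamma r/q$ (so $\kappa>1$ by hypothesis \eqref{eq:70}) and $\theta_m=1-\frac{1}{m}\frac{r}{\kappa}$ for $m>r\kappa^{-1}=q/\gamma$, the estimate above specialises to the exact analogue of inequality \eqref{eq:24} with $T_t\hat u$ replaced by $u_0$. Since $T_tu_0=u_0$ and $\{T_t\}$ enjoys the semigroup property, writing $T_{s+t}u-u_0=T_s(T_tu)-u_0$ and applying the specialised estimate recursively between successive times yields a chain of inequalities identical to \eqref{eq:29} after replacing $T_{t\sum t_\nu}\hat u$ by $u_0$ and $\norm{u-\hat u}_{\kappa m_0}$ by $\norm{u-u_0}_{\kappa m_0}$. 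Choosing the strictly increasing sequence $(m_n)_{n\ge 0}$ determined by \eqref{eq:247} from the starting value $m_0\ge q/\gamma$ satisfying \eqref{eq:71}, the asymptotics \eqref{eq:27}, \eqref{eq:25}, \eqref{rap}, \eqref{eq:limit-q-sum-kappa}, \eqref{eq:limit-sum-nu-plus-one-kappa}, and \eqref{eq:39} used in the proof of Lemma~\ref{lem:iteration} apply verbatim.

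Passing to the limit $n\to\infty$, the limits \eqref{lim:t-powers}, \eqref{lim:t-nu-powers}, \eqref{eq:74}, \eqref{eq:73}, \eqref{lim:norm-f-powers}, \eqref{lim:C-in-power-of-kappa-nu}, and \eqref{eq:186} all go through unchanged, giving the desired $L^{\gamma r m_0/q}$-to-$L^\infty$ estimate with exponents \eqref{eq:72} for every $u\in L^{\kappa m_0}\cap L^\infty(\Sigma,\mu)$. A final density argument, justified by the standard continuity of $T_t$ from $L^{\kappa m_0}$ to $L^{\kappa m_0}$ (again by \eqref{eq:62}) together with Fatou's lemma on the $L^\infty$ norm, extends the estimate to all $u\in L^{\kappa m_0}(\Sigma,\mu)=L^{\gamma r m_0/q}(\Sigma,\mu)$. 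The main technical point requiring care is the verification that Theorem~\ref{thm:interpol-no-differences-bis} applies, i.e.\ the weak $L^r$-continuity of $T_t$, which is however an essentially immediate consequence of the exponential growth hypothesis in both $L^q$ and $L^\infty$; once this is in place the remainder of the argument is a mechanical adaptation of the proof of Theorem~\ref{thm:extrapolation-to-infty}.
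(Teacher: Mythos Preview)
Your proposal is correct and follows essentially the same route as the paper: the paper explicitly states that the proof proceeds analogously to that of Theorem~\ref{thm:extrapolation-to-infty}, replacing Theorem~\ref{thm:nonlinear-interpol} by Theorem~\ref{thm:interpol-no-differences} or Theorem~\ref{thm:interpol-no-differences-bis}, invoking the density argument of Remark~\ref{rem:density-argument}, and replacing Lemma~\ref{lem:iteration} by the analogous Lemma~\ref{lem:iteration-bis}. Your verification of the weak $L^r$-continuity hypothesis in Theorem~\ref{thm:interpol-no-differences-bis} is the one point the paper leaves implicit, and your argument for it is sound (alternatively, the $L^r$-bound coming directly from \eqref{eq:20} already ensures boundedness of the image sequence in $L^r$, which together with strong $L^q$-convergence yields weak $L^r$-convergence).
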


The proof of this theorem proceeds analogously as the one for
Theorem~\ref{thm:extrapolation-to-infty}, where one replaces the
application of interpolation Theorem~\ref{thm:nonlinear-interpol} by
Theorem~\ref{thm:interpol-no-differences} or
Theorem~\ref{thm:interpol-no-differences-bis}. 
Furthermore, one applies the extrapolation argument from
Remark~\ref{rem:density-argument} and replaces
Lemma~\ref{lem:iteration} by the following one. We leave the details
of the proof to the interested reader.

\begin{lemma}\label{lem:iteration-bis}
  Suppose there exist $\kappa>1$, $\beta$, $\gamma>0$, $1\le r<\infty$ and
  $m_{0}\ge r\,K^{-1}$ such that~\eqref{cond:qnod} holds. Let
  $\{T_{t}\}_{t\geq 0}$ be a semigroup acting on
  $L^{\kappa m_{0}}(\Sigma,\mu)\cap L^{\infty}(\Sigma,\mu)$ satisfying
    \begin{displaymath}
    \begin{split}
    \norm{T_{t}u-u_{0}}_{m\,\kappa} &\le 
    \Big[\tfrac{\gamma\,m}{m+r\kappa^{-1}(\gamma-1)}\Big]^{\frac{1}{m\,\kappa}}\,
    \big[C\,e^{\omega \beta t}\,t^{-\alpha}\big]^{
      \frac{1}{m}\frac{r}{\kappa}}\,e^{\omega
      (1-\frac{1}{m}\frac{r}{\kappa}) t}\,\times\\
    & \hspace{3cm}\times \,C_{\varphi^{\ast},m}^{\frac{1}{m}\frac{r}{\kappa}(\frac{1}{m}\frac{r}{\kappa}(\gamma-1)+1)}\,
     \norm{u-u_{0}}_{m+r\,\kappa^{-1}(\gamma-1)}^{\frac{1}{m}r\,\kappa^{-1}(\gamma-1)+1}
  \end{split}
  \end{displaymath}
  for every $u \in L^{\kappa m_{0}}(\Sigma,\mu)\cap
  L^{\infty}(\Sigma,\mu)$, $t>0$, $m> m_{0}$ and some $u_{0}\in
  L^{\kappa m_{0}}\cap L^{\infty}(\Sigma,\mu)$, where $C_{\varphi^{\ast},m}$
  satisfies~\eqref{eq:4} for some constants $ C_{\varphi^{\ast}, 1}$,
  $ C_{\varphi^{\ast}, 2}>0$ independent of $m$. Then
  \begin{displaymath}
    \begin{split}
      \norm{T_{t}u-u_{0}}_{\infty} &\lesssim \;e^{\omega 
        \big(\frac{(\beta-1)\kappa +
          \gamma-\beta}{(\kappa-1)m_{0}+r\,\kappa^{-1}(1-\gamma)}+1\big) t}\;
      t^{-\frac{\alpha\,r\,\kappa^{-1}}{(\kappa-1)
          m_{0}+r\,\kappa^{-1}(1-\gamma)}}\times\\
      &\hspace{4cm}\times \norm{u-u_{0}}_{\kappa
        m_{0}}^{\frac{(\kappa-1)\,
          m_{0}}{(\kappa-1)m_{0}+r\,\kappa^{-1}(1-\gamma)}}.
    \end{split}
   \end{displaymath}
  for every $u\in L^{\kappa m_{0}}(\Sigma,\mu)$ and every $t>0$. 
\end{lemma}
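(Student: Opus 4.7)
The plan is to follow the same iteration scheme used in the proof of Lemma~\ref{lem:iteration}, mechanically replacing the quantity $T_t\hat{u}$ by the distinguished element $u_0$ throughout. More precisely, I would first construct the recursive sequence $(m_n)_{n\ge 0}$ by $m_{n+1} = \kappa\,m_n - r\,\kappa^{-1}(\gamma-1)$, which is exactly~\eqref{eq:247}. Condition~\eqref{cond:qnod} on $m_0$ guarantees that $(m_n)$ is strictly increasing with $m_n\nearrow\infty$, and identities \eqref{eq:27i}, \eqref{eq:27}, \eqref{rap} remain valid without modification, since they depend only on the recursion and not on the semigroup.

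Next, inserting $m=m_n$ into the assumed inequality and using the algebraic identities $\tfrac{1}{m_n}r\kappa^{-1}(\gamma-1)+1 = \tfrac{\kappa m_{n-1}}{m_n}$ and $\tfrac{\gamma m_n}{m_n+r\kappa^{-1}(\gamma-1)} = \tfrac{\gamma m_n}{m_{n-1}\kappa}$, I obtain the analogue of~\eqref{eq:28}, namely
\begin{equation*}
\norm{T_t u - u_0}_{\kappa m_n} \le C_{m_n}^{\frac{1}{m_n\kappa}}\,t^{-\frac{\alpha r}{m_n\kappa}}\,e^{\omega(\beta-1)\frac{r}{m_n\kappa}t}\,e^{\omega t}\,C_{\varphi^*,m_n}^{\frac{r}{m_n\kappa}\frac{\kappa m_{n-1}}{m_n}}\,\norm{u-u_0}_{m_{n-1}\kappa}^{\frac{\kappa m_{n-1}}{m_n}}
\end{equation*}
for $u\in L^{\kappa m_0}\cap L^\infty$ and $t>0$. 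The key iteration step is then to replace $u$ by $T_{s}u$ at each stage and use the semigroup property $T_{t+s}=T_t\circ T_s$ together with the fact that $\{T_t\}$ acts on $L^{\kappa m_0}\cap L^\infty(\Sigma,\mu)$; note that this iteration goes through \emph{without} any separate hypothesis $T_t u_0 = u_0$, because the fixed element $u_0$ remains on both sides of the inequality at every step. Choosing $t_\nu=2^{-\nu-1}$ so that $\sum_{\nu\ge 0} t_\nu = 1$ and iterating $n$ times produces the counterpart of~\eqref{eq:29}.

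Finally, I would let $n\to\infty$ in the iterated inequality. All the limits~\eqref{lim:t-powers}, \eqref{lim:t-nu-powers}, \eqref{eq:74}, \eqref{eq:73}, \eqref{lim:norm-f-powers}, \eqref{lim:C-in-power-of-kappa-nu} and the two-sided bound~\eqref{eq:186} transfer verbatim, since they only involve the sequence $(m_n)$, the exponents $\alpha,\beta,\gamma,\kappa,r$, and the uniform bound~\eqref{eq:4} on $C_{\varphi^*,m_n}$. Because $m_n\kappa\nearrow\infty$ and the $L^\infty$-norm is lower semicontinuous on $L^{\kappa m_0}\cap L^\infty(\Sigma,\mu)$, passing to the limit yields the desired estimate on $\|T_t u - u_0\|_\infty$ for every $u\in L^{\kappa m_0}\cap L^\infty(\Sigma,\mu)$. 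The only non-routine point to verify carefully is therefore the uniform control of $\prod_{\nu=1}^{n+1}C_{m_\nu}^{\kappa^{n-\nu}/m_{n+1}}$ as in~\eqref{eq:control-product}--\eqref{eq:184}, but the same two-sided estimate based on whether $\gamma<1$ or $\gamma\ge 1$ applies here. A standard density argument (using that $\{T_t\}$ has exponential growth on $L^{\kappa m_0}$, so each $T_t$ extends continuously to $L^{\kappa m_0}(\Sigma,\mu)$) then upgrades the estimate from $u\in L^{\kappa m_0}\cap L^\infty$ to all $u\in L^{\kappa m_0}(\Sigma,\mu)$.

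The main (very mild) obstacle is purely bookkeeping: ensuring that at each iteration the function $T_{s_n}\circ\cdots\circ T_{s_1}u$ lies in the correct $L^{p}$-space so that the hypothesis is applicable at the next scale $m_{n+1}$. This is guaranteed by the assumption that $\{T_t\}$ acts on $L^{\kappa m_0}\cap L^\infty(\Sigma,\mu)$, so that iterates automatically lie in every $L^p$ for $\kappa m_0\le p\le\infty$, which covers every $m_n\kappa$ appearing in the recursion.
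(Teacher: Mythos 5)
Your proposal is correct and takes essentially the same approach as the paper, which simply states that the proof "proceeds as the one of Lemma~\ref{lem:iteration}" and omits the details. You correctly identify the one subtlety worth flagging: the iteration goes through because replacing $u$ by $T_s u$ in the hypothesis leaves the fixed element $u_0$ untouched on both sides, so no fixed-point property $T_t u_0 = u_0$ is needed at the level of this lemma, and the semigroup property then yields $T_t(T_s u) = T_{t+s}u$.
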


The proof of Lemma~\ref{lem:iteration-bis} proceeds as the one of
Lemma~\ref{lem:iteration}.  We omit the details.

\subsection{An alternative approach to arrive at $L^{\infty}$}
\label{sec:Alternative-Moser-iteration}
It is a fundamental fact that semigroups $\{T_{t}\}_{t\ge 0}$
generated by operators $-A$ in $L^{1}$ with a ($c$-)complete resolvent
( Section~\ref{subsec:L1}) are not, in general, contractive with
respect to the $L^{\infty}$-norm
(cf.~\cite[Section~A.11]{MR2286292}). Thus, if one wants to extend the
$L^{q}$-$L^{r}$-regularisation effect of $\{T_{t}\}_{t\ge 0}$ to an
$L^{q}$-$L^{\infty}$-regularisation effect, one needs to proceed by an
alternative approach. One possible way is the following one: firstly,
show that $A$ satisfies a \emph{one-parameter family of
  Gagliardo-Nirenberg type inequalities}, and then by employing
Theorem~\ref{thm:Sobolev-for-porousmedia}, deduce that the semigroup
$\{T_{t}\}$ satisfies a \emph{sequence of
  $L^{q_{n}}$-$L^{q_{n+1}}$-regularisation effects} for some sequence
$(q_{n})_{n\ge 1}\subseteq (1,\infty)$ with
$q_{n}\nearrow\infty$. This method has been employed in the past by
many authors. But to the best of our knowledge, V\'eron has been the
first to use this method in~\cite{MR554377} in the context of
nonlinear semigroups of \emph{contractive} mappings on
$L^{1}(\Sigma,\mu)$ (see also \cite{MR1277971} for another use of this
type of argument in linear semigroup theory). Here, we extend and
simplify this method to nonlinear semigroups $\{T_{t}\}_{t\ge 0}$ of
\emph{Lipschitz continuous} mappings $T_{t}$ on $L^{1}(\Sigma,\mu)$
with constant $e^{\omega t}$, in other words, of exponential
growth~\eqref{eq:62} for $q=1$.

\begin{theorem}
  \label{thm:Moser}
  Let $A+\omega I$ be $m$-accretive in
  $L^{1}(\Sigma,\mu)$ for some $\omega\ge 0$ with trace $A_{1\cap \infty}$
  of $A$ on $L^{1}\cap L^{\infty}(\Sigma,\mu)$ satisfying range
  condition~\eqref{eq:148}. Suppose there exist $\kappa>1$, $m>0$, and
  $q_{0}\ge p\ge 1$ such that $\kappa m q_{0}\ge 1$ and
  \begin{equation}
    \tag{\ref{eq:164}}
    (\kappa-1)q_{0}+p-1-\tfrac{1}{m}>0,
  \end{equation}
  and there exist $C>0$ and $(u_{0},0)\in A_{1\cap \infty}$ such that
  for every $q\ge q_{0}$, the trace $A_{1\cap \infty}$ satisfies
  Sobolev type inequality
  \begin{equation}
    \tag{\ref{eq:10single}}
    \begin{split}
      \norm{u-u_{0}}_{\kappa mq}^{mq}&\le \tfrac{C\,(q/p)^{p}}{q-p+1}\,
      \left[[u-u_{0},v]_{(q-p+1)m+1}+\omega\norm{u-u_{0}}_{(q-p+1)m+1}^{(q-p+1)m+1}\right]
  \end{split}
  \end{equation}
  for every $(u,v)\in A_{1\cap\infty}$, and for every $\lambda>0$
  satisfying $\lambda \omega<1$, the resolvent $J_{\lambda}$ of $A$
  satisfies~\eqref{eq:123} for $\tilde{q}=\kappa m q$. Then, there is
  a $\beta^{\ast}\ge 0$ such that the semigroup
  $\{T_{t}\}_{t\geq 0}\sim -A$ on $\overline{D(A)}^{\mbox{}_{L^{1}}}$
  satisfies
  \begin{equation}
    \label{eq:167}
      \norm{T_{t}u-u_{0}}_{\infty} \lesssim \;e^{\omega \beta^{\ast} t}\;
      t^{-\frac{1}{m((\kappa-1)q_{0}+p-1-\frac{1}{m})}}
      \norm{u-u_{0}}_{\kappa m q_{0}}^{\frac{(\kappa-1) q_{0}}{(\kappa-1)q_{0}+p-1-\frac{1}{m}}}
   \end{equation}
   for every $t>0$ and $u\in \overline{D(A)}^{\mbox{}_{L^{1}}}\cap L^{\infty}(\Sigma,\mu)$.
\end{theorem}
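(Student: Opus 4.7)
The plan is to run a Moser-type iteration by applying Theorem~\ref{thm:Sobolev-for-porousmedia} at each step along a carefully chosen sequence of exponents, in the spirit of the argument used in Lemma~\ref{lem:iteration} but with step-dependent parameters. The first observation is that, for any fixed $q \ge q_{0}$, inequality~\eqref{eq:10single} is precisely a Gagliardo-Nirenberg type inequality for the trace $A_{1\cap\infty}$ in the sense of Definition~\ref{def:Gag-Nire-inequality} with parameters
\begin{displaymath}
  q_{\mathrm{GN}} := (q-p+1)m+1,\quad r_{\mathrm{GN}}:=\kappa m q,\quad \sigma := mq,\quad \varrho:=0,\quad C_{q}:=\tfrac{C(q/p)^{p}}{q-p+1}.
\end{displaymath}
Since $\varrho=0$, the hypothesis on $J_{\lambda}$ is only required for $\tilde{q}=r_{\mathrm{GN}}=\kappa m q$, and that is exactly what~\eqref{eq:123} provides. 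Applying Theorem~\ref{thm:Sobolev-for-porousmedia} yields, for every $q\ge q_{0}$, constants $\alpha_{q}=\frac{1}{mq}$, $\gamma_{q}=\frac{(q-p+1)m+1}{mq}$, $\beta_{q}=\gamma_{q}+1$ such that
\begin{displaymath}
  \norm{T_{t}u-u_{0}}_{\kappa m q}\le (C_{q}/q_{\mathrm{GN}})^{1/(mq)}\,t^{-\alpha_{q}}\,e^{\omega\beta_{q}t}\,\norm{u-u_{0}}_{q_{\mathrm{GN}}}^{\gamma_{q}}
\end{displaymath}
for all $t>0$ and all $u\in\overline{D(A)}^{\mbox{}_{L^{1}}}\cap L^{\infty}(\Sigma,\mu)$.

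Next, I build the iteration sequence by matching the output of one step to the input of the next: define $(q_{n})_{n\ge 0}$ by $q_{n+1}=\kappa q_{n}+p-1-\tfrac{1}{m}$, so that $(q_{n+1}-p+1)m+1=\kappa m q_{n}$. Because the unique fixed point $q^{\ast}=-\tfrac{p-1-1/m}{\kappa-1}$ of this affine recursion satisfies $q_{0}-q^{\ast}=\tfrac{(\kappa-1)q_{0}+p-1-1/m}{\kappa-1}>0$ by assumption~\eqref{eq:164}, one has $q_{n}=\kappa^{n}(q_{0}-q^{\ast})+q^{\ast}$, hence $q_{n}\nearrow\infty$ and $q_{n}/\kappa^{n}\to q_{0}-q^{\ast}$. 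For any $t>0$ and $N\ge 1$, I split $t=\sum_{n=1}^{N}t_{n}$ with $t_{n}=2^{-n}t/(1-2^{-N})$ (essentially $t_{n}\simeq 2^{-n}t$), set $s_{n}=\sum_{k=1}^{n}t_{k}$, and denote $y_{n}:=\norm{T_{s_{n}}u-u_{0}}_{\kappa m q_{n}}$ (with $y_{0}:=\norm{u-u_{0}}_{\kappa m q_{0}}$). Using the semigroup property $T_{s_{n}}=T_{t_{n}}\circ T_{s_{n-1}}$ and the estimate above with $q=q_{n}$, I obtain
\begin{displaymath}
  y_{n}\le D_{n}\, t_{n}^{-\alpha_{n}}\, e^{\omega\beta_{n}t_{n}}\, y_{n-1}^{\gamma_{n}},\qquad n=1,\dots,N,
\end{displaymath}
where $\alpha_{n}=\tfrac{1}{mq_{n}}$, $\gamma_{n}=\tfrac{\kappa q_{n-1}}{q_{n}}$, $\beta_{n}=\gamma_{n}+1$, and $D_{n}=(C_{q_{n}}/(\kappa m q_{n-1}))^{1/(mq_{n})}$.

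Iterating this recursion gives
\begin{displaymath}
  y_{N}\le y_{0}^{\Pi_{N}}\,\prod_{n=1}^{N}\bigl(D_{n}\,t_{n}^{-\alpha_{n}}\,e^{\omega\beta_{n}t_{n}}\bigr)^{\prod_{k=n+1}^{N}\gamma_{k}},
\end{displaymath}
and the key telescoping identity $\prod_{k=n+1}^{N}\gamma_{k}=\kappa^{N-n}q_{n}/q_{N}$ lets me compute every exponent in closed form. Letting $N\to\infty$, I have $\Pi_{N}=\kappa^{N}q_{0}/q_{N}\to\tfrac{q_{0}}{q_{0}-q^{\ast}}=\tfrac{(\kappa-1)q_{0}}{(\kappa-1)q_{0}+p-1-1/m}=\gamma^{\ast}$, and
\begin{displaymath}
  \sum_{n=1}^{N}\alpha_{n}\prod_{k=n+1}^{N}\gamma_{k}=\frac{1}{mq_{N}}\sum_{n=1}^{N}\kappa^{N-n}\longrightarrow \frac{1}{m(\kappa-1)(q_{0}-q^{\ast})}=\alpha^{\ast},
\end{displaymath}
so the exponents of $\norm{u-u_{0}}_{\kappa m q_{0}}$ and of $t$ match exactly those of~\eqref{eq:167}. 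With $t_{n}\simeq 2^{-n}t$, the $t_{n}^{-\alpha_{n}}$ factors produce an additional harmless constant (since $\sum_{n}(n+1)\alpha_{n}\prod_{k>n}\gamma_{k}$ converges by the ratio test, $\gamma_{n}\to 1$ geometrically), and the exponential factors aggregate into $e^{\omega\beta^{\ast}t}$ for some $\beta^{\ast}\ge 0$ built analogously from $\sum\beta_{n}t_{n}\prod_{k>n}\gamma_{k}$.

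The main technical obstacle is controlling the product $\prod_{n=1}^{\infty}D_{n}^{\prod_{k>n}\gamma_{k}}$: the numerator $C_{q_{n}}\asymp q_{n}^{p-1}$ grows polynomially in $q_{n}\asymp\kappa^{n}$, so $\log D_{n}=O(n/q_{n})$, and the weighted sum $\sum_{n}\log D_{n}\cdot\tfrac{\kappa^{N-n}q_{n}}{q_{N}}=\tfrac{1}{mq_{N}}\sum_{n}\kappa^{N-n}\log[C_{q_{n}}/(\kappa m q_{n-1})]=O(1)$ uniformly in $N$, because $\kappa>1$ dominates the polynomial growth of the logarithm. This yields a finite constant multiplying the right-hand side, independent of $N$. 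Finally, passing to the limit $N\to\infty$ in the iterated estimate and using the lower semicontinuity of the $L^{\infty}$-norm with respect to convergence in $L^{1}$ (together with Fatou's lemma applied to $\norm{\cdot}_{\kappa m q_{N}}^{\kappa m q_{N}}\to\norm{\cdot}_{\infty}^{\infty}$ in the usual way), one obtains~\eqref{eq:167} for every $u\in\overline{D(A)}^{\mbox{}_{L^{1}}}\cap L^{\infty}(\Sigma,\mu)$.
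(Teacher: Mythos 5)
Your proof is correct and follows essentially the same route as the paper: recognize~\eqref{eq:10single} as a one-parameter family of Gagliardo-Nirenberg inequalities (with $\varrho=0$, $\sigma=mq$), apply Theorem~\ref{thm:Sobolev-for-porousmedia} to obtain the one-step smoothing estimate, and then run the Moser iteration along $q_{n+1}=\kappa q_{n}+p-1-\tfrac{1}{m}$ with a dyadic time split. The only organisational difference is that the paper factors the iteration into a standalone lemma (Lemma~\ref{lem:iterationbis}) rather than carrying it out inline, and it uses the normalised choice $t_{\nu}=2^{-\nu-1}$ with $\sum_{\nu\ge 0}t_{\nu}=1$ instead of your truncated version; these are cosmetic. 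Your identification of the fixed point $q^{\ast}$ of the affine recursion and the telescoping formula $\prod_{k=n+1}^{N}\gamma_{k}=\kappa^{N-n}q_{n}/q_{N}$ reproduces the exponent computations of~\eqref{eq:limit-q-sum-kappabis}, \eqref{rapbis} and~\eqref{lim:t-powersbis} correctly, and your control of $\prod_{n}D_{n}^{\prod_{k>n}\gamma_{k}}$ via $\log C_{q_{n}}=O(\log q_{n})=O(n)$ against the geometric weights matches the paper's bounds~\eqref{eq:estimate-of-q-nubis}--\eqref{lim:C-in-power-of-kappa-nubis}.
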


\begin{proof}
  From Theorem~\ref{thm:Sobolev-for-porousmedia}, we can conclude that
  the semigroup $\{T_{t}\}_{t\geq 0}$ satisfies
  inequality~\eqref{eq:163} below for every $t>0$,
  $u\in \overline{D(A)}^{\mbox{}_{L^{1}}}\cap L^{\infty}(\Sigma,\mu)$
  and $q\ge q_{0}$. Thus, we can deduce the claim of this theorem from
  the subsequent iteration Lemma~\ref{lem:iterationbis}.
\end{proof}

\begin{lemma}\label{lem:iterationbis}
  Suppose there exist $\kappa>1$, $m>0$, $q_{0}\ge p\ge 1$ such that
  $\kappa m q_{0}\ge 1$ and~\eqref{eq:164} hold. Furthermore, suppose,
  there exists $C>0$ such that the semigroup $\{T_{t}\}_{t\geq 0}$ on
  $\overline{D(A)}^{\mbox{}_{L^{1}}}$ satisfies
    \begin{equation}
      \label{eq:163}
      \begin{split}
        \norm{T_{t}u-u_{0}}_{\kappa qm}&\le
            \left[\tfrac{C\,(q/p)^{p}}{(q-p+1)
                  ((q-p+1)m+1)}\right]^{\frac{1}{qm}}\,
                   e^{\omega (\frac{(q-p+1)m+1}{qm} +1) t}\times\\
                  &\hspace{4cm} \times \,t^{-\frac{1}{qm}}\,
                   \norm{u-u_{0}}_{(q-p+1)m+1}^{\frac{(q-p+1)m+1}{qm}}
      \end{split}
  \end{equation}
  for every
  $u\in \overline{D(A)}^{\mbox{}_{L^{1}}}\cap
  L^{\infty}(\Sigma,\mu)$ and $q\ge q_{0}$. Then there is a $\beta^\ast\ge 0$
  such that the semigroup $\{T_{t}\}_{t\geq 0}\sim -A$ on
  $\overline{D(A)}^{\mbox{}_{L^{1}}}$ satisfies inequality~\eqref{eq:167} for every
  $u\in \overline{D(A)}^{\mbox{}_{L^{1}}}\cap L^{\infty}(\Sigma,\mu)$ and every
  $t>0$.
\end{lemma}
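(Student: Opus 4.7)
The plan is a Moser-type iteration on the recursive bound~\eqref{eq:163}. Starting from the given $q_0$, I construct exponents $(q_n)_{n\geq 0}$ so that applying~\eqref{eq:163} at step $n$ with $q=q_n$ feeds the $L^{\kappa q_{n-1}m}$-output of step $n-1$ into its $L^{(q_n-p+1)m+1}$-input; this forces the affine recursion $q_n=\kappa q_{n-1}+p-1-\tfrac{1}{m}$ for $n\geq 1$, with fixed point $q^{\ast}=(\tfrac{1}{m}-p+1)/(\kappa-1)$, and hence $q_n=q^{\ast}+\kappa^n(q_0-q^{\ast})$. Hypothesis~\eqref{eq:164} is precisely $q_0-q^{\ast}>0$, so $q_n\nearrow\infty$ and
\[
\lim_{n\to\infty}\frac{\kappa^n q_0}{q_n}=\frac{(\kappa-1)q_0}{(\kappa-1)q_0+p-1-\tfrac{1}{m}},
\]
which is the exponent of $\|u-u_0\|_{\kappa m q_0}$ demanded in~\eqref{eq:167}. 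I partition $[0,t]$ by the geometric increments $\tau_n:=t(\kappa-1)\kappa^{-n}$, $t_n:=\sum_{\nu=1}^{n}\tau_\nu\nearrow t$.

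Setting $V_n:=\|T_{t_n}u-u_0\|_{\kappa q_n m}$ for $n\geq 0$ (so $V_0=\|u-u_0\|_{\kappa q_0 m}$, which is finite since $u\in\overline{D(A)}^{\mbox{}_{L^{1}}}\cap L^{\infty}(\Sigma,\mu)$), the semigroup identity $T_{t_n}u=T_{\tau_n}(T_{t_{n-1}}u)$ combined with~\eqref{eq:163} applied at $q=q_n$ gives the single-step estimate
\[
V_n\leq K_n\,e^{\omega\alpha_n\tau_n}\,\tau_n^{-1/(q_n m)}\,V_{n-1}^{\kappa q_{n-1}/q_n},
\]
where $\alpha_n:=\kappa q_{n-1}/q_n+1$ and $K_n$ is the bracketed constant of~\eqref{eq:163} evaluated at $q=q_n$. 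Iterating and using the telescoping identity $\prod_{j=\nu+1}^{n}(\kappa q_{j-1}/q_j)=\kappa^{n-\nu}q_\nu/q_n$, one obtains
\[
V_n\leq\Bigl(\prod_{\nu=1}^n K_\nu^{\kappa^{n-\nu}q_\nu/q_n}\Bigr)\exp\!\Bigl(\omega\sum_{\nu=1}^n\alpha_\nu\tau_\nu\tfrac{\kappa^{n-\nu}q_\nu}{q_n}\Bigr)\Bigl(\prod_{\nu=1}^n\tau_\nu^{-\kappa^{n-\nu}/(q_n m)}\Bigr)V_0^{\kappa^n q_0/q_n}.
\]

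The main obstacle is to verify that each of the three prefactors above stays bounded as $n\to\infty$ and to extract the correct $t$-dependence. From $q_\nu\sim\kappa^\nu(q_0-q^{\ast})$ the weights $\kappa^{n-\nu}q_\nu/q_n$ are uniformly bounded in $1\leq\nu\leq n$; then $\log K_\nu=O(\log q_\nu/q_\nu)=O(\nu\kappa^{-\nu})$ makes the first product converge to a finite constant depending only on $C,\kappa,p,m,q_0$. The choice $\tau_\nu=t(\kappa-1)\kappa^{-\nu}$ separates the $t$-power, yielding the exponent $\sum_{\nu=1}^{\infty}\kappa^{-\nu}/[m(q_0-q^{\ast})]=1/[m((\kappa-1)q_0+p-1-\tfrac{1}{m})]$ required by~\eqref{eq:167}, while the remaining $\nu$-factor $\prod[(\kappa-1)\kappa^{-\nu}]^{-\kappa^{n-\nu}/(q_n m)}$ converges by the same weighted-$\ell^1$ estimate; the exponential factor is dominated by $e^{\omega\beta^{\ast}t}$ with $\beta^{\ast}:=\sup_{n}\sum_{\nu=1}^n\alpha_\nu s_\nu\kappa^{n-\nu}q_\nu/q_n<\infty$, which is finite because $\alpha_\nu$ is uniformly bounded and $s_\nu\kappa^{n-\nu}q_\nu/q_n$ is summable in $\nu$.

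Finally, I must pass from the $L^{\kappa q_n m}$-bound at time $t_n$ to an $L^{\infty}$-bound at time $t$. Since $\{T_\tau\}_{\tau\geq 0}$ is strongly continuous in $L^{1}$ and $t_n\nearrow t$, we have $T_{t_n}u\to T_tu$ in $L^{1}$, and so along a subsequence $T_{t_n}u\to T_tu$ $\mu$-almost everywhere. The uniform bound on $V_n$ together with $\kappa q_n m\to\infty$ allows Fatou's lemma (applied as lower semicontinuity of $\|\cdot\|_\infty$ against $\mu$-a.e.\ limits of sequences with uniformly bounded high $L^{q}$-norms) to deliver the bound~\eqref{eq:167} for every $u\in\overline{D(A)}^{\mbox{}_{L^{1}}}\cap L^{\infty}(\Sigma,\mu)$ and every $t>0$, completing the argument.
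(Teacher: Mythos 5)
Your proposal is correct and follows essentially the same Moser-type iteration as the paper: the same affine recursion $q_n=\kappa q_{n-1}+p-1-\tfrac{1}{m}$, the same single-step estimate obtained by plugging $q=q_n$ into~\eqref{eq:163} (exploiting $(q_n-p+1)m+1=\kappa q_{n-1}m$), the same telescoping product $\prod_{j=\nu+1}^n\kappa q_{j-1}/q_j=\kappa^{n-\nu}q_\nu/q_n$, and the same passage to the limit via lower semicontinuity once the prefactors are shown to converge. The only cosmetic difference is your choice of time increments $\tau_\nu=t(\kappa-1)\kappa^{-\nu}$ (geometric with ratio matching the problem's $\kappa$), whereas the paper uses the fixed partition $t\cdot 2^{-\nu-1}$; both choices yield the same $t$-exponent $-\tfrac{1}{m((\kappa-1)q_0+p-1-\tfrac{1}{m})}$ and a convergent constant, so this is a matter of taste rather than substance.
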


\begin{proof}
  We fix some $q_{0}\ge p$ and set
  \begin{displaymath}
    q_{n+1}=\kappa\,q_{n}+p-1-\frac{1}{m}\qquad\text{for every $n\in \N_{0}$.}
  \end{displaymath}
  Then one can show by induction over $n\in \N_{0}$ that
  \begin{displaymath}
    q_{n}=\kappa^{n}(q_{0}-((p-1)-\tfrac{1}{m})) +
    ((p-1)-\tfrac{1}{m})\sum_{\nu=0}^{n}\kappa^{\nu},
  \end{displaymath}
  that is
   \begin{equation}
    \label{eq:27bis}
    q_{n}=\frac{\kappa^{n}}{\kappa-1}\left[(\kappa-1)q_{0}+p-1-\tfrac{1}{m}\right] 
    -\frac{p-1-\frac{1}{m}}{\kappa-1}.
  \end{equation}
  Using \eqref{eq:27bis}, we see that
  \begin{displaymath}
    q_{n+1}-q_{n}= \kappa^{n}\Big[(\kappa-1)q_{0}+p-1-\tfrac{1}{m}\Big]
  \end{displaymath}
  hence the sequence $(q_{n})_{n\ge 0}$ is strictly increasing if
  and only if $q_{0}$ satisfies condition~\eqref{cond:qnod}. Moreover,
  by~\eqref{eq:27bis}, since $\kappa >1$, and by~\eqref{cond:qnod}, we see
  that $q_{n}\to \infty$ as $n\to\infty$ and
  \begin{equation}
    \label{rapbis}
    \lim_{n\to\infty}\frac{q_{n}}{\kappa^{n}}
    = \frac{(\kappa-1)q_{0}+p-1-\frac{1}{m}}{\kappa-1}. 
  \end{equation}
  Now, let
  $u\in \overline{D(A)}^{\mbox{}_{L^{1}}}\cap
  L^{\kappa q_{0}m}(\Sigma,\mu)$. Then, by construction of $q_{1}$, we
  find that $u\in L^{(q_{1}-p+1)m+1}(\Sigma,\mu)$ and so by \eqref{eq:163}, $T_{t}u\in
  \overline{D(A)}^{\mbox{}_{L^{1}}}\cap L^{\kappa
    q_{1}m}(\Sigma,\mu)$ for all $t>0$. By construction of
  $(q_{n})_{n\ge 1}$ and since $\{T_{t}\}_{t\ge 0}$ is a semigroup satisfying~\eqref{eq:163},
  we see that $T_{t}u\in
  \overline{D(A)}^{\mbox{}_{L^{1}}}\cap L^{\kappa
    q_{n}m}(\Sigma,\mu)$ for all $n\ge 1$ and $t>0$. Thus, inserting the sequence
  $(q_{n})_{n\ge0}$ into~\eqref{eq:163} yields
  \begin{equation}
    \label{eq:28bis}
    \norm{T_{t}u-u_{0}}_{\kappa m q_{n+1}}\le
    C_{q_{n+1}}\,t^{-1/mq_{n+1}}\,e^{\omega (\frac{\kappa m q_{n}}{m q_{n+1}}+1) t}\, 
   \norm{u-u_{0}}_{\kappa m q_{n}}^{\kappa q_{n}/q_{n+1}}
  \end{equation}
  for every $t>0$ with
  \begin{displaymath}
    C_{q_{n+1}}= C^{\frac{1}{m q_{n+1}}}\,\left[\tfrac{
      (\frac{q_{n+1}}{p})^{p}}{(q_{n+1}-p+1) (\kappa
      m q_{n})}\right]^{\frac{1}{mq_{n+1}}}
  \end{displaymath}
  for every $n\in \N_{0}$. Let $(t_{\nu})_{\nu\ge0}$ be any
  sequence in $[0,1]$ such that
  $\sum_{\nu=0}^{\infty}t_{\nu}=1$, which will be specified below. Then
  by~\eqref{eq:28bis}, we obtain that
  \begin{equation}
    \label{eq:29bis}
    \begin{split}
      \norm{T_{t\sum_{\nu=0}^{n}t_{\nu}}u-u_{0}}_{\kappa q_{n+1}} &\le
      \prod_{\nu=1}^{n+1}C_{q_{\nu}}^{\kappa^{n+1-\nu}\frac{q_{\nu}}{q_{n+1}}}\,
      \prod_{\nu=0}^{n}t_{\nu}^{-\frac{\kappa^{n-\nu}}{mq_{n+1}}}\,
      t^{-\frac{1}{mq_{n+1}}\sum_{\nu=0}^{n}\kappa^{\nu}}\times\\ 
      &\qquad \times e^{\omega
     \sum_{\nu=0}^{n}t_{\nu}(\frac{\kappa q_{\nu}}{q_{\nu+1}}+1)
     \frac{\kappa^{n-\nu}q_{\nu+1}}{q_{n+1}} t}
      \,\norm{u-u_0}_{\kappa m q_{0}}^{\kappa^{n+1}q_{0}/q_{n+1}}.
    \end{split}
  \end{equation}
  Since $\kappa>1$, by~\eqref{rapbis}, $q_{n}\to\infty$ as $n\to
  \infty$ and since $(\kappa-1)q_{0}+p-2>0$ by
  assumption~\eqref{cond:qnod}, we see that
  \begin{equation}
    \label{eq:limit-q-sum-kappabis}
    \lim_{n\to\infty} \frac{1}{q_{n+1}}\sum_{\nu=0}^{n}\kappa^{\nu}
    = \frac{1}{(\kappa-1)q_{0}+p-1-\frac{1}{m}}.
  \end{equation}
  Thus
  \begin{equation}
    \label{lim:t-powersbis}
    \lim_{n\to\infty} t^{-\frac{1}{mq_{n+1}}\sum_{\nu=0}^{n}\kappa^{\nu}}
    = t^{-\frac{1}{m((\kappa-1)q_{0}+p-1-\frac{1}{m})}}\qquad\text{for every $t>0$.}
  \end{equation}
  If we choose, for instance, $t_{\nu}=2^{-\nu-1}$, then
  \begin{displaymath}
    \prod_{\nu=0}^{n}t_{\nu}^{-\frac{\kappa^{n-\nu}}{mq_{n+1}}}=
    2^{\frac{\kappa^{n}\sum_{\nu=0}^{n}(\nu+1)\kappa^{-\nu}}{mq_{n+1}}}.
  \end{displaymath}
  Using
  \begin{equation}
    \label{eq:165}
    \sum_{\nu=0}^{\infty}(\nu+1)\kappa^{-\nu}=\frac{\kappa^2}{(\kappa-1)^2}
  \end{equation}
  and \eqref{rapbis}, one obtains
  \begin{equation}
    \label{eq:limit-sum-nu-plus-one-kappabis}
    \lim_{n\to\infty}\frac{\kappa^{n}\sum_{\nu=0}^{n}(\nu+1)\kappa^{-\nu}}{mq_{n+1}}
    = \frac{\kappa}{m(\kappa-1)}\,\frac{1}{(\kappa-1)q_{0}+p-1-\frac{1}{m}}
  \end{equation}
  and so
  \begin{equation}
    \label{lim:t-nu-powersbis}
    \lim_{n\to\infty}\prod_{\nu=0}^{n}t_{\nu}^{-\frac{\kappa^{n-\nu}}{mq_{n+1}}}=
    2^{\frac{\kappa}{m(\kappa-1)[(\kappa-1)q_{0}+p-1-\frac{1}{m}]}}.
  \end{equation}
  Next, by~\eqref{rapbis}, we see that
  \begin{displaymath}
    \lim_{n\to\infty} \kappa^{n+1}\frac{q_{0}}{q_{n+1}}=
    \frac{(\kappa-1)q_{0}}{(\kappa-1)q_{0}+p-1-\frac{1}{m}},
  \end{displaymath}
  thus
  \begin{equation}
    \label{lim:norm-f-powersbis}
    \lim_{n\to\infty}\norm{u-u_{0}}_{\kappa m q_{0}}^{\kappa^{n+1}q_{0}/q_{n+1}}
    =
    \norm{u-u_{0}}^{(\kappa-1)q_{0}/((\kappa-1)q_{0}+p-1-\frac{1}{m})}_{\kappa
      m q_{0}}.
  \end{equation}
  
  Further, since for $a_{\nu}:=2^{-\nu}(\frac{\kappa
    q_{\nu}}{q_{\nu+1}}+1)\kappa^{-\nu}q_{\nu+1}$ for every
  $\nu\ge 0$, \eqref{eq:27bis} yields
  \begin{displaymath}
    \lim_{\nu\to\infty}\labs{\frac{a_{\nu+1}}{a_{\nu}}}=\frac{1}{2},
  \end{displaymath}
  the ratio test implies that the series
  \begin{displaymath}
    \frac{1}{2}\sum_{\nu=0}^{\infty}2^{-\nu}\left(\frac{\kappa
    q_{\nu}}{q_{\nu+1}}+1\right)\kappa^{-\nu}q_{\nu+1}
  \end{displaymath}
  converges; we denote the sum of the series by $S\ge 0$. Thus, by~\eqref{rapbis},
  \begin{displaymath}
    \lim_{n\to\infty} \tfrac{\kappa^{n+1}}{q_{n+1}}\frac{1}{2\kappa}
    \sum_{\nu=0}^{\infty}2^{-\nu}\left(\frac{\kappa
    q_{\nu}}{q_{\nu+1}}+1\right)\kappa^{-\nu}q_{\nu+1}= 
   \frac{\kappa-1}{(\kappa-1)q_{0}+p-1-\frac{1}{m}} \frac{1}{2\kappa}S=:\beta^{\ast} 
  \end{displaymath}
and so
\begin{equation}
  \label{eq:166}
  \lim_{n\to\infty}e^{\omega
     \sum_{\nu=0}^{n}t_{\nu}(\frac{\kappa q_{\nu}}{q_{\nu+1}}+1)
     \frac{\kappa^{n-\nu}q_{\nu+1}}{q_{n+1}} t}=e^{\omega \beta^{\ast} t}.
\end{equation}

  It remains to show that we can control the product
  \begin{equation}
    \label{eq:control-productbis}
    \prod_{\nu=1}^{n+1}C_{q_{\nu}}^{\frac{\kappa^{n+1-\nu}q_{\nu}}{q_{n+1}}}=
    \prod_{\nu=1}^{n+1}C^{\frac{\kappa^{n+1-\nu}}{m q_{n+1}}}\,\times
    \prod_{\nu=1}^{n+1} \left[\tfrac{
      (\frac{q_{n+1}}{p})}{(q_{n+1}-p+1) (\kappa
      m q_{n})}\right]^{\frac{\kappa^{n+1-\nu}}{mq_{n+1}}}
  \end{equation}
  as $n\to\infty$. First, note that
  \begin{displaymath}
    \prod_{\nu=1}^{n+1}C^{\frac{\kappa^{n+1-\nu}}{mq_{n+1}}}=e^{\frac{\log
        C}{mq_{n+1}}\sum_{\nu=1}^{n+1}\kappa^{n+1-\nu}}
    = e^{\frac{\log C}{mq_{n+1}}\sum_{\nu=0}^{n}\kappa^{\nu}}.
  \end{displaymath}
  Thus by~\eqref{eq:limit-q-sum-kappabis},
  \begin{equation}
    \label{lim:C-in-power-of-kappa-nubis}
    \lim_{n\to\infty}
       \prod_{\nu=1}^{n+1}C^{\frac{\kappa^{n+1-\nu}}{mq_{n+1}}}
       =C^{\frac{1}{m((\kappa-1)q_{0}+p-1-\frac{1}{m})}}.
  \end{equation}
  By~\eqref{eq:27bis}, we have that
  \begin{displaymath}
    q_{\nu}=\frac{\kappa^{\nu}[(\kappa-1)q_{0}
      +p-1-\frac{1}{m}]-((p-1)-\frac{1}{m})}{\kappa-1}
  \end{displaymath}
  for every $\nu\in \N_{0}$. From this, we conclude that
  \begin{equation}
    \label{eq:estimate-of-q-nubis}
    q_{\nu}\le M\,\kappa^{\nu}
  \end{equation}
  for every $\nu\in \N_{0}$, where
  \begin{displaymath}
   M:= 
    \begin{cases}
      \frac{[(\kappa-1)q_{0}+p-1-\frac{1}{m}]}{\kappa-1}
    & \text{if $(p-1)-\frac{1}{m}\ge 0$, and}\\
    q_{0}
    & \text{if $(p-1)-\frac{1}{m} <0$.}
    \end{cases}
  \end{displaymath}
  Applying~\eqref{eq:estimate-of-q-nubis} and using that $q_{\nu}\ge
  q_{0}\ge p>1$ and $\kappa m>1$, one sees that on the one hand
  \begin{displaymath}
    \prod_{\nu=1}^{n+1}\left[\tfrac{
        (q_{\nu}/p)^{p}}{(q_{\nu}-p+1)\kappa m q_{\nu}}
    \right]^{\frac{\kappa^{n+1-\nu}}{mq_{n+1}}}
    \le \prod_{\nu=1}^{n+1}
    \left(\tfrac{M}{p}\right)^{\frac{p\kappa^{n+1-\nu}}{mq_{n+1}}}\;
    \prod_{\nu=1}^{n+1}\kappa^{\frac{\nu\kappa^{n+1-\nu}}{mq_{n+1}}},
  \end{displaymath}
  and by~\eqref{eq:limit-q-sum-kappabis}, \eqref{eq:165},
  and~\eqref{rapbis}, one has
  \begin{displaymath}
    \lim_{n\to\infty}\prod_{\nu=1}^{n+1}
    \left(\tfrac{M}{p}\right)^{\frac{p\kappa^{n+1-\nu}}{mq_{n+1}}}=
    \left(\tfrac{M}{p}\right)^{\frac{p}{m((\kappa-1)q_{0}+p-1-\frac{1}{m})}}
  \end{displaymath}
  and
  \begin{displaymath}
    \lim_{n\to\infty}\prod_{\nu=1}^{n+1}\kappa^{\frac{\nu\kappa^{n+1-\nu}}{mq_{n+1}}}=
    \kappa^{\frac{\kappa}{(\kappa-1)m((\kappa-1)q_{0}+p-1-\frac{1}{m})}}.
  \end{displaymath}
  On the other hand, by using that $q_{\nu}-p+1\le q_{\nu}$,
  $q_{\nu}\ge p$ and~\eqref{eq:estimate-of-q-nubis}, one finds
  \begin{displaymath}
    \prod_{\nu=1}^{n+1}\left[\tfrac{
        (q_{\nu}/p)^{p}}{(q_{\nu}-p+1)\kappa m q_{\nu}}
    \right]^{\frac{\kappa^{n+1-\nu}}{mq_{n+1}}}\ge
    \prod_{\nu=1}^{n+1}
      (M^3 K )^{-\frac{\kappa^{n+1-\nu}}{mq_{n+1}}}\quad
    \prod_{\nu=1}^{n+1}\kappa^{-2\nu\frac{\kappa^{n+1-\nu}}{mq_{n+1}}}
  \end{displaymath}
  with
  \begin{displaymath}
    \lim_{n\to\infty}\prod_{\nu=1}^{n+1} (M^3 K )^{-\frac{\kappa^{n+1-\nu}}{mq_{n+1}}}
      = (M^3 K )^{-\frac{1}{m((\kappa-1)q_{0}+p-1-\frac{1}{m})}}
  \end{displaymath}
  and
  \begin{displaymath}
    \lim_{n\to\infty}\prod_{\nu=1}^{n+1}\kappa^{-2\nu\frac{\kappa^{n+1-\nu}}{mq_{n+1}}}=
    \kappa^{-\frac{2\kappa}{(\kappa-1)m((\kappa-1)q_{0}+p-1-\frac{1}{m})}}.
  \end{displaymath}
  Thus, by taking
  \begin{displaymath}
    M_{1}=\left(\tfrac{M}{p}\right)^{\frac{p}{m((\kappa-1)q_{0}+p-1-\frac{1}{m})}}\,
    \kappa^{\frac{\kappa}{(\kappa-1)m((\kappa-1)q_{0}+p-1-\frac{1}{m})}}
  \end{displaymath}
  and
  \begin{displaymath}
    M_{2} =  (M^3 K
    )^{-\frac{1}{m((\kappa-1)q_{0}+p-1-\frac{1}{m})}}\,
    \kappa^{-\frac{2\kappa}{(\kappa-1)m((\kappa-1)q_{0}+p-1-\frac{1}{m})}},
  \end{displaymath}
  we have that
  \begin{displaymath}
    \begin{split}
      0<M_{1}&\le \liminf_{n\to\infty}\prod_{\nu=1}^{n+1}\left[\tfrac{
          (q_{\nu}/p)^{p}}{(q_{\nu}-p+1)\kappa m
          q_{\nu}}\right]^{\frac{\kappa^{n+1-\nu}}{mq_{n+1}}} \\
      &\le
      \limsup_{n\to\infty}\prod_{\nu=1}^{n+1}\left[\tfrac{
          (q_{\nu}/p)^{p}}{(q_{\nu}-p+1)\kappa m
          q_{\nu}}\right]^{\frac{\kappa^{n+1-\nu}}{mq_{n+1}}}\le
      M_{2}<\infty
    \end{split}
  \end{displaymath}
   hence by ~\eqref{eq:control-productbis} and~\eqref{lim:C-in-power-of-kappa-nubis},
   \begin{displaymath}
     \begin{split}
       0<C^{\frac{1}{m((\kappa-1)q_{0}+p-1-\frac{1}{m})}}\,M_{1}
       &\le \liminf_{n\to\infty}\prod_{\nu=1}^{n+1}C_{q_{\nu}}^{\frac{\kappa^{n+1-\nu}q_{\nu}}{q_{n+1}}}\\
       &\le \limsup_{n\to\infty}\prod_{\nu=1}^{n+1}C_{q_{\nu}}^{\frac{\kappa^{n+1-\nu}q_{\nu}}{q_{n+1}}}
       \le  C^{\frac{1}{m((\kappa-1)q_{0}+p-1-\frac{1}{m})}}\,M_{2}<\infty.
     \end{split}
  \end{displaymath}
  Thus sending $n\to\infty$ in inequality~\eqref{eq:29bis}
  and using the limits~\eqref{lim:t-powersbis}, \eqref{lim:t-nu-powersbis},
  \eqref{lim:norm-f-powersbis} and~\eqref{eq:166} together with the fact that
  $q_{n}\nearrow\infty$ as $n\to\infty$ yields the desired
  inequality~\eqref{eq:167}. This completes the proof of the lemma.
\end{proof}

%
%

\section{Application I: Mild solutions in $L^{1}$ are weak energy
  solutions}
\label{Asec:proof-of-Linfty-regularity}

This section is concerned with illustrating a first application of the
$L^{1}$-$L^{\infty}$ regularisation estimate
\begin{equation}
  \label{eq:263}
  \norm{T_{t}u}_{\infty}\le
  \tilde{C}\,t^{-\alpha}\,e^{\omega\,\beta\,t}\,\norm{u}_{1}^{\gamma},
  \qquad\text{holding for all $t>0$, $u\in \overline{D(A_{1\cap\infty}\phi)}^{\mbox{}_{L^{1}}}$}
\end{equation}
for some exponents $\alpha$, $\beta$, $\gamma>0$, and a
constant $\tilde{C}>0$, satisfied by the semigroup $\{T_{t}\}\sim
-(A_{1\cap\infty}\phi+F)$ on $\overline{D(A_{1\cap\infty}\phi)}^{\mbox{}_{L^{1}}}$. 

Let $A$ be an $m$-completely accretive operator on $L^{2}(\Sigma,\mu)$
and which is the realisation in $L^{2}(\Sigma,\mu)$ of a monotone
operator $\Psi' : V\to V'$ of a convex, G\^ateaux-differentiable
real-valued functional $\Psi$ defined on a reflexive Banach space $V$
(see the precise hypotheses on $A$ and $V$ below). Further,
\begin{enumerate}[($\mathcal{H}$a)]
\item \label{hyp:phi} let $\phi$ be a strictly increasing continuous
  functions on $\R$ with Yosida operator $\beta_{\lambda}$ of
  $\beta=\phi^{-1}$ satisfying condition~\eqref{eq:252},
  ($\lambda>0$),

\item \label{hyp:F} let $F$ be the Nemytski operator in
  $L^{q}(\Sigma,\mu)$, ($1\le q\le\infty$), of a Carath\'eodory
  function $f : \Sigma\times \R\to \R$ satisfying Lipschitz
  condition~\eqref{eq:2} for some $\omega\ge 0$ and
  $f(x,0)=0$ for a.e. $x\in \Sigma$.
\end{enumerate}

Then, the aim of this section is to show that for every initial value
$u_{0}\in \overline{D(A_{1\cap\infty}\phi)}^{\mbox{}_{L^{1}}}$, the
mild solution $u$ of Cauchy problem
  \begin{equation}
    \label{eq:149}
    \begin{cases}
      \frac{d u}{dt}+\overline{A_{1\cap\infty}\phi}u+F(u)=0&\qquad\text{in
        $L^{1}(\Sigma,\mu)$ on $(0,+\infty)$,}\\
      \mbox{}\hspace{2,75cm}u(0)=u_{0} & 
    \end{cases}
  \end{equation}
  is, in fact, a \emph{weak energy solution} of problem
  \begin{equation}
   \label{eq:161}
   \begin{cases}
    \frac{d u}{dt}+\Psi'(\phi(u))+F(u)=0&\qquad\text{in $V'$ on
      $(0,\infty)$,}\\
     \mbox{}\hspace{2,9cm}u(0)=u_{0}&
   \end{cases}
 \end{equation}
in the sense of Definition~\ref{Def:weak-sols-in-V} below.\bigskip

In this section, we work in the following framework. We
assume that the classical Lebesgue space $L^{q}(\Sigma,\mu)$,
$1\le q\le \infty$, is defined on a finite measure space
$(\Sigma,\mu)$, and $V$ be a reflexive Banach space such that there
are $a\ge 0$ and a semi-norm $\abs{\cdot}_{V}$ on $V$ such that
\begin{displaymath}
  \abs{\cdot}_{V}+a \norm{\cdot}_{2}
\end{displaymath}
defines an equivalent norm on $V$. Further, suppose the continuous
embedding $i :V \to L^{2}(\Sigma,\mu)$ has a dense image. Then,
the adjoint operator $i^{\ast}$ of $i$ from the dual space $(L^{2}(\Sigma,\mu))'$ of
$L^{2}(\Sigma,\mu)$ to the dual space $V'$ of $V$ is also an injective
linear bounded operator. After identifying $L^{2}(\Sigma,\mu)$ with
$(L^{2}(\Sigma,\mu))'$, we see that
\begin{displaymath}
  V\hookrightarrow L^{2}(\Sigma,\mu)\hookrightarrow V',
\end{displaymath}
where each inclusion ''$\hookrightarrow$'' denotes a continuous
embedding with a dense image. Moreover,
the duality brackets $\langle \cdot,\cdot\rangle_{V',V}$
of $V\times V'$ and the inner product $\langle\cdot,\cdot\rangle$ on
$L^{2}(\Sigma,\mu)$ coincide whenever both make sense
(cf.~\cite[Remark~3, Chapter~5.2]{MR2759829}), that is
\begin{equation}
  \label{eq:214}
  \langle u,v\rangle_{V',V}=\langle u,v\rangle\qquad\text{for all
    $u\in L^{2}(\Sigma,\mu)$ and $v\in V$.}
\end{equation}
Thus, in order to keep our notation simple, we only employ
the brackets $\langle\cdot,\cdot\rangle$.\bigskip

Further, we assume that $\Psi : V\to \R$ is a convex, lower
semicontinuous, and G\^ateaux differentiable functional satisfying
  \begin{enumerate}[($\mathcal{H}$i):\;]
    
     \item\label{hyp:a} \emph{there are $1<p<\infty$, $\eta>0$, $C>0$ such that}
       \begin{align}\label{ineq:p-coercive}
      & \langle \Psi'(v),v\rangle\ge \eta
        \abs{v}^{p}_{V}\qquad\textit{and}\\
       \label{ineq:bounded-in-V}
      & \norm{\Psi'(v)}_{V'}\le C\,\abs{v}_{V}^{p-1}\qquad\textit{for every $v\in V$,}
   \end{align}

   \item\label{hyp:b} \emph{$\Psi' : V\to V'$ is hemicontinuous, that is, for every $u$,
     $v$, $w\in V$, the function $\lambda\mapsto \langle
     \Psi'(v+\lambda u),w\rangle$ is continuous on $\R$,}

   \item\label{hyp:d} \emph{there is a $\varepsilon>0$ such that
     $\Psi+\varepsilon \norm{\cdot}_{2}^{2}$ is weakly
       coercive in $V$, that is, for every $c\in \R$, the sub-level set $E_{c}:=\{v\in
V\,\vert\,\Psi(v)+\varepsilon \norm{v}_{2}^{2}\le c\}$ is relatively compact with respect to
the weak topology on $V$.}
 
   \item\label{hyp:c} \emph{the subgradient
       $A:=\partial_{L^{2}}\Psi^{L^{2}}$ in $L^{2}(\Sigma,\mu)$ of the
       extended functional $\Psi^{L^{2}}$ of $\Psi$ on
       $L^{2}(\Sigma,\mu)$ is an $m$-completely accretive operator in
       $L^{2}(\Sigma,\mu)$.}  

   \item\label{hyp:e} \emph{the functional $\Psi$ is related to a ''Poincar\'e type inequality''}
\begin{equation}
  \label{eq:259}
  \norm{u}_{p}\le C\,\Psi(u)\qquad\text{for all $u\in V$,}
\end{equation}
\emph{where the constant $C>0$ is independent of $u\in V$.}  
\end{enumerate}

\begin{remark}
  We note that hypothesis~($\mathcal{H}$\ref{hyp:d}) is needed only to ensure that
  that the extended functional $\Psi^{L^{2}}$ of $\Psi$ on
  $L^{2}(\Sigma,\mu)$ is lower semicontinuous on
  $L^{2}(\Sigma,\mu)$. For a more detailed discussion on this, we
  refer the interested reader to~\cite{arXiv:1412.4151}.
\end{remark}

 \begin{definition}
 \label{Def:weak-sols-in-V}
 Let $1<p<\infty$ with conjugate exponent $p'=\frac{p}{p-1}$, $T>0$,
 the operator $\Psi' : V\to V'$ satisfy the
 hypotheses~($\mathcal{H}$\ref{hyp:a}) and ($\mathcal{H}$\ref{hyp:b}),
 and $\phi$ be a continuous function on $\R$. Then for given
 $u_{0}\in \overline{D(A_{1\cap\infty}\phi)}^{\mbox{}_{L^{1}}}\cap L^{\infty}(\Sigma,\mu)$, we call a function
 $u\in C([0,T];L^{1}(\Sigma,\mu))$ a \emph{weak energy solution}
 of~\eqref{eq:161} if $u(0)=u_{0}$ in $L^{1}(\Sigma,\mu)$, and for
 every $0<\delta<T$,
  \begin{displaymath}
    \frac{d u}{dt}\in L^{p^{\mbox{}_{\prime}}}(\delta,T;V'),\qquad
    \phi(u) \in L^{p}(\delta,T;V),
  \end{displaymath}
   and 
  \begin{displaymath}
    \int_{\delta}^{T}\Big\{\Big\langle \frac{d u}{dt},v\Big\rangle_{V',V} +
    \langle \Psi'\phi(u(t)),v\rangle +\langle F(u(t)),v\rangle \Big\}\,\dt=0
  \end{displaymath}
  for all $v\in L^{p}(\delta,T;V)$.
\end{definition}

\begin{remark}
  To the best of our knowledge, the notion of \emph{weak energy
    solution} was introduced in~\cite[Section~5.3.2]{MR2286292} in
  connection with the porous media operator $A\phi=\Delta\phi$. The
  word \emph{energy} in this notion indicates that the solution $u$ of~\eqref{eq:161} 
 has the property
  \begin{displaymath}
     \int_{\delta}^{t}\Psi(\phi(u(t)))\,\dmu\,\ds\qquad\text{is finite
       for every $0<\delta<t$.}
  \end{displaymath}
\end{remark}

\begin{remark}
  We note that under the additional assumptions that $\phi$ is
  non-decreas\-ing on $\R$ and the weak energy solution $u$ of
  problem~\eqref{eq:161} is in
  $L^{\infty}(0,T;L^{\infty}(\Sigma,\mu))$,
  Lemma~\ref{lem:3} below yields for the primitive 
  \begin{equation}
    \label{eq:276}
    \Phi(s):=\int_{0}^{s}\phi(r)\dr,\qquad\text{($s\in \R$),}
  \end{equation}
  of $\phi$ that
  \begin{displaymath}
    \int_{\Sigma}\Phi(u)\,\dmu\in W^{1,1}(0,1)
  \end{displaymath}
   and \emph{integration by parts rule}~\eqref{eq:215} from
   Section~\ref{subsec:weak-sol-for-initial-in-Linfty} holds.
\end{remark}

\begin{definition}
 For a given continuous function $\phi$ on $\R$ satisfying $\phi(0)=0$
 and every $\varepsilon>0$, we call the function
 \begin{displaymath}
   \phi_{\varepsilon}(s):=\tfrac{1}{\varepsilon}\int_{\R}\phi(r)\,
   \rho(\tfrac{s-r}{\varepsilon})\,dr+\varepsilon
   s+ c_{\varepsilon}\qquad\text{for every $s\in \R$,}
 \end{displaymath}
 the \emph{regularisation of $\phi$}. Here, the constant
 $c_{\varepsilon}$ is chosen such that $\phi_{\varepsilon}(0)=0$ and
 $\rho\in C^{\infty}(\R)$, $\rho\ge0$, $\int_{\R}\rho\dr=1$,
 $\rho\equiv0$ on $\R\setminus [-1,1]$.
\end{definition}

The following theorem is the main result of this section, which
provides sufficient conditions that mild solutions are weak
  energy solutions.

\begin{theorem}
  \label{thm:weak-solutions-Li-Linfty}
  Let $\Psi : V\to \R$ be a convex, lower semicontinuous, and
  G\^ateaux differentiable functional satisfying the hypotheses
  ($\mathcal{H}$\ref{hyp:a})-($\mathcal{H}$\ref{hyp:e}), $\phi$ be a
  strictly increasing continuous function on $\R$ satisfying~($\mathcal{H}$\ref{hyp:phi})
  and $F$ be an operator on $L^{q}(\Sigma,\mu)$ satisfying~($\mathcal{H}$\ref{hyp:F}).
  Further, suppose that there are exponents $\alpha$, $\beta$,
  $\gamma>0$ and a constant $\tilde{C}>0$ such that the semigroup
  $\{T_{t}\}\sim - (\overline{A_{1\cap\infty}\phi}+F)$ on
  $\overline{D(A_{1\cap\infty}\phi)}^{\mbox{}_{L^{1}}}$ satisfies
  $L^{1}$-$L^{\infty}$ regularisation estimate~\eqref{eq:259}. Then,
  the following statements hold:
  \begin{enumerate}
    \item For every initial value
    $u_{0}\in \overline{D(A_{1\cap\infty}\phi)}^{\mbox{}_{L^{1}}}$, the
    mild solution $u(t)=T_{t}u_{0}$, $(t\ge 0)$, of Cauchy
    problem~\eqref{eq:149} in $L^{1}(\Sigma,\mu)$ is a weak energy
    solution of Cauchy problem~\eqref{eq:161} satisfying
    \begin{equation}
      \label{eq:260}
      \begin{split}
        &\tfrac{1}{2} \int_{0}^{t} s^{\alpha(p^{\mbox{}_{\prime}}-1)+1} \Psi(\phi(u(s)))\,\ds +
        t^{\alpha(p^{\mbox{}_{\prime}}-1)+1} \int_{\Sigma} \Phi(u(t))\,\dmu\\
        &\qquad\le
        \tfrac{(\alpha(p^{\mbox{}_{\prime}}-1)+1)^{p}\,C^{p-1}}{p^{\mbox{}_{\prime}}(4^{-1}\,p)^{p-1}}\,
        \int_{0}^{t}
        e^{\omega\,(\beta\,(p^{\mbox{}_{\prime}}-1)+1)\,s}\,\ds
        \,\norm{u_{0}}_{1}^{\gamma\,(p^{\mbox{}_{\prime}}-1)+1}\\
        &\hspace{3cm}
        +\tfrac{\omega^{p}\,C^{p-1}}{p^{\mbox{}_{\prime}}(4^{-1}\,p)^{p-1}}\,
        \int_{0}^{t}s\,e^{\omega\,(\beta\,(p^{\mbox{}_{\prime}}-1)+1)\,s}\,\ds
        \,\norm{u_{0}}_{1}^{\gamma\,(p^{\mbox{}_{\prime}}-1)+1}.
      \end{split}
    \end{equation}
    for every $t>0$.
    
    \item If, in addition, $\phi'\in L^{\infty}(\R)$, $\phi^{-1}$
      is locally bounded and $0<\alpha\le 1$ in estimate~\eqref{eq:259}, then for every initial value
    $u_{0}\in \overline{D(A_{1\cap\infty}\phi)}^{\mbox{}_{L^{1}}}$, the
    mild solution $u(t)=T_{t}u_{0}$, $(t\ge 0)$, of Cauchy
    problem~\eqref{eq:149} in $L^{1}(\Sigma,\mu)$ is a strong solution
    of~\eqref{eq:149} in $L^{1}(\Sigma,\mu)$ with the following properties:
    \begin{enumerate}
      \item One has
        \begin{displaymath}
          u\in W^{1,2}_{loc}((0,\infty);L^{2}(\Sigma,\mu))
        \end{displaymath}
    
     \item  the function
    $\phi(u)\in W^{1,2}_{loc}((0,T];L^{2}(\Sigma,\mu))$ with weak
    derivative
    \begin{equation}
      \label{eq:150}
      \frac{d}{dt}\phi(u(t))=\phi'(u(t))\frac{d
        u}{dt}(t)\qquad\text{in $L^{2}(\Sigma,\mu)$ for a.e. $t>0$,}
    \end{equation}

   \item for a.e. $t>0$, one has $\phi(u(t))\in D(A)$ and
    \begin{equation}
      \label{eq:152}
     \tfrac{d u}{dt}(t)+A\phi(u(t))+F(u(t))\ni 0\qquad\text{in $L^{2}(\Sigma,\mu)$,}
    \end{equation}

   \item the real-valued function $t\mapsto \Psi(\phi(u(t)))$ is
     locally absolutely continuous on $(0,\infty)$ satisfying for a.e. $t>0$,
     \begin{equation}
       \label{eq:267}
         \tfrac{d}{dt}\Psi(\phi(u(t)))=- \norm{\tfrac{d
             u}{dt}(t)\sqrt{\phi'(u(t))}}_{2}^{2}- \langle
         F(u(t)),\tfrac{d u}{dt}(t)\,\phi'(u(t)) \rangle,
     \end{equation} 

     \item for every $t>0$,
       \begin{equation}
         \label{eq:268}
         \begin{split}
          &\tfrac{1}{2}\int_{0}^{t}s^{\alpha(p^{\mbox{}_{\prime}}-1)+2} \int_{\Sigma}
          \phi'(u(s))\,\labs{\frac{d
              u}{ds}(s)}^{2}\,\dmu\,\ds +
          t^{\alpha(p^{\mbox{}_{\prime}}-1)+2}\,\Psi(\phi(u(t)))\\
          &\qquad\le \tfrac{(\alpha(p^{\mbox{}_{\prime}}-1)+2)\,2}{p^{\mbox{}_{\prime}}}\,\left[
        \tfrac{(k+1)^{p}\,C^{p-1}}{(4^{-1}\,p)^{p-1}}\,
        \int_{0}^{t}e^{\omega\,(\beta\,(p^{\mbox{}_{\prime}}-1)+1)\,s}\,\ds
        \,\norm{u_{0}}_{1}^{\gamma\,(p^{\mbox{}_{\prime}}-1)+1}\right.\\
    &\hspace{3cm}
        +\tfrac{\omega^{p}\,C^{p-1}}{(4^{-1}\,p)^{p-1}}\,
        \int_{0}^{t}s\,e^{\omega\,(\beta\,(p^{\mbox{}_{\prime}}-1)+1)\,s}\,\ds
        \,\norm{u_{0}}_{1}^{\gamma\,(p^{\mbox{}_{\prime}}-1)+1}\\
  & \hspace{3,5cm}\left.
    + \tfrac{(\alpha(p^{\mbox{}_{\prime}}-1)+1)^{p}\,C^{p}}{p^{\mbox{}_{\prime}}(2^{-1}\,p)^{p-1}}\,
      \int_{0}^{t}e^{\omega\,\beta (p^{\mbox{}_{\prime}}-1)\,s}\,\ds
    \norm{u_{0}}_{1}^{\gamma (p^{\mbox{}_{\prime}}-1)+1}\,\right]\\
   &\hspace{4,5cm} +
     \tfrac{\omega^{2}\,\tilde{C} }{2}\norm{\phi'}_{\infty}
     \int_{0}^{t}s^{1-\alpha}\,e^{\omega(\beta+1)\,s }\,\ds\,\norm{u_{0}}_{1}^{\gamma+1}.
          \end{split}
       \end{equation}
    \end{enumerate}    
  \end{enumerate}
 \end{theorem}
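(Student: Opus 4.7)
The plan is to prove the two statements by first approximating $\phi$ by the bi-Lipschitz regularisations $\phi_{\varepsilon}$, solving the Cauchy problem in $L^{2}$ for the approximate equation (where standard subgradient theory applies), establishing weighted energy estimates that are uniform in $\varepsilon$, and finally passing to the limit $\varepsilon\to 0$. The singular weights $s^{\alpha(p^{\mbox{}_{\prime}}-1)+1}$ and $s^{\alpha(p^{\mbox{}_{\prime}}-1)+2}$ are forced on us because $u_{0}$ is only $L^{1}$: they are tuned precisely so that the time-decay of the $L^{1}$--$L^{\infty}$ regularisation estimate~\eqref{eq:263} keeps every right-hand side integrable near $s=0$.

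\textbf{Step 1 (Regularisation and approximate semigroups).} For $\varepsilon>0$, $\phi_{\varepsilon}'\ge\varepsilon$, so $\phi_{\varepsilon}$ is bi-Lipschitz. Hypothesis~($\mathcal{H}$\ref{hyp:c}) makes $A=\partial_{L^{2}}\Psi^{L^{2}}$ an $m$-completely accretive operator on $L^{2}$; Propositions~\ref{propo:Lipschitz-complete-accretive}, \ref{propo:Range-cond-in-Rd} and \ref{propo:Lipschitz-complete} promote $A\phi_{\varepsilon}+F$ to a quasi-$m$-accretive operator on $L^{1}$ with complete resolvent. Because $\phi_{\varepsilon}\to\phi$ uniformly on compacts and the resolvent $J_{\lambda}^{\varepsilon}$ of $A\phi_{\varepsilon}+F$ converges to the resolvent of $\overline{A_{1\cap\infty}\phi}+F$ on $L^{1}$, the Trotter--Kato theorem gives $T_{t}^{\varepsilon}u_{0}\to T_{t}u_{0}$ in $C([0,T];L^{1})$, while complete accretivity and Proposition~\ref{propo:complete-expo-grow} show that the $L^{1}$--$L^{\infty}$ estimate~\eqref{eq:263} is inherited with a constant independent of $\varepsilon$.

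\textbf{Step 2 (Strong solutions and weighted energy for the approximate problem).} Because $A$ is a subgradient in $L^{2}$ and $\phi_{\varepsilon}$ is bi-Lipschitz, a standard Brezis-type argument shows that $u_{\varepsilon}(t):=T_{t}^{\varepsilon}u_{0}$ is a strong solution in $L^{2}$ on $(0,\infty)$ with $\phi_{\varepsilon}(u_{\varepsilon}(t))\in D(A)\cap V$ for a.e. $t$, and
\[
\tfrac{du_{\varepsilon}}{dt}+A\phi_{\varepsilon}(u_{\varepsilon})+F(u_{\varepsilon})\ni 0\quad\text{in }L^{2}.
\]
Testing this in $\langle\cdot,\cdot\rangle_{V',V}$ against $\phi_{\varepsilon}(u_{\varepsilon})$, multiplying by the weight $s^{\alpha(p^{\mbox{}_{\prime}}-1)+1}$ and integrating in time, the chain rule (Lemma~\ref{lem:3}) for the primitive $\Phi_{\varepsilon}$ rewrites the parabolic term as $\tfrac{d}{ds}\int_{\Sigma}\Phi_{\varepsilon}(u_{\varepsilon})\,\dmu$, the coercivity~\eqref{ineq:p-coercive} controls $\langle\Psi'\phi_{\varepsilon}(u_{\varepsilon}),\phi_{\varepsilon}(u_{\varepsilon})\rangle$ from below by $p\Psi(\phi_{\varepsilon}(u_{\varepsilon}))$, and the lower-order term is absorbed by Young's inequality with exponents $p,p^{\mbox{}_{\prime}}$ combined with~\eqref{eq:263}, $\mu(\Sigma)<\infty$, and the Poincar\'e-type inequality~($\mathcal{H}$\ref{hyp:e}). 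Integration by parts in $s$ produces exactly the structure of the right-hand side of~\eqref{eq:260}, uniformly in $\varepsilon$.

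\textbf{Step 3 (Passage to the limit).} The previous bound provides uniform control on $\phi_{\varepsilon}(u_{\varepsilon})$ in $L^{p}_{\loc}((0,\infty);V)$ and hence, by~\eqref{ineq:bounded-in-V}, on $\Psi'\phi_{\varepsilon}(u_{\varepsilon})$ in $L^{p^{\mbox{}_{\prime}}}_{\loc}((0,\infty);V')$; combined with $u_{\varepsilon}\to u$ in $C([0,T];L^{1})$ this yields weak convergence $\phi_{\varepsilon}(u_{\varepsilon})\rightharpoonup \phi(u)$ in $L^{p}(\delta,T;V)$ and weak convergence of $\Psi'\phi_{\varepsilon}(u_{\varepsilon})$ in $L^{p^{\mbox{}_{\prime}}}(\delta,T;V')$ to some $\chi$. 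Passing to the limit in the weak form gives $\tfrac{du}{dt}+\chi+F(u)=0$ in $V'$, and a Minty-type argument using the monotonicity of $\Psi'$, together with hemicontinuity~($\mathcal{H}$\ref{hyp:b}), identifies $\chi=\Psi'\phi(u)$. Weak lower semicontinuity of $\Psi$ and of the primitive $\Phi$ (which is convex on $\R$) then transfer estimate~\eqref{eq:260} from $u_{\varepsilon}$ to $u$, completing part~(1).

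\textbf{Step 4 (Strong-solution part).} When $\phi'\in L^{\infty}$ and $\phi^{-1}$ is locally bounded, $\phi$ is globally Lipschitz, so the chain rule~\eqref{eq:150} together with the fact that, thanks to~\eqref{eq:263}, $u$ stays in $L^{\infty}_{\loc}((0,\infty);L^{\infty})$ gives $\phi(u)\in W^{1,2}_{\loc}((0,\infty);L^{2})$ as soon as $u\in W^{1,2}_{\loc}((0,\infty);L^{2})$. The latter follows by applying Brezis's regularisation theorem to the Hilbert-space Cauchy problem for the subgradient $A$ after the change of variable, yielding~\eqref{eq:152} in $L^{2}$ for a.e. $t$. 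Taking the inner product of~\eqref{eq:152} with $\phi'(u)\tfrac{du}{dt}\in L^{2}$ and using Brezis's chain rule $\langle A\phi(u),\tfrac{d}{dt}\phi(u)\rangle=\tfrac{d}{dt}\Psi(\phi(u))$ along an absolutely continuous trajectory in $V$ produces~\eqref{eq:267}. Multiplying~\eqref{eq:267} by the weight $s^{\alpha(p^{\mbox{}_{\prime}}-1)+2}$, integrating from $0$ to $t$, integrating by parts in $s$, and inserting the already established bound~\eqref{eq:260} plus the regularisation estimate~\eqref{eq:263} for the $F$-contribution give~\eqref{eq:268}.

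\textbf{Main obstacle.} The delicate step is the identification $\chi=\Psi'\phi(u)$ in Step 3: because only weak convergence of $\phi_{\varepsilon}(u_{\varepsilon})$ in $L^{p}(\delta,T;V)$ is available and $\Psi'$ is not assumed to be strongly continuous, one must combine monotonicity of $\Psi'$ with a lim-sup inequality
\[
\limsup_{\varepsilon\to 0}\int_{\delta}^{T}\langle \Psi'\phi_{\varepsilon}(u_{\varepsilon}),\phi_{\varepsilon}(u_{\varepsilon})\rangle\,\ds\le\int_{\delta}^{T}\langle\chi,\phi(u)\rangle\,\ds,
\]
which in turn is extracted from the energy inequality in Step 2 by using the almost-everywhere convergence of $\tfrac{d}{ds}\int_{\Sigma}\Phi_{\varepsilon}(u_{\varepsilon})\,\dmu$ and the strong $L^{1}$-convergence of $u_{\varepsilon}$. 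A secondary, more technical obstacle is to justify the chain rule for $\Psi(\phi(u))$ when $\phi(u)$ is only in $W^{1,2}_{\loc}((0,\infty);L^{2})\cap L^{p}_{\loc}((0,\infty);V)$; this is standard in the Hilbert-space subgradient theory but must be applied with some care because $A$ itself, and not $A\phi$, is the subgradient.
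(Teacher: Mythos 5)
Your proposal correctly identifies the $\phi_{\varepsilon}$-regularisation, the chain rule/Lemma~\ref{lem:3}, the weak compactness in $L^{p}(\delta,T;V)$, and the Minty/monotonicity argument for identifying $\chi=\Psi'(\phi(u))$; these are indeed the tools used in the paper, and your "main obstacle" paragraph correctly names the genuinely delicate step. However, there is a real gap in Step~1 that makes the rest of the argument unavailable as written.

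You propose to run the $\phi_{\varepsilon}$-approximation directly with the $L^{1}$ initial datum $u_{0}$, claiming that "complete accretivity and Proposition~\ref{propo:complete-expo-grow} show that the $L^{1}$--$L^{\infty}$ estimate~\eqref{eq:263} is inherited by $T^{\varepsilon}$ with a constant independent of $\varepsilon$." This does not follow. Proposition~\ref{propo:complete-expo-grow} only gives non-expansiveness (or exponential boundedness) of the resolvent and semigroup in each $L^{q}$; it gives no time-decay and no gain of integrability, so it cannot produce anything like the $t^{-\alpha}\norm{\cdot}_{1}^{\gamma}$ bound. More importantly, the $L^{1}$--$L^{\infty}$ estimate~\eqref{eq:263} is an \emph{assumption} of Theorem~\ref{thm:weak-solutions-Li-Linfty} about the limit semigroup $\{T_{t}\}$; it is not derived from ($\mathcal{H}$\ref{hyp:a})--($\mathcal{H}$\ref{hyp:e}) and there is no mechanism in the abstract framework to transfer it to the approximating semigroups $\{T_{t}^{\varepsilon}\}$, let alone with uniform constants. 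Without it, you cannot assert that $T_{t}^{\varepsilon}u_{0}\in L^{2}(\Sigma,\mu)$ for $t>0$, so the Brezis strong-solution machinery in Step~2 is not applicable and the "standard Brezis-type argument" in your Step~2 is unavailable for $u_{0}\in L^{1}$ only. You also cannot justify the uniformity in $\varepsilon$ of the weighted energy estimate near $s=0$, since there the only control comes from the very estimate you have not transferred.

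The paper avoids this by a two-stage approximation in which the two limits are taken in the opposite order from what you propose. First (Theorem~\ref{thm:reg-mild-sol-are-weak}) the $\phi_{\varepsilon}$-approximation is used only for $u_{0}\in L^{1}\cap L^{\infty}(\Sigma,\mu)$, where the uniform $L^{\infty}$-bound in time comes from the completeness of the resolvent (and needs no $L^{1}$--$L^{\infty}$ regularisation at all). This yields the weak-energy property and the weighted estimate~\eqref{eq:140}. Second, for a general $u_{0}\in\overline{D(A_{1\cap\infty}\phi)}^{\mbox{}_{L^{1}}}$, one sets $u_{0,n}:=T_{t_{n}}u_{0}$, which is in $L^{\infty}$ precisely because the \emph{limit} semigroup enjoys~\eqref{eq:263}, applies the $L^{\infty}$ result to $u_{n}(t)=T_{t}u_{0,n}$, and exploits the semigroup identity $u(t+t_{n})=u_{n}(t)$ to shift the time origin. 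The singular weights are then fed in by combining the $L^{\infty}$-estimate \eqref{eq:140} at level $k=\alpha(p'-1)$ with~\eqref{eq:263} before letting $t_{n}\to 0$. To salvage your plan you would need to replace the unsupported uniform transfer of~\eqref{eq:263} to $T^{\varepsilon}$ by this time-shift argument for the limit semigroup, which is exactly the missing second stage.

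Two smaller points: (i) your Step~4 takes the $L^{2}$-inner product of~\eqref{eq:152} with $\phi'(u)\tfrac{du}{dt}$ and uses Brezis's chain rule, which requires knowing $\phi(u)\in W^{1,2}_{\loc}((0,\infty);L^{2})$ in advance; the paper obtains this from the $\phi_{\varepsilon}$-approximation in the $L^{\infty}$-data case before passing to the limit, and you should make clear that the chain rule is applied to $u_{n}$ (or $u_{\varepsilon}$), not to the limit directly. (ii) In Step~2, your Young-with-$p,p'$ absorption is correct, but it uses the Poincar\'e-type inequality~($\mathcal{H}$\ref{hyp:e}) to estimate $\norm{\phi(u)}_{p}$ by $\Psi(\phi(u))$ and the $L^{1}$--$L^{\infty}$ estimate to control $\norm{u}_{p'}$; this is what forces $k=\alpha(p'-1)$ in~\eqref{eq:260}, and it is worth stating that this choice makes $s^{k}\norm{u(s)}_{p'}^{p'}$ integrable near $0$ by H\"older between $L^{1}$ and $L^{\infty}$.
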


 The proof of Theorem~\ref{thm:reg-mild-sol-are-weak} is divided into
 three steps. The first step is to consider the \emph{smooth case},
 that is, under the assumption that $\phi$ and its inverse $\phi^{-1}$
 are locally Lipschitz continuous (see
 Theorem~\ref{thm:Linfty-implies-mild-are-strong-in-L1}). Then, in the
 second step, we consider a general \emph{continuous strictly increasing}
 function $\phi$ but we take initial values
 $u_{0}\in \overline{D(A_{1\cap\infty}\phi)}^{\mbox{}_{L^{1}}}\cap
 L^{\infty}(\Sigma,\mu)$ (see Theorem~\ref{thm:reg-mild-sol-are-weak}).
 In the last and third step, one uses the estimates established in
 step two to conclude by using the continuous dependence of the
 semigroup $\{T_{t}\}_{t\ge 0}$ and the its $L^{1}$-$L^{\infty}$
 regularisation effect to conclude the statement of the main
 theorem (Theorem~\ref{thm:weak-solutions-Li-Linfty}).

%
%
\subsection{The smooth case}
\label{subsec:smooth-case}

We begin by considering the smooth case. Here, the statement of our
following theorem confirms positively a conjecture stated in
\cite[Remarque~2.13]{Benthesis} and generalises the results
in~\cite[Proposition~2.18]{Benthesis} and partially some results in
\cite[Section~3]{MR0454360} to the general subgradient setting. Our
next theorem is the main results in this subsection.

\begin{theorem}
  \label{thm:Linfty-implies-mild-are-strong-in-L1}
  Let $\Psi : V\to \R$ be a convex, lower semicontinuous, and
  G\^ateaux differentiable functional satisfying
  ($\mathcal{H}$\ref{hyp:d}) and ($\mathcal{H}$\ref{hyp:c}). Further,
  let $\phi$ be a strictly increasing function on $\R$ such that
  $\phi$ and $\phi^{-1}$ are locally Lipschitz continuous, and the
  Yosida operator $\beta_{\lambda}$ of $\beta=\phi^{-1}$ satisfies
  condition~\eqref{eq:252}, ($\lambda>0$), and $F$ be an operator on
  $L^{q}(\Sigma,\mu)$ satisfying~($\mathcal{H}$\ref{hyp:F}). We set
  $\Phi(r)=\int_{0}^{r}\phi(s)\,\ds$ for every $r\in \R$.  Then, for
  every $u_{0}\in \overline{D(A_{1\cap\infty}\phi)}^{\mbox{}_{L^{1}}} \cap
  L^{\infty}(\Sigma,\mu)$,
  the mild solution $u(t)=T_{t}u_{0}$, $(t\ge 0)$, of
  problem~\eqref{eq:149} in $L^{1}(\Sigma,\mu)$ is a strong solution
  of
  \begin{equation}
    \label{eq:206}
    \tfrac{d u}{dt}+A\phi(u)+F(u)\ni 0\quad\text{in
      $L^{2}(\Sigma,\mu)$ on
      $(0,T)$,}\qquad u(0)=u_{0}
  \end{equation}
  with the regularity
  \begin{equation}
    \label{eq:124}
    u \in C([0,T];L^{q}(\Sigma,\mu))\cap
    W^{1,2}_{loc}((0,T];L^{2}(\Sigma,\mu))\cap L^{\infty}([0,T];L^{\infty}(\Sigma,\mu))
  \end{equation}
  for every $1\le q<\infty$ and satisfying
  \begin{enumerate}
  \item the function
    $\phi(u)\in W^{1,2}_{loc}((0,T];L^{2}(\Sigma,\mu))$ with weak
    derivative~\eqref{eq:150},

   \item for a.e. $t>0$, one has $\phi(u(t))\in D(A)$
     and~\eqref{eq:152} holds,
 
   \item the real-valued function $t\mapsto \Psi(\phi(u(t)))$ is
     locally absolutely continuous on $(0,\infty)$
     satisfying~\eqref{eq:267} for a.e. $t>0$,
    
    \item 
      for every $k\ge 0$ and $t>0$, one has
      \begin{equation}
        \label{eq:140}
        \begin{split}
          &\int_{0}^{t}s^{k+1}\Psi(\phi(u(s)))\,\ds +
          t^{k+1} \int_{\Sigma}\Phi(u(t))\,\dmu\\
          &\qquad\le
          (k+1)\int_{0}^{t}s^{k}\int_{\Sigma}\phi(u(t))\,u(t)\,\dmu\ds\\
          &\hspace{4cm} -\int_{0}^{t}s^{k+1}\int_{\Sigma} F(u(s))\phi(u(s))\,\dmu\,\ds.
        \end{split}
      \end{equation}
      and
      \begin{equation}
        \label{eq:139}
        \begin{split}
          &\tfrac{1}{2}\int_{0}^{t}s^{k+2} \int_{\Sigma}
          \phi'(u(s))\,\labs{\frac{d
              u}{ds}(s)}^{2}\,\dmu\,\ds +
          t^{k+2}\,\Psi(\phi(u(t)))\\
          &\qquad \le (k+2)(k+1) \int_{0}^{t}s^{k}\int_{\Sigma}\phi(u(t))\,u(t)\,\dmu\,\ds\\
          &\hspace{3cm} -(k+2) \int_{0}^{t}s^{k+1}\int_{\Sigma}
          F(u(s))\phi(u(s))\,\dmu\,\ds\\
          &\hspace{4cm} +
          \tfrac{1}{2}\int_{0}^{t}s\int_{\Sigma}\phi'(u(s))\,
          \labs{F(u(s))}^{2}\,\dmu\,\ds.
        \end{split}
      \end{equation}
    \end{enumerate}

\end{theorem}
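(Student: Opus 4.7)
The plan is to reduce matters to Brezis's classical regularization theorem for semigroups generated by a subgradient in $L^{2}$, exploiting that the local Lipschitz hypotheses on $\phi$ and $\phi^{-1}$ together with the $L^{\infty}$-boundedness of $u_{0}$ make the change of variable $v=\phi(u)$ a well-behaved bi-Lipschitz substitution on a fixed range. Indeed, since $u_{0}\in L^{\infty}$ and the operator $\overline{A_{1\cap\infty}\phi}+F+\omega I$ is $m$-accretive in $L^{1}$ with complete resolvent by Proposition~\ref{propo:Lipschitz-complete}, Proposition~\ref{propo:quasi-accretive-operators-in-L1-complete-resolvent} gives the a priori bound $\norm{u(t)}_{\infty}\le e^{\omega t}\norm{u_{0}}_{\infty}$, so on every compact interval $[0,T]$ one has $\abs{u(t,\cdot)}\le M:=e^{\omega T}\norm{u_{0}}_{\infty}$ and both $\phi$ and $\phi^{-1}$ are globally Lipschitz on $[-M,M]$ with uniform constants.

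First I would approximate the initial datum: pick $u_{0,n}\in D(A_{1\cap\infty}\phi)$ with $\norm{u_{0,n}}_{\infty}\le\norm{u_{0}}_{\infty}$ and $u_{0,n}\to u_{0}$ in $L^{1}$, and set $u_{n}(t)=T_{t}u_{0,n}$, $v_{n}(t)=\phi(u_{n}(t))$. Formally $v_{n}$ obeys $(\phi^{-1})'(v_{n})\,(v_{n})_{t}+Av_{n}+F(\phi^{-1}(v_{n}))\ni 0$ in $L^{2}$, which on the fixed $L^{\infty}$-ball is a perturbed subgradient flow for $A=\partial_{L^{2}}\Psi$ with a Lipschitz lower-order term and a weight $(\phi^{-1})'(v_{n})$ bounded above and below. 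Brezis's regularization theorem applied to the $L^{2}$-subgradient $A$ (guaranteed by hypothesis $(\mathcal{H}\ref{hyp:c})$) then yields $v_{n}\in W^{1,2}_{loc}((0,T];L^{2})$, $v_{n}(t)\in D(A)$ for a.e.\ $t$, and the strong inclusion in $L^{2}$; inverting through $\phi^{-1}$ transfers the Sobolev regularity and the inclusion back to $u_{n}$, giving assertions (1), (2) and the regularity class~\eqref{eq:124} for each approximant.

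Next, the energy identities come from testing the strong equation $u_{t}+A\phi(u)+F(u)=0$ against $\phi(u)$ in two ways. For~\eqref{eq:140}, combining (1) with $\Phi'=\phi$ gives $\tfrac{d}{dt}\int\Phi(u)\,\dmu=\langle u_{t},\phi(u)\rangle$; using that $\langle A\phi(u),\phi(u)\rangle\ge 2\Psi(\phi(u))$ (from convexity of $\Psi$ with $\Psi(0)=0$), multiplying by $t^{k+1}$ and integrating by parts in $t$ produces~\eqref{eq:140}. For~\eqref{eq:139}, the crucial ingredient is Brezis's chain rule for convex functionals along $W^{1,2}$-curves with values in $D(A)$, which combined with (1) gives the pointwise-in-$t$ identity~\eqref{eq:267}; multiplying~\eqref{eq:267} by $t^{k+2}$, integrating, and bounding the $F$-term via Young's inequality yields~\eqref{eq:139}. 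The passage $u_{n}\to u$ then uses the $L^{1}$-continuous dependence of $\{T_{t}\}$, the uniform $L^{\infty}$ bound, and weak lower semicontinuity of every energy functional appearing on the left-hand sides.

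The main technical obstacle is justifying the chain rule in part (1), namely $\tfrac{d}{dt}\phi(u)=\phi'(u)\,u_{t}$ in $L^{2}$ for a.e.\ $t$, when $\phi$ is only locally Lipschitz so $\phi'$ exists merely almost everywhere on $\R$. The resolution is to approximate $\phi$ by smooth strictly increasing $\phi_{\varepsilon}$ with $\phi_{\varepsilon}'$ uniformly bounded on $[-M,M]$ and $\phi_{\varepsilon}'\to\phi'$ almost everywhere, prove every assertion at the regularized level where the chain rule is classical, and pass $\varepsilon\to 0$ using dominated convergence controlled by the uniform $L^{\infty}$-bound on $u$. The same approximation simultaneously legitimizes Brezis's chain rule in the derivation of~\eqref{eq:267}, at which point all the energy identities follow uniformly in $\varepsilon$ and pass cleanly to the limit.
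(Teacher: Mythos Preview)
Your proposal has a genuine gap at the step where you invoke Brezis's regularization theorem. After the substitution $v_{n}=\phi(u_{n})$ the formal equation reads $(\phi^{-1})'(v_{n})\,\partial_{t}v_{n}+Av_{n}+F(\phi^{-1}(v_{n}))=0$, which carries a nontrivial weight in front of the time derivative; Brezis's theorem concerns $\partial_{t}v+\partial\Psi(v)\ni g$ in a Hilbert space and does not apply to such weighted flows. More fundamentally, the step is circular: at this stage $u_{n}=T_{t}u_{0,n}$ is only known to be a \emph{mild} solution in $L^{1}$, so you are not entitled to write any differential equation for $v_{n}$ at all --- that is precisely what must be proved. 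Approximating the initial datum does not help here, since mild solutions with data in $D(A_{1\cap\infty}\phi)$ are still, a priori, only mild.

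The paper's proof regularizes the \emph{operator} rather than the datum or $\phi$: it replaces $A$ by its Yosida approximation $A_{\lambda}=\lambda^{-1}(I-J_{\lambda})$, so that $A_{\lambda}\phi$ is globally Lipschitz on $L^{2}$ (because $J_{\lambda}$ is a contraction and $\phi$ is Lipschitz on $[-M,M]$). The approximate Cauchy problem $\partial_{t}u_{\lambda}+A_{\lambda}\phi(u_{\lambda})+F(u_{\lambda})=0$ is then a genuine ODE in $L^{2}$ with a $C^{1}$ solution, for which the chain rule~\eqref{eq:150} and the energy computations leading to~\eqref{eq:140}--\eqref{eq:139} are rigorous (using the Moreau regularization $\Psi_{\lambda}$ in place of $\Psi$). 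One then proves resolvent convergence $J_{\rho}^{A_{\lambda}\phi+F}\to J_{\rho}^{A_{1\cap\infty}\phi+F}$ in $L^{1}$, deduces $u_{\lambda}\to u$ in $C([0,T];L^{q})$, extracts a uniform $W^{1,2}_{loc}$ bound from~\eqref{eq:139}, and passes to the limit using $A_{\lambda}\phi(u_{\lambda})\in A(J_{\lambda}\phi(u_{\lambda}))$ together with the $m$-accretivity of $A$ in $L^{2}$ to identify the limiting inclusion. The smoothing of $\phi$ by $\phi_{\varepsilon}$ that you propose is what the paper uses in the \emph{next} theorem, to reduce a general continuous $\phi$ to the locally bi-Lipschitz case treated here.
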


Before outlining the proof of
Theorem~\ref{thm:Linfty-implies-mild-are-strong-in-L1}, we recall the
following \emph{convergence result} (cf.~\cite[Proposition 4.4 \&
Theorem~4.14]{MR2582280}), which we state in a version suitable for
the framework of this paper.

\begin{theorem}
  \label{thm:convergence-result}
  For $\omega\in \R$ and $1\le q\le \infty$, let $(A_{n})_{n\ge 1}$ be
  a sequence of operators $A_{n}$ on $L^{q}(\Sigma,\mu)$ such that $A_{n}+\omega I$ is
  accretive in $L^{q}(\Sigma,\mu)$. For given $u_{0,n}\in
  \overline{D(A_{n})}^{\mbox{}_{L^{q}}}$, let $u_{n}$ be the unique
  mild solution of initial value problem
  \begin{displaymath}
    \tfrac{d u_{n}}{dt}+A_{n}u_{n}\ni 0\quad\text{ on $(0,T)$ and
    }\quad u_{n}(0)=u_{0,n}.
  \end{displaymath}
  Further, let $A$ be an operator on $L^{q}(\Sigma,\mu)$ such that $A+\omega I$ is
  accretive in $L^{q}(\Sigma,\mu)$ and for given $u\in
  \overline{D(A)}^{\mbox{}_{L^{q}}}$, let $u$ be the unique mild
  solution of
  \begin{displaymath}
    \tfrac{d u}{dt}+Au\ni 0\quad\text{ on $(0,T)$ and
    }\quad u(0)=u_{0}.
  \end{displaymath}
  Suppose that $\lim_{n\to\infty}u_{0,n}=u_{0}$ in $L^{q}(\Sigma,\mu)$
  and for every $\lambda>0$
  satisfying $\lambda \omega<1$, the resolvent $J_{\lambda}^{A}$ of
  $A$ and the resolvent $J_{\lambda}^{A_{n}}$ of $A_{n}$ satisfy
  \begin{equation}
    \label{eq:116}
   \lim_{n\to\infty} J_{\lambda}^{A_{n}}x\to J_{\lambda}^{A}x\qquad\text{in
      $L^{q}(\Sigma,\mu)$ for every $x\in L^{q}(\Sigma,\mu)$,}
  \end{equation}
  then $u_{n}\to u$ in $C([0,T];L^{q}(\Sigma,\mu))$.
\end{theorem}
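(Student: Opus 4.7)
The plan is to compare both mild solutions to their implicit Euler approximations and pass to the limit using hypothesis~\eqref{eq:116}. Fix $T>0$ and choose $N$ large enough that $\tfrac{T\omega}{N}<1$, and set $u^{N}(t):=(J_{t/N}^{A})^{N}u_{0}$ and $u_{n}^{N}(t):=(J_{t/N}^{A_{n}})^{N}u_{0,n}$ for $t\in[0,T]$. By the Crandall--Liggett exponential formula~\eqref{eq:36}, $u^{N}\to u$ uniformly on $[0,T]$ as $N\to\infty$, and for each fixed $n$, $u_{n}^{N}\to u_{n}$ uniformly on $[0,T]$. The conclusion will follow from the triangle inequality
\[
\norm{u_{n}(t)-u(t)}_{q}\le \norm{u_{n}(t)-u_{n}^{N}(t)}_{q}+\norm{u_{n}^{N}(t)-u^{N}(t)}_{q}+\norm{u^{N}(t)-u(t)}_{q},
\]
provided I can make the outer two terms small uniformly in $n$ by choosing $N$ large, and then the middle term small for fixed $N$ by choosing $n$ large.

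The routine half is the middle term. I would prove by induction on $k\in\{0,1,\dots,N\}$ that for every fixed $N$ the iterates satisfy $(J_{t/N}^{A_{n}})^{k}u_{0,n}\to (J_{t/N}^{A})^{k}u_{0}$ in $L^{q}(\Sigma,\mu)$ as $n\to\infty$. The base case $k=0$ is the assumed convergence of initial data. For the induction step, insert the intermediate term $J_{t/N}^{A_{n}}\bigl[(J_{t/N}^{A})^{k}u_{0}\bigr]$ and use the quasi-contraction
\[
\norm{J_{t/N}^{A_{n}}x-J_{t/N}^{A_{n}}y}_{q}\le (1-\tfrac{t\omega}{N})^{-1}\norm{x-y}_{q},
\]
whose constant is uniform in $n$ since the accretivity parameter $\omega$ is common to all $A_{n}$; then invoke \eqref{eq:116} with $x=(J_{t/N}^{A})^{k}u_{0}$ to close the step. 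Iterating and summing the $N$ geometric factors gives convergence of $u_{n}^{N}(t)\to u^{N}(t)$ in $L^{q}(\Sigma,\mu)$ for every $t\in[0,T]$.

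The real obstacle is the uniformity of the Crandall--Liggett approximation in $n$: the standard error bound $\norm{u_{n}^{N}(t)-u_{n}(t)}_{q}\lesssim \sqrt{t/N}\,\norm{A_{n}^{\circ}u_{0,n}}_{q}$ requires control on $A_{n}^{\circ}u_{0,n}$, which is not available when $u_{0,n}\in\overline{D(A_{n})}^{L^{q}}$ is only assumed to converge in norm. I would circumvent this in the standard way (cf.~the proof of~\cite[Theorem~4.14]{MR2582280}): given $\varepsilon>0$, approximate $u_{0}$ by $\tilde u_{0}\in D(A)$ with $\norm{\tilde u_{0}-u_{0}}_{q}<\varepsilon$ and transfer the Crandall--Liggett estimate from $\tilde u_{0}$ to $u_{0}$ and $u_{0,n}$ using the quasi-contraction of the Euler schemes $(J_{t/N}^{A_{n}})^{N}$ and of the semigroups themselves, whose Lipschitz constants $(1-\tfrac{t\omega}{N})^{-N}\to e^{\omega T}$ and $e^{\omega T}$ are uniform in $n$. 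Once the outer two terms are simultaneously bounded by $\varepsilon/3$ uniformly in $t\in[0,T]$ and in $n$ sufficiently large, the middle-term convergence from the previous paragraph completes the argument.
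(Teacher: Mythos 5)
The paper does not give a proof of Theorem~\ref{thm:convergence-result} at all: it is introduced with the phrase ``we recall the following convergence result (cf.~\cite[Proposition 4.4 \& Theorem~4.14]{MR2582280})'' and stated without proof, so there is no in-text argument to compare your proposal against. What you have written is the standard Trotter--Kato/Br\'ezis--Pazy style argument that underlies the cited result in Barbu's book.

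The two easy thirds of your argument are sound. The outer term $\norm{u^{N}(t)-u(t)}_{q}$ is the Crandall--Liggett theorem for $A$ alone, and your induction for the middle term
$\norm{u_{n}^{N}(t)-u^{N}(t)}_{q}$ at fixed $N$ is correct: inserting $J_{t/N}^{A_{n}}\bigl[(J_{t/N}^{A})^{k}u_{0}\bigr]$, the first piece is controlled by the uniform quasi-contraction $(1-t\omega/N)^{-1}$ and the induction hypothesis, the second by hypothesis~\eqref{eq:116} applied at $x=(J_{t/N}^{A})^{k}u_{0}$. (You should, however, also justify that this pointwise-in-$t$ convergence upgrades to uniform convergence on $[0,T]$; since the resolvent parameter $t/N$ itself varies with $t$, a short equicontinuity argument, or replacing $J_{t/N}$ by a fixed-step scheme $J_{T/N}$ with $\lfloor tN/T\rfloor$ iterations, is needed.)

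The genuine gap is in your treatment of the term $\norm{u_{n}^{N}(t)-u_{n}(t)}_{q}$. You correctly identify that the Crandall--Liggett rate involves $\norm{A_{n}^{\circ}u_{0,n}}_{q}$, which is not controlled. Your proposed repair is to approximate $u_{0}$ by $\tilde u_{0}\in D(A)$ and ``transfer the Crandall--Liggett estimate'' by quasi-contraction, but this does not close the argument: $\tilde u_{0}\in D(A)$ is not in $D(A_{n})$, so there is no Crandall--Liggett rate for the $A_{n}$-scheme starting from $\tilde u_{0}$ to transfer. The missing step is to push $\tilde u_{0}$ into $D(A_{n})$, e.g.\ by setting $\tilde u_{0,n}:=J_{\mu}^{A_{n}}\tilde u_{0}$ for small $\mu>0$. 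One then has $\norm{A_{n}^{\circ}\tilde u_{0,n}}_{q}\le (1-\mu\omega)^{-1}\mu^{-1}\norm{\tilde u_{0}-J_{\mu}^{A_{n}}\tilde u_{0}}_{q}$, and by hypothesis~\eqref{eq:116} this right-hand side converges to $(1-\mu\omega)^{-1}\mu^{-1}\norm{\tilde u_{0}-J_{\mu}^{A}\tilde u_{0}}_{q}$, hence is bounded uniformly in $n$; at the same time $\norm{\tilde u_{0,n}-u_{0,n}}_{q}$ can be made uniformly small for $\mu$ small and $n$ large because $\tilde u_{0}$ is close to $u_{0}$ and $J_{\mu}^{A}u_{0}\to u_{0}$ as $\mu\to 0+$. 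This Yosida-approximation step is the crux of Barbu's Theorem~4.14 and is precisely what makes the error uniform in $n$; your write-up glosses over it, and as stated the ``transfer'' does not produce an admissible initial datum for the $A_{n}$-scheme.
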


Our proof of
Theorem~\ref{thm:Linfty-implies-mild-are-strong-in-L1}
improves an idea from~\cite{Benthesis}.
\allowdisplaybreaks

\begin{proof}[Proof of Theorem~\ref{thm:Linfty-implies-mild-are-strong-in-L1}]
  By Proposition~\ref{propo:Lipschitz-complete},
  $\overline{A_{1\cap\infty}\phi}+F$ is $m$-accretive in $L^{1}$ with
  complete resolvent and for every $\lambda>0$ such that
  $\omega\lambda<1$, $A_{1\cap\infty}\phi+F$ satisfies range
  condition~\eqref{eq:201}.  In particular, $-(\overline{A\phi}+F)$
  generates a strongly continuous semigroup $\{T_{t}\}_{t\ge0}$ on
  $\overline{D(A_{1\cap\infty}\phi)}^{\mbox{}_{L^{1}}}$. Therefore, for
  $u_{0}\in \overline{D(A_{1\cap\infty}\phi)}^{\mbox{}_{L^{1}}} \cap
  L^{\infty}(\Sigma,\mu)$, the function $u(t):=T_{t}u_{0}$ for every $t\ge0$ is the unique
  mild solution of problem~\eqref{eq:149} in $L^{1}(\Sigma,\mu)$ and by
  Proposition~\ref{propo:quasi-accretive-operators-in-L1-complete-resolvent},
  one has
  \begin{equation}
    \label{eq:132}
    \norm{u(t)}_{q}\le e^{\omega
      t}\norm{u_{0}}_{q}\qquad\text{for every $t\ge 0$ and $1\le q\le \infty$,}
  \end{equation}
  where $\omega\ge 0$ is the Lipschitz constant of $F$. Fix $T>0$
  and set $M:=e^{\omega T}\norm{u_{0}}_{\infty}$. Then, the values of
  $\phi(s)\in \R$ for $\abs{s}\ge M$ do not intervene if one considers
  the solutions merely on the time interval $[0,T]$. Thus, there is no
  loss of generality if we assume, $\phi$ and $\phi^{-1}$ are globally
  Lipschitz continuous on $\R$.

  Now, for every $\lambda>0$, let $\Psi_{\lambda} :
  L^{2}(\Sigma,\mu)\to \R_{+}$ denote the
  \emph{Moreau regularisation} of $\Psi$ on $L^{2}(\Sigma,\mu)$
  (cf.~\cite[Proposition~2.11]{MR0348562}). Then, $\Psi_{\lambda}$ is
  continuously Fr\'echet-differentiable on $L^{2}(\Sigma,\mu)$ and the
  Fr\'echet-derivative $\Psi'_{\lambda}$ of $\Psi$ coincides with the
  Yosida operator $A_{\lambda}:=\tfrac{1}{\lambda}(I-J_{\lambda})$ of $A$ in
  $L^{2}(\Sigma,\mu)$. Since the resolvent operator $J_{\lambda}$ of
  $A$ is contractive on $L^{2}(\Sigma,\mu)$ and since $\phi$ is
  globally Lipschitz continuous, the composition operator
  $J_{\lambda}\phi$ is globally Lipschitz continuous on
  $L^{2}(\Sigma,\mu)$. Hence by~\cite[Corollaire~1.1]{MR0348562}, for
  every $\lambda>0$, there is a unique strong solution
  \begin{displaymath}
    u_{\lambda}\in C^{1}([0,T];L^{2}(\Sigma,\mu))
  \end{displaymath}
  of the Cauchy problem
    \begin{equation}
      \label{eq:130}
      \begin{cases}
        \tfrac{du_{\lambda}}{dt}+A_{\lambda}\phi(u_{\lambda})
        +F(u_{\lambda})=0 &\text{in $L^{2}(\Sigma,\mu)$ on $(0,T)$,}\\
        \mbox{}\hspace{3cm}u_{\lambda}(0)=u_{0}.&
      \end{cases}
    \end{equation}
    Since $A$ is accretive in $L^{1}(\Sigma,\mu)$, one easily verifies
    that for every $\lambda>0$, the Yosida operator $A_{\lambda}$ is
    also accretive in $L^{1}(\Sigma,\mu)$. Moreover, for every
    $p\in P_{0}$, there is a $\theta(x)\in (0,1)$ such that
    \begin{align*}
      \int_{\Sigma}p(u)A_{\lambda}u\,\dmu 
      &=  \int_{\Sigma}p(J_{\lambda}u)A_{\lambda}u\,\dmu
        + \lambda \int_{\Sigma}p'(\theta u+(1-\theta)
        J_{\lambda}u)\abs{A_{\lambda}u}^{2}\,\dmu\\
      & \ge \int_{\Sigma}p(J_{\lambda}u)A_{\lambda}u\,\dmu.  
    \end{align*}
    Since $A_{\lambda} u\in A( J_{\lambda}u)$ and since $A$ has a
    complete resolvent,
    \begin{displaymath}
      \int_{\Sigma}p(J_{\lambda}u)A_{\lambda}u\,\dmu\ge 0
    \end{displaymath}
    yielding the Yosida operator $A_{\lambda}$ has a complete
    resolvent. Thus by Proposition~\ref{propo:Lipschitz-complete}, the
    operator $A_{\lambda}\phi +F$ is quasi accretive in
    $L^{1}(\Sigma,\mu)$ with complete resolvent. Thus,
    \begin{equation}
      \label{eq:133}
      \norm{u_{\lambda}(t)}_{q}\le e^{\omega
        t}\,\norm{u_{0}}_{q}\qquad\text{ for every $t\in [0,T]$,
        $\lambda>0$, $1\le q\le \infty$.}
    \end{equation} 
    and in particular, $\norm{u_{\lambda}(t)}_{\infty}\le M$.

    Next, let $\rho>0$ such that $\rho\,\omega <1$ and
    $x\in L^{1}\cap L^{\infty}(\Sigma,\mu)$. Then, by range
    condition~\eqref{eq:201}, there is
    $u_{\rho}\in D(A_{1\cap\infty}\phi)$ such that
    $u_{\rho}=J_{\rho}^{A_{1\cap\infty}\phi+F}x$ or, equivalently,
    \begin{equation}
      \label{eq:254}
       u_{\rho}=x-\rho(A\phi(u_{\rho})+F(u_{\rho})).
   \end{equation}
    Since $A_{\lambda}\phi +F+\omega I$ is Lipschitz
    continuous and accretive in $L^{1}(\Sigma,\mu)$, $A_{\lambda}\phi
    +F+\omega I$ is $m$-accretive in $L^{1}(\Sigma,\mu)$. Thus for every
    $\lambda>0$, there is $u_{\rho,\lambda}\in D(A_{\lambda}\phi)$
    such that $u_{\rho,\lambda}=J_{\rho}^{A_{\lambda}\phi+F}x$ or, equivalently,
    \begin{equation}
      \label{eq:253}
     u_{\rho,\lambda}=x-\rho (A_{\lambda}\phi(u_{\rho,\lambda})+ F(u_{\rho,\lambda})),
   \end{equation}
    and
    \begin{equation}
      \label{eq:131}
      \norm{u_{\rho,\lambda}}_{q}\le (1-\rho\,\omega)^{-1}\norm{x}_{q}
   \end{equation}
   for every $1\le q\le \infty$. Now, by the two
   equations~\eqref{eq:254} and \eqref{eq:253}, since
   \begin{displaymath}
     A_{\lambda}(\phi(u_{\rho,\lambda}))\in
     A(J_{\lambda}\phi(u_{\rho,\lambda})),
   \end{displaymath}
   since the operators $A$ and $F+\omega I$ are accretive in
   $L^{2}(\Sigma,\mu)$, $\phi$ is non-decreasing, and $F$ Lipschitz
   continuous with constant $\omega\ge 0$, we see
    \begin{align*}
      &(1-\rho \omega)\,\int_{\Sigma}
        (u_{\rho,\lambda}-u_{\rho})(\phi(u_{\rho,\lambda})-\phi(u_{\rho}))\,\dmu\\
      &\phantom{0} = - \rho\;\int_{\Sigma} \Big[
        (A_{\lambda}\phi(u_{\rho,\lambda})+F(u_{\rho,\lambda})+\omega
        u_{\rho,\lambda})-(A\phi(u_{\rho})+F(u_{\rho})+\omega u_{\rho})\Big]\times\\
      &\hspace{9cm}  \times\, (\phi(u_{\rho,\lambda})-\phi(u_{\rho}))\,\dmu\\
      &\phantom{0} = - \rho\; \int_{\Sigma} \Big[A\phi(u_{\rho})
        -A_{\lambda}\phi(u_{\rho,\lambda})\Big]\times\\
      &\hspace{4.5cm}  
        \times (\phi(u_{\rho})-J_{\lambda}^{A}\phi(u_{\rho,\lambda})
        +J_{\lambda}^{A}\phi(u_{\rho,\lambda})-\phi(u_{\rho,\lambda}))\,\dmu\\
      &\hspace{1cm}-\rho\;\int_{\Sigma}
        \Big[(F(u_{\rho,\lambda})+\omega
        u_{\rho,\lambda})-(F(u_{\rho})+\omega u_{\rho})\Big]\,
        (\phi(u_{\rho,\lambda})-\phi(u_{\rho}))\,\dmu\\
      & \phantom{0}\le  \rho\,\lambda\, \int_{\Sigma} \left(A\phi(u_{\rho})
        -A_{\lambda}\phi(u_{\rho,\lambda})\right)\,A_{\lambda}\phi(u_{\rho,\lambda})\,\dmu\\
      & \phantom{0}= -  \lambda\; \int_{\Sigma} (u_{\rho}-u_{\rho,\lambda})
        \,A_{\lambda}\phi(u_{\rho,\lambda})\,\dmu\\
      &\hspace{3cm}-\rho\,\lambda\, \int_{\Sigma}
        (F(u_{\rho})-F(u_{\rho,\lambda}))\,A_{\lambda}\phi(u_{\rho,\lambda})\,\dmu\\
      & \phantom{0}\le \lambda \,(1+\rho\,\omega)\;\int_{\Sigma} \abs{u_{\rho}-u_{\rho,\lambda}}\,
        \abs{A_{\lambda}\phi(u_{\rho,\lambda})}\,\dmu\\
      & \phantom{0}\le \lambda
        \frac{(1+\rho\,\omega)\,2}{\rho\,(1-\rho\omega)}\,
        \norm{x}_{\infty}\int_{\Sigma}
        \abs{u_{\rho,\lambda}-x+\rho\,F(u_{\rho,\lambda})}\,\dmu\\
      & \phantom{0}\le \lambda
        \frac{(1+\rho\,\omega)\,2}{\rho\,(1-\rho\omega)}\,
        \norm{x}_{\infty}\left(\int_{\Sigma}
        \abs{u_{\rho,\lambda}}\,\dmu + \int_{\Sigma}\abs{x}\,\dmu + 
        \rho\,\omega\int_{\Sigma} \abs{u_{\rho,\lambda}}\,\dmu\right)
    \end{align*}
    and so by~\eqref{eq:131},
    \begin{equation}
      \label{eq:210}
      \begin{split}
        &\int_{\Sigma}
        (u_{\rho,\lambda}-u_{\rho})(\phi(u_{\rho,\lambda})-\phi(u_{\rho}))\,\dmu\\
        & \qquad\le \lambda
        \frac{(1+\rho\,\omega)\,2}{\rho\,(1-\rho\omega)^2}\,
        \norm{x}_{\infty}\left( \frac{1}{(1-\rho\omega)}+1+ 
        \frac{\rho\omega}{(1-\rho\omega)}\right)\int_{\Sigma}
        \abs{x}\,\dmu
      \end{split}
    \end{equation}
    From this we can conclude that
    $\lim_{\lambda\to0}u_{\rho,\lambda}=u_{\rho}$ a.e. on $\Sigma$
    since $\phi$ is continuous, strictly increasing and $\phi(s)=0$ if
    and only if $s=0$. Since $(\Sigma,\mu)$ is finite and by~\eqref{eq:131}, Lebesgue's
    dominated convergence theorem yields
   \begin{displaymath}
    \lim_{\lambda\to0+} J_{\rho}^{A_{\lambda}\phi+F}x=\lim_{\lambda\to0+}u_{\rho,\lambda}=
    u_{\rho}=J_{\rho}^{A_{1\cap\phi}\phi+F}x\qquad\text{ in
      $L^{1}(\Sigma,\mu)$}
   \end{displaymath}
   for every $x\in L^{1}\cap L^{\infty}(\Sigma,\mu)$ and $\rho>0$. Since
   $L^{1}\cap L^{\infty}(\Sigma,\mu)$ is dense in $L^{1}(\Sigma,\mu)$
   and $J_{\rho}^{\overline{A_{1\cap\infty}\phi}+F}$ and
   $J_{\rho}^{A_{\lambda}\phi+F}$ are Lipschitz continuous, a
   standard density argument shows that the hypothesis~\eqref{eq:116}
   in Theorem~\ref{thm:convergence-result} for $q=1$ holds. Therefore,
   \begin{displaymath}
      \lim_{\lambda\to0+}u_{\lambda}=u\qquad\text{in $C([0,T];L^{1}(\Sigma,\mu))$.}
   \end{displaymath}
   and by~\eqref{eq:132} and~\eqref{eq:133}, for the strong solution
   $u_{\lambda}$ of~\eqref{eq:130} and the mild solution $u$
   of~\eqref{eq:9}, one has
  \begin{equation}
    \label{eq:136}
    \lim_{\lambda\to0+}u_{\lambda}= u\qquad\text{in
      $C([0,T];L^{q}(\Sigma,\mu))$}\qquad\text{for all $1\le q<\infty$.}
  \end{equation}

  Next, we show that
  \begin{equation}
    \label{eq:204}
    u\in W^{1,2}_{loc}((0,T];L^{2}(\Sigma,\mu))
  \end{equation}
  By the Lipschitz continuity of the Nemytski operator $F$ on
  $L^{2}(\Sigma,\mu)$ and since $u_{\lambda}$ belongs to
  $C([0,T];L^2(\Sigma,\mu))$, we have that
  $F(u_{\lambda})\in C([0,T];L^{2}(\Sigma,\mu))$. Further, since
  $\phi$ is Lipschitz continuous, $\phi(0)=0$, and
  $u_{\lambda}\in C^{1}([0,T];L^2(\Sigma,\mu))$, we can conclude that the function
  $\phi(u_{\lambda})\in W^{1,2}(0,T;L^{2}(\Sigma,\mu))$ with weak
  derivative
  \begin{equation}
    \label{eq:134}
    \frac{d}{dt}\phi(u_{\lambda}(t))=\phi'(u_{\lambda}(t))\frac{d
    u_{\lambda}}{dt}(t)\qquad\text{ for a.e. $t\in (0,T)$.}
  \end{equation}
  By equation~\eqref{eq:130} and by
  $A_{\lambda}=\Psi'_{\lambda}$, 
  \cite[Lemme~3.3]{MR0348562} implies that the function
  $\Psi_{\lambda}(\phi(u_{\lambda})) : [0,T]\to \R$ is absolutely
  continuous and
  \begin{displaymath}
    \frac{d}{dt} \Psi_{\lambda}(\phi(u_{\lambda}(t)))=\langle
    A_{\lambda}\phi(u_{\lambda}(t))\,,\frac{d}{dt}\phi(u_{\lambda}(t))\rangle= \langle
    A_{\lambda}\phi(u_{\lambda}(t))\,,\phi'(u_{\lambda}(t))\frac{d
    u_{\lambda}}{dt}(t)\rangle
  \end{displaymath}
  for a.e. $t\in (0,T)$. Let $k\ge 0$ and multiply equation~\eqref{eq:130} by
  $s^{k+2}\,\frac{d}{ds}\phi(u_{\lambda}(s))$ with respect to the
  $L^{2}$-inner product and integrating over $(0,t)$, for some
  $0<t<T$. Then
  \begin{align*}
           &\int_{0}^{t}s^{k+2} \int_{\Sigma} \phi'(u_{\lambda}(s))\,\labs{\frac{d
           u_{\lambda}}{ds}(s)}^{2}\,\dmu\,\ds +
       t^{k+2}\,\Psi_{\lambda}(\phi(u_{\lambda}(t)))\\
       &\quad = (k+2)\int_{0}^{t}s^{k+1}\psi_{\lambda}(\phi(u_{\lambda}(s)))\,\ds -
       \int_{0}^{t}s^{k+2}\int_{\Sigma} F(u_{\lambda}(s)) \phi'(u_{\lambda}(s))\frac{d
         u_{\lambda}}{ds}(s)\,\dmu\,\ds.
  \end{align*}
  Since $\phi'(u_{\lambda})\ge 0$, Young's inequality gives
  \begin{equation}
    \label{eq:135}
     \begin{split}
       &\tfrac{1}{2}\int_{0}^{t}s^{k+2} \int_{\Sigma} \phi'(u_{\lambda}(s))\,\labs{\frac{d
           u_{\lambda}}{ds}(s)}^{2}\,\dmu\,\ds +
       t^{k+2}\,\Psi_{\lambda}(\phi(u_{\lambda}(t)))\\
       &\quad \le
       (k+2)\int_{0}^{t}s^{k+1}\psi_{\lambda}(\phi(u_{\lambda}(s)))\,\ds\\
       &\hspace{2cm}+\tfrac{1}{2}
       \int_{0}^{t}s^{k+2}\int_{\Sigma} \abs{F(u_{\lambda}(s))}^{2} \phi'(u_{\lambda}(s))\,\dmu\,\ds.
    \end{split}
  \end{equation}
  for every $0<t\le T$. On the other hand, the Yosida operator $A_{\lambda}$
  is the subgradient $\partial_{\!
    L^{2}}\Psi_{\lambda}$ of $\Psi_{\lambda}$, and $A_{\lambda}(0)=0$
  and $\Psi_{\lambda}(0)=0$. Thus
  \begin{displaymath}
    \langle
    A_{\lambda}\phi(u_{\lambda}(t))\,,0-\phi(u_{\lambda}(t))\rangle\le 
    \Psi_{\lambda}(0)-\Psi_{\lambda}(\phi(u_{\lambda}(t)))
  \end{displaymath}
  for every $0<t<T$. Multiplying this inequality by $(-1)$ and taking
  advantage of equation~\eqref{eq:130} yields
  \begin{equation}
    \label{eq:203}
    \Psi_{\lambda}(\phi(u_{\lambda}(t)))\le - \langle \tfrac{d
      u_{\lambda}}{d t}(t)\, ,\phi(u_{\lambda}(t))\rangle -
    \langle F(u_{\lambda}(t))\, ,\phi(u_{\lambda}(t))\rangle
  \end{equation}
  for every $0<t<T$. Since $\phi$ is non-decreasing on $\R$ and
  $\Phi(0)=0$, one has $\Phi(r)\le \phi(r)r$ for every $r\in
  \R$. 
  Thus, by integrating inequality~\eqref{eq:203} over $(0,t)$ for some
  $t\in (0,T]$, we obtain
  \begin{align*}
    &\int_{0}^{t}\Psi_{\lambda}(\phi(u_{\lambda}(s)))\,\ds +
      \int_{\Sigma}\Phi(u_{\lambda}(t))\,\dmu\\
    &\qquad\le \int_{\Sigma}\Phi(u_{\lambda}(t))\,\dmu
      -\int_{0}^{t}\int_{\Sigma} F(u_{\lambda}(s))\phi(u_{\lambda}(s))\,\dmu\,\ds.
  \end{align*}
  Similarly, multiplying inequality~\eqref{eq:203} by $s^{k+1}$ and subsequently
  integrating over $(0,t)$ for some $t\in (0,T]$ gives
  \begin{equation}
  \label{eq:255}
  \begin{split}
    &\int_{0}^{t}s^{k+1}\Psi_{\lambda}(\phi(u_{\lambda}(s)))\,\ds +
      t^{k+1} \int_{\Sigma}\Phi(u_{\lambda}(t))\,\dmu\\
    &\qquad\le
      (k+1)\int_{0}^{t}s^{k}\int_{\Sigma}\Phi(u_{\lambda}(t))\,\dmu\ds\\
    &\hspace{4cm}  -\int_{0}^{t}s^{k+1}\int_{\Sigma} F(u_{\lambda}(s))
      \phi(u_{\lambda}(s))\,\dmu\,\ds\\
    &\qquad\le
      (k+1)\int_{0}^{t}s^{k}\int_{\Sigma}\phi(u_{\lambda}(t))\,u_{\lambda}(t)\dmu\ds\\
     &\hspace{4cm} -\int_{0}^{t}s^{k+1}\int_{\Sigma}
     F(u_{\lambda}(s))\phi(u_{\lambda}(s))\,\dmu\,\ds
  \end{split}
\end{equation}
for every $0<t\le T$. Since $\phi(0)=0$ and $\phi$ is non-decreasing
on $\R$, one has that the function $\Phi(r)\ge 0$
for every $r\in \R$ and so,
\begin{displaymath}
  \int_{\Sigma}\Phi(u_{\lambda}(t))\,\dmu\ge 0.
\end{displaymath}
Thus, applying estimate~\eqref{eq:255} to the right hand-side of~\eqref{eq:135}
yields
\begin{equation}
  \label{eq:205}
  \begin{split}
    &\tfrac{1}{2}\int_{0}^{t}s^{k+2} \int_{\Sigma}
    \phi'(u_{\lambda}(s))\,\labs{\frac{d
        u_{\lambda}}{ds}(s)}^{2}\,\dmu\,\ds +
    t^{k+2}\,\Psi_{\lambda}(\phi(u_{\lambda}(t)))\\
    &\qquad \le (k+2)(k+1)
    \int_{0}^{t}s^{k}\int_{\Sigma}\phi(u_{\lambda}(t))
    \,u_{\lambda}(t)\,\dmu\,\ds\\
    &\hspace{3cm}  -(k+2) \int_{0}^{t}s^{k+1}\int_{\Sigma}
      F(u_{\lambda}(s))\phi(u_{\lambda}(s))\,\dmu\,\ds\\
    &\hspace{4cm} +
    \tfrac{1}{2}\int_{0}^{t}s\int_{\Sigma}\phi'(u_{\lambda}(s))\,
    \labs{F(u_{\lambda}(s))}^{2}\,\dmu\,\ds.
    \end{split}
  \end{equation}
  By assumption, there are constants $\alpha_{1}$, $\alpha_{2}>0$ such
  that $\alpha_{1}\le \phi'(s)\le \alpha_{2}$ for all $s\in
  [-M,M]$, by the boundedness of $\phi$ on $[-M,M]$, by the
  continuity of $F : L^{2}(\Sigma,\mu)\to L^{2}(\Sigma,\mu)$, and by
  \eqref{eq:132} and ~\eqref{eq:133}, we can conclude from
  estimate~\eqref{eq:205} that the sequence 
$(u_{\lambda})_{\lambda>0}$ is bounded in
$W^{1,2}_{loc}((0,T];L^{2}(\Sigma,\mu))$. Thus, for every sequence
$(\lambda_{n})\in (0,1)$ such that $\lambda_{n}\to0$ as $n\to\infty$,
there is $w\in L^{2}_{loc}((0,T];L^{2}(\Sigma,\mu))$ and after
eventually passing to a subsequence,
$\frac{d u_{\lambda_{n}}}{dt}\rightharpoonup w$ weakly in
$L^{2}([\delta,T];L^{2}(\Sigma,\mu)$ for every $\delta\in
(0,T)$. Hence, sending $n\to\infty$ in
\begin{displaymath}
  u_{\lambda_{n}}(t)-u_{\lambda_{n}}(s)=\int_{s}^{t} \frac{d u_{\lambda_{n}}}{dr}(r)\,\dr
\end{displaymath}
and using~\eqref{eq:136} yields $w(t)=\frac{d u}{dt}(t)$ in
$L^{2}(\Sigma,\mu)$ for a.e. $t\in (0,T)$. Therefore, \eqref{eq:204}
holds and
\begin{equation}
  \label{eq:137}
  \lim_{\lambda\to0+}\frac{d u_{\lambda_{n}}}{dt}=\frac{d u}{dt}
  \qquad\text{weakly in $L^{2}_{loc}((0,T];L^{2}(\Sigma,\mu))$.}
\end{equation}

Next, by~\eqref{eq:136} and since $\phi$ is Lipschitz continuous,
\begin{displaymath}
  \lim_{\lambda\to0+}\phi(u_{\lambda})=\phi(u)\qquad\text{in
    $C([0,T];L^{2}(\Sigma,\mu))$.}
\end{displaymath}
In particular, $\phi(u(t))\in L^{2}(\Sigma,\mu)$ for every
$t\in [0,T]$. By assumption, $\Psi$ is densely defined on
$L^{2}(\Sigma,\mu)$ and so by \cite[Proposition~2.11 \&
Th\'eor\`eme~2.2]{MR0348562}, the resolvent operator $J_{\lambda}$ of $A$
satisfies
\begin{displaymath}
  \lim_{\lambda\to0+}J_{\lambda}\phi(u(t))=\phi(u(t))\qquad\text{in
      $L^{2}(\Sigma,\mu)$ for every $t\in [0,T]$.}
\end{displaymath}
Thus and since for every $t\in [0,T]$,
\begin{align*}
  &\norm{J_{\lambda}\phi(u_{\lambda}(t))-\phi(u(t))}_{2}\\
  &\hspace{1,5cm}\le
    \norm{J_{\lambda}\phi(u_{\lambda}(t))-J_{\lambda}\phi(u(t))}_{2}
    + \norm{J_{\lambda}\phi(u(t))-\phi(u(t))}_{2}\\
  &\hspace{1,5cm} \le
    \norm{\phi(u_{\lambda}(t))-\phi(u(t))}_{2}
    + \norm{J_{\lambda}\phi(u(t))-\phi(u(t))}_{2},
  \end{align*}
  we can conclude that
  \begin{equation}
    \label{eq:138}
    \lim_{\lambda\to0+}J_{\lambda}\phi(u_{\lambda}(t))=\phi(u(t))\qquad\text{in
      $L^{2}(\Sigma,\mu)$ for every $t\in [0,T]$.}
  \end{equation}
  Now, let $(\hat{w},\hat{v})\in A$ and $t\in (0,T)$ such that
  $\frac{d u}{dt}(t)$ exists in $L^{2}(\Sigma,\mu)$. Since
  \begin{displaymath}
    A_{\lambda}(\phi(u_{\lambda}(t)))\in A(J_{\lambda}\phi(u_{\lambda}))
\end{displaymath}
and since $A$ is accretive in $L^{2}(\Sigma,\mu)$, one has
  \begin{displaymath}
    \left[J_{\lambda}\phi(u_{\lambda}(t))-\hat{w},
      \left(-F(u_{\lambda}(t))-\frac{d u_{\lambda}}{dt}(t)
    \right) -\hat{v}\right]_{2}\ge0
  \end{displaymath}
  Sending $\lambda\to0+$ in this inequality and using~\eqref{eq:137}
  and~\eqref{eq:138} yields
  \begin{displaymath}
    \left[\phi(u_{\lambda}(t))-\hat{w},\left(-F(u(t))-\frac{d u}{dt}(t)
    \right) -\hat{v}\right]_{2}\ge0.
  \end{displaymath}
  Since $(\hat{w},\hat{v})\in A$ was arbitrary, $A$ is $m$-accretive
  in $L^{2}(\Sigma,\mu)$ and $\frac{d u}{dt}(t)$ exists in
  $L^{2}(\Sigma,\mu)$ for a.e. $t\in (0,T)$, we can conclude that for
  a.e. $t\in (0,T)$, one has $\phi(u(t))\in D(A)$ satisfying
  inclusion~\eqref{eq:206}, showing that $u$ is a strong solution
  of~\eqref{eq:206} in $L^{2}(\Sigma,\mu)$. Now, proceeding as in the
  previous steps of this proof, one sees that chain
  rule~\eqref{eq:134} and estimates~\eqref{eq:135} and \eqref{eq:255}
  satisfied by $u_{\lambda}$ hold, in particular, for $u$, proving
  that $\phi(u)\in W^{1,2}_{loc}((0,T];L^{2}(\Sigma,\mu))$, chain
  rule~\eqref{eq:150} and the estimates~\eqref{eq:140} and
  \eqref{eq:139} hold. Moreover, by \eqref{eq:133}, \eqref{eq:136},
  and~\eqref{eq:204}, we see that $u$ has the regularity as stated
  in~\eqref{eq:124}. Next, recall that $A$ is the subgradient
  $\partial_{L^{2}}\Psi$ in $L^{2}(\Sigma,\mu)$ of a convex, proper,
  lower semicontinuous functional $\Psi$ on
  $L^{2}(\Sigma,\mu)$. Further, for every $0<\delta<T$, the function
  $v:=\phi(u)\in W^{1,2}([\delta,T];L^{2}(\Sigma,\mu))$ and
  inclusion~\eqref{eq:206} yields
  $g:=- F(u)-\frac{d u}{dt}\in
  L^{2}([\delta,T];L^{2}(\Sigma,\mu))$
  and satisfies $g(t)\in Av(t)$ for a.e. $t\in (\delta,T)$. Thus for
  every $0<\delta<T$, by~\cite[Lemme~3.3]{MR0348562}, the function
  $t\mapsto \Psi(v(t))$ is absolutely continuous on $[\delta,T]$ and
  for a.e. $t\in (\delta,T)$,
  \begin{displaymath}
    \tfrac{d}{dt}\Psi(v(t))= \langle g(t),\tfrac{d v}{dt}(t) \rangle.
  \end{displaymath}
  Combining this together with chain rule~\eqref{eq:150}, we see that
  equation~\eqref{eq:267} holds.
 \end{proof}

%
%
%
%
%
%
%
%
%
%
 \subsection{Weak solutions for general $\phi$ and initial values in
   $L^{\infty}$}
\label{subsec:weak-sol-for-initial-in-Linfty}

This subsection is concerned with the second major step toward the
proof of Theorem~\ref{thm:weak-solutions-Li-Linfty}. Our next results
shows that mild solutions are, in fact, weak energy solutions for
initial values
$u_{0}\in \overline{D(A_{1\cap\infty}\phi)}^{\mbox{}_{L^{1}}}\cap
L^{\infty}(\Sigma,\mu)$.
This is known to be true for the homogeneous porous media equation
(cf.~\cite{Benthesis,MR2286292}), and generalises this result to
general quasi-$m$-accretive operators in $L^{1}$ with complete resolvent of the
form $\overline{A_{1\cap\infty}\phi}$.

\begin{theorem}
  \label{thm:reg-mild-sol-are-weak}
  Let $\Psi : V\to \R$ be a convex, lower semicontinuous, and
  G\^ateaux differentiable functional satisfying the hypotheses
  ($\mathcal{H}$\ref{hyp:a})-($\mathcal{H}$\ref{hyp:c}), $\phi$ be a
  strictly increasing continuous function on $\R$
  satisfying~($\mathcal{H}$\ref{hyp:phi}) and $F$ be an operator on
  $L^{q}(\Sigma,\mu)$ satisfying~($\mathcal{H}$\ref{hyp:F}).  Then,
  for every
  $u_{0}\in \overline{D(A_{1\cap\infty}\phi)}^{\mbox{}_{L^{1}}} \cap
  L^{\infty}(\Sigma,\mu)$
  and $T>0$, the mild solution $u(t)=T_{t}u_{0}$, $(t\ge 0)$, of
  Cauchy problem~\eqref{eq:149} in $L^{1}(\Sigma,\mu)$ is a weak
  energy solution of Cauchy problem~\eqref{eq:161} satisfying
  \begin{equation}
    \label{eq:273}
    u\in C([0,T];L^{q}(\Sigma,\mu))\cap L^{\infty}(0,T;L^{\infty}(\Sigma,\mu)),
  \end{equation}
  for every $1\le q<\infty$,
  \begin{equation}
   \label{eq:274} 
    \tfrac{\textrm{d}u}{\dt}\in
    L^{p^{\mbox{}_{\prime}}}(0,T;V'),\qquad
    \phi(u)\in L^{p}(0,T;V)
  \end{equation}
  and identity
  \begin{equation}
    \label{eq:275}
    \int_{0}^{T}\Big\{\Big\langle \tfrac{d u}{dt},v\Big\rangle_{V',V} +
    \langle \Psi'\phi(u(t)),v\rangle +\langle F(u(t)),v\rangle \Big\}\,\dt=0
  \end{equation}
  holds for every $v\in L^{p}(0,T;V)$. In particular, for the function
  $\Phi$ given by~\eqref{eq:276}, one has that
\begin{equation}
 \label{eq:277}
   \Phi(u)\in C([0,T];L^{1}(\Sigma,\mu))\quad\text{with}\quad 
   \int_{\Sigma}\Phi(u)\,\dmu\in W^{1,1}(0,T),
 \end{equation}
 ''integration by parts rule''~\eqref{eq:215} holds, and for every
$k\ge 0$ and $t>0$, inequality~\eqref{eq:140} and energy estimate
 \begin{equation}
   \label{eq:256}
   \begin{split}
    &\int_{0}^{t}\Psi(\phi(u(s)))\,\ds +
      \int_{\Sigma}\Phi(u(t))\,\dmu\\
    &\qquad\le \int_{\Sigma}\Phi(u_{0})\,\dmu
      -\int_{0}^{t}\int_{\Sigma} F(u(s))\phi(u(s))\,\dmu\,\ds,
  \end{split}
 \end{equation}
 holds.
\end{theorem}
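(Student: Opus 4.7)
The approach is to approximate $\phi$ by the smooth regularisations $\phi_n := \phi_{1/n}$ introduced in this section. Each $\phi_n$ is $C^{\infty}$, strictly increasing with $\phi_n'(s)\ge 1/n$, hence $\phi_n$ and $\phi_n^{-1}$ are locally Lipschitz continuous, $\phi_n(0)=0$, and $\phi_n\to\phi$ locally uniformly on $\R$. Theorem~\ref{thm:Linfty-implies-mild-are-strong-in-L1} then provides for every $n$ a strong solution $u_n$ of $\tfrac{du_n}{dt}+A\phi_n(u_n)+F(u_n)\ni 0$ with $u_n(0)=u_0$, having the regularity~\eqref{eq:124}, the chain rule~\eqref{eq:150}, and the inequalities~\eqref{eq:140} and~\eqref{eq:139} with $\Phi_n(r):=\int_0^r\phi_n(s)\,\ds$ in place of $\Phi$. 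From Proposition~\ref{propo:quasi-accretive-operators-in-L1-complete-resolvent} applied to $\overline{A_{1\cap\infty}\phi_n}+F$, the sequence $(u_n)$ is uniformly bounded in $L^{\infty}(0,T;L^{q}(\Sigma,\mu))$ for every $1\le q\le\infty$, and in particular $\norm{u_n(t)}_{\infty}\le M:=e^{\omega T}\norm{u_0}_{\infty}$.

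\emph{Uniform estimates.} Since $\phi_n\to\phi$ uniformly on $[-M,M]$ with $\phi_n(0)=0$, the sequences $\phi_n(u_n)$ and $F(u_n)$ are uniformly bounded in $L^{\infty}(0,T;L^{\infty}(\Sigma,\mu))$. Applying \eqref{eq:140} with $k=0$ to $u_n$ and using the coercivity~\eqref{ineq:p-coercive} from ($\mathcal{H}$\ref{hyp:a}) together with the pairing identity~\eqref{eq:214} gives
\begin{displaymath}
\eta\int_{0}^{T}\abs{\phi_n(u_n(s))}_{V}^{p}\,\ds
\le \int_{0}^{T}\!\!\langle\Psi'(\phi_n(u_n)),\phi_n(u_n)\rangle\,\ds
\le C_{1}
\end{displaymath}
with $C_{1}$ independent of $n$. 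Hence $\phi_n(u_n)$ is uniformly bounded in $L^{p}(0,T;V)$, and using the growth bound~\eqref{ineq:bounded-in-V} and $F(u_n)$ bounded in $L^{\infty}(0,T;L^{2})$, the identity $\tfrac{du_n}{dt}=-\Psi'(\phi_n(u_n))-F(u_n)$ yields that $\tfrac{du_n}{dt}$ is uniformly bounded in $L^{p^{\mbox{}_{\prime}}}(0,T;V')$.

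\emph{Passage to the limit in $u_n$.} I would show that for each $\lambda>0$ with $\lambda\omega<1$ and every $x\in L^{1}\cap L^{\infty}(\Sigma,\mu)$,
\begin{displaymath}
J_{\lambda}^{\overline{A_{1\cap\infty}\phi_n}+F}x \longrightarrow J_{\lambda}^{\overline{A_{1\cap\infty}\phi}+F}x \quad\text{in } L^{1}(\Sigma,\mu).
\end{displaymath}
This is obtained by repeating the accretivity computation~\eqref{eq:210} from the proof of Theorem~\ref{thm:Linfty-implies-mild-are-strong-in-L1}, now with the right-hand side controlled by the modulus of continuity of $\phi-\phi_n$ on $[-M,M]$ rather than by a factor of $\lambda$, which tends to $0$ as $n\to\infty$; density of $L^{1}\cap L^{\infty}$ in $L^{1}$ extends this convergence to all of $L^{1}(\Sigma,\mu)$. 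Theorem~\ref{thm:convergence-result} then gives $u_n\to u$ in $C([0,T];L^{1}(\Sigma,\mu))$, and interpolation with the uniform $L^{\infty}$-bound upgrades this to $C([0,T];L^{q}(\Sigma,\mu))$ for every $1\le q<\infty$, proving~\eqref{eq:273}. After extracting a subsequence, $\phi_n(u_n)\rightharpoonup w$ in $L^{p}(0,T;V)$, $\tfrac{du_n}{dt}\rightharpoonup \tfrac{du}{dt}$ in $L^{p^{\mbox{}_{\prime}}}(0,T;V')$, and $\Psi'(\phi_n(u_n))\rightharpoonup\xi$ in $L^{p^{\mbox{}_{\prime}}}(0,T;V')$ for some $\xi$. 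The uniform convergence of $\phi_n$ on $[-M,M]$ and strong $L^{q}$-convergence of $u_n$ force $w=\phi(u)$, giving $\phi(u)\in L^{p}(0,T;V)$ and~\eqref{eq:274}.

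\emph{Identification of $\xi$ and the energy identities (the main obstacle).} The delicate point is to prove $\xi=\Psi'(\phi(u))$, which together with the equation then yields~\eqref{eq:275}. I would apply a Minty-type monotonicity argument based on monotonicity of $\Psi'$ (from convexity of $\Psi$) and hemicontinuity ($\mathcal{H}$\ref{hyp:b}). The key ingredient is to compute
\begin{displaymath}
\limsup_{n\to\infty}\int_{0}^{T}\langle\Psi'(\phi_n(u_n)),\phi_n(u_n)\rangle\,\dt
= -\lim_{n\to\infty}\int_{0}^{T}\Big\langle\tfrac{du_n}{dt},\phi_n(u_n)\Big\rangle\,\dt
-\lim_{n\to\infty}\int_{0}^{T}\langle F(u_n),\phi_n(u_n)\rangle\,\dt.
\end{displaymath}
The second limit is the strong limit $\int_{0}^{T}\langle F(u),\phi(u)\rangle\,\dt$. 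For the first, the chain rule~\eqref{eq:150} applied to $u_n$ gives $\tfrac{d}{dt}\!\int_{\Sigma}\!\Phi_n(u_n)\,\dmu=\int_{\Sigma}\phi_n(u_n)\tfrac{du_n}{dt}\,\dmu$, hence the integral equals $\int_{\Sigma}\Phi_n(u_n(T))\,\dmu-\int_{\Sigma}\Phi_n(u_0)\,\dmu$, and Lebesgue's theorem together with the $L^{\infty}$-bound and strong $L^{1}$-convergence $u_n(T)\to u(T)$ gives the limit $\int_{\Sigma}\Phi(u(T))\,\dmu-\int_{\Sigma}\Phi(u_0)\,\dmu$. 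Testing monotonicity with any $v\in L^{p}(0,T;V)$ and passing to the limit then identifies $\xi=\Psi'(\phi(u))$ via the standard Minty trick. The same chain-rule identity in the limit yields $\Phi(u)\in C([0,T];L^{1})$ with $\int_{\Sigma}\Phi(u)\,\dmu\in W^{1,1}(0,T)$ and the integration-by-parts rule~\eqref{eq:215}. Finally, lower semicontinuity of $v\mapsto\Psi(v)$ under weak $V$-convergence together with the strong $L^{q}$-convergence allows to pass to the limit in~\eqref{eq:140} for $u_n$ and gives both~\eqref{eq:140} and the energy estimate~\eqref{eq:256} for $u$.
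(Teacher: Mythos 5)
Your proposal follows essentially the same route as the paper's proof: regularise $\phi$ by the mollified functions introduced before Theorem~\ref{thm:weak-solutions-Li-Linfty} (you write $\phi_n=\phi_{1/n}$; the paper writes $\phi_\varepsilon$), invoke the smooth-case result (Theorem~\ref{thm:Linfty-implies-mild-are-strong-in-L1}) for each approximant, establish resolvent convergence by the same type of accretivity computation (which the paper isolates as Lemma~\ref{lem:2}), upgrade via Theorem~\ref{thm:convergence-result} to $C([0,T];L^q)$ convergence, extract weak limits in $L^p(0,T;V)$ and $L^{p'}(0,T;V')$, and identify $\xi=\Psi'(\phi(u))$ via Minty's trick using the monotonicity of $\Psi'$, hemicontinuity, and the integration-by-parts rule Lemma~\ref{lem:3}. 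One small inaccuracy in your uniform estimate step: invoking \eqref{eq:140} with $k=0$ gives only a bound on $\int_0^T s\,\Psi(\phi_n(u_n(s)))\,\ds$ (the weighted version); what you actually need is the \emph{unweighted} energy inequality \eqref{eq:256} for $u_n$, which the paper obtains as an intermediate step in the proof of Theorem~\ref{thm:Linfty-implies-mild-are-strong-in-L1} (by integrating the analogue of \eqref{eq:203} directly over $(0,t)$, before multiplying by $s^{k+1}$), and which combined with ($\mathcal{H}$\ref{hyp:a})/($\mathcal{H}$\ref{hyp:e}) and the uniform $L^\infty$ bound does give the $L^p(0,T;V)$ bound you want.
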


%
%

For the proof of this result, we need the following approximation result.

\begin{lemma}
  \label{lem:2}
  Let $A$ be an $m$-completely accretive operator in
  $L^{2}(\Sigma,\mu)$ of a finite measure space $(\Sigma,\mu)$, $F$ be
  the Nemytski operator of a Carath\'eodory function
  $f : \Sigma\times \R\to\R$ satisfying~\eqref{eq:2}, and $\phi$ be a
  strictly increasing continuous function on $\R$ such that for every
  $\lambda>0$, the Yosida operator $\beta_{\lambda}$ of
  $\beta=\phi^{-1}$ and of $\beta=\phi^{-1}_{\varepsilon}$ the
  regularisation $(\phi_{\varepsilon})$ of $\phi$ satisfy
  condition~\eqref{eq:252}. Further, for every $\lambda>0$, let
  $J_{\lambda}^{\varepsilon}$ denote the resolvent operator of
  $\overline{A_{1\cap\infty}\phi_{\varepsilon}}+F$ and $J_{\lambda}$
  the resolvent operator of $\overline{A_{1\cap\infty}\phi}+F$. Then,
  for every $\lambda>0$ such that $\omega\lambda<1$ and
  $u\in L^{1}\cap L^{\infty}(\Sigma,\mu)$, one has
\begin{equation}
  \label{eq:207}
  \lim_{\varepsilon\to0}J_{\lambda}^{\varepsilon}u=J_{\lambda}u\qquad\text{in
    $L^{q}(\Sigma,\mu)$ for every $1\le q<\infty$.}
   \end{equation}
\end{lemma}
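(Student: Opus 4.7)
\smallskip
\textbf{Proof plan.} Fix $u\in L^{1}\cap L^{\infty}(\Sigma,\mu)$ and $\lambda>0$ with $\lambda\omega<1$, and set $u_{\varepsilon}:=J_{\lambda}^{\varepsilon}u$ and $u_{\ast}:=J_{\lambda}u$. The plan is to reduce everything to an $L^{2}$-accretivity test of the pair of resolvent equations, paying the price for replacing $\phi_{\varepsilon}$ by $\phi$ via the uniform modulus $\eta_{\varepsilon}:=\sup_{|s|\le M}|\phi_{\varepsilon}(s)-\phi(s)|\to 0$ (with $M$ a joint $L^{\infty}$-bound on $u_{\varepsilon},u_{\ast}$), and then exploiting the strict monotonicity of $\phi$ to upgrade to pointwise and $L^{q}$-convergence. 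The hypothesis \eqref{eq:252} on the Yosida operators of $\beta=\phi^{-1}$ and of $\phi_{\varepsilon}^{-1}$ is precisely what allows us to apply Proposition~\ref{propo:Range-cond-in-Rd} combined with Proposition~\ref{propo:Lipschitz-complete} to both $\phi$ and $\phi_{\varepsilon}$; this yields $u_{\varepsilon}, u_{\ast}\in L^{1}\cap L^{\infty}(\Sigma,\mu)$ together with \emph{genuine} elements $v_{\varepsilon}\in A\phi_{\varepsilon}(u_{\varepsilon})$, $v_{\ast}\in A\phi(u_{\ast})$ (not merely elements of the closure) satisfying
\begin{equation*}
  u_{\varepsilon}+\lambda v_{\varepsilon}+\lambda F(u_{\varepsilon})=u,\qquad u_{\ast}+\lambda v_{\ast}+\lambda F(u_{\ast})=u,
\end{equation*}
as well as the uniform bound $M:=(1-\lambda\omega)^{-1}\|u\|_{\infty}$ from Proposition~\ref{propo:quasi-accretive-operators-in-L1-complete-resolvent}.

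Subtracting the two resolvent equations and pairing with $\phi_{\varepsilon}(u_{\varepsilon})-\phi(u_{\ast})$, one gets
\begin{equation*}
  \int_{\Sigma}(u_{\varepsilon}-u_{\ast})(\phi_{\varepsilon}(u_{\varepsilon})-\phi(u_{\ast}))\,\dmu+\lambda\int_{\Sigma}(v_{\varepsilon}-v_{\ast})(\phi_{\varepsilon}(u_{\varepsilon})-\phi(u_{\ast}))\,\dmu=-\lambda\int_{\Sigma}(F(u_{\varepsilon})-F(u_{\ast}))(\phi_{\varepsilon}(u_{\varepsilon})-\phi(u_{\ast}))\,\dmu.
\end{equation*}
Because $(\Sigma,\mu)$ is finite, the functions $\phi_{\varepsilon}(u_{\varepsilon}),\phi(u_{\ast}),v_{\varepsilon},v_{\ast}$ all lie in $L^{2}(\Sigma,\mu)$, so the $L^{2}$-accretivity of $A$ (inherited from its $m$-complete accretivity in $L^{2}$) makes the middle term nonnegative. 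Splitting $\phi_{\varepsilon}(u_{\varepsilon})-\phi(u_{\ast})=[\phi(u_{\varepsilon})-\phi(u_{\ast})]+[\phi_{\varepsilon}(u_{\varepsilon})-\phi(u_{\varepsilon})]$ in both the first term of the identity and the right-hand side, using $\|\phi_{\varepsilon}(u_{\varepsilon})-\phi(u_{\varepsilon})\|_{\infty}\le\eta_{\varepsilon}$ and the Lipschitz bound for $F$, I expect to arrive at
\begin{equation*}
  (1-\lambda\omega)\int_{\Sigma}(u_{\varepsilon}-u_{\ast})(\phi(u_{\varepsilon})-\phi(u_{\ast}))\,\dmu\le (1+\lambda\omega)\,\eta_{\varepsilon}\,\|u_{\varepsilon}-u_{\ast}\|_{1}\le C\,\eta_{\varepsilon},
\end{equation*}
with $C$ independent of $\varepsilon$ (here the $L^{1}$-difference is controlled by $2(1-\lambda\omega)^{-1}\|u\|_{1}$). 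Since $\lambda\omega<1$ and $\eta_{\varepsilon}\to 0$, the nonnegative quantity $\int_{\Sigma}(u_{\varepsilon}-u_{\ast})(\phi(u_{\varepsilon})-\phi(u_{\ast}))\,\dmu$ tends to zero.

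Finally, strict monotonicity of $\phi$ on the compact interval $[-M,M]$ gives a positive continuous function $\omega_{M}(\delta):=\inf\{(s-t)(\phi(s)-\phi(t)):s,t\in[-M,M],\ |s-t|\ge\delta\}>0$ for every $\delta>0$, so that
\begin{equation*}
  \omega_{M}(\delta)\,\mu(\{|u_{\varepsilon}-u_{\ast}|\ge\delta\})\le\int_{\Sigma}(u_{\varepsilon}-u_{\ast})(\phi(u_{\varepsilon})-\phi(u_{\ast}))\,\dmu\to 0,
\end{equation*}
yielding $u_{\varepsilon}\to u_{\ast}$ in measure. Combined with the uniform $L^{\infty}$-bound, Lebesgue's dominated convergence theorem delivers $u_{\varepsilon}\to u_{\ast}$ in $L^{q}(\Sigma,\mu)$ for every $1\le q<\infty$, which is \eqref{eq:207}. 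The main obstacle, as I see it, is the bookkeeping in the middle paragraph: one must carefully justify that the accretivity of $A$ in $L^{2}$ can be applied to the pairs $(\phi_{\varepsilon}(u_{\varepsilon}),v_{\varepsilon})$ and $(\phi(u_{\ast}),v_{\ast})$ produced by the range condition, which is exactly where hypothesis~\eqref{eq:252} and the finite-measure assumption enter in an essential way.
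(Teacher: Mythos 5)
Your proof is correct and follows essentially the same route as the paper's: subtract the two resolvent equations, pair with $\phi_{\varepsilon}(u_{\varepsilon})-\phi(u_{\ast})$, discard the nonnegative middle term by $L^{2}$-accretivity of $A$, and control the perturbation error by $\eta_{\varepsilon}=\|\phi_{\varepsilon}-\phi\|_{L^{\infty}(-M,M)}$ to conclude $\int_{\Sigma}(u_{\varepsilon}-u_{\ast})(\phi(u_{\varepsilon})-\phi(u_{\ast}))\,\dmu\to 0$. The only (stylistic) difference is in the final upgrade: where the paper passes directly to a.e.\ convergence from strict monotonicity of $\phi$ and applies dominated convergence, you make the intermediate step explicit via the modulus $\omega_{M}(\delta)$ and convergence in $\mu$-measure, which is a cleaner way of doing the same thing.
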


\begin{proof}[Proof of Lemma~\ref{lem:2}] By
  Proposition~\ref{propo:Lipschitz-complete},
  $\overline{A_{1\cap\infty}\phi}+F$ is $m$-accretive in $L^{1}$ with
  complete resolvent and for every $\lambda>0$ such that
  $\omega\lambda<1$, $A_{1\cap\infty}\phi+F$ satisfies the range
  condition~\eqref{eq:201}. Moreover, since one has that
  \begin{displaymath}
  A_{1\cap\infty}\phi_{\varepsilon}\subseteq
  \overline{A_{1\cap\infty}\phi_{\varepsilon}}\quad\text{ and }\quad
  A_{1\cap\infty}\phi\subseteq \overline{A_{1\cap\infty}\phi},
\end{displaymath}
the range condition~\eqref{eq:201} yields that the resolvent
$J_{\lambda}^{\varepsilon}$ coincides with the resolvent of
$A_{1\cap\infty}\phi_{\varepsilon}$ on
$L^{1}\cap L^{\infty}(\Sigma,\mu)$ and the resolvent $J_{\lambda}$ of
$\overline{A_{1\cap\infty}\phi}$ coincides with the resolvent of
$A_{1\cap\infty}\phi$ on $L^{1}\cap L^{\infty}(\Sigma,\mu)$. Thus, for
every $\lambda>0$ such that $\omega\lambda<1$ and every
$u\in L^{1}\cap L^{\infty}(\Sigma,\mu)$, $\varepsilon>0$, there are
$(u_{\varepsilon}, v_{\varepsilon})\in A_{1\cap\infty}\phi_{\varepsilon}$
and $(u_{0},v_{0})\in A_{1\cap\infty}\phi$ satisfying
\begin{equation}
  \label{eq:208}
  u_{\varepsilon}+\lambda
  (v_{\varepsilon}+F(u_{\varepsilon}))= u
  \qquad\text{and}\qquad
  u_{0}+\lambda (v_{0}+F(u_{0}))= u,
\end{equation}
or equivalently, $u_{\varepsilon}=J_{\lambda}^{\varepsilon}u$ for
every $\varepsilon>0$ and $u_{0}=J_{\lambda}u$. Now, by
using~\eqref{eq:208} and since $A$ and $F+\omega I$ are accretive
operators in $L^{2}(\Sigma,\mu)$, we see that
\begin{align*}
  &(1-\omega\lambda)\,\int_{\Sigma}
    (u_{\varepsilon}-u_{0})(\phi(u_{\varepsilon})-\phi(u_{0}))\,\dmu\\
  &\qquad = (1-\omega\lambda)\, \int_{\Sigma}
    (u_{\varepsilon}-u_{0})(\phi_{\varepsilon}(u_{\varepsilon})-\phi(u_{0}))\,\dmu\\
  &\mbox{}\hspace{6cm}  - \,\int_{\Sigma}
    (u_{\varepsilon}-u_{0})(\phi(u_{\varepsilon})-\phi_{\varepsilon}(u_{\varepsilon}))\,\dmu\\
  &\qquad = -\lambda\,\int_{\Sigma}
    \Big[v_{\varepsilon}+F(u_{\varepsilon})+\omega
    u_{\varepsilon}-(v_{0}+F(u_{0})+\omega u_{0})\Big]\times\\
  &\hspace{8cm} \times \,(\phi_{\varepsilon}(u_{\varepsilon})-\phi(u_{0}))\,\dmu\\
  &\mbox{}\hspace{6cm}  - \,\int_{\Sigma}
    (u_{\varepsilon}-u_{0})(\phi(u_{\varepsilon})-\phi_{\varepsilon}(u_{\varepsilon}))\,\dmu\\
  &\qquad = -\lambda\,
    \Big[\phi_{\varepsilon}(u_{\varepsilon})-\phi(u_{0}),v_{\varepsilon}-v_{0}\Big]_{2}\\
  &\qquad\hspace{1cm}  - \lambda\,
    \Big[\phi_{\varepsilon}(u_{\varepsilon})-\phi(u_{0}),(F(u_{\varepsilon})+\omega
    u_{\varepsilon})-(F(u_{0})+\omega u_{0})\Big]_{2}\\
  &\mbox{}\hspace{6cm}  - \,\int_{\Sigma}
    (u_{\varepsilon}-u_{0})(\phi(u_{\varepsilon})-\phi_{\varepsilon}(u_{\varepsilon}))\,\dmu\\
  & \qquad\le  - \,\int_{\Sigma}
    (u_{\varepsilon}-u_{0})(\phi(u_{\varepsilon})-\phi_{\varepsilon}(u_{\varepsilon}))\,\dmu.
  \end{align*}
  By Proposition~\ref{propo:quasi-accretive-operators-in-L1-complete-resolvent},
  \begin{equation}
   \label{eq:211}
      \norm{u_{\varepsilon}}_{q}\le (1-\lambda\,\omega)^{-1}\norm{u}_{q}=:M
  \end{equation}
  for all $\varepsilon\ge 0$ and $1\le q\le \infty$ and so,
  \begin{displaymath}
    0\le \int_{\Sigma}
    (u_{\varepsilon}-u_{0})(\phi(u_{\varepsilon})-\phi(u_{0}))\,\dmu\le
    \frac{2\norm{u}_{1}}{(1-\lambda\,\omega)^{2}}
    \norm{\phi_{\varepsilon}-\phi}_{L^{\infty}(-M,M)}.
  \end{displaymath}
  Since $\phi_{\varepsilon}\to\phi$ uniformly on compact subsets of
  $\R$, since $\phi$ is strictly increasing on $\R$ and $\phi(s)=0$ if
  and only if $s=0$, it follows that
  $\lim_{\varepsilon}u_{\varepsilon}=u_{0}$ a.e. on $\Sigma$. Using
  again~\eqref{eq:211} and that the measure space $(\Sigma,\mu)$ is
  finite, we can conclude that \eqref{eq:207} holds for
  $u\in L^{1}\cap L^{\infty}(\Sigma,\mu)$. A standard density argument
  yields the statement of this lemma.
\end{proof}

The following \emph{integration by parts rule} is an important tool in
the proof of  Theorem~\ref{thm:reg-mild-sol-are-weak}. It also appears in
different versions in the literature (cf., for instance,~\cite[p. 366]{MR837254}).

\begin{lemma}
  \label{lem:3}
  Let $\phi: \R\to\R$ be a non-decreasing continuous
  function and $u\in L^{\infty}(0,T; L^{\infty}(\Sigma,\mu))
  \cap C([0,T];L^{1}(\Sigma,\mu))$
  such that $\tfrac{d u}{dt}\in L^{p^{\mbox{}_{\prime}}}(0,T;V')$ and
  $\phi(u)\in L^{p}(0,T;V)$. Set $\Phi(r)=\int_{0}^{r}\phi(s)\,\ds$
  for every $r\in \R$. Then, 
  \begin{equation}
    \label{eq:215}
  \int_{t_{1}}^{t_{2}}\Big\langle
    \frac{du}{dt},\phi(u)\Big\rangle_{V,V}\,\dt=
    \int_{\Sigma}\Phi(u(t_{2}))\dmu-\int_{\Sigma}\Phi(u(t_{1}))\dmu
\end{equation}
for every $0\le t_{1}<t_{2}\le T$.
\end{lemma}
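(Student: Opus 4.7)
The plan is to approximate $\tfrac{du}{dt}$ by forward difference quotients, exploit the convexity of $\Phi$ to sandwich a telescoping time integral between two duality pairings, and pass to the limit $h\to 0^+$. First I would fix $0\le t_1<t_2<T$ (the boundary case $t_2=T$ then follows from the continuity of $t\mapsto\int_\Sigma\Phi(u(t))\,d\mu$ established below), and for $0<h<T-t_2$ and $t\in[t_1,t_2]$ set $\delta_h u(t):=h^{-1}(u(t+h)-u(t))$. Since $u\in L^\infty(0,T;L^\infty(\Sigma,\mu))\hookrightarrow L^\infty(0,T;L^2(\Sigma,\mu))$ and $\tfrac{du}{dt}\in L^{p^{\mbox{}_{\prime}}}(0,T;V')$, the function $u$ admits a continuous $V'$-valued representative and $\delta_h u(t)=h^{-1}\int_t^{t+h}\tfrac{du}{ds}\,ds$ in $V'$; by Minkowski's inequality and density of continuous functions in $L^{p^{\mbox{}_{\prime}}}(0,T;V')$ one then obtains $\delta_h u\to \tfrac{du}{dt}$ in $L^{p^{\mbox{}_{\prime}}}(t_1,t_2;V')$. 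Since additionally $\delta_h u(t)\in L^2(\Sigma,\mu)$ for each $t$, identity \eqref{eq:214} yields
\begin{equation*}
\langle\delta_h u(t),v\rangle_{V',V}=\int_\Sigma v\,\delta_h u(t)\,d\mu\qquad\text{for every }v\in V.
\end{equation*}

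Next, since $\phi$ is continuous and non-decreasing, $\Phi$ is $C^1$ and convex with $\Phi'=\phi$, so
\begin{equation*}
\phi(a)(b-a)\;\le\;\Phi(b)-\Phi(a)\;\le\;\phi(b)(b-a)\qquad(a,b\in\R).
\end{equation*}
Taking $a=u(t,x)$, $b=u(t+h,x)$ and dividing by $h$ would give, pointwise on $\Sigma\times(t_1,t_2)$,
\begin{equation*}
\phi(u(t))\,\delta_h u(t)\;\le\;\delta_h[\Phi\circ u](t)\;\le\;\phi(u(t+h))\,\delta_h u(t).
\end{equation*}
Integrating over $\Sigma$ and $[t_1,t_2]$, the middle term collapses to $h^{-1}\int_{t_2}^{t_2+h}\psi(t)\,dt - h^{-1}\int_{t_1}^{t_1+h}\psi(t)\,dt$, where $\psi(t):=\int_\Sigma\Phi(u(t))\,d\mu$. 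Because $u\in C([0,T];L^1(\Sigma,\mu))\cap L^\infty(0,T;L^\infty)$ and $\Phi$ is Lipschitz on $[-M,M]$ with $M:=\|u\|_{L^\infty(0,T;L^\infty)}$, the function $\psi$ is continuous on $[0,T]$, so the telescoped quantity converges to $\psi(t_2)-\psi(t_1)=\int_\Sigma\Phi(u(t_2))\,d\mu-\int_\Sigma\Phi(u(t_1))\,d\mu$ as $h\to 0^+$.

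For the outer sandwich terms, the lower one equals $\int_{t_1}^{t_2}\langle\delta_h u(t),\phi(u(t))\rangle_{V',V}\,dt$, which converges to $\int_{t_1}^{t_2}\langle\tfrac{du}{dt},\phi(u)\rangle_{V',V}\,dt$ by virtue of $\delta_h u\to\tfrac{du}{dt}$ in $L^{p^{\mbox{}_{\prime}}}(t_1,t_2;V')$ together with $\phi(u)\in L^p(t_1,t_2;V)$; the upper one, after the substitution $s=t+h$, becomes $\int_{t_1+h}^{t_2+h}\langle\tilde\delta_h u(s),\phi(u(s))\rangle_{V',V}\,ds$ with backward quotient $\tilde\delta_h u(s):=h^{-1}(u(s)-u(s-h))$, and converges to the same limit by the analogous argument. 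Passing to the limit in the sandwich would then deliver \eqref{eq:215}. The main technical hurdle will be the justification of these $L^{p^{\mbox{}_{\prime}}}(V')$-convergences of the difference quotients together with the identification of the duality pairing with the Lebesgue integral; once these are in place, the convexity sandwich drives the identity automatically.
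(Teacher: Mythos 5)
Your proof is correct and follows essentially the same route as the paper's: both use forward difference quotients (Steklov averages) of $u$, the convexity of $\Phi$ to pass from the duality pairing to the telescoping quantity $h^{-1}\bigl(\Phi(u(t+h))-\Phi(u(t))\bigr)$, the Lipschitz estimate for $\Phi$ on $[-M,M]$ to get $\Phi(u)\in C([0,T];L^1)$, and the convergence of the Steklov averages in $L^{p'}(V')$. The only cosmetic difference is that the paper obtains the two inequalities by taking $h>0$ and then $h<0$ in the Steklov average, whereas you package both at once as a two-sided convexity sandwich $\phi(u(t))\,\delta_h u\le\delta_h(\Phi\circ u)\le\phi(u(t+h))\,\delta_h u$ and handle the upper end by the substitution $s=t+h$; these are the same argument.
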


\begin{proof}
   By assumption, there is a constant
  $M=\norm{u}_{L^{\infty}(0,T; L^{\infty}(\Sigma,\mu))}\ge 0$ such that $\phi$
  is bounded on $[-M,M]$ with constant $L_{\phi}>0$. From this, we
  easily obtain that
  \begin{equation}
    \label{eq:219}
    \norm{\Phi(u(t))-\Phi(u(s))}_{1}\le L_{\phi}\,M\,\norm{u(t)-u(s)}_{1}
  \end{equation}
  for every $t$, $s\in [0,T]$ and so $\Phi(u)\in
  C([0,T];L^{1}(\Sigma))$. Furthermore, H\"older's inequality yields
  $\langle \tfrac{d u}{dt}, \phi(u) \rangle_{V';V}\in
  L^{1}(0,T)$. Thus, both sides of equation~\eqref{eq:215} are
  finite. Now, let $0\le t_{1}<t_{2}\le  T$. For every $h>0$ and
  for $t_{1}<t<t_{2}$ such that
  $h<t_{2-t}$, the \emph{Steklov average} $[\frac{du}{dt}]_{h}$
  of $\frac{du}{dt}$ is given by
  \begin{displaymath}
    \Big[\frac{du}{dt}\Big]_{h}(t)
    :=\frac{1}{h}\int_{t}^{t+h}\frac{du}{ds}(s)\,\ds\qquad\text{in
    $V'$.}
  \end{displaymath}
  Since $\tfrac{d u}{dt}\in L^{p^{\mbox{}_{\prime}}}(0,T;V')$, one
  easily checks that
  \begin{displaymath}
    \lim_{h\to0+} \Big[\frac{du}{dt}\Big]_{h}=\frac{du}{dt}\qquad\text{in
    $L^{p^{\mbox{}_{\prime}}}(t_{1},t_{2};V')$}
  \end{displaymath}
  and so,
  \begin{equation}
    \label{eq:217}
    \lim_{h\to0+}\int_{t_{1}}^{t_{2}}\Big\langle
    \Big[\frac{du}{dt}\Big]_{h},\phi(u)\Big\rangle_{V',V}\,\dt=
   \int_{t_{1}}^{t_{2}}\Big\langle \frac{du}{dt},\phi(u)\Big\rangle_{V,V}\,\dt.
  \end{equation}
  Furthermore, for every $t\in (0,T-h)$, 
  $[\frac{du}{dt}]_{h}(t)=h^{-1}(u(t+h)-u(t))$. Using this together
  with the convexity of $\Phi$ and~\eqref{eq:214}, we see that
  \begin{equation}
    \label{eq:220}
    \begin{split}
      \int_{t_{1}}^{t_{2}}\Big\langle
      \Big[\frac{du}{dt}\Big]_{h},\phi(u)\Big\rangle_{V',V}\,\dt&\le
      \int_{t_{1}}^{t_{2}}h^{-1}\int_{\Sigma} (\Phi(u(t+h))-
      \Phi(u(t)))\,\dmu\,\dt\\
      & = \int_{t_{1}}^{t_{2}}\int_{\Sigma}
      \frac{d}{dt}\Phi_{h}(u(t))\,\dmu\,\dt
    \end{split}
  \end{equation}
  By~\eqref{eq:219}, we can apply Fubini's Theorem, to conclude that
  \begin{displaymath}
    \int_{t_{1}}^{t_{2}}\int_{\Sigma}
    \frac{d}{dt}\Phi_{h}(u(t))\,\dmu\,\dt
    =\int_{\Sigma} \Phi_{h}(u(t_{2}))\,\dmu-\int_{\Sigma} \Phi_{h}(u(t_{2}))\,\dmu
  \end{displaymath}
  for every $h>0$. Since $\Phi(u)\in
  C([0,T];L^{1}(\Sigma))$, one has that
  \begin{displaymath}
   \lim_{h\to 0+}\Phi_{h}(u(t_{2}))\qquad\text{ in $C([t_{1},t_{2}];L^{1}(\Sigma))$}
  \end{displaymath}
  (cf.~\cite[Lemma 3.3.4]{zbMATH06275549} for $V=\R$ and note that the
  general Banach space-valued case $V$ is shown analogously). Thus, sending
  $h\to 0+$ in \eqref{eq:220} and using \eqref{eq:217} yields
  \begin{displaymath}
    \int_{t_{1}}^{t_{2}}\Big\langle
    \frac{du}{dt},\phi(u)\Big\rangle_{V',V}\,\dt\le 
    \int_{\Sigma} \Phi(u(t_{1}))\,\dmu-\int_{\Sigma} \Phi(u(t_{2}))\,\dmu.
  \end{displaymath}
  In order to see that the reverse inequality holds as well, we take
  $h<0$ such that $0<-h<t_{1}$. Then by the
  convexity of  $\Phi$ and since $h^{-1}<0$, we obtain that 
  \begin{align*}
      \int_{t_{1}}^{t_{2}}\Big\langle
      \Big[\frac{du}{dt}\Big]_{h},\phi(u)\Big\rangle_{V',V}\,\dt
    &= \int_{t_{1}}^{t_{2}}h^{-1}\langle
      u(t+h)-u(t),\phi(u(t))\rangle\,\dt\\
    &\ge
      \int_{t_{1}}^{t_{2}}h^{-1}\int_{\Sigma} (\Phi(u(t+h))-
      \Phi(u(t)))\,\dmu\,\dt\\
      & = \int_{t_{1}}^{t_{2}}\int_{\Sigma}
      \frac{d}{dt}\Phi_{h}(u(t))\,\dmu\,\dt.
    \end{align*}
    Now, proceeding as in the first part of this proof, we see
    that~\eqref{eq:215} holds.
\end{proof}

With the above preliminaries, we can now outline the proof
of Theorem~\ref{thm:reg-mild-sol-are-weak}.

\begin{proof}[Proof of Theorem~\ref{thm:reg-mild-sol-are-weak}]
  Let $\phi : \R\to \R$ be strictly increasing continuous and for
  every $\varepsilon>0$ and $\phi_{\varepsilon}$ be the regularisation
  of $\phi$ satisfying the assumptions of this theorem. Since
  $\overline{A_{1\cap\infty}\phi}+F$ and
  $\overline{A_{1\cap\infty}\phi_{\varepsilon}}+F$ are quasi
  $m$-accretive in $L^{1}(\Sigma,\mu)$, the
  Crandall-Liggett theorem yields the existence of strongly continuous
  semigroups $\{T_{t}\}_{t\ge0}\sim -(\overline{A_{1\cap\infty}\phi}+F)$ on
  $\overline{D(A_{1\cap\infty}\phi)}$ and
  $\{T_{t}^{\varepsilon}\}_{t\ge0}\sim
  -(\overline{A_{1\cap\infty}\phi_{\varepsilon}}+F)$
  on $\overline{D(A_{1\cap\infty}\phi_{\varepsilon})}$. Since 
  $\overline{A_{1\cap\infty}\phi}+F$ and
  $\overline{A_{1\cap\infty}\phi_{\varepsilon}}+F$ have each a complete
  resolvent,
  Proposition~\ref{propo:quasi-accretive-operators-in-L1-complete-resolvent}
  yields
  \begin{equation}
     \label{eq:153}
     \norm{u_{\varepsilon}(t)}_{q}\le e^{\omega
       t}\norm{u_{0}}_{q}\quad\text{ and }\quad \norm{u(t)}_{q}\le e^{\omega
       t}\norm{u_{0}}_{q}
  \end{equation}
  for every $t\ge 0$, $\varepsilon>0$ and $1\le q\le
  \infty$. Moreover, by Lemma~\ref{lem:2} and
  Theorem~\ref{thm:convergence-result}, one has for every $T>0$ that
  \begin{equation}
    \lim_{\varepsilon\to0}u_{\varepsilon}=u\qquad\text{in $C([0,T];L^{1}(\Sigma,\mu))$.}
  \end{equation}
 Combining this with \eqref{eq:153} and H\"older's inequality, we find
 \begin{equation}
   \label{eq:151}
   \lim_{\varepsilon\to0}u_{\varepsilon}=u\qquad\text{in
     $C([0,T];L^{q}(\Sigma,\mu))$, for every $1\le q<\infty$ and $T>0$.}
 \end{equation}
 Now, we fix $T>0$ and set
 \begin{displaymath}
   M= e^{\omega T}\norm{u_{0}}_{\infty}.
 \end{displaymath}
 By~\eqref{eq:153}, the values of $u_{\varepsilon}$ and $u$ do not
 exceed the interval $[-M,M]$. Since the
 measure space $(\Sigma,\mu)$ is finite, we see that
\begin{align*}
  \norm{\phi_{\varepsilon}(u_{\varepsilon}(t))-\phi(u(t))}_{q}
  &\le \norm{\phi_{\varepsilon}(u_{\varepsilon}(t))-\phi(u_{\varepsilon}(t))}_{q}
    + \norm{\phi(u_{\varepsilon}(t))-\phi(u(t))}_{q}\\
  &\le\;\mu(\Sigma)^{1/q}\,\norm{\phi_{\varepsilon}-\phi}_{L^{\infty}(-M,M)}
    + \norm{\phi(u_{\varepsilon}(t))-\phi(u(t))}_{q}
\end{align*}
and so, the uniform convergence of $\phi_{\varepsilon}\to \phi$
on $[-M,M]$ as $\varepsilon\to0+$, the continuity of $\phi$ combined
with~\eqref{eq:153}, \eqref{eq:151}, and Lebesgue's dominated
convergence theorem imply
\begin{equation}
   \label{eq:154}
   \lim_{\varepsilon\to0}\phi_{\varepsilon}(u_{\varepsilon})=\phi(u)\qquad\text{in
     $C([0,T];L^{q}(\Sigma,\mu))$ for every $1\le q<\infty$.}
 \end{equation}
Since $F$ is Lipschitz continuous in all $L^{q}$-spaces,
\eqref{eq:151} and \eqref{eq:154} imply
\begin{equation}
   \label{eq:155}
   \lim_{\varepsilon\to0}\int_{0}^{t}s^{k+1}\int_{\Sigma} F(u_{\varepsilon})
   \phi_{\varepsilon}(u_{\varepsilon})\,\dmu\,\ds=
   \int_{0}^{t}s^{k+1}\int_{\Sigma} F(u) \phi(u)\,\dmu\,\ds
 \end{equation}
for every $t\in (0,T]$ and $k\ge -1$. We set
\begin{displaymath}
 \Phi_{\varepsilon}(s)=\int_{0}^{s}\phi_{\varepsilon}(r)\,\dr\qquad\text{for
 every $s\in \R$.}
\end{displaymath}
Since $\phi_{\varepsilon}\to \phi$ uniformly
on $[-2M,2M]$ as $\varepsilon\to0+$, there is some $\varepsilon_{0}>0$
such that the sequence $(\phi_{\varepsilon})_{\varepsilon>\varepsilon_{0}}$ is bounded
$L^{\infty}(-2M,2M)$. Thus and by the mean-value theorem, we obtain that
\begin{align*}
  & \norm{\Phi_{\varepsilon}(u_{\varepsilon}(t))-\Phi(u(t))}_{1}\\
  &\qquad
    \le \norm{\Phi_{\varepsilon}(u_{\varepsilon}(t))-\Phi_{\varepsilon}(u(t))}_{1}
     + \norm{\Phi_{\varepsilon}(u(t))-\Phi(u(t))}_{1}\\
  &\qquad
    \le\,\sup_{\varepsilon\ge
    \varepsilon_{0}}\norm{\phi_{\varepsilon}}_{L^{\infty}(-M,M)}\,
    \norm{u_{\varepsilon}(t)-u(t)}_{1}
    +\,\norm{\phi_{\varepsilon}-\phi}_{L^{\infty}(-M,M)}\,e^{\omega T}\norm{u_{0}}_{1}.
\end{align*}
Applying to this limit~\eqref{eq:151} and the uniform convergence of
$\phi_{\varepsilon}\to \phi$ on $[-M,M]$ as $\varepsilon\to0+$, we
obtain that
\begin{equation}
  \label{eq:156}
  \lim_{\varepsilon\to 0}\Phi_{\varepsilon}(u_{\varepsilon})=\Phi(u)\qquad\text{in
    $C([0,T];L^{1}(\Sigma,\mu))$.}
\end{equation}

By Theorem~\ref{thm:Linfty-implies-mild-are-strong-in-L1}, every
$u_{\varepsilon}$ is a strong solution of Cauchy
problem~\eqref{eq:150} for $\phi$ replaced by $\phi_{\varepsilon}$
and has the same regularity as stated in~\eqref{eq:124}. Moreover,
every $u_{\varepsilon}$ satisfies inequality~\eqref{eq:256}, that is,
\begin{equation}
  \label{eq:257}
  \begin{split}
    &\int_{0}^{t}\Psi(\phi_{\varepsilon}(u_{\varepsilon}(s)))\,\ds +
      \int_{\Sigma}\Phi_{\varepsilon}(u_{\varepsilon}(t))\,\dmu\\
    &\qquad\le \int_{\Sigma}\Phi_{\varepsilon}(u_{0})\,\dmu
      -\int_{0}^{t}\int_{\Sigma} F(u_{\varepsilon}(s))\phi_{\varepsilon}(u_{\varepsilon}(s))\,\dmu\,\ds
  \end{split}
\end{equation}
and inequality~\eqref{eq:140}, that is,
\begin{equation}
  \label{eq:258}
   \begin{split}
    &\int_{0}^{t}s^{k+1}\Psi(\phi_{\varepsilon}(u_{\varepsilon}(s)))\,\ds +
      t^{k+1} \int_{\Sigma}\Phi_{\varepsilon}(u_{\varepsilon}(t))\,\dmu\\
    &\qquad\le
      (k+1)\int_{0}^{t}s^{k}\int_{\Sigma}\phi_{\varepsilon}(u_{\varepsilon}(t))\,u_{\varepsilon}(t)\,\dmu\,\ds\\
     &\hspace{4cm} -\int_{0}^{t}s^{k+1}\int_{\Sigma}
     F(u_{\varepsilon}(s))\,\phi_{\varepsilon}(u_{\varepsilon}(s))\,\dmu\,\ds
  \end{split}
\end{equation}
for every $t\in (0,T]$ and $\varepsilon>0$, where $k\ge 0$ is
fixed. Since $\Phi_{\varepsilon}\ge 0$, inequality~\eqref{eq:257}
together with \eqref{eq:153} and the two limits~\eqref{eq:154} and
\eqref{eq:155} imply that the sequence
$(\phi_{\varepsilon}(u_{\varepsilon}))_{\varepsilon>0}$ is bounded in
$L^{p}(0,T;V)$ and so by~\eqref{ineq:bounded-in-V},
$(\Psi'\phi_{\varepsilon}(u_{\varepsilon}))_{\varepsilon>0}$ is
bounded in $L^{p^{\mbox{}_{\prime}}}(0,T;V')$. In particular, by
equation~\eqref{eq:152} we have that
$\big(\frac{d u_{\varepsilon}}{dt}\big)_{\varepsilon>0}$ is bounded in
$L^{p^{\mbox{}_{\prime}}}(0,T;V')$. By the continuous embedding of $V$
into $L^{2}(\Sigma,\mu)$ and by~\eqref{eq:154}, for any sequence
$(\varepsilon_{n})_{n\ge 1}$ of the open interval $(0,1)$ satisfying
$\lim_{n\to\infty}\varepsilon_{n}=0$, there are $\nu$ and
$\chi\in L^{p^{\mbox{}_{\prime}}}(0,T;V')$ and a subsequence of
$(u_{\varepsilon_{n}})_{n\ge 1}$, which we denote again by
$(u_{\varepsilon_{n}})_{n\ge 1}$ such that
\begin{align}
  \label{eq:160}
   &\lim_{n\to\infty}\phi_{\varepsilon_{n}}(u_{\varepsilon_{n}})=\phi(u)\qquad\text{weakly in
     $L^{p}(0,T;V)$,}\\
   \label{eq:158}
   &\lim_{n\to\infty}\tfrac{d u_{\varepsilon_{n}}}{dt}=\nu\qquad\text{weakly in
     $L^{p^{\mbox{}_{\prime}}}(0,T;V')$,}\\
   \label{eq:159}
   & \lim_{n\to\infty}\Psi'\phi_{\varepsilon_{n}}(u_{\varepsilon_{n}}) =\chi\qquad\text{weakly in
     $L^{p^{\mbox{}_{\prime}}}(0,T;V')$.}
\end{align}
By~\eqref{eq:151} and~\eqref{eq:158} combined with standard techniques
employed for vector-valued distributions (see, for instance, \cite[pp
36]{hauer07}), one sees that
\begin{displaymath}
\nu=\frac{d u}{dt}\qquad\text{in $L^{p^{\mbox{}_{\prime}}}(0,T;V')$.}
\end{displaymath}
By~\eqref{eq:214} and Lipschitz continuity of $F$ on
$L^{p^{\mbox{}_{\prime}}}(0,T;L^{2}(\Sigma,\mu))$, we may multiply
equation~\eqref{eq:152} by $v\in C^{1}_{c}(0,T;V)$ and subsequently
integrate over $(0,T)$. Then,
\begin{displaymath}
  \int_{0}^{T}\left\langle \frac{d u_{\varepsilon_{n}}}{dt},v\right\rangle_{V',V}\,\dt+
  \int_{0}^{T}\langle \Psi'\phi_{\varepsilon_{n}}(u_{\varepsilon_{n}}),v\rangle_{V',V}\,\dt
  + \int_{0}^{T}\langle F(u_{\varepsilon_{n}}),v\rangle\,\dt=0
\end{displaymath}
for every $n\ge 1$. Sending $n\to \infty$ in the latter equation and
employing~\eqref{eq:151}, \eqref{eq:158}, \eqref{eq:159} and the
Lipschitz continuity of $F$ on
$L^{p^{\mbox{}_{\prime}}}(0,T;L^{2}(\Sigma,\mu))$ yields
\begin{equation}
  \label{eq:157}
  \frac{d u}{dt} + \chi +F(u)=0\qquad\text{in $L^{p^{\mbox{}_{\prime}}}(0,T;V')$.}
\end{equation}
It remains to show that $\chi=\Psi'(u)$. To see this, let
$0<t_{1}<t_{2}<T$ and take $w\in L^{p}(0,T;V)$. By convexity of
$\Psi$, we have that
\begin{displaymath}
  \int_{t_{1}}^{t_{2}}\langle \Psi'(\phi_{\varepsilon_{n}}(u_{\varepsilon_{n}}))-\Psi'(w),
  \phi_{\varepsilon_{n}}(u_{\varepsilon_{n}})-w\rangle \,\dt\ge 0.
\end{displaymath}
Now, using that $u_{\varepsilon_{n}}$ is a solution of
equation~\eqref{eq:152}, the latter inequality can be
rewritten as
\begin{align*}
  &\int_{t_{1}}^{t_{2}}\int_{\Sigma} F(u_{\varepsilon_{n}}) 
    (\phi_{\varepsilon_{n}}(u_{\varepsilon_{n}})-w)\dmu \dt +
    \int_{\Sigma}\Phi(u_{\varepsilon_{n}})\dmu\Big\vert_{t_{1}}^{t_{2}}
    + \int_{t_{1}}^{t_{2}}\langle \tfrac{d u_{\varepsilon_{n}}}{dt}, w\rangle \,\dt\\
  &\qquad \le -\int_{t_{1}}^{t_{2}}\langle \Psi'(w),
    \phi_{\varepsilon_{n}}(u_{\varepsilon_{n}})-w\rangle \,\dt.
\end{align*}
Sending $n\to\infty$ in this inequality and using ~\eqref{eq:151},
\eqref{eq:155} for $k=-1$, \eqref{eq:160}, \eqref{eq:158}, and
\eqref{eq:156}, we obtain
\begin{align*}
 &\int_{t_{1}}^{t_{2}}\int_{\Sigma} F(u) (\phi(u)-w)\dmu \dt +
 \int_{\Sigma}\Phi(u)\dmu\Big\vert_{t_{1}}^{t_{2}}
 + \int_{t_{1}}^{t_{2}}\langle \tfrac{d u}{dt}, w\rangle \,\dt\\
 &\qquad \le -\int_{t_{1}}^{t_{2}}\langle \Psi'(w),
  \phi(u)-w\rangle \,\dt.
\end{align*}
On the other hand, if we first
multiply equation~\eqref{eq:157} with $\phi(u)$, and then integrate over
$(t_{1},t_{2})$ for $0<t_{1}<t_{2}<T$ and apply \emph{integration
  by parts formula} (Lemma~\ref{lem:3}) yields
\begin{displaymath}
  \int_{\Sigma}\Phi(u)\,\dmu\Big\vert_{t_{1}}^{t_{2}}+\int_{t_{1}}^{t_{2}}\langle
  \chi,\phi(u)\rangle\,\dt + \int_{t_{1}}^{t_{2}}
  \int_{\Sigma}F(u(t))\phi(u(t))\,\dmu\,\dt =0.
\end{displaymath}
Using this, we can rewrite the latter inequality as
\begin{displaymath}
  \int_{t_{1}}^{t_{2}}\langle \chi-\Psi'(w),
  \phi(u)-w\rangle \,\dt\ge 0.
\end{displaymath}
Since $w\in L^{p}(0,T;V')$ was arbitrary, taking $w=\phi(u)-\lambda \xi$ for
any $\lambda>0$ and for some general $\xi \in L^{p}(t_{1},t_{2};V)$ in this inequality
and applying the hemicontinuity of $\Psi'$
(hypothesis~($\mathcal{H}$\ref{hyp:b})) yields
\begin{displaymath}
  \int_{t_{1}}^{t_{2}}\langle \chi-\Psi'(\phi(u)),\xi\rangle \,\dt\ge 0
\end{displaymath}
for all $\xi \in L^{p}(t_{1},t_{2};V)$. Therefore and since
$0<t_{1}<t_{2}<T$ were arbitrary, $\chi=\Psi'(\phi(u))$ in $V'$ for
a.e. $t\in (0,T)$, showing that $u$ is a weak energy solution of Cauchy
problem~\eqref{eq:161}. 

It remains to show that $u$ satisfies the energy
inequalities~\eqref{eq:256} and~\eqref{eq:140}. To see this, we send
$\varepsilon\to0$ in the two inequalities~\eqref{eq:257} and
\eqref{eq:258} and apply limit \eqref{eq:160} together with
Fatou's lemma (note $\Psi\ge 0$ by assumption) and the lower
semicontinuity of $\psi$, limit~\eqref{eq:151}, \eqref{eq:154},
\eqref{eq:155} and \eqref{eq:156}. This completes the proof of this theorem.
\end{proof}

%
%
%
%
%
%
%
%
%
%
\subsection{Proof of Theorem~\ref{thm:weak-solutions-Li-Linfty}}

This subsection is dedicated to outlining the proof of
Theorem~\ref{thm:weak-solutions-Li-Linfty}.

\begin{proof}[Proof of Theorem~\ref{thm:weak-solutions-Li-Linfty}]
  Let $u_{0}\in \overline{D(A_{1\cap\infty}\phi)}^{\mbox{}_{L^{1}}}$
  and $\{T_{t}\}_{t\ge 0}$ be the semigroup generated by
  $-(\overline{A_{1\cap\infty}\phi}+F)$ on
  $\overline{D(A_{1\cap\infty}\phi)}^{\mbox{}_{L^{1}}}$. By
  assumption, the semigroup $\{T_{t}\}_{t\ge 0}$ satisfies the
  $L^{1}$-$L^{\infty}$-regularisation estimate~\eqref{eq:168} (for
  $u_{0}=0$ and $s=1$). Thus
  $T_{t}u_{0}\in
  \overline{D(A_{1\cap\infty}\phi)}^{\mbox{}_{L^{1}}}\cap
  L^{\infty}(\Sigma,\mu)$
  for every $t>0$ and the strong continuity of $\{T_{t}\}_{t\ge 0}$ in
  $L^{1}(\Sigma,\mu)$ yields the existence of a sequence
  $(u_{0,n})_{n\ge 1}$ with elements
  $u_{0,n}\in \overline{D(A_{1\cap\infty}\phi)}^{\mbox{}_{L^{1}}}\cap
  L^{\infty}(\Sigma,\mu)$
  such that $u_{0,n}=T_{t_{n}}u_{0}$ for some sequence
  $(t_{n})_{n\ge 1}$ satisfying $0<t_{n+1}<t_{n}$,
  $\lim_{n\to\infty}t_{n}=0$, and $\lim_{n\to\infty}u_{0,n}=u_{0}$ in
  $L^{1}(\Sigma,\mu)$. We set $u(t)=T_{t}u_{0}$, ($t\ge 0$), to be the
  unique mild solution of problem~\eqref{eq:149} in $L^{1}$ with
  initial value $u_{0}$ and $u_{n}(t)=T_{t}u_{0,n}$, ($t\ge 0$), the
  unique mild solution of problem~\eqref{eq:149} in $L^{1}$ with
  initial value $u_{0,n}$. By Theorem~\ref{thm:reg-mild-sol-are-weak},
  the mild solution $u_{n}$ of~\eqref{eq:149} is a weak energy
  solution of problem~\eqref{eq:161} with
  regularity~\eqref{eq:273}--\eqref{eq:274}, satisfying \eqref{eq:275}
  and \eqref{eq:277}. Now, the semigroup
  property~\eqref{eq:272} and the exponential growth property in
  $L^{1}$ (that is, $\tilde{q}=1$ in~\eqref{eq:62}) yield that
  \begin{equation}
    \label{eq:265}
    u(t+t_{n})=u_{n}(t)\qquad\text{for all $t\ge 0$ and $n\ge 1$.}
  \end{equation}
  Thus, if one replaces the interval $[0,T)$ by $[t_{n},T)$ for every
  $n\ge 1$, the mild solution $u$ is a weak energy
  solution of problem~\eqref{eq:161} with
  regularity~\eqref{eq:273}--\eqref{eq:274}, satisfying \eqref{eq:275}
  and \eqref{eq:277}. 

  It remains to show that energy estimates~\eqref{eq:260} holds. For
  this, recall that by Theorem~\ref{thm:reg-mild-sol-are-weak},
  $u_{n}$ satisfies~\eqref{eq:140} for every $k\ge 0$, that is,
  \allowdisplaybreaks
  \begin{equation}
    \label{eq:262}
  \begin{split}
    &\int_{0}^{t}s^{k+1}\Psi(\phi(u_{n}(s)))\,\ds +
      t^{k+1} \int_{\Sigma}\Phi(u_{n}(t))\,\dmu\\
    &\qquad\le
      (k+1)\int_{0}^{t}s^{k}\int_{\Sigma}\phi(u_{n}(t))\,u_{n}(t)\,\dmu\,\ds\\
     &\hspace{4cm} 
     -\int_{0}^{t}s^{k+1}\int_{\Sigma} F(u_{n}(s))\phi(u_{n}(s))\,\dmu\,\ds.
  \end{split}
  \end{equation}
  By using that $F$ is Lipschitz continuous, H\"older's and Young's
  inequality, and then Poincar\'e type inequality~\eqref{eq:259}, we
  see that
  \begin{equation}
    \label{eq:269}
  \begin{split}
    &\pm\int_{0}^{t}s^{k+1}\int_{\Sigma} F(u_{n}(s))\phi(u_{n}(s))\,\dmu\,\ds\\
    &\qquad\le \omega\,
      \int_{0}^{t}s^{k+1}\norm{u_{n}(s)}_{p^{\mbox{}_{\prime}}}\,\norm{\phi(u_{n}(s))}_{p}\,\ds\\
    &\qquad\le \varepsilon\, \int_{0}^{t}s^{k+1}
      \Psi(\phi(u_{n}(s)))\,\ds +
      \tfrac{\omega^{p}\,C^{p-1}}{p^{\mbox{}_{\prime}}(\varepsilon\,p)^{p-1}}\,
      \int_{0}^{t}s^{k+1}\norm{u_{n}(s)}_{p^{\mbox{}_{\prime}}}^{p^{\mbox{}_{\prime}}}\,\ds
  \end{split}
\end{equation}
  for every $\varepsilon>0$. Similarly, 
  \begin{equation}
    \label{eq:270}
    \begin{split}
      &(k+1)\,\int_{0}^{t}s^{k}\int_{\Sigma} \phi(u_{n}(s))\,u_{n}(s)\,\dmu\,\ds\\
      &\qquad\le (k+1)\,
      \int_{0}^{t}s^{k}\norm{u_{n}(s)}_{p^{\mbox{}_{\prime}}}\,\norm{\phi(u_{n}(s))}_{p}\,\ds\\
      &\qquad\le \varepsilon\, \int_{0}^{t}s^{k+1}
      \Psi(\phi(u_{n}(s)))\,\ds +
      \tfrac{(k+1)^{p}\,C^{p-1}}{p^{\mbox{}_{\prime}}(\varepsilon\,p)^{p-1}}\,
      \int_{0}^{t}s^{k}\norm{u_{n}(s)}_{p^{\mbox{}_{\prime}}}^{p^{\mbox{}_{\prime}}}\,\ds
    \end{split}
  \end{equation}
  Choosing $\varepsilon=\tfrac{1}{4}$ in these two estimates and apply
  them to the right hand-side of inequality~\eqref{eq:262}, we
  obtain
  \begin{equation}
    \label{eq:264}
    \begin{split}
    &\tfrac{1}{2}\int_{0}^{t}s^{k+1}\Psi(\phi(u_{n}(s)))\,\ds +
      t^{k+1} \int_{\Sigma}\Phi(u_{n}(t))\,\dmu\\
    &\qquad\le
      \tfrac{(k+1)^{p}\,C^{p-1}}{p^{\mbox{}_{\prime}}(4^{-1}\,p)^{p-1}}\,
      \int_{0}^{t}s^{k}\norm{u_{n}(s)}_{p^{\mbox{}_{\prime}}}^{p^{\mbox{}_{\prime}}}\,\ds
      +\tfrac{\omega^{p}\,C^{p-1}}{p^{\mbox{}_{\prime}}(4^{-1}\,p)^{p-1}}\,
      \int_{0}^{t}s^{k+1}\norm{u_{n}(s)}_{p^{\mbox{}_{\prime}}}^{p^{\mbox{}_{\prime}}}\,\ds.
  \end{split}
  \end{equation}
  By assumption, there are exponents $\alpha$, $\beta$, $\gamma>0$ and
  a constant $\tilde{C}>0$ such that the semigroup $\{T_{t}\}_{t\ge
    0}$ satisfies $L^{1}$-$L^{\infty}$ regularisation
  estimate~\eqref{eq:259}. Now, we choose
  $k=\alpha(p^{\mbox{}_{\prime}}-1)>0$. Then, by H\"older's
  inequality, by using that
  \begin{displaymath}
    \norm{u_{n}(t)}_{1}\le e^{\omega
      t}\,\norm{u_{0,n}}_{1}\qquad\text{for every $t\ge 0$,}
  \end{displaymath}
 and by $L^{1}$-$L^{\infty}$ regularisation
  estimate~\eqref{eq:259}, we see that
  \begin{displaymath}
    \begin{split}
      \int_{0}^{t}s^{\alpha(p^{\mbox{}_{\prime}}-1)}
      \norm{u_{n}(s)}_{p^{\mbox{}_{\prime}}}^{p^{\mbox{}_{\prime}}}\,\ds
      &\le
      \int_{0}^{t}s^{\alpha(p^{\mbox{}_{\prime}}-1)}
      \norm{u_{n}(s)}_{\infty}^{(p^{\mbox{}_{\prime}}-1)}\,\norm{u_{n}(s)}_{1}\,\ds\\
      & \le
      \int_{0}^{t}s^{\alpha(p^{\mbox{}_{\prime}}-1)}
      \norm{u_{n}(s)}_{\infty}^{(p^{\mbox{}_{\prime}}-1)}\,e^{\omega
        s}\,\ds\,\norm{u_{0,n}}_{1}\\
      & \le
      \int_{0}^{t}e^{\omega(\beta\,(p^{\mbox{}_{\prime}}-1)+1)\,s}\,\ds
      \,\norm{u_{0,n}}_{1}^{\gamma\,(p^{\mbox{}_{\prime}}-1)+1}
    \end{split}
  \end{displaymath}
  and
   \begin{displaymath}
    \begin{split}
      \int_{0}^{t}s^{\alpha(p^{\mbox{}_{\prime}}-1)+1}\,
      \norm{u_{n}(s)}_{p^{\mbox{}_{\prime}}}^{p^{\mbox{}_{\prime}}}\,\ds
      & \le
      \int_{0}^{t}s\,e^{\omega\,(\beta\,(p^{\mbox{}_{\prime}}-1)+1)\,s}\,\ds
      \,\norm{u_{0,n}}_{1}^{\gamma\,(p^{\mbox{}_{\prime}}-1)+1}.
    \end{split}
  \end{displaymath}
  Applying these to estimate~\eqref{eq:264}, we obtain
  \begin{displaymath}
    \begin{split}
    &\tfrac{1}{2}\int_{0}^{t}s^{\alpha(p^{\mbox{}_{\prime}}-1)+1}\Psi(\phi(u_{n}(s)))\,\ds +
      t^{\alpha(p^{\mbox{}_{\prime}}-1)+1} \int_{\Sigma}\Phi(u_{n}(t))\,\dmu\\
    &\qquad\le
      \tfrac{(\alpha(p^{\mbox{}_{\prime}}-1)+1)^{p}\,C^{p-1}}{p^{\mbox{}_{\prime}}(4^{-1}\,p)^{p-1}}\,
      \int_{0}^{t}e^{\omega\,(\beta\,(p^{\mbox{}_{\prime}}-1)+1)\,s}\,\ds
      \,\norm{u_{0,n}}_{1}^{\gamma\,(p^{\mbox{}_{\prime}}-1)+1}\\
   &\hspace{3cm}   +\tfrac{\omega^{p}\,C^{p-1}}{p^{\mbox{}_{\prime}}(4^{-1}\,p)^{p-1}}\,
      \int_{0}^{t}s\,e^{\omega\,(\beta\,(p^{\mbox{}_{\prime}}-1)+1)\,s}\,\ds
      \,\norm{u_{0,n}}_{1}^{\gamma\,(p^{\mbox{}_{\prime}}-1)+1}.
  \end{split}
  \end{displaymath}
  Inserting relation~\eqref{eq:265} into this inequality yields
  \begin{displaymath}
    \begin{split}
    &\tfrac{1}{2}\int_{0}^{t}s^{\alpha(p^{\mbox{}_{\prime}}-1)+1}\Psi(\phi(u(s+t_{n})))\,\ds +
      t^{\alpha(p^{\mbox{}_{\prime}}-1)+1} \int_{\Sigma}\Phi(u(t+t_{n}))\,\dmu\\
    &\qquad\le
      \tfrac{(\alpha(p^{\mbox{}_{\prime}}-1)+1)^{p}\,C^{p-1}}{p^{\mbox{}_{\prime}}(4^{-1}\,p)^{p-1}}\,
      \int_{0}^{t}e^{\omega\,(\beta\,(p^{\mbox{}_{\prime}}-1)+1)\,s}\,\ds
      \,\norm{u(t_{n})}_{1}^{\gamma\,(p^{\mbox{}_{\prime}}-1)+1}\\
   &\hspace{3cm}   +\tfrac{\omega^{p}\,C^{p-1}}{p^{\mbox{}_{\prime}}(4^{-1}\,p)^{p-1}}\,
      \int_{0}^{t}s\,e^{\omega\,(\beta\,(p^{\mbox{}_{\prime}}-1)+1)\,s}\,\ds
      \,\norm{u(t_{n})}_{1}^{\gamma\,(p^{\mbox{}_{\prime}}-1)+1}.
  \end{split}
  \end{displaymath}
  for every $t>0$ and $n\ge 1$. By the continuity of $\phi$, since
  $u\in C([0,\infty);L^{1}(\Sigma,\mu))$, the lower semicontinuity of
  $\Psi$, and since
  \begin{displaymath}
     \lim_{n\to\infty}\int_{\Sigma}\Phi(u(t+t_{n}))\dmu\to
    \int_{\Sigma}\Phi(u(t))\dmu
 \end{displaymath}
 for every $t>0$, sending $n\to\infty$ in the last inequality yields
 inequality~\eqref{eq:260}. 
  
 Next, suppose that $\phi'\in L^{\infty}(\R)$, $(\phi^{-1})'$ is
 locally bounded, and exponent $0<\alpha\le 1$ in estimate~\eqref{eq:263}. By
 Theorem~\ref{thm:Linfty-implies-mild-are-strong-in-L1}, the function
 $u_{n}$ given by~\eqref{eq:265} is a strong solution of Cauchy
 problem~\eqref{eq:206} with initial value
 $u_{n}(0)=u_{0,n}=T_{t_{n}}u_{0}$ for some sequence
 $(t_{n})_{n\ge 1}\subseteq (0,\infty)$ satisfying
 $t_{n}\downarrow 0+$ as $n\to\infty$. By
 Theorem~\ref{thm:Linfty-implies-mild-are-strong-in-L1}, the function
 $u_{n}$ has regularity~\eqref{eq:124} satisfying the
 properties~$(1)$--$(4)$ of in this theorem. Thus, by~\eqref{eq:265},
 the mild solution $u$ is also a strong solution of Cauchy problem~\eqref{eq:206} 
 on the interval $[t_{n},T)$ admitting regularity~\eqref{eq:124} on
 $[t_{n},T)$ for every $n\ge 1$ large enough. 

 It remains to show that $u$ satisfies energy
 inequality~\eqref{eq:268}. To see this, we use that by
 Theorem~\ref{thm:Linfty-implies-mild-are-strong-in-L1}, $u_{n}$ satisfies
 inequality~\eqref{eq:139} for every $k\ge 0$ and $t>0$, that is,
 \begin{equation}
   \label{eq:271}
   \begin{split}
     &\tfrac{1}{2}\int_{0}^{t}s^{k+2} \int_{\Sigma}
     \phi'(u_{n}(s))\,\labs{\frac{d u_{n}}{ds}(s)}^{2}\,\dmu\,\ds +
     t^{k+2}\,\Psi(\phi(u_{n}(t)))\\
     &\qquad \le (k+2)(k+1) \int_{0}^{t}s^{k}\int_{\Sigma}\phi(u_{n}(t))\,u_{n}(t)\,\dmu\,\ds\\
     &\hspace{3cm} -(k+2) \int_{0}^{t}s^{k+1}\int_{\Sigma}
     F(u_{n}(s))\phi(u_{n}(s))\,\dmu\,\ds\\
     &\hspace{4cm} +
     \tfrac{1}{2}\int_{0}^{t}s\int_{\Sigma}\phi'(u_{n}(s))\,
     \labs{F(u_{n}(s))}^{2}\,\dmu\,\ds.
        \end{split}
 \end{equation}
 Applying the two estimates~\eqref{eq:269} and~\eqref{eq:270} to the
 right hand side of inequality~\eqref{eq:271} yields
 \begin{align*}
    &\tfrac{1}{2}\int_{0}^{t}s^{k+2} \int_{\Sigma}
     \phi'(u_{n}(s))\,\labs{\frac{d u_{n}}{ds}(s)}^{2}\,\dmu\,\ds +
     t^{k+2}\,\Psi(\phi(u_{n}(t)))\\
   &\qquad\le (k+2)\,2\,\left[\varepsilon\, \int_{0}^{t}s^{k+1}
      \Psi(\phi(u_{n}(s)))\,\ds +
      \tfrac{(k+1)^{p}\,C^{p-1}}{p^{\mbox{}_{\prime}}(\varepsilon\,p)^{p-1}}\,
      \int_{0}^{t}s^{k}\norm{u_{n}(s)}_{p^{\mbox{}_{\prime}}}^{p^{\mbox{}_{\prime}}}\,\ds\right]\\
   &\hspace{4cm} +
     \tfrac{1}{2}\int_{0}^{t}s\int_{\Sigma}\phi'(u_{n}(s))\,
     \labs{F(u_{n}(s))}^{2}\,\dmu\,\ds.
 \end{align*}
 for every $\varepsilon>0$ and $k\ge 0$. Now, taking
 $k=\alpha(p^{\mbox{}_{\prime}}-1)$ and $\varepsilon=\tfrac{1}{2}$,
 then by inequality~\eqref{eq:260}, the
 Lipschitz continuity of $F$, H\"older's inequality, since by
 assumption $\phi'\in L^{\infty}(\R)$, by the $L^{1}$-$L^{\infty}$
 regularisation estimate~\eqref{eq:263}, and by the exponential growth
 property
 \begin{displaymath}
   \norm{u(t)}_{1}\le e^{\omega t}\,\norm{u_{0}}_{1}\qquad\text{for
     all $t\ge 0$,}
 \end{displaymath}
 we see that
\begin{align*}
  &\tfrac{1}{2}\int_{0}^{t}s^{\alpha(p^{\mbox{}_{\prime}}-1)+2} \int_{\Sigma}
    \phi'(u_{n}(s))\,\labs{\frac{d u_{n}}{ds}(s)}^{2}\,\dmu\,\ds +
    t^{\alpha(p^{\mbox{}_{\prime}}-1)+2}\,\Psi(\phi(u_{n}(t)))\\
  &\qquad\le \tfrac{(\alpha(p^{\mbox{}_{\prime}}-1)+2)\,2}{p^{\mbox{}_{\prime}}}\,\left[
    \tfrac{(\alpha(p^{\mbox{}_{\prime}}-1)+1)^{p}\,C^{p-1}}{(4^{-1}\,p)^{p-1}}\,
    \int_{0}^{t}e^{\omega\,(\beta\,(p^{\mbox{}_{\prime}}-1)+1)\,s}\,\ds
    \,\norm{u_{0,n}}_{1}^{\gamma\,(p^{\mbox{}_{\prime}}-1)+1}\right.\\
  &\hspace{3,5cm}
    +\tfrac{\omega^{p}\,C^{p-1}}{(4^{-1}\,p)^{p-1}}\,
    \int_{0}^{t}s\,e^{\omega\,(\beta\,(p^{\mbox{}_{\prime}}-1)+1)\,s}\,\ds
    \,\norm{u_{0,n}}_{1}^{\gamma\,(p^{\mbox{}_{\prime}}-1)+1}\\
  & \hspace{4cm}\left.+ 
    \tfrac{(\alpha(p^{\mbox{}_{\prime}}-1)+1)^{p}\,C^{p-1}}{(2^{-1}\,p)^{p-1}}\,
    \int_{0}^{t}s^{\alpha(p^{\mbox{}_{\prime}}-1)}\norm{u_{n}(s)}_{\infty}^{p^{\mbox{}_{\prime}}-1}\, 
    \norm{u_{n}(s)}_{1}\,\ds\right]\\
  &\hspace{5cm} +
    \tfrac{\omega^{2} }{2}\norm{\phi'}_{\infty}
    \int_{0}^{t}s\,\norm{u_{n}(s)}_{\infty}\,
    \norm{u_{n}(s)}_{1}\,\ds.\\
&\qquad\le \tfrac{(\alpha(p^{\mbox{}_{\prime}}-1)+2)\,2}{p^{\mbox{}_{\prime}}}\,\left[
        \tfrac{(k+1)^{p}\,C^{p-1}}{(4^{-1}\,p)^{p-1}}\,
        \int_{0}^{t}e^{\omega\,(\beta\,(p^{\mbox{}_{\prime}}-1)+1)\,s}\,\ds
        \,\norm{u_{0,n}}_{1}^{\gamma\,(p^{\mbox{}_{\prime}}-1)+1}\right.\\
    &\hspace{3,5cm}
        +\tfrac{\omega^{p}\,C^{p-1}}{(4^{-1}\,p)^{p-1}}\,
        \int_{0}^{t}s\,e^{\omega\,(\beta\,(p^{\mbox{}_{\prime}}-1)+1)\,s}\,\ds
        \,\norm{u_{0,n}}_{1}^{\gamma\,(p^{\mbox{}_{\prime}}-1)+1}\\
  & \hspace{4cm}\left.
    + \tfrac{(\alpha(p^{\mbox{}_{\prime}}-1)+1)^{p}\,C^{p}}{p^{\mbox{}_{\prime}}(2^{-1}\,p)^{p-1}}\,
      \int_{0}^{t}e^{\omega\,\beta (p^{\mbox{}_{\prime}}-1)\,s}\,\ds
    \norm{u_{0,n}}_{1}^{\gamma (p^{\mbox{}_{\prime}}-1)+1}\,\right]\\
   &\hspace{5cm} +
     \tfrac{\omega^{2}\,\tilde{C} }{2}\norm{\phi'}_{\infty}
     \int_{0}^{t}s^{1-\alpha}\,e^{\omega(\beta+1)\,s }\,\ds \norm{u_{0,n}}_{1}^{\gamma+1}.
 \end{align*}
 From this estimate, the assumptions on $\phi$ and by
 inequality~\eqref{eq:263}, it is not difficult to deduce that
 inequality~\eqref{eq:268} holds and $u$ admits the stated
 properties. This concludes the proof of this theorem.
\end{proof}


%
%
%
%
%

\section{Examples}
\label{sec:examples}

This section is devoted to illustrating the power of the theory
developed in the preceding sections. By using the abstract theory of
nonlinear semigroups, we show in this section that mild solutions of nonlinear
parabolic initial boundary-value problems satisfy an $L^{1}$-$L^{\infty}$-regularisation
effect provided the involved diffusion operator satisfies a Gagliardo-Nirenberg type
inequality. Comparing our first examples with the results from
the known literature, one sees that the methods developed in
Section~\ref{gn} and Section~\ref{extra} yield sharp exponents and
extend these results for solutions with exponential growth.


Note that in principle our theory could work for non-linear operators
on non-compact manifolds, such as the porous media operators
associated with the Laplace-Beltrami operator or the $p$-Laplace
operator (for the latter see \cite{MR1878317}). Here
the Sobolev inequalities for the gradient depend on the geometry of
the manifold (see~\cite{MR2039952}), the main task is to deduce
Gagliardo-Nirenberg inequalities for the operator under consideration
by adapting the methods of the present section, then one applies the
above machinery. We leave this for future work.

In order to keep our examples simple and to focus on
the essential, namely, the regularisation effect of solutions
of parabolic boundary-value problems, we shall assume that
$\Sigma$ is an open subset of the $d$-dimen\-sional Euclidean space
$\R^{d}$ for $d\ge 2$. 
We shall specify at the beginning of each example which
further assumptions we impose on the boundary $\partial\Sigma$ of
$\Sigma$. We choose $\mu$ to be the $d$-dimensional Lebesgue measure on
$\Sigma$ and denote by $\mathcal{H}=\mathcal{H}^{d-1}_{\vert \partial\Sigma}$ the
$(d-1)$-dimensional Hausdorff measure $\mathcal{H}^{d-1}$ restricted
to the boundary $\partial\Sigma$. 

Under this assumptions, we simplify our
notation and write $L^{q}(\Sigma)$ to denote the Lebesgue space
$L^{q}(\Sigma,\mu)$, $L^{q}(\partial\Sigma)$ to denote the Lebesgue space
$L^{q}(\partial\Sigma,\mathcal{H})$, and $L^{q}_{0}(\Sigma,\mu)$ the closed
linear subspace $u\in L^{q}(\Sigma,\mu)$ with \emph{mean value}
$\overline{u}:=\tfrac{1}{\abs{\Sigma}}\int_{\Sigma}u\,\dx=0$ for $1\le q\le \infty$.

Here, we employ the following notation: for $1\le p,q\le \infty$, let
$W^{1}_{p,q}(\Sigma)$ be the linear subspace of all functions $u\in
L^{q}(\Sigma)$ having weak partial derivatives $\tfrac{\partial
  u}{\partial x_{1}}, \dots,\tfrac{\partial u}{\partial x_{d}}\in
L^{p}(\Sigma)$ equipped with the norm
\begin{displaymath} 
\norm{u}_{W^{1}_{p,q}}:=\norm{u}_{q}+\norm{|\nabla u|}_{p}.
\end{displaymath} 
Moreover, for $1\le p, q<\infty$, we denote by
$\dot{W}^{1}_{p,q}(\Sigma)$ the closure of the set of test functions
$C_{c}^{\infty}(\Sigma)$ in $W_{p,q}^{1}(\Sigma)$,
$W^{1}_{p,q,m}(\Sigma)$ the space
$L^{q}_{0}(\Sigma)\cap W_{p,q}^{1}(\Sigma)$, and for $0<s<1$,
$W^{s}_{p,q}(\Sigma)$ denotes the set of all $u\in L^{q}(\Sigma)$ with
finite semi-norm
\begin{displaymath}
  \abs{u}_{s,p}^{p}:=\int_{\Sigma}
  \int_{\Sigma}\tfrac{\abs{u(x)-u(y)}^{p}
    }{\abs{x-y}^{d+sp}}\,\dx\dy.
\end{displaymath}
We equip $W^{s}_{p,q}(\Sigma)$ with the norm
$\norm{u}_{s,p,q}=\norm{u}_{q}+\abs{u}_{s,p}$. Further, we denote by
$\dot{W}^{s}_{p,q}(\Sigma)$ the closure of $C_{c}^{\infty}(\Sigma)$ in
$W^{s}_{p,q}(\Sigma)$.
For subsets $\partial\Sigma$ in $\R^{d-1}$, we denote by
$W^{1-1/p,p}(\partial\Sigma)$ the Sobolev-Slobodeckij
space given by the set of all $u\in
L^{p}(\partial\Sigma)$ having finite semi-norm
\begin{displaymath}
  \abs{u}_{p}^{p}:=\int_{\partial\Sigma}
  \int_{\partial\Sigma}\tfrac{\abs{u(x)-u(y)}^{p}
    }{\abs{x-y}^{d+p-2}}\,\dH(x)\dH(y).
\end{displaymath}

In the following, $F : L^{q}(\Sigma,\mu)\to L^{q}(\Sigma,\mu)$ be the
  Nemytski operator of a Carath\'eodory function
  $f : \Sigma\times\R\to \R$ satisfying~\eqref{eq:2} for some constant
  $L> 0$ and $f(\cdot,0)=0$ and $\beta$ be an
$m$-accretive graph in $\R$ with domain $D(\beta)=\R$ and
$(0,0)\in \beta$.\bigskip

We begin to illustrate our theory on the following classical example.

%
%
%
%
%
%

\subsection{Parabolic problems involving $p$-Laplace type operators}
\label{sec:p-laplace}

The $L^{q}$-$L^{r}$-regu\-larisation effect for $1\le q <r\le \infty$ of
solutions of parabolic equations associated with the celebrated
$p$-Laplace operators equipped with homogeneous Dirichlet boundary
conditions has been first established by V\'eron~\cite{MR554377}. The
ideas in~\cite{MR554377} were followed up and extended rapidly by
Alikakos and Rostam\-ian~\cite{MR656651} and more recently
in~\cite{MR1746793,MR1741878}. By using the logarithmic Sobolev
approach, Cipriani and Grillo~\cite{MR1867617} revisited the
$L^{q}$-$L^{r}$-regularisation effect for solutions of parabolic
equations involving $p$-Laplace operators equipped with homogeneous
boundary conditions. Then many papers followed on this topic by using
the same method (see, for
instance, \cite{Takac05,MR2379911,MR2053885,MR2529737}, and more
recently, ~\cite{MR3158845} for homogeneous Robin boundary
conditions with a nonlocal term).\bigskip

To the best of our knowledge, our results stated in this section
complement the existing literature in several ways: namely, by adding
(possibly multi-valued) monotone and Lipschitz continuous
perturbations and by providing a simplified approach to a
$L^{q}$-$L^{r}$-regularisation effect of solutions of parabolic
boundary-value problems associated with $p$-Laplace type operators.\bigskip

Further, the examples in this subsection show that the parameter
$m_{0}$ appearing in the two main theorems Theorem~\ref{thm:main-1}
and Theorem~\ref{thm:GN-implies-reg-bis} is \emph{optimal} if
$m_{0}=q\,\gamma^{-1}$ (cf. Remark~\ref{rem:11}). To be more precise,
consider the case $1<p<d$ and let $\{T_{t}\}_{t\ge 0}$ be the
semigroup generated by the negative $p$-Laplace operator
$-\Delta_{p}^{\!\R^{d}}$ on $L^{2}(\R^{d})$. Then, we show in the
proof of Theorem~\ref{thm:Dirichlet-p-laplace} below that
$\{T_{t}\}_{t\ge 0}$ satisfies $L^{q}$-$L^{r}$ regularity
estimate~\eqref{eq:168} for $u_{0}=0$ with parameters
$r=\tfrac{pd}{d-p}$, $q=2$ and exponent $\gamma=\tfrac{2}{p}$. One
easily sees that $\gamma\,r>q$ and so one can deduce an
$L^{s}$-$L^{\infty}$ regularisation estimate for
$s=\gamma\,r\,q^{-1}\,m_{0}=\frac{dm_{0}}{d-p}$ and sufficiently large
$m_{0}\ge q\,\gamma^{-1}=p$. By Theorem~\ref{thm:Dirichlet-p-laplace},
if $\tfrac{2d}{d+2}<p<d$ then $m_{0}=p$ satisfies~\eqref{eq:75},
and if $\tfrac{2d}{d+1}<p<d$ then for $m_{0}=p$, the semigroup
$\{T_{t}\}_{t\ge 0}\sim -\Delta_{p}^{\!\R^{d}}$ for satisfies 
$L^{1}$-$L^{\infty}$-regularisation estimate~\eqref{eq:168} with
exponent $\alpha_{1}=\tfrac{d}{d(p-2)+p}$ und $u_{0}=0$. The exponent $\alpha_{1}$
coincides with exponent $\frac{d}{\lambda}$ in the \emph{Barenblatt
solution}
\begin{equation}
  \label{eq:250}
  \begin{split}
    &\Gamma_{p}(x,t):=t^{-\frac{d}{\lambda}}\left[1+C_{p}\left(\frac{\abs{x}}{t^{\frac{1}{\lambda}}}\right)^{\frac{p}{p-1}}
    \right]^{\frac{p-1}{p-2}}_{+},\quad\text{for $t>0$,}\\
    &\quad\text{with $\lambda=d(p-2)+p$,
      $C_{p}=\left(\frac{1}{\lambda}\right)^{\frac{1}{p-1}}\frac{2-p}{p}$,}
  \end{split}
\end{equation}
 to the prototype parabolic $p$-Laplace equation 
\begin{displaymath}
 \partial_{t}u-\Delta_{p}^{\!\R^{d}}u=0\qquad\text{on $\R^{d}\times (0,\infty)$.}
\end{displaymath}
Note, the Barenblatt solution~\eqref{eq:250} also
holds for the \emph{singular} range $1<p<2$ provided the parameter
$\lambda>0$. Moreover, $\lambda>0$ if and only if
$\tfrac{2d}{d+1}<p<2$ (see \cite[Chapter 7.4]{MR2865434} ). It is
worth noting that for singular $1<p<2$, the existence of a Barenblatt
solution coincides with the fact that semigroup $\{T_{t}\}_{t\ge 0}$
generated by the negative $p$-Laplace operator
$-\Delta_{p}^{\!\R^{d}}$ on $L^{2}(\R^{d})$ satisfies
$L^{1}$-$L^{\infty}$-regularisation estimate~\eqref{eq:168} for
$u_{0}=0$ with exponent $\alpha_{1}=\tfrac{d}{d(p-2)+p}$, but also that for
the same range $\tfrac{2d}{d+1}<p<2$, every positive weak energy solutions of
problem~\eqref{ip:p-laplace} (below) satisfy a \emph{Harnack
  inequality} (cf.~\cite[Chapter 7.4]{MR2865434}). In the degenerated
range  $2<p<\infty$, the comparison of the optimal exponents $\alpha_{1}$ has been
considered, for instance, in~\cite{MR2268115}.\bigskip

Throughout this section, let $1<p<\infty$. Then for given initial
value $u_{0} \in L^{q}(\Sigma)$, we intend to establish the
regularisation effect of solutions $u(t)=u(x,t)$ for $t>0$ of the
\emph{parabolic initial value problem}
\begin{equation}
  \label{ip:p-laplace}
  \begin{cases}
    \partial_{t}u-\textrm{div} (a(x,\nabla u))+\beta(u)+f(x,u)\ni 0 
    &\qquad\text{on $\Sigma\times
      (0,\infty)$,}\\
    u(\cdot,0)=u_{0} & \qquad\text{on $\Sigma$,}
  \end{cases}
\end{equation}
respectively equipped with one of the following types of boundary
conditions:
\begin{align}
  \label{eq:44} u=0\quad\text{on $\partial\Sigma\times
    (0,\infty)$,} & \;
    \text{if $\Sigma\subseteq \R^{d}$,}\\ 
   \label{eq:45} a(x,\nabla u)\cdot\nu
     =0\quad\text{on $\partial\Sigma\times (0,\infty)$,} & \;
     \text{if $\mu(\Sigma)<\infty$,}\\
  \label{eq:46}  a(x,\nabla u)\cdot\nu+b(x)
  \abs{u}^{p-1}u+d\,\theta_{p}(u)= 0
  \quad\text{on $\partial\Sigma\times
     (0,\infty)$,} & \;
     \text{if $\mu(\Sigma)<\infty$.}
\end{align}

Here, we suppose that $a : \Sigma \times \R^{d}\to \R^{d}$ is
a Carath\'eodory function satisfying the following
\emph{$p$-coercivity}, \emph{growth} and \emph{monotonicity} conditions
 \begin{align}
   \label{eq:coerciveness}
   &a(x,\xi)\xi\ge \eta \abs{\xi}^{p}\\
   \label{eq:growth-cond}
   &\abs{a(x,\xi)}\le c_{1}\abs{\xi}^{p-1}+h(x)\\
   \label{eq:monotonicity-of-a}
   &(a(x,\xi_{1})-a(x,\xi_{2}))(\xi_{1}-\xi_{2})>0
 \end{align}
 for a.e. $x\in \Sigma$ and all $\xi$, $\xi_{1}$, $\xi_{2}\in \R^{d}$
 with $\xi_{1}\neq \xi_{2}$, where $h\in
 L^{p^{\mbox{}_{\prime}}}(\Sigma)$ and $c_{1}$, $\eta>0$ are constants
 independent of $x\in \Sigma$ and $\xi\in \R^{d}$. Under these assumptions, the
 second order quasi linear operator
 \begin{equation}
   \label{eq:gen-div-operator}
  \mathcal{B}u:= -\divergence (a(x,\nabla u))\qquad\text{in $D'(\Sigma)$}
 \end{equation}
 for $u\in W^{1,p}_{loc}(\Omega)$ belongs to the class of
 \emph{Leray-Lions operators} (cf.~\cite{MR0194733}), of which the \emph{$p$-Laplace
 operator} $\Delta_{p}u=\textrm{div}(\abs{\nabla u}^{p-2}\nabla u)$ is a classical prototype.

In some situations, one can replace~\eqref{eq:monotonicity-of-a} by
\begin{equation}
  \label{eq:54}
  (a(x,\xi_{1})-a(x,\xi_{2}))(\xi_{1}-\xi_{2}) \ge \tilde{\eta}\,\abs{\xi_{1}-\xi_{2}}^{p}
\end{equation}
for a.e. $x\in \Sigma$ and all $\xi_{1}$, $\xi_{2}\in \R^{d}$.
In fact, it is well-known (\cite{MR1230384}) that for $p\ge 2$, the
$p$-Laplace operator satisfies inequality~\eqref{eq:54} with constant
$\tilde{\eta}=2^{2-p}$. 

Regarding \emph{homogeneous Dirichlet boundary
  conditions}~\eqref{eq:44}, we assume that $\Sigma$ is an open subset
of $\R^{d}$ and impose no further assumptions on the boundary
$\partial\Sigma$ of $\Sigma$. In the case $\Sigma=\R^{d}$, the homogeneous Dirichlet
boundary conditions~\eqref{eq:44} become the following \emph{vanishing
at infinity condition}
\begin{equation}\label{eq:43}
      \lim_{\abs{x}\to\infty}u(x,t)=0\quad\text{for every $t>0$,}  \quad
      \text{if $\Sigma=\R^{d}$.} 
\end{equation}

Concerning \emph{homogeneous Neumann boundary
  conditions}~\eqref{eq:45}, we assume that $\Sigma$ is an open bounded
domain with a Lipschitz boundary $\partial\Sigma$ (in the sense of~\cite[Sect.~1.3]{MR0227584}).
We denote by $\nu$ the (weak) outward pointing unit normal vector on
$\partial\Sigma$. Under this assumption, it is not clear whether the
co-normal derivative $a(x,\nabla u)\cdot \nu$ on $\partial\Sigma$
exists. Thus, the Neumann boundary condition~\eqref{eq:45} needs to be
understood in a \emph{weak sense} and so, we denote
by $a(x,\nabla u)\cdot \nu$ the \emph{generalised co-normal
  derivative} of $u$ at $\partial\Sigma$ associated with the operator
$\mathcal{B}$ (as, for instance, described in~\cite{MR2981020}).

Considering \emph{homogeneous Robin boundary
  conditions}~\eqref{eq:46}, we assume that $\Sigma$ is a open bounded
domain with a Lipschitz boundary $\partial\Sigma$, $d\ge 0$ is a
constant, $b\in L^{\infty}(\partial\Sigma)$ such that
$b(x)\ge b_{0}>0$ for $\mathcal{H}$-a.e. $x\in \partial\Sigma$. The
operator $\theta_{p}$ describes the \emph{nonlocal term on
  $\partial\Sigma$} and is given by
\begin{equation}
    \label{eq:87}
  \langle \theta_{p}(u),v\rangle=\int_{\partial\Sigma}
  \int_{\partial\Sigma}\tfrac{\abs{u(x)-u(y)}^{p-2}
    (u(x)-u(y))}{\abs{x-y}^{d+p-2}}(v(x)-v(y))\,\dH(x)\dH(y)
\end{equation}
for every $u$, $v\in W^{1-1/p,p}(\partial\Sigma)$.

%
%

\subsubsection{Homogeneous Dirichlet boundary conditions}
\label{sec:homog-dirichl-bound}

Let $\Sigma$ be an open subset of $\R^{d}$. It is well-known
(cf.~\cite{Benilan1990}), at least in the case when
$\Sigma$ is bounded that the Leray-Lions
operator $\mathcal{B}$ given by~\eqref{eq:gen-div-operator} equipped
with homogeneous Dirichlet boundary conditions can be realised as follows:
\begin{align}\label{eq:p-laplace-homogeneous}
 \begin{split}
      &B^{D} = \Big\{ (u,v)\in
      L^{2}(\Sigma)\times L^{2}(\Sigma)\;\Big\vert\;u\in
      \dot{W}_{p,2}^{1}(\Sigma)\; \text{ such that }
      \Big.\\
      & \hspace{3cm}\Big.\int_\Sigma a(x,\nabla u)\nabla
      \xi\dx=\int_{\Sigma} v\,\xi\dx \;\text{ for all }\;\xi\in \dot{W}_{p,2}^{1}(\Sigma)\Big\}.
  \end{split}
\end{align}
We call $B^D$ the \emph{Dirichlet-Leray-Lions operator} in
$L^{2}(\Sigma)$. Note that, since the set of test functions
$C^{\infty}_{c}(\Sigma)$ is contained in $\dot{W}_{p,2}^{1}(\Sigma)$
and dense in $L^{2}(\Sigma)$, $B^{D}$ defines a single-valued operator
on $L^{2}(\Sigma)$ and by using~\eqref{eq:coerciveness}, one obtains that
the domain $D(B^{D})$ is dense in $L^{2}(\Sigma)$. Furthermore,
condition~\eqref{eq:coerciveness} yields $a(x,0)=0$ a.e. on
$\Sigma$ hence, $(0,0)\in B^{D}$.

In the case $\Sigma=\R^{d}$, the space
$\dot{W}_{p,2}^{1}(\Sigma)=W_{p,2}^{1}(\R^{d})$.  Hence the operator
$B^D$ becomes a realisation in $L^{2}(\Sigma)$ of the Leray-Lions
operator $\mathcal{B}$ equipped with vanishing
conditions~\eqref{eq:43}.

To see that $B^D$ is completely accretive in $L^{2}(\Sigma)$,
let $T\in C^{\infty}(\R)$ be such that the derivative $0\le T'\le 1$
with compact support $\textrm{supp}(T')$ and $T(0)=0$. Since for every
$u$, $\hat{u}\in \dot{W}_{p,2}^{1}(\Sigma)$, $T(u-\hat{u})\in
\dot{W}_{p,2}^{1}(\Sigma)$ with
\begin{displaymath}
\nabla T(u-\hat{u})=T'(u-\hat{u})\,\nabla (u-\hat{u})
\end{displaymath}
and by monotonicity
condition~\eqref{eq:monotonicity-of-a}, one sees that
\begin{align*}
 &\int_{\Sigma}T(u-\hat{u})(B^Du-B^D\hat{u})\dx \\
 &\qquad = \int_{\Sigma} (a(x,\nabla
    u)-a(x,\nabla\hat{u}))\nabla(u-\hat{u})T'(u-\hat{u})\,\dx\ge 0.
\end{align*}
Thus, by Proposition~\ref{prop:completely-accretive},
the operator $B^D$ is completely accretive. 

Under the assumptions~\eqref{eq:coerciveness}-\eqref{eq:monotonicity-of-a},
the restriction of the operator $I+ B^D$ on the reflexive Banach space
$V=\dot{W}_{p,2}^{1}(\Sigma)$ satisfies the hypotheses
of~\cite[Th\'eo\-r\`eme~1]{MR0194733}. Recall that an operator $I+B$ on some Banach
space $V$ is \emph{coercive} in $V$ if
\begin{equation}
  \label{eq:91}
  \lim_{\norm{u}_{V}\to\infty}\frac{\langle (I+B)u,u\rangle_{V',V}}{\norm{u}_{V}}=\infty,
\end{equation}
where we denote by $\langle v',v\rangle_{V',V}$ the value of $v'\in V'$
at $v\in V$. In practice, it is often easier to verify that the following
statement holds, which is equivalent to~\eqref{eq:91}: \emph{for every $\alpha \in R$,
the set of all $u\in V$ satisfying
\begin{displaymath}
 \frac{\langle (I+B)u,u\rangle_{V',V}}{\norm{u}_{V}}\le \alpha
\end{displaymath}
is bounded in $V$}. For the operator $B=B^{D}$, the latter statement
holds since for every $\alpha \in \R_{+}:=[0,\infty)$, the set
$\{(a,b)\in \R_{+}^{2}\;\vert a^2 +b^p\le \alpha (a+b)\,\}$ is bounded
in $\R^{2}$. Thus and since $\dot{W}_{p,2}^{1}(\Sigma)$ is
continuously and densely embedded into $L^{2}(\Sigma)$, it follows
that $B^D$ satisfies the range condition~\eqref{eq:range-condition}
for $X=L^{2}(\Sigma)$.

By hypothesis on the $m$-accretive graph $\beta$ on $\R$, one has that
the domain $D(\beta_{2})$ of the associated accretive operator
$\beta_{2}$ in $L^{2}(\Sigma)$ contains the set of test
functions $C_{c}^{\infty}(\Sigma)$. Recall, for every $\lambda>0$, the Yosida operator
$\beta_{2,\lambda}$ of $\beta_{2}$ is given by
$(\beta_{2,\lambda}u)(x)=\beta_{\lambda}(u(x))$ for a.e. $x\in \Sigma$, where
$\beta_{\lambda}$ denotes the Yosida operator of $\beta$ on
$\R$. Since the Yosida operator $\beta_{\lambda} : \R\to \R$ of $\beta$ is
monotone, Lipschitz continuous and satisfies
$\beta_{\lambda}(0)=0$, one has that for every $u\in\dot{W}_{p,2}^{1}(\Sigma)$,
$\beta_{2,\lambda}(u)\in\dot{W}_{p,2}^{1}(\Sigma)$ with
$\nabla \beta_{2,\lambda}(u)=\beta'_{\lambda}(u)\,\nabla u$ a.e. on
$\Sigma$ for all $\lambda>0$. Thus, by definition of $B^{D}$ and by
\eqref{eq:coerciveness},
\begin{displaymath}
  [v,\beta_{2,\lambda}(u)]_{2}=\int_{\Sigma}a(x,\nabla u)\nabla 
  \beta_{2,\lambda}(u)\,\dx \ge \eta\int_{\Sigma}\abs{\nabla
    u}^{p}\beta'_{\lambda}(u)\,\dx\ge 0
\end{displaymath}
 for every $(u,v)\in B^{D}$. Therefore, by
 Proposition~\ref{propo:Lipschitz-complete-accretive}, the operator
 \begin{equation}
   \label{eq:79}
   A^{D}:=B^D+\beta_{2}+F
 \end{equation}
 is quasi $m$-completely accretive in $L^{2}(\Sigma)$ with dense
 domain.

 By the Crandall-Liggett theorem~\cite{MR0287357}, $-A^D$
 generates a strongly continuous semigroup $\{T_{t}\}_{t\ge 0}$ on
 $L^{2}(\Sigma)$ of Lipschitz continuous mappings $T_{t} :
 L^{2}(\Sigma)\to L^{2}(\Sigma)$. Since $-A^{D}$ is completely
 accretive, each mapping $T_{t}$ has a unique Lip\-schitz continuous
 extension on $L^{q}(\Sigma)$ for all $1\le q< \infty$ and
 on $\overline{L^{2}\cap L^{\infty}(\Sigma)}^{\mbox{}_{L^{\infty}}}$ if $q=\infty$,
 respectively with constant $e^{\omega t}$.\bigskip

The complete description of the $L^{q}$-$L^{\infty}$-regularisation effect
of the semigroup $\{T_{t}\}_{t\ge 0}\sim-A^D$ is as follows.

\begin{theorem}
  \label{thm:Dirichlet-p-laplace}
  Suppose the Carath\'eodory function
  $a : \Sigma\times\R^{d}\to \R^{d}$ satisfies the
  conditions~\eqref{eq:growth-cond}, ~\eqref{eq:54} and $a(x,0)=0$ for
  a.e. $x\in \Sigma$. Then the semigroup $\{T_{t}\}_{t\ge 0}\sim-A^D$
  for the operator $A^D$ given by~\eqref{eq:79} satisfies the following
  $L^{q}$-$L^{r}$ regularisation estimates.
  \begin{enumerate}
    \item\label{thm:Dirichlet-p-laplace-claim1} If $1<p<d$, then~\eqref{eq:80} holds for 
      \begin{displaymath}
        \mbox{}\qquad\alpha_{s}=\tfrac{\alpha^{\ast}}{1-\gamma^{\ast}\left(1-\frac{s(d-p)}{d
              m_{0}} \right)},\quad
        \beta_{s}=\tfrac{\frac{\beta^{\ast}}{2}+\gamma^{\ast}
          \frac{s(d-p)}{d m_{0}}}{1-\gamma^{\ast}\left(1-\frac{s(d-p)}{d
              m_{0}}\right)},\quad
        \gamma_{s}=\tfrac{\gamma^{\ast}\,s(d-p)}{d
        m_{0}\left(1-\gamma^{\ast}\left(1-\frac{s (d-p)}{d m_{0}}\right)\right)}
      \end{displaymath}
      for every $m_{0}\ge p$ satisfying
    \begin{math}
      (\frac{d}{d-p}-1)m_{0}+p-2>0,
    \end{math}
    and every $1\le s\le \frac{d m_{0}}{d-p}$ satisfying $s>\frac{d(2-p)}{p}$, where 
    \begin{displaymath}
      \mbox{}\qquad\alpha^{\ast}=\tfrac{d-p}{p m_{0}+(d-p)(p-2)},\;
      \beta^{\ast}=\tfrac{(\frac{2}{p}-1)d+p}{p m_{0}+(d-p)(p-2)}+1,\;
      \gamma^{\ast}=\tfrac{p m_{0}}{p m_{0}+(d-p)(p-2)}.
    \end{displaymath}
    Moreover, if $\frac{2d}{d+2}<p<d$ then one can take $m_{0}=p$ and
    if $\frac{2d}{d+1}<p<d$, then~\eqref{eq:80} holds for every $1\le s\le \frac{d p}{d-p}$.

  \item\label{thm:Dirichlet-p-laplace-claim2} If $p=d\ge 2$, then for
    every $0<\theta <1$, inequality~\eqref{eq:80} holds with exponents
    \begin{displaymath}
     \alpha_{s}=\tfrac{\alpha_{\theta}^{\ast}}{1-\gamma_{\theta}^{\ast}(1-\frac{s(1-\theta)}{2})},\quad
      \beta_{s}=\tfrac{\frac{\beta^{\ast}}{2}+\gamma_{\theta}^{\ast} 
        \frac{s(1-\theta)}{2}}{1-\gamma_{\theta}^{\ast}(1-\frac{s(1-\theta)}{2})},\quad
     \gamma_{s}=\tfrac{\gamma_{\theta}^{\ast}\frac{s(1-\theta)}{2}}{1-\gamma_{\theta}^{\ast}
       (1-\frac{s(1-\theta)}{2})}
    \end{displaymath}
    for every $1\le s \le \frac{2}{1-\theta}$, where
    \begin{displaymath}
      \qquad\begin{array}[c]{c}
      \alpha_{\theta}^{\ast}=\tfrac{[2\theta+p(1-\theta)](1-\theta)}{\theta^{2}},\;
      \beta^{\ast}_{\theta}=\tfrac{[2\theta+p(1-\theta)]^{2}-(1-\theta)p^{2}}{p^{2}2\theta}+1,\; 
      \gamma_{\theta}^{\ast}=\tfrac{2\theta^{2}}{2\theta+p(1-\theta)}.      
      \end{array}
    \end{displaymath}

  \item\label{thm:Dirichlet-p-laplace-claim3} If $d<p<\infty$, then
    inequality~\eqref{eq:80} holds with exponents
    \begin{displaymath}
      \alpha_{s}=\tfrac{\alpha^{\ast}}{1-\gamma^{\ast}(1-\frac{s}{2})},\quad
      \beta_{s}=\tfrac{}{1-\gamma^{\ast}(1-\frac{s}{2})},\quad
      \gamma_{s}=\tfrac{\gamma^{\ast}s}{2(1-\gamma^{\ast}(1-\frac{s}{2}))}
    \end{displaymath}
     for every $1\le s \le 2$, where
    \begin{displaymath}
      \alpha^{\ast}=\tfrac{d}{pd+2(p-d)},\;
      \beta^{\ast}=\gamma^{\ast}+1,\;
      \gamma^{\ast}=\tfrac{2\theta_{0}+p(1-\theta_{0})}{p},\;
      \theta_{0}=\tfrac{p d}{pd+2(p-d)}.
    \end{displaymath}
 \end{enumerate}
 Under the assumptions that $a$
 satisfies~\eqref{eq:coerciveness}-\eqref{eq:monotonicity-of-a}, the
 statements~\eqref{thm:Dirichlet-p-laplace-claim1}-\eqref{thm:Dirichlet-p-laplace-claim3}
 remain true with~\eqref{eq:80} replaced by~\eqref{eq:168} and for $u_{0}=0$.
\end{theorem}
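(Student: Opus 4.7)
The plan is, in each of the three regimes for $p$, to derive an appropriate Gagliardo-Nirenberg type inequality for $A^{D}=B^{D}+\beta_{2}+F$ on $L^{2}(\Sigma)$ and then invoke Theorem~\ref{thm:main-1} (when $p\ge 2$, so that the uniform monotonicity~\eqref{eq:54} yields an inequality with differences~\eqref{eq:5}) or Theorem~\ref{thm:GN-implies-reg-bis} (when $1<p<2$, so that only the $p$-coerciveness~\eqref{eq:coerciveness} is at hand and one uses~\eqref{eq:242} with $(0,0)\in B^{D}$). By Remarks~\ref{rem:2} and~\ref{rem:6}, the Gagliardo-Nirenberg inequality for $B^{D}$ transfers to $A^{D}$ with identical exponents, since $\beta_{2}$ is accretive and $F+\omega I$ is accretive.

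The functional input in each case combines a classical inequality on $\dot{W}^{1}_{p,2}(\Sigma)$ with $\tilde{\eta}\,\|\nabla(u-\hat{u})\|_{p}^{p}\le \langle B^{D}u-B^{D}\hat{u},u-\hat{u}\rangle$ (for $p\ge 2$) or $\eta\,\|\nabla u\|_{p}^{p}\le \langle B^{D}u,u\rangle$ (for $1<p<2$). For claim~\eqref{thm:Dirichlet-p-laplace-claim1}, the Sobolev inequality $\|u\|_{pd/(d-p)}\le C\,\|\nabla u\|_{p}$ produces a Gagliardo-Nirenberg inequality with $q=2$, $r=pd/(d-p)$, $\sigma=p$, $\varrho=0$, hence $\gamma=2/p$. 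For claim~\eqref{thm:Dirichlet-p-laplace-claim2}, for every $\theta\in(0,1)$ the Gagliardo-Nirenberg inequality
\[
\|u\|_{p/(1-\theta)}\le C\,\|\nabla u\|_{p}^{(p-2+2\theta)/p}\,\|u\|_{2}^{(2-2\theta)/p},
\]
valid at the borderline $p=d$ with $r=p/(1-\theta)\ge p$, raised to the power $p^{2}/(2\theta+p-2)$, yields $r=p/(1-\theta)$, $\sigma=p^{2}/(2\theta+p-2)$, $\varrho=2p(1-\theta)/(2\theta+p-2)$. For claim~\eqref{thm:Dirichlet-p-laplace-claim3}, the Morrey-type interpolation $\|u\|_{\infty}\le C\,\|\nabla u\|_{p}^{\theta_{0}}\,\|u\|_{2}^{1-\theta_{0}}$ with $\theta_{0}=pd/(pd+2(p-d))$, raised to the power $p/\theta_{0}$, gives $r=\infty$, $\sigma=p/\theta_{0}$, $\varrho=p(1-\theta_{0})/\theta_{0}$.

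With the Gagliardo-Nirenberg parameters identified, the verification of the hypothesis $\gamma r>q$ (which reduces to $d/(d-p)>1$ in case~\eqref{thm:Dirichlet-p-laplace-claim1} and is immediate in the other two), the selection of $m_{0}\ge q/\gamma$ fulfilling~\eqref{eq:75}, and the substitution into~\eqref{eq:76} proceed by direct computation. In case~\eqref{thm:Dirichlet-p-laplace-claim1}, condition~\eqref{eq:75} at the optimal choice $m_{0}=p$ rewrites as $p m_{0}+(d-p)(p-2)>0$, equivalent to $p>2d/(d+2)$; and viewing $p^{2}(d+1)-pd(d+3)+2d^{2}=(d+1)(p-2d/(d+1))(p-d)$ as a quadratic in $p$ shows that the additional extrapolation condition at $s=1$ with $m_{0}=p$ (coming from Theorem~\ref{thm:extrapol-L1-differences}/\ref{thm:extrapol-L1-bis}) forces the sharper restriction $p>2d/(d+1)$. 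In case~\eqref{thm:Dirichlet-p-laplace-claim3} the Gagliardo-Nirenberg inequality is already of $L^{2}$-$L^{\infty}$ type, so Theorem~\ref{thm:quasi-super-contractivity} directly yields the $L^{2}$-$L^{\infty}$ estimate and only the extrapolation towards $L^{s}$, $1\le s\le 2$, is required.

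The main difficulty is case~\eqref{thm:Dirichlet-p-laplace-claim2}: $p=d$ lies on the borderline of the Sobolev embedding, so no single $L^{r}$ target is distinguished and one must carry a one-parameter family of Gagliardo-Nirenberg inequalities (indexed by $\theta$) through the exponent machinery~\eqref{eq:76}. The resulting algebraic simplification to the stated $\alpha^{\ast}_{\theta}$, $\beta^{\ast}_{\theta}$, $\gamma^{\ast}_{\theta}$ is mechanical, but the bookkeeping of the $\theta$-dependent parameters is the delicate point.
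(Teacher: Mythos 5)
Your strategy in Cases~\eqref{thm:Dirichlet-p-laplace-claim1} and~\eqref{thm:Dirichlet-p-laplace-claim3} matches the paper: the Sobolev embedding (resp.\ Morrey interpolation with $\theta_{0}=pd/(pd+2(p-d))$) yields a Gagliardo--Nirenberg inequality with $r=pd/(d-p)$, $\sigma=p$, $\varrho=0$ (resp.\ $r=\infty$, $\sigma=p/\theta_{0}$, $\varrho=p(1-\theta_{0})/\theta_{0}$), Remark~\ref{rem:2} absorbs $\beta_{2}$ and $F$, and Theorem~\ref{thm:main-1} plus the extrapolation towards $L^{1}$ deliver the stated estimates. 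Your reduction of the constraint at $s=1$, $m_{0}=p$ to the factorised quadratic $p^{2}(d+1)-pd(d+3)+2d^{2}=(d+1)\bigl(p-\tfrac{2d}{d+1}\bigr)(p-d)<0$ is exactly the computation that isolates $p>2d/(d+1)$.

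The genuine gap is in Case~\eqref{thm:Dirichlet-p-laplace-claim2}: your Gagliardo--Nirenberg family is not the one the statement is parametrised by. At $p=d$ the paper takes the interpolation exponent $\theta\in(0,1)$ as the running parameter, using $\norm{u}_{2/(1-\theta)}\le C\norm{\abs{\nabla u}}_{p}^{\theta}\norm{u}_{2}^{1-\theta}$, so $r_{\theta}=2/(1-\theta)$ sweeps all of $(2,\infty)$; raising to the power $p/\theta$ gives $\sigma_{\theta}=p/\theta$, $\varrho_{\theta}=p(1-\theta)/\theta$, and the stated exponents (in particular the factor $s(1-\theta)/2$ and the range $1\le s\le 2/(1-\theta)$) are written in exactly these terms. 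Your inequality $\norm{u}_{p/(1-\theta)}\le C\norm{\abs{\nabla u}}_{p}^{(p-2+2\theta)/p}\norm{u}_{2}^{(2-2\theta)/p}$ is valid, but its interpolation exponent is $\theta'=(p-2+2\theta)/p$, which for $\theta\in(0,1)$ only sweeps $\bigl((p-2)/p,1\bigr)$, so $r=p/(1-\theta)$ covers only $[p,\infty)$. When $p=d>2$ you therefore cannot reach $r\in(2,p)$, i.e.\ the theorem's $\theta\in(0,1-2/p)$, and your $\sigma=p^{2}/(2\theta+p-2)$, $\varrho=2p(1-\theta)/(2\theta+p-2)$ will not reduce to the displayed $\alpha^{\ast}_{\theta},\gamma^{\ast}_{\theta}$ without a change of variable. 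You should take the interpolation exponent itself as the parameter, as the paper does. One further small slip: the choice between Theorem~\ref{thm:main-1} and Theorem~\ref{thm:GN-implies-reg-bis} is governed by whether $a$ satisfies the strong monotonicity~\eqref{eq:54} or only~\eqref{eq:coerciveness}--\eqref{eq:monotonicity-of-a}, not by whether $p\ge 2$ or $1<p<2$; those ranges only coincide for the model $p$-Laplacian, whereas the theorem is stated for a general Leray--Lions nonlinearity $a$.
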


For the proof of this theorem, we employ the classical
Gagliardo-Nirenberg inequalities (\cite{MR0109940}, see
also~\cite{MR1245890}). The Gagliardo-Nirenberg inequalities are valid
for functions $u\in W^{1}_{p,q}(\R^{d})$ and so, in particular, for test
functions $u\in C^{\infty}_{c}(\Sigma)$. Thus we can use of the
following version of Gagliardo-Nirenberg inequalities.

\begin{lemma}[\cite{MR0109940}]
  \label{lem:Sobolev-Gagliardo-Nirenberg}
  For $1\le q, p\le \infty$, let $u\in \dot{W}_{p,q}^{1}(\Sigma)$. Then there
  is a constant $C>0$ depending only on $d$, $q$, $p$, $\theta$ such
  that
  \begin{equation}
    \label{eq:31}
    \norm{u}_{p^{\ast}}\le C\,\norm{|\nabla u|}_{p}^{\theta}\,
    \norm{u}_{q}^{1-\theta},
  \end{equation}
  where
  \begin{equation}
    \label{eq:32}
    \tfrac{1}{p^{\ast}}=\theta \left(\tfrac{1}{p}-\tfrac{1}{d}\right) +
    (1-\theta)\tfrac{1}{q}\;,
  \end{equation}
  for all $\theta \in [0,1]$ with the following exceptional cases:
  \begin{enumerate}
  \item\label{case:1} If $p< d$ and $q=\infty$, then we make the additional assumption
    that either $u$ tends to zero at infinity or $u\in
    L^{\tilde{q}}(\R^{d})$ for some finite $\tilde{q}>0$.
 
  \item\label{case:2} If $1<p<\infty$ and $1-d/p$ is a non negative integer,
    then~\eqref{eq:31} holds only for $\theta\in [0,1)$.
  
  \item If $\Sigma$ is a bounded domain with a Lipschitz boundary,
    then inequality~\eqref{eq:31} is replaced by
    \begin{equation}
      \label{eq:92}
      \norm{u}_{p^{\ast}}\le C\,\left(\norm{|\nabla u|}_{p}^{\theta}\,
    \norm{u}_{q}^{1-\theta}+\norm{u}_{\tilde{q}}\right),
    \end{equation}
    for every $u\in W_{p,q}^{1}(\Sigma)\cap L^{\tilde{q}}(\Sigma)$ and
    any $\tilde{q}>0$, where $p^\ast$ is given by~\eqref{eq:32} for every $\theta\in
    [0,1]$ with the exceptional cases~\eqref{case:1}
    and~\eqref{case:2}, and the constant $C>0$ also depends on the
    domain.
  \end{enumerate}
\end{lemma}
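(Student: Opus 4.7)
The plan is to reduce the full interpolation inequality to the endpoint Sobolev embedding in $\R^d$ and then recover the stated range of exponents via H\"older interpolation. Throughout I would first prove the inequality for $u\in C_c^\infty(\R^d)$, and then recover the general statement on $\dot{W}^1_{p,q}(\Sigma)$ by density and an even extension, and the bounded-domain variant \eqref{eq:92} via the existence of a bounded extension operator $E: W^1_{p,q}(\Sigma)\to W^1_{p,q}(\R^d)$ (which requires the Lipschitz regularity of $\partial \Sigma$).

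The cornerstone is the Sobolev embedding $\|u\|_{p^\sharp}\le C\|\,|\nabla u|\,\|_p$ for $1\le p<d$ with $p^\sharp=dp/(d-p)$, which is exactly \eqref{eq:31} in the endpoint case $\theta=1$, $q=p^\sharp$. I would first establish the case $p=1$ by the classical slicing argument: writing $|u(x)|\le \int_{\R}|\partial_i u(x_1,\dots,t,\dots,x_d)|\,\mathrm{d}t=:g_i(\hat x_i)$ for each coordinate $i$, one has $|u(x)|^{d/(d-1)}\le \prod_{i=1}^d g_i(\hat x_i)^{1/(d-1)}$, and an application of the Loomis--Whitney inequality followed by AM--GM yields $\|u\|_{d/(d-1)}\le \tfrac{1}{d}\sum_{i=1}^d\|\partial_i u\|_1\le \|\,|\nabla u|\,\|_1$. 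For $1<p<d$ I would apply this case to $v:=|u|^{\gamma}$ with $\gamma=p(d-1)/(d-p)$ and invoke H\"older to bound $\int |u|^{\gamma-1}|\nabla u|\,\mathrm{d}x$ by $\|u\|_{(\gamma-1)p^{\mbox{}_{\prime}}}^{\gamma-1}\|\,|\nabla u|\,\|_p$, and a rearrangement produces the desired $\|u\|_{p^\sharp}\le C\|\,|\nabla u|\,\|_p$. The full interpolation \eqref{eq:31} for $\theta\in(0,1)$ with $p<d$ then follows from H\"older's inequality $\|u\|_{p^\ast}\le \|u\|_{p^\sharp}^{\theta}\|u\|_q^{1-\theta}$, which holds precisely when $p^\ast$ satisfies \eqref{eq:32}.

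For the remaining regimes the endpoint step changes. If $p>d$, Morrey's estimate furnishes an $L^\infty$-bound $\|u\|_\infty\le C\|\,|\nabla u|\,\|_p^{\lambda}\|u\|_q^{1-\lambda}$ (obtained by estimating pointwise oscillations against the $L^p$-norm of the gradient on balls and iterating across scales), which can be combined with H\"older to obtain \eqref{eq:31}; the exceptional assumption in case \eqref{case:1} is required so that one can make sense of the $L^\infty$-endpoint. The borderline $p=d$ is more delicate, and the cleanest route is a scaling/approximation argument: inequality \eqref{eq:31} with $p<d$ applied with $\tilde p\nearrow d$ gives the result for every $\theta<1$, which explains the exclusion of $\theta=1$ whenever $1-d/p\in\N_0$ in case \eqref{case:2}.

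The main obstacle will be the handling of the borderline cases $p=d$ and of the critical admissibility condition~\eqref{eq:32}, together with the verification that the exceptional cases are genuine obstructions. A secondary technical point is the passage from $C_c^\infty(\R^d)$ to $\dot W^1_{p,q}(\Sigma)$ (straightforward when $\Sigma=\R^d$ or via zero extension when $u$ has compact support in $\Sigma$) and the bounded-domain variant \eqref{eq:92}, where one cannot rely on compactly supported approximation and must use an extension operator plus a covering argument; the lower-order term $\|u\|_{\tilde q}$ then appears naturally to absorb the contribution of the cutoff.
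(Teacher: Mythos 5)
The paper does not give a proof of this lemma; it simply cites the result from Nirenberg's 1959 paper \cite{MR0109940} (see also \cite{MR1245890}). So what you have written is a reconstruction rather than a competitor to an argument in the text. Your overall strategy --- establish the endpoint Sobolev embedding by the $p=1$ slicing/Loomis--Whitney estimate, bootstrap to $1<p<d$ via the substitution $v=|u|^\gamma$ with $\gamma=p(d-1)/(d-p)$, then recover the general exponents in \eqref{eq:32} by H\"older interpolation, and handle the bounded domain by a Stein-type extension operator --- is the standard route and those steps check out (in particular your exponent bookkeeping $(\gamma-1)p^{\mbox{}_{\prime}}=dp/(d-p)$ is correct).

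There are, however, two genuine problems. First, the treatment of the borderline $p=d$ by sending $\tilde p\nearrow d$ in the subcritical inequality does not close: the constant in $\|u\|_{d\tilde p/(d-\tilde p)}\le C_{\tilde p}\|\,|\nabla u|\,\|_{\tilde p}$ blows up as $\tilde p\to d^-$, and moreover $\|\,|\nabla u|\,\|_{\tilde p}$ need not be controlled by $\|\,|\nabla u|\,\|_d$ on all of $\R^d$. The $p=d$ case with $\theta<1$ has to be argued directly (for instance by the same $|u|^\gamma$-trick, but now letting $\gamma$ depend on the target exponent $p^\ast$ rather than forcing the endpoint $p^\sharp$, or by the unified induction in Nirenberg's original proof); the approximation as you have set it up leaves a hole rather than explaining exceptional case \eqref{case:2}. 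Second, you invoke exceptional case \eqref{case:1} in your discussion of $p>d$, but \eqref{case:1} concerns $p<d$ with $q=\infty$, where the extra hypothesis is needed to rule out nonzero constants on $\R^d$ (zero gradient, finite $L^\infty$ norm), not to ``make sense of the $L^\infty$ endpoint''. Your Morrey step for $p>d$ would also benefit from being made precise: the scale-invariant bound $\|u\|_\infty\le C\|\,|\nabla u|\,\|_p^{\lambda}\|u\|_q^{1-\lambda}$ is obtained by combining the pointwise oscillation estimate $|u(x)-u_{B_r}|\le Cr^{1-d/p}\|\,|\nabla u|\,\|_{L^p(B_r)}$ with the crude bound $|u_{B_r}|\le Cr^{-d/q}\|u\|_{L^q}$ and optimizing over $r$, which is a specific computation rather than an iteration across scales.
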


\begin{proof}[Proof of Theorem~\ref{thm:Dirichlet-p-laplace}]
  We begin to consider the case $1<p<d$. Then by
  Lemma~\ref{lem:Sobolev-Gagliardo-Nirenberg}, there is a constant
  $C>0$ such that
  \begin{equation}
    \label{eq:162}
    \norm{u}_{\frac{p d}{d-p}}\le C \norm{|\nabla u|}_{p}
  \end{equation}
  for every $u\in\dot{W}_{p,2}^{1}(\Sigma)$. Thus, by definition of
  the operator $B^D$ and by~\eqref{eq:54}, 
  \begin{align*}
    \norm{u-\hat{u}}_{\frac{p d}{d-p}}^{p}
    &\le C\, \norm{\abs{\nabla(u-\hat{u})}}_{p}^{p}\\
    & \le C\,\tilde{\eta}^{-1} \int_{\Sigma} (a(x,\nabla u)-a(x,\nabla
      \hat{u}))\nabla (u-\hat{u})\,\dx\\
    & =C\,\tilde{\eta}^{-1} \langle u-\hat{u},B^{D}u-B^{D}\hat{u}\rangle
  \end{align*}
 for every $u$, $\hat{u}\in D(B^{D})$. Now, Remark~\ref{rem:2} yields
 the operator $A^{D}$ given
  by~\eqref{eq:79} satisfies the Gagliardo-Nirenberg
 inequality~\eqref{eq:245} with parameters
 \begin{equation}
   \label{eq:235}
   r=\frac{p d}{d-p},\quad \sigma=p,\quad
  \varrho=0,\;\text{ and }\; \omega=L.
\end{equation}
For $\gamma=\tfrac{2}{p}$, one has $\gamma\,r> 2$ and
$m_{0}=2 \gamma^{-1}=p$ satisfies~\eqref{eq:75} if and only if
$p> 2d/(d+2)$. Thus, Theorem~\ref{thm:main-1} yields the first
statement of this theorem.

Next, consider the case $p=d\ge 2$. By
Lemma~\ref{lem:Sobolev-Gagliardo-Nirenberg}, 
  \begin{displaymath}
    \norm{u}_{\frac{2}{1-\theta}}\le C \norm{|\nabla u|}_{p}^{\theta}\,\norm{u}_{2}^{1-\theta}
  \end{displaymath}
  for every $u\in\dot{W}_{p,2}^{1}(\Sigma)$, $0\le \theta
  <1$ and some constant $C>0$. Let $0<\theta<1$. Then by definition of
  the operator $B^D$ and by~\eqref{eq:54}, 
\begin{align*}
    \norm{u-\hat{u}}_{\frac{2}{1-\theta}}^{\frac{p}{\theta}}
    &\le C^{\frac{p}{\theta}}\,
      \norm{\abs{\nabla(u-\hat{u})}}_{p}^{p}\,\norm{u-\hat{u}}_{2}^{\frac{p(1-\theta)}{\theta}}\\
    & \le C^{\frac{p}{\theta}} \,\tilde{\eta}^{-1}\int_{\Sigma} (a(x,\nabla u)-a(x,\nabla
      \hat{u}))\nabla (u-\hat{u})\,\dx\,\norm{u-\hat{u}}_{2}^{\frac{p(1-\theta)}{\theta}}\\
    & = C^{\frac{p}{\theta}}\,\tilde{\eta}^{-1}\,
      \langle u-\hat{u},B^{D}u-B^{D}\hat{u}\rangle\,\norm{u-\hat{u}}_{2}^{\frac{p(1-\theta)}{\theta}}
  \end{align*}
  for every $u$, $\hat{u}\in D(B^{D})$. Thus, by Remark~\ref{rem:2}, the
  operator $A^D$ given by~\eqref{eq:79} satisfies the Gagliardo-Nirenberg
  inequality~\eqref{eq:245} with parameters
 \begin{equation}
   \label{eq:236}
   r_{\theta}=\frac{2}{1-\theta},\; \sigma_{\theta}=\frac{p}{\theta},\;
  \varrho_{\theta}=\frac{p(1-\theta)}{\theta},\; \omega=L\quad
  \text{for every $0<\theta<1$.}
\end{equation}
For $0<\theta<1$, $\gamma_{\theta}:=\frac{2\theta+p(1-\theta)}{p}$
satisfies $\gamma_{\theta}\, r_{\theta} >2$ and by taking
$m_{0}=2 \gamma_{\theta}^{-1}=\frac{2 p }{2\theta+p (1-\theta)}$, one has
\begin{displaymath}
  \left(\frac{\gamma_{\theta}
      r_{\theta}}{2}-1\right)m_{0}+2\left(\frac{1}{\gamma_{\theta}}-1\right)
  =\frac{2\theta}{1-\theta}>0
\end{displaymath}
hence, condition~\eqref{eq:75} holds. Moreover, since
$0<\gamma_{\theta}\le 1$, one easily sees that
$\gamma_{\theta} (1-\frac{s}{r_{\theta}})<1$ for every
$1\le s\le 2^{-1}\gamma_{\theta} r_{\theta} m_{0}
=r_{\theta}$.
Therefore by Theorem~\ref{thm:main-1}, the second statement of this
theorem holds.

Finally, let $d<p<\infty$. Then there is an $0<\theta_{0}<1$ such that
$\theta_{0}
(\frac{1}{p}-\frac{1}{d})+(1-\theta_{0})\frac{1}{2}=0$ or
equivalently, $\theta_{0}=\tfrac{p d}{pd+2(p-d)}$. We
apply Lemma~\ref{lem:Sobolev-Gagliardo-Nirenberg} for this
$\theta_{0}$, to conclude that there is
a constant $C>0$ such that
\begin{displaymath}
    \norm{u}_{\infty}\le C \norm{|\nabla u|}_{p}^{\theta_{0}}\,\norm{u}_{2}^{1-\theta_{0}}
  \end{displaymath}
  for every $u\in\dot{W}_{p,2}^{1}(\Sigma)$. Proceeding as in the
  previous step, we
  see that by~\eqref{eq:54} and by Remark~\ref{rem:2}, the operator
  $A^{D}$ satisfies the Gagliardo-Nirenberg inequality~\eqref{eq:245}
  with parameters
 \begin{equation}
   \label{eq:98}
   r=\infty,\quad \sigma=\frac{p}{\theta_{0}},\quad
  \varrho=\frac{p(1-\theta_{0})}{\theta_{0}},\;\text{ and }\; \omega=L.
\end{equation}
Then, by the first statement of Theorem~\ref{thm:main-1},
$\gamma^{\ast}=\frac{2+\varrho}{\sigma}=\tfrac{2\theta_{0}+p(1-\theta_{0})}{p}$,
$\alpha^{\ast}=\frac{\theta_{0}}{p}$ and
$\beta^{\ast}=\gamma^{\ast}+1$. Moreover, since
$\frac{1}{p}<\gamma^{\ast}<\frac{2}{p}<1$, one has for all
$1\le s\le 2$ that $\gamma (1-\frac{s}{2})<1$. Thus,
Theorem~\ref{thm:extrapol-L1-differences} implies that the third
statement of this theorem holds.
\end{proof}

%
%

\subsubsection{Homogeneous Neumann boundary conditions}

In this subsection, we assume that $\Sigma$ is a bounded domain with a
Lipschitz boundary.

Further, we assume that the monotone graph
 $\beta$ on $\R$ either satisfies
 \begin{equation}
   \label{eq:84}
   (v-\hat{v})(u-\hat{u})\ge \eta_{0}\abs{u-\hat{u}}^{p}
 \end{equation}
 or
 \begin{equation}
   \label{eq:85}
   v u\ge \eta_{0}\abs{u}^{p}
 \end{equation}
 for every $(u,v)$, $(\hat{u},\hat{v})\in \beta$.

 We define the realisation $B^{N}$ in $L^{2}(\Sigma)$ of the Leray-Lions
  operator $\mathcal{B}$ equipped with homogeneous Neumann boundary
  conditions~\eqref{eq:45} by
\begin{equation}
  \label{eq:83}
  \begin{split}
  B^{N}&= \Big\{ (u,v)\in
      L^{2}(\Sigma)\times L^{2}(\Sigma)\;\Big\vert\;u\in
      W_{p,2}^{1}(\Sigma)\; \text{ such that }
      \Big.\\
      & \hspace{2cm}\Big.\int_\Sigma a(x,\nabla u)\nabla
      \xi\dx=\int_{\Sigma} v\,\xi\dx \;\text{ for all }\;\xi\in
      W_{p,2}^{1}(\Sigma)\Big\}.
  \end{split}
\end{equation}

Under the assumption that $u$, $\xi$ and $a(\cdot,\nabla
u)$ are smooth functions up to the boundary
$\partial\Sigma$ and $\nu$ denotes the outward pointing unit normal vector on
$\partial\Sigma$, the application of Green's first identity yields
 \begin{displaymath}
  \int_{\Sigma}a(x,\nabla u)\nabla \xi\,\dx
  =-\int_{\Sigma}\textrm{div}\left(a(x,\nabla u)\right)\,\xi\,\dx+
  \int_{\partial\Sigma} a(x,\nabla u)\cdot\nu\; \xi\dH. 
 \end{displaymath}
 Thus, if $u\in D(B^{N})$, one has that
 $v=-\textrm{div}\left(a(x,\nabla u)\right)$ and
 $ a(x,\nabla u)\cdot\nu=0$ for
 $\mathcal{H}^{d-1}$-a.e. $x\in \partial\Sigma$, showing that our
 definition of the operator $B^{N}$ is consistent with the
 \emph{smooth} situation. We call $B^{N}$ the \emph{Neumann
   Leray-Lions operator} in $L^{2}(\Sigma)$.

 In order to see that $B^N$ is $m$-completely accretive in $L^{2}(\Sigma)$
 and that the monotone graph $\beta_{2}$ in $L^{2}(\Sigma)$ satisfies
 the hypothesis~\eqref{eq:78} in
 Proposition~\ref{propo:Lipschitz-complete-accretive} with respect to
 the operator $B^N$, one proceeds as in the previous example (for homogeneous
 Dirichlet boundary conditions), but here one needs to replace the
 space $\dot{W}_{p,2}^{1}(\Sigma)$ by $W_{p,2}^{1}(\Sigma)$. In addition,
 it is not difficult to check that the domain $D(B^{N})$ is dense in
 $L^{2}(\Sigma)$. Therefore, the operator
 \begin{equation}
   \label{eq:82}
   A^N:=B^N+\beta_{2}+F
 \end{equation}
 is quasi $m$-completely accretive in $L^{2}(\Sigma)$ with dense
 domain. By the Cran\-dall-Liggett theorem, $-A^N$ generates a
 strongly continuous semigroup $\{T_{t}\}_{t\ge 0}$ on $L^{2}(\Sigma)$
 of Lipschitz continuous mappings $T_{t}$ on $L^{2}(\Sigma)$. The
 space $L^{\infty}(\Sigma)$ is continuously embedded into
 $L^{2}(\Sigma)$ since $\Sigma$ is bounded. Thus, and since
 $T_{t} : L^{q}\cap L^{2}(\Sigma)\to L^{q}\cap L^{2}(\Sigma)$ is
 Lipschitz continuous with respect to the $L^{q}$-norm with constant
 $e^{\omega t}$ for $1\le q\le\infty$, $T_{t}$ admits a unique
 Lipschitz continuous extension on $L^{q}(\Sigma)$ with the same
 Lipschitz constant $e^{\omega t}$ for every
 $1\le q\le \infty$.

 Now, we state the
 complete description of the $L^{q}$-$L^{\infty}$-regularisation effect of
 the semigroup $\{T_{t}\}_{t\ge 0}\sim-A^N$.

\begin{theorem}
  \label{thm:Neumann-p-laplace}
  Suppose the Carath\'eodory function
  $a : \Sigma\times\R^{d}\to \R^{d}$ satisfies growth
  condition~\eqref{eq:growth-cond}, $A_{\phi}^{N}$ is the operator given
  by~\eqref{eq:79}, and
  $\overline{u}:=\tfrac{1}{\mu(\Sigma)}\int_{\Sigma}u\,\dx$ for any $u\in
  L^{1}(\Sigma)$. Then the following statements hold:
 \begin{enumerate}
 \item If $a$ satisfies the strong monotonicity
   condition~\eqref{eq:54}, $a(x,0)=0$ for a.e. $x\in \Sigma$, and the
   monotone graph $\beta$ satisfies~\eqref{eq:84}, then the semigroup
   $\{T_{t}\}_{t\ge 0}\sim-A_{\phi}^{N}$ on $L^{2}(\Sigma)$ satisfies the
   regularisation estimates~\eqref{eq:18} and~\eqref{eq:80} with the
   same exponents as the semigroup generated by $-A^D$.

 \item If $a$
   satisfies~\eqref{eq:coerciveness}-\eqref{eq:monotonicity-of-a}, and
   the monotone graph $\beta$ satisfies~\eqref{eq:85}, then the
   semigroup $\{T_{t}\}_{t\ge 0}\sim-A_{\phi}^{N}$ on $L^{2}(\Sigma)$
   satisfies the regularisation estimates~\eqref{eq:20}
   and~\eqref{eq:168} with the same exponents as the semigroup
   generated by $-A^D$. Moreover, the semigroup
   $\{T_{t}\}_{t\ge 0}\sim-B^N$ on $L^{2}(\Sigma)$ satisfies
   \begin{equation}
     \label{eq:196}
    \norm{T_{t}u-\overline{u}}_{\infty}\lesssim \; t^{-\alpha_{s}}\;
    \;e^{\omega \beta_{s} t}\; \norm{u-\overline{u}}_{s}^{\gamma_{s}}
  \end{equation}
  for every $t>0$, $u\in L^{s}(\Sigma)$ for $1\le s<\infty$ and
  exponents $\alpha_{s}$, $\beta_{s}$ and $\gamma_{s}$ as given in
  Theorem~\ref{thm:Dirichlet-p-laplace} for the semigroup generated by
  $-A^{D}$.
 \end{enumerate}
\end{theorem}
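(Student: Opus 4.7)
The proof hinges on deriving appropriate Gagliardo--Nirenberg type inequalities for $A^N$ and $B^N$, from which the regularisation estimates follow by invoking Theorem~\ref{thm:main-1}, Theorem~\ref{thm:GN-implies-reg-bis}, Theorem~\ref{thm:extrapol-L1-differences} and Theorem~\ref{thm:extrapol-L1-bis} exactly as in the proof of Theorem~\ref{thm:Dirichlet-p-laplace}. The key new feature, compared to the Dirichlet case, is that on a bounded Lipschitz domain $\Sigma$ the Sobolev embedding $W^{1,p}(\Sigma)\hookrightarrow L^{pd/(d-p)}(\Sigma)$ controls $\|u\|_{pd/(d-p)}^{p}$ by $\|\nabla u\|_{p}^{p}+\|u\|_{p}^{p}$ (cf.\ \eqref{eq:92}) rather than $\|\nabla u\|_{p}^{p}$ alone, so the lower-order $L^{p}$-term must be absorbed either through the zero-order perturbation coming from $\beta$, or by restricting to the mean-zero subspace.

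For claim $(1)$, let $(u,v),(\hat u,\hat v)\in A^{N}$ and decompose $v=B^{N}u+v_{\beta}+F(u)$ with $v_{\beta}\in\beta_{2}(u)$ (analogously for $\hat v$). The strong monotonicity \eqref{eq:54} of $a$, the hypothesis \eqref{eq:84} on $\beta$, and the accretivity of $F+LI$ in $L^{2}$ together yield
\[
[u-\hat u,(v-\hat v)+L(u-\hat u)]_{2}\;\ge\;\tilde\eta\,\|\nabla(u-\hat u)\|_{p}^{p}+\eta_{0}\|u-\hat u\|_{p}^{p}.
\]
Combined with \eqref{eq:92} (applied to the three ranges $1<p<d$, $p=d$, $p>d$, precisely as in the proof of Theorem~\ref{thm:Dirichlet-p-laplace} but now on $W^{1,p}(\Sigma)$ instead of $\dot W^{1,p}(\Sigma)$) this produces the Gagliardo--Nirenberg inequality with differences \eqref{eq:5} with parameters \eqref{eq:235}, \eqref{eq:236} and \eqref{eq:98} respectively, and with $\omega=L$. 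Theorem~\ref{thm:main-1} then delivers \eqref{eq:18} and, after extrapolation, \eqref{eq:80} with the very same exponents as in the Dirichlet case.

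For claim $(2)$ applied to $A^{N}$, one loses the \emph{with-differences} inequality but the coercivity \eqref{eq:coerciveness} still gives $\int_{\Sigma}a(x,\nabla u)\cdot\nabla u\,\dx\ge\eta\,\|\nabla u\|_{p}^{p}$ and \eqref{eq:85} gives $\int_{\Sigma}u\,v_{\beta}\,\dx\ge\eta_{0}\|u\|_{p}^{p}$ for $v_{\beta}\in\beta_{2}(u)$. Hence for $(u,v)\in A^{N}$,
\[
[u,v+Lu]_{2}\;\ge\;\eta\,\|\nabla u\|_{p}^{p}+\eta_{0}\|u\|_{p}^{p}\;\ge\;C\,\|u\|_{pd/(d-p)}^{p}
\]
by \eqref{eq:92}, together with the analogous inequalities in the critical and supercritical ranges. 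This is the Gagliardo--Nirenberg inequality \emph{without} differences \eqref{eq:242} at $u_{0}=0$, after observing that $(0,0)\in A^{N}$ (using $a(x,0)=0$, $0\in\beta(0)$, $F(0)=0$). Theorem~\ref{thm:GN-implies-reg-bis} then yields \eqref{eq:20} and \eqref{eq:168} with the Dirichlet exponents.

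The main obstacle is the last assertion \eqref{eq:196}, where neither $\beta$ nor $F$ is available to absorb $\|u\|_{p}^{p}$. The strategy is to restrict the problem to the mean-zero subspace $L^{2}_{0}(\Sigma)$, on which the Poincar\'e--Wirtinger inequality $\|w\|_{p}\le C_{P}\|\nabla w\|_{p}$ gives back a clean Sobolev inequality $\|w\|_{pd/(d-p)}\le C\|\nabla w\|_{p}$. Two structural facts enable this reduction: first, $\int_{\Sigma}B^{N}u\,\dx=0$ (test with $\xi\equiv 1$), so the semigroup $\{T_{t}\}_{t\ge 0}\sim -B^{N}$ preserves the mean; second, $B^{N}(u+c)=B^{N}u$ for every constant $c$, hence by uniqueness of mild solutions $T_{t}(u+c)=T_{t}u+c$, which in particular gives $T_{t}\overline u=\overline u$ and $T_{t}u-\overline u=T_{t}(u-\overline u)$ for all $u$ and $t\ge 0$. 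On the invariant subspace $L^{2}_{0}(\Sigma)$ one then obtains, for $w\in W^{1,p}(\Sigma)\cap L^{2}_{0}(\Sigma)$,
\[
\|w\|_{pd/(d-p)}^{p}\;\le\;C\,\|\nabla w\|_{p}^{p}\;\le\;C\eta^{-1}\,[w,B^{N}w]_{2},
\]
which is the Gagliardo--Nirenberg inequality \eqref{eq:242} for the restriction $\widetilde B^{N}:=B^{N}|_{L^{2}_{0}}$ with $u_{0}=0$ and $\omega=0$. Applying Theorem~\ref{thm:GN-implies-reg-bis} (and, to reach $1\le s<p$, Theorem~\ref{thm:extrapol-L1-bis}) to the restricted semigroup $\widetilde T_{t}:=T_{t}|_{L^{2}_{0}}$ and undoing the translation via $T_{t}u-\overline u=\widetilde T_{t}(u-\overline u)$ yields \eqref{eq:196} with the same exponents as in the Dirichlet case. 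The hard part here is precisely identifying the correct invariant subspace and the shift invariance $T_{t}(u+c)=T_{t}u+c$; once established, the remainder is a direct transcription of the Dirichlet argument.
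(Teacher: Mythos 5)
Your proof is correct and follows the same overall route as the paper: use the Gagliardo--Nirenberg inequality on $W^{1}_{p,p}(\Sigma)$ (via \eqref{eq:92}) rather than on $\dot W^{1}_{p,p}$, absorb the lower-order $L^{p}$ term via \eqref{eq:84} or \eqref{eq:85}, and invoke Theorem~\ref{thm:main-1} resp. Theorem~\ref{thm:GN-implies-reg-bis}. Claims (1) and (2) match the paper's argument nearly step by step.

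Where the two diverge slightly is in claim (3). The paper absorbs the $L^{p}$ term via the Poincar\'e inequality $\|u-\overline u\|_{p}\le C_{N}\|\nabla u\|_{p}$ and then simply notes that $(\overline u,0)\in B^{N}$ for every $u$, before citing Theorem~\ref{thm:GN-implies-reg-bis}. Taken at face value this does not quite match the hypothesis of that theorem, which calls for a \emph{single} stationary element $u_{0}$, not a $u$-dependent one; the gap is closed once one realises that the mean is preserved by the resolvent iteration and that constants are fixed points. You make this implicit step explicit by identifying the invariant subspace $L^{2}_{0}(\Sigma)$, noting that $B^{N}(u+c)=B^{N}u$ hence $T_{t}(u+c)=T_{t}u+c$, applying the GN inequality at the genuine fixed point $u_{0}=0$ on $L^{2}_{0}$, and then undoing the shift via $T_{t}u-\overline u=T_{t}(u-\overline u)$. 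This is the cleaner formulation, and it is consistent with how the paper itself treats the Neumann operator on $L^{1}_{m}(\Sigma)$ in Section~\ref{sec:doubly-nonl-diff}. One small wrinkle you might have flagged: $\widetilde T_{t}:=T_{t}|_{L^{2}_{0}}$ is indeed well defined because the mean is conserved, and the restriction $B^{N}|_{L^{2}_{0}}$ is $m$-completely accretive on $L^{2}_{0}$ --- this is part of the \emph{structure}, not an automatic consequence of the corresponding properties on $L^{2}(\Sigma)$, but it is granted by the paper's remarks in Sections~\ref{sec:examples} and~\ref{sec:doubly-nonl-diff}.
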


For the proof of this theorem,
Lemma~\ref{lem:Sobolev-Gagliardo-Nirenberg} provides the crucial estimates.

\begin{proof}[Proof of Theorem~\ref{thm:Neumann-p-laplace}]
  First, let $1<p<d$. Then by
  Lemma~\ref{lem:Sobolev-Gagliardo-Nirenberg}, there is a constant $C>0$ such that
  \begin{equation}
    \label{eq:93}
    \norm{u}_{\frac{p d}{d-p}}\le C \left(\norm{|\nabla u|}_{p}+\norm{u}_{p}\right)
  \end{equation}
  for every $u\in W_{p,p}^{1}(\Sigma)$. Taking $p$th power on both
  sides of this inequality, using that for $q>1$,
  \begin{equation}
    \label{eq:86}
    (a+b)^{q}\le  2^{q-1}(a^q+b^q)\qquad\text{ for every $a$, $b\ge0$,}
  \end{equation}
  by definition of
  the operator $B^{N}$ and by~\eqref{eq:54} and~\eqref{eq:84}, we see that
  \begin{align*}
    \norm{u-\hat{u}}_{\frac{p d}{d-p}}^{p}
    &\le C\, \left(\norm{\abs{\nabla(u-\hat{u})}}_{p}^{p}+\norm{u-\hat{u}}_{p}^{p}\right)\\
    & \le C\,\tilde{\eta}^{-1} \int_{\Sigma} (a(x,\nabla u)-a(x,\nabla
      \hat{u}))\nabla (u-\hat{u})\,\dx\\
    &\hspace{2cm}+  C\,\eta_{0}^{-1}\int_{\Sigma} (v-\hat{v})(u-\hat{u})\,\dx\\
    & = C\,\max\{\tilde{\eta}^{-1},\eta_{0}^{-1}\}\,\langle u-\hat{u},B^Nu+v-(B^N\hat{u}+\hat{v})\rangle
  \end{align*}
  for every $u$, $\hat{u}\in D(B^{N})$, $v\in \beta_{2}(u)$,
  $\hat{v}\in \beta_{2}(\hat{u})$. Now, Remark~\ref{rem:2} yields that the
  operator $A^N$ given by~\eqref{eq:82} satisfies the Gagliardo-Nirenberg
  inequality~\eqref{eq:245} with parameters~\eqref{eq:235}. Thus,
  Theorem~\ref{thm:main-1} yields the first statement of this theorem
  for $1<p<d$.

  Next, let $p=d\ge 2$. Then by
  Lemma~\ref{lem:Sobolev-Gagliardo-Nirenberg}, there is a constant
  $C>0$ such that
  \begin{displaymath}
    \norm{u}_{\frac{2}{1-\theta}}\le C \left(\norm{|\nabla
        u|}_{p}^{\theta}\,\norm{u}_{2}^{1-\theta}+\norm{u}_{\tilde{q}}\right)
  \end{displaymath}
  for every
  $u\in\dot{W}_{p,2}^{1}(\Sigma)\cap L^{\tilde{q}}(\Sigma)$,
  $0\le \theta <1$, and $\tilde{q}>0$. Thus, if $p=d=2$, we choose $\tilde{q}=2$ and use that
   $\norm{u}_{2}=\norm{u}_{2}^{\theta}\,\norm{u}_{2}^{1-\theta}$ for
   every $0\le \theta<1$, and if $p=d>2$ then we choose $\tilde{q}_{\theta}$ by
  $\tfrac{1}{\tilde{q}_{\theta}}=\tfrac{\theta}{p}+\tfrac{1-\theta}{2}$
  for any given $0<\theta<1$
  and apply H\"older's inequality. Then, in both cases, we obtain 
  \begin{equation}
    \label{eq:95}
    \norm{u}_{\frac{2}{1-\theta}}\le C \left(\norm{|\nabla
        u|}_{p}^{\theta}+\norm{u}_{p}^{\theta}\right)\,\norm{u}_{2}^{1-\theta}
  \end{equation}
  for every $u\in W_{p,2}^{1}(\Sigma)$. Thus, by definition of the
  operator $B^{N}$,~\eqref{eq:54} and~\eqref{eq:84}, 
\begin{align*}
    \norm{u-\hat{u}}_{\frac{2}{1-\theta}}^{\frac{p}{\theta}}
    &\le C^{\frac{p}{\theta}}\,2^{\frac{p}{\theta}-1}\,\left(
      \norm{\abs{\nabla(u-\hat{u})}}_{p}^{p}
      +\norm{u-\hat{u}}_{p}^{p}\right) 
      \,\norm{u-\hat{u}}_{2}^{\frac{p(1-\theta)}{\theta}}\\
    & \le C^{\frac{p}{\theta}}\,2^{\frac{p}{\theta}-1}
      \left(\eta^{-1}\,\int_{\Sigma} (a(x,\nabla u)-a(x,\nabla
      \hat{u}))\nabla (u-\hat{u})\,\dx\right.\\
     & \hspace{4cm}\left.+ \tilde{\eta}_{0}^{-1} \,\int_{\Sigma}
      (v-\hat{v})(u-\hat{u})\,\dx\right)\,
       \norm{u-\hat{u}}_{2}^{\frac{p(1-\theta)}{\theta}}\\
    & \le  C^{\frac{p}{\theta}} 2^{\frac{p}{\theta}-1} \max\{\tilde{\eta}^{-1},\eta_{0}^{-1}\} 
       \langle u-\hat{u},B^Nu+v-(B^N
      \hat{u}+\hat{v})\rangle \norm{u-\hat{u}}_{2}^{\frac{p(1-\theta)}{\theta}}
  \end{align*}
  By Remark~\ref{rem:2}, $A^N$ satisfies the Gagliardo-Nirenberg
  inequality~\eqref{eq:245} with parameters~\eqref{eq:236} hence,
  Theorem~\ref{thm:main-1} yields the first statement of this
  theorem for $p=d$.

  Now, let $d<p<\infty$. Then, there is an $0<\theta_{0}<1$ such that
  $\theta_{0} (\frac{1}{p}-\frac{1}{d})+(1-\theta_{0})\frac{1}{2}=0$
  or equivalently, $\theta_{0}=\tfrac{p d}{pd+2(p-d)}$. We apply
  Lemma~\ref{lem:Sobolev-Gagliardo-Nirenberg} for $\theta_{0}$,
  $\tilde{q}$ given by
  $\tfrac{1}{\tilde{q}}=\tfrac{\theta_{0}}{p}+\tfrac{1-\theta_{0}}{2}$
  and apply H\"older's inequality. Then,
  \begin{equation}
    \label{eq:96}
    \norm{u}_{\infty}\le C \left(\norm{|\nabla
        u|}_{p}^{\theta_{0}}+\norm{u}_{p}^{\theta_{0}}\right)\,\norm{u}_{2}^{1-\theta_{0}}
  \end{equation}
  for every $u\in\dot{W}_{p,2}^{1}(\Sigma)$ and some constant
  $C>0$. Proceeding as in the first step of this proof, we see that
  by~\eqref{eq:54}, \eqref{eq:84} and Remark~\ref{rem:2}, the operator
  $A^N$ satisfies the Gagliardo-Nirenberg inequality~\eqref{eq:245} with
  parameters~\eqref{eq:98}. Therefore, by
  Theorem~\ref{thm:extrapol-L1-differences}, the first statement of
  this theorem holds for $p>d$.

  Under the assumption that merely the
  hypotheses~\eqref{eq:coerciveness}-\eqref{eq:monotonicity-of-a} are
  satisfied, one proceeds as in the previous three steps of this
  proof and applies Theorem~\ref{thm:GN-implies-reg-bis} with
  $u_{0}=0$. Thus, the second statement of this theorem holds.

  To see that also the last claim of this theorem holds, one
  combines the Gagliardo-Nirenberg inequality~\eqref{eq:92} with the
  Poincar\'e inequality
  \begin{equation}
    \label{eq:187}
    \norm{u-\overline{u}}_{p}\le C_{N}\,\norm{\abs{\nabla u}}_{p},
  \end{equation}
  which holds for all $u\in W^{1}_{p,p}(\Sigma)$ and some constant
  $C_{N}>0$ independent of $u$. Then, for $1<p<d$,
  inequality~\eqref{eq:93} reduces to
  \begin{equation}
    \label{eq:188}
    \norm{u-\overline{u}}_{\frac{p d}{d-p}}\le C \norm{\abs{\nabla u}}_{p}
  \end{equation}
  for every $u\in W^{1}_{p,p}(\Sigma)$, if $p=d\ge 2$ then
  inequality~\eqref{eq:95} reduces to
  \begin{equation}
    \label{eq:189}
    \norm{u-\overline{u}}_{\frac{2}{1-\theta}}\le C_{\theta}\,
    \norm{\abs{\nabla u}}_{p}^{\theta}\,\norm{u-\overline{u}}_{2}^{1-\theta}
  \end{equation}
  for every $u\in W^{1}_{p,2}(\Sigma)$ and $0<\theta<1$, and if
  $d<p<\infty$ then inequality~\eqref{eq:96} reduces to
  \begin{displaymath}
    \norm{u-\overline{u}}_{\infty}\le C\,\norm{\abs{\nabla
        u}}_{p}^{\theta_{0}}\,\norm{u-\overline{u}}_{2}^{1-\theta_{0}}
  \end{displaymath}
  for every $u\in W^{1}_{p,2}(\Sigma)$, where
  $\theta_{0}=\tfrac{p d}{pd+2(p-d)}$ and the constant $C$ can differ
  from line to line. Now, proceeding as in the first three steps of
  this proof and using the inequalities~\eqref{eq:187}-\eqref{eq:189}
  instead of~\eqref{eq:93}, \eqref{eq:95} and \eqref{eq:96}, and
  noting that for every $u\in L^{2}(\Sigma)$, the element
  $(\overline{u},0)\in B^{N}$, then one obtains that for all
  $1<p<\infty$, the operator $B^N$ satisfies the Gagliardo-Nirenberg
  inequality~\eqref{eq:190} for the same exponents as found in the
  first three steps of this proof. Thus
  Theorem~\ref{thm:GN-implies-reg-bis} yields the third statement of
  this theorem. 
\end{proof}

%
%

\subsubsection{Homogeneous Robin boundary conditions}
\label{subsection:robin}

In this subsection, we assume that $\Sigma$ is a bounded domain with a
Lipschitz boundary. Then the mapping $u\mapsto
 u_{\vert\partial\Sigma}$ from $C^{0,1}(\overline{\Sigma})$ to
 $C^{0,1}(\partial\Omega)$ has a unique continuous and surjective extension 
  \begin{displaymath}
   \mathrm{Tr} : W^{1}_{p,p}(\Sigma)\to W^{1-1/p,p} (\partial\Sigma)
 \end{displaymath}
 called \emph{trace operator} 
 (cf.~\cite[Th\'eor\`eme~4.2,~4.6, and Section~3.8]{MR0227584}).  For
 convenience, we write $u_{\vert\partial\Omega}:=\mathrm{Tr}(u)$ for $u\in
 W^{1}_{p,p}(\Sigma)$ even if $u$ does not belong to $C(\overline{\Sigma})$
 and call $u_{\vert\partial\Omega}$ the \emph{trace} of $u$. Thus, if
 $\theta$ denotes the boundary operator given by~\eqref{eq:87} then
 $\langle \theta(u),u\rangle$ is finite for every $u\in
 W^{1}_{p,p}(\Sigma)$ hence, under the assumptions of this
 section, we can define the realisation $B^{R}$ in $L^{2}(\Sigma)$ of the
Leray-Lions operator $\mathcal{B}$ equipped with homogeneous Robin
boundary conditions~\eqref{eq:46} by
\begin{equation}
  \label{eq:88}
  \begin{split}
  B^{R}&= \Big\{ (u,v)\in
      L^{2}\times L^{2}(\Sigma)\,\Big\vert\,u\in
      W_{p,p}^{1}(\Sigma)\text{ s.t. for all }\xi\in
      W_{p,p}^{1}\cap L^{2}(\Sigma)
      \Big.\\
      &\hspace{0.5cm}\Big.\int_\Sigma a(x,\nabla u)\nabla
      \xi\dx+\int_{\partial\Sigma}b \abs{u}^{p-2}u\xi\dH + d \langle\theta(u),\xi\rangle
      =\int_{\Sigma} v \xi\dx\Big\}.
  \end{split}
\end{equation}
We call $B^R$ the \emph{Robin Leray-Lions operator} in $L^2(\Sigma)$.

Since  $C^{\infty}(\overline{\Sigma})$ is
contained in $W_{p,p}^{1}\cap L^{2}(\Sigma)$ and dense in
$L^{2}(\Sigma)$, $B^{R}$ defines a single-valued and densely defined
operator on $L^{2}(\Sigma)$. To see that $B^R$ is completely
accretive, let $T\in P_{0}$. Then by definition of $B^{R}$,
by~\eqref{eq:monotonicity-of-a} and since
$T$ is monotonically increasing and Lipschitz continuous  on
$\R$, and since $s\mapsto \abs{s}^{p-2}s$ is monotonically increasing,
we have that
\begin{align*}
    & \int_{\Sigma} T(u-\hat{u})(B^{R}u-B^{R}\hat{u}) \dx\\
    & = \int_\Sigma\left( a(x,\nabla u)-a(x,\nabla
      \hat{u})\right)\nabla (u-\hat{u})
    T'(u-\hat{u})\dx\\
    &\qquad + \int_{\partial\Sigma}\,b(x)
    (\abs{u}^{p-2}u-\abs{\hat{u}}^{p-2}\hat{u})T(u-\hat{u})\dH\\
    &\qquad + d \int_{\partial\Sigma}
    \int_{\partial\Sigma}\tfrac{\abs{u(x)-u(y)}^{p-2}(u(x)-u(y))
      -\abs{\hat{u}(x)-\hat{u}(y)}^{p-2}(\hat{u}(x)-\hat{u}(y))
    }{\abs{x-y}^{d+p-2}}\times\\
    &\hspace{4cm} \times T((u(x)-u(y))-(\hat{u}(x)-\hat{u}(y)))\,\dH(x)\dH(y)\\
    &\ge 0
\end{align*}
for every $u$, $\hat{u}\in D(B^{R})$. Thus, $B^{R}$ is completely
accretive. Since for every $\lambda>0$, the Yosida operator
 $\beta_{\lambda} : \R\to \R$ of $\beta$ is monotonically increasing and Lipschitz
 continuous and since $\Sigma$ is bounded, we may replace $T$ by
 $\beta_{\lambda}$ in the previous calculation, showing that the $m$-accretive
 graph $\beta$ on $\R$ satisfies condition~\eqref{eq:78} in
 Proposition~\ref{propo:Lipschitz-complete-accretive}.

In order to see that $B^{R}$ satisfies the range condition~\eqref{eq:range-condition} for
$X=L^{2}(\Sigma)$, we employ the following
 $p$-variant of Maz'ya's inequality 
 \begin{equation}
   \label{eq:90}
 \norm{u}_{\frac{pd}{d-1}}\le C\,\left(\norm{\abs{\nabla u}}_{p}
 + \norm{u_{\vert\partial\Sigma}}_{p}\right)
 \end{equation}
 holding for all $u\in W^{1}_{p,p}(\Sigma)$ provided $1\le p<\infty$.
 Here the constant $C>0$ depends on $p$, the volume $\abs{\Sigma}$, and
 the isoperimetric constant $C(d)$ (cf.~\cite[Cor. 3.6.3]{MR817985}
 and see also~\cite[Section~2.1]{MR3369257}). Now, let
 $V=W^{1}_{p,p}(\Sigma)\cap L^{2}(\Sigma)$ be equipped with the sum
 norm. Then by~\eqref{eq:coerciveness}, since
 for every $u\in V$, $\langle \theta(u),u\rangle\ge 0$, since $b(x)\ge b_{0}>0$ a.e. on
 $\partial\Sigma$ and by \eqref{eq:90}, we obtain that 
 \begin{align*}
     \langle (I+B^{R})u,u\rangle
   & \ge \norm{u}_{2}^{2}+\tfrac{\eta}{2}\norm{\abs{\nabla
     u}}_{p}^{p}+C_{1} \norm{u}_{p}^{p}\\
   & \ge C_{2}\left( \norm{u}_{2}^{2}+\norm{\abs{\nabla
     u}}_{p}^{p}+ \norm{u}_{p}^{p}\right)
 \end{align*}
 for every $u\in V$. Thus, the restriction of the operator
 $I+B^{R}$ on $V$ satisfies condition~\eqref{eq:91} and so, the
 operator $I+B^{R} : V\to V'$ is surjective
 by~\cite[Th\'eor\`eme~1]{MR0194733}, proving that $B^{R}$ satisfies
 the range condition~\eqref{eq:range-condition} in
 $L^{2}(\Sigma)$.

 Therefore, by Proposition~\ref{propo:Lipschitz-complete-accretive},
 the operator
 \begin{equation}
   \label{eq:89}
   A^{R}:=B^{R}+\beta_{2}+F
 \end{equation}
 is quasi $m$-completely accretive in $L^{2}(\Sigma)$ with dense
 domain and so by the Cran\-dall-Liggett theorem, $-A^R$ generates a
 strongly continuous semigroup $\{T_{t}\}_{t\ge 0}$ on $L^{2}(\Sigma)$
 of Lipschitz continuous mappings $T_{t}$ on $L^{2}(\Sigma)$, and each
 mapping $T_{t}$ admits a unique Lipschitz continuous extension on
 $L^{q}(\Sigma)$ with constant $e^{\omega t}$ for every $1\le q\le
 \infty$.

 Here, we state the
 complete description of the $L^{q}$-$L^{r}$-regularisation effect of
 the semigroup $\{T_{t}\}_{t\ge 0}\sim-A^R$.

\begin{theorem}
  \label{thm:Robin-p-laplace}
  Suppose the Carath\'eodory function
  $a : \Sigma\times\R^{d}\to \R^{d}$ satisfies growth
  conditions~\eqref{eq:growth-cond}. Further, suppose
  $b\in L^{\infty}(\partial\Sigma)$ such that $b(x)\ge b_{0}>0$
  a.e. on $\partial\Sigma$, $d\ge 0$, and $A^{R}$ is the operator
  given by~\eqref{eq:89}. Then the following statements
  hold:
  \begin{enumerate}
  \item If $a$ satisfies the strong monotonicity condition~\eqref{eq:54} and
    $a(x,0)=0$ for a.e. $x\in \Sigma$, then the semigroup
    $\{T_{t}\}_{t\ge 0}\sim-A^{R}$ on $L^{2}(\Sigma)$ satisfies the
    regularisation estimates~\eqref{eq:18} and~\eqref{eq:80} with the
    same exponents as the semigroup generated by $-A^D$.

    \item If $a$
      satisfies~\eqref{eq:coerciveness}-\eqref{eq:monotonicity-of-a}, then the semigroup
    $\{T_{t}\}_{t\ge 0}\sim-A^{R}$ on $L^{2}(\Sigma)$ satisfies the
    regularisation estimates~\eqref{eq:20} and~\eqref{eq:168} with the
    same exponents as the semigroup generated by $-A^D$.
 \end{enumerate}
\end{theorem}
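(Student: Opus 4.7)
The proof proceeds along the same template as that of Theorem~\ref{thm:Neumann-p-laplace}: one verifies that $B^{R}$ satisfies a Gagliardo-Nirenberg type inequality with exactly the parameters \eqref{eq:235}, \eqref{eq:236} or \eqref{eq:98} (according to whether $1<p<d$, $p=d\ge 2$, or $d<p<\infty$), transfers this to $A^{R}=B^{R}+\beta_{2}+F$ via Remark~\ref{rem:2}, and concludes by invoking Theorem~\ref{thm:main-1} (or, for $d<p<\infty$, Theorem~\ref{thm:extrapol-L1-differences}) for statement~(1), and Theorem~\ref{thm:GN-implies-reg-bis} for statement~(2). The only structural modification compared with the Neumann argument is that the $L^{p}$-norm control on $u-\hat{u}$, which was provided by the graph $\beta$ via~\eqref{eq:84}, must now be recovered from the Robin boundary term of $B^{R}$.

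The crucial auxiliary estimate is the Poincar\'e-trace inequality
\begin{equation*}
\norm{u}_{p}\le C\,\bigl(\,\norm{\abs{\nabla u}}_{p}+\norm{u_{\vert\partial\Sigma}}_{L^{p}(\partial\Sigma)}\bigr)\qquad\text{for all }u\in W^{1}_{p,p}(\Sigma),
\end{equation*}
which follows from Maz'ya's inequality~\eqref{eq:90} together with the continuous embedding $L^{pd/(d-1)}(\Sigma)\hookrightarrow L^{p}(\Sigma)$ on the bounded Lipschitz domain $\Sigma$. Under hypothesis~\eqref{eq:54} and using the strong monotonicity $(\abs{a}^{p-2}a-\abs{b}^{p-2}b)(a-b)\ge c_{p}\abs{a-b}^{p}$ of $s\mapsto\abs{s}^{p-2}s$ (valid for $p\ge 2$), the Robin bracket satisfies
\begin{equation*}
[u-\hat{u},B^{R}u-B^{R}\hat{u}]_{2}\ge \tilde{\eta}\,\norm{\abs{\nabla(u-\hat{u})}}_{p}^{p}+b_{0}\,c_{p}\,\norm{(u-\hat{u})_{\vert\partial\Sigma}}_{L^{p}(\partial\Sigma)}^{p}
\end{equation*}
after dropping the nonnegative nonlocal contribution $d\langle\theta_{p}(u)-\theta_{p}(\hat{u}),u-\hat{u}\rangle$. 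Combining the two displays, the bracket dominates $\norm{\abs{\nabla(u-\hat{u})}}_{p}^{p}+\norm{u-\hat{u}}_{p}^{p}$ up to a multiplicative constant.

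Feeding this control into the Gagliardo-Nirenberg inequalities~\eqref{eq:93}, \eqref{eq:95} and~\eqref{eq:96} applied to $u-\hat{u}$, raising them to the appropriate power and invoking~\eqref{eq:86}, yields the Gagliardo-Nirenberg type inequality with differences~\eqref{eq:245} for $B^{R}$ with the claimed parameters. Claim~(1) then follows exactly as in the Neumann proof, producing the exponents of Theorem~\ref{thm:Dirichlet-p-laplace}. For claim~(2), where only~\eqref{eq:monotonicity-of-a} is assumed, the same scheme is run without differences at the fixed point $u_{0}=0$: since $(0,0)\in A^{R}$, the bracket $[u,B^{R}u]_{2}$ is bounded below by $\eta\,\norm{\abs{\nabla u}}_{p}^{p}+b_{0}\,\norm{u_{\vert\partial\Sigma}}_{L^{p}(\partial\Sigma)}^{p}$ directly from~\eqref{eq:coerciveness} and the nonnegativity of $b\abs{u}^{p}$ on $\partial\Sigma$, with no strong monotonicity of $a$ required, and Theorem~\ref{thm:GN-implies-reg-bis} concludes.

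The chief technical obstacle lies in the singular range $1<p<2$, where the boundary monotonicity degenerates to $(\abs{a}^{p-2}a-\abs{b}^{p-2}b)(a-b)\ge (p-1)\abs{a-b}^{2}(\abs{a}+\abs{b})^{p-2}$. Recovering $\norm{(u-\hat{u})_{\vert\partial\Sigma}}_{L^{p}(\partial\Sigma)}^{p}$ from the bracket then requires a H\"older interpolation on $\partial\Sigma$ together with the a priori bound $\norm{T_{t}u}_{\infty}\le e^{\omega t}\norm{u}_{\infty}$ provided by complete accretivity; one first establishes the regularisation estimate on the dense subset $L^{q}\cap L^{\infty}(\Sigma)$ where that bound is available, and then extends by Lipschitz continuity to the whole of $L^{q}(\Sigma)$. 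This refinement leaves the final exponents unaffected but complicates the intermediate calculations.
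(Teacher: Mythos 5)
Your proposal follows the paper's own template exactly: establish the Gagliardo--Nirenberg type inequality for $B^{R}$ from the Sobolev inequalities combined with Maz'ya's trace inequality, transfer to $A^{R}$ via Remark~\ref{rem:2}, and conclude via Theorem~\ref{thm:main-1} (or Theorem~\ref{thm:extrapol-L1-differences} when $p>d$) for statement~(1), and via Theorem~\ref{thm:GN-implies-reg-bis} at $u_{0}=0$ for statement~(2). Your observation that no monotonicity of $s\mapsto\abs{s}^{p-2}s$ is needed for statement~(2) is correct and matches the paper.

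Where you diverge from the paper is in the handling of the boundary term for statement~(1) when $1<p<2$. You restrict the scalar inequality $(\abs{a}^{p-2}a-\abs{b}^{p-2}b)(a-b)\ge c_{p}\abs{a-b}^{p}$ to $p\ge 2$; the paper invokes it (inequality~\eqref{eq:97}) for all $q>1$. Your caution is in fact well placed: for $1<q<2$ the inequality fails. Taking $s=M+1$, $t=M$ with $M\to\infty$ gives
\begin{displaymath}
\frac{(s^{q-1}-t^{q-1})(s-t)}{(s-t)^{q}}=\frac{s^{q-1}-t^{q-1}}{(s-t)^{q-1}}\approx (q-1)\,M^{q-2}\longrightarrow 0,
\end{displaymath}
so no positive constant $C_{q}$ works. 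You have thus identified a genuine issue in the argument.

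However, your proposed repair for $1<p<2$ does not succeed as sketched, and the concluding claim that the final exponents are unaffected is unsupported. The degenerate scalar monotonicity $(\abs{a}^{p-2}a-\abs{b}^{p-2}b)(a-b)\ge(p-1)(\abs{a}+\abs{b})^{p-2}\abs{a-b}^{2}$ produces a lower bound that is itself \emph{dominated} by $\abs{a-b}^{p}$: since $\abs{a-b}\le\abs{a}+\abs{b}$ and $p-2<0$, one has $(\abs{a}+\abs{b})^{p-2}\le\abs{a-b}^{p-2}$, whence $(\abs{a}+\abs{b})^{p-2}\abs{a-b}^{2}\le\abs{a-b}^{p}$. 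So the Robin bracket controls a quantity that is \emph{smaller} than $\norm{(u-\hat{u})_{\vert\partial\Sigma}}_{L^{p}(\partial\Sigma)}^{p}$, not larger. H\"older on $\partial\Sigma$ combined with the $L^{\infty}$ a priori bound only yields $\norm{u-\hat{u}}_{L^{p}(\partial\Sigma)}^{p}\le C\,M^{(2-p)p/2}\,[\cdots]^{p/2}$ with an uncontrolled constant $M^{(2-p)p/2}$ and the bracket raised to a power $p/2<1$; this is not a Gagliardo--Nirenberg type inequality in the sense of~\eqref{eq:245}, and the exponents would necessarily change. Either the $1<p<2$ case of statement~(1) requires a genuinely different functional inequality (for instance $(\abs{a}^{p-2}a-\abs{b}^{p-2}b)(a-b)\ge c_{p}\abs{\abs{a}^{(p-2)/2}a-\abs{b}^{(p-2)/2}b}^{2}$, which however does not directly feed into Maz'ya's inequality), or the statement must be read as restricting to $p\ge 2$ -- this is consistent with the paper's own remark after~\eqref{eq:54} that the $p$-Laplacian satisfies the strong monotonicity only for $p\ge 2$.
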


\begin{proof}[Proof of Theorem~\ref{thm:Robin-p-laplace}]
  Note that, for every $q>1$, there is a constant $C_{q}>0$
  such that
  \begin{equation}
    \label{eq:97}
    (\abs{s}^{q-2}s-\abs{t}^{q-2}t)(s-t)\ge C_{q}\abs{s-t}^{q}
  \end{equation}
  for all $s$, $t\in \R$ (cf.~\cite[Appendix]{MR3262196}). Due to
  inequality~\eqref{eq:97}, we can show that the semigroup $\{T_{t}\}_{t\ge
    0}\sim-A^R$ satisfies inequality~\eqref{eq:80} provided the
  Carath\'e\-odory function $a$ satisfies~\eqref{eq:54}.

  First, let $1<p<d$. By Lemma~\ref{lem:Sobolev-Gagliardo-Nirenberg}, we have that
  inequality~\eqref{eq:93} holds. Applying Maz'ya's
  inequality~\eqref{eq:90} to estimate the term $\norm{u}_{p}$ in~\eqref{eq:93} gives
  \begin{equation}
    \label{eq:144}
    \norm{u}_{\frac{p d}{d-p}}\le C \left(\norm{|\nabla u|}_{p}+
      \norm{u_{\vert\partial\Sigma}}_{p}\right)
  \end{equation}
  for every $u\in W_{p,p}^{1}(\Sigma)$, where the constant $C$ can be
  different from the one in~\eqref{eq:93}. Taking $p$th power on both
  sides of the last inequality, applying~\eqref{eq:86} and using the
  definition of the operator $B^{R}$ combined with~\eqref{eq:54}
  and~\eqref{eq:97} shows that
  \begin{align*}
    \norm{u-\hat{u}}_{\frac{p d}{d-p}}^{p}
    &\le C\, \left(\norm{\abs{\nabla(u-\hat{u})}}_{p}^{p}
      +\norm{u_{\vert\partial\Sigma}-\hat{u}_{\vert\partial\Sigma}}_{p}^{p}\right)\\
    & \le C\,\tilde{\eta}^{-1} \int_{\Sigma} (a(x,\nabla u)-a(x,\nabla
      \hat{u}))\nabla (u-\hat{u})\,\dx\\
    &\hspace{2cm}+  C\,b_{0}^{-1}C^{-1}_{p}\int_{\partial\Sigma} 
      b(x)\,(\abs{u}^{p-2}u-\abs{\hat{u}}^{p-2}\hat{u})(u-\hat{u})\,\dH\\
    & \le  C\,\max\{\tilde{\eta}^{-1},(b_{0}C_{p})^{-1}\}\,
      \langle u-\hat{u},B^{R}u-B^{R}\hat{u}\rangle
  \end{align*}
  for every $u$, $\hat{u}\in D(B^{R})$. Thus, Remark~\ref{rem:2}
  yields the operator $A^R$ given by~\eqref{eq:89} satisfies
  the Gagliardo-Nirenberg inequality~\eqref{eq:245} with
  parameters~\eqref{eq:235}. By Theorem~\ref{thm:main-1}, the first
  statement of this theorem holds for $1<p<\infty$.

  If $p=d\ge 2$, then applying Maz'ya's inequality~\eqref{eq:90}
  to~\eqref{eq:95} yields
  \begin{displaymath}
    \norm{u}_{\frac{2}{1-\theta}}\le C \left(\norm{|\nabla
        u|}_{p}^{\theta}+\norm{u_{\vert\partial\Sigma}}_{p}^{\theta}\right)\,\norm{u}_{2}^{1-\theta}
  \end{displaymath}
  for every $u\in W_{2,p}^{1}(\Sigma)$ and $0<\theta<1$. Thus
  by~\eqref{eq:86}, the definition of $B^{R}$,~\eqref{eq:54} and by
  inequality~\eqref{eq:97} for $q=p$ shows that
  \begin{align*}
    &\norm{u-\hat{u}}_{\frac{2}{1-\theta}}^{\frac{p}{\theta}}\\
    &\quad\le C^{\frac{p}{\theta}}\,2^{\frac{p}{\theta}-1}\,\left(
      \norm{\abs{\nabla(u-\hat{u})}}_{p}^{p}
      +\norm{u-\hat{u}}_{p}^{p}\right) \,\norm{u-\hat{u}}_{2}^{\frac{p(1-\theta)}{\theta}}\\
    & \quad\le C^{\frac{p}{\theta}}\,2^{\frac{p}{\theta}-1}\left(\eta^{-1}\,\int_{\Sigma} (a(x,\nabla u)-a(x,\nabla
      \hat{u}))\nabla (u-\hat{u})\,\dx\right.\\
    & \hspace{2.5cm}\left.+ (a_{0}\,C_{p})^{-1}\int_{\partial\Sigma}
      a\,(\abs{u}^{p-2}u-\abs{\hat{u}}^{p-2}\hat{u})(u-\hat{u})\,\dH\right)
      \,\norm{u-\hat{u}}_{2}^{\frac{p(1-\theta)}{\theta}}\\
    & \quad\le  C^{\frac{p}{\theta}} 2^{\frac{p}{\theta}-1} \max\{\tilde{\eta}^{-1},(a_{0}\,C_{p})^{-1}\} 
      \langle u-\hat{u},B^Ru-B^R\hat{u}\rangle \norm{u-\hat{u}}_{2}^{\frac{p(1-\theta)}{\theta}}
  \end{align*}
  By Remark~\ref{rem:2}, the operator $A^R$ satisfies the Gagliardo-Nirenberg
  inequality~\eqref{eq:245} with
 \begin{displaymath}
   r_{\theta}=\frac{2}{1-\theta},\quad \sigma_{\theta}=\frac{p}{\theta},\quad
   \varrho_{\theta}=\frac{p(1-\theta)}{\theta},\quad \omega=L\qquad
   \text{for every $0<\theta<1$.}
\end{displaymath}
Therefore, by Theorem~\ref{thm:main-1}, the first statement of this theorem
holds for $p=d$.

Next, let $d<p<\infty$. Applying Maz'ya's
inequality~\eqref{eq:90} to~\eqref{eq:96} with $\theta_{0}=\tfrac{p
  d}{pd+2(p-d)}$ and subsequently taking $p$th power and employing
inequality~\eqref{eq:86} gives
\begin{displaymath}
    \norm{u}_{\infty}\le C \left(\norm{|\nabla
        u|}_{p}^{\theta_{0}}+\norm{u_{\vert\partial\Sigma}}_{p}^{\theta_{0}}\right)\,
    \norm{u}_{2}^{1-\theta_{0}}
  \end{displaymath}
  for every $u\in\dot{W}_{p,2}^{1}(\Sigma)$. Proceeding as above, we
  see that by~\eqref{eq:54}, \eqref{eq:97} and by Remark~\ref{rem:2}, the operator
  $A^{R}$ satisfies the Gagliardo-Nirenberg inequality~\eqref{eq:245} with
  parameters $r$, $\sigma$, $\varrho$ and $\omega$ as given in~\eqref{eq:98}.
  By Theorem~\ref{thm:extrapol-L1-differences}, the first
statement of this theorem holds for $p>d$. 

Under the assumption that merely the
hypotheses~\eqref{eq:coerciveness}-\eqref{eq:monotonicity-of-a} are satisfied,
one proceeds as in the previous threes steps of this proof and applies
Theorem~\ref{thm:GN-implies-reg-bis} with $u_{0}=0$. Thus, the second
statement of this theorem holds as well, completing the proof.
\end{proof}

%
%
%
%
\subsection{Parabolic problems involving nonlocal operators}
\label{sec:nonlocal}

In the following two subsections, we outline two examples currently
attracting much interest. We begin in Subsection~\ref{subsec:DtN} by
establishing the $L^{q}$-$L^{r}$-regularisation estimates for the
\emph{semigroup generated by the Dirichlet-to-Neumann operator
  associated with a Leray-Lions operator} (cf, for
instance,~\cite{MR3369257} and the references
therein). Subsection~\ref{subsection:fractional-p-laplace} is
dedicated to the $L^{q}$-$L^{r}$-regularisation estimates for the
\emph{semigroup generated by the fractional $p$-Laplace operator}
equipped with either homogeneous Dirichlet or Neumann boundary
conditions (cf, for instance,~\cite{MR2927356,MaRoTo2015}). One can
easily see in both examples that the standard construction of a
\emph{one-parameter family of Sobolev type inequalities}
fails. Recall, this is an important intermediate step in the known
literature to achieve an $L^{q}$-$L^{\infty}$-regularisation estimates
for $1\le q<\infty$ of the semigroup (cf Section~\ref{subsec:story}).

For instance, consider the example of the semigroup generated by the
Dirichlet-to-Neumann operator associated with a Leray-Lions operator
\eqref{eq:gen-div-operator} satisfying the
hypotheses~\eqref{eq:coerciveness}-~\eqref{eq:monotonicity-of-a}. The
construction of this Dirichlet-to-Neumann operator proceeds in two
steps. First, one needs to know the solvability of \emph{Dirichlet
  problem}
\begin{equation}
   \label{eq:99}
   \begin{cases}
     -\divergence (a(x,\nabla u))=0 & \text{in $\Sigma$,}\\
     \hspace{2,6cm}u=\varphi & \text{on $\partial\Sigma$}
   \end{cases}
 \end{equation}
 for every boundary function $\varphi\in
 W^{1-1/p,p}(\partial\Sigma)$.
 For given boundary-value $\varphi$, let $P\varphi:=u$ be the unique
 weak energy solution $u$ of~\eqref{eq:99}.  Then, in order to construct a
 one-parameter family of Sobolev type inequalities, one needs that
\begin{displaymath}
    P(\abs{\varphi}^{q-p}\varphi)=\abs{P\varphi}^{q-p}P\varphi\qquad\text{for
    every $q\ge p>1$.}
\end{displaymath}
However, this does not hold in general. Thus, our next example
demonstrates the strength of Theorem~\ref{thm:main-1} and
Theorem~\ref{thm:GN-implies-reg-bis}.

\subsubsection{The Dirichlet-to-Neumann operators associated with
  Leray-Lions operators}
\label{subsec:DtN}
In this subsection, we suppose that $\Sigma$ is either the half space
$\R^{d}_{+}:=\R^{d-1}\times(0,\infty)$ or a bounded domain with a Lipschitz
boundary. 

We begin by outlining the construction of the Dirichlet-to-Neumann operator
in the case $\Sigma$ is a bounded domain with a Lipschitz
  continuous boundary. The construction of the operator on the
half space $\Sigma=\R^{d}_{+}$ proceeds similarly (see also
Remark~\ref{rem:DtN-half-space} below). Under this assumption on
$\Sigma$, the trace operator
$\textrm{Tr} : W^{1}_{p,p}(\Sigma)\to W^{1-1/p,p}(\partial\Sigma)$ has
a linear bounded right inverse
\begin{displaymath}
     Z : W^{1-1/p,p}(\partial\Sigma)\to W^{1,p}(\Sigma)
\end{displaymath}
(cf.~\cite[Th\'eor\`eme 5.7]{MR0227584}) and the kernel of
$\textrm{Tr}$ coincides with $\dot{W}^{1}_{p,p}(\Sigma)$. If the
Carath\'eodory function $a : \Sigma\times \R^{d}\to \R^{d}$
satisfies~\eqref{eq:coerciveness}-\eqref{eq:monotonicity-of-a}, then
by the classical theory of monotone operators
(~\cite[Th\'eor\`eme~1]{MR0194733}), we have that for every given boundary
value $\varphi\in W^{1-1/p,p}(\partial\Omega)$, the Dirichlet
problem~\eqref{eq:99} admits a unique \emph{weak} solution
$u\in W^{1}_{p,p}(\Sigma)$ in the following sense: for given boundary
value $\varphi\in W^{1-1/p,p}(\partial\Sigma)$, a function
$u\in W^{1}_{p,p}(\Sigma)$ is a \emph{weak energy solution} of Dirichlet
problem~\eqref{eq:99} on $\Sigma$ if
$u-Z\varphi\in \dot{W}^{1}_{p,p}(\Sigma)$ and
   \begin{displaymath}
          \int_{\Sigma} a(x,\nabla u)\nabla \xi\,\dx=0
   \end{displaymath}
 for all $\xi\in \dot{W}^{1}_{p,p}(\Sigma)$. Let
 $P : W^{1-1/p,p}(\partial\Sigma) \to W^{1}_{p,p}(\Sigma)$ be the mapping
 which assigns to each boundary value $\varphi\in W^{1-1/p,p}(\partial\Sigma)$ the
 unique weak energy solution $u\in W^{1}_{p,p}(\Sigma)$ of \eqref{eq:99}. Then
 $P$ is injective and continuous. Furthermore,
 for every $\varphi \in W^{1-1/p,p}(\partial\Sigma)$ and
 $\Phi\in W^{1}_{p,p}(\Sigma)$ satisfying
 $\Phi_{\vert\partial\Sigma}=\varphi$, there is a unique
 $u_{\Phi}\in \dot{W}^{1}_{p,p}(\Sigma)$ such that
  \begin{equation}
    \label{eq:100}
    P\varphi=u_{\Phi}+\Phi
  \end{equation}
 (cf.~\cite[Lemma~2.5]{MR3369257}).

 The Dirichlet-to-Neumann operator associated with the
 operator~$\mathcal{B}$ defined in~\eqref{eq:gen-div-operator} assigns
 to each Dirichlet boundary data $\varphi$ the corresponding co-normal
 derivative $a(x,\nabla P\varphi)\cdot\nu=:\Lambda\varphi$ on $\partial\Sigma$. 

 If $P\varphi$ and $a(\cdot,\nabla
 P\varphi)$ are smooth enough up to the boundary $\partial\Sigma$, Green's formula yields
 \begin{displaymath}
  \int_{\partial\Sigma} \Lambda\varphi\,\xi\,\dH 
  = \int_{\Sigma}a(x,\nabla P\varphi)\nabla \xi\,\dx
 \end{displaymath}
 for every $\xi\in C^{\infty}(\overline{\Sigma})$ and if
 $\Lambda\varphi\in L^{p\prime}(\partial\Sigma)$, then an
 approximation argument shows that
  \begin{displaymath}
   \int_{\partial\Sigma}\Lambda\varphi\,\xi\,\dH = \int_{\Sigma}a(x,\nabla P\varphi)
   \nabla Z\xi\,\dx
 \end{displaymath}
 for every $\xi\in W^{1-1/p,p}(\partial\Sigma)$. Even if $\varphi$
 and $\xi$ merely belong to $W^{1-1/p,p}(\partial\Sigma)$, the
 integral on the right-hand side of this equation exists. Thus, we can
 use this integral to define the operator
 $\Lambda$ for the more general class of functions
 $W^{1-1/p,p}(\partial\Sigma)$. By linearity of $Z$ and by using
 H\"older's inequality together with growth
 condition~\eqref{eq:growth-cond}, one easily sees that the functional
  \begin{displaymath}
   \psi\mapsto \int_{\Omega}a(x,\nabla P\varphi)\nabla Z\psi\,\dx
 \end{displaymath}
 belongs to the dual space
 $W^{-(1-1/p),p\prime}(\partial\Omega)$. This justifies to define the
 \emph{Dirichlet-to-Neumann operator associated with the quasi-linear
   operator} $\mathcal{B}$ as the operator
 $\Lambda : W^{1-1/p,p}(\partial\Sigma) \to
 W^{-(1-1/p),p\prime}(\partial\Sigma)$ defined by
 \begin{displaymath}
   \langle \Lambda\varphi, \xi\rangle=\int_{\Sigma}
   a(x,\nabla P\varphi)\nabla Z\xi\,\dx
 \end{displaymath}
 for every $\varphi$, $\xi\in W^{1-1/p,p}(\partial\Sigma)$. The
 Dirichlet-to-Neumann operator $\Lambda$ realised as an operator on
 $L^{2}(\partial\Sigma)$ is given by the restriction
 $\Lambda_{2}:=\Lambda\cap (L^{2}(\partial\Sigma))\times
 L^{2}(\partial\Sigma))$. In fact, one can show (cf.~\cite[Proposition~3.9]{MR3369257}) that
 \begin{displaymath}
   \begin{split}
  \Lambda_{2}&= \Big\{ (\varphi,\psi)\in
      L^{2}(\partial\Sigma)\times L^{2}(\partial\Sigma)\;\Big\vert\;\varphi\in
      W^{1-1/p,p}(\partial\Sigma)\; \text{ such that }
      \Big.\\
      & \hspace{0.5cm}\Big.\int_\Sigma a(x,\nabla P\varphi)\nabla
      Z\xi\dx=\int_{\Sigma} \psi\,\xi\dx \;\text{ for all }\;\xi\in
     W^{1-1/p,p}(\partial\Sigma)\cap L^{2}(\partial\Sigma)\Big\}.
  \end{split}
 \end{displaymath}

It is well-known (cf.~\cite[Proposition~3.9]{MR3369257} or
\cite{MR2294196}), that $\Lambda_{2}$ is completely accretive. To see
that $\Lambda_{2}$ satisfies the range condition~\eqref{eq:range-condition} for
$X=L^{2}(\partial\Sigma)$, we take $V=W^{1-1/p,p}(\partial\Sigma)\cap
L^{2}(\partial\Sigma)$ equipped with the sum norm. Then, by~\eqref{eq:100},
\begin{displaymath}
  \langle \psi, \varphi\rangle_{V',V}=\int_{\Sigma}
   a(x,\nabla P\varphi)\nabla P\varphi\,\dx
\end{displaymath}
for every $(\varphi,\psi)\in \Lambda_{2}$. By using Maz'ya's
inequality~\eqref{eq:90} and Poincar\'e's inequality on
 $W^{1}_{p,p}(\Sigma)$, one can deduce the following useful inequality
 \begin{equation}
   \label{aux-ineq:2-proof}
 \norm{u}_{p}\le \tilde{C} \Big(\norm{\abs{\nabla u}}_{p}
 + \norm{u_{\vert\partial\Sigma}}_{L^{2}(\partial\Sigma)}\Big)
 \end{equation}
 holding for all $u\in W^{1}_{p,p}(\Sigma)$ with trace
 $u_{\vert\partial\Omega}\in L^{2}(\partial\Omega)$ (cf.~\cite[Section
 2]{MR3369257}). Now, let $\alpha\in \R$ and $\varphi\in
 V$. Then, by using
 ~\eqref{eq:coerciveness}, the
 boundedness of the trace operator $\textrm{Tr} $ and
 inequality~\eqref{aux-ineq:2-proof}, we see that
 \begin{align*}
   \norm{\varphi_{\vert\partial\Sigma}}_{2}^{2}+\eta\norm{\abs{\nabla
     P\varphi}}_{p}^{p} &\le  
   \langle  (I+\Lambda_{2})\varphi, \varphi\rangle_{V',V}\\
   & \le \alpha\,C
     (\norm{\varphi_{\vert\partial\Sigma}}+\norm{P\varphi}_{p}+\norm{\abs{\nabla
     P\varphi}}_{p})\\
   & \le \alpha\,\tilde{C}(\norm{\varphi_{\vert\partial\Sigma}}+\norm{\abs{\nabla
     P\varphi}}_{p})
 \end{align*}
 Thus, the restriction of the operator $I+\Lambda_{2}$ on
 $V$ satisfies condition~\eqref{eq:91} hence
 $I+\Lambda_{2} : V\to V'$ is surjective
 by~\cite[Th\'eor\`eme~1]{MR0194733}, proving that $\Lambda_{2}$
 satisfies the range condition~\eqref{eq:range-condition} in
 $X=L^{2}(\partial\Sigma)$.

 By hypothesis on the $m$-accretive graph $\beta$ on $\R$, the domain
 $D(\beta_{2})$ of the associated accretive operator $\beta_{2}$ in $L^{2}(\partial\Sigma)$ contains the
 set $\{v_{\vert\partial\Sigma}\,\vert\, v\in C^{\infty}(\overline{\Sigma})\}$.
 Thus, the domain $D(\beta_{2})$ is dense in $L^{2}(\partial\Sigma)$
 (cf.~\cite[Lemma~2.1 2]{MR3369257}). For every $\lambda>0$, the
 Yosida operator $\beta_{\lambda}$ of $\beta$ is Lipschitz continuous,
  $\beta_{\lambda}(0)=0$, and the Yosida operator $\beta_{2,\lambda}$
 of the operator $\beta_{2}$ is given by
 $(\beta_{2,\lambda}\varphi)(x)=\beta_{\lambda}(\varphi(x))$ for
 a.e. $x\in \partial\Sigma$ and every $\varphi\in L^{2}(\partial\Sigma)$. Therefore,
 $\beta_{2,\lambda}(\varphi)\in W^{1-1/p,p}(\partial\Sigma)\cap
 L^{2}(\partial\Sigma)$ for every $\varphi\in W^{1-1/p,p}(\partial\Sigma) \cap
 L^{2}(\partial\Sigma)$. Moreover, by~\eqref{eq:100}, there is a unique
 $u_{\Phi}\in \dot{W}^{1}_{p,p}(\Sigma)$ such that
 $P(\beta_{\lambda}(\varphi))=u_{\Phi}+\beta_{\lambda}(P\varphi)$ for
 $\Phi=\beta_{\lambda}(P\varphi)$. Combining this with the definition of
 $\Lambda_{2}$, \eqref{eq:coerciveness}, and the fact that
 $\beta'_{\lambda}\ge0$, we see that
\begin{displaymath}
  [\psi,\beta_{\lambda}(\varphi)]_{2}=\int_{\Sigma}a(x,\nabla
  P\varphi)\nabla \beta_{\lambda}(P\varphi)\,\dx\ge \eta \int_{\Sigma}\abs{\nabla
    P\varphi}^{p}\beta'_{\lambda}(P\varphi)\,\dx\ge 0
\end{displaymath}
 for every $(\varphi,\psi)\in \Lambda_{2}$. Therefore, by
 Proposition~\ref{propo:Lipschitz-complete-accretive}, the operator
 \begin{equation}
   \label{eq:101}
   A^{\Lambda}:=\Lambda_{2}+\beta_{2}+F
 \end{equation}
 is quasi $m$-completely accretive in $L^{2}(\partial\Sigma)$ with dense
 domain.

 By the Crandall-Liggett theorem~\cite{MR0287357}, $-A^\Lambda$
 generates a strongly continuous semigroup $\{T_{t}\}_{t\ge 0}$ on
 $L^{2}(\partial\Sigma)$ of Lipschitz continuous mappings $T_{t}$,
 which admits a unique Lipschitz continuous extension on
 $L^{q}(\partial\Sigma)$ with constant $e^{\omega t}$ for all $1\le
 q\le  \infty$.

 \begin{remark}
   \label{rem:DtN-half-space}
   In the case $\Sigma$ is the half space $\R^{d}_{+}$, the
   construction is of $\Lambda_{2}$ is exactly the same. But one needs
   to replace the space $W^{1}_{p,p}(\Sigma)$ by the space
   $D^{1,p}(\R^{d}_{+})$ which is the completion of the
   space of all $u\in C^{\infty}_{c}(\overline{\R^{d}_{+}})$ with
   respect to $\norm{\abs{\nabla u}}_{p}$ and the space
   $W^{1-1/p,p}(\R^{d-1})$ needs to be replaced by the completion of
   the space of all $\varphi\in C^{\infty}_{c}(\R^{d-1})$ with respect
   to $|\varphi|_{p}$. We leave the details to the interested reader.
 \end{remark}

 Here is the complete description of the
 $L^{q}$-$L^{\infty}$-regularisation effect of the semigroup
 $\{T_{t}\}_{t\ge 0}\sim-A^\Lambda$.

\begin{theorem}
  \label{thm:DtN-p-laplace}
  Suppose the Carath\'eodory function
  $a : \Sigma\times\R^{d}\to \R^{d}$ satisfies growth
  conditions~\eqref{eq:growth-cond} and $A^\Lambda$ be the operator
  given by~\eqref{eq:101}. Then the following statements are true.
  \begin{enumerate}
  \item Suppose $\Sigma$ is a bounded domain with a Lipschitz
    boundary, $a$ satisfies~\eqref{eq:54} with
    $a(x,0)=0$ for a.e. $x\in \Sigma$, and the
    monotone graph $\beta$ satisfies~\eqref{eq:84}. Then
    \begin{enumerate}[(i)]
    \item\label{thm:DtN-p-laplace-claim1} for $1<p<d$, the semigroup
      $\{T_{t}\}_{t\ge 0}\sim-A^\Lambda$ on $L^{2}(\partial\Sigma)$
      satisfies estimate~\eqref{eq:80} with exponents
      \begin{displaymath}
        \hspace{1cm}\begin{array}[c]{c}
          \qquad\alpha_{s}=\tfrac{\alpha^{\ast}}{1-\gamma^{\ast}\left(1-\frac{s(d-p)}{(d-1)
              m_{0}} \right)},\;
          \beta_{s}=\tfrac{\frac{\beta^{\ast}}{2}+\gamma^{\ast} 
          \frac{s(d-p)}{(d-1) m_{0}}}{1-\gamma^{\ast}\left(1-\frac{s(d-p)}{(d-1)
              m_{0}}\right)},\;
        \gamma_{s}=\tfrac{\gamma^{\ast}\,\frac{s(d-p)}{(d-1)m_{0}}}
          {1-\gamma^{\ast}\left(1-\frac{s(d-p)}{(d-1) m_{0}}
          \right)},\\
        \qquad \alpha^{\ast}=\tfrac{d-p}{(p-1)m_{0}+(d-p)(p-2)},\qquad
          \beta^{\ast}=\tfrac{(\frac{2}{p}-1)d+p-\frac{2}{p}}{(p-1) m_{0}+(d-p)(p-2)}+1,\\
        \gamma^{\ast}=\tfrac{(p-1) m_{0}}{(p-1) m_{0}+(d-p)(p-2)}.
        \end{array}
      \end{displaymath}
      for $m_{0}\ge p$ satisfying
      \begin{math}
        (\frac{d-1}{d-p}-1)m_{0}+p-2>0,
      \end{math}
      and for every $1\le s\le \frac{(d-1)m_{0}}{d-p}$ satisfying
      $s>\frac{(2-p)(d-1)}{p-1}$. Moreover, if $\frac{2d}{d+1}<p<d$
      then one can take $m_{0}=p$ and if $\frac{2d-1}{d}<p<d$,
      then estimate~\eqref{eq:80} holds with the same exponents for every
      $1\le s\le \frac{(d-1)p}{d-p}$.

    \item\label{thm:DtN-p-laplace-claim2} for $p=d\ge 2$, the semigroup
      $\{T_{t}\}_{t\ge 0}\sim-A^\Lambda$ on $L^{2}(\partial\Sigma)$
      satisfies estimate~\eqref{eq:80} with exponents
      \begin{displaymath}
        \hspace{2cm}\begin{array}[c]{c}
        \alpha_{s}=\tfrac{\alpha_{\theta}^{\ast}}{1-\gamma_{\theta}^{\ast}(1-s (1-\theta))},\quad
        \beta_{s}=\tfrac{\frac{\beta^{\ast}_{\theta}}{2}+\gamma_{\theta}^{\ast} 
          s (1-\theta)}{1-\gamma_{\theta}^{\ast}(1-s(1-\theta))},\quad
        \gamma_{s}=\tfrac{\gamma_{\theta}^{\ast}s(1-\theta)}{1-\gamma_{\theta}^{\ast}(1-s(1-\theta))},\\
        \alpha^{\ast}_{\theta}=\tfrac{1}{\frac{1}{1-\theta}-2},\;
        \beta^{\ast}_{\theta}=\tfrac{\frac{2}{p^2}\frac{1}{1-\theta}-1}{\frac{1}{1-\theta}-2}+1,\;
        \gamma_{\theta}^{\ast}=\tfrac{\frac{1}{1-\theta}-p}{\frac{1}{1-\theta}-2},
        \end{array}
      \end{displaymath}
      for every $1-\frac{1}{p}<\theta <1$ and
      $1\le s \le \frac{1}{1-\theta}$.

    \item\label{thm:DtN-p-laplace-claim3} for $d<p<\infty$, the semigroup
      $\{T_{t}\}_{t\ge 0}\sim-A^\Lambda$ on $L^{2}(\partial\Sigma)$
      satisfies estimate~\eqref{eq:80} with exponents
      \begin{displaymath}
        \alpha_{s}=\tfrac{1}{p-2+s},\quad
        \beta_{s}=\tfrac{\frac{2+p}{2}+s}{p-2+s},\quad
        \gamma_{s}=\tfrac{s}{p-2+s}
      \end{displaymath}
      for every $1\le s \le 2$.
    \end{enumerate}

  \item Suppose $\Sigma$ is a bounded domain with a Lipschitz
    boundary, $a$ satisfies~\eqref{eq:coerciveness}-\eqref{eq:monotonicity-of-a}
    and $\beta$ satisfies~\eqref{eq:85}. Then the following holds:
    \begin{enumerate}[(i)]
      \item\label{thm:DtN-p-laplace-claim4} The semigroup
        $\{T_{t}\}_{t\ge 0}\sim-A^\Lambda$ satisfies estimate~\eqref{eq:168} with
        $u_{0}=0$ for the same exponents as given in the
        statements~\eqref{thm:DtN-p-laplace-claim1}-\eqref{thm:DtN-p-laplace-claim3}.
  
      \item\label{thm:DtN-p-laplace-claim5} The semigroup $\{T_{t}\}_{t\ge 0}\sim-\Lambda_{2}$
        satisfies
        \begin{displaymath}
          \norm{T_{t}\varphi_{\vert\partial\Sigma}-\overline{\varphi_{\vert\partial\Sigma}}}_{\infty}
          \lesssim \;e^{\omega \beta_{q} t}\,
          t^{-\delta_{q}}\,\norm{\varphi_{\vert\partial\Sigma}-\overline{\varphi_{\vert\partial\Sigma}}}_{q}^{\gamma_{q}}
        \end{displaymath}
        for every $t>0$ and $\varphi\in L^{q}(\partial\Sigma)$, where
        $\overline{\varphi_{\vert\partial\Sigma}}:=\tfrac{1}{\mathcal{H}(\partial\Sigma)}\int_{\partial\Sigma}\varphi\,\dH$
        and the exponents $\alpha_{s}$, $\beta_{s}$ and $\gamma_{s}$ are
        the same as given in the
        statements~\eqref{thm:DtN-p-laplace-claim1}-\eqref{thm:DtN-p-laplace-claim3}.
    \end{enumerate}

   \item Suppose $\Sigma$ is the half space $\R^{d}_{+}$.
    \begin{enumerate}[(i)]
    
    \item[(vi)] If $a$ satisfies~\eqref{eq:54} with
    $a(x,0)=0$ for a.e. $x\in \Sigma$ and without any further
    assumptions on $\beta$, then for $1<p\le d$, the
    semigroup $\{T_{t}\}_{t\ge 0}\sim-A^\Lambda$ satisfies
    estimate~\eqref{eq:80} with the same exponents as given in the
    statements~\eqref{thm:DtN-p-laplace-claim1}-\eqref{thm:DtN-p-laplace-claim3}.

    \item[(vii)] If $a$
    satisfies~\eqref{eq:coerciveness}-\eqref{eq:monotonicity-of-a}
    and without any further assumptions on $\beta$, then for $1<p\le d$ 
    the semigroup $\{T_{t}\}_{t\ge 0}\sim-A^\Lambda$ satisfies estimate~\eqref{eq:168} with
        $u_{0}=0$ for the same exponents as given in~\eqref{thm:DtN-p-laplace-claim5}.
    \end{enumerate}
  \end{enumerate}
\end{theorem}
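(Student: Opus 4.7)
The plan is to reduce each regime of the theorem to the abstract machinery of Section~\ref{gn}: produce an appropriate Gagliardo--Nirenberg inequality of the form~\eqref{eq:245} or~\eqref{eq:190} for $A^{\Lambda}=\Lambda_{2}+\beta_{2}+F$, then invoke Theorem~\ref{thm:main-1} or Theorem~\ref{thm:GN-implies-reg-bis}, followed by Theorem~\ref{thm:extrapol-L1-differences} (or~\ref{thm:extrapol-L1-bis}) whenever $r=\infty$. The common starting point is the identity
$[\varphi-\hat{\varphi},\psi-\hat{\psi}]_{L^{2}(\partial\Sigma)}=\int_{\Sigma}\big(a(x,\nabla P\varphi)-a(x,\nabla P\hat{\varphi})\big)\nabla(P\varphi-P\hat{\varphi})\,\dx$
for $(\varphi,\psi),(\hat{\varphi},\hat{\psi})\in\Lambda_{2}$, which holds because $P\varphi-P\hat{\varphi}$ and the fixed lifting $Z(\varphi-\hat{\varphi})$ differ by an element of $\dot{W}_{p,p}^{1}(\Sigma)$, on which the form integrates to zero since $P\varphi,P\hat{\varphi}$ solve~\eqref{eq:99}. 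Combined with the strong monotonicity~\eqref{eq:54}, the boundary monotonicity~\eqref{eq:84}, and Remark~\ref{rem:2}, this yields the key bracket estimate $[\varphi-\hat{\varphi},\psi-\hat{\psi}]_{2}+L\,\norm{\varphi-\hat{\varphi}}_{2}^{2}\gtrsim \norm{\abs{\nabla(P\varphi-P\hat{\varphi})}}_{L^{p}(\Sigma)}^{p}+\norm{\varphi-\hat{\varphi}}_{L^{p}(\partial\Sigma)}^{p}$.

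Writing $u:=P\varphi-P\hat{\varphi}\in W^{1,p}(\Sigma)$, the left-hand side of the target GN inequality is now controlled by suitable trace-Sobolev embeddings on $\partial\Sigma$, whose Hausdorff dimension is $d-1$. For $1<p<d$, the trace $W^{1,p}(\Sigma)\hookrightarrow W^{1-1/p,p}(\partial\Sigma)$ composed with the fractional Sobolev embedding $W^{1-1/p,p}(\partial\Sigma)\hookrightarrow L^{p(d-1)/(d-p)}(\partial\Sigma)$ gives $\norm{u_{\vert\partial\Sigma}}_{L^{p(d-1)/(d-p)}(\partial\Sigma)}^{p}\lesssim \norm{\abs{\nabla u}}_{p}^{p}+\norm{u}_{p}^{p}$, and Maz'ya's inequality~\eqref{eq:90} absorbs the residual $\norm{u}_{p}^{p}$, producing~\eqref{eq:245} with $r=\tfrac{p(d-1)}{d-p}$, $\sigma=p$, $\varrho=0$. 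For $p=d\ge 2$, the fractional Gagliardo--Nirenberg inequality on $\partial\Sigma$ interpolating $W^{1-1/p,p}(\partial\Sigma)$ with $L^{2}(\partial\Sigma)$ yields a $\theta$-family of estimates valid precisely for $1-\tfrac{1}{p}<\theta<1$ (the restriction keeping the $L^{2}$-weight positive), again with $\norm{u}_{p}$ absorbed via~\eqref{eq:90}. For $d<p<\infty$, Morrey's embedding $W^{1,p}(\Sigma)\hookrightarrow C^{0,1-d/p}(\overline{\Sigma})\hookrightarrow L^{\infty}(\partial\Sigma)$ gives directly $\norm{u_{\vert\partial\Sigma}}_{\infty}^{p}\lesssim \norm{\abs{\nabla u}}_{p}^{p}+\norm{u}_{p}^{p}$, again absorbed by~\eqref{eq:90}; this is~\eqref{eq:245} with $r=\infty$, $\sigma=p$, $\varrho=0$, and the descent to $L^{s}$-$L^{\infty}$ for $s\in[1,2]$ is then executed by Theorem~\ref{thm:extrapol-L1-differences}.

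Inserting the three GN inequalities into Theorem~\ref{thm:main-1} yields~\eqref{eq:80} with exponents given by the formulas~\eqref{eq:76}; a direct substitution recovers the three stated exponent tables, and the admissible range of $s$ (including the enlarged ranges when $p>\tfrac{2d}{d+1}$ or $p>\tfrac{2d-1}{d}$, for which $m_{0}=p$ already satisfies~\eqref{eq:75}) is read off from that condition. This settles part~(1). For part~(2), the weaker hypotheses~\eqref{eq:coerciveness}--\eqref{eq:monotonicity-of-a} together with~\eqref{eq:85} only yield the one-sided bracket bound at $\hat{\varphi}=0$; since $(0,0)\in A^{\Lambda}$, this is precisely~\eqref{eq:190}, so Theorem~\ref{thm:GN-implies-reg-bis} combined with Theorem~\ref{thm:extrapol-L1-bis} gives~\eqref{eq:168} with identical exponents, proving (iv). Claim~(v) concerns $-\Lambda_{2}$ alone: the essential facts are that $(c,0)\in\Lambda_{2}$ for every constant $c\in\R$ (since $Pc\equiv c$ has zero gradient) and that the mean $\overline{\varphi_{\vert\partial\Sigma}}$ is conserved under $\{T_{t}\}_{t\ge 0}$; Maz'ya's inequality is replaced by the boundary Poincar\'e inequality $\norm{\psi-\overline{\psi}}_{L^{p}(\partial\Sigma)}\lesssim \norm{\abs{\nabla P(\psi-\overline{\psi})}}_{L^{p}(\Sigma)}$, and Theorem~\ref{thm:GN-implies-reg-bis} applied with $u_{0}=\overline{\varphi_{\vert\partial\Sigma}}$ concludes~\eqref{eq:196}. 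For the half-space part~(3), $\R^{d}_{+}$ is non-compact but the space $D^{1,p}(\R^{d}_{+})$ admits the \emph{homogeneous} trace-Sobolev inequality $\norm{\mathrm{Tr}\,u}_{L^{p(d-1)/(d-p)}(\R^{d-1})}\lesssim \norm{\abs{\nabla u}}_{p}$ with no lower-order correction, so no absorption is needed and no hypothesis on $\beta$ beyond accretivity is required; the case $p=d$ is handled by interpolating this homogeneous trace inequality with $L^{2}(\R^{d-1})$.

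The principal difficulty is the non-linearity of the extension $\varphi\mapsto P\varphi$: one must justify the use of $\eta=P\varphi-P\hat{\varphi}$ (rather than a linear extension) as test function in the variational formulation of $\Lambda_{2}$, and it is exactly this identity that allows~\eqref{eq:54} to translate into a coercive bound on $\norm{\abs{\nabla(P\varphi-P\hat{\varphi})}}_{p}$. A secondary obstacle is that on a bounded domain the trace-Sobolev inequality necessarily carries a $\norm{u}_{p}$ correction; this is precisely where the $\beta$-term~\eqref{eq:84} (or, in (v), the boundary Poincar\'e inequality on mean-zero functions) enters to furnish the missing $\norm{\varphi-\hat{\varphi}}_{L^{p}(\partial\Sigma)}^{p}$ needed by Maz'ya's inequality~\eqref{eq:90} to absorb the bulk term. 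On the half-space this correction disappears, which is exactly why no assumption on $\beta$ is needed for (vi)--(vii).
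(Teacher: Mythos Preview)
Your proposal is correct and follows essentially the same route as the paper: derive an $L^{2}$--$L^{r}$ Gagliardo--Nirenberg inequality for $A^{\Lambda}$ via a Sobolev-trace inequality combined with Maz'ya's inequality~\eqref{eq:90} and the monotonicity assumptions~\eqref{eq:54}, \eqref{eq:84}, then invoke Theorem~\ref{thm:main-1} (or Theorem~\ref{thm:GN-implies-reg-bis} under the weaker hypotheses), and for the mean-centred estimate~(v) replace Maz'ya by the boundary Poincar\'e inequality and use $(\overline{\varphi_{\vert\partial\Sigma}},0)\in\Lambda_{2}$.

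One small point on the case $p=d$: the paper does \emph{not} interpolate with $L^{2}(\partial\Sigma)$ but uses directly the Sobolev-trace embedding $W^{1,d}(\Sigma)\hookrightarrow L^{1/(1-\theta)}(\partial\Sigma)$ for all $0\le\theta<1$, yielding a Gagliardo--Nirenberg inequality with $\varrho=0$ and $\sigma=p$. The constraint $\theta>1-\tfrac{1}{p}$ does not come from the functional inequality itself (which holds for all $\theta\in[0,1)$) but from the extrapolation condition $\gamma r>q$ in Theorem~\ref{thm:main-1}, namely $\tfrac{2}{p}\cdot\tfrac{1}{1-\theta}>2$. Your interpolation route with $L^{2}$ would also work and produce a genuine $\varrho>0$ inequality, but the bookkeeping of exponents in~\eqref{eq:76} is then different; the paper's choice keeps $\varrho=0$ throughout, which makes the match with the stated exponents more transparent.
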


Since we are not aware about the existence of Gagliardo-Nirenberg
inequalities involving the trace operator, we need to construct in
each case $1<p<d$, $p=d$ and $p>d$ the
sufficient inequality from the known Sobolev-trace inequality
(\cite[Chapter 2, Sect.~4]{MR0227584}). 

\begin{proof}[Proof of Theorem~\ref{thm:DtN-p-laplace}]
  First, let $1<p<d$. Then, by the Sobolev-trace
  inequality~\cite[Th\'eor\`eme~4.2]{MR0227584} and by Maz'ya's
  inequality~\eqref{eq:90}, 
  \begin{equation}
    \label{eq:94}
    \norm{u_{\vert\partial\Sigma}}_{\frac{p (d-1)}{d-p}}\le C \left(\norm{|\nabla u|}_{p}+
      \norm{u_{\vert\partial\Sigma}}_{p}\right)
  \end{equation}
  for every $u\in W_{p,p}^{1}(\Sigma)$ and some constant $C>0$
  independent of $u$. Taking $p$th power on both
  sides of this inequality, applying~\eqref{eq:86} and using the
  definition of the operator $\Lambda_{2}$ combined with~\eqref{eq:54}
  and~\eqref{eq:84} gives
  \begin{align*}
    \norm{\varphi_{\vert\partial\Sigma}-\hat{\varphi}_{\vert\partial\Sigma}}_{\frac{p (d-1)}{d-p}}^{p}
    &\le \tilde{C}\, \left(\norm{\abs{\nabla(P\varphi-P\hat{\varphi})}}_{p}^{p}
      +\norm{\varphi_{\vert\partial\Sigma}-\hat{\varphi}_{\vert\partial\Sigma}}_{p}^{p}\right)\\
    & \le \tilde{C}\,\tilde{\eta}^{-1} \int_{\Sigma} (a(x,\nabla P\varphi)-a(x,\nabla
      P\hat{\varphi}))\nabla (P\varphi-P\hat{\varphi})\,\dx\\
    &\hspace{2cm}+  \tilde{C}\,\eta_{0}^{-1}\int_{\partial\Sigma} 
      (v-\hat{v})(\varphi-\hat{\varphi})\,\dH\\
    & \le  \tilde{C}\,\max\{\tilde{\eta}^{-1},\eta_{0}^{-1}\}\,
      \langle \varphi-\hat{\varphi},(\Lambda_{2}\varphi+v-(\Lambda_{2}\hat{\varphi}+\hat{v})\rangle
  \end{align*}
  for every $\varphi$, $\hat{\varphi}\in D(\Lambda_{2})$ and $v\in
  \beta_{2}(\varphi)$, $\hat{v}\in \beta_{2}(\hat{\varphi})$. Thus, by Remark~\ref{rem:2}, the
  operator $A^R$ satisfies the Gagliardo-Nirenberg
  inequality~\eqref{eq:245} with
 \begin{displaymath}
   r=\frac{p (d-1)}{d-p},\quad \sigma=p,\quad
  \varrho=0,\;\text{ and }\; \omega=L
\end{displaymath}
hence the first statement of this theorem holds by Theorem~\ref{thm:main-1}.

If $p=d\ge 2$, then by the Sobolev inequality for trace
  operators~\cite[Th\'eor\`eme~4.6]{MR0227584} and by Maz'ya's
  inequality~\eqref{eq:90}, for every $0\le \theta<1$, there is a constant $C>0$ such that
  \begin{equation}
    \label{eq:102}
    \norm{u_{\vert\partial\Sigma}}_{\frac{1}{1-\theta}}\le C \left(\norm{|\nabla u|}_{p}+
      \norm{u_{\vert\partial\Sigma}}_{p}\right)
  \end{equation}
  for every $u\in W_{p,p}^{1}(\Sigma)$. Proceeding as in the first step
  of this proof yields that the operator $A^\Lambda$ satisfies the Gagliardo-Nirenberg
  inequality~\eqref{eq:245} with
  \begin{displaymath}
   r=\frac{1}{1-\theta},\quad \sigma=p,\quad
  \varrho=0,\;\text{ and }\; \omega=L\quad\text{for every $0<\theta<1$.}
\end{displaymath}
Therefore by Theorem~\ref{thm:main-1}, the second statement of this theorem
holds.

Next, let $d<p<\infty$. Then, by the classical Sobolev-Morrey
inequality~\cite[Th\'eor\`eme~3.8]{MR0227584} and by Maz'ya's
inequality~\eqref{eq:90}, there is a constant such that
\begin{equation}
  \label{eq:103}
    \norm{u_{\vert\partial\Sigma}}_{\infty}\le C \left(\norm{|\nabla
        u|}_{p}+\norm{u_{\vert\partial\Sigma}}_{p}\right)
  \end{equation}
  for every $u\in\dot{W}_{p,p}^{1}(\Sigma)$. By proceeding as in the
  first step of this proof, we
  see that the operator $A^\Lambda$ satisfies the Gagliardo-Nirenberg
  inequality~\eqref{eq:245} with
  \begin{displaymath}
   r=\infty,\quad \sigma=p,\quad
  \varrho=0,\;\text{ and }\; \omega=L.
\end{displaymath}
Therefore, by Theorem~\ref{thm:main-1}, the third statement of this
theorem holds. 

Under the assumption that the Carath\'eodory function $a$
satisfies~\eqref{eq:coerciveness}-\eqref{eq:monotonicity-of-a} and the
accretive graph $\beta$ satisfies~\eqref{eq:85}, one
proceeds as in the first three steps and applies Theorem~\ref{thm:GN-implies-reg-bis}
with $u_{0}=0$. Thus, statement~\eqref{thm:DtN-p-laplace-claim4} of this theorem holds.

To see that the last statement holds, one applies
Poincar\'e's inequality
\begin{displaymath}
  \norm{u_{\vert\partial\Sigma}-\overline{u_{\vert\partial\Sigma}}}_{p}\le C\,\norm{\abs{\nabla u}}_{p}
\end{displaymath}
holding for all $u\in W^{1}_{p,p}(\Sigma)$ with mean value
$\overline{u_{\vert\partial\Sigma}}:=\tfrac{1}{\mathcal{H}(\partial\Sigma)}\int_{\partial\Sigma}u\,\dH$, for some
constant $C>0$ (cf.~\cite[Lemma~2.5]{MR3369257}) to the Sobolev-trace
inequalities~\eqref{eq:94}, \eqref{eq:102} and \eqref{eq:103}. Then
for $1<p<d$, inequality~\eqref{eq:94} reduces to
\begin{displaymath} 
  \norm{u_{\vert\partial\Sigma}-\overline{u_{\vert\partial\Sigma}}}_{\frac{p (d-1)}{d-p}}\le C \norm{\abs{\nabla u}}_{p}
\end{displaymath}
for every $u\in W^{1}_{p,p}(\Sigma)$, if $p=d\ge 2$ then
inequality~\eqref{eq:102} reduces to
\begin{displaymath}
  \norm{u_{\vert\partial\Sigma}-\overline{u_{\vert\partial\Sigma}}}_{\frac{1}{1-\theta}}\le C_{\theta}\,
  \norm{\abs{\nabla u}}_{p}
\end{displaymath}
for every $u\in W^{1}_{p,p}(\Sigma)$ and $0\le \theta<1$, and if $d<p<\infty$,
inequality~\eqref{eq:103} reduces to
\begin{displaymath}
  \norm{u_{\vert\partial\Sigma}-\overline{u_{\vert\partial\Sigma}}}_{\infty}\le C\,\norm{\abs{\nabla
      u}}_{p}
\end{displaymath}
for every $u\in W^{1}_{p,p}(\Sigma)$, where the constant $C$ can
differ from line to line. Now, by proceeding as in the first three
steps of this proof, where one employs these three new Sobolev-trace
inequalities involving the average value
$\overline{u_{\vert\partial\Sigma}}$ and by noting that for every
$\varphi\in L^{2}(\partial\Sigma)$, the element
$(\overline{\varphi_{\vert\partial\Sigma}},0)\in \Lambda_{2}$, one
sees that for all $1<p<\infty$, the operator $\Lambda_{2}$ satisfies
the Gagliardo-Nirenberg inequality~\eqref{eq:242} with the same
exponents as in the
statements~\eqref{thm:DtN-p-laplace-claim1}-\eqref{thm:DtN-p-laplace-claim3}. This
proves that statement~\eqref{thm:DtN-p-laplace-claim5} of 
this theorem holds.

If $\Sigma$ is the half space $\R^{d}_{+}$, then one replaces the
above used Sobolev-trace inequalities with the ones given
in~\cite[Theorem~15.17 \& Exercise 15.19]{MR2527916} and proceeds as
the first two steps of this proof. This completes the proof of this
theorem.
\end{proof}


\subsubsection{Parabolic problems involving the fractional $p$-Laplace operator}
\label{subsection:fractional-p-laplace}

Let $\Sigma$ be an open subset of $\R^{d}$, $1<p<\infty$ and
$0<s<1$. Then, for given initial value $u(0) \in L^{q}(\Sigma)$, we
intend to establish the $L^{q}$-$L^{r}$-regularisation estimates of
solutions $u(t)=u(x,t)$ for $t>0$ of the nonlocal diffusion equation
\begin{equation}
  \label{ip:Neumann-fractional-p-laplace}
  \partial_{t}u-(-\Delta_{p})^{s}u+\beta(u)+f(x,u)\ni 0 \qquad\text{on $\Sigma\times
      (0,\infty)$,}
\end{equation}
equipped with either \emph{homogeneous Dirichlet boundary conditions}
\begin{equation}
  \label{ip:Dirichlet-fractional-p-laplace}
   u=0  \qquad \text{on $\R^{d}\setminus\Sigma\;\times
      (0,\infty)$,}
\end{equation}
or with \emph{homogeneous Neumann boundary conditions}, that is, equation
\eqref{ip:Neumann-fractional-p-laplace} without any further
conditions. We refer the interested reader to~\cite{MR2437810} for a
thorough discussion on Neumann boundary conditions in nonlocal
diffusion problems.

Concerning homogeneous Dirichlet boundary
conditions~\eqref{ip:Dirichlet-fractional-p-laplace}, we impose no
further regularity conditions on the boundary $\partial\Sigma$ of $\Sigma$. Note that,
if $\Sigma=\R^{d}$, then the homogeneous Dirichlet boundary conditions
become \emph{vanishing conditions near infinity}~\eqref{eq:43}. In the
case of homogeneous Neumann boundary conditions, we assume that
$\Sigma$ is a bounded domain with a Lipschitz boundary.

The operator $(-\Delta_{p})^{s}$ in
equation~\eqref{ip:Neumann-fractional-p-laplace} denotes the
\emph{fractional $p$-Laplace operator} defined by
\begin{equation}
  \label{eq:105}
  (-\Delta_{p})^{s}u(x) = \textrm{P.V. }\int_{\Sigma}
  \frac{\abs{u(y)-u(x)}^{p-2}(u(y)-u(x))}{\abs{y-x}^{d+sp}}\,\dy 
\end{equation}
for a.e. $x\in \Sigma$ and any sufficiently regular function $u :
\Sigma\to \R$. The notation $\textrm{P.V. }$ in~\eqref{eq:105} indicates that the
integral at the right hand side is to be understood in the
\emph{Cauchy principal value sense}, that is, for given $x\in \Sigma$,
the value $(-\Delta_{p})^{s}u(x)$ denotes the limit
\begin{displaymath}
  \lim_{\varepsilon\to0+}\int_{\Sigma\setminus B_{\varepsilon}(x)}
  \frac{\abs{u(y)-u(x)}^{p-2}(u(y)-u(x))}{\abs{y-x}^{d+sp}}\,\dy 
\end{displaymath}
provided the limit exists. For every $u\in
W^{s}_{p,q}(\Sigma)$, $(q\ge 1)$, such that
\begin{equation}
  \label{eq:244}
  (x,y)\mapsto\frac{\abs{u(y)-u(x)}^{p-1}}{\abs{y-x}^{d+sp}}\quad\text{
  belongs to }\quad
  L^{1}(\Sigma\times \Sigma),
\end{equation}
Fubini's theorem yields that $(-\Delta_{p})^{s}u\in
L^{1}(\Sigma)$ and
\begin{displaymath}
  (-\Delta_{p})^{s}u(x)=\int_{\Sigma}
  \frac{\abs{u(y)-u(x)}^{p-2}(u(y)-u(x))}{\abs{y-x}^{d+sp}}\,\dy 
\end{displaymath}
for a.e. $x\in \Sigma$. In other words, the integral on the right hand
side of~\eqref{eq:105} holds without
the $\textrm{P.V.}$-symbol. Employing Fubini's theorem again,
subsequently interchanging $x$ and $y$, and using the symmetry of the kernel
$\abs{y-x}^{-(d+sp)}$, one sees that
\begin{equation}
  \label{eq:191}
  \begin{split}
    &\int_{\Sigma} - (-\Delta_{p})^{s}u\,\xi\,\dx\\
   &\qquad =\frac{1}{2}\int_{\Sigma} \int_{\Sigma}
    \frac{\abs{u(y)-u(x)}^{p-2}(u(y)-u(x))}{\abs{y-x}^{d+sp}}\,
    (\xi(y)-\xi(x))\,\dy\,\dx
  \end{split}
\end{equation}
for every $\xi\in C^{\infty}_{c}(\overline{\Sigma})$. Since the double integral
in the right hand side of~\eqref{eq:191} exists merely for $u$,
$\xi\in W^{s}_{p,q}(\Sigma)$, it makes sense to employ this
double integral in order to define the fractional $p$-Laplace operator $(-\Delta_{p})^{s}$ in a
\emph{weak} sense. In particular, the previous calculation shows that our next
definition of the fractional $p$-Laplace operator is consistent with
the smooth case, that is, when $u\in W^{s}_{p,q}$ satisfies~\eqref{eq:244}.

For any open subset $\Sigma$ of $\R^{d}$, we define the realisation of the
\emph{Dirichlet-fractional $p$-Laplace operator}
$(-\Delta_{p}^{D})^{s}$ in $L^{2}(\Sigma)$ by
\begin{displaymath}
  \begin{split}
  &(-\Delta_{p}^{D})^{s}= \Bigg\{ (u,v)\in
      L^{2}\times L^{2}(\Sigma)\,\Big\vert\,u\in
      \dot{W}_{p,2}^{s}(\Sigma)\text{ s.t. for all }\xi\in
      \dot{W}_{p,2}^{s}(\Sigma)
      \Bigg.\\
      &\Bigg. 
-\frac{1}{2}\int_{\Sigma} \int_{\Sigma}
  \frac{\abs{u(y)-u(x)}^{p-2}(u(y)-u(x))}{\abs{y-x}^{d+sp}}\,
    (\xi(y)-\xi(x))\,\dy\,\dx=\int_{\Sigma} v\,\xi\,\dx
\Bigg\}.
  \end{split}
\end{displaymath}

If $\Sigma$ is a bounded Lipschitz domain, then we define the
realisation of the
\emph{Neumann-fractional $p$-Laplace operator}
$(-\Delta_{p}^{N})^{s}$ in $L^{2}(\Sigma)$ by
\begin{displaymath}
  \begin{split}
  &(-\Delta_{p}^{N})^{s}= \Bigg\{ (u,v)\in
      L^{2}\times L^{2}(\Sigma)\,\Big\vert\,u\in
      W_{p,2}^{s}(\Sigma)\text{ s.t. for all }\xi\in
      W_{p,2}^{s}(\Sigma)
      \Bigg.\\
      &\Bigg. 
-\frac{1}{2}\int_{\Sigma} \int_{\Sigma}
  \frac{\abs{u(y)-u(x)}^{p-2}(u(y)-u(x))}{\abs{y-x}^{d+sp}}\,
    (\xi(y)-\xi(x))\,\dy\,\dx=\int_{\Sigma} v\,\xi\,\dx
\Bigg\}.
  \end{split}
\end{displaymath}

Both operators $-(-\Delta_{p}^{D})^{s}$ and $-(-\Delta_{p}^{N})^{s}$ are completely
accretive in $L^{2}(\Sigma)$ (cf.~\cite{MaRoTo2015}). We show this
only at the operator $-(-\Delta_{p}^{D})^{s}$ since the proof for the
operator  $-(-\Delta_{p}^{N})^{s}$ proceeds similarly. Let $(u,v)$,
$(\hat{u},\hat{v})\in (-\Delta_{p}^{D})^{s}$ and $T\in P_{0}$. By
the Lipschitz property of $T$ and since $T(0)=0$, one has that $T\circ u\in
\dot{W}_{p,2}^{s}(\Sigma)$. Furthermore, since $T$ and $s\mapsto
\abs{s}^{p-2}s$ are monotonically increasing on $\R$,
\begin{align*}
  &\int_{\Sigma}T(u-\hat{u})((-v)-(-\hat{v}))\,\dx\\
  & =\frac{1}{2}\int_{\Sigma}
    \int_{\Sigma}\tfrac{\abs{u(x)-u(y)}^{p-2}(u(x)-u(y))
      -\abs{\hat{u}(x)-\hat{u}(y)}^{p-2}(\hat{u}(x)-\hat{u}(y))
    }{\abs{x-y}^{d+sp}}\times\\
    &\hspace{4cm} \times
      T((u(x)-u(y))-(\hat{u}(x)-\hat{u}(y)))\,\dx\dy\ge 0,
\end{align*}
proving that $-(-\Delta_{p}^{N})^{s}$ is completely accretive by
Proposition~\ref{prop:completely-accretive}. 

Further, for any given $m$-accretive
graph $\beta$ on $\R$ satisfying $0\in \beta(0)$, one has for every
$\lambda>0$ that the Yosida operator
$\beta_{\lambda} : \R\to \R$ of $\beta$ is
monotonically increasing, Lipschitz continuous and satisfies
$\beta_{\lambda}(0)=0$. Therefore, taking $T=\beta_{\lambda}$ in the
previous calculation shows that the monotone graph $\beta_{2}$ in
$L^{2}(\Sigma)$ satisfies condition~\eqref{eq:78} in
Proposition~\ref{propo:Lipschitz-complete-accretive}. 

To see that $-(-\Delta_{p}^{D})^{s}$ satisfies the range
condition~\eqref{eq:range-condition} for $X=L^{2}(\Sigma)$, we take
$V=\dot{W}_{p,2}^{s}(\Sigma)$. If for every $\alpha\in \R_{+}$, $E_{\alpha}$
 denotes the set of all $u\in
 V$ satisfying 
 \begin{displaymath}
   \frac{\langle (I-(-\Delta_{p}^{D})^{s})u,u\rangle_{V',V}}{\norm{u}_{V}}\le
   \alpha\, ,
 \end{displaymath}
 then by 
 \begin{displaymath}
   \langle (I-(-\Delta_{p}^{D})^{s})u,u\rangle_{V',V}
   = \norm{u}_{2}^{2}+\abs{u}_{s,p}^{p},
   \end{displaymath}
 one has
 \begin{displaymath}
   \norm{u}_{2}^{2}+\abs{u}_{s,p}^{p}\le \alpha (\norm{u}_{2}+\abs{u}_{s,p})
 \end{displaymath}
 for every $u\in E_{\alpha}$, implying that $E_{\alpha}$ is bounded in $V$. Thus, the operator
 $(I-(-\Delta_{p}^{D})^{s}) :V\to V'$ is surjective
 by~\cite[Th\'eor\`eme~1]{MR0194733}, proving that $-(-\Delta_{p}^{D})^{s}$ satisfies
 the range condition~\eqref{eq:range-condition} in
 $X=L^{2}(\Sigma)$. Analogously, one shows that $-(-\Delta_{p}^{N})^{s}$
 is $m$-completely accretive in $L^{2}(\Sigma)$. Moreover, both
 operators have a dense domain in $L^{2}(\Sigma)$ as proven in~\cite{MaRoTo2015}.

Therefore, by Proposition~\ref{propo:Lipschitz-complete-accretive}, the operators
 \begin{equation}
  \label{eq:104}
   A^{D,s}:=-(-\Delta_{p}^{D})^{s}+\beta_{2}+F
 \end{equation}
 and
\begin{equation}
  \label{eq:106}
   A^{N,s}:=-(-\Delta_{p}^{N})^{s}+\beta_{2}+F
 \end{equation}
 are quasi $m$-completely accretive in $L^{2}(\Sigma)$ with dense
 domain in $L^{2}(\Sigma)$. By the Crandall-Liggett
 theorem, $-A^{D,s}$ and $-A^{N,s}$
 generate respectively a strongly continuous semigroup $\{T_{t}\}_{t\ge 0}$ on
 $L^{2}(\Sigma)$ of Lipschitz continuous mappings $T_{t}$, which admits
 a unique Lipschitz continuous extension on $L^{q}(\Sigma)$ for all
 $1\le q< \infty$ and on $\overline{L^{2}\cap
   L^{\infty}(\Sigma)}^{\mbox{}_{L^{\infty}}}$ if $q=\infty$,
 respectively with constant $e^{\omega t}$.

 We begin by giving the complete description of the
 $L^{q}$-$L^{r}$-regularisation estimates of the semigroup
 $\{T_{t}\}_{t\ge 0}\sim -A^{D,s}$ on $L^{2}(\Sigma)$.

\begin{theorem}
  \label{thm:D-fractional-p-laplace}
  Let $1<p<\infty$, $0<s<1$, $\Sigma$ be an open subset of
  $\R^{d}$ and $A^{D,s}$ given by~\eqref{eq:104}. Then the following statements are true.
  \begin{enumerate}
  \item\label{thm:D-fractional-p-laplace-claim1} 
    For $1<s p<d$, the semigroup $\{T_{t}\}_{t\ge 0}\sim-A^{D,s}$
    on $L^{2}(\Sigma)$ satisfies~\eqref{eq:80} for
      \begin{displaymath}
        \alpha_{q}=\tfrac{\alpha^{\ast}}{1-\gamma^{\ast}(1-\frac{q(d-sp)}{d
              m_{0}})},\;
        \beta_{q}=\tfrac{\frac{\beta^{\ast}}{2}+\gamma^{\ast} 
          \frac{q(d-sp)}{d m_{0}}}{1-\gamma^{\ast}(1-\frac{q(d-sp)}{d m_{0}})},\;
        \gamma_{q}=\tfrac{\gamma^{\ast}\frac{q(d-sp)}{d m_{0}}}{1-\gamma^{\ast}(1-\frac{q (d-sp)}{d m_{0}})}
      \end{displaymath}
      for every $m_{0}\ge p$ satisfying
      \begin{math}
        sp m_{0}+(p-2)(d-sp)>0
      \end{math}
      and $1\le q\le \frac{d m_{0}}{d-sp}$ satisfying
      $q>\frac{sp}{d}+(p-2)-d\frac{2+p}{sp}$, where
      \begin{displaymath}
        \begin{array}[c]{c}
        \alpha^{\ast}=\tfrac{1}{(\frac{d}{d-sp}-1)m_{0}+p-2},\qquad
        \beta^{\ast}=\tfrac{\frac{2}{p}\frac{d}{d-sp}-1}{(\frac{d}{d-sp}-1)m_{0}+p-2}+1,\\
        \gamma^{\ast}=\tfrac{(\frac{d}{d-sp}-1)m_{0}}{(\frac{d}{d-sp}-1)m_{0}+p-2}.
        \end{array}
      \end{displaymath}
      Moreover, if $\frac{2d}{d+2s}<p<d$ then one can take $m_{0}=p$
      and if $\frac{2d}{d+s}<p<d$, then~\eqref{eq:80} holds for every
      $1\le q\le \frac{dp}{d-sp}$.

    \item\label{thm:D-fractional-p-laplace-claim2} For $sp=d$,
      suppose that either $\Sigma$ is unbounded and $\beta$
      satisfies~\eqref{eq:84} or $\Sigma$ is bounded and no further
      assumptions on $\beta$. Then the semigroup
      $\{T_{t}\}_{t\ge 0}\sim-A^{D,s}$ satisfies~\eqref{eq:80} with
      exponents
      \begin{displaymath}
        \alpha_{q}=\tfrac{\alpha^{\ast}_{\theta}}{1-\gamma_{\theta}^{\ast}(1-\frac{q(1-\theta)}{p})},\quad
        \beta_{q}=\tfrac{\frac{\beta^{\ast}_{\theta}}{2}+\gamma_{\theta}^{\ast} 
          \frac{q(1-\theta)}{p}}{1-\gamma_{\theta}^{\ast}(1-\frac{q(1-\theta)}{p})},\quad
        \gamma_{q}=\tfrac{\gamma_{\theta}^{\ast}\,
          \frac{q(1-\theta)}{p}}{1-\gamma_{\theta}^{\ast}(1-\frac{q(1-\theta)}{p})}
      \end{displaymath}
      for every $1\le q \le \frac{p}{1-\theta}$ and $\max\{1-\frac{p}{2}, 2-p,0\}<\theta <1$, where
      \begin{displaymath}
        \begin{array}[c]{c}
          \alpha^{\ast}_{\theta}=\tfrac{1}{\frac{p}{1-\theta}-2},\quad 
          \beta_{\theta}^{\ast}=\tfrac{\frac{2}{1-\theta}-1}{\frac{p}{1-\theta}-2}+1,
          \quad\gamma_{\theta}^{\ast}=\tfrac{(\frac{1}{1-\theta}-1)p}{\frac{p}{1-\theta}-2}.
        \end{array}
      \end{displaymath}

    \item\label{thm:D-fractional-p-laplace-claim3} For $d<s p<\infty$,
      suppose that either $\Sigma$ is unbounded and $\beta$
      satisfies~\eqref{eq:84} or $\Sigma$ is bounded and no further
      assumptions on $\beta$. Then the semigroup
      $\{T_{t}\}_{t\ge 0}\sim-A^{D,s}$ satisfies~\eqref{eq:80} with
      exponents
      \begin{displaymath}
        \alpha_{q}=\tfrac{1}{p-2(1-\frac{q}{2})},\quad
        \beta_{q}=\tfrac{1+\frac{p}{2}+q}{p-2(1-\frac{q}{2})},\quad
        \gamma_{q}=\tfrac{q}{p-2(1-\frac{q}{2})}
      \end{displaymath}
      for every $1\le q \le 2$.
    \end{enumerate}
\end{theorem}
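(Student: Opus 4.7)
The plan is to mirror closely the strategy of Theorem~\ref{thm:Dirichlet-p-laplace} for the local $p$-Laplace operator, substituting the classical Sobolev/Gagliardo--Nirenberg inequalities on $\dot{W}^{1}_{p,2}(\Sigma)$ by their fractional counterparts on $\dot{W}^{s}_{p,2}(\Sigma)$. In each of the three cases I would first establish a Gagliardo--Nirenberg type inequality~\eqref{eq:5} with differences for $-(-\Delta_{p}^{D})^{s}$, then invoke Remark~\ref{rem:2} to transfer it to the perturbed operator $A^{D,s}=-(-\Delta_{p}^{D})^{s}+\beta_{2}+F$, and finally apply Theorem~\ref{thm:main-1} together with, when $r<\infty$, Theorem~\ref{thm:extrapol-L1-differences} in order to extrapolate from $L^{2}$-$L^{r}$ bounds down to $L^{s}$-$L^{\infty}$ bounds for $1\leq s<\infty$.

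The computation shared by all three cases is the following bracket identity, obtained by interchanging the roles of $x$ and $y$ exactly as in~\eqref{eq:191}: for every $(u,v),(\hat{u},\hat{v})\in{-(-\Delta_{p}^{D})^{s}}$,
\begin{equation*}
[u-\hat{u},v-\hat{v}]_{2}=\tfrac{1}{2}\int_{\Sigma}\int_{\Sigma}\tfrac{\Phi_{u,\hat{u}}(x,y)\,\bigl((u-\hat{u})(y)-(u-\hat{u})(x)\bigr)}{\abs{y-x}^{d+sp}}\,\dy\,\dx,
\end{equation*}
where $\Phi_{u,\hat{u}}(x,y):=\abs{u(y)-u(x)}^{p-2}(u(y)-u(x))-\abs{\hat{u}(y)-\hat{u}(x)}^{p-2}(\hat{u}(y)-\hat{u}(x))$. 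Applying the pointwise monotonicity inequality~\eqref{eq:97} with $q=p\geq 2$ to $\Phi_{u,\hat{u}}$ produces the coercivity bound $[u-\hat{u},v-\hat{v}]_{2}\geq \tfrac{C_{p}}{2}\,\abs{u-\hat{u}}_{s,p}^{p}$, reducing each case to an appropriate fractional Sobolev/Gagliardo--Nirenberg estimate for the seminorm $\abs{\cdot}_{s,p}$.

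For case~\ref{thm:D-fractional-p-laplace-claim1} ($1<sp<d$), the fractional Sobolev embedding $\norm{u}_{dp/(d-sp)}\leq C\abs{u}_{s,p}$ on $\dot{W}^{s}_{p,2}(\Sigma)$, combined with the coercivity above, yields~\eqref{eq:5} with $r=\tfrac{dp}{d-sp}$, $\sigma=p$, $\varrho=0$, $q=2$ and $\omega=L$; hence $\gamma=2/p$, the condition $\gamma r>2$ is automatic, and \eqref{eq:75} specialises to $sp\,m_{0}+(p-2)(d-sp)>0$, matching the announced constraints on $m_{0}$. For case~\ref{thm:D-fractional-p-laplace-claim2} ($sp=d$), I would use a fractional Gagliardo--Nirenberg inequality of the form $\norm{u}_{p/(1-\theta)}\leq C\abs{u}_{s,p}$ for $\theta\in(0,1)$, valid on bounded $\Sigma$ from the embedding $W^{s,p}(\Sigma)\hookrightarrow L^{r}(\Sigma)$ for every $r<\infty$, or on unbounded $\Sigma$ after absorbing a low-order $L^{p}$ remainder via hypothesis~\eqref{eq:84} on $\beta$ (the same mechanism used in Section~\ref{subsection:robin} with Maz'ya's inequality~\eqref{eq:90}); this supplies~\eqref{eq:5} with $r_{\theta}=\tfrac{p}{1-\theta}$, $\sigma=p$, $\varrho=0$ and $\gamma=2/p$, whence Theorem~\ref{thm:main-1} with $\gamma r/q=1/(1-\theta)$ yields the exponents of part~\ref{thm:D-fractional-p-laplace-claim2}. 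For case~\ref{thm:D-fractional-p-laplace-claim3} ($d<sp$), the Morrey-type estimate $\norm{u}_{\infty}\leq C\abs{u}_{s,p}$ (valid under the same bounded/unbounded dichotomy) gives~\eqref{eq:5} with $r=\infty$, $\sigma=p$, $\varrho=0$, $\gamma=2/p$; Theorem~\ref{thm:quasi-super-contractivity} then produces the $L^{2}$-$L^{\infty}$ estimate with exponent $\alpha=1/p$, and Theorem~\ref{thm:extrapol-L1-differences} with $\theta_{s}=s/2$ gives $\alpha_{s}=\tfrac{1/p}{1-(2/p)(1-s/2)}=\tfrac{1}{p-2+s}$ together with the remaining exponents of part~\ref{thm:D-fractional-p-laplace-claim3}.

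The main obstacle will be the singular range $1<p<2$, in which~\eqref{eq:97} with $q=p$ fails to dominate $\abs{u-\hat{u}}_{s,p}^{p}$ from below---exactly mirroring the failure of the strong monotonicity~\eqref{eq:54} for the local $p$-Laplace when $p<2$. To cover this range I would replace the ``with differences'' argument by the pointwise identity $[u,v]_{2}=\abs{u}_{s,p}^{p}$ (valid because $(0,0)\in{-(-\Delta_{p}^{D})^{s}}$), combine it with the same three fractional Sobolev/GN inequalities to obtain~\eqref{eq:242} with $u_{0}=0$, and then apply Theorem~\ref{thm:GN-implies-reg-bis} in place of Theorem~\ref{thm:main-1}; the resulting estimates then take the form~\eqref{eq:168} rather than~\eqref{eq:80}, with identical exponents. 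A secondary technicality, to be verified carefully in the borderline cases~\ref{thm:D-fractional-p-laplace-claim2} and~\ref{thm:D-fractional-p-laplace-claim3} on unbounded $\Sigma$, is controlling the $L^{p}$ remainder arising in the fractional GN/Morrey step; hypothesis~\eqref{eq:84} on $\beta$ furnishes precisely the missing $L^{p}$-coercivity by contributing $\eta_{0}\norm{u-\hat{u}}_{p}^{p}$ to the left-hand side of the bracket.
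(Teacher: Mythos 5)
Your proposal faithfully reproduces the paper's own strategy: establish a fractional Sobolev (or fractional Gagliardo--Nirenberg) inequality for $\dot{W}^{s}_{p,2}(\Sigma)$, apply the pointwise monotonicity bound~\eqref{eq:97} to the increments $u(x)-u(y)$ and $\hat{u}(x)-\hat{u}(y)$ inside the double integral to dominate $\abs{u-\hat{u}}_{s,p}^{p}$ by $[u-\hat{u},v-\hat{v}]_{2}$, absorb the $L^{p}$-remainder either by hypothesis~\eqref{eq:84} on $\beta$ (unbounded $\Sigma$) or by the fractional Poincar\'e inequality~\eqref{eq:107} (bounded $\Sigma$), invoke Remark~\ref{rem:2} to pass to $A^{D,s}$, and finish with Theorem~\ref{thm:main-1} together with Theorem~\ref{thm:extrapol-L1-differences} when $r=\infty$. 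The paper does exactly this in the three regimes, citing the fractional embeddings from MR2527916 (Theorem~14.29) and MR2944369 (Theorems~6.9 and~8.2). Your exponent bookkeeping in each case is consistent with the paper's. A small slip: from the weak formulation of $-(-\Delta_{p}^{D})^{s}$ one has $[u,v]_{2}=\tfrac{1}{2}\abs{u}_{s,p}^{p}$, not $\abs{u}_{s,p}^{p}$ (the factor $\tfrac12$ only affects the constant $C$).

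The genuine divergence between your account and the paper's proof is in the range $1<p<2$. You flag, correctly, that the bound $(\abs{a}^{p-2}a-\abs{b}^{p-2}b)(a-b)\ge C_{p}\abs{a-b}^{p}$ fails when $1<p<2$ (take $a=N+1$, $b=N$: the left side is of order $N^{p-2}\to0$ while the right side is fixed). The paper, however, does not acknowledge this: inequality~\eqref{eq:97} is asserted to hold for every $q>1$ and is applied uniformly in the proof of the theorem, with no branching on $p$. So the paper's proof, as written, does not carry your caveat --- you are being \emph{more} careful than the paper here, and the concern appears well-founded. Note the contrast with the local case Theorem~\ref{thm:Dirichlet-p-laplace}, where the paper explicitly splits according to whether $a$ satisfies the strong monotonicity~\eqref{eq:54} or only~\eqref{eq:monotonicity-of-a}, yielding~\eqref{eq:80} versus~\eqref{eq:168}; no analogous split appears in Theorem~\ref{thm:D-fractional-p-laplace}.

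However, your proposed fix for $1<p<2$ does not actually prove the theorem as stated. Switching to the no-differences inequality~\eqref{eq:242} with $u_{0}=0$ and invoking Theorem~\ref{thm:GN-implies-reg-bis} yields the estimate~\eqref{eq:168} ($\norm{T_{t}u-u_{0}}_{\infty}\lesssim\dots\norm{u-u_{0}}_{s}^{\gamma_{s}}$), which controls only the deviation from the fixed point $u_{0}$ and does not imply the differences estimate~\eqref{eq:80} that Theorem~\ref{thm:D-fractional-p-laplace} asserts. So your route either (i) restricts silently to $p\ge2$ when claiming~\eqref{eq:80}, or (ii) proves a weaker conclusion for $1<p<2$. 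If~\eqref{eq:97} indeed fails for $1<q<2$ as you argue, the correct statement of the theorem for $1<p<2$ likely needs the same two-case splitting as Theorem~\ref{thm:Dirichlet-p-laplace}, which the paper omits; that is an issue in the paper rather than one you can patch by substituting~\eqref{eq:168} for~\eqref{eq:80}.
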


Since we could not find an appropriate reference to Gagliardo-Nirenberg
inequalities available for Sobolev or Besov
spaces of fractional order in the spirit of the classical ones
(cf. Lemma~\ref{lem:Sobolev-Gagliardo-Nirenberg}), we construct in
each case $1<sp<d$, $sp=d$ and $sp>d$ our
sufficient inequalities from the known Sobolev inequality for
fractional Sobolev spaces partially combined with Poincar\'e inequalities.

\begin{proof}[Proof of Theorem~\ref{thm:D-fractional-p-laplace}]
  First, let $1<s p<d$. Then, by ~\cite[Theorem 14.29]{MR2527916}, there is a
  constant $C>0$ such that
  \begin{displaymath}
    \norm{u}_{\frac{p d}{d-sp}}\le C \abs{u}_{s,p}
  \end{displaymath}
  for every $u\in \dot{W}_{p,q}^{s}(\Sigma)$ and $q\ge 1$. Taking
  $p$th power on both sides of this inequality and
  applying~\eqref{eq:97} to $a=u(x)-u(y)$ and
  $b=\hat{u}(x)-\hat{u}(y)$ yields
  \begin{align*}
    &\norm{u-\hat{u}}_{\frac{p d}{d-sp}}^{p}\\
    &\; \le C\, \int_{\Sigma} \int_{\Sigma}
  \tfrac{\abs{(u(x)-\hat{u}(x))-(u(y)-\hat{u}(y)}^{p}}{\abs{x-y}^{d+sp}}\,\dx\,\dy\\
    & \; \le \tilde{C}\, \frac{1}{2}\int_{\Sigma}\int_{\Sigma} 
      \tfrac{\abs{u(x)-u(y)}^{p-2}(u(x)-u(y))-\abs{\hat{u}(x)-\hat{u}(y)}^{p-2}(\hat{u}(x)-\hat{u}(y))}{\abs{x-y}^{d+sp}}\\
    &\hspace{5cm} \times\,((u(x)-u(y))-(\hat{u}(x)-\hat{u}(y)))\,\dx\,\dy\\
    & \; = \tilde{C}\,
      \langle u-\hat{u},(-(-\Delta_{p}^{D})^{s}u)-(-(-\Delta_{p}^{D})^{s}\hat{u})\rangle
  \end{align*}
  for every $u$, $\hat{u}\in D((-\Delta_{p}^{D})^{s})$. Thus, by Remark~\ref{rem:2}, the
  operator $A^{D,s}$ given by~\eqref{eq:104} satisfies the Gagliardo-Nirenberg
  inequality~\eqref{eq:245} with parameters
 \begin{equation}
   \label{eq:197}
   r=\frac{p d}{d-sp},\quad \sigma=p,\quad\text{ and }\quad
  \varrho=0,
\end{equation}
hence the first statement of this theorem holds by Theorem~\ref{thm:main-1}.

Next, let $sp=d\ge 2$. By~\cite[Theorem~6.9]{MR2944369}, there is a
constant $C=C(d,p,s)>0$ such that 
\begin{equation}
  \label{eq:192}
  \norm{u}_{\frac{p}{1-\theta}}\le C \,\left(\abs{u}_{s,p}+\norm{u}_{p}\right)
\end{equation}
for every $u\in C_{c}^{\infty}(\Sigma)$ and by a standard
approximation argument we see that this inequality holds for all
$u\in \dot{W}^{s}_{p,p}(\Sigma)$ and every $0\le \theta<1$. If
$\Sigma$ is unbounded and $\beta$ satisfies~\eqref{eq:84}, then taking
$p$th power on both sides of inequality~\eqref{eq:192} and
applying~\eqref{eq:97} to $a=u(x)-u(y)$ and $b=\hat{u}(x)-\hat{u}(y)$
together with~\eqref{eq:84} yields
\begin{align*}
    &\norm{u-\hat{u}}_{\frac{p}{1-\theta}}^{p}\\
    &\; \le \,C\, \left(\int_{\Sigma} \int_{\Sigma}
  \tfrac{\abs{(u(x)-\hat{u}(x))-(u(y)-\hat{u}(y)}^{p}}{\abs{x-y}^{d+sp}}\,\dx\,\dy
      + \int_{\Sigma}\abs{u-\hat{u}}^{p}\,\dx \right)\\
   & \quad \le \,C\, \left(2\,\frac{1}{2}\int_{\Sigma}\int_{\Sigma} 
      \tfrac{\abs{u(x)-u(y)}^{p-2}(u(x)-u(y))-
     \abs{\hat{u}(x)-\hat{u}(y)}^{p-2}(\hat{u}(x)-\hat{u}(y))}{\abs{x-y}^{d+sp}}\right.\\
    &\hspace{5cm}
      \times\,((u(x)-u(y))-(\hat{u}(x)-\hat{u}(y)))\,\dx\,\dy\\
   & \hspace{8cm} \left.+ \eta_{0}^{-1}\int_{\Sigma}(v-\hat{v})(u-\hat{u})\,\dx\right)\\
    & \quad \le C\,\max\{2,\eta_{0}^{-1}\}\, 
      \langle u-\hat{u},(-(-\Delta_{p}^{D})^{s}u+v)-(-(-\Delta_{p}^{D})^{s}\hat{u}+\hat{v})\rangle
  \end{align*}
  for every $u$, $\hat{u}\in D((-\Delta_{p}^{D})^{s})$,
  $v\in \beta_{2}(u)$, $\hat{v}\in \beta_{2}(\hat{u})$ and some
  constant $C>0$ which might be different from line to line. Therefore, by
  Remark~\ref{rem:2}, the operator $A^{D,s}$ given by~\eqref{eq:104}
  satisfies the Gagliardo-Nirenberg inequality~\eqref{eq:245} with parameters
  \begin{equation}
    \label{eq:193}
    r=\frac{p}{1-\theta},\quad \sigma=p,\quad\text{ and }\quad
    \varrho=0,\qquad\text{for every $0\le \theta<1$.}
  \end{equation}
  If $\Sigma$ is bounded, then by~\cite[Theorem~5]{MR3148135},
  the first eigenvalue of $-(-\Delta_{p}^{D})^{s}$ is positive hence
  the following Poincar\'e inequality
  \begin{equation}
    \label{eq:107}
    \norm{u}_{p}\le C\,\abs{u}_{s,p}
  \end{equation}
  holds for every $u\in \dot{W}_{p,p}^{s}(\Sigma)$, $1<p<\infty$ and
  $0<s<1$. Using~\eqref{eq:107} to estimate the term
  $\norm{u}_{p}$ in~\eqref{eq:192} yields
\begin{displaymath}
  \norm{u}_{\frac{p}{1-\theta}}\le C\,\abs{u}_{s,p}
\end{displaymath}
 for every $u\in \dot{W}^{s}_{p,2}(\Sigma)$. Now, proceeding as previously,
 we see that the operator $A^{D,s}$ satisfies the Gagliardo-Nirenberg
 inequality~\eqref{eq:245} with exponents~\eqref{eq:193}.
 Therefore by Theorem~\ref{thm:main-1}, the second statement of this theorem
 holds.

Next, let $d<s p$. Then, by~\cite[Theorem~8.2]{MR2944369}, there is a
constant $C=C(d,p,s)>0$ such that 
\begin{equation}
  \label{eq:194}
  \norm{u}_{\infty}\le C \,\left(\abs{u}_{s,p}^{p}+\norm{u}_{p}^{p}\right)^{1/p}
\end{equation}
for every $u\in \dot{W}_{p,p}^{s}(\Sigma)$. If $\Sigma$ is unbounded but $\beta$
satisfies~\eqref{eq:84}, then proceeding as in the case $s p=d$, we
obtain that
\begin{align*}
    &\norm{u-\hat{u}}_{\infty}^{p}\\
    & \quad \le C\,\max\{2,\eta_{0}^{-1}\}\, 
      \langle u-\hat{u},(-(-\Delta_{p}^{D})^{s}u+v)-(-(-\Delta_{p}^{D})^{s}\hat{u}+\hat{v})\rangle
  \end{align*}
  for every $u$, $\hat{u}\in D((-\Delta_{p}^{D})^{s})$ and
  $v\in \beta_{2}(u)$, $\hat{v}\in \beta_{2}(\hat{u})$. Thus,
  Remark~\ref{rem:2} implies that the operator $A^{D,s}$ satisfies
 the  Gagliardo-Nirenberg inequality~\eqref{eq:245} with exponents
  \begin{equation}
    \label{eq:195}
    r=\infty,\quad \sigma=p,\quad\text{ and }\quad
  \varrho=0.
  \end{equation}
  If $\Sigma$ is bounded, then we apply Poincar\'e
  inequality~\eqref{eq:107} to estimate the term $\norm{u}_{p}$
  in~\eqref{eq:194} and obtain
  \begin{displaymath}
    \norm{u}_{\infty}\le C \abs{u}_{s,p}
  \end{displaymath}
  for every $u\in\dot{W}_{p,2}^{s}(\Sigma)$. Proceeding as above, we
  see that the operator $A^{D,s}$ satisfies the Gagliardo-Nirenberg
  inequality~\eqref{eq:245} with exponents~\eqref{eq:195}. Therefore
  by Theorem~\ref{thm:main-1}, the third statement of this theorem
  holds. This completes the proof.
\end{proof}

Next, we state the $L^{q}$-$L^{r}$-regularisation estimates of the
semigroup $\{T_{t}\}_{t\ge 0}\sim -A^{N,s}$ on $L^{2}(\Sigma)$.

\begin{theorem}
  \label{thm:Neumann-fractional-p-laplace}
  Let $\Sigma$ be a bounded domain of
  $\R^{d}$ with a Lipschitz continuous boundary. Then the following statements are true.
  \begin{enumerate}
  \item Suppose the monotone graph $\beta$ satisfies~\eqref{eq:84} and
    $A^{N,s}$ is given by~\eqref{eq:106}. Then, for every $1<p<\infty$
    and $0<s<1$, the semigroup $\{T_{t}\}_{t\ge 0}\sim-A^{N,s}$
    satisfies~\eqref{eq:80} with the same exponents as satisfied by
    the semigroup $\{T_{t}\}_{t\ge 0}\sim-A^{D,s}$ in the
    statements~\eqref{thm:D-fractional-p-laplace-claim1}-\eqref{thm:D-fractional-p-laplace-claim3}
    of Theorem~\ref{thm:D-fractional-p-laplace}.
 
    \item Suppose the monotone graph $\beta$ satisfies~\eqref{eq:85} and $A^{N,s}$
      is given by~\eqref{eq:106}. Then, for every $1<p<\infty$
    and $0<s<1$, the semigroup $\{T_{t}\}_{t\ge 0}\sim-A^{N,s}$
    satisfies estimate~\eqref{eq:168} for $u_{0}=0$ with the same exponents as satisfied by
    the semigroup $\{T_{t}\}_{t\ge 0}\sim-A^{D,s}$ in the
    statements~\eqref{thm:D-fractional-p-laplace-claim1}-\eqref{thm:D-fractional-p-laplace-claim3}
    of Theorem~\ref{thm:D-fractional-p-laplace}.  
    
    \item For every $1<p<\infty$
    and $0<s<1$, the semigroup $\{T_{t}\}_{t\ge 0}\sim (-\Delta_{p}^{N})^{s}$
    satisfies~\eqref{eq:196} with the same exponents
   as given in the
   statements~\eqref{thm:D-fractional-p-laplace-claim1}-\eqref{thm:D-fractional-p-laplace-claim3}
   of Theorem~\ref{thm:D-fractional-p-laplace}.
    \end{enumerate}
\end{theorem}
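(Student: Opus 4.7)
The strategy is to run, claim by claim, a close parallel of the proof of Theorem~\ref{thm:D-fractional-p-laplace}, replacing the embedding results for $\dot{W}^{s}_{p,p}(\Sigma)$ by their analogues for $W^{s}_{p,p}(\Sigma)$ valid on the bounded Lipschitz domain $\Sigma$. Concretely, by the fractional Sobolev-type embedding on Lipschitz domains (obtainable, e.g., via an extension argument from~\cite{MR2944369}), one has
\begin{equation*}
  \norm{u}_{r} \le C\bigl(\abs{u}_{s,p}+\norm{u}_{p}\bigr),\qquad u\in W^{s}_{p,p}(\Sigma),
\end{equation*}
where $r=pd/(d-sp)$ if $sp<d$, $r=p/(1-\theta)$ for any $0<\theta<1$ if $sp=d$, and $r=\infty$ if $sp>d$. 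This is the Neumann counterpart of the three inequalities used in the proof of Theorem~\ref{thm:D-fractional-p-laplace}.

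For claim~(1), I would take $p$-th powers, use~\eqref{eq:86} to separate the $\abs{u-\hat u}_{s,p}^p$ and $\norm{u-\hat u}_p^p$ contributions, invoke the elementary inequality~\eqref{eq:97} (applied pointwise in the double integral) to bound $\abs{u-\hat u}_{s,p}^p$ by $[u-\hat u,\,(-(-\Delta^{N}_p)^s u)-(-(-\Delta^{N}_p)^s \hat u)]_2$, and finally absorb $\norm{u-\hat u}_p^p$ into a bracket via hypothesis~\eqref{eq:84}, which gives $\int_\Sigma (v-\hat v)(u-\hat u)\dx \ge \eta_0 \norm{u-\hat u}_p^p$ for $v\in\beta_2(u)$, $\hat v\in\beta_2(\hat u)$. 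This yields the Gagliardo-Nirenberg inequality~\eqref{eq:5} for $-(-\Delta^{N}_p)^s+\beta_2$ with parameters $\sigma=p$, $\varrho=0$ and $r$ as above (matching respectively~\eqref{eq:197}, \eqref{eq:193}, \eqref{eq:195}); Remark~\ref{rem:2} carries the inequality over to $A^{N,s}=-(-\Delta^{N}_p)^s+\beta_2+F$ with $\omega=L$, and Theorem~\ref{thm:main-1} delivers~\eqref{eq:80} with exactly the exponents of Theorem~\ref{thm:D-fractional-p-laplace}. Claim~(2) is handled identically, with differences $u-\hat u$ replaced by $u-0=u$, with~\eqref{eq:85} in place of~\eqref{eq:84} to absorb $\norm{u}_p^p$, and with Theorem~\ref{thm:GN-implies-reg-bis} applied at $u_0=0$ (which lies in $A^{N,s}$ since each of $(-\Delta^N_p)^s$, $\beta_2$ and $F$ kills $0$).

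For claim~(3), two structural features of the unperturbed Neumann fractional $p$-Laplacian come into play. First, for every constant $c\in\R$ one has $(c,0)\in -(-\Delta^N_p)^s$, so by the exponential formula $T_t(u+c)=T_t u+c$ for all $t\ge 0$. Second, testing the evolution equation with the constant $1$ and using the symmetry of the kernel shows that the mean value is preserved, $\overline{T_t u}=\overline u$. Combining these two facts gives the key identity $T_t u-\overline u=T_t(u-\overline u)$, which reduces~\eqref{eq:196} to an $L^{s}_{0}$-$L^{\infty}$ estimate for the restriction of $\{T_{t}\}$ to the closed invariant subspace $L^{2}_{0}(\Sigma)$. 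On this subspace the fractional Poincaré-Wirtinger inequality
\begin{equation*}
  \norm{u-\overline u}_{p}\le C_{s,p,\Sigma}\,\abs{u}_{s,p},\qquad u\in W^{s}_{p,p}(\Sigma),
\end{equation*}
(which follows by the standard compactness-and-contradiction argument from the compact embedding $W^{s}_{p,p}(\Sigma)\hookrightarrow L^{p}(\Sigma)$ and the fact that $\abs{u}_{s,p}=0$ forces $u$ constant) allows one to absorb the $\norm{u}_p$ term in the Sobolev inequality, producing $\norm{u}_{r}\le C\,\abs{u}_{s,p}$ on $L^{2}_{0}\cap W^{s}_{p,p}(\Sigma)$ for each of the three cases. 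This is precisely the Gagliardo-Nirenberg inequality~\eqref{eq:242} for the restricted operator with $u_0=0$; Corollary~\ref{coro:super-contractivity-bis} together with Theorem~\ref{thm:extrapolation-to-infty-bis} then gives an $L^{s}$-$L^{\infty}$ estimate on $L^{2}_{0}(\Sigma)$ with the exponents of Theorem~\ref{thm:D-fractional-p-laplace}, and the identity $T_t u-\overline u=T_t(u-\overline u)$ translates it into~\eqref{eq:196} on all of $L^{s}(\Sigma)$.

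The routine piece is the symbolic bookkeeping of exponents, which is inherited directly from the Dirichlet proof once the relevant Gagliardo-Nirenberg inequality has been secured. The genuine obstacle is the verification of the two stability properties of $\{T_t\}$ used in claim~(3)—preservation of the mean value and commutation with additive constants—together with the fractional Poincaré-Wirtinger inequality on $W^{s}_{p,p}(\Sigma)$; I would need to check that the closed invariant subspace $L^{2}_{0}(\Sigma)$ intersects $\overline{D(-(-\Delta^N_p)^s)}^{L^2}$ nontrivially and densely (which it does, since the restriction to $L^2_0$ of the $m$-completely accretive operator on $L^2$ is again $m$-completely accretive by the invariance just noted), so that the framework of Section~\ref{sec:extrapolation-towards-infinity} applies verbatim to the restricted semigroup.
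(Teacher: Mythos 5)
Your proof of claims~(1) and~(2) follows the paper's proof essentially verbatim: apply the fractional Sobolev inequalities on $W^{s}_{p,p}(\Sigma)$ from~\cite{MR2944369} (Theorems 6.9, 6.10 and 8.2), take $p$-th powers, invoke~\eqref{eq:86} and~\eqref{eq:97}, absorb $\norm{u-\hat u}_p^p$ via~\eqref{eq:84} (or $\norm{u}_p^p$ via~\eqref{eq:85} without~\eqref{eq:97}), appeal to Remark~\ref{rem:2}, and close with Theorem~\ref{thm:main-1} (respectively Theorem~\ref{thm:GN-implies-reg-bis}). No discrepancy there.

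For claim~(3), you take a genuinely different but entirely valid route. The paper simply substitutes the fractional Poincar\'e--Wirtinger inequality of~\cite{MR3095149} into the three Sobolev inequalities to obtain $\norm{u-\overline u}_{r}\lesssim \abs{u}_{s,p}$, notes that $(\overline u,0)\in(-\Delta_p^N)^s$, and feeds the resulting Gagliardo--Nirenberg inequality centred at $u_0=\overline u$ directly into the abstract theory — the fact that $u_0$ depends on $u$ is tacitly justified because the inequality and the fixed-point property $(c,0)\in(-\Delta_p^N)^s$ hold for every constant $c$ simultaneously. You instead make the underlying structure explicit: commutation with constants ($T_t(u+c)=T_tu+c$) and conservation of the mean ($\overline{T_tu}=\overline u$, from testing against $1$) give the identity $T_tu-\overline u=T_t(u-\overline u)$, so you restrict to the invariant subspace $L^2_0(\Sigma)$, where the Poincar\'e inequality turns the Sobolev estimates into Gagliardo--Nirenberg inequalities with the fixed centre $u_0=0$, and then Corollary~\ref{coro:super-contractivity-bis} plus Theorem~\ref{thm:extrapolation-to-infty-bis} apply on the nose. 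This is more verbose but more airtight than the paper's shorthand, and it removes any unease about the $u$-dependent choice of $u_0$. The only small gap is that you assert without proof that the restriction of the operator to $L^2_0$ is again $m$-completely accretive; this does follow from the invariance of $L^2_0$ under the resolvent (which your conservation-of-mean calculation already implies at the resolvent level), but it deserves one sentence.
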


In particular, concerning Neumann boundary condition, we need to
construct for each case $1<sp<d$, $sp=d$ and $sp>d$ our
sufficient inequalities from the known Sobolev inequalities for
fractional Sobolev spaces partially combined with a Poincar\'e inequalities.

\begin{proof}[Proof of Theorem~\ref{thm:Neumann-fractional-p-laplace}]
  We only derive the Sobolev inequalities in each case $1<sp<d$,
  $sp=d$ and $sp>d$ needed to deduce the $L^{q}$-$L^{r}$- regularity
  estimates for the semigroups, since then one proceeds as in the proof of
  Theorem~\ref{thm:D-fractional-p-laplace}. 

  First, let $1<s p<d$. Then, by~\cite[Theorem~6.9]{MR2944369}, there is a
  constant $C>0$ such that 
  \begin{equation}
    \label{eq:198}
    \norm{u}_{\frac{p d}{d-sp}}\le C (\abs{u}_{s,p}+\norm{u}_{p})
  \end{equation}
  for every $u\in W_{p,p}^{s}(\Sigma)$. Now, one takes $p$th power on
  both sides of this inequality and applies assumption~\eqref{eq:84}
  and inequality~\eqref{eq:97} for $s=u(x)-u(y)$ and
  $t=\hat{u}(x)-\hat{u}(y)$. This, together with Remark~\ref{rem:2}
  yields the operator $A^{N,s}$ satisfies Gagliardo-Nirenberg
  inequality~\eqref{eq:245} with exponents~\eqref{eq:197}. If $\beta$
  satisfies~\eqref{eq:85}, then one does not need
  inequality~\eqref{eq:97} to show that $A^{N,s}$ satisfies
  Gagliardo-Nirenberg inequality~\eqref{eq:242} with
  exponents~\eqref{eq:197}.

  In the case $sp=d\ge 2$, one uses the  Sobolev
  inequality~\cite[Theorem~6.10]{MR2944369}
  \begin{equation}
    \label{eq:199}
    \norm{u}_{\frac{p}{1-\theta}}\le C (\abs{u}_{s,p}+\norm{u}_{p}),
  \end{equation}
  which holds for all $u\in W_{p,p}^{s}(\Sigma)$ and any $\theta\in
  [0,1)$, where the constant $C$ is independent of $\theta$ and $u$.
  In the case and $d<s p$, one employs the Sobolev
  inequality~\cite[Theorem~8.2]{MR2944369})
  \begin{equation}
    \label{eq:200}
    \norm{u}_{\infty}\le C (\abs{u}_{s,p}^{p}+\norm{u}_{p}^{p})^{1/p}
  \end{equation}
  holding for all $u\in W_{p,p}^{s}(\Sigma)$. In both cases, one
  proceeds analogously as in the proof of
  Theorem~\ref{thm:D-fractional-p-laplace}. If $\beta$
  satisfies~\eqref{eq:84}, then one sees that operator $A^{N,s}$
  satisfies the Gagliardo-Nirenberg inequality~\eqref{eq:245} with 
  exponents~\eqref{eq:193} and \eqref{eq:195}, respectively. If
  $\beta$ satisfies~\eqref{eq:85}, then by proceeding as in the
  previous case but without using inequality~\eqref{eq:97} one sees
  that $A^{N,s}$ satisfies the Gagliardo-Nirenberg
  inequality~\eqref{eq:242} with  exponents~\eqref{eq:193}
  and~\eqref{eq:195}, respectively. This shows that the first and the
  second statement of this theorem holds.

  In order to see that the last statement holds, one employs the
  following Poincar\'e inequality (see, for instance,~\cite{MR3095149})
  \begin{displaymath}
    \norm{u-\overline{u}}_{p}\le C\, \abs{u}_{s,p}
  \end{displaymath}
  holding for all $u\in W^{1}_{p,p}(\Sigma)$ and some constant $C>0$,
  to estimate the term $\norm{u}_{p}$ in~\eqref{eq:198},
  \eqref{eq:199} and \eqref{eq:200}. Then, one proceeds as in the
  previous steps of this proof and obtains that the operator
  $(-\Delta_{p}^{N})^{s}$ satisfies the Gagliardo-Nirenberg
  inequality~\eqref{eq:245} with exponents~\eqref{eq:197} if $1< s p
  < d$, \eqref{eq:193} if $ps=d\ge 2$ and \eqref{eq:195} if $sp >d$. Therefore
  by Theorem~\ref{thm:main-1}, the third statement of this theorem
  holds.
\end{proof}


%
%
%
%

\subsection{Nonlinear diffusion equations in $L^{1}$}
\label{sec:doubly-nonl-diff}

This subsection is concerned with the application of our theory
developed in Section~\ref{gn} and Section~\ref{extra} to semigroups
generated by quasi accretive operator in $L^{1}$. The here established
$L^{1}$-$L^{\infty}$-regularisation estimates are used in the
subsequent Section~\ref{subsec:Mild-is-strong} to show that mild
solutions are strong. 

For the sake of readability, we outline the
example here only on the $p$-Laplace operator but we emphasise that it
is clear that this example and the corresponding results hold very
well for the general Leray-Lions operator considered in
Section~\ref{sec:p-laplace}.


Let $1<p<\infty$, $m>0$, and $\phi \in C(\R)\cap C^{1}(\R\setminus\{0\})$ be a
non-decreasing function satisfying
\begin{equation}
  \label{eq:10}
  \phi(0)=0\qquad\text{and}\qquad \phi'(s)\ge
  C\,\abs{s}^{m-1}\quad\text{ for every $s\neq 0$,}
\end{equation}
for some $C>0$ independent of $s\in \R$. 

\begin{remark}
  Typical examples of non-decreasing functions
  $\phi\in C(\R)\cap C^{1}(\R\setminus\{0\})$ satisfying the two
  conditions in~\eqref{eq:10} are $\phi_{1}(s)=\abs{s}^{m-1}s$,
  ($s\in \R$), for any $m>0$ or for $m=1$,
  $\phi_{2}(s):=a\,s^{+}-b\, s^{-}$, ($s\in \R$), for $a$, $b>0$. Note
  that for the function $\phi_{1}$, the operator $\Delta_{p}\phi_{1}$
  coincides with the celebrated \emph{doubly nonlinear operator}
  $\Delta_{p}(\cdot^{m})$ (cf.~\cite{MR2268115}).
\end{remark}

Then, for given initial value $u_{0} \in L^{1}(\Sigma)$, we
investigate in this subsection the regularisation effect of
\emph{mild} solutions $u(t)=u(x,t)$ for $t>0$ of the \emph{parabolic
  initial value problem}
\begin{equation}
  \label{ip:doubly-nonlinear}
  \begin{cases}
    \partial_{t}u-\textrm{div} (\abs{\nabla \phi(u)}^{p-2}\nabla \phi(u))+f(x,u)\ni 0 
    &\qquad\text{on $\Sigma\times
      (0,\infty)$,}\\
    \hspace{5,5cm}u(\cdot,0)=u_{0} & \qquad\text{on $\Sigma$,}
  \end{cases}
\end{equation}
respectively equipped with one of the following types of boundary
conditions:
\begin{align}
 \label{eq:127} u=0\quad \text{on $\partial\Sigma\times
    (0,\infty)$,}& \;
    \text{if $\Sigma\subseteq \R^{d}$,}\\  
 \label{eq:128} \abs{\nabla \phi(u)}^{p-2}\nabla \phi(u)\cdot\nu
     =0\quad \text{on $\partial\Sigma\times (0,\infty)$,}& \;
     \text{if $\mu(\Sigma)<\infty$,}\\
 \label{eq:129} \abs{\nabla \phi(u)}^{p-2}\nabla \phi(u)\cdot\nu+a \abs{\phi(u)}^{p-1}= 0
  \quad \text{on $\partial\Sigma\times
     (0,\infty)$,}& \;
     \text{if $\mu(\Sigma)<\infty$.}
\end{align}

Concerning homogeneous Dirichlet boundary condition~\eqref{eq:127}, we
make no further assumptions on the boundary of $\Sigma$. However,
regarding homogeneous Neumann or Robin boundary conditions
\eqref{eq:128} and \eqref{eq:129}, respectively, we need to ensure the
validity of the Gagliardo-Niren\-berg inequalities~\eqref{eq:31}
hence, we assume that $\Sigma$ is a bounded domain with a Lipschitz
boundary. In addition, concerning homogeneous Neumann boundary
conditions \eqref{eq:128}, we state the $L^{q}$-$L^{r}$ regularisation
effect merely for initial values $u_{0}\in L^{q}_{0}(\Sigma)$ for
$1\le q\le \infty$.



\begin{remark}
  \label{rem:5}
  If $\Sigma$ is unbounded, then the Dirichlet boundary
  conditions~\eqref{eq:127} become \emph{vanishing conditions at
    infinity}~\eqref{eq:43}. It is well-known 
  (cf.~\cite[Theorem~9.12]{MR2286292} for the case $p=2$ and the
  references therein) that the $L^{q}$-$L^{r}$-regular\-isa\-tion effect of
  mild (respectively, strong) solutions of
  problem~\eqref{ip:doubly-nonlinear} for $\Sigma=\R^{d}$ has been
  deduced from the uniform estimates obtained in the case
  $\Sigma_{n}=B(0,n)$ the open ball centred at $x=0$ and radius
  $r=n\ge 1$, or for $p\neq 2$ under the assumption that the solutions
  have enough regularity (cf., for
  instance,~\cite{MR2268115,MR2726546}). In this monograph, we show that
  we do not need to proceed in this way. We treat the case of Dirichlet boundary
  condition~\eqref{eq:127} for general open subsets $\Sigma$ of
  $\R^{d}$ \emph{at once}. This simplifies essentially the known approaches in the
  literature and has the great advantage that we know
  the infinitesimal generator of the nonlinear semigroup.
\end{remark}

Now, let $A$ denote either the negative Dirichlet $p$-Laplace operator
$-\Delta_{p}^{D}$ on $L^{2}(\Sigma)$, the negative Neumann
$p$-Laplace operator $-\Delta_{p}^{N}$ on $L^{2}(\Sigma)$ or on $L^{2}_{m}(\Sigma)$, or the
negative Robin $p$-Laplace operator $-\Delta_{p}^{R}$ realised on
$L^{2}(\Sigma)$. Then, $A$ is a single-valued, $m$-completely
accretive operators in $L^{2}(\Sigma)$ satisfying $A0=0$
(cf. Section~\ref{sec:homog-dirichl-bound}
and~\ref{subsection:robin}). Furthermore, let $A_{1\cap\infty}$ be the
trace of $A$ on $L^{1}\cap L^{\infty}(\Sigma,\mu)$. Then, by
Proposition~\ref{propo:extrapol-prop}, the closure
$\overline{A_{1\cap\infty}}$ in $L^{1}(\Sigma)$ of the trace
$A_{1\cap\infty}$ is $m$-completely accretive in $L^{1}(\Sigma)$ with
dense domain. In the specific case $A=-\Delta_{p}^{N}$ on $L^{2}_{m}(\Sigma)$,
Proposition~\ref{propo:charact-of-c-complete-operators} yields that
$\overline{A_{1\cap\infty}}$ is $m$-completely accretive in
$L^{1}_{m}(\Sigma)$ with dense domain and with $c$-complete resolvent.

Next, we first consider the case when $\phi : \R\to\R$ is a general
continuous, non-decreasing function satisfying $\phi(0)=0$. For every
$\lambda>0$, let $\beta_{\lambda}(s)=(1+\lambda\beta)^{-1}(s)$,
$(s\in \R)$, denote the Yosida operator of $\beta:=\phi^{-1}$. Then,
by the Lipschitz continuity of $\beta_{\lambda} : \R\to\R$, since
$\beta_{\lambda}(0)=0$ and since $\beta_{\lambda}$ monotonically
increasing,
\begin{align*}
  [\beta_{\lambda}(u),A(u)]_{2}
  &=\int_{\Sigma}\abs{\nabla
    u}^{p-2}\nabla u\nabla \beta_{\lambda}(u)\,\dx +
    a\,\int_{\partial\Sigma}\abs{u}^{p-2}u \beta_{\lambda}(u)\,\dH\\
  &= \int_{\Sigma}\abs{\nabla
    u}^{p}\beta'_{\lambda}(u)\,\dx +
    a\,\int_{\partial\Sigma}\abs{u}^{p-2}u \beta_{\lambda}(u)\,\dH\\
  &\ge 0,
\end{align*}
for every $u\in D(A)$ and $\lambda>0$ provided $A$ is one of the three
operators $-\Delta_{p}^{D}$, $-\Delta_{p}^{N}$ or $-\Delta_{p}^{R}$,
where $a=0$ if $A$ is not $-\Delta_{p}^{R}$. Thus,
condition~\eqref{eq:78} holds for $q=2$. To see that the trace
$A_{1\cap\infty}$ of $A$ in $L^{1}\cap L^{\infty}(\Sigma,\mu)$
satisfies ~\eqref{eq:78} for $q=1$, let
$(\gamma_{\varepsilon})_{\varepsilon>0}$ be the sequence given
by~\eqref{eq:64}, then by the Lipschitz continuity of
$\gamma_{\varepsilon}$ and $\beta_{\lambda}$ on $\R$, since
$\gamma_{\varepsilon}(0)=0$ and $\beta_{\lambda}(0)=0$, and by the
monotonicity of $\gamma_{\varepsilon}$ and $\beta_{\lambda}$ on $\R$,
\begin{align*}
  &\int_{\Sigma}\gamma_{\varepsilon}(\beta_{\lambda}(u))\,A_{1\cap\infty}(u)\,\dmu\\
  &\qquad=\int_{\Sigma}\abs{\nabla
    u}^{p-2}\nabla u\nabla \gamma_{\varepsilon}(\beta_{\lambda}(u))\,\dx +
    a\,\int_{\partial\Sigma}\abs{u}^{p-2}u \gamma_{\varepsilon}(\beta_{\lambda}(u))\,\dH\\
  &\qquad= \int_{\Sigma}\abs{\nabla
    u}^{p}\gamma'_{\varepsilon}(\beta_{\lambda}(u))\beta'_{\lambda}(u)\,\dx +
    a\,\int_{\partial\Sigma}\abs{u}^{p-2}u \gamma_{\varepsilon}(\beta_{\lambda}(u))\,\dH\\
  &\qquad\ge 0,
\end{align*}
for every $u\in D(A_{1\cap\infty})$, $\lambda>0$ and $\varepsilon>0$. Since 
\begin{displaymath}
  \lim_{\varepsilon\to0+}\gamma_{\varepsilon}(\beta_{\lambda}(u(x)))=
  \textrm{sign}_{0}(\beta_{\lambda}(u(x)))\qquad\text{for
a.e. $x\in \Sigma$,}
\end{displaymath}
and $\abs{\gamma_{\varepsilon}(\beta_{\lambda}(u))\,A_{1\cap\infty}(u)}\le
\abs{A_{1\cap\infty}(u)}\in L^{1}(\Sigma)$, Lebesgue's dominated convergence
theorem yields
\begin{displaymath}
  \lim_{\varepsilon\to 0+}\int_{\Sigma}\gamma_{\varepsilon}(\beta_{\lambda}(u))\,A_{1\cap\infty}(u)\,\dmu=
  \int_{\Sigma}\textrm{sign}_{0}(\beta_{\lambda}(u(x)))A_{1\cap\infty}(u)\,\dmu.
\end{displaymath}
Thus,
\begin{displaymath}
  [\beta_{\lambda}(u),\,A_{1\cap\infty}(u)]_{1}\ge 0\qquad
  \text{for all $u\in D(A_{1\cap\infty})$ and $\lambda>0$.}
\end{displaymath}
Therefore, if $A$ is one of the three operators $-\Delta_{p}^{D}$,
$-\Delta_{p}^{N}$ or $-\Delta_{p}^{R}$ then for every continuous
non-decreasing function $\phi$ on $\R$ satisfying $\phi(0)=0$, one has
that condition~\eqref{eq:252} of
Proposition~\ref{propo:Range-cond-in-Rd} holds and so, under either
hypothesis~\eqref{propo:Range-cond-in-Rd-Hyp-3},
hypothesis~\eqref{propo:Range-cond-in-Rd-Hyp-1} or
hypothesis~\eqref{propo:Range-cond-in-Rd-Hyp-2} of
Proposition~\ref{propo:Range-cond-in-Rd}, we can conclude that the
closure $\overline{A_{1\cap\infty}\phi}$ of $A_{1\cap\infty}\phi$ in
$L^{1}(\Sigma,\mu)$ is an $m$-accretive operator in $L^{1}(\Sigma)$
with complete resolvent. In particular, for $A=-\Delta_{p}^{N}$, the
operator $\overline{A_{1\cap\infty}\phi}$ is $m$-accretive in
$L^{1}_{m}(\Sigma)$ with $c$-complete resolvent. If $\phi$ satisfies
either hypothesis~\eqref{propo:Range-cond-in-Rd-Hyp-1} or
hypothesis~\eqref{propo:Range-cond-in-Rd-Hyp-2} holds, then
$A_{1\cap\infty}\phi$ satisfies the range condition~\eqref{eq:182},
which is important in order to apply
Theorem~\ref{thm:GN-implies-reg-for-oper-in-L1}. Therefore, if either
$\phi(s)$ is locally Lipschitz continuous or $A$ is defined on
$L^{2}(\Sigma)$ with $\Sigma$ an open subset of $\R^{d}$ of finite
Lebesgue measure, then $A_{1\cap\infty}\phi$ satisfies the range
condition~\eqref{eq:182}.  Moreover, we can state the following
result.

 \begin{lemma}
   \label{lem:4}
   Let $\phi$ be a continuous non-decreasing function satisfying
   $\phi(0)=0$ and $\Sigma$ an open subset of $\R^{d}$. Let $A$ be the
   negative Dirichlet-$p$-Laplace operator $-\Delta_{p}^{D}$ on
   $L^{2}(\Sigma)$ and $A_{1\cap\infty}$ the trace of $A$ on
   $L^{1}\cap L^{\infty}(\Sigma)$.
 Then, $A_{1\cap\infty}\phi$ satisfies range condition~\eqref{eq:182}.
 \end{lemma}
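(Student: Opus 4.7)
The plan is to reduce Lemma~\ref{lem:4} to Proposition~\ref{propo:Range-cond-in-Rd} through an approximation argument. None of the three hypotheses~(i)--(iii) of that proposition applies directly here: $\phi$ is not assumed to be injective, the Dirichlet $p$-Laplacian $A = -\Delta_p^D$ is not known a priori to be $s$-accretive in $L^1$, and $\Sigma$ may have infinite Lebesgue measure. I would therefore regularise by $\phi_n(s) := \phi(s) + s/n$ for $n \in \N$; each $\phi_n$ is continuous, strictly increasing with $\phi_n(0) = 0$, hence injective, and the verifications of~\eqref{eq:252} carried out in the discussion preceding Lemma~\ref{lem:4} for an arbitrary continuous non-decreasing function vanishing at $0$ apply verbatim to $\phi_n$. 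Proposition~\ref{propo:Range-cond-in-Rd} under hypothesis~(i) then produces, for each $\lambda > 0$ and each $f \in L^1 \cap L^\infty(\Sigma)$, a sequence $u_n \in L^1 \cap L^\infty(\Sigma)$ with $w_n := \phi_n(u_n) \in D(A_{1\cap\infty})$ satisfying $u_n + \lambda A_{1\cap\infty}\phi_n(u_n) = f$, and the remainder of the proof consists in passing to the limit $n \to \infty$.

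Since $\overline{A_{1\cap\infty}\phi_n}$ is $m$-accretive in $L^1(\Sigma)$ with complete resolvent, Proposition~\ref{propo:quasi-accretive-operators-in-L1-complete-resolvent} provides the uniform bound $\norm{u_n}_q \leq \norm{f}_q$ for every $1 \leq q \leq \infty$. Setting $M := \norm{f}_\infty$, continuity of $\phi$ yields $\norm{w_n}_\infty \leq \sup_{|s|\leq M}|\phi(s)| + M$, and testing the variational identity defining $B^D$ with $\xi = w_n$ together with the coercivity estimate~\eqref{eq:coerciveness} bounds $\norm{|\nabla w_n|}_p$ uniformly. Passing to a subsequence I would obtain $u_n \rightharpoonup u$ weakly in every $L^q$, $1 < q < \infty$, and weakly-$\ast$ in $L^\infty$, together with $w_n \rightharpoonup w$ weakly in $\dot{W}^{1}_{p,2}(\Sigma)$ and weakly-$\ast$ in $L^\infty$; a Rellich--Kondrachov argument on a bounded-Lipschitz exhaustion of $\Sigma$ combined with a diagonal extraction then produces $w_n \to w$ strongly in $L^{q}_{\mathrm{loc}}(\Sigma)$ for every $1 \leq q < \infty$ and almost everywhere on $\Sigma$.

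The identification of the limit equation $u + \lambda A_{1\cap\infty} w = f$ is then carried out by a localised Minty--Browder monotonicity argument applied to $a$: for every cutoff $\xi \in C_c^\infty(\Sigma)$, the pre-limit equation permits one to compute $\lim_n \int \xi\, a(\cdot, \nabla w_n)\cdot \nabla w_n \, \dx = \int \xi\, \chi \cdot \nabla w \, \dx$, where $\chi$ is the weak $L^{p^{\prime}}$-limit of $a(\cdot, \nabla w_n)$, after which classical monotone-operator arguments identify $\chi = a(\cdot, \nabla w)$ and hence place $w$ in $D(A_{1\cap\infty})$ with $A_{1\cap\infty}w = \lambda^{-1}(f - u)$. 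The step I expect to be the main obstacle is the identification $w = \phi(u)$, since non-injectivity of $\phi$ rules out direct inversion of $w_n = \phi_n(u_n)$. I would handle this by applying Minty's trick to the maximal monotone graph $\beta := \phi^{-1}$: from $\phi(u_n) = w_n - u_n/n$ and pointwise monotonicity of $\phi$, for every $\eta \in \R$ and every nonnegative $\xi \in C_c(\Sigma)$ one has
\[
\int_\Sigma (u_n - \eta)\,(w_n - \phi(\eta))\,\xi\,\dx \;\geq\; \frac{1}{n}\int_\Sigma u_n(u_n - \eta)\,\xi\,\dx,
\]
whose right-hand side tends to $0$ because $\{u_n\}$ is $L^\infty$-bounded, and whose left-hand side converges to $\int_\Sigma (u - \eta)(w - \phi(\eta))\,\xi\,\dx$ by weak-strong pairing on $\mathrm{supp}(\xi)$. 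Letting $\eta$ range over $\R$ and $\xi$ over the nonnegative test functions then yields $(u(x) - \eta)(w(x) - \phi(\eta)) \geq 0$ for a.e.~$x \in \Sigma$ and every $\eta \in \R$, which since $\phi$ is continuous forces $w = \phi(u)$ almost everywhere on $\Sigma$, completing the proof.
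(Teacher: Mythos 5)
Your regularisation $\phi_n(s) := \phi(s) + s/n$ is a reasonable idea, but the opening step already fails: you invoke Proposition~\ref{propo:Range-cond-in-Rd} under hypothesis~\eqref{propo:Range-cond-in-Rd-Hyp-3} (injectivity of $\phi_n$) to produce, for each $f\in L^{1}\cap L^{\infty}(\Sigma)$, an element $u_n\in L^1\cap L^\infty(\Sigma)$ with $\phi_n(u_n)\in D(A_{1\cap\infty})$ solving $u_n+\lambda A_{1\cap\infty}\phi_n(u_n)= f$, i.e., the range condition~\eqref{eq:182} for $A_{1\cap\infty}\phi_n$. But Proposition~\ref{propo:Range-cond-in-Rd} asserts~\eqref{eq:182} \emph{only} under hypotheses~\eqref{propo:Range-cond-in-Rd-Hyp-1} and~\eqref{propo:Range-cond-in-Rd-Hyp-2}; under hypothesis~\eqref{propo:Range-cond-in-Rd-Hyp-3} it merely gives that the closure $\overline{A_{1\cap\infty}\phi_n}$ is $m$-accretive in $L^1$ with complete resolvent. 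This places $u_n$ in $D(\overline{A_{1\cap\infty}\phi_n})$, but says nothing about $w_n:=\phi_n(u_n)$ being in $D(A_{1\cap\infty})$, i.e., in $\dot W^{1}_{p,2}(\Sigma)$. Since $\phi$ (and hence $\phi_n$) is not assumed Lipschitz at the origin, hypothesis~\eqref{propo:Range-cond-in-Rd-Hyp-1} is not available; since $\Sigma$ may have infinite Lebesgue measure, neither is~\eqref{propo:Range-cond-in-Rd-Hyp-2}. Without $w_n\in\dot W^1_{p,2}(\Sigma)$ you cannot test with $\xi=w_n$ to obtain the energy estimate $\norm{\abs{\nabla w_n}}_p\lesssim 1$, and consequently the Rellich--Kondrachov step, the Leray--Lions/Minty identification and the final identification $w=\phi(u)$ have no starting point. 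The omission in Proposition~\ref{propo:Range-cond-in-Rd} is not a technicality: for injective $\phi$ that is not Lipschitz near $0$ (say $\phi(s)=\sqrt{\abs{s}}\,\mathrm{sign}(s)$), $\phi(u)$ can genuinely fail to lie in $L^1$ when $\mu(\Sigma)=\infty$ even though $u\in L^1\cap L^\infty$, so~\eqref{eq:182} need not hold.

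The paper's proof approximates in a different variable: it keeps $\phi$ fixed and exhausts $\Sigma$ by an increasing sequence of open subsets $\Sigma_n$ of \emph{finite} measure, on which hypothesis~\eqref{propo:Range-cond-in-Rd-Hyp-2} does apply and gives~\eqref{eq:182} for the Dirichlet $p$-Laplacian on each $\Sigma_n$. The resulting resolvent elements, extended by zero to $\Sigma$, are controlled in $L^1$ by the $T$-accretivity and order-preservation of the resolvents (giving monotone a.e.\ convergence after splitting $f=f^+-f^-$), and the gradient bound and Leray--Lions identification proceed much as you describe. If you wish to keep the $\phi_n$-regularisation you would first have to establish the range condition for each injective $\phi_n$ on the full $\Sigma$, which is essentially the content of the lemma being proved and would itself require an exhaustion argument; the paper's domain truncation is the more economical route.
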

 
 By using that $A_{1\cap\infty}\phi$ satisfies the range
 condition~\eqref{eq:182} and under the assumption that $\phi$ is a
 continuous strictly increasing function satisfying $\phi(0)=0$ and
 $\Sigma$ be either an open bounded subset of $\R^{d}$ or $\R^{d}$,
 it is not difficult to see that the domain
 $D(A_{1\cap\infty}\phi)$ is dense in $L^{1}(\Sigma)$.

We briefly outline the proof of Lemma~\eqref{lem:4}.

 \begin{proof}[Proof of Lemma~\ref{lem:4}]
   Let $(\Sigma_{n})_{n\ge 1}$ be a sequence of subsets
   $\Sigma_{n}\subseteq \Sigma$ satisfying
   $\Sigma_{n}\subseteq \Sigma_{n+1}$ and
   $\bigcup_{n\ge 1}\Sigma_{n}=\Sigma$. Let $\Delta_{p}^{D,n}$ be the
   Dirichlet-$p$-Laplace operator on $L^{2}(\Sigma_{n})$ and
   $A_{n}$ the trace of $\Delta_{p}^{D,n}$ on
   $L^{1}\cap L^{\infty}(\Sigma_{n})$. Since $\Sigma_{n}$ has finite
   Lebesgue measure, Proposition~\ref{propo:Range-cond-in-Rd} implies
   that the operator $A_{n}\phi$ satisfies range
   condition~\eqref{eq:182}. For every $\lambda>0$, let $J_{\lambda}^{n}$ be the
   resolvent operator of $A_{1\cap\infty}\phi$. 

   Now, let $f\in L^{1}\cap L^{\infty}(\Sigma)$,
   $\lambda>0$ and for every $n\ge 1$, set $f_{n}=f \mathds{1}_{\Sigma_{n}}$,
   $u_{n}=J_{\lambda}^{n}[f_{\vert \Sigma_{n}}]$ and $\tilde{u}_{n}$
   the extension of $u_{n}$ on $\R^{d}$ by zero. Then, our first aim
   is to show that there is $u\in D(A_{1\cap\infty}\phi)$ satisfying
   $u+\lambda A_{1\cap\infty}\phi(u)\ni f$ and after eventually
   passing to a subsequence, 
   \begin{equation}
     \label{eq:241}
     \lim_{n\to\infty}\tilde{u}_{n}=u\qquad\text{in $L^{1}(\Sigma)$.}
   \end{equation}
   Since $f\in L^{1}\cap L^{\infty}(\Sigma)$ and $A_{1\cap\infty}\phi$
   has a complete resolvent, it follows that
   \begin{equation}
     \label{eq:240}
     \norm{\tilde{u}_{n}}_{q}\le \norm{f}_{q}\qquad\text{for every
       $n\ge 1$}
   \end{equation}
   and all $1\le q\le \infty$. By reflexivity of $L^{q}(\Sigma)$ for
   $q>1$, there is $u\in L^{q}(\Sigma)$ such that after eventually
   passing to a subsequence $\tilde{u}_{n}$ converges to $u$ weakly in
   $L^{q}(\Sigma)$ and $\norm{u}_{q}\le \norm{f}_{q}$. Moreover, since
   for every $\varepsilon>0$ and for every $A\subseteq \Sigma$
   satisfying
   $\abs{A}<\delta(\varepsilon):=\norm{f}_{\infty}^{-1}\,\varepsilon$,
   one sees that 
   \begin{math}
     \int_{A}\abs{\tilde{u}_{n}}\,\dx\le \varepsilon
   \end{math}
   for all $n\ge 1$. Thus the Dunford-Pettis theorem implies
   $u\in L^{1}(\Sigma)$, $\norm{u}_{1}\le \norm{f}_{1}$ and
   $\tilde{u}_{n}$ converges to $u$ weakly in $L^{1}(\Sigma)$ after
   passing eventually to a subsequence of $(\tilde{u}_{n})_{n\ge 1 }$.
   If $f\ge 0$, then by the $T$-accretivity of $A_{n}$, we have that
   $0\le \tilde{u}_{n}\le \tilde{u}_{n+1}$ a.e. on $\Sigma$ for every
   $n\ge 1$. Moreover, by~\eqref{eq:240} for $q=1$, Beppo-Levi's
   monotone convergence theorem yields \eqref{eq:241} for some
   $u\in L^{1}(\Sigma)$ satisfying $u\ge 0$ provided $f\ge 0$. Similar
   arguments show that $\tilde{u}_{n+1}\le \tilde{u}_{n}\le 0$ and
   \eqref{eq:241} holds for some $u\in L^{1}(\Sigma)$ satisfying
   $u\le 0$ provided $f\le 0$. Since $-f^{-}\le f\le f^{+}$, the
   $T$-accretivity of $A_{n}$ yields
   $J_{\lambda}^{n}[-f_{\vert \Sigma_{n}}^{-}]\le u_{n}\le
   J_{\lambda}^{n}[f_{\vert \Sigma_{n}}^{+}]$
   a.e. on $\Sigma$. Let $\tilde{u}_{-,n}$ denote the extension on
   $\Sigma$ of $J_{\lambda}^{n}[-f_{\vert \Sigma_{n}}^{-}]$ by zero
   and $\tilde{u}_{+,n}$ denote the extension on $\Sigma$ of
   $J_{\lambda}^{n}[f_{\vert \Sigma_{n}}^{+}]$ by zero. Then, there
   are $u_{-}$ and $u_{+}\in L^{1}(\Sigma)$ such that
   $\lim_{n\to\infty}\tilde{u}_{-,n}=u_{-}$ and
   $\lim_{n\to\infty}\tilde{u}_{+,n}=u_{+}$ in $L^{1}(\Sigma)$. By the
   monotonicity of $(\tilde{u}_{-,n})_{n\ge 1}$ and
   $(\tilde{u}_{+,n})_{n\ge 1}$, we obtain
   $u_{-}\le \tilde{u}_{n}\le u_{+}$ a.e. on $\Sigma$ for all
   $n\ge 1$. Thus, by Lebesgue's dominated convergence theorem,
  ~\eqref{eq:241} holds provided $\tilde{u}_{n}$ converges to
   $u$ a.e. on $\Sigma$.

   We multiply equation $u_{n}+\lambda A_{n}\phi= f_{n}$ by
   $\phi(u_{n})$ with respect to the $L^{2}$-inner product. Then
   coercivity condition~\eqref{eq:coerciveness} yields
   $(\phi(\tilde{u}_{n}))_{n\ge 1}$ is bounded in
   $W^{1,p}_{0}(\Sigma)$. Hence and by using Rellich-Kondrachov's
   compactness result combined with a diagonal-sequence argument
   yields the existence of a subsequence of
   $(\tilde{u}_{n})_{n\ge 1}$, which we denote again by
   $(\tilde{u}_{n})_{n\ge 1}$ and some $v\in W^{1,p}_{0}(\Sigma)$ such
   that $\phi(\tilde{u}_{n})$ converges weakly to $v$ in
   $W^{1,p}_{0}(\Sigma)$ and strongly in $L^{p}_{loc}(\Sigma)$. By the
   continuity of $\phi^{-1}$ on $\R$, it follows that
   $\tilde{u}_{n}=\phi^{-1}(\phi(\tilde{u}_{n}))$ converges to
   $\phi^{-1}(v)$ in $L^{p}_{loc}(\Sigma)$ and a.e. on $\Sigma$ after
   passing again to a subsequence. Comparing this with the weak limit
   $u$ of $(\tilde{u}_{n})$ in $L^{q}(\Sigma)$, it follows that
   $\phi^{-1}(v)=u$ and that limit~\eqref{eq:241} holds. Now, by using
   classical monotonicity arguments due to
   Leray-Lions~\cite{MR0194733} (as employed, for instance,
   in~\cite[Lemma~2.5]{MR3369257}) yields $u\in D(A_{1\cap\infty}\phi)$ with
   $u+\lambda A_{1\cap\infty}\phi(u)\ni f$.
 \end{proof}

\begin{remark}
  Note that, for $A=-\Delta_{p}^{D}$, the
  operator $\overline{A_{1\cap\infty}\phi}$ coincides with the associated
  \emph{entropy solution} operator of the composition $-\Delta_{p}^{D}\phi$. This
  has been investigate in the celebrated paper~\cite{MR1354907}. 
\end{remark}

Let $F$ denote the Nemytski operator on $L^{1}(\Sigma)$ associated
with a Carath\'eo\-dory function $f : \Sigma\times \R\to\R$
satisfying~\eqref{eq:2} for some Lipschitz constant $L>0$. If
$\Delta_{p,1}^{D}$, $\Delta_{p,1}^{N}$ and $\Delta_{p,1}^{R}$ are
respectively the traces on $L^{1}\cap L^{\infty}(\Sigma,\mu)$ of the
operators $\Delta_{p}^{D}$, $\Delta_{p}^{N}$ or $\Delta_{p}^{R}$, and
if $\phi\in C(\R)$ is a non-decreasing function satisfying $\phi(0)=0$
then, by Proposition~\ref{propo:Lipschitz-complete} and
Lemma~\ref{lem:4}, the operators
\begin{displaymath}
  \begin{array}[c]{c}
  A^{D}_{\phi}:=\overline{(-\Delta_{p,1}^{D})\phi}+F\text{ on $L^{1}(\Sigma)$},\qquad
  A^{N}_{\phi}:=\overline{(-\Delta_{p,1}^{N})\phi}+F\text{ on $L^{1}_{m}(\Sigma)$},\\[7pt]
  A^{R}_{\phi}:=\overline{(-\Delta_{p,1}^{R})\phi}+F\text{ on $L^{1}(\Sigma)$},
  \end{array}
\end{displaymath}
are quasi-$m$-accretive in $L^{1}$ with complete resolvent. 
By the Crandall-Liggett theorem, the operators $-A^{D}_{\phi}$,
$-A^{N}_{\phi}$ and $-A^{R}_{\phi}$ generate a strongly continuous
semigroup $\{T_{t}\}_{t\ge 0}$ on
$\overline{D(A_{1\cap\infty}\phi)}^{\mbox{}_{L^{1}}}$ of Lipschitz
continuous mappings $T_{t}$ on the set
$\overline{D(A_{1\cap\infty}\phi)}^{\mbox{}_{L^{1}}}$ with constant
$e^{\omega t}$ and satisfying exponential
growth~\eqref{eq:60} with respect to the $L^{\tilde{q}}$-norm for all
$1\le \tilde{q}\le \infty$. 
\bigskip

%
%

Here, we state the complete description of the
$L^{q}$-$L^{r}$-regularisation effect of the Dirichlet-semigroup
$\{T_{t}\}_{t\ge 0}\sim-A_{\phi}^{D}$ on
$\overline{D(A_{\phi}^{D})}^{\mbox{}_{L^{1}}}$ for a non-decreasing
function $\phi\in C(\R)\cap C^{1}(\R\setminus\{0\})$
satisfying~\eqref{eq:10} for some $m>0$ and $C>0$.

\begin{theorem}
  \label{thm:Lq-Lr-reg-of-doubly-nonlinear-Dirichlet}
  Let $\phi\in C(\R)\cap C^{1}(\R\setminus\{0\})$ be non-decreasing
  function satisfying~\eqref{eq:10} for some $m>0$ and $C>0$, and
  let $\Sigma$ be an arbitrary open set of $\R^{d}$. Then, the semigroup
  $\{T_{t}\}_{t\ge 0}\sim-A_{\phi}^{D}$ on
  $\overline{D(A_{\phi}^{D})}^{\mbox{}_{L^{1}}}$ satisfies the
  following regularisation estimates.
   \begin{enumerate}
   \item\label{thm:Lq-Lr-reg-of-doubly-nonlinear-Dirichlet-claim1} If
     $1<p<d$, then there is $\beta^{\ast}\ge 0$ such that the
     semigroup $\{T_{t}\}_{t\ge 0}$ satisfies estimate~\eqref{eq:168}
     with $u_{0}=0$ for every
     $u\in \overline{D(A_{\phi}^{D})}^{\mbox{}_{L^{1}}}\cap
     L^{\infty}(\Sigma)$ with exponents
      \begin{displaymath}
         \alpha_{s}=
        \tfrac{\alpha^{\ast}}{1-\gamma^{\ast}\big(1-\frac{s(d-p)}{dmq_{0}}\big)},\quad
        \beta_{s}= \tfrac{\frac{\beta^{\ast}}{2}+\gamma^{\ast}
          \frac{s(d-p)}{dmq_{0}}}{1-\gamma^{\ast}(1-\frac{s(d-p)}{dmq_{0}})},\quad
        \gamma_{s}=\tfrac{\gamma^{\ast}\,s}{\frac{dm q_{0}}{(d-p)}
          \big(1-\gamma^{\ast}(1-\frac{s(d-p)}{dmq_{0}}) \big)}
      \end{displaymath}
      for every $q_{0}\ge p$ satisfying
    \begin{math}
     \frac{pq_{0}}{d-p}+p-1-\frac{1}{m}>0
    \end{math}
    and $1\le s\le \frac{d m\,q_{0}}{d-p}$ satisfying
    $\gamma^{\ast}(1-\frac{s(d-p)}{dmq_{0}})<1$, where
    \begin{displaymath}
      \alpha^{\ast}=\tfrac{1}{\frac{mp}{d-p}q_{0}+mp-m-1},\quad\text{ and }\quad
      \gamma^{\ast}=\tfrac{p\, q_{0}}{p\, q_{0}+(d-p)(p-1-\frac{1}{m})}.
    \end{displaymath}
    Moreover, for $\frac{d(1+\frac{1}{m})}{1+d+\frac{1}{m}}<p<d$, one can
    take $q_{0}=p$ and for $\frac{d(m+1)}{dm+1}<p<d$,
    the semigroup $\{T_{t}\}_{t\ge 0}$ satisfies~\eqref{eq:168} for every
    $u\in \overline{D(A_{\phi}^{D})}^{\mbox{}_{L^{1}}}\cap
    L^{\infty}(\Sigma)$ and $1\le s\le \frac{d m\,q_{0}}{d-p}$.


   
  \item If $p=d\ge 2$ and $\Sigma$ has finite Lebesgue measure, then
    for every $\theta \in (0,1)$, there is a $\beta^{\ast}_{\theta}\ge 0$ such that the semigroup
    $\{T_{t}\}_{t\ge 0}$ satisfies estimate~\eqref{eq:168} with
    $u_{0}=0$ for every
    $u\in \overline{D(A_{\phi}^{D})}^{\mbox{}_{L^{1}}}\cap
    L^{\infty}(\Sigma)$ with exponents
      \begin{displaymath}
          \alpha_{s}=
        \tfrac{\alpha^{\ast}}{1-\gamma^{\ast}_{\theta}\big(1-\frac{s(1-\theta)}{
            m q_{0}}\big)},\quad
        \beta_{s}= \tfrac{\frac{\beta^{\ast}}{2}+\gamma^{\ast}_{\theta}
          \frac{s(1-\theta)}{mq_{0}}}{1-\gamma^{\ast}_{\theta}(1-\frac{s(\theta-1)}{mq_{0}})},\quad
        \gamma_{s}=\tfrac{\gamma^{\ast}_{\theta}\,s}{\frac{m q_{0}}{(1-\theta)}
          (1-\gamma^{\ast}_{\theta}(1-\frac{s(1-\theta)}{mq_{0}}))}
      \end{displaymath} 
      for every $q_{0}\ge p$ satisfying
      $ \frac{\theta q_{0}}{1-\theta}+p-1-\frac{1}{m}>0$ and
      $1\le s\le \frac{m\,q_{0}}{1-\theta}$, where
    \begin{displaymath}
      \alpha^{\ast}_{\theta}=\tfrac{1}{m(\frac{\theta q_{0}}{1-\theta}+p-1-\frac{1}{m})},\quad\text{ and }\quad
      \gamma^{\ast}_{\theta}=\tfrac{\frac{\theta}{1-\theta}\,q_{0}}{\frac{\theta q_{0}}{1-\theta}+p-1-\frac{1}{m}}.
    \end{displaymath}
    If one takes $\max\Big\{0,\tfrac{1+m(1-p)}{m+1}\Big\}<\theta<1$,
    then one can take $q_{0}=p$ and the semigroup $\{T_{t}\}_{t\ge 0}$
    satisfies estimate~\eqref{eq:168} with $s=1$ for every
    $u\in \overline{D(A_{\phi}^{D})}^{\mbox{}_{L^{1}}}$.

   \item If $p>d$, then the semigroup
    $\{T_{t}\}_{t\ge 0}$ satisfies estimate~\eqref{eq:168} for every $u\in \overline{D(A_{\phi}^{D})}^{\mbox{}_{L^{1}}}\cap
    L^{\infty}(\Sigma)$ with exponents
     \begin{displaymath}
       \alpha_{s}=\frac{\alpha^{\ast}}{1-\gamma^{\ast}(1-\frac{s}{m+1})},\quad
       \beta_{s}=\tfrac{\frac{\beta^{\ast}}{2}+\gamma^{\ast}
         \tfrac{s}{m+1}}{1-\gamma^{\ast}(1-\frac{s}{m+1})},\quad
       \gamma_{s}=\tfrac{\gamma^{\ast}\frac{s}{m+1}}{1-\gamma^{\ast}(1-\frac{s}{m+1})},
     \end{displaymath}
    for every $1\le s\le m+1$, where
     \begin{equation}
       \label{eq:174}
       \alpha^{\ast}=\tfrac{1}{pm(1-\frac{m+1}{mp}+\frac{m+1}{md})},\quad
       \beta^{\ast}=\gamma^{\ast}+1
       \quad\text{and}\quad
       \gamma^{\ast}=\tfrac{m+1}{
       d m (1-\frac{m+1}{mp}+\frac{m+1}{md})}.
     \end{equation}
   \end{enumerate}
\end{theorem}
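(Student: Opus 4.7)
The proof proceeds by applying Theorem~\ref{thm:GN-implies-reg-for-oper-in-L1} to the operator $A = A^D_\phi$. The discussion preceding the theorem, together with Lemma~\ref{lem:4} and Proposition~\ref{propo:Lipschitz-complete}, already establishes that $A^D_\phi+\omega I$ is quasi $m$-accretive in $L^1(\Sigma)$ with complete resolvent and dense domain, and that the trace $A_{1\cap\infty}\phi$ satisfies the range condition~\eqref{eq:148}. The Lipschitz perturbation $F$ is absorbed into the $\omega\|\cdot\|_q^q$ terms exactly as in Remark~\ref{rem:6}, so the task reduces to verifying, for $u_0=0$, both the Gagliardo--Nirenberg inequality~\eqref{eq:242} and the one-parameter Sobolev family~\eqref{eq:10single} for the unperturbed operator $(-\Delta_{p,1}^D)\phi$.

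\textbf{The core computation.} Let $q\ge q_0$ and $(u,v)\in A_{1\cap\infty}\phi$ with $v=(-\Delta_p^D)\phi(u)$. Testing $v$ against the admissible function $|u|^{(q-p+1)m-1}u$ in the weak formulation~\eqref{eq:p-laplace-homogeneous} of the Dirichlet $p$-Laplacian gives
\[
  [u,v]_{(q-p+1)m+1} \;=\; (q-p+1)\,m\int_\Sigma |\phi'(u)|^{p-1}\,|u|^{(q-p+1)m-1}\,|\nabla u|^p\,\dx.
\]
Invoking the lower bound $\phi'(s)\ge C|s|^{m-1}$ and simplifying the exponents via $(m-1)(p-1)+(q-p+1)m-1 = qm-p$, this yields
\[
  [u,v]_{(q-p+1)m+1}\;\ge\; C^{p-1}(q-p+1)\,m\,\bigl(p/(qm)\bigr)^{p}\|\nabla w\|_p^p,\qquad w := |u|^{qm/p-1}u.
\]
The principal technical obstacle, which I would resolve first, is to legitimise $|u|^{(q-p+1)m-1}u$ as a test function in~\eqref{eq:p-laplace-homogeneous} for a general open set $\Sigma\subseteq\R^d$ with no boundary regularity: I would truncate and mollify $s\mapsto|s|^{(q-p+1)m-1}s$ to obtain a $C^1$ approximation, apply the chain rule on $\dot{W}^{1}_{p,2}(\Sigma)$ using $\phi(u)\in \dot{W}^{1}_{p,2}(\Sigma)$, and pass to the limit.

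\textbf{Case-by-case Sobolev input.} For (1) $1<p<d$, the classical Sobolev inequality $\|w\|_{pd/(d-p)}\le C_S\|\nabla w\|_p$ applied to $w\in\dot{W}^{1}_{p,p}(\Sigma)$ converts the last display into~\eqref{eq:10single} with $\kappa=d/(d-p)$; then $\kappa-1 = p/(d-p)$ and the admissibility condition~\eqref{eq:164} reduces to the hypothesis $pq_0/(d-p)+p-1-1/m>0$, while setting $q_0 = q = p$ in the same estimate furnishes the pure Sobolev instance of~\eqref{eq:242}. For (2) $p=d\ge 2$ on a bounded $\Sigma$, one instead invokes the Gagliardo--Nirenberg inequality $\|w\|_{p/(1-\theta)}\le C_\theta\bigl(\|\nabla w\|_p^\theta + \|w\|_p^\theta\bigr)\|w\|_p^{1-\theta}$ from Lemma~\ref{lem:Sobolev-Gagliardo-Nirenberg}; substituting back in terms of $u$ and applying Young's inequality produces~\eqref{eq:10single} with $\kappa=1/(1-\theta)$ and a Gagliardo--Nirenberg inequality~\eqref{eq:242} with $\varrho>0$. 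For (3) $p>d$, the Sobolev--Morrey estimate $\|w\|_\infty\le C\bigl(\|\nabla w\|_p^{\theta_0}+\|w\|_p^{\theta_0}\bigr)\|w\|_2^{1-\theta_0}$ with $\theta_0=pd/(pd+2(p-d))$ yields directly an $L^{m+1}$-$L^\infty$ Gagliardo--Nirenberg inequality, from which Theorem~\ref{thm:extrapol-L1-bis} extrapolates down to $1\le s\le m+1$.

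\textbf{Identification of exponents.} Finally, substituting $\kappa = d/(d-p)$ (respectively $1/(1-\theta)$) into formulas~\eqref{eq:261} of Theorem~\ref{thm:GN-implies-reg-for-oper-in-L1} and performing a short calculation identifies $\alpha^{\ast}$, $\beta^{\ast}$, $\gamma^{\ast}$ with the values announced in the statement (e.g., in case (1), $\alpha^{\ast}=1/\bigl(m(pq_0/(d-p)+p-1-1/m)\bigr)$ simplifies to $1/(mpq_0/(d-p)+mp-m-1)$). The improved sub-cases $q_0=p$ for $\tfrac{d(1+1/m)}{1+d+1/m}<p<d$ and the extended range $s\in[1,dmp/(d-p)]$ for $\tfrac{d(m+1)}{dm+1}<p<d$ then follow by checking the inequalities~\eqref{eq:75} and $\gamma^{\ast}(1-sq/(\kappa m q_0))<1$ directly.
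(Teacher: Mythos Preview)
Your overall strategy matches the paper's: verify the range condition via Lemma~\ref{lem:4}, establish the one-parameter Sobolev family~\eqref{eq:10single} (or a single Gagliardo--Nirenberg inequality when $r=\infty$) by testing against $|u|^{(q-p+1)m-1}u$ and using $\phi'(s)\ge C|s|^{m-1}$, and then apply Theorem~\ref{thm:GN-implies-reg-for-oper-in-L1}. The core computation for case~(1) is exactly right.

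There are, however, two places where your case-by-case input deviates from what is actually needed. In case~(2) you quote a Gagliardo--Nirenberg inequality with an additive term $\|\nabla w\|_p^\theta+\|w\|_p^\theta$ and propose to remove $\|w\|_p$ via Young's inequality. This does not work: with $w=|u|^{qm/p-1}u$ one has $\|w\|_p=\|u\|_{qm}^{qm/p}$, which is neither the left-hand norm $\|u\|_{qm/(1-\theta)}$ nor the right-hand norm $\|u\|_{(q-p+1)m+1}$ of~\eqref{eq:10single}, so there is nothing to absorb. The paper instead uses that for $w\in\dot{W}^{1}_{d,q}(\Sigma)$ on a set of finite measure, Maz'ya's inequality~\eqref{eq:90} reduces to a Poincar\'e inequality $\|w\|_d\le C\|\nabla w\|_d$, which kills the lower-order factor outright and yields the clean Sobolev-type bound $\|w\|_{d/(1-\theta)}^d\le\tilde C\|\nabla w\|_d^d$; from there the argument is identical to case~(1). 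In case~(3) you have copied the exponent $\theta_0=pd/(pd+2(p-d))$ from the $p$-Laplacian Theorem~\ref{thm:Dirichlet-p-laplace}; that value corresponds to $q=2$ in~\eqref{eq:32} and produces an $L^{2m}$--$L^\infty$ estimate, not the claimed $L^{m+1}$--$L^\infty$ one. The paper takes instead $q=(m+1)/m$ and applies~\eqref{eq:31} to $w=|u|^{m-1}u$, so that $\theta_0$ is determined by $\theta_0(\tfrac1p-\tfrac1d)+(1-\theta_0)\tfrac{m}{m+1}=0$; this is what yields the exponents~\eqref{eq:174}.

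A final remark on the legitimisation of the test function: your truncation/mollification route is perfectly viable, but the paper bypasses it by invoking interior $C^{1,\alpha}$ regularity (Tolksdorf) for $\phi(u)$, which gives $u\in C(\Sigma)\cap C^{1,\alpha}(\{u\neq 0\})$ with $\nabla u\equiv 0$ on $\{u=0\}$, so the formal chain rule is justified pointwise.
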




We outline the proof of
Theorem~\ref{thm:Lq-Lr-reg-of-doubly-nonlinear-Dirichlet}.

\begin{proof}[Proof of Theorem~\ref{thm:Lq-Lr-reg-of-doubly-nonlinear-Dirichlet}]

  By Lemma~\ref{lem:4}, the operator $(-\Delta_{p,1}^{D})\phi$ satisfies range
  condition~\eqref{eq:182} in
  Proposition~\ref{propo:Range-cond-in-Rd}. Hence, we intend to apply 
  Theorem~\ref{thm:GN-implies-reg-for-oper-in-L1}.

  We begin by considering the case $1<p<d$. Then by
  Lemma~\ref{lem:Sobolev-Gagliardo-Nirenberg}, there is a constant
  $C>0$ such that inequality~\eqref{eq:162} holds for every
  $u\in\dot{W}_{p,2}^{1}(\Sigma)$. For every
  $(u,v)\in (-\Delta_{p,1}^{D})\phi$, one has
  $\phi(u)\in \dot{W}_{p,\infty}^{1}(\Sigma)$. By classical
  interior regularity results (see~\cite{MR727034}) and since $\phi'(r)>0$
  for all $r\neq 0$, one has
  $u\in C(\Sigma)\cap C^{1,\alpha}(\{u\neq0\})$ and $\nabla u\equiv 0$
  on the level set $\{u=0\}$. Combining this with coercivity
  condition~\eqref{eq:10} and Gagliardo-Nirenberg
  inequality~\eqref{eq:31} for $1<p<d$, we see that
  \begin{align*}
    [u,v]_{(q-p+1)m+1}
    &= \int_{\Sigma}\abs{\nabla \phi(u)}^{p-2}\nabla\phi(u)\nabla (\abs{u}^{((q-p+1)m+1)-2}u)\,\dx\\
    &= (q-p+1)\,m\,\int_{\{u\neq0\}} \abs{u}^{(q-p+1)m-1}\abs{\nabla u}^{p}\,[\phi']^{p-1}(u)\,\dx\\
    &\ge C^{p-1}\, (q-p+1)\,m\,\int_{\{u\neq0\}} \abs{u}^{qm-p}\abs{\nabla u}^{p}\,\dx\\
    &=  \left[\frac{C}{m}\right]^{p-1}\,\frac{(q-p+1) p^{p}}{q^{p}}\norm{\abs{\nabla (\abs{u}^{\frac{qm+p}{p}-2}u)}}_{p}^{p}\\
    &\ge \left[\frac{C}{m}\right]^{p-1}\,\frac{(q-p+1) p^{p}}{q^{p}}\,
      \tilde{C}^{-p}\,\norm{\abs{u}^{\frac{qm+p}{p}-2}u}_{\frac{pd}{d-p}}^{p}\\
    &=  \left[\frac{C}{m}\right]^{p-1}\,\frac{(q-p+1) p^{p}}{q^{p}}\,\tilde{C}^{-p}\,\norm{u}_{\frac{qmd}{d-p}}^{qm}
  \end{align*}
  for every $q\ge p$. Here, the constant $\tilde{C}>0$ is the one
  given by Gagliardo-Nirenberg inequality~\eqref{eq:31} and is independent of
  $\Sigma$. Remark~\ref{rem:2} yields that the operator $A_{\phi}^{D}$ satisfies
  the one-parameter family of Gagliardo-Nirenberg type
  inequalities~\eqref{eq:10single} with $u_{0}=0$ and
  $\kappa = \frac{d}{d-p}>1$ and so
  Theorem~\ref{thm:GN-implies-reg-for-oper-in-L1} yields the first
  statement of this theorem.

 Next, we consider the case $p=d$ and suppose that $\Sigma$ is a
 general open subset of $\R^{d}$ with finite Lebesgue measure. By
 Lemma~\ref{lem:Sobolev-Gagliardo-Nirenberg}, for every $1\le q\le
 \infty$ and every $\theta\in [0,1)$, there is a constant $\tilde{C}=\tilde{C}(q,d,\theta)>0$ such
 that
 \begin{displaymath}
   \norm{u}_{\frac{q}{1-\theta}}^{\frac{d}{\theta}}\le \tilde{C}\,
   \norm{\abs{\nabla u}}_{d}^{d}\,\norm{u}_{q}^{d\frac{1-\theta}{\theta}}
 \end{displaymath}
for every $u\in \dot{W}^{1}_{p,q}(\Sigma)$. 
For functions $u\in C^{\infty}_{c}(\Sigma)$, Maz'ya's
inequality~\eqref{eq:90} reduces to a Poincar\'e inequality, which we apply to estimate
$\norm{u}_{q}^{d\frac{1-\theta}{\theta}}$ for $q=d$ in the last
inequality. Then for every $\theta\in [0,1)$, there is a constant
$\tilde{C}>0$, which might be different to the one given in the
previous inequality, such that
\begin{equation}
  \label{eq:169}
  \norm{u}_{\frac{d}{(1-\theta)}}^{d}\le \tilde{C}\,\norm{\abs{\nabla u}}_{d}^{d}
\end{equation}
for every $u\in C^{\infty}_{c}(\Sigma)$. Since for $1\le q<\infty$,
$\dot{W}_{p,q}^{1}(\Sigma)$ is the closure of $C^{\infty}_{c}(\Sigma)$
in $W^{1}_{p,q}(\Sigma)$, an approximation argument shows
that~\eqref{eq:169} holds also for functions
$u\in \dot{W}_{p,q}^{1}(\Sigma)$. Now, proceeding as in the case
$1<p<d$ and using~\eqref{eq:169}, yields
\begin{align*}
  [u,v]_{(q-d+1)m+1}&\ge \left[\frac{C}{m}\right]^{p-1}\,\frac{(q-d+1) d^{d}}{q^{d}}
    \tilde{C}^{-p}\,\norm{\abs{u}^{\frac{qm+d}{d}-2}u}_{\frac{d}{(1-\theta)}}^{d}\\
  &= \left[\frac{C}{m}\right]^{p-1}\,\frac{(q-d+1) d^{d}}{q^{d}} C^{-p}\,\norm{u}_{\frac{qm}{(1-\theta)}}^{qm}
\end{align*}
for every $(u,v)\in ((-\Delta_{p,1}^{D})\phi)_{1}$ and $q\ge p=d$,
where for every $\theta\in [0,1)$, the constant $C>0$ depends on
the measure of $\Sigma$, $\theta$ and $p=d$. Remark~\ref{rem:2} yields
that the operator $A_{\phi}^{D}$ satisfies the one-parameter family of
Gagliardo-Niren\-berg type inequalities~\eqref{eq:10single} with
$u_{0}=0$ and $\kappa = \frac{1}{1-\theta}>1$ and so
Theorem~\ref{thm:GN-implies-reg-for-oper-in-L1} yields the third
statement of this theorem.

Now, let $p>d$. Then by Lemma~\ref{lem:Sobolev-Gagliardo-Nirenberg}
there is a $\theta_{0}\in (0,1)$ satisfying
 \begin{displaymath}
   \theta_{0}(\tfrac{1}{p}-\tfrac{1}{d})+(1-\theta_{0})\tfrac{m}{m+1}=0
\end{displaymath}
 and a constant $\tilde{C}>0$ such that
 \begin{equation}
   \label{eq:143}
   \norm{u}_{\infty}\le \tilde{C}\,\norm{\abs{\nabla u}}_{p}^{\theta_{0}}\,
   \norm{u}_{\frac{m+1}{m}}^{1-\theta_{0}}
 \end{equation}
for every $u\in \dot{W}^{1}_{p,\frac{m+1}{m}}(\Sigma)$. By
applying~\eqref{eq:143} and the coercivity condition~\eqref{eq:10} of $\phi$, we see that
 \begin{align*}
   [u,v]_{m+1}\,\norm{u}_{m+1}^{m p
   \frac{1-\theta_{0}}{\theta_{0}}} 
   & = m\,\int_{\{u\neq 0\}}\abs{\nabla
     u}^{p}[\phi'(u)]^{p-1}\,\abs{u}^{m-1}\dx 
     \,\norm{u_{m+1}}_{\frac{m+1}{m}}^{p\frac{1-\theta_{0}}{\theta_{0}}}\\
   & \ge m\,C^{p-1}\,\int_{\{u\neq 0\}}\abs{\nabla
     u}^{p}\,\abs{u}^{p(m-1)}\dx 
     \,\norm{u_{m+1}}_{\frac{m+1}{m}}^{p\frac{1-\theta_{0}}{\theta_{0}}} \\
   &=\left[\frac{C}{m}\right]^{p-1}\,
     \int_{\Sigma}\abs{\nabla u_{m+1}}^{p}\dx \,\norm{u_{m+1}}_{\frac{m+1}{m}}^{p
     \frac{1-\theta_{0}}{\theta_{0}}} \\
   &\ge\left[\frac{C}{m}\right]^{p-1}\,
     \tilde{C}^{-\frac{p}{\theta_{0}}}\,\norm{u_{m+1}}_{\infty}^{\frac{p}{\theta_{0}}}\\
   &=\tilde{C}^{-\frac{p}{\theta_{0}}}\,\norm{u}_{\infty}^{\frac{pm}{\theta_{0}}}
 \end{align*}
 for every $(u,v)\in ((-\Delta_{p,1}^{D})\phi)_{1}$. Since
 $\theta_{0}=(1-\frac{m+1}{mp}+\frac{m+1}{md})^{-1}$ and by
 Remark~\ref{rem:2}, $A_{\phi}^{D}$ satisfies Gagliardo-Niren\-berg type
 inequality~\eqref{eq:242} with
 \begin{displaymath}
   r=\infty, \qquad \sigma=pm (1-\tfrac{m+1}{mp}+\tfrac{m+1}{md}),
   \qquad q=m+1,\qquad \varrho=m p
     \frac{1-\theta_{0}}{\theta_{0}} 
 \end{displaymath}
 and so by Theorem~\ref{thm:GN-implies-reg-for-oper-in-L1}
 and Theorem~\ref{thm:extrapol-L1-bis},
 the semigroup $\{T_{t}\}_{t\ge 0}\sim-A_{\phi}^{D}$ satisfies
 inequality~\eqref{eq:168} with $u_{0}=0$, $r=\infty$, $q=m+1$ and $\alpha^{\ast}$,
 $\beta^{\ast}$ and $\gamma^{\ast}$ given by~\eqref{eq:174}. Since for
 $m\ge 1$, $\gamma^{\ast}(1-\frac{1}{m+1})<1$,
 Theorem~\ref{thm:extrapol-L1-bis} completes the proof of the last claim of this
 theorem.
\end{proof}

%
%

Next, we state the complete description of the
$L^{q}$-$L^{r}$-regularisation estimates of the semigroup
$\{T_{t}\}_{t\ge 0}\sim-A_{\phi}^{N}$ on $L^{1}_{m}(\Sigma)$. Here, we denote by
$L^{1}_{m}(\Sigma)$ the space of all functions $u\in L^{1}(\Sigma)$
with mean value $\overline{u}:=\tfrac{1}{\abs{\Sigma}}\int_{\Sigma}u\,\dx=0$.

\begin{theorem}
  \label{thm:Lq-Lr-reg-of-doubly-nonlinear-Neumann}
  Let $\Sigma$ be a bounded domain with Lipschitz boundary and
  $\phi\in C(\R)\cap C^{1}(\R\setminus\{0\})$ be a non-decreasing
  function satisfying~\eqref{eq:10} for some $m>0$ and $C>0$. Then,
  for $1<p<\infty$, the semigroup
  $\{T_{t}\}_{t\ge 0}\sim-A_{\phi}^{N}$ on $L^{1}_{m}(\Sigma)$
  satisfies the $L^{q}$-$L^{r}$-regularisation estimate~\eqref{eq:168}
  with $u_{0}=0$ for every
  $u\in L^{1}_{m}(\Sigma)\cap L^{\infty}(\Sigma)$ with the same
  exponents and conclusions as for the semigroup generated by
  $-A_{\phi}^{D}$ on $\overline{D(A_{\phi}^{D})}^{\mbox{}_{L^{1}}}$
  stated in Theorem~\ref{thm:Lq-Lr-reg-of-doubly-nonlinear-Dirichlet}.
\end{theorem}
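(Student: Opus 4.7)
The plan is to follow the strategy used in the proof of Theorem~\ref{thm:Lq-Lr-reg-of-doubly-nonlinear-Dirichlet}, namely to verify the hypotheses of Theorem~\ref{thm:GN-implies-reg-for-oper-in-L1} for the operator $A^{N}_{\phi}$ on $L^{1}_{m}(\Sigma)$ with $u_{0}=0$, and then read off the regularisation estimate~\eqref{eq:168} with the exponents recorded in~\eqref{eq:261}. Since $\Sigma$ is a bounded Lipschitz domain and $\phi$ is continuous with $\phi(0)=0$, the range condition~\eqref{eq:148} for the trace $A^{N}_{\phi,1\cap\infty}$ follows from Proposition~\ref{propo:Range-cond-in-Rd}(\ref{propo:Range-cond-in-Rd-Hyp-2}), exactly as in the proof of Lemma~\ref{lem:4}, once one observes that the restriction of $-\Delta_{p}^{N}$ to $L^{2}_{m}(\Sigma)$ is $m$-completely accretive (use Poincar\'e--Wirtinger for coercivity on $W^{1}_{p,2}\cap L^{2}_{m}$) and that condition~\eqref{eq:252} was already verified at the beginning of Section~\ref{sec:doubly-nonl-diff} for $A=-\Delta_{p}^{N}$.

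The core step is to establish the one-parameter family of Sobolev type inequalities~\eqref{eq:10single}. For $(u,v)\in A^{N}_{\phi,1\cap\infty}$ with $\bar{u}=0$, the same integration-by-parts computation as in the Dirichlet case goes through: the Neumann boundary condition kills the boundary contribution, and using~\eqref{eq:10} one obtains
\begin{equation*}
  [u,v]_{(q-p+1)m+1}\;\ge\;\Big[\tfrac{C}{m}\Big]^{p-1}\tfrac{(q-p+1)\,p^{p}}{q^{p}}\,\|\nabla w\|_{p}^{p},
  \qquad w:=|u|^{\frac{qm}{p}-1}u,
\end{equation*}
for every $q\ge p$. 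In each of the three regimes $1<p<d$, $p=d$, $p>d$, one then substitutes the corresponding Sobolev--Gagliardo--Nirenberg inequality of Lemma~\ref{lem:Sobolev-Gagliardo-Nirenberg} for $W^{1,p}(\Sigma)$, which on a bounded Lipschitz domain produces an extra additive term of the shape $\|w\|_{\tilde q}$ in~\eqref{eq:92}. Choosing $\tilde q$ so that $\|w\|_{\tilde q}$ is comparable to $\|w-\bar w\|_{\tilde q}$ after using Poincar\'e--Wirtinger for $w-\bar w$, and then controlling $|\bar w|$ by Jensen's inequality $|\bar w|\le |\Sigma|^{-1}\|u\|_{qm/p}^{qm/p}$, yields an inequality of the form $\|u\|_{\kappa mq}^{mq}\le \tilde C\big([u,v]_{(q-p+1)m+1}+ \|u\|_{qm/p}^{qm}\big)$ with $\kappa=d/(d-p)$, $1/(1-\theta)$ or $\infty$ respectively, completely parallel to the Dirichlet case.

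The main obstacle is precisely the presence of this extra lower-order term $\|u\|_{qm/p}^{qm}$, which does not immediately fit the structure of~\eqref{eq:10single} because the exponent $qm/p$ differs from $(q-p+1)m+1$. The idea to overcome it is to interpolate $\|u\|_{qm/p}\le \|u\|_{(q-p+1)m+1}^{\eta}\,\|u\|_{\infty}^{1-\eta}$ with $\eta=\eta(q,m,p)\in(0,1]$ for $q$ large enough (which, since $qm/p<(q-p+1)m+1$ precisely when $q$ exceeds an absolute threshold depending only on $p,m$, covers the iteration regime $q\ge q_{0}$ needed in Theorem~\ref{thm:GN-implies-reg-for-oper-in-L1}), and to absorb the resulting $L^{\infty}$ factor into the constant on the right-hand side using the \emph{a priori} bound $\|T_{t}u\|_{\infty}\le \|u\|_{\infty}$ available from the $c$-complete resolvent of $-\Delta_{p}^{N}$ on $L^{1}_{m}$ together with Remark~\ref{rem:6}. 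This is the analogue, in the Neumann setting, of the step in which one normally uses that $\phi$-transformations preserve the boundary condition; here one trades the loss of mean-zero under the nonlinear transformation $u\mapsto|u|^{qm/p-1}u$ for an absorbable $L^{\infty}$ bound.

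Once~\eqref{eq:10single} has been verified with a constant depending only on $p,m,d$ and $\|u\|_{\infty}$ of the initial datum, the application of Theorem~\ref{thm:GN-implies-reg-for-oper-in-L1} is identical to the Dirichlet case: in the three regimes $1<p<d$, $p=d$, $p>d$ one chooses the parameters $q_{0}$, $m_{0}$, $\theta$ exactly as in the proof of Theorem~\ref{thm:Lq-Lr-reg-of-doubly-nonlinear-Dirichlet}, and one obtains~\eqref{eq:168} with the same exponents $\alpha_{s},\beta_{s},\gamma_{s}$, combined with the extrapolation-towards-$L^{1}$ step via Theorem~\ref{thm:extrapol-L1-bis} as needed for the borderline ranges of $p$ where $q_{0}=p$ or $s=1$ is allowed. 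The Lipschitz perturbation by $F$ is handled as in Remark~\ref{rem:6}, and the monotone perturbation (implicit in the $\omega$ term of~\eqref{eq:10single}) is absorbed into the constant $\beta^{\ast}\ge 0$ appearing in~\eqref{eq:168}.
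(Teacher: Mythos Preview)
The paper's proof is far shorter than your proposal: it merely observes that for functions in $W^{1}_{p,p,m}(\Sigma)$ the Poincar\'e--Wirtinger inequality~\eqref{eq:187} collapses the Gagliardo--Nirenberg inequalities~\eqref{eq:188}/\eqref{eq:92} to the same form~\eqref{eq:162}/\eqref{eq:169}/\eqref{eq:143} used in the Dirichlet case, and then says ``proceed as in the proof of Theorem~\ref{thm:Lq-Lr-reg-of-doubly-nonlinear-Dirichlet}.'' You have correctly spotted the subtlety that the paper glosses over: in that proof the Sobolev inequality is applied not to $u$ but to $w=\lvert u\rvert^{qm/p-1}u$, and while $\overline{u}=0$ one does \emph{not} have $\overline{w}=0$, so the Poincar\'e reduction to $W^{1}_{p,p,m}$ does not transfer to $w$.

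Your proposed repair, however, has a concrete defect. The interpolation you write, $\lVert u\rVert_{qm/p}\le \lVert u\rVert_{(q-p+1)m+1}^{\eta}\,\lVert u\rVert_{\infty}^{1-\eta}$, requires $qm/p$ to lie between $(q-p+1)m+1$ and $\infty$; but a direct computation gives $(q-p+1)m+1-qm/p=(p-1)m(q-p)/p+1>0$ for all $q\ge p$, so $qm/p<(q-p+1)m+1$ and the interpolation is invalid ($\eta>1$). On a bounded $\Sigma$ one should instead use H\"older to obtain $\lVert u\rVert_{qm/p}\le C_{\Sigma}\,\lVert u\rVert_{(q-p+1)m+1}$ with no $L^{\infty}$ factor at this stage. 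The real mismatch is then in the \emph{powers}: the lower-order term appears as $\lVert u\rVert_{(q-p+1)m+1}^{qm}$ rather than $\lVert u\rVert_{(q-p+1)m+1}^{(q-p+1)m+1}$, and when $(p-1)m>1$ reconciling these forces an extra factor $\lVert u\rVert_{\infty}^{(p-1)m-1}$. This produces~\eqref{eq:10single} only with an ``effective $\omega$'' depending on $\lVert u\rVert_{\infty}$, and after running the Moser iteration of Theorem~\ref{thm:Moser} the implicit constant in~\eqref{eq:168} depends on $\lVert u_{0}\rVert_{\infty}$ --- a strictly weaker conclusion than the uniform estimate asserted in the statement (and one that cannot be extended by density to data merely in $L^{s}$). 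So the obstacle you flag is genuine, but your route around it does not close.
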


For the proof, we proceed similarly as in the proof of
Theorem~\ref{thm:Lq-Lr-reg-of-doubly-nonlinear-Dirichlet}.

\begin{proof}[Proof of Theorem~\ref{thm:Lq-Lr-reg-of-doubly-nonlinear-Neumann}]
  If $1<p<d$, then inequality~\eqref{eq:188} reduces to Sobolev
  inequality~\eqref{eq:162} by using functions
  $u\in W^{1}_{p,p,m}(\Sigma)$. If $p\ge d$, then applying Poincar\'e
  inequality~\eqref{eq:187} for functions $u\in W^{1}_{p,p,m}(\Sigma)$
  to Gagliardo-Nirenberg inequality~\eqref{eq:92} yields
  inequality~\eqref{eq:169} and~\eqref{eq:143}.  Thus, proceeding as
  in the proof of
  Theorem~\ref{thm:Lq-Lr-reg-of-doubly-nonlinear-Dirichlet}, we see
  that the statement of this theorem holds.
\end{proof}

%
%

To complete this subsection, we state the complete description of the
$L^{q}$-$L^{r}$-regularisation effect of the semigroup
$\{T_{t}\}_{t\ge 0}\sim-A_{\phi}^{R}$ on $L^{1}(\Sigma)$ and
$\phi(s)=\abs{s}^{m-1}s$ for $m>0$.

\begin{theorem}
  \label{thm:Lq-Lr-reg-of-doubly-nonlinear-Robin}
  Let $\Sigma$ be a bounded domain with Lipschitz boundary and
  $\phi\in C(\R)\cap C^{1}(\R\setminus\{0\})$ be a non-decreasing
  function satisfying~\eqref{eq:10} for some $m>0$ and $C>0$. Then,
  for $1<p<\infty$, the semigroup
  $\{T_{t}\}_{t\ge 0}\sim-A_{\phi}^{R}$ on $L^{1}(\Sigma)$ satisfies
  the $L^{q}$-$L^{r}$-regularisation estimate~\eqref{eq:168} with
  $u_{0}=0$ for every $u\in L^{1}(\Sigma)\cap L^{\infty}(\Sigma)$ with
  the same exponents and conclusions and as for the semigroup
  generated by $-A_{\phi}^{D}$ on
  $\overline{D(A_{\phi}^{D})}^{\mbox{}_{L^{1}}}$ stated in
  Theorem~\ref{thm:Lq-Lr-reg-of-doubly-nonlinear-Dirichlet}.
\end{theorem}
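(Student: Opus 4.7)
The plan is to follow the three-step strategy used in the proofs of Theorem~\ref{thm:Lq-Lr-reg-of-doubly-nonlinear-Dirichlet} and Theorem~\ref{thm:Lq-Lr-reg-of-doubly-nonlinear-Neumann}: verify the range condition for the $L^{1}$-realisation of the composition operator, establish a one-parameter family of Sobolev-type inequalities of the form~\eqref{eq:10single} for the trace $A_\phi^R$ on $L^1\cap L^\infty$, and then invoke Theorem~\ref{thm:GN-implies-reg-for-oper-in-L1}.

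For the first step, recall from Section~\ref{subsection:robin} that the Robin $p$-Laplace operator $-\Delta_p^R$ is $m$-completely accretive on $L^2(\Sigma)$; since $\phi$ is strictly increasing continuous with $\phi(0)=0$, hypothesis~\eqref{propo:Range-cond-in-Rd-Hyp-3} of Proposition~\ref{propo:Range-cond-in-Rd} applies (and hypothesis~\eqref{propo:Range-cond-in-Rd-Hyp-2} is also available because $\Sigma$ is bounded), giving that $\overline{(-\Delta_{p,1}^R)\phi}$ is $m$-accretive in $L^1(\Sigma)$ with complete resolvent and that the Robin trace analogue of Lemma~\ref{lem:4} holds. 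Proposition~\ref{propo:Lipschitz-complete} then shows that $A_\phi^R$ is quasi-$m$-accretive in $L^1(\Sigma)$ with complete resolvent and satisfies the range condition~\eqref{eq:148}.

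For the second step, testing $(u,v)\in ((-\Delta_{p,1}^R)\phi)_{1\cap\infty}$ against $\xi=|u|^{(q-p+1)m-1}u$ and using the Robin boundary condition splits the bracket as
\[
[u,v]_{(q-p+1)m+1}=(q-p+1)m\!\int_{\{u\neq 0\}}\!\![\phi'(u)]^{p-1}|u|^{(q-p+1)m-1}|\nabla u|^p\dx+\!\int_{\partial\Sigma}\!a\,|\phi(u)|^{p-1}|u|^{(q-p+1)m}\dH.
\]
The coercivity~\eqref{eq:10} together with the consequent inequality $|\phi(s)|\ge (C/m)|s|^m$ allows both terms to be bounded below in terms of the single power function $w:=|u|^{(qm-p)/p}u$, satisfying $|w|^p=|u|^{qm}$. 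After the usual chain-rule computations one finds a constant $C'>0$ (depending only on $p,m,b_0$) such that
\[
[u,v]_{(q-p+1)m+1}\ge C'\,\frac{q-p+1}{(q/p)^{p}}\bigl(\|\nabla w\|_p^p+\|w_{\vert\partial\Sigma}\|_p^p\bigr).
\]
The required Sobolev-type inequality~\eqref{eq:10single} then follows by invoking Maz'ya's inequality~\eqref{eq:90} in the regime $1<p<d$ (with $\kappa=d/(d-p)$), and for the borderline $p=d$ and super-critical $p>d$ cases by combining the Gagliardo-Nirenberg estimates~\eqref{eq:95} and~\eqref{eq:96} with~\eqref{eq:90} to eliminate the $\|w\|_p^p$ terms in favour of $\|\nabla w\|_p^p+\|w_{\vert\partial\Sigma}\|_p^p$; these are precisely the substitutes (for $p=d$) for the Poincar\'e estimate~\eqref{eq:169} used in the Dirichlet proof. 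Remark~\ref{rem:6} then absorbs the Lipschitz perturbation $F$ into the quasi-accretivity constant $\omega$ without altering the exponents.

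The third step consists in applying Theorem~\ref{thm:GN-implies-reg-for-oper-in-L1}: since the one-parameter families in~\eqref{eq:10single} have exactly the same parameters $(\kappa,m,p,q_0)$ as in the proof of Theorem~\ref{thm:Lq-Lr-reg-of-doubly-nonlinear-Dirichlet}, the resulting exponents $\alpha_s,\beta_s,\gamma_s$ coincide with the Dirichlet ones in each of the three regimes. The main obstacle will be the case $p=d$: here one must give up the pure Poincar\'e argument available in the Dirichlet setting and instead carefully track that the Maz'ya-corrected Gagliardo-Nirenberg estimate has a constant that is uniform in $\theta$ over the admissible range, so that the iteration built into Lemma~\ref{lem:iterationbis} still converges to the announced $L^1$-$L^\infty$ estimate.
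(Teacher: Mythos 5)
Your proposal follows exactly the paper's strategy: establish the range condition via Proposition~\ref{propo:Range-cond-in-Rd}, derive the one-parameter family of Sobolev inequalities~\eqref{eq:10single} by testing $v$ against $\xi=|u|^{(q-p+1)m-1}u$ and exploiting the Robin boundary term plus coercivity~\eqref{eq:10} (and~\eqref{eq:248}), then feed the result into Theorem~\ref{thm:GN-implies-reg-for-oper-in-L1}. The key observation that both the gradient and boundary terms collapse onto $w=|u|^{(qm-p)/p}u$ with $|w|^p=|u|^{qm}$ is precisely what the paper does, and the use of Maz'ya's inequality~\eqref{eq:90} to combine $\|\nabla w\|_p^p+\|w_{\vert\partial\Sigma}\|_p^p$ into $\|w\|_{\frac{pd}{d-p}}^p$ matches the paper's argument for $1<p<d$. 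Two small inaccuracies worth flagging: your lower bound with the single prefactor $C'\frac{q-p+1}{(q/p)^p}$ in front of both terms implicitly requires a $\min$ with the boundary coefficient $a(C/m)^{p-1}$ (which is $q$-independent); since $\frac{q-p+1}{(q/p)^p}\le 1$ for $q\ge p$ this is harmless, but in the paper's version the constant is explicitly $\min\{\frac{(q-p+1)p^p}{q^p},a\}$ and one checks separately that this still yields~\eqref{eq:10single} with a $q$-independent $C$. Secondly, for $p=d$ and $p>d$ the paper does not use~\eqref{eq:95} and~\eqref{eq:96} (those carry the $L^2$ norm on the right); instead it derives fresh Gagliardo-Nirenberg estimates from Lemma~\ref{lem:Sobolev-Gagliardo-Nirenberg} tailored to the exponent $q=d$ (resp. $q=\frac{m+1}{m}$) before correcting with Maz'ya. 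Finally, the worry about uniformity of constants in $\theta$ for $p=d$ is unfounded: $\theta$ is fixed once and for all (it determines the announced exponents), and only $q$ is iterated in Lemma~\ref{lem:iterationbis}.
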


We proceed as in the proof of
Theorem~\ref{thm:Lq-Lr-reg-of-doubly-nonlinear-Dirichlet}.

\begin{proof}[Proof of Theorem~\ref{thm:Lq-Lr-reg-of-doubly-nonlinear-Robin}]
  We begin by considering the case $1<p<d$. Note that by coercivity
  condition~\eqref{eq:10} of $\phi$, one has
  \begin{equation}
    \label{eq:248}
    \phi(s)\ge \tfrac{C}{m}\abs{s}^{m-1}s\qquad\text{for all $s\in \R$.}
  \end{equation}
  Combining this with \eqref{eq:10} and Sobolev
  inequality~\eqref{eq:144}, 
  we see that
  \begin{align*}
    [u,v]_{(q-p+1)m+1}
    &\ge \left[\frac{C}{m}\right]^{p-1}\,\frac{(q-p+1)
      p^{p}}{q^{p}}\norm{\abs{\nabla
      (\abs{u}^{\frac{qm+p}{p}-2}u)}}_{p}^{p} \\
    &\hspace{2cm} +
      a\,\left[\frac{C}{m}\right]^{p-1} \int_{\partial\Sigma}
      u_{(q-p+1)m+1}\abs{u_{m+1}}^{p-2}u_{m+1}\,\dH\\
    & = \left[\frac{C}{m}\right]^{p-1}\,\frac{(q-p+1)
      p^{p}}{q^{p}}\norm{\abs{\nabla
      (\abs{u}^{\frac{qm+p}{p}-2}u)}}_{p}^{p}\\
    &\hspace{2cm} +
      a\,\left[\frac{C}{m}\right]^{p-1} \int_{\partial\Sigma} \abs{\abs{u}^{\frac{qm+p}{p}-2}u}^{p}\,\dH\\
    &\ge \left[\frac{C}{m}\right]^{p-1}\,\min\Big\{\tfrac{(q-p+1)
      p^{p}}{q^{p}},a\Big\}\,\tilde{C}^{-p}\norm{\abs{u}^{\frac{qm}{p}}}_{\frac{pd}{d-p}}^{p}\\
    &=  \left[\frac{C}{m}\right]^{p-1}\,
      \min\Big\{\tfrac{(q-p+1) p^{p}}{q^{p}},a\Big\} \tilde{C}^{-p}\,\norm{u}_{\frac{qmd}{d-p}}^{qm}
  \end{align*}
 for every $q\ge p$ and $(u,v)\in (-\Delta_{p,1}^{R})\phi$. Remark~\ref{rem:2} yields
 that the operator $A_{\phi}^{R}$ satisfies the one-parameter family of
 Gagliardo-Nirenberg type inequalities~\eqref{eq:10single} with
 $u_{0}=0$ and $\kappa = \frac{d}{d-p}>1$ and so by
Theorem~\ref{thm:GN-implies-reg-for-oper-in-L1}, the 
statement of this theorem holds for $1<p<d$.

Next, for $p=d$, then by Lemma~\ref{lem:Sobolev-Gagliardo-Nirenberg}, for every $1\le q\le
 \infty$ and every $\theta\in [0,1)$, there is a constant $C=C(d,\theta)>0$ such that
 \begin{displaymath}
   \norm{u}_{\frac{d}{1-\theta}}\le C\,\left(
   \norm{\abs{\nabla u}}_{d}^{\theta}\,\norm{u}_{d}^{1-\theta}+\norm{u}_{d}\right)
 \end{displaymath}
 for every $u\in W^{1}_{d,d}(\Sigma)$. Applying Maz'ya's
 inequality~\eqref{eq:90} and Young's
 inequality to the latter inequality and subsequently raising to the $d$th
 power  yields 
\begin{displaymath}
   \norm{u}_{\frac{d}{1-\theta}}^{d}\le C\,\left(
   \norm{\abs{\nabla u}}_{d}^{d}+\norm{u_{\vert\partial\Sigma}}_{d}\right)^{d}
 \end{displaymath}
for every $u\in W^{1}_{d,d}(\Sigma)$, where the constant $C>0$ can
differ from the previous one. By this Sobolev type inequality, we can
proceed as above and see that also for $p=d$, the statement of this
theorem holds.

Now, let $p>d$. Then by Lemma~\ref{lem:Sobolev-Gagliardo-Nirenberg}
there is a $\theta_{0}\in (0,1)$ satisfying
 \begin{displaymath}
   \theta_{0}(\tfrac{1}{p}-\tfrac{1}{d})+(1-\theta_{0})\tfrac{m}{m+1}=0
\end{displaymath}
and for every $\tilde{q}>0$, there is a constant
$C:=C(\theta_{0},p,d,\tilde{q})>0$ such that
 \begin{displaymath}
   \norm{u}_{\infty}\le C\,\left( \norm{\abs{\nabla u}}_{p}^{\theta_{0}}\,
   \norm{u}_{\frac{m+1}{m}}^{1-\theta_{0}}+\norm{u}_{\tilde{q}}\right)
 \end{displaymath}
for every $u\in W^{1}_{p,\frac{m+1}{m}}(\Sigma)\cap
L^{\tilde{q}}(\Sigma)$. Taking $\tilde{q}$ such that
$\frac{1}{\tilde{q}}=\frac{\theta_{0}}{p}+\tfrac{1-\theta_{0}}{\frac{m+1}{m}}$
yields
\begin{displaymath}
   \norm{u}_{\infty}\le C\,\left( \norm{\abs{\nabla
         u}}_{p}^{\theta_{0}}+ \norm{u}_{p}^{\theta_{0}}\right)\,
   \norm{u}_{\frac{m+1}{m}}^{1-\theta_{0}}
 \end{displaymath}
 and so by Maz'ya's
 inequality~\eqref{eq:90}, and subsequently raising to the $\frac{p}{\theta_{0}}$th power, we obtain that
\begin{displaymath}
   \norm{u}_{\infty}^{\frac{p}{\theta_{0}}}\le C\,\left( \norm{\abs{\nabla
         u}}_{p}^{p}+ \norm{u_{\vert\partial\Sigma}}_{p}^{p}\right)\,
   \norm{u}_{\frac{m+1}{m}}^{p\frac{1-\theta_{0}}{\theta_{0}}}
 \end{displaymath}
for every $u\in W^{1}_{p,\frac{m+1}{m}}(\Sigma)$, where the constant
can differ from the previous one. By using
this Gagliardo-Nirenberg type inequality together with~\eqref{eq:248}, we see that
 \begin{align*}
   [u,v]_{m+1}\,\norm{u}_{m+1}^{m\,p
     \frac{1-\theta_{0}}{\theta_{0}}} 
& \ge \left( m\,C^{p-1}\,\int_{\{u\neq 0\}}\abs{\nabla
     u}^{p}\,\abs{u}^{p(m-1)}\dx \right.\\
 & \hspace{3cm}\left. +
  \left[\frac{C}{m}\right]^{p-1}\,a\,\int_{\partial\Sigma}\abs{u}^{p m}\,\dH \right)
     \,\norm{u_{m+1}}_{\frac{m+1}{m}}^{p\frac{1-\theta_{0}}{\theta_{0}}}
   \\
& =\left[\frac{C}{m}\right]^{p-1} \left( \, \norm{\nabla u_{m+1}}_{p}^{p} +
      a\,\norm{u_{m+1\,\vert \partial\Sigma}}_{p}^{p} \right)
     \,\norm{u_{m+1}}_{\frac{m+1}{m}}^{p\frac{1-\theta_{0}}{\theta_{0}}}
   \\
  &\ge \left[\frac{C}{m}\right]^{p-1} \, \tilde{C}^{-1}\,\min\{1,a\}\,
    \norm{u}_{\infty}^{\frac{m p}{\theta_{0}}}
 \end{align*}
$(u,v)\in (-\Delta_{p,1}^{R})\phi$. Thus, the statement of this
theorem holds in the case $p>d$, completing the proof.
\end{proof}


\section{Application II: Mild solutions in $L^{1}$ are strong}
\label{subsec:Mild-is-strong}

Let $\phi\in C(\R)\cap C^{1}(\R\setminus\{0\})$ be a strictly
increasing function satisfying~\eqref{eq:10} and $\Sigma$ be an open
bounded subset of $\R^{d}$ satisfying the same assumption as in the
previous Section~\ref{sec:doubly-nonl-diff}. Then the aim of this
section is to show that \emph{mild solutions} in $L^{1}$ of the
nonlinear parabolic initial value problem~\eqref{ip:doubly-nonlinear}
equipped with one of the boundary conditions~\eqref{eq:127},
\eqref{eq:128}, \eqref{eq:129} on a bounded open set $\Sigma$ of
$\R^{d}$ are \emph{weak energy solutions} (see
Definition~\ref{Def:weak-and-strong-solutions} below) which are globally
bounded. This property implies global H\"older continuity of is mild
solutions of the parabolic problem~\eqref{ip:doubly-nonlinear}
(see~\cite{MR2865434,MR1218742,MR1156216}). Moreover, if  $\phi$ is either
given by 
\begin{equation}
  \label{eq:251}
  \phi(s)=\abs{s}^{m-1}s\qquad\text{for every $s\in \R$, and some $m>0$,}
\end{equation}
or $\phi$ is locally bi-Lipschitz continuous, then every mild solution
in $L^{1}$ of the nonlinear parabolic initial value
problem~\eqref{ip:doubly-nonlinear} is a \emph{strong energy} solution
(see Definition~\ref{Def:weak-and-strong-solutions} below).

In this section, we denote by
\begin{center}
  \emph{$V$ either the space $\dot{W}^{1}_{p,2}(\Sigma)$,
  $W^{1}_{p,2,m}(\Sigma)$ or $W^{1}_{p,2}(\Sigma)$}
\end{center}
and $L^{2}(\Sigma,\mu)$ is either the classical $L^{2}(\Sigma)$ space
equipped with the $d$-dimensional Lebesgue measure if we consider
Dirichlet or Robin boundary conditions or $L^{2}_{m}(\Sigma)$ if we
consider Neumann boundary conditions. Note that in each case the space
$V$ is embedded into the Hilbert space $L^{2}(\Sigma,\mu)$ by a continuous injection with a
dense image.

\begin{remark}
  Note that our approach given here is quite general and can easily be
  adapted to other nonlinear parabolic boundary-value problems. For
  instance, to problems involving the fractional $p$-Laplace operator as
  \begin{displaymath}
    \partial_{t}u-(-\Delta_{p})^{s}\phi(u)+\beta(u)+f(x,u)\ni 0 \qquad\text{on $\Sigma\times
      (0,\infty)$,}
  \end{displaymath}
  or to problems associated with the $p(x)$-Laplace operator as
  \begin{displaymath}
    \partial_{t}u-\textrm{div}(\abs{\nabla \phi(u)}^{p(x)-2}\nabla
    \phi(u))+\beta(u)+f(x,u)\ni 0 \qquad\text{on $\Sigma\times
      (0,\infty)$.}
  \end{displaymath}
  each equipped with some boundary conditions. Concerning the latter
  problem, we refer the interested reader to~\cite{Ha2016pLaplaceNonStandardGrowth}.
\end{remark}

In order to conclude that the milds solution of
problem~\eqref{ip:doubly-nonlinear} with initial value
$u_{0}\in L^{1}(\Sigma)$ is, in fact, a weak energy solution, we will
take advantage of the following two properties: the negative
$p$-Laplace operator $-\Delta_{p}$ equipped with one of
the above given boundary conditions~\eqref{eq:127}-\eqref{eq:129} can be realised 
\begin{enumerate}[(i)]
\item as the first derivative $\Psi' : V\to V'$ of a
  continuously differentiable functional $\Psi : V\to \R_{+}$ given by
  \begin{equation}
    \label{eq:266}
    \Psi(u)=\frac{1}{p}\displaystyle\int_{\Sigma}\abs{\nabla u}^{p}\,\dx + \tfrac{a}{p}
    \int_{\partial\Sigma}\abs{u}^{p}\dH 
  \end{equation}
  for very $u\in V$, where $a=0$ if one considers Dirichlet or Neumann
  boundary conditions, and $a>0$ if one considers purely \emph{Robin
    boundary conditions},

\item as an operator $A$ in $L^{2}(\Sigma,\mu)$ by taking the
  \emph{part} of $\Psi'$ in $L^{2}(\Sigma,\mu)$, that is,
  \begin{displaymath}
    A=\Big\{(u,v)\in V\times L^{2}(\Sigma,\mu)\,\vert \;\langle
    \Psi'(u),v\rangle_{V';V}=\langle h,v\rangle
    \text{ for all $v\in V$}\Big\}.
  \end{displaymath}
\end{enumerate}

Note, the part $A$ of $\Psi'$ in $L^{2}(\Sigma,\mu)$ coincides with the
subgradient $\partial_{\! L^{2}}\Psi^{L^{2}}$ in $L^{2}(\Sigma,\mu)$ of the convex, proper,
densely defined, and lower semicontinuous functional
$\Psi^{L^{2}} : L^{2}(\Sigma,\mu)\to \R\cup\{+\infty\}$ given by
 \begin{displaymath}
  \Psi^{L^{2}}(u)=
  \begin{cases}
    \Psi(u)& \text{if $u\in
      V$,}\\
    +\infty & \text{if otherwise}
  \end{cases}
\end{displaymath}
for every $u\in L^{2}(\Sigma,\mu)$. This is well-known, but if
the reader is interested in a more thorough explanation, then we
refer him to~\cite{arXiv:1412.4151}.


One easily verifies that the functional $\Psi$ defined
in~\eqref{eq:266} satisfies the
hypotheses~($\mathcal{H}$\ref{hyp:a})-($\mathcal{H}$\ref{hyp:e}). Moreover,
in this framework, the notion of \emph{weak energy solutions} given in
Definition~\ref{Def:weak-sols-in-V} concerning solutions of
problem~\eqref{ip:doubly-nonlinear} equipped with one
of the boundary condition~\eqref{eq:127}-\eqref{eq:129} makes
sense, we also in this section we use the function
\begin{displaymath}
  \Phi(s):=\int_{0}^{s}\phi(r)\,\dr\qquad\text{for every $s\in \R$.}
\end{displaymath}
We still need to clarify the notion of \emph{strong solutions} of such problems.

\begin{definition}
  \label{Def:weak-and-strong-solutions}
  For given $u_{0}\in L^{1}(\Sigma)$, we a function
  $u\in C([0,\infty);L^{1}(\Sigma))$ a \emph{strong energy solution in
    $L^{1}$ of problem~\eqref{ip:doubly-nonlinear}} if $u$ is a weak
  energy solution of problem~\eqref{ip:doubly-nonlinear} in the sense of
  Definition~\ref{Def:weak-sols-in-V} and for every $T>0$, one has
    \begin{displaymath}
      u\in W^{1,1}((0,T];L^{1}(\Sigma)).
    \end{displaymath}
\end{definition}


The following theorem is the main result of this section, where we
take the measure $\dmu=\dx$ the $d$-dimensional Lebesgue-measure.

\begin{theorem}
  \label{thm:weak-is-strong}
  Let $1<p<\infty$ with the restriction that
  \begin{displaymath}
    \frac{d(1+\frac{1}{m})}{dm+1}<p\qquad\text{if $1<p<d$,}
  \end{displaymath}
  and $\phi\in C(\R)\cap C^{1}(\R\setminus\{0\})$ be a strictly
  increasing function satisfying~\eqref{eq:10} for some $m>0$ and $\Sigma$ be an open
  bounded subset of $\R^{d}$ satisfying the same assumption as in the
  previous Section~\ref{sec:doubly-nonl-diff}. Further, let
  $\{T_{t}\}_{t\ge 0}$ be the semigroup either generated by 
  $-\overline{((-\Delta_{p,1}^{D})\phi)_{1}}+F$ on $L^{1}(\Sigma)$,
  $-\overline{((-\Delta_{p,1}^{N})\phi)_{1}}+F$ on $L^{1}_{m}(\Sigma)$
  or by $-\overline{((-\Delta_{p,1}^{R})\phi)_{1}}+F$ on
  $L^{1}(\Sigma)$. Then, for every $u_{0}\in L^{1}(\Sigma)$ (respectively, for every
  $u_{0}\in L^{1}_{m}(\Sigma)$), the following statements hold.
  \begin{enumerate}[(1)]
    \item The mild solution $u(t):=T_{t}u_{0}$, $t\ge 0$ of
      problem~\eqref{ip:doubly-nonlinear} equipped with either homogeneous
      Dirichlet boundary conditions~\eqref{eq:127}, homogeneous Neumann
      boundary conditions~\eqref{eq:128}, or homogeneous Robin boundary
      conditions~\eqref{eq:129} is a weak energy solution of
      \eqref{ip:doubly-nonlinear} satisfying energy inequality~\eqref{eq:260}.
  
   \item If, in addition, $\phi$ satisfies one of the following
     conditions
     \begin{enumerate}[(i)]
       \item $\phi$ is homogeneous of degree $\alpha>0$, $\alpha\neq 1$,
       that is, $\phi(\lambda s)=\lambda^{\alpha}\phi(s)$ for every
       $s\in \R$ and $\lambda> 0$,

       \item $\phi$ and $\phi^{-1}$ are locally Lipschitz continuous
         on $\R$,
     \end{enumerate}
     then the mild solution $u(t):=T_{t}u_{0}$, $t\ge 0$, is a strong energy solution.
  \end{enumerate}
\end{theorem}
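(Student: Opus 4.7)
The plan is to deduce both statements from the abstract framework already developed in the paper, with the concrete $p$-Laplace setting providing the necessary structural ingredients.

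For part~(1), I will show that the triple $(\Psi, \phi, F)$ fits the hypotheses of Theorem~\ref{thm:weak-solutions-Li-Linfty}. The functional $\Psi$ given by~\eqref{eq:266} satisfies ($\mathcal{H}$\ref{hyp:a})--($\mathcal{H}$\ref{hyp:e}): the $p$-coercivity and growth estimates in~\eqref{ineq:p-coercive}--\eqref{ineq:bounded-in-V} are immediate from the definition and monotonicity condition~\eqref{eq:54} for $a(x,\xi)=|\xi|^{p-2}\xi$; hemicontinuity of $\Psi'$ and the weak coercivity of $\Psi+\varepsilon\|\cdot\|_2^2$ are classical for the $p$-Laplace on each of $V=\dot W^{1}_{p,2}(\Sigma)$, $W^{1}_{p,2,m}(\Sigma)$, $W^{1}_{p,2}(\Sigma)$; $A=\partial_{L^2}\Psi^{L^2}$ being $m$-completely accretive in $L^2$ was established in Section~\ref{sec:p-laplace} (Theorems~\ref{thm:Dirichlet-p-laplace}, \ref{thm:Neumann-p-laplace}, \ref{thm:Robin-p-laplace}); the Poincar\'e-type inequality ($\mathcal{H}$\ref{hyp:e}) follows from the usual Poincar\'e inequality in the Dirichlet and mean-zero Neumann cases, and from Maz'ya's inequality~\eqref{eq:90} in the Robin case. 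The function $\phi$ satisfies~($\mathcal{H}$\ref{hyp:phi}) since it is strictly increasing continuous with $\phi(0)=0$ and the Yosida regularisation of $\beta=\phi^{-1}$ satisfies~\eqref{eq:252}, as verified in the opening of Section~\ref{sec:doubly-nonl-diff}. Finally, $F$ obeys~($\mathcal{H}$\ref{hyp:F}) by assumption. The required $L^1$--$L^\infty$ regularisation estimate~\eqref{eq:263} is supplied by Theorem~\ref{thm:Lq-Lr-reg-of-doubly-nonlinear-Dirichlet}, \ref{thm:Lq-Lr-reg-of-doubly-nonlinear-Neumann}, or \ref{thm:Lq-Lr-reg-of-doubly-nonlinear-Robin} -- the lower bound $p>\tfrac{d(1+1/m)}{dm+1}$ when $1<p<d$ is exactly what guarantees $s=1$ belongs to the admissible range in those theorems. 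Theorem~\ref{thm:weak-solutions-Li-Linfty}(1) then yields that $u(t)=T_tu_0$ is a weak energy solution satisfying energy inequality~\eqref{eq:260}.

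For part~(2) under hypothesis~(ii), locally Lipschitz $\phi$ and $\phi^{-1}$, I will leverage Theorem~\ref{thm:Linfty-implies-mild-are-strong-in-L1}, which already delivers the strong-solution property for initial data in $\overline{D(A_{1\cap\infty}\phi)}^{\mbox{}_{L^{1}}}\cap L^{\infty}(\Sigma)$. To pass to a general $u_0\in L^{1}(\Sigma)$, I will use the $L^1$--$L^\infty$ regularisation of part~(1) to choose, for any $t_0>0$, the element $\tilde u_0:=T_{t_0}u_0\in\overline{D(A_{1\cap\infty}\phi)}^{\mbox{}_{L^{1}}}\cap L^{\infty}(\Sigma)$, then invoke the semigroup identity $T_{t+t_0}u_0=T_t\tilde u_0$. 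Theorem~\ref{thm:Linfty-implies-mild-are-strong-in-L1} applied to $\tilde u_0$ gives $T_\cdot\tilde u_0\in W^{1,2}_{\mathrm{loc}}((0,\infty);L^{2}(\Sigma))\subseteq W^{1,1}_{\mathrm{loc}}((0,\infty);L^{1}(\Sigma))$ together with the chain-rule identity~\eqref{eq:150} and inclusion~\eqref{eq:152}, and since $t_0>0$ is arbitrary these identities propagate to $u$ on all of $(0,\infty)$. The density $\overline{D(A_{1\cap\infty}\phi)}^{\mbox{}_{L^{1}}}=L^{1}(\Sigma)$ (respectively $L^{1}_m(\Sigma)$) observed just before Theorem~\ref{thm:Lq-Lr-reg-of-doubly-nonlinear-Dirichlet} ensures the argument covers every $u_0\in L^{1}(\Sigma)$.

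For part~(2) under hypothesis~(i), homogeneity of $\phi$ of degree $\alpha\neq 1$, the operator $(-\Delta_p)\phi$ is positively homogeneous of degree $\kappa:=\alpha(p-1)$, and provided $\kappa\neq 1$ I will apply the regularising theorem of B\'enilan--Crandall for evolution governed by positively homogeneous accretive operators in $L^{q_0}$ (cf.~\cite{MR648452} and the discussion in Section~\ref{fm}). Concretely, the scaling $v(t):=\lambda^{-1}u(\lambda^{\kappa-1}t)$ maps mild solutions to mild solutions, and differentiating this invariance at $\lambda=1$ yields the pointwise bound $t\,\partial_t u(t)=-\tfrac{1}{\kappa-1}(u(t)-\mathrm{radial\ projection})$ in the appropriate sense, giving the a priori estimate $\|\partial_t u(t)\|_1\lesssim t^{-1}\|u(t)\|_1$ and hence the required $W^{1,1}_{\mathrm{loc}}((0,\infty);L^{1}(\Sigma))$ regularity. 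In the exceptional case $\kappa=1$ (i.e.\ $\alpha=1/(p-1)$, which for $p=2$ coincides with $\alpha=1$ and is excluded by hypothesis, but which for $p\neq 2$ may occur for $\alpha\neq 1$), the scaling argument degenerates; here I would instead smooth $\phi$ by replacing it with $\phi+\varepsilon\,\mathrm{id}$ to enter case~(ii), obtain strong solutions for the regularised problem, and pass to the limit via the contractivity of the resolvent in $L^1$ and the uniform bounds provided by the $L^1$--$L^\infty$ regularisation.

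The principal obstacle I foresee is the homogeneous case, because the B\'enilan--Crandall regularisation result is classically formulated in uniformly convex spaces $L^{q_0}$ with $1<q_0<\infty$, whereas here the semigroup lives on $L^{1}$. The workaround is to apply the scaling argument at the level of the $L^{q_0}$-realisation for some $q_0\in(1,\infty)$, for which the semigroup is the $L^{q_0}$-extension of the $L^1$-semigroup by Proposition~\ref{propo:extrapol-prop} (valid for the first class of operators) or by the $L^1$--$L^\infty$ estimate combined with Riesz--Thorin (for the $L^1$ class), and then to deduce the $L^1$-differentiability from the $L^{q_0}$-differentiability on compact subsets of $\Sigma$ together with the $L^\infty$-bound that time-freezes the values of $u$ on which $\phi$ effectively acts.
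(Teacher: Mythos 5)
Your part~(1) and part~(2)(ii) arguments match the paper's approach (the paper simply cites Theorem~\ref{thm:weak-solutions-Li-Linfty} for the first and Theorem~\ref{thm:Linfty-implies-mild-are-strong-in-L1} for the second, with the reduction from $u_0\in L^1$ to $\tilde u_0=T_{t_0}u_0\in L^\infty$ implicit as you describe). The genuine problem is your treatment of part~(2)(i).

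Your proposed workaround -- transferring the B\'enilan--Crandall scaling argument to an $L^{q_0}$-realisation for some $q_0\in(1,\infty)$ -- does not work for the operators at hand. The B\'enilan--Crandall regularisation result [\cite{MR648452}, Theorem~7] requires the generator to be ($m$-)accretive in the relevant Banach space, so that the Crandall--Liggett semigroup structure is available there. But the paper explicitly notes (end of Section~\ref{sec:main-thms}) that operators of the second class such as $-\Delta(\cdot^m)$ are \emph{not} accretive in $L^q$ for $q>1$; the same failure applies to the doubly nonlinear operators $\overline{A_{1\cap\infty}\phi}+F$ considered here. Proposition~\ref{propo:extrapol-prop} is formulated only for the first class (completely accretive in $L^{q_0}$), not for $m$-accretive operators in $L^1$ with complete resolvent. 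And Riesz--Thorin is a theorem about linear operators -- it does not apply to the nonlinear mappings $T_t$; in fact developing a nonlinear substitute for it is one of the main contributions of Section~\ref{sec:nonlinear-interpolation}. So the semigroup does act boundedly on $L^{q_0}$, but its generator there is not quasi-$m$-accretive, and [\cite{MR648452}, Theorem~7] cannot be invoked in $L^{q_0}$.

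The correct route, which the paper takes, is to stay in $L^1$: the scaling argument of B\'enilan--Crandall only needs the semigroup structure and the positive homogeneity, so it delivers local Lipschitz continuity of $t\mapsto u(t)$ with values in $L^1(\Sigma)$ even though $L^1$ is not reflexive. Since $L^1$ fails the Radon--Nikodym property, Lipschitz continuity alone does not yield $W^{1,1}_{\mathrm{loc}}$-regularity; to make this step, the paper invokes [\cite{MR2286292}, Lemma A.1] together with the Ambrosio--Dal Maso chain rule [\cite{MR969514}]. This measure-theoretic ingredient is the one genuinely missing from your argument. Your observation that $\alpha\neq1$ does not exclude $\alpha(p-1)=1$ when $p\neq2$ is a sharp reading of the hypotheses and worth keeping in mind, but your smoothing-and-limit workaround for that case is subordinate to, and also undermined by, the above $L^1$-versus-$L^{q_0}$ obstruction.
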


For the proof of Theorem~\ref{thm:weak-is-strong}, the main ingredients are the
$L^{1}$-$L^{\infty}$-regularisation estimates established in Section~\ref{sec:doubly-nonl-diff}.

\begin{proof}[Proof of Theorem~\ref{thm:weak-is-strong}]
  The first statement of this theorem follows immediately from
  Theorem~\ref{thm:weak-solutions-Li-Linfty} due to the global
  $L^{1}$-$L^{\infty}$ regularisation estimates holding uniformly for
  all $t>0$ given by Theorem~\ref{thm:Lq-Lr-reg-of-doubly-nonlinear-Dirichlet} concerning
  Dirichlet boundary conditions,
  Theorem~\ref{thm:Lq-Lr-reg-of-doubly-nonlinear-Neumann} concerning
  Neumann boundary conditions, and
  Theorem~\ref{thm:Lq-Lr-reg-of-doubly-nonlinear-Robin} regarding
  Robin boundary conditions. Here, we chose in the case $p=d$, the
  parameter $\theta$ appearing in
  Theorem~\ref{thm:Lq-Lr-reg-of-doubly-nonlinear-Dirichlet} such that
  \begin{displaymath}
    \max\Big\{0,\tfrac{1+m(1-p)}{m+1}\Big\}<\theta<1.
  \end{displaymath}

  The second statement follows from \cite[Theorem~7]{MR648452} if
  $\phi$ is homogeneous of order $\alpha>0$, $\alpha\neq 1$, and from
  Theorem~\ref{thm:Linfty-implies-mild-are-strong-in-L1} if $\phi$ and $\phi^{-1}$ are locally Lipschitz
  continuous on $\R$. Here, we note that if one wants to conclude from
  Lipschitz continuity of the mild solution $u$ with values in
  $L^{1}(\Sigma)$ that the function
  $u\in W^{1,1}((0,T];L^{1}(\Sigma))$, one needs to apply a classical
  result from measure theory (cf. \cite[Lemma A.1]{MR2286292}, wherein
  the continuity assumption of $u$ can be omitted due to the chain
  rule given by Ambrosio and Dal Maso~\cite{MR969514}).
\end{proof}

%
%
%
%

\appendix
\renewcommand{\thesection}{\Alph{section}}
\setcounter{section}{0}

\section{More on accretive operators in $L^{1}$}
\label{Asec:accretive-L1}

We begin this section by outlining the proof of
Proposition~\ref{propo:characterisation-c-complete}.

\begin{proof}[Proof of Proposition~\ref{propo:characterisation-c-complete}]
  Let $u$, $v\in L^{1}(\Sigma,\mu)$ and suppose~\eqref{eq:118} holds
  for all $j\in \mathcal{J}$ and $\lambda>0$. For every $T\in P$, one has
  either $T<0$ on $\R$ or $T\in P_{0}$ or $T>0$ on $\R$. If $T\in
  P_{0}$, then inequality~\eqref{eq:char-c-complete} follows from
  Proposition~\ref{prop:completely-accretive}. If $T>0$ on $\R$, then
  the function $j(s):=\int_{0}^{s}T(r)\,\dr$ for every $s\in \R$
  belongs to $\mathcal{J}$. Since the support of the derivative $T'$
  of $T$ is a compact subset of $\R$, the function $T$ is bounded on
  $\R$. Thus, there is a constant $M\ge0$ such that
  \begin{displaymath}
    \abs{j(u+\lambda v)}\le \abs{j(0)}+\int_{0}^{1}\abs{T((u+\lambda
      v)s)}\,\ds\abs{u+\lambda v}\le \abs{j(0)}+M\,\abs{u+\lambda v}
  \end{displaymath}
  for a.e. $x\in \Sigma$, showing that $j(u+\lambda v)\in L^{1}(\Sigma,\mu)$.
  By~\eqref{eq:118} and since $j(u)\ge0$, we have that $j(u)\in
  L^{1}(\Sigma,\mu)$ satisfies
  \begin{displaymath}
    0\le \int_{\Sigma}\left(j(u+\lambda v)-j(u)\right)\,\dmu
  \end{displaymath}
  for all $\lambda>0$. By convexity of $j$ and since $j\in C^{1}(\R)$,
  one has that
  \begin{math}
    \frac{j(u+\lambda v)-j(u)}{\lambda}
  \end{math}
  decreases to $T(u)\,v$ a.e. on $\Sigma$. Thus and since
  $T(u)\,v\in L^{1}(\Sigma,\mu)$, it follows
  that~\eqref{eq:char-c-complete} holds for $T>0$. In the case that
  $T<0$, we first truncate $u$ and $v$ at hight $n$. More precisely,
  for every $n>1$, let $u_{n}=u$ if $\abs{u}\le n$ and $u_{n}=0$ if
  otherwise and analogously, define $v_{n}$. Further, define
  $j_{n}\in \mathcal{J}$ by
  \begin{displaymath}
    j_{n}(s)=
    \begin{cases}
      \int_{-n}^{n}T(r)\,\dr-2\,n\,T(-n) & \text{if $s\ge n$,}\\
      \int_{-n}^{s}T(r)\,\dr-2\,n\,T(-n) & \text{if $\abs{s}\le n$,}\\
      -2\,n\,T(-n) & \text{if $s\le -n$}
    \end{cases}
  \end{displaymath}
  for every $s\in \R$. Now, proceeding as above yields
  \begin{displaymath}
    \int_{\Sigma}T(u_{n})\,v_{n}\,\dmu\ge 0
  \end{displaymath}
  for every $n>1$. By dominated convergence, $T(u_{n})\,v_{n}$
  converges to $T(u)\,v$ in $L^{1}(\Sigma,\mu)$ hence we can
  conclude that~\eqref{eq:char-c-complete} holds as well for $T>0$.

  It remains to show that the other inclusion holds as well. To see
  this, let $u$, $v\in L^{1}(\Sigma,\mu)$
  satisfy~\eqref{eq:char-c-complete} for every $T\in P$. For given
  $j\in \mathcal{J}$, let $j_{\nu}(s):=\inf_{r\in \R}\{j(r)+\nu\abs{s-r}\}$ for every $s\in
  \R$ and $\nu\ge 0$. Then the sequence $(j_{\nu})_{\nu\ge0}$ consists
  of Lipschitz continuous, convex functions $j_{\nu}\in \mathcal{J}$ such that for every
  $h\in L^{1}(\Sigma,\mu)$, $j_{\nu}(h)$ converges monotone increasingly to $j(h)$ a.e. on
  $\Sigma$ and $\int_{\Sigma}j_{\nu}(h)\dmu\uparrow \int_{\Sigma}j(h)\dmu$ as
  $\nu\to\infty$. Next, for every $n\ge 1$, let $j_{\nu,n}\in \mathcal{J}$ be given
  by
  \begin{displaymath}
    j_{\nu, n}(r)=
    \begin{cases}
      j'_{\nu}(n)(r-n)+j_{\nu}(n) & \text{if $r\ge n$,}\\
      j_{\nu}(r) & \text{if $\abs{s}\le n$,}\\
      j'_{\nu}(-n)(r+n)+j_{\nu}(-n) & \text{if $s\le -n$}
    \end{cases}
  \end{displaymath}
  for every $r\in \R$. By construction, the a.e. derivative
  $j'_{\nu,n}$ is positive and bounded by the same Lipschitz constant
  $L_{\nu}>0$ of $j_{\nu}$. Thus, for every $h\in
  L^{1}(\Sigma,\mu)$,
  \begin{equation}
    \label{eq:119}
    \abs{j_{\nu,n}(h)}\le \abs{j_{\nu,n}(0)}+
    \int_{0}^{1}\abs{j'_{\nu,n}(h s)}\abs{h}\,\ds\le \abs{j_{\nu}(0)}+L_{\nu}\abs{h}
  \end{equation}
   a.e. on $\Sigma$ hence $j_{\nu,n}(h)\in L^{1}(\Sigma,\mu)$. Since, the function $j_{\nu,n}$
  is convex,
  \begin{displaymath}
    \inf_{\lambda>0}\frac{j_{\nu,n}(u+\lambda v)-j_{\nu,n}(u)}{\lambda}=j'_{\nu,n}(u)v
  \end{displaymath}
  for a.e. $x\in \Sigma$ and by the Lipschitz continuity of $j_{\nu,n}$,
  \begin{displaymath}
    \labs{\frac{j_{\nu,n}(u+\lambda v)-j_{\nu,n}(u)}{\lambda}}\le L_{\nu}\abs{v}. 
  \end{displaymath}
  Therefore, by the dominated convergence theorem and since
  $L_{\nu}^{-1}j'_{\nu,n}\in P$, it follows
  by~\eqref{eq:char-c-complete} that
  \begin{align*}
    \int_{\Sigma} \frac{L_{\nu}^{-1}j_{\nu,n}(u+\lambda
    v)-L_{\nu}^{-1}j_{\nu,n}(u)}{\lambda}\,\dmu&\ge 
    \int_{\Sigma}\inf_{\lambda>0}\frac{L_{\nu}^{-1}j_{\nu,n}(u+\lambda
                                     v)-L_{\nu}^{-1}j_{\nu,n}(u)}{\lambda}\,\dmu\\
    &= \int_{\Sigma}L_{\nu}^{-1} j'_{\nu,n}(u)v\,\dmu\ge0,
  \end{align*}
  from where one can conclude that~\eqref{eq:118} holds for
  $j_{\nu,n}$. Since for every $h\in L^{1}(\Sigma,\mu)$,
  $j_{\nu,n}(h)$ converges to $j_{\nu}(h)$ a.e. on $\Sigma$ and since
  the right hand side in~\eqref{eq:119} does not depend on $n$, it
  follows that ~\eqref{eq:118} holds for $j_{\nu}$. By the properties
  of the sequence $(j_{\nu})_{\nu\ge0}$, one easily concludes that
  inequality~\eqref{eq:118} holds for $j$. This completes the proof of
  this proposition.
\end{proof}

Next, we outline the proof of Proposition~\ref{propo:composition-operators-in-L1}.

\begin{proof}[Proof of Proposition~\ref{propo:composition-operators-in-L1}]
  We begin by showing that $A\phi$ is accretive in $L^{1}(\Sigma,\mu)$. To
  do so, let $(u,v)$, $(\hat{u},\hat{v})\in A\phi$ and
   $(u,w)$, $(\hat{u},\hat{w})\in \phi$. First, we assume that
  hypothesis~\eqref{propo:composition-operators-in-L1-H1} holds. Then, 
  \begin{equation}
    \label{eq:12}
    \int_{\Sigma}\psi\,(v-\hat{v})\,\dmu\ge 0
  \end{equation}
  for every $\psi\in L^{\infty}(\Sigma,\mu)$ satisfying
  $\psi(x)\in \textrm{sign}(w(x)-\hat{w}(x))$ for a.e. $x\in \Sigma$
  and since by assumption, $A$ is single-valued, the situation
  $w=\hat{w}$ implies that~ \eqref{eq:12} holds only for
  $\psi\equiv 0$. Consider, the function
  $\psi\in L^{\infty}(\Sigma,\mu)$ defined by
  \begin{displaymath}
   \psi(x):=
   \begin{cases}
     1 & \text{if $u(x)>\hat{u}(x)$,}\\
     \textrm{sign}_{0}(w(x)-\hat{w}(x)) & \text{if
       $u(x)=\hat{u}(x)$,}\\
     -1 & \text{if $u(x)<\hat{u}(x)$,}
   \end{cases}
  \end{displaymath}
  for a.e. $x\in \Sigma$. Then by construction,
  \begin{displaymath}
     \psi\in\;\textrm{sign}(w(x)-\hat{w}(x))\cap
   \textrm{sign}(u(x)-\hat{u}(x)).
 \end{displaymath}
 In particular, $\psi$ satisfies~\eqref{eq:12} hence $A\phi$ is
 accretive in $L^{1}(\Sigma,\mu)$. If we assume that
 hypothesis~\eqref{propo:composition-operators-in-L1-H2} holds, then
 by definition of $A\phi$ and since $\phi$ is a function, one has that
 $v\in A\phi(u)$ and $\hat{v}\in A\phi(\hat{u})$. Thus and since $A$
 is accretive in $L^{1}(\Sigma,\mu)$,
  \begin{align*}
    &[\phi(u)-\phi(\hat{u}),v-\hat{v}]_{1}\\
    &\quad = \int_{\{\phi(u)\neq\phi(\hat{u})\}}
      \textrm{sign}_{0}(\phi(u)-\phi(\hat{u}))\,(v-\hat{v})\,\dmu +
    \int_{\{\phi(u)=\phi(\hat{u})\}}\abs{v-\hat{v}}\,\dmu\ge 0.
  \end{align*}
  Since $\phi$ is injective, one has that
  $\{\phi(u)=\phi(\hat{u})\}= \{u=\hat{u}\}$. Therefore,
  \begin{align*}
    &\int_{\{u\neq\hat{u}\}}\textrm{sign}_{0}(u-\hat{u})\,(v-\hat{v})\,\dmu +
    \int_{\{u=\hat{u}\}}\abs{v-\hat{v}}\,\dmu\\
    &\qquad = \int_{\{\phi(u)\neq\phi(\hat{u})\}}\textrm{sign}_{0}(\phi(u)-\phi(\hat{u}))\,(v-\hat{v})\,\dmu +
    \int_{\{\phi(u)=\phi(\hat{u})\}}\abs{v-\hat{v}}\,\dmu\ge 0,
  \end{align*}
  showing that $A\phi$ is accretive in $L^{1}(\Sigma,\mu)$. 

  Moreover, for every $\varepsilon>0$, the sum
  $\varepsilon\phi_{1}+A\phi$ is accretive in $L^{1}(\Sigma,\mu)$
  under the assumption that either~\eqref{propo:composition-operators-in-L1-H1}
  or~\eqref{propo:composition-operators-in-L1-H2}
  holds. This follows easily from the fact that the operator
  $\phi_{1}$ in $L^{1}(\Sigma,\mu)$ of the monotone function $\phi$ on
  $\R$ is $s$-accretive in $L^{1}(\Sigma,\mu)$
  (cf.~\cite{Benilanbook}).

  Similarly,
  one shows under the assumptions $\phi$ is injective and $A$ is $T$-accretive in
  $L^{1}(\Sigma,\mu)$ that for every $\varepsilon\ge 0$, one has
  $\varepsilon\phi_{1} + A\phi$ is $T$-accretive in
  $L^{1}(\Sigma,\mu)$ (cf.~\cite[Proposition~2.5]{Benthesis}).

  Next, suppose that $A$ has a complete resolvent and $\phi$ is
  continuous satisfying $\phi(0)=0$. Then, for every $T\in P_{0}$,
  $T\circ \phi^{-1}$ is continuous on $Rg (\phi)=]a,b[$ for some $a$,
  $b\in \R$, bounded and $T\circ \phi^{-1}(0)=0$. If $(\rho_{n})$ is a
  standard positive mollifier sequence on $\R$, then
  $T_{n}:=(T\circ \phi^{-1})\ast \rho_{n}\in P_{0}$ and
  $T_{n}\to T\circ \phi^{-1}$ uniformly on compact subsets of $\R$ as
  $n\to\infty$. Thus, for every $u$, $v\in L^{1}(\Sigma,\mu)$,
  $T_{n}(\phi(u))v\to T(u) v$ a.e. on $\Sigma$ as $n\to \infty$ and
  since $(T_{n}(\phi(u)))$ is uniformly bounded in
  $L^{\infty}(\Sigma,\mu)$, it follows that $\lim_{n\to\infty}
  T_{n}(\phi(u))v =T(u) v$ in $L^{1}(\Sigma,\mu)$. For every
  $(u,v)\in A\phi$, one has $(\phi(u),v)\in A$ hence by
  Proposition~\ref{prop:completely-accretive},
  \begin{displaymath}
    \int_{\Sigma}T(\phi(u))v\,\dmu\ge 0
  \end{displaymath}
  for every $T\in P_{0}$. Hence, for every $T\in P_{0}$, replacing $T$
  by $T_{n}$ in the latter inequality and sending $n\to\infty$ yields
  \begin{equation}
    \label{eq:22}
    \int_{\Sigma}T(u)v\,\dmu\ge 0,
  \end{equation}
  showing that $A\phi$ has a complete resolvent. If $(\Sigma,\mu)$ is
  finite and $A$ has a $c$-complete resolvent, then
  similar arguments and  replacing
  Proposition~\ref{prop:completely-accretive} by
  Proposition~\ref{propo:charact-of-c-complete-operators} yields that
  $A\phi$ has a $c$-complete resolvent. Now, for every
  $\varepsilon>0$, recall that $\varepsilon \phi_{1}$ is completely
  accretive in $L^{1}(\Sigma,\mu)$. Thus, if $\phi(0)=0$, then
  $\phi_{1}$ has a complete resolvent and so
  \begin{displaymath}
    \int_{\Sigma}T(u)\varepsilon\phi(u)\,\dmu\ge 0
  \end{displaymath}
  for every $T\in P_{0}$. For any $T\in P_{0}$, adding this inequality
  to~\eqref{eq:22} for $u\in D(\phi_{1})\cap D(A\phi)$ and
  $v\in A\phi(u)$ shows that for every $\varepsilon>0$,
  $\varepsilon \phi_{1}+A\phi$ has a complete resolvent by
  Proposition~\ref{prop:completely-accretive}. Again, the same
  arguments and using
  Proposition~\ref{propo:charact-of-c-complete-operators} yields that for
  every $\varepsilon>0$, $\varepsilon \phi_{1}+A\phi$ has a
  $c$-complete resolvent.
\end{proof}

The statements of Proposition~\ref{propo:composition-operators-in-L1}
are used in following proof.

\begin{proof}[Proof of Proposition~\ref{propo:Range-cond-in-Rd}]
  Here, we have been inspired by the proof of
  ~\cite[Proposition~2]{MR647071}. Let $A_{\phi}$ denote the
  operator on $L^{1}(\Sigma,\mu)$ given by
  \begin{displaymath}
    A_{\phi}=\Bigg\{(u,f)\in L^{1}\times L^{1}(\Sigma,\mu) \Bigg\vert
    \begin{array}[c]{l}
      \text{there are }\lambda>0,\;g\in L^{1}\cap L^{\infty}(\Sigma,\mu)\textrm{ such that }\\
      \displaystyle\lim_{\varepsilon\to 0+}J_{\lambda}^{\varepsilon\phi+A_{1\cap\infty}\phi}g= u\text{
      in $L^{1}(\Sigma,\mu)$ and }f=\frac{g-u}{\lambda}
    \end{array}\!\!\!
    \Bigg\},
  \end{displaymath}
  where for every $\lambda>0$ and every $\varepsilon>0$, the operator
  $J_{\lambda}^{\varepsilon\phi+A_{1\cap\infty}\phi}$ denotes the
  resolvent of $\varepsilon\phi+A_{1\cap\infty}\phi$.

  We begin by showing that under the
  hypotheses~\eqref{propo:Range-cond-in-Rd-Hyp-3}-\eqref{propo:Range-cond-in-Rd-Hyp-2},
  for every $\varepsilon>0$ sufficiently small, $\lambda>0$ and every
  $g\in L^{1}\cap L^{\infty}(\Sigma,\mu)$, there is a unique
  $u_{\varepsilon}\in D(A_{1\cap\infty}\phi)$
  satisfying
  \begin{equation}
    \label{eq:228}
    u_{\varepsilon}+\lambda (\varepsilon \phi(u_{\varepsilon})+
    A_{1\cap\infty}\phi(u_{\varepsilon}))\ni g
  \end{equation}
  or equivalently,
  $u_{\varepsilon}=J_{\lambda}^{\varepsilon\phi+A_{1\cap\infty}\phi}g$,
  and there is an $u\in L^{1}\cap L^{\infty}(\Sigma,\mu)$ such that
  \begin{equation}
    \label{eq:224}
    \lim_{\varepsilon\to0+}u_{\varepsilon}=u\qquad\text{ in
      $L^{1}(\Sigma,\mu)$}
  \end{equation}
  and 
  \begin{equation}
  \label{eq:231}
  \lim_{\varepsilon\to0+}
  \varepsilon\,\varphi(u_{\varepsilon})=0\qquad\text{in
    $L^{\tilde{q}}(\Sigma,\mu)$, for every $1\le \tilde{q}\le \infty$.}
\end{equation}
  By Proposition~\ref{propo:Lipschitz-complete-accretive},
  the operator $\phi^{-1}_{q}+\lambda A$ is $m$-completely accretive
  in $L^{q}(\Sigma,\mu)$. Thus, and since $(0,0)\in \phi^{-1}_{q}+\lambda A$,
  for every $\varepsilon>0$, there are
  $v_{\varepsilon}\in L^{1}\cap L^{\infty}(\Sigma,\mu)\cap
  D(\phi^{-1}_{q})\cap D(A)$
  and $w_{\varepsilon}\in Av_{\varepsilon}$ satisfying
  \begin{equation}
    \label{eq:218}
    v_{\varepsilon}+\tfrac{1}{\varepsilon\lambda
    }(\phi^{-1}(v_{\varepsilon})+\lambda w_{\varepsilon})=
    \tfrac{1}{\varepsilon\lambda}g.
  \end{equation}
  In fact (cf. the proof of~\cite[Proposition~3.8]{MR2582280}), the
  solution $v_{\varepsilon}$ of \eqref{eq:218} is the limit
  \begin{displaymath}
    \lim_{\nu\to0+}v_{\varepsilon,\nu}=v_{\varepsilon}\qquad\text{in $L^{q}(\Sigma,\mu)$}
  \end{displaymath}
  of the sequence $(v_{\varepsilon,\nu})_{\nu>0}$ of solutions
  $v_{\varepsilon,\nu}\in L^{1}\cap L^{\infty}(\Sigma,\mu)\cap D(A)$ of
  \begin{equation}
    \label{eq:223}
    v_{\varepsilon,\nu}+\tfrac{1}{\varepsilon\lambda
    }(\beta_{\nu}(v_{\varepsilon,\nu})+\lambda w_{\varepsilon,\nu}=
    \tfrac{1}{\varepsilon\lambda}g
  \end{equation}
  with $w_{\varepsilon,\nu}\in Av_{\varepsilon,\nu}$. Moreover, one has
  \begin{equation}
    \label{eq:222}
    \lim_{\nu\to0+}\beta_{\nu}(v_{\varepsilon,\nu})=\beta(v_{\varepsilon})
    \qquad\text{weakly in $L^{q}(\Sigma,\mu)$,}
  \end{equation}
  where $\beta_{\nu}$ denotes the Yosida operator of
  $\beta:=\phi^{-1}$. We note that for every $\nu>0$,
  $v_{\varepsilon,\nu}\in D(A_{1\cap \infty})$ owing to the Lipschitz continuity
  of $\beta_{\nu}$ and since $\beta_{\nu}(0)=0$. First, multiplying
  equation~\eqref{eq:223} with $\beta_{\nu}(v_{\varepsilon,\nu})$ with
  respect to the $1$-bracket $[\cdot,\cdot]_{1}$,  then  using that
  $\beta_{\nu}$ is accretive in $L^{1}(\Sigma,\mu)$ and that
  $\beta_{\nu}$ satisfies~\eqref{eq:179} for $q=1$, we see that
  \begin{align*}
    \norm{\beta_{\nu}(v_{\varepsilon,\nu})}_{1}
    &\le [\beta_{\nu}(v_{\varepsilon,\nu}),v_{\varepsilon,\nu}]_{1}
      +[\beta_{\nu}(v_{\varepsilon,\nu}),\beta_{\nu}(v_{\varepsilon,\nu})]_{1}
    +[\beta_{\nu}(v_{\varepsilon,\nu}),w_{\varepsilon,\nu}]_{1}\\
    &=\tfrac{1}{\varepsilon\lambda}[\beta_{\nu}(v_{\varepsilon,\nu}),g]_{1}\\
    &\le \tfrac{1}{\varepsilon\lambda}\,\norm{g}_{1}
  \end{align*}
  for all $\nu>0$. By this estimate together with~\eqref{eq:222} and
  H\"older's inequality yields that there is a constant $C>0$ such that
  \begin{displaymath}
    \norm{\beta_{\nu}(v_{\varepsilon,\nu})}_{p}\le C\qquad\text{for
      all $\nu>0$ and $1<p<q$}
  \end{displaymath}
  hence, the weak limit $\beta(v_{\varepsilon})$ satisfies
  \begin{displaymath}
   \norm{\beta(v_{\varepsilon})}_{p}\le C\qquad\text{for
     all $1<p<q$.}
  \end{displaymath}
  Sending $p\to 1+$ in the latter inequality and using Fatou's lemma,
  we obtain that
  $\beta(v_{\varepsilon})=\phi^{-1}(v_{\varepsilon})\in
  L^{1}(\Sigma,\mu)$
  and so by continuity of $\phi^{-1}$ on $\R$,
  $\phi^{-1}(v_{\varepsilon})\in L^{1}\cap
  L^{\infty}(\Sigma,\mu)$.
  Thus, equation~\eqref{eq:218} yields $v_{\varepsilon}\in D(A_{1\cap \infty})$
  with $w_{\varepsilon}\in L^{1}\cap L^{\infty}(\Sigma,\mu)$, hence,
  for every $\varepsilon>0$, there is
  $v_{\varepsilon}\in L^{1}\cap L^{\infty}(\Sigma,\mu)\cap
  D(\phi^{-1}_{q})\cap D(A_{1\cap \infty})$
  such that $\phi^{-1}(v_{\varepsilon})\in L^{1}\cap L^{\infty}(\Sigma,\mu)$ and
  \begin{displaymath}
    v_{\varepsilon}+\tfrac{1}{\varepsilon\lambda
    }(\phi^{-1}(v_{\varepsilon})+\lambda A_{1\cap\infty}v_{\varepsilon})\ni
    \tfrac{1}{\varepsilon\lambda}g.
  \end{displaymath}
  Taking $u_{\varepsilon}=\phi^{-1}(v_{\varepsilon})$, one has
  $\phi(u_{\varepsilon})=v_{\varepsilon}$.  Thus and by the last
  inclusion, we have shown that for every $\varepsilon>0$, there is an
  $u_{\varepsilon}\in L^{1}\cap L^{\infty}(\Sigma,\mu)$ such that
  $\phi(u_{\varepsilon})\in D(A_{1\cap \infty})$ and \eqref{eq:228}
  holds, or, equivalently,
  $u_{\varepsilon}=J_{\lambda}^{\varepsilon\phi+A_{1\cap\infty}\phi}g$. We
  still need to show that the ~\eqref{eq:224} and~\eqref{eq:231}
  hold. 

  We begin, by assuming that
  hypothesis~\eqref{propo:Range-cond-in-Rd-Hyp-3} holds. Then, by
  Proposition~\ref{propo:Range-cond-in-Rd}, for every
  $\varepsilon> 0$, $\varepsilon \phi+A_{1\cap\infty}\phi$ is
  $T$-accretive in $L^{1}(\Sigma,\mu)$. Thus, for every
  $\tilde{g}\in L^{1}\cap L^{\infty}(\Sigma,\mu)$ satisfying
  $\tilde{g}\ge 0$ and $\varepsilon>0$, one has that
  $\tilde{u}_{\varepsilon}:=J_{\lambda}^{\varepsilon\phi+A_{1\cap\infty}\phi}\tilde{g}$
  satisfies $\tilde{u}_{\varepsilon}\ge 0$,
  $\norm{\tilde{u}_{\varepsilon}}_{\tilde{q}}\le
  \norm{\tilde{g}}_{\tilde{q}}$ for $1\le\tilde{q}\le \infty$
  hence, by the assumptions on $A\phi$ and $\phi$,
  \begin{equation}
    \label{eq:171}
    \begin{split}
      \norm{\tilde{u}_{\varepsilon}+\lambda\varepsilon
        \phi(\tilde{u}_{\varepsilon})}_{1} & =
      \norm{\tilde{u}_{\varepsilon}}_{1}+\lambda\varepsilon
      \norm{\phi(\tilde{u}_{\varepsilon})}_{1} \\
      & \le \norm{\tilde{g}-\lambda\varepsilon
        \phi(\tilde{u}_{\varepsilon})}_{1}+\lambda\varepsilon
      \norm{\phi(\tilde{u}_{\varepsilon})}_{1} = \norm{\tilde{g}}_{1}
 \end{split}
\end{equation}
for every sufficiently small $\varepsilon>0$. Moreover, if
$\tilde{w}_{\varepsilon}\in
A_{1\cap\infty}\phi(\tilde{u}_{\varepsilon})$
satisfies
$\tilde{u}_{\varepsilon}+\lambda
(\eta\phi(\tilde{u}_{\varepsilon})+\tilde{w}_{\varepsilon}))=\tilde{g}$,
then for every $\varepsilon>\eta>0$,
\begin{displaymath}
  \tilde{u}_{\varepsilon}+\lambda
  (\eta\phi(\tilde{u}_{\varepsilon})+\tilde{w}_{\varepsilon}))=\tilde{g}-\lambda
  (\varepsilon-\eta) \tilde{g}\le \tilde{g} = \tilde{u}_{\eta}+\lambda
  (\eta\phi(\tilde{u}_{\eta})+\tilde{w}_{\eta}))
\end{displaymath}
and so, since the resolvent
$J_{\lambda}^{\varepsilon \phi+A_{1\cap\infty}\phi}$ of
$\varepsilon \phi+A_{1\cap\infty}\phi$ is order-preserving, one has
that $\tilde{u}_{\varepsilon}\le \tilde{u}_{\eta}$ for every
$\varepsilon>\eta>0$. Since $\tilde{u}_{\varepsilon}\ge 0$ and
$\sup_{\varepsilon>0}\norm{\tilde{u}_{\varepsilon}}_{1}\le \norm{\tilde{g}}_{1}$,
Beppo-Levi's monotone convergence theorem implies that there is
$u_{+}\in L^{1}\cap L^{\infty}(\Sigma,\mu)$ such that
$\tilde{u}_{\varepsilon}\uparrow u_{+}$ in $L^{1}(\Sigma,\mu)$ as
$\varepsilon\downarrow 0+$. Similarly, one shows that for every
$\tilde{g}\in L^{1}\cap L^{\infty}(\Sigma,\mu)$ satisfying
$\tilde{g}\le 0$, one has $\tilde{u}_{\varepsilon}\le 0$,
$\tilde{u}_{\varepsilon}\ge \tilde{u}_{\eta}$ for every
$\varepsilon>\eta>0$ and there is
$u_{-}\in L^{1}\cap L^{\infty}(\Sigma,\mu)$ such that
$\tilde{u}_{\varepsilon}\downarrow u_{-}$ in $L^{1}(\Sigma,\mu)$ as
$\varepsilon\downarrow 0+$. Now, we apply this to a general function
$g\in L^{1}\cap L^{\infty}(\Sigma,\mu)$. Let $g^{+}=g\vee 0$ be the
positive part of $g$ and $g^{-}=(-g)\vee 0$ be the negative part of
$g$.  Since by assumption, $J_{\lambda}^{A_{1\cap\infty}\phi}$ is
order-preserving, $u_{\varepsilon}$,
$u_{\varepsilon^{+}}:=J_{\lambda}^{\varepsilon\phi+A_{1\cap\infty}\phi}(g^{+})$
and
$u_{\varepsilon^{-}}:=J_{\lambda}^{\varepsilon\phi+A_{1\cap\infty}\phi}(-g^{-})$
satisfy
\begin{equation}
  \label{eq:175}
  u_{\varepsilon^{-}}\le u_{\varepsilon}\le u_{\varepsilon^{+}}\qquad
  \text{for every $\varepsilon>0$,}
\end{equation}
and there are $u_{+}$, $u_{-}\in L^{1}\cap L^{\infty}(\Sigma,\mu)$
satisfying $u_{+}\ge 0$, $u_{-}\le 0$,
$u_{\varepsilon}\uparrow u_{+}$ in $L^{1}(\Sigma,\mu)$ as
$\varepsilon\downarrow 0$ and $u_{\varepsilon}\downarrow u_{-}$
in $L^{1}(\Sigma,\mu)$ as $\varepsilon\uparrow 0+$. In particular,
\begin{displaymath}
  u_{-}\le u_{\varepsilon}\le u_{+}\qquad\text{for every $\varepsilon>0$.}
\end{displaymath}
Thus, to see that ~\eqref{eq:224} holds for some function
$u\in L^{1}\cap L^{\infty}(\Sigma,\mu)$, it is enough to show that for
every sequence $(\varepsilon_{n})_{n\ge 1}\subseteq (0,1)$ with
$\varepsilon_{n}>\varepsilon_{n+1}$ and every $\delta>0$, one has
 \begin{equation}
   \label{eq:77}
   \lim_{n,m\to\infty} 
   \mu\left(\{\abs{u_{\varepsilon_{n}}-u_{\varepsilon_{m}}}>\delta\}\right)=0,
 \end{equation}
 that is, $(u_{\varepsilon})_{\varepsilon>0}$ is a Cauchy sequence $\mu$-measure.
 First, we note that by the boundedness of
 $(u_{\varepsilon_{n}})_{n\ge 1}$ and by the continuity and
 infectivity of $\phi$, for every given $\delta>0$, there is an
 $N>0$ such that
 \begin{displaymath}
   \{\abs{u_{\varepsilon_{n}}-u_{\varepsilon_{m}}}>\delta\}
   \subseteq
   \{\abs{\phi(u_{\varepsilon_{n}})-\phi(u_{\varepsilon_{m}})}>N\}\qquad
   \text{for every $n$, $m\ge 1$.}
 \end{displaymath}
 By \eqref{eq:171} and \eqref{eq:175}, every
 $\phi(u_{\varepsilon_{n}})\in L^{1}(\Sigma,\mu)$. Furthermore, by the
 continuity of $\phi$ and since $\norm{u_{\varepsilon_{n}}}_{\infty}\le
 \norm{g}_{\infty}$, we obtain that every $\phi(u_{\varepsilon_{n}})\in L^{\infty}(\Sigma,\mu)$.
 Thus, $\phi(u_{\varepsilon_{n}})-\phi(u_{\varepsilon_{m}})\in
 L^{1}\cap L^{\infty}(\Sigma,\mu)$ and so,
 $\psi(\phi(u_{\varepsilon_{n}})-\phi(u_{\varepsilon_{m}}))\in L^{q}(\Sigma,\mu)$
 for $\psi(r):=1$ if $r>N$, $\psi(r)=0$ if $\abs{r}\le N$ and
 $\psi(r)=-1$ if $r<-N$. Thus, multiplying inclusion
 \begin{displaymath}
   (u_{\varepsilon_{n}}-u_{\varepsilon_{m}})+\lambda
   (A_{1\cap\infty}\phi(u_{\varepsilon_{n}})-A_{1\cap\infty}\phi(u_{\varepsilon_{m}}))\ni
   \varepsilon_{m}\phi(u_{\varepsilon_{m}})-\varepsilon_{n}\phi(u_{\varepsilon_{n}}),
 \end{displaymath}
 by $(\psi(\phi(u_{\varepsilon_{n}})-\phi(u_{\varepsilon_{m}})))_{q}$,
 and using that $A_{1\cap\infty}$ is accretive in $L^{q}(\Sigma,\mu)$
 together with H\"older's inequality yields
 \begin{equation}
   \label{eq:145}
   \begin{split}
    & \int_{\{\abs{\phi(u_{\varepsilon_{n}})-\phi(u_{\varepsilon_{m}})}>N\}}
     \abs{u_{\varepsilon_{n}}-u_{\varepsilon_{m}}}\,\dmu\\
     &\qquad \le \lambda\;
     \int_{\{\abs{\phi(u_{\varepsilon_{n}})-\phi(u_{\varepsilon_{m}})}>N\}}
     \abs{\varepsilon_{m}\phi(u_{\varepsilon_{m}})-\varepsilon_{n}\phi(u_{\varepsilon_{n}})}\,\dmu\\
     &\qquad \le \lambda\;
     \norm{\varepsilon_{m}\phi(u_{\varepsilon_{m}})-\varepsilon_{n}\phi(u_{\varepsilon_{n}})}_{2}\;
     \mu\left(\{\abs{\phi(u_{\varepsilon_{n}})-\phi(u_{\varepsilon_{m}})}>N\}\right)^{1/2}.
   \end{split}
 \end{equation}
 By continuity of $\phi$ and boundedness of $(u_{\varepsilon_{n}})_{n\ge 1}$, there is an
 $M>0$ such that
 \begin{displaymath}
   \{\abs{\phi(u_{\varepsilon_{n}})-\phi(u_{\varepsilon_{m}})}>N\}
   \subseteq \{\abs{u_{\varepsilon_{n}}-u_{\varepsilon_{m}}}>M\}\qquad\text{}
 \end{displaymath}
and so,~\eqref{eq:145} gives
\begin{equation}
  \label{eq:7}
  M\,\mu\left(\{\abs{\phi(u_{\varepsilon_{n}})-\phi(u_{\varepsilon_{m}})}>N\}\right)^{1/2}\\
  \le \lambda\;
    \norm{\varepsilon_{m}\phi(u_{\varepsilon_{m}})-\varepsilon_{n}\phi(u_{\varepsilon_{n}})}_{2}
\end{equation}
for every $n$, $m\ge 1$. By \eqref{eq:171} and \eqref{eq:175}, one has
$(\varepsilon\phi(u_{\varepsilon}))_{\varepsilon>0}$ is bounded in
$L^{1}(\Sigma,\mu)$. By continuity of $\phi$ and since
$\norm{u_{\varepsilon}}_{\infty}\le \norm{g}_{\infty}$, one has
$\lim_{\varepsilon\to0+}\varepsilon \phi(u_{\varepsilon})=0$ in
$L^{\infty}(\Sigma,\mu)$. Thus, under the
hypothesis~\eqref{propo:Range-cond-in-Rd-Hyp-3}, for every
$g\in L^{1}\cap L^{\infty}(\Sigma,\mu)$, ~\eqref{eq:231}
holds. In particular, the right hand side in~\eqref{eq:7} tends to
zero as $n$, $m\to\infty$ showing that ~\eqref{eq:77} holds and
hence  \eqref{eq:224} holds for some function
$u\in L^{1}\cap L^{\infty}(\Sigma,\mu)$ provided
hypothesis~\eqref{propo:Range-cond-in-Rd-Hyp-3} holds. Next, suppose
that  hypotheses~\eqref{propo:Range-cond-in-Rd-Hyp-1}
and~\eqref{propo:Range-cond-in-Rd-Hyp-2} hold. Since
$u_{\varepsilon}=J_{\lambda}^{\varepsilon\phi+A_{1\cap\infty}\phi}g$
can be rewritten as
$u_{\varepsilon}=J_{\lambda}^{A_{1\cap\infty}\phi}[g-\lambda\varepsilon\phi(u_{\varepsilon})]$,
the accretivity of $A_{1\cap\infty}\phi$ in $L^{1}(\Sigma,\mu)$ yields
\begin{equation}
  \label{eq:234}
  \norm{u_{\varepsilon}-u_{\eta}}_{1}\le \lambda\;
  \norm{\varepsilon\phi(u_{\varepsilon})-\eta\phi(u_{\eta})}_{1}
\end{equation}
for every $\varepsilon$, $\eta>0$. If
hypothesis~\eqref{propo:Range-cond-in-Rd-Hyp-1} holds,
then for every $g\in L^{1}\cap L^{\infty}(\Sigma,\mu)$, there is
another $K_{1}>0$ such that
\begin{equation}
  \label{eq:21}
  \abs{\phi(r)}\le K_{1}\,\abs{r}\qquad\text{for every $\abs{r}\le \norm{g}_{\infty}$.}
\end{equation}
Moreover, since $\varepsilon\phi + A_{1\cap \infty}\phi$ has a complete resolvent,
$u_{\varepsilon}=J_{\lambda}^{\varepsilon\phi+A_{1}\phi}g$ satisfies
\begin{equation}
  \label{eq:226}
  \norm{u_{\varepsilon}}_{\infty}\le \norm{g}_{\infty}
\end{equation}
for every $\varepsilon>0$ and so,~\eqref{eq:21} yields
\begin{equation}
  \label{eq:227}
  \abs{\phi(u_{\varepsilon})}\le
  K_{1}\,\abs{u_{\varepsilon}}\qquad\text{for a.e. $x\in \Sigma$
    and all $\varepsilon>0$.}   
\end{equation}
Thus and since $\norm{u_{\varepsilon}}_{\tilde{q}}\le
\norm{g}_{\tilde{q}}$ for every $1\le q\le \infty$, it
follows that
\begin{displaymath}
  \norm{\varepsilon\phi(u_{\varepsilon})}_{\tilde{q}}\le 
  K_{1}\,\varepsilon\,\norm{u_{\varepsilon}}_{\tilde{q}}\le K_{1}\,\varepsilon\,\norm{g}_{\tilde{q}},
\end{displaymath}
for every for every $1\le q\le \infty$, from where we can conclude
that the sequence $(u_{\varepsilon})_{\varepsilon>0}$ has
limit~\eqref{eq:231} under
hypothesis~\eqref{propo:Range-cond-in-Rd-Hyp-1}. In particular,
by~\eqref{eq:234}, $(u_{\varepsilon})_{\varepsilon>0}$ is a Cauchy
sequence in $L^{1}(\Sigma,\mu)$. Therefore and by \eqref{eq:226},
 \eqref{eq:224} holds for some function
$u\in L^{1}\cap L^{\infty}(\Sigma,\mu)$ also under
hypothesis~\eqref{propo:Range-cond-in-Rd-Hyp-1}. Moreover, if we
assume that hypothesis~\eqref{propo:Range-cond-in-Rd-Hyp-2} holds,
then the continuous embedding of $L^{\infty}(\Sigma,\mu)$ into
$L^{1}(\Sigma,\mu)$ and the boundedness of $\phi$ on
$[-\norm{g}_{\infty},\norm{g}_{\infty}]$ imply that
~\eqref{eq:231} holds. Thus and by \eqref{eq:226}, 
\eqref{eq:224} holds for some function
$u\in L^{1}\cap L^{\infty}(\Sigma,\mu)$ also under
hypothesis~\eqref{propo:Range-cond-in-Rd-Hyp-2}.

With these preliminaries, we can begin proving the statements of this
proposition. First, we show that $A_{\phi}$ is an extension of
$A_{1\cap\infty}\phi$ in $L^{1}(\Sigma,\mu)$. To do so, let
$(\hat{u},\hat{v})\in A_{1\cap\infty}\phi$ and for $\lambda>0$, set
$g=\hat{u}+\lambda \hat{v}$. Then, $f:=\frac{g-\hat{u}}{\lambda}=\hat{v}$ and for every
$\varepsilon>0$ sufficiently small, there is a unique
$u_{\varepsilon}=J_{\lambda}^{\varepsilon\phi+A_{1\cap\infty}\phi}g\in
D(A_{1\cap\infty}\phi)$ and there is a function $u\in L^{1}\cap
L^{\infty}(\Sigma,\mu)$ satisfying ~\eqref{eq:224}.
Since $\hat{u}=J_{\lambda}^{A_{1\cap\infty}\phi}g$ can be rewritten as
$\hat{u}=J_{\lambda}^{\varepsilon\phi+A_{1\cap\infty}\phi}[g+\lambda\varepsilon\phi(\hat{u})]$
and since the operator $\varepsilon\phi+A_{1\cap\infty}\phi$ is accretive in
$L^{1}(\Sigma,\mu)$,
\begin{displaymath}
  \norm{u_{\varepsilon}-\hat{u}}_{1}=
  \norm{J_{\lambda}^{\varepsilon\phi+A_{1\cap\infty}\phi}g-
    J_{\lambda}^{\varepsilon\phi+A_{1\cap\infty}\phi}[g+\lambda\varepsilon\phi(\hat{u})]}_{1}
  \le \lambda\,\varepsilon\,\norm{\phi(\hat{u})}_{1}.
\end{displaymath}
Since by assumption, $\hat{u}\in D(A_{1\cap\infty}\phi)$, one has
$\phi(u)\in L^{1}(\Sigma,\mu)$. Thus, sending $n\to\infty$ in the last
inequality yields $u=\hat{u}$ and so, $u\in D(A_{\phi})$ with
$f=v\in A_{\phi}u$.

Next, we show that $A_{\phi}$ is contained in the closure
$\overline{A_{1\cap\infty}\phi}$ of $A_{1\cap\infty}\phi$ in
$L^{1}(\Sigma,\mu)$. Let $(u,f)\in A_{\phi}$. Then, by definition
of $A_{\phi}$, there are $\lambda>0$ and
$g\in L^{1}\cap L^{\infty}(\Sigma,\mu)$ such that
$f=\frac{g-u}{\lambda}$ and for every $\varepsilon>0$ sufficiently
small, there is $u_{\varepsilon}=J_{\lambda}^{\varepsilon\phi+A_{1\cap
    \infty}\phi}g\in L^{1}\cap L^{\infty}(\Sigma,\mu)$ and $u \in L^{1}\cap L^{\infty}(\Sigma,\mu)$
satisfying ~\eqref{eq:224}.  By definition of the resolvent
$J_{\lambda}^{\varepsilon\phi+A_{1\cap\infty}\phi}$ of
$\varepsilon\phi+A_{1\cap\infty}\phi$, one has
\begin{displaymath}
    (u_{\varepsilon},\frac{g-u_{\varepsilon}}{\lambda}-\varepsilon\phi(u_{\varepsilon}))\in
    A_{1\cap\infty}\phi
\end{displaymath}
and so, by ~\eqref{eq:231} for $\tilde{q}=1$, we can conclude
that $(u,\frac{g-u}{\lambda})=(u,f)\in \overline{A_{1\cap\infty}\phi}$. 
  
The operator $A_{\phi}$ is accretive in $L^{1}(\Sigma,\mu)$ since by
construction of $A_{\phi}$, the operator $A_{\phi}$ is contained in
the \emph{limit inferior}
$\liminf_{\varepsilon\to0+}(\varepsilon\phi+A_{1}\phi)$ (see, for
instance, \cite[Definition~(2.17) and Proposition~(2.18)]{Benilanbook}
or \cite[Proposition 4.4]{MR2582280}) of the family
$(\varepsilon\phi+A_{1\cap\infty}\phi)_{\varepsilon>0}$ of accretive
operators $\varepsilon\phi+A_{1\cap \infty}\phi$ in
$L^{1}(\Sigma,\mu)$ (see
Proposition~\ref{propo:composition-operators-in-L1}). Moreover,
$A_{\phi}$ is $m$-accretive in $L^{1}(\Sigma,\mu)$. To see that
$A_{\phi}$ satisfies the range condition~\eqref{eq:range-condition}
for $X=L^{1}(\Sigma,\mu)$, note that under the
hypotheses~\eqref{propo:Range-cond-in-Rd-Hyp-3}-\eqref{propo:Range-cond-in-Rd-Hyp-2},
$A_{\phi}$ is closed in $L^{1}(\Sigma,\mu)$. Hence, it is sufficient
to show that the set
\begin{equation}
\label{eq:230}
L^{1}\cap L^{\infty}(\Sigma,\mu) \subseteq Rg (I+\lambda A_{\phi}).
\end{equation}
To this end, let $g\in L^{1}\cap L^{\infty}(\Sigma,\mu)$ and
$\lambda>0$. Then, by following the arguments in the first part of
this proof, we see that for every $\varepsilon>0$, there is
$u_{\varepsilon}=J_{\lambda}^{\varepsilon\phi+A_{1\cap
    \infty}\phi}g\in D(A_{1\cap\infty}\phi)$.
Since
$u_{\varepsilon}=J_{\lambda}^{\varepsilon\phi+A_{1\cap\infty}\phi}g$
is equivalent to
$u_{\varepsilon}=J_{\lambda}^{A_{1\cap\infty}\phi}[g-\lambda\varepsilon
\phi(u_{\varepsilon})]$, we have that
\begin{displaymath}
  \norm{u_{\varepsilon}-u_{\eta}}_{1}\le \lambda \norm{\varepsilon
    \phi(u_{\varepsilon})-\eta \phi(u_{\eta})}_{1}
\end{displaymath}
for every $\varepsilon$, $\eta>0$. Thus and since under the
hypotheses~\eqref{propo:Range-cond-in-Rd-Hyp-3}-\eqref{propo:Range-cond-in-Rd-Hyp-2},
~\eqref{eq:231} holds for $\tilde{q}=1$, we can conclude from the
previous inequality that $(u_{\varepsilon})_{\varepsilon>0}$ is a
Cauchy sequence in $L^{1}(\Sigma,\mu)$ as
$\varepsilon\to0+$. Therefore, there is an $u\in L^{1}(\Sigma,\mu)$
such that~\eqref{eq:224} holds and so, by definition of $A_{\phi}$,
$(u,f)\in A_{\phi}$ with $f:=\frac{g-u}{\lambda}\in L^{1}(\Sigma,\mu)$
and $g=u+\lambda f\in (I+\lambda A_{\phi})u$. Thus, the range condition~\eqref{eq:230} holds.

Summarising, we have shown that $A_{\phi}$ in contained in the closure
$\overline{A_{1\cap\infty}\phi}$ of $A_{1\cap\infty}\phi$ in
$L^{1}(\Sigma,\mu)$ and $A_{\phi}$ is $m$-accretive in $L^{1}(\Sigma,\mu)$. 
Thus and since $\overline{A_{1\cap\infty}\phi}$ is accretive in $L^{1}(\Sigma,\mu)$,
statement~\eqref{eq:114} implies that $\overline{A_{1\cap\infty}\phi}=A_{\phi}$.

Next, we show that the range condition~\eqref{eq:182} holds under the
hypotheses~\eqref{propo:Range-cond-in-Rd-Hyp-1} and
\eqref{propo:Range-cond-in-Rd-Hyp-2}. For this, let
$g\in L^{1}\cap L^{\infty}(\Sigma,\mu)$. Then, for every
$\varepsilon>0$ sufficiently small,
\begin{displaymath}
 u_{\varepsilon}=J_{\lambda}^{\varepsilon\phi+A_{1\cap\infty}\phi}g\in
L^{1}\cap L^{\infty}(\Sigma,\mu)\quad\text{ and }\quad 
(\phi(u_{\varepsilon}),\frac{g-u_{\varepsilon}}{\lambda}-\varepsilon\phi(u_{\varepsilon}))\in A.
\end{displaymath}
Thus, if we can show that
\begin{equation}
  \label{eq:181}
  \lim_{\varepsilon\to0+}\phi(u_{\varepsilon})=\phi(u)\qquad\text{in
    $L^{q}(\Sigma,\mu)$}
\end{equation}
and
\begin{equation}
  \label{eq:221}
  \lim_{\varepsilon\to0+}
  \frac{g-u_{\varepsilon}}{\lambda}-\varepsilon
  \phi(u_{\varepsilon})=\frac{g-u}{\lambda}
  \qquad\text{in $L^{q}(\Sigma,\mu)$}
\end{equation}
then by the assumption, $A$ is $m$-accretive in the uniformly convex
Banach space $L^{q}(\Sigma,\mu)$
(cf.~\cite[Proposition~3.4]{MR2582280}), we have that
\begin{displaymath}
  \Big(\phi(u),\frac{g-u}{\lambda}\Big)\in A.
\end{displaymath}
To see that \eqref{eq:181}
holds, recall that by~\eqref{eq:226} and~\eqref{eq:224}, one has
\begin{equation}
  \label{eq:233}
  \lim_{\varepsilon\to0+}u_{\varepsilon}=u\qquad\text{ in $L^{q}(\Sigma,\mu)$.}
\end{equation}
If hypothesis~\eqref{propo:Range-cond-in-Rd-Hyp-1} holds, then
combining~\eqref{eq:233} with the continuity of $\varphi$ and by
\eqref{eq:227}, it follows that~\eqref{eq:181} holds and
\begin{displaymath}
  \norm{\varepsilon\phi(u_{\varepsilon})}_{q}\le
  \varepsilon\,K_{1}\,\norm{u_{\varepsilon}}_{q}\le \varepsilon\,K_{1}\norm{g}_{q},
\end{displaymath}
from where we can conclude that~\eqref{eq:221} holds.  If
hypothesis~\eqref{propo:Range-cond-in-Rd-Hyp-2} holds, then
by~\eqref{eq:233}, the continuity of $\phi$, and by eventually passing
to a subsequence, we see that
$\lim_{\varepsilon\to0+}\phi(u_{\varepsilon}(x))=\phi(u(x))$ a.e. on
$\Sigma$. Thus, by~\eqref{eq:226} and the embedding of
$L^{\infty}(\Sigma,\mu)$ into $L^{q}(\Sigma,\mu)$, we see
that~\eqref{eq:181} and \eqref{eq:221} hold. Moreover, using that
$\norm{u_{\varepsilon}}_{p}\le \norm{g}_{p}$ for all $\varepsilon>0$
and $1\le p\le \infty$, we can conclude that
$u\in L^{1}\cap L^{\infty}(\Sigma,\mu)$ and by the
hypotheses~\eqref{propo:Range-cond-in-Rd-Hyp-1} and
\eqref{propo:Range-cond-in-Rd-Hyp-2}, that
$\phi(u)\in L^{1}\cap L^{\infty}(\Sigma,\mu)$. Thus, 
\begin{displaymath}
    \Big(\phi(u),\frac{g-u}{\lambda}\Big)\in A_{1\cap\infty},
  \end{displaymath}
proving the range condition~\eqref{eq:182}. This completes the proof of
this proposition.
\end{proof}

%
%

\section{The link between mean spaces and $L^{p}$}

The first part of the following theorem has been proved
in~\cite[Th\'eor\`eme 1.1 of Chapter IV]{MR0165343} by using so-called \emph{discrete
mean spaces} (cf.~\cite[Chapter II]{MR0165343}). Here, we improve this result
by showing that both spaces are isometrically isomorphic. This result
serves us in the proof of Theorem~\ref{thm:extrapolation-to-infty} and
Theorem~\ref{thm:extrapolation-to-infty-bis} to determine the
convergence of the constants in inequality~\eqref{eq:24} as $m\to \infty$.

\begin{theorem}\label{thmApp:Lpspaces}
  Let $(\Sigma,\mu)$ be a $\sigma$-finite measure space,
  $(X_{0},X_{1})$ be an interpolation couple, $1\le p_{0}$,
  $p_{1}\le \infty$ and $0<\theta<1$. Then for $1\le p\le \infty$ given
  by
  \begin{equation}
    \label{eq:1App}
    \frac{1}{p}=\frac{1-\theta}{p_{0}}+\frac{\theta}{p_{1}},
  \end{equation}
 one has that
  \begin{equation}
    \label{eq:2App}
    (L^{p_{0}}(\Sigma,X_{0};\mu),L^{p_{1}}(\Sigma,X_{1};\mu))_{\theta,p_{0},p_{1}}
    =L^{p}(\Sigma,(X_{0},X_{1})_{\theta,p_{0},p_{1}};\mu)
  \end{equation}
  with equal norms.
\end{theorem}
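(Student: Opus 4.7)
The plan is to establish the identity in~\eqref{eq:2App} with matching norms by proving the two inequalities separately, exploiting (i) the product-form expression of the mean-space norm recorded in~\eqref{eq:51}, (ii) the relation~\eqref{eq:1App}, and (iii) a pointwise balancing/scaling of decompositions. Throughout I write $Y_i := L^{p_i}(\Sigma,X_i;\mu)$ and, for $u \in L^p(\Sigma,(X_0,X_1)_{\theta,p_0,p_1};\mu)$, set $f(x):=\|u(x)\|_{\theta,p_0,p_1}$; the Lions--Peetre result quoted in the statement already guarantees that $f$ is measurable and that both sides of~\eqref{eq:2App} agree as sets, so only the equality of the norms needs a proof.

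\textbf{Easy inclusion: mean-space norm $\ge L^p$-norm.} Take $u\in(Y_0,Y_1)_{\theta,p_0,p_1}$ with an arbitrary admissible representation $u=U_0(t)+U_1(t)$ in the sense of~\eqref{eq:50}. Evaluating at $x$ gives $u(x)=U_0(t)(x)+U_1(t)(x)$ in $X_0+X_1$ for a.e.\ $x$, so~\eqref{eq:51} yields
\begin{equation*}
 f(x) \le \bigl\|t^{-\theta}U_0(\cdot)(x)\bigr\|_{L^{p_0}_{\ast}(X_0)}^{1-\theta}\,
          \bigl\|t^{1-\theta}U_1(\cdot)(x)\bigr\|_{L^{p_1}_{\ast}(X_1)}^{\theta}.
\end{equation*}
Raise to the $p$th power, integrate in $x$, and apply H\"older's inequality with conjugate exponents $q_0=p_0/(p(1-\theta))$ and $q_1=p_1/(p\theta)$; these satisfy $q_0^{-1}+q_1^{-1}=1$ \emph{precisely} because of~\eqref{eq:1App}. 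Fubini converts the resulting mixed integrals into the norms $\|t^{-\theta}U_0\|_{L^{p_0}_{\ast}(Y_0)}$ and $\|t^{1-\theta}U_1\|_{L^{p_1}_{\ast}(Y_1)}$. Taking the infimum over all admissible decompositions and invoking~\eqref{eq:51} once more gives $\|f\|_{L^p(\Sigma)} \le \|u\|_{(Y_0,Y_1)_{\theta,p_0,p_1}}$.

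\textbf{Hard inclusion: $L^p$-norm $\ge$ mean-space norm via a balanced rescaling.} For fixed $\varepsilon>0$ and for a.e.\ $x$ with $f(x)>0$, choose a decomposition $u(x)=v_0(x,s)+v_1(x,s)$ that is $\varepsilon$-close to realising the infimum in~\eqref{eq:51}, and then (by reparametrising $s\mapsto \rho s$) take it to be \emph{balanced}:
\begin{equation*}
 \bigl\|s^{-\theta}v_0(x,\cdot)\bigr\|_{L^{p_0}_{\ast}(X_0)}
 = \bigl\|s^{1-\theta}v_1(x,\cdot)\bigr\|_{L^{p_1}_{\ast}(X_1)} = (1+\varepsilon)f(x).
\end{equation*}
Now introduce the rescaling $w_i(x,t):=v_i(x,\lambda(x)t)$ with $\lambda(x):=f(x)^{\gamma}$, where the exponent $\gamma$ is to be chosen. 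The scale-invariance of $\mathrm{d}t/t$ yields
\begin{equation*}
 \|t^{-\theta}w_0(x,\cdot)\|_{L^{p_0}_{\ast}}=(1+\varepsilon)f(x)^{1+\theta\gamma},\qquad
 \|t^{1-\theta}w_1(x,\cdot)\|_{L^{p_1}_{\ast}}=(1+\varepsilon)f(x)^{1-(1-\theta)\gamma}.
\end{equation*}
Selecting $\gamma$ so that $(1+\theta\gamma)p_0=p$ and simultaneously $(1-(1-\theta)\gamma)p_1=p$ is possible; a short algebraic check shows that both equations are equivalent to relation~\eqref{eq:1App}. With this $\gamma$, Fubini gives
\begin{equation*}
 \|t^{-\theta}W_0\|_{L^{p_0}_{\ast}(Y_0)}^{p_0} = (1+\varepsilon)^{p_0}\|f\|_p^p,\qquad
 \|t^{1-\theta}W_1\|_{L^{p_1}_{\ast}(Y_1)}^{p_1} = (1+\varepsilon)^{p_1}\|f\|_p^p,
\end{equation*}
where $W_i(t):=w_i(\cdot,t)$; on $\{f=0\}$ one simply puts $w_i\equiv 0$. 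Inserting into~\eqref{eq:51} gives $\|u\|_{(Y_0,Y_1)_{\theta,p_0,p_1}}\le (1+\varepsilon)\|f\|_p$, and letting $\varepsilon\to 0$ closes the argument.

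\textbf{Main obstacle and how I handle it.} The genuine difficulty is the measurable $x$-dependence of the decompositions $v_0(x,\cdot),v_1(x,\cdot)$ and of the balancing reparametrisation, because I selected them pointwise. I would sidestep this by first proving the identity for simple functions $u=\sum_{i=1}^{N}a_i\mathds{1}_{E_i}$ with $a_i\in(X_0,X_1)_{\theta,p_0,p_1}$ and $E_i$ disjoint measurable: for such $u$ one picks decompositions of each $a_i$ separately and assembles them, after which measurability is automatic and the scaling step is a multiplication by a simple function. Since $p<\infty$ (the case $p_0=p_1=\infty$ being excluded), simple functions with values in the mean space are dense in $L^p(\Sigma,(X_0,X_1)_{\theta,p_0,p_1})$, and simple functions with values in $Y_0\cap Y_1$ are dense in $(Y_0,Y_1)_{\theta,p_0,p_1}$ by Lemma~\ref{lem:density}; the completeness of both spaces therefore extends the norm-preserving identification from simple functions to the whole space, yielding~\eqref{eq:2App}.
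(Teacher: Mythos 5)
Your proof is correct and follows essentially the same route as the paper's: the easy direction via the product-form norm~\eqref{eq:51}, H\"older with exponents determined by~\eqref{eq:1App}, and Fubini; the hard direction via pointwise rescaling of near-optimal decompositions by a power of $\|u(x)\|_{\theta,p_0,p_1}$ (your $\gamma$ is the negative of the paper's $\lambda$, a sign convention), with the measurability issue handled exactly as the paper does, by first establishing the estimate for step functions. One small slip in the closing density step: the relevant density fact is that $(X_0,X_1)_{\theta,p_0,p_1}$-valued step functions are dense in $L^{p}(\Sigma,(X_0,X_1)_{\theta,p_0,p_1};\mu)$ (which, combined with the already-proved easy inequality, lets you pass to the limit); Lemma~\ref{lem:density} as stated only gives density of $Y_0\cap Y_1$ in the mean space, not density of simple functions, so it is not quite the tool you want there.
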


\begin{proof}[Proof of Theorem~\ref{thmApp:Lpspaces}]
  We only outline the proof for $1\le p_{0}<\infty$ and $1\le
  p_{1}<\infty$ since the other cases are shown similarly.

  First, let $u$ be an element of
  $(L^{p_{0}}(\Sigma,X_{0};\mu),L^{p_{1}}(\Sigma,X_{1};\mu))_{\theta,p_{0},p_{1}}$. By
  definition, there are measurable functions
  $v_{i}: (0,\infty)\to L^{p_{i}}(\Sigma,\mu)$ for $i=0,1$ such that
  $t^{-\theta}v_{0}\in L^{p_{0}}_{\ast}(L^{p_{0}}(\Sigma,X_{0};\mu))$,
  $t^{1-\theta}v_{1}\in L^{p_{1}}(\Sigma,X_{1};\mu)$ and
  \begin{displaymath}
   u(x)=v_{0}(t,x)+v_{1}(t,x)
 \end{displaymath}
  for a.e. $(t,x)\in (0,\infty)\times \Sigma$. Since $(\Sigma,\mu)$ and
  $(\R_{+},\tfrac{dt}{t})$ are both $\sigma$-finite measure spaces, Fubini's
  theorem implies that
  \begin{displaymath}
    t^{-\theta}v_{0}(\cdot,x)\in L^{p_{0}}_{\ast}(X_{0})\quad\text{and}\quad
    t^{1-\theta}v_{1}(\cdot,x)\in L^{p_{1}}(\Sigma,\mu)
  \end{displaymath}
  for a.e. $x\in \Sigma$. Thus by definition of the mean space and
  by~\eqref{eq:51}, one has $a(x)\in (X_{0},X_{1})_{\theta,p_{0},p_{1}}$ for a.e. $x\in \Sigma$
  and
  \begin{displaymath}
    \norm{u(x)}_{(X_{0},X_{1})_{\theta,p_{0},p_{1}}}\le \norm{
      t^{-\theta}v_{0}(\cdot,x)}_{L^{p_{0}}_{\ast}(X_{0})}^{1-\theta}\,
    \norm{t^{1-\theta}v_{1}(\cdot,x)}_{L^{p_{1}}_{\ast}(X_{1})}^{\theta}.
  \end{displaymath}
  Integrating the latter inequality over $\Sigma$, taking $p$th root,
  applying H\"older's inequality (where one uses~\eqref{eq:1App}) and
  Fubini's theorem, we see that
  \begin{align*}
    &\norm{u}_{L^{p}(\Sigma,(X_{0},X_{1})_{\theta,p_{0},p_{1}};\mu)}\\
    &\qquad \le
          \left[\int_{\Sigma}\norm{
      t^{-\theta}v_{0}(\cdot,x)}_{L^{p_{0}}_{\ast}(X_{0})}^{p_{0}}\,\dmu\right]^{\frac{1-\theta}{p_{0}}}\;
      \left[\int_{\Sigma}\norm{t^{1-\theta}v_{1}(\cdot,x)}_{L^{p_{1}}_{\ast}(X_{1})}^{p_{1}}\,\dmu\right]^{\frac{\theta}{p_{1}}}\\
    &\qquad =\norm{t^{-\theta}v_{0}}_{L^{p_{0}}_{\ast}(L^{p_{0}}(\Sigma,X_{0};\mu))}^{1-\theta}\; 
      \norm{t^{1-\theta}v_{1}}_{L^{p_{1}}_{\ast}(L^{p_{1}}(\Sigma,X_{1};\mu))}^{\theta}.
  \end{align*}
  Taking in this inequality the infimum over all representation pairs
  $(v_{0},v_{1})$ of $u$ and applying~\eqref{eq:51} yields
  \begin{displaymath}
    \norm{u}_{L^{p}(\Sigma,(X_{0},X_{1})_{\theta,p_{0},p_{1}};\mu)}\le 
    \norm{u}_{ (L^{p_{0}}(\Sigma,X_{0};\mu),L^{p_{1}}(\Sigma,X_{1};\mu))_{\theta,p_{0},p_{1}}}.
  \end{displaymath}
  
  Now, let $u\in
  L^{p}(\Sigma,(X_{0},X_{1})_{\theta,p_{0},p_{1}};\mu)$ be a step
  function given by
  \begin{displaymath}
    u(x)=\sum_{\nu=1}^{m}a_{\nu}\,\mathds{1}_{B_{\nu}}(x)
  \end{displaymath}
  for finitely many different values
  $a_{\nu}\in (X_{0},X_{1})_{\theta,p_{0},p_{1}}$ attained on pairwise
  disjoint measurable subsets $B_{\nu}$ of $\Sigma$. Let
  $\varepsilon>0$. By the definition of
  $(X_{0},X_{1})_{\theta,p_{0},p_{1}}$ and the infimum, for every
  $\nu=1,\dots,m$, there are measurable functions
  $v_{i\nu} : (0,\infty)\to X_{i}$ for $i=0,1$ satisfying
  \begin{equation}\label{eqApp:rel-anu-v0-v1}
    a_{\nu}=v_{0\nu}(t)+v_{1\nu}(t)
  \end{equation}
  for a.e. $t\in (0,\infty)$ and
  \begin{equation}\label{eqApp:a-nu}
    \max\Big\{\norm{
      t^{-\theta}v_{0\nu}}_{L^{p_{0}}_{\ast}(X_{0})},
    \norm{t^{1-\theta}v_{1\nu}}_{L^{p_{1}}_{\ast}(X_{1})}\Big\}\le
    \norm{a_{\nu}}_{(X_{0},X_{1})_{\theta,p_{0},p_{1}}}\,(1+\varepsilon).
  \end{equation}
  Set $\lambda=(p_{0}-p)/\theta\,p_{0}$ and for every $\nu=1,\dots,m$
  and $i=0,1$ define
  \begin{displaymath}
    w_{i\nu}(t)=v_{i\nu}(\norm{a_{\nu}}_{(X_{0},X_{1})_{\theta,p_{0},p_{1}}}^{\lambda}\,t).
  \end{displaymath}
  Then applying the substitution
  $s=\norm{a_{\nu}}_{(X_{0},X_{1})_{\theta,p_{0},p_{1}}}^{\lambda}\,t$
  together with~\eqref{eqApp:a-nu} yields
  \begin{equation}\label{eqApp:est-w0}
  \begin{split}
    \norm{t^{-\theta}w_{0\nu}}_{L^{p_{0}}_{\ast}(X_{0})}^{p_{0}}
    &=
      \norm{a_{\nu}}_{(X_{0},X_{1})_{\theta,p_{0},p_{1}}}^{-\lambda\,\theta\,p_{0}}\,
      \int_{0}^{\infty}\norm{s^{-\theta}v_{0\nu}(s)}_{X_{0}}^{p_{0}}\,\tfrac{\ds}{s}\\
    & \le (1+\varepsilon)^{p_{0}}
      \norm{a_{\nu}}_{(X_{0},X_{1})_{\theta,p_{0},p_{1}}}^{p_{0}-\lambda\,\theta\,p_{0}}\\
    & = (1+\varepsilon)^{p_{0}}\;
      \norm{a_{\nu}}_{(X_{0},X_{1})_{\theta,p_{0},p_{1}}}^{p}
  \end{split}
\end{equation}
for every $\nu=1,\dots,m$. By the relation~\eqref{eq:1App}, one sees
that the same $\lambda$ satisfies
$p_{1}+\lambda(1-\theta)p_{1}=p$. Thus the same arguments gives
\begin{equation}\label{eqApp:est-w1}
  \norm{t^{1-\theta}w_{1\nu}}_{L^{p_{1}}_{\ast}(X_{1})}^{p_{1}}\le (1+\varepsilon)^{p_{1}}\;
  \norm{a_{\nu}}_{(X_{0},X_{1})_{\theta,p_{0},p_{1}}}^{p}.
\end{equation}
For $i=0,1$ and every $t\in (0,\infty)$, we define the step functions
\begin{displaymath}
  w_{i}(t,x)=\sum_{\nu=1}^{m}w_{i\nu}(t)\,\mathds{1}_{B_{\nu}}(x)
\end{displaymath}
for a.e. $x\in \Sigma$. Then by~\eqref{eqApp:est-w0}
and~\eqref{eqApp:est-w1} as well as by Fubini's theorem, 
\begin{equation}\label{eqApp:w0-Lp0astLp0}
\begin{split}
  \int_{0}^{\infty}\norm{t^{-\theta}w_{0}(t,\cdot)}_{L^{p_{0}}(\Sigma,X_{0};\mu)}^{p_{0}}\,\tfrac{\dt}{t}
  & = \int_{\Sigma}
    \norm{t^{-\theta}w_{0}(\cdot,x)}_{L^{p_{0}}_{\ast}(X_{0})}^{p_{0}}\,\dmu\\
  &\le
    (1+\varepsilon)^{p_{0}}\,\sum_{i=1}^{m}\norm{a_{\nu}}_{(X_{0},X_{1})_{\theta,p_{0},p_{1}}}^{p}\mu(B_{\nu})\\
  & = (1+\varepsilon)^{p_{0}}\,\norm{u}_{L^{p}(\Sigma,(X_{0},X_{1})_{\theta,p_{0},p_{1}};\mu)}^{p}
\end{split}
\end{equation}
and similarly,
\begin{equation}\label{eqApp:w1-Lp1astLp1}
  \int_{0}^{\infty}\norm{t^{1-\theta}w_{1}(t,\cdot)}_{L^{p_{1}}(\Sigma,X_{1};\mu)}^{p_{1}}\,\tfrac{\dt}{t}\le 
  (1+\varepsilon)^{p_{1}}\,\norm{u}_{L^{p}(\Sigma,(X_{0},X_{1})_{\theta,p_{0},p_{1}};\mu)}^{p}.
\end{equation}
Therefore, for $i=0, 1$, the functions $w_{i} : (0,\infty)\to
L^{p_{i}}(\Sigma,X_{i};\mu)$ are well defined step functions and so strongly
measurable. In addition, by~\eqref{eqApp:rel-anu-v0-v1},
\begin{align*}
  w_{0}(t,x)+w_{1}(t,x)
  & =
    \sum_{\nu=1}^{m}(v_{0\nu}(\norm{a_{\nu}}_{(X_{0},X_{1})_{\theta,p_{0},p_{1}}}^{\lambda}\,t)
    +v_{1\nu}(\norm{a_{\nu}}_{(X_{0},X_{1})_{\theta,p_{0},p_{1}}}^{\lambda}\,t))\,\mathds{1}_{B_{\nu}}(x)\\
  & =\sum_{\nu=1}^{m}a_{\nu}\,\mathds{1}_{B_{\nu}}(x)\\
  & =u(x)
\end{align*}
for a.e. $x\in \Sigma$. Thus $u\in
(L^{p_{0}}(\Sigma,X_{0};\mu),L^{p_{1}}(\Sigma,X_{1};\mu))_{\theta,p_{0},p_{1}}$
and by~\eqref{eq:51}, \eqref{eqApp:w0-Lp0astLp0},
\eqref{eqApp:w1-Lp1astLp1}, and~\eqref{eq:1App},
\begin{align*}
  \norm{u}_{(L^{p_{0}}(\Sigma,X_{0};\mu),L^{p_{1}}(\Sigma,X_{1};\mu))_{\theta,p_{0},p_{1}}}
  & \le
    \norm{t^{-\theta}w_{0}}_{L^{p_{0}}_{\ast}(L^{p_{0}}(\Sigma,X_{0};\mu))}^{1-\theta}\,
    \norm{t^{1-\theta}w_{1}}_{L^{p_{1}}_{\ast}(L^{p_{1}}(\Sigma,X_{1};\mu))}^{\theta}\\
  & \le
    (1+\varepsilon)\;\norm{u}_{L^{p}(\Sigma,(X_{0},X_{1})_{\theta,p_{0},p_{1}};\mu)}^{\frac{(1-\theta)
    p}{p_{0}}+\frac{\theta p}{p_{1}}}\\
  & = (1+\varepsilon)\;\norm{u}_{L^{p}(\Sigma,(X_{0},X_{1})_{\theta,p_{0},p_{1}};\mu)}.
\end{align*}
Sending $\varepsilon\to0+$ shows that inequality
\begin{displaymath}
  \norm{u}_{(L^{p_{0}}(\Sigma,X_{0};\mu),L^{p_{1}}(\Sigma,X_{1};\mu))_{\theta,p_{0},p_{1}}}
  \le \norm{u}_{L^{p}(\Sigma,(X_{0},X_{1})_{\theta,p_{0},p_{1}};\mu)}.
\end{displaymath}
holds for step functions. Since the set of step functions is dense
in the space $L^{p}(\Sigma,(X_{0},X_{1})_{\theta,p_{0},p_{1}};\mu)$ the claim of
this theorem holds.
\end{proof}

As an immediate consequence of Theorem~\ref{thmApp:Lpspaces}, we
obtain the following corollary improving the statement
in~\cite[Corollaire 1.1 of Chapter IV]{MR0165343}.

\begin{corollary}\label{corApp:1}
  Let $(\Sigma,\mu)$ be a $\sigma$-finite measure space, $1\le p_{0}$,
  $p_{1}\le \infty$ and $0<\theta<1$. Then for $1\le p\le \infty$
  satisfying the relation~\eqref{eq:1App}, one has that
  \begin{displaymath}
    (L^{p_{0}}(\Sigma,\mu),L^{p_{1}}(\Sigma,\mu))_{\theta,p_{0},p_{1}}
    =L^{p}(\Sigma,\mu)
  \end{displaymath}
  with equal norms.
\end{corollary}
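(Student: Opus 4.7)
The plan is to obtain the corollary as an immediate specialisation of Theorem~\ref{thmApp:Lpspaces} by taking the interpolation couple to be the scalar field, i.e.\ $X_{0}=X_{1}=\R$ (or $\C$). For this choice the Bochner-type spaces $L^{p_{i}}(\Sigma,X_{i};\mu)$ reduce isometrically to the ordinary Lebesgue spaces $L^{p_{i}}(\Sigma,\mu)$, so the left-hand side of~\eqref{eq:2App} in Theorem~\ref{thmApp:Lpspaces} already coincides with the left-hand side of the corollary.

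What then remains is the isometric identification $(\R,\R)_{\theta,p_{0},p_{1}}=\R$; once this is in hand, the right-hand side of~\eqref{eq:2App} becomes $L^{p}(\Sigma,\R;\mu)=L^{p}(\Sigma,\mu)$ with equal norms, and the corollary follows. As a set, $(\R,\R)_{\theta,p_{0},p_{1}}=\R$ is immediate from $\R+\R=\R$, so only the norm identification requires work, namely the computation
\begin{equation*}
\|a\|_{\theta,p_{0},p_{1}}\ =\ \inf_{a=v_{0}(t)+v_{1}(t)}\max\bigl\{\|t^{-\theta}v_{0}\|_{L^{p_{0}}_{\ast}(\R)},\ \|t^{1-\theta}v_{1}\|_{L^{p_{1}}_{\ast}(\R)}\bigr\}\ =\ |a|
\end{equation*}
for each $a\in\R$.

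I would verify this scalar identification by a direct one-dimensional optimisation. For the upper bound I would use an explicit ansatz $v_{0}(t)=a\,\rho(t)$, $v_{1}(t)=a\,(1-\rho(t))$ with $\rho:(0,\infty)\to[0,1]$ adapted to the scaling $t\mapsto t^{\theta}$; a natural candidate is a truncation-type profile (e.g.\ $\rho(t)=\min(1,t^{\theta})$) pre-composed with a dilation $t\mapsto\lambda t$ in order to bring the two weighted norms $\|t^{-\theta}v_{0}\|_{L^{p_{0}}_{\ast}}$ and $\|t^{1-\theta}v_{1}\|_{L^{p_{1}}_{\ast}}$ into balance. For the matching lower bound, I would use the continuous embedding $(\R,\R)_{\theta,p_{0},p_{1}}\hookrightarrow\R+\R$, the sum-space norm on $\R+\R$ being exactly $|\cdot|$.

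The main technical point, as I see it, is the sharp matching of constants in the scalar computation: a crude profile only produces an identity of the form $\|a\|_{\theta,p_{0},p_{1}}=c_{\theta,p_{0},p_{1}}|a|$, and extracting the exact constant $1$ requires exploiting the scale invariance of the product formulation~\eqref{eq:51} under the dilations $t\mapsto\lambda t$ (which rescales the two weighted norms by $\lambda^{\theta}$ and $\lambda^{\theta-1}$ respectively, leaving the geometric mean with exponents $1-\theta,\theta$ fixed) in order to optimise over both the shape of $\rho$ and the dilation parameter $\lambda$ simultaneously.
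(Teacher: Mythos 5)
Your approach --- specialising Theorem~\ref{thmApp:Lpspaces} to the scalar interpolation couple $X_{0}=X_{1}=\R$ --- is exactly the paper's intended one-line derivation, and you have correctly isolated the scalar identification $(\R,\R)_{\theta,p_{0},p_{1}}=\R$ as the one point that remains to be checked. The genuine gap is that this identification is \emph{not} an isometry. By homogeneity the scalar mean-space norm necessarily has the form $\norm{a}_{\theta,p_{0},p_{1}}=c_{\theta,p_{0},p_{1}}\abs{a}$, but the constant $c_{\theta,p_{0},p_{1}}$ is generically different from $1$. For a concrete counterexample, take $\theta=\tfrac{1}{2}$, $p_{0}=p_{1}=2$ and $a=1$: with the admissible decomposition $v_{0}(t)=t^{2}/(1+t^{2})$, $v_{1}(t)=1/(1+t^{2})$ one finds
\begin{displaymath}
\norm{t^{-1/2}v_{0}}_{L^{2}_{\ast}(\R)}^{2}
=\int_{0}^{\infty}\frac{t^{2}}{(1+t^{2})^{2}}\,\dt=\frac{\pi}{4}
=\norm{t^{1/2}v_{1}}_{L^{2}_{\ast}(\R)}^{2},
\end{displaymath}
so $\norm{1}_{1/2,2,2}\le\sqrt{\pi}/2<1$. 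The scale invariance you invoke cannot rescue the constant~$1$: the dilation $t\mapsto\lambda t$ leaves the product in~\eqref{eq:51} invariant (it only balances the two factors in the maximum), so optimising over $\lambda$ cannot raise the value $\sqrt{\pi}/2$ that this profile already attains. Nor is the embedding of $(\R,\R)_{\theta,p_{0},p_{1}}$ into $\R+\R=\R$ isometric --- if it were, combined with the bound above it would force the constant to be $1$, which it is not.

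What the specialisation of Theorem~\ref{thmApp:Lpspaces} actually delivers, with equal norms, is the identification of $(L^{p_{0}}(\Sigma,\mu),L^{p_{1}}(\Sigma,\mu))_{\theta,p_{0},p_{1}}$ with $L^{p}\bigl(\Sigma,(\R,\R)_{\theta,p_{0},p_{1}};\mu\bigr)$; the latter agrees with $L^{p}(\Sigma,\mu)$ only up to the multiplicative factor $c_{\theta,p_{0},p_{1}}$. This is exactly the ``equivalent norms'' form of the statement in Lions--Peetre \cite[Corollaire~1.1, Chapitre~IV]{MR0165343}; the claimed improvement to equal norms rests on the scalar isometry that fails, so both the corollary as stated and your proposed proof of it need to carry the scalar constant (or compute it explicitly). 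The downstream uses are unaffected, since this constant is in any case absorbed into $\lesssim$ and $C_{\varphi^{\ast},m}$ in the proof of Theorem~\ref{thm:extrapolation-to-infty}.
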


%
%


\def\cprime{$'$} \def\cprime{$'$} \def\cprime{$'$} \def\cprime{$'$}
  \def\ocirc#1{\ifmmode\setbox0=\hbox{$#1$}\dimen0=\ht0 \advance\dimen0
  by1pt\rlap{\hbox to\wd0{\hss\raise\dimen0
  \hbox{\hskip.2em$\scriptscriptstyle\circ$}\hss}}#1\else {\accent"17 #1}\fi}
  \def\cprime{$'$} \def\cprime{$'$}
  \def\ocirc#1{\ifmmode\setbox0=\hbox{$#1$}\dimen0=\ht0 \advance\dimen0
  by1pt\rlap{\hbox to\wd0{\hss\raise\dimen0
  \hbox{\hskip.2em$\scriptscriptstyle\circ$}\hss}}#1\else {\accent"17 #1}\fi}
\providecommand{\bysame}{\leavevmode\hbox to3em{\hrulefill}\thinspace}

\end{document}